\setlist{leftmargin=5.5mm}
\definecolor{mythm}{RGB}{136,216,216}
\definecolor{myprop}{RGB}{216,136,216}
\definecolor{mycor}{RGB}{216,216,136}
\newtheorem{lemma}{Lemma}[section]
\newtheoremstyle{mythm}
{}
{}
{\sffamily}
{}
{\sffamily\bfseries}
{.}
{.5em}
{}
\theoremstyle{mythm}
\newtheorem{definition}{Definition}[section]
\newtheorem{definition-lemma}{Definition-Lemma}[section]
\newcommand{\splus}{\!+\!}
\newcommand{\sminus}{\!-\!}
\newcommand{\m}{\mathfrak{m}}
\newcommand{\trisp}{\mathfrak{t}}
\newcommand{\graph}{G}
\newcommand{\simp}{s}
\newcommand{\cM}{\mathcal{M}}
\newcommand{\cG}{\mathcal{G}}
\newcommand{\cT}{\mathcal{T}}
\newcommand{\bb}{B}
\newcommand{\quart}{Q}
\newcommand{\CBB}{\mathfrak{b}}
\newcommand{\colset}{\mathfrak{c}}
\newcommand{\id}{\mathrm{id}}
\newcommand{\tree}{t}
\newcommand{\ptimes}{\vec{p}}
\newcommand{\qtimes}{\vec{q}}
\DeclareMathOperator{\hook}{hook}
\DeclareMathOperator{\Pf}{Pf}
\DeclareMathOperator{\sgn}{sgn}
\DeclareMathOperator{\Pl}{Pl}
\DeclareMathOperator{\SIF}{SIF}
\DeclareMathOperator*{\Res}{Res}
\newcommand{\hsum}{\mathop{\slashed{\sum}}}
\theoremstyle{remark}
\newtheorem{remark}{Remark}
\definecolor{SectionColor}{RGB}
{211,68,133}
\renewcommand\subsection{\@startsection{subsection}{2}%
\normalparindent{.3\linespacing\@plus.6\linespacing}{-.5em}{\normalfont\bfseries\color{SectionColor}}}
\title[]{Some structural and enumerative aspects of discrete surfaces and PL-manifolds}
\author[V.~Bonzom]{Valentin Bonzom}
\address{Universit\'e Sorbonne Paris Nord, LIPN, CNRS, UMR 7030, F-93430 Villetaneuse, France}
\email{bonzom@lipn.univ-paris13.fr}
\begin{document}

\begin{abstract}
	This manuscript recounts some of the author's contributions to algebraic and enumerative combinatorics. We have focused on two types of generalizations of bipartite maps, which are bipartite graphs embedded on surfaces. Maps are known to appear in many areas of theoretical physics and discrete mathematics, but one key interest for fundamental computer science is how multi-facetted they are in the sense that multiple encodings exist which are not fully interchangeable, like Tutte's equations, the topological recursion, the KP hierarchy and numerous bijections which made the field so rich. One generalization we considered is weighted Hurwitz numbers, including constellations, monotone Hurwitz numbers and the unoriented versions of Chapuy-Do\l\k{e}ga. We have investigated whether some universal structures of maps lift to weighted Hurwitz numbers, such that the topological recursion (it does, for orientable, double, weighted Hurwitz numbers), and the passage from the KP to the BKP hierarchy for some unoriented weighted Hurwitz numbers (like monotone ones). The other generalization of bipartite maps we considered is colored triangulations in dimensions three and higher. They provide a nice meeting ground for topology and combinatorics, where universality classes above dimension 2 can be investigated. In particular, we found that the gluings of 3-balls which maximize the number of edges at fixed number of tetrahedra are in bijection with trees. In even dimensions however, we found that more universality classes can be reached depending on the choice of building blocks. Beyond those universality classes we also wondered whether some of the universal structures featured by maps lift to higher dimensions. In particular, we proved the blobbed topological recursion with only mild assumptions.
\end{abstract}
\maketitle
\tableofcontents

\chapter*{Remerciements}
Je remercie vivement toutes les personnes avec qui j'ai eu plaisir à travailler, échanger et collaborer. En premier lieu, Razvan Gurau et Vincent Rivasseau qui m'ont emmené à l'insu de mon plein gré vers des thématiques de plus en plus combinatoires et m'ont scientifiquement beaucoup appris. Ç'aura été un immense plaisir de travailler avec eux, et j'espère que nous aurons la chance de retravailler ensemble au cours de nos déambulations scientifiques. 

Merci à Frédérique Bassino et Adrian Tanasa pour l'intégration au sein de l'équipe CALIN du LIPN, et à Olivier Bodini pour l'accueil dans le bureau A111. J'ai largement bénéficié de leurs conseils pendant toutes ces années. Olivier et Adrian m'ont de plus toujours fait confiance et impliqué scientifiquement dans leurs projets. Je ne peux pas évoquer le bureau A111 sans mentionner Nabil Mustafa et son entrain quotidien qui ont fait de ce bureau un havre de confort dionysien dédié à la science, aux plantes et aux gourmandises (merci pour les amandes !). Au-delà du LIPN, j'ai vivement apprécié l'accueil de la communauté de combinatoire énumérative et algébrique française et du GT ALEA, qui en outre ne ménage pas ses efforts pour faire vivre de si belles thématiques.

Un grand merci à mes collaborateurs de ces dernières années, avec qui j'ai partagé l'intimité de la recherche, Guillaume Chapuy, Stéphane Dartois, Maciej Do\l\k{e}ga, Luca Lionni, Victor Nador, Adrian Tanasa. Un remerciement spécial à Guillaume pour son accueil à l'IRIF en 2019 et auprès de qui j'ai énormément appris (et c'est toujours le cas !) dans des domaines non-bornés. Un autre remerciement spécial pour Marie Albenque avec qui j'ai eu plaisir à discuter de tout, et qui en partageant de nombreux détails de son article à paraître \cite{AlbenqueBouttier2022} m'a immensément aidé dans la complétion un brin précipitée de \cite{BCCGF22}. Les tranches de cartes sont magnifiques, et c'était très excitant de partager cet intérêt (sans parler du vélo, évidemment).

Je remercie également mes collègues de GEA de l'IUT de Saint-Denis, dont l'engagement pédagogique et social à offrir une formation à la fois accessible et pointue m'impressionne chaque jour, et l'équipe dirigeante actuelle d'Hélène Breda et Maria Grat qui m'épaule quotidiennement.

Merci à Marie Albenque, Guillaume Chapuy, Sylvie Cortéel et Karine Lamiaux-Charet  pour leur amitié, soutien, précieux conseils et encouragements à rédiger. 

Enfin, merci taille mammouth à ma famille, en particulier Sara qui m'a toujours soutenu quoi qu'il arrive et avec qui j'ai toujours plaisir et bonheur à partager ma vie, Léon et Clarisse qui chaque jour me font m'évader.



\chapter*{Introduction}
\section*{Motivation}

This manuscript describes some of the author's work of the past 10 years and it is not meant to represent a state of the art in any of the topics discussed.

\subsection*{A (shamefully short) history of maps} Combinatorial maps, or simply \emph{maps}, are ubiquitous in modern science. Maps are graphs with an additional structure, an embedding on a surface without crossings, and up to deformations \cite{MoharThomassen2001}. For instance, they encode the adjacency relations of polygon meshes in geometric modeling, and are key objects in the theory of forbidden minors.

W.~T.~Tutte, in the 60s, while interested in the four-color problem, found some remarkable enumerative formulas \cite{Tutte1962a, Tutte1962b, Tutte1962c}, e.g. the number of rooted planar (i.e. embedded on the sphere) maps is \cite{Tutte1963}, 
\begin{equation*}
	\frac{2\times 3^n\times (2n)!}{n!(n+2)!}
\end{equation*}
Tutte derived them by writing functional equations on the generating series of maps, which he found by performing some map decomposition, well, {\it à la} Tutte as it is now said. His work found echoes in French computer science. His decompositions of maps was of interest to computer scientists who worked on decompositions based on grammars. His enumerative formulas generated a strong interest and were eventually understood through bijections with decorated trees, like the Cori-Vauquelin-Schaeffer bijection. Those bijections are obviously crucial for efficient coding and random sampling of maps and can be used to classify maps according to their genus \cite{ChapuyMarcusSchaeffer2009}. The functional equations {\it à la} Tutte have also not only survived but also flourished, see for instance \cite{BousquetJehanne2006} for a theorem ensuring algebraicity of the generating series in some of those equations, and they also appear in the enumeration of quarter-plane walks.

\subsection*{Dynamical triangulations and quantum gravity} In the late 70s, physicists realized that the Feynman expansions of matrix integrals give rise to generating series of maps \cite{BrezinItzyksonParisiZuber1978}. At first seen as 't Hooft's large $N$ approximation ($N$ being the size of the matrix) in quantum field theory, it was also realized that another interpretation holds, as a discrete approach to 2-dimensional quantum gravity (see \cite{DiFrancescoGinspargZinnJustin} and references therein). Indeed, general relativity is a theory where the spacetime metric is dynamical, so at the quantum level, it seems necessary to be able to define and perform integrals over the space of equivalence classes of metrics up to diffeomorphisms. A way to achieve this is to discretize, say {\it à la} Regge \cite{Regge1961}, so that everything is (hopefully) well-defined, and then take a continuum limit.

The discretization proposed in \emph{Dynamical Triangulations} is to consider equilateral triangulations (in any dimension), assuming that one triangulation is like one equivalence class of metrics up to diffeomorphisms, and thus to sum over triangulations. The probability density of a metric in Einstein's theory is given by the exponential of the Einstein-Hilbert action. On equilateral triangulations, the latter gives rise to a \emph{volume} term, i.e. counting the number of top-dimensional simplices, say $d$, and a \emph{curvature} term measuring the deficit angle around every $(d-2)$-dimensional simplices. The deficit angle at $\sigma_{d-2}$ is 
\begin{equation*}
	2\pi- (\text{number of $d$-simplices meeting at $\sigma_{d-2}$})\times \text{value of dihedral angle in regular $d$-simplex}
\end{equation*}
So on a $d$-manifold $M$, the ``partition function'' would be
\begin{equation*}
	Z_M(x,y) = \sum_{\text{Triangulations $\trisp$ of $M$}} x^{\Delta_d(\trisp)} y^{\Delta_{d-2}(\trisp)}
\end{equation*}
where $\Delta_k(\trisp)$ is the number of $k$-dimensional simplices of $\trisp$.

This approach has been extremely successful in two dimensions. In particular, the Cori-Vauquelin-Schaeffer-like bijections give access to distances and geodesics on the maps and are therefore interesting for quantum gravity purposes. The relationship to Liouville quantum gravity has also been a major topic \cite{GwynneMillerSheffield}, as well as the limiting objects emerging out of maps at large scale in the probabilistic setting \cite{Miermont2013, LeGall2013}.

In higher dimensions however, while several interesting families of triangulations have been introduced \cite{Ambjorn3D,Durhuus1995,BenedettiZiegler,BenedettiPavelka2021}, it has been difficult to extract any enumerative result analytically and for a long time people resorted to numerical simulations instead (see Loll's review \cite{LollReview2019}). Some basic questions still remain unanswered, e.g. it is not known if the number of triangulations of the 3-sphere is exponentially bounded with respect to the number of tetrahedra, meaning that it is not even known if the sum above, over triangulations of the 3-sphere, converges at all \cite{Rivasseau2013,ChapuyPerarnau2020}.


\subsection*{The objects studied in this manuscript} We study two generalizations of bipartite maps: $m$-constellations and $d$-dimensional colored triangulations. A bipartite map is a map whose underlying graph is bipartite, with, say, black and white vertices. A bipartite map with $n$ labeled edges can be encoded as a pair of permutations $\sigma_\circ, \sigma_\bullet\in\mathfrak{S}_n$, by recording the labels of the edges meeting counter-clockwise at every white (respectively black) vertex into a cycle of $\sigma_\circ$ (respectively $\sigma_\bullet$).

A map not only has edges and vertices but also \emph{faces} which are the connected components of the graph complement and are by definition homeomorphic to discs\footnote{Here there is a slight confusion of terminology since we use ``faces'' as is traditional in the literature on combinatorial maps, but ``faces'' has a different meaning in the literature on higher-dimensional triangulations. We will avoid the latter meaning.}. Since the permutations record which edges are next to one another (i.e. share a corner), they also encode the faces, as the cycles of the permutation $\phi$ such that
\begin{equation*}
	\phi = \sigma_\circ\sigma_\bullet.
\end{equation*}
This is thus a simple example of \emph{ordered factorization in the symmetric group} $\mathfrak{S}_n$. The more general problem of ordered factorizations is to enumerate the tuples $(\sigma_0, \dotsc, \sigma_{m-1}, \sigma_m)$ such that 
\begin{equation*}
	\sigma_0 \dotsm \sigma_m=\mathbb{1}
\end{equation*}
and $\sigma_i$ has a fixed conjugacy class $\lambda^{(i)}$. We recall that conjugacy classes of the symmetric group are the cycle types of permutations, i.e. how many cycles of length 1, 2, and so on, and are thus labeled by partitions, by listing the length of the cycles in decreasing order. Counting ordered factorization is a purely combinatorial problem, but one of its origins is as counting ramified coverings of the sphere by surfaces of genus $g$ (given by the Riemann-Hurwitz formula). Moreover, the factorization problem can be given a map-like interpretation known as \emph{$m$-constellations}, introduced by Lando and Zvonkin \cite{LandoZvonkin2004}. The key question regarding constellations is whether properties of maps extend \emph{universally} to constellations. Or to an even larger family of objects generalizing constellations and counted by the so-called weighted Hurwitz numbers of \cite{Guay-PaquetHarnad2017}. Their generating series can be given the remarkable form
\begin{equation*}
	\tau_G(\ptimes, \qtimes) = \sum_\lambda s_\lambda(\ptimes) s_\lambda(\qtimes) \prod_{\Box\in\lambda} G(c(\Box)).
\end{equation*}
Here $\ptimes = (p_1, p_2, \dotsc)$ and $\qtimes = (q_1, q_2, \dotsc)$ are two sets of indeterminates and $s_\lambda(\ptimes)$ is the Schur function at partition $\lambda$, where the $p_i$s are seen as power-sums. Moreover, $G$ is the \emph{weight} function and $c(\Box)$ the cell content (all definitions in due time). For constellations, $G$ is polynomial, but it is also natural to allow for rational $G$s to include the so-called monotone Hurwitz numbers, and an exponential factor to include simple Hurwitz numbers. This series $\tau_G$ is one of the main objects studied here.

Another generalization of bipartite maps are \emph{colored triangulations}. They are built out of $d$-simplices whose boundary $(d-1)$-simplices are colored in $0..d$ and those colors can be used to define an unambiguous attaching map between $d$-simplices. It is known that those colored triangulations produce PL-pseudomanifolds and that every PL-manifold admits a colored triangulation. 

Due to the specificity of the attaching map for colored simplices, the triangulation is determined by the data of which $d$-simplex is glued to which along which colors. In the dual picture, it means that they can be represented as bipartite graphs with properly-colored edges and this is the reason why those objects were introduced \cite{Ferri1982, Ferri1986, LinsMandel1985}. 

Those objects have thus been introduced in topology some time ago. Almost the same objects have been introduced by Stanley as balanced complexes \cite{Stanley1983} (in the simplicial setting), with different questions in mind, this time of combinatorial nature, but not enumerative. However, none of the celebrated enumerative result about maps had been extended to this framework.

Here we stress the core differences between the analysis required for maps and constellations one the one hand and for those higher-dimensional triangulations on the other hand. For maps, there is a beautiful matching between
\begin{itemize}
	\item the topological classification of surfaces. The topology is characterized by a single non-negative integer, the genus $g$,
	\item and a purely combinatorial classification based on the difference between the number of vertices $V$ and the number of faces $F$.
\end{itemize}
This matching is the famous Euler's formula, which for instance for $p$-angulations (faces are $p$-gons) yields 
\begin{equation} \label{EulerIntro}
	V-\frac{p-2}{2}F=2-2g.
\end{equation}
In higher dimensions however this coincidence is lost and it is necessary to choose whether one works at fixed topology, or one relies on a combinatorial criterion instead (or something else). The program started by R.~Gurau and V.~Rivasseau was the second option, based on Gurau's discovery of a relation similar to \eqref{EulerIntro} for higher-dimensional colored triangulations. Gurau's relation is a bound on the number of $(d-2)$-dimensional simplices with respect to the number of $d$-dimensional simplices. In two dimensions, this bound is attained by planar maps, and the natural question is thus what the equivalent objects are in higher dimensions.

\subsection*{Universality I: What is ``planar'' in higher dimensions?} We will discuss two types of universality in this manuscript. One is the usual notion of universality in combinatorics: although different models are, well, different at the ``microscopic'' level, they may exhibit similar large scale behaviors. A typical manifestation is the ``entropy'' exponent. Let $\mathcal{A}$ be a combinatorial class and $A_n$ the number of objects of size $n$ and assume an asymptotics of the form
\begin{equation*}
	A_n \underset{n\to\infty}{\sim} a \rho^{-n} n^\gamma.
\end{equation*}
The constant $a$ and the growth rate $\rho^{-1}$ are model-dependent but the exponent $\gamma$ is typical of a universality class \cite{FlajoletSedgewick2009book}. For instance, $\gamma=-3/2$ is the universality class of trees, in which we find rooted plane trees, binary trees and trees of any fixed arity while $\gamma=-5/2$ is the so-called universality class of pure two-dimensional gravity, in which we find all types of planar maps, like triangulations, quadrangulations, bipartite maps and the formula given for general planar maps at the beginning of the manuscript\footnote{This notion of universality can be pushed further by investigating the universality of the scaling limit. In the two cases mentioned, the first is the continuous random tree, the second is the Brownian sphere. We will not discuss scaling limits in this manuscript.}. Universality classes of gravity coupled to conformal matter can be found for example by coupling the $O(n)$-loop model to planar maps \cite{EynardZinnJustin1992}.

My work on colored triangulations in dimensions $d\geq 3$ has focused on extracting this type of universality classes. This first involves being able to test them, by defining \emph{colored building blocks} (CBBs) which generalize polygons and include for instance octahedra and bipyramids at $d=3$. Then I focused on identifying and enumerating the gluings of CBBs which maximize the number of $(d-2)$-simplices at fixed number of $d$-simplices, and it is their universality classes that I was interested in. Some results I chose to highlight here are
\begin{itemize}
	\item The colored triangulations built out of arbitrary CBBs which are homeomorphic to 3-balls, and which maximize the number of edges at fixed number of tetrahedra, are structurally trees, and topologically 3-spheres \cite{Bonzom3D}. This proves the branched polymer phase observed in numerical simulations for all building blocks which can create 3-spheres.
	\item In even dimensions, subverting some expectations, different CBBs can lead to different universality classes \cite{Enhancing}. However, only known classes have been found. The most interesting one and lesser known lies inbetween that of trees and maps and is sometimes known as the ``proliferation of baby universes'', with $\gamma=-5/3$\footnote{And ongoing investigation by Z.~Salvy and W.~Fleurat shows that its scaling limit is the stable tree of index $3/2$. We are thankful to Z.~Salvy and their directors M.~Albenque and É.~Fusy for sharing this with us.}.
\end{itemize}


\subsection*{Universality II: all genus structures} Another universality phenomenon is not at fixed genus or higher-dimensional equivalent, but in structures which relate genera. In this manuscript we present two methods through which universality has shown up in various models.

In maps, the Tutte equations relate the generating series of maps with $n$ boundaries and of genus $g$. B.~Eynard \cite{Eynard04} and collaborators found an intrinsic way to solve those equations, known as the (Chekhov-Eynard-Orantin) \emph{topological recursion} (TR) \cite{EO,ChekhovEynard2006}. It has since become a universal solution to many problems of enumerative geometry, such as Kontsevich-Witten intersection numbers and Weil-Petersson volumes \cite{EynardOrantinWeilPetersson}.

It was then only natural to ask whether any of the two generalizations of bipartite maps we consider here, i.e. weighted Hurwitz numbers and colored triangulations, satisfy the topological recursion.
\begin{itemize}
	\item The remarkable \cite{AlexandrovChapuyEynardHarnad2020} proves the topological recursion for constellations without internal faces (only boundary components). This was reproved in \cite{BychkovDuninBarkowskiKazarianShadrin2020}. We eventually proved the topological recursion for weighted Hurwitz numbers in \cite{BCCGF22}. It was out on the same day as \cite{BychkovDuninBarkowskiKazarianShadrin2022} which proves essentially the same thing, with a completely different approach.
	\item As for colored triangulations, it is easy to convince oneself that no topogical recursion can hold. The models (depending on the CBBs) are too complicated, and do not follow a classification by the genus, like the TR requires. However, we could show \cite{BonzomDartois2018, BonzomDub} that ``with the correct point of view'', some version of the TR kwown as the blobbed TR of \cite{Borot13, BorotShadrin2016} does hold. It really highlights the notion of universality, because the TR holds with respect to \emph{specific boundary CBBs} which are simple enough to control, while all the gluings of the other, arbitrary CBBs, which are incredibly too difficult to enumerate, are packaged into some blobs on which only limited data is required to prove the TR\footnote{Furthermore, this blobbed version of the TR \emph{has a non-universal part}, which results precisely from the too-complicated structures and which we do not know how to evaluate.}.
\end{itemize}

\subsection*{Universality III: Integrable hierarchies and non-oriented maps} The origin of the topological recursion for weighted Hurwitz numbers, whether from \cite{AlexandrovChapuyEynardHarnad2020} or \cite{BychkovDuninBarkowskiKazarianShadrin2020}, lies not in the Tutte equations (like it is usually done for maps) but in a different set of non-linear, partial derivative equations on the generating series, known as the \emph{KP hierarchy} \cite{GouldenJackson2008}. The origin of those equations is well-known from the theory of integrable systems in mathematical physics \cite{AlexandrovZabrodin2013}, and in the context of algebraic combinatorics \cite{Guay-PaquetHarnad2017}, yet their combinatorial interpretation remains a mystery.

They are powerful equations since it is possible to extract from them some not only very compact, but acutally the most efficient known recurrence formulas to evaluate the number of triangulations \cite{GouldenJackson2008}, maps and bipartite maps \cite{CarrellChapuy2015, KazarianZograf2015}, constellations \cite{Louf2019}, by their size and genus. Those recurrence formulas as well as the mysterious origin of the hierarchy are enough reasons to investigate its universality further. 

Integrability is considered to be fragile and to break down under continuous deformations, like the $\beta$-deformation of matrix models and the $b$-deformation of maps of Goulden and Jackson \cite{GouldenJackson1996}. Those deformations interpolate between orientable and non-oriented maps. Yet, it is known from van de Leur's \cite{VandeLeur2001} that non-oriented maps satisfy indeed an integrable hierarchy, the so-called BKP (because it is like KP but for a type $B$ group) hierarchy. The latter contains KP, i.e. solutions of the KP hierarchy are also solutions of the BKP one.

G.~Chapuy and M.~Do\l\k{e}ga in a beautiful article \cite{ChapuyDolega2020} have introduced a notion of non-oriented weighted Hurwitz numbers. However, among all of them we have only found two more instances of the BKP hierarchy: non-oriented bipartite maps (easily proved from \cite{VandeLeur2001}), and non-noriented \emph{monotone Hurwitz numbers} \cite{BonzomChapuyDolega2021}. Proving the latter involved equating beautiful expansions on symmetric functions, and as a byproduct we proved some technical conjectures of V.~Féray and of Oliveira and Novaes. Another key application of our result is a Pfaffian expression for the $O(n)$-BGW integral, which was a long-standing open problem\footnote{Since it is a bit outside the scope of this manuscript, we will not present that application.}.

To conclude this investigation of the universality of the BKP hierarchy, we derived from it some recurrence formulas for non-oriented triangulations, maps and bipartite maps by size and genus \cite{BCD-RecurrenceFormulas} and provided a Maple worksheet with all of them \cite{us:Maple}. 

\subsection*{Universality IV: Scheme decomposition} Here we briefly mention another method through which universality arises. {\it We will not discuss it further in this manuscript.} The idea of the scheme decomposition, say for maps of genus $g$, is to encode the latter, of which there is an infinite number, as only a finite number of objects known as \emph{schemes} then decorated by decorations which although they can be arbitrarily large, are simple like trees. This has been done for maps in \cite{ChapuyMarcusSchaeffer2009}, and re-adapted to colored graphs in \cite{GurauSchaeffer}. We found that a sort of universal scheme decomposition holds in several models, including some of interest for black hole holography and models of maps with 4-valent vertices decorated with crossing loops.  In particular, the ``most singular schemes'' have the same structure \cite{O(N)3, DoubleScalingMultiMatrix}.

\section*{Plan \& list of works used in this manuscript}

\subsection*{Chapter \ref{sec:Definitions}} Introduces the main protagonists: maps, constellations, stuffed maps, higher-dimensional colored triangulations and their dual representation as properly-edge-colored graphs.

\subsection*{Chapter \ref{sec:TR}} Introduces double weighted Hurwitz numbers, their Schur expansion, their enumeration in the planar case, and presents the topological recursion. It is based on
\begin{itemize}
	\item ``Topological recursion for Orlov-Scherbin tau functions, and constellations with internal faces'', V. Bonzom, G. Chapuy, S. Charbonnier and E. Garcia-Failde, \url{https://arxiv.org/abs/2206.14768}
\end{itemize}

\subsection*{Chapter \ref{sec:EnumerationIntegrability}} Introduces the KP hierarchy applied to map enumeration at all genus.

\subsection*{Chapter \ref{sec:NonOrientedMaps}} Is about non-oriented maps and the BKP hierarchy. It is based on
\begin{itemize}
	\item ``Enumeration of non-oriented maps via integrability'', V. Bonzom, G. Chapuy and M. Do\l\k{e}ga, Accepted in ALCO, \url{https://arxiv.org/abs/2110.12834}
\end{itemize}

\subsection*{Chapter \ref{sec:Monotone}} Focuses on the specific case of non-oriented monotone Hurwitz numbers and proves the BKP hierarchy, as done in
\begin{itemize}
	\item ``$b$-monotone Hurwitz numbers: Virasoro constraints, BKP hierarchy, and $O(N)$-BGW integral'', V. Bonzom, G. Chapuy and M. Do\l\k{e}ga, International Mathematics Research Notices (2022) rnac177
\end{itemize}

\subsection*{Chapter \ref{sec:PrescribedBubbles}} Introduces the main questions we tackled in higher-dimensional triangulations, focusing on the problem of maximizing the number of $(d-2)$-subsimplices at fixed number of $d$-simplices, and investigating universality using the notion of colored building blocks. The presentation is inspired from the review article
\begin{itemize}
	\item ``Large $N$ Limits in Tensor Models: Towards More Universality Classes of Colored Triangulations in Dimension $d\geq 2$'', V. Bonzom, SIGMA 12 (2016) 073
\end{itemize}
This program was triggered in the context of tensor models (generalizing matrix models) with
\begin{itemize}
	\item ``Critical behavior of colored tensor models in the large N limit'', V. Bonzom, R. Gurau, A. Riello and V. Rivasseau, Nucl. Phys. B 853 (2011) 174--195
	\item ``Random tensor models in the large N limit: Uncoloring the colored tensor models'',V. Bonzom, R. Gurau and V. Rivasseau, Phys. Rev. D 85 (2012) 084037
	\item ``New 1/N expansions in random tensor models'', V. Bonzom, JHEP 06 (2013) 062
\end{itemize}
where in the last one I realized that more behaviors than the melonic (i.e. trees) one may be achieved.

\subsection*{Chapter \ref{sec:1CBB}} Presents some enumerative results about triangulations made of a single colored building block (higher-dimensional equivalents of 1-face maps). We focus on a family of CBBs where the enumeration problem is related to meander systems, from
\begin{itemize}
	\item ``The calculation of expectation values in Gaussian random tensor theory via meanders'', V. Bonzom and F. Combes, Annales de l'Institut Henri Poincar{\'{e}} D 1 (2014) 443--485
\end{itemize}

\subsection*{Chapter \ref{sec:Universality3D}} Contains our most important results on the universality class of triangulations in dimensions 3. It is based on
\begin{itemize}
	\item ``Maximizing the number of edges in three-dimensional colored triangulations whose building blocks are balls'', V. Bonzom, \url{https://arxiv.org/abs/1802.06419}
	\item ``Counting Gluings of Octahedra'', V. Bonzom and L. Lionni, The Electronic Journal of Combinatorics 24 (2017)
\end{itemize}

\subsection*{Chapter \ref{sec:Universality2}} Presents a bijection with some colored stuffed maps, which we used to extract universality classes, in particular the ``proliferation of baby universes'' phase transition between maps and trees, and which we also used to prove the blobbed TR for some boundary variables of colored triangulations. The relevant references are
\begin{itemize}
	\item ``Colored Triangulations of Arbitrary Dimensions are Stuffed Walsh Maps'', V. Bonzom, L. Lionni and V. Rivasseau, The Electronic Journal of Combinatorics 24 (2017)
	\item ``Enhancing non-melonic triangulations: A tensor model mixing melonic and planar maps'', V. Bonzom, T. Delepouve and V. Rivasseau, Nucl. Phys. B 895 (2015) 161--191
	\item ``Tensor models with generalized melonic interactions'', V. Bonzom, \url{https://arxiv.org/abs/1905.01903}
	\item ``Blobbed topological recursion for the quartic melonic tensor model'', V. Bonzom and S. Dartois, J. Phys. A 51 (2018) 325201
	\item ``Blobbed topological recursion for correlation functions in tensor models'', V. Bonzom and N. Dub, \url{https://arxiv.org/abs/2011.09399}
\end{itemize}

\chapter{Maps and higher-dimensional PL-manifolds} \label{sec:Definitions}

This chapter introduces some of the main objects of this memoir, some well-known like maps, others less known like colored triangulations. Because the latter are not wide-spread, I will provide short proofs for some propositions which in my opinion help grasp a better intuition.
\section{Maps, stuffed maps and constellations}

\subsection{Maps as gluings of polygons} \label{sec:PolygonGluings} An $p$-gon is a polygon made of $p$ sides and vertices. A \emph{map} with $F$ faces is a gluing of $F$ polygons along their sides, such that every side is identified with another \cite{MoharThomassen2001, LandoZvonkin2004}, see Figure \ref{fig:PolygonsGluing}. The identification of a side with another\footnote{The reader who already notices that there are in fact two ways of identifying two sides together probably already knows this has to do with orientability. Here we focus on the orientable case. The reader is invited to go straight to Chapter \ref{sec:EnumerationIntegrability}, Section \ref{sec:NonOrientedMaps} for details about the roles of those two ways of gluing sides of polygons together.} is an example of an \emph{attaching map} between pieces of boundaries of PL-balls. This will be discussed in higher dimensions later. The identified sides form \emph{edges}. The polygons are called the \emph{faces} and a face has \emph{degree} $p$ it is a $p$-gon. The degree of a vertex is the number of incident edges. A corner is the portion of polygon between two consecutive edges meeting at the same vertex. A $p$-angulation is a map whose faces all have degree $p$, as in Figure \ref{fig:Quadrangulation}.
\begin{figure}
	\includegraphics[scale=.6,valign=c]{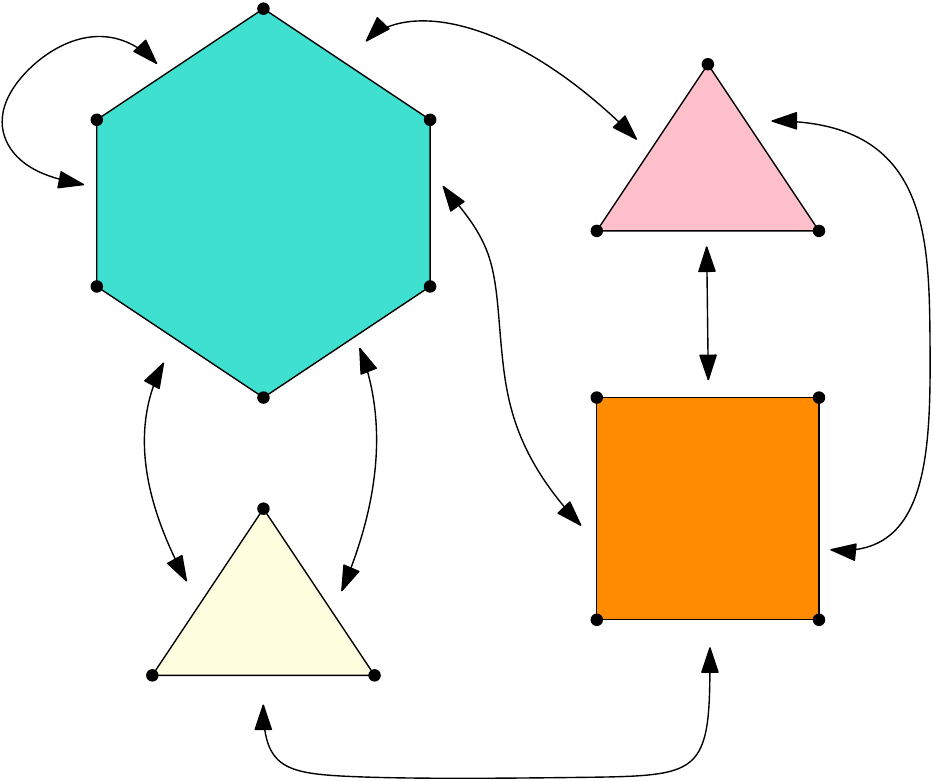} \hspace{2cm} \includegraphics[scale=.6,valign=c]{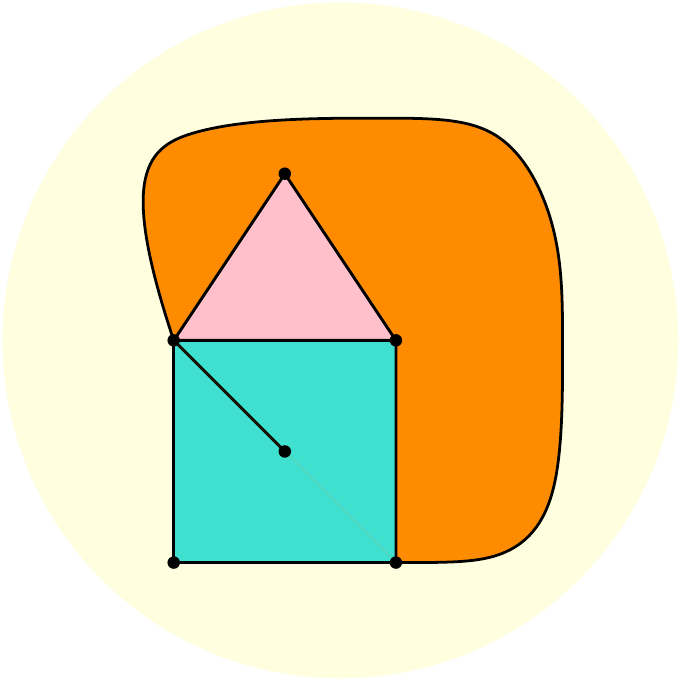}
	\caption{\label{fig:PolygonsGluing}A gluing of four polygons and the surface (the sphere) it represents on the right. The yellow triangle is represented as an outer face.}
\end{figure}
\begin{figure}
	\includegraphics[scale=.6]{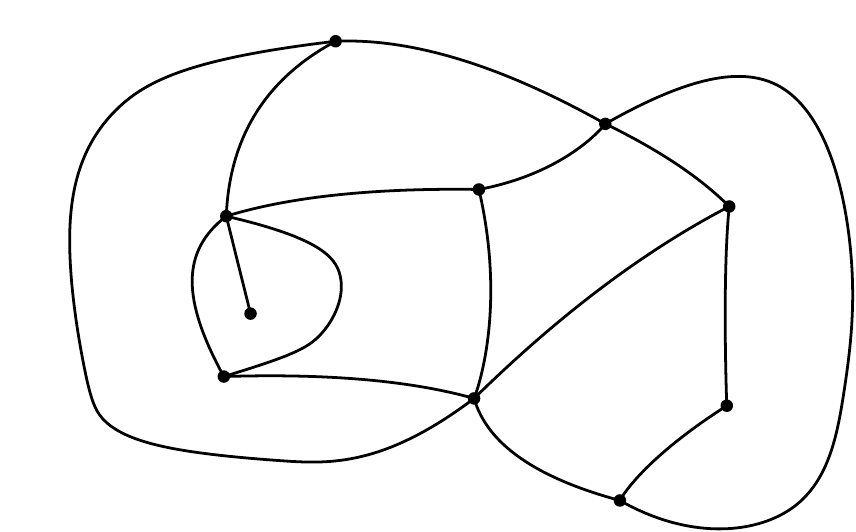}
	\caption{\label{fig:Quadrangulation} A planar quadrangulation.}
\end{figure}

\subsection{Euler's relation} A map may not be connected. As a topological space, it is a disjoint union of closed surfaces. The topology of each connected component is characterized by its genus (which is a half-integer if non-orientable surfaces are considered). For a connected map $\m$ with $F(\m)$ faces, $E(\m)$ edges and $V(\m)$ vertices, the genus $g(\m)\geq 0$ is found via Euler's formula,
\begin{equation} \label{Euler}
	2-2g(\m) = F(\m)-E(\m)+V(\m).
\end{equation}
Setting aside the topological content, there is still a purely combinatorial substance here: the fact that $g(\m)$ defined by the above formula is non-negative. More explicitly, and focusing on $p$-angulations for simplicity where we have $E(\m) = pF(\m)/2$, Euler's relation gives
\begin{equation} \label{EulerBound}
	V(\m) \leq 2 + \frac{p-2}{2} F(\m),
\end{equation}
in other words,
\begin{itemize}
	\item there is a bound on the number of vertices which grows linearly with the number of faces,
	\item maps which maximize the bound are precisely the planar maps.
\end{itemize}
This is the content we will be aiming at generalizing in higher dimensions.

\emph{Unless stated otherwise, all maps in the rest of this section are assumed to be orientable.}

\subsection{Maps as embedded graphs} \label{sec:EmbeddedGraphs} Another useful point of view to pick up the 1-skeleton of a map as defined above, which is a graph, and realize that the map is recovered by a ``proper'' embedding of the graph into the surface. This embedding is such that edges only intersect at vertices and the graph complement is a union of discs (the polygons of the above definition), see Figure \ref{fig:TorusCylinder}. This selects a unique genus (given by Euler's formula) and the embedding is considered up to isotopy. As a consequence, the embedding (hence the map itself) is equivalent to a \emph{rotation system}, i.e. a choice of cyclic ordering of the edges meeting at every vertex.

In practice, maps are represented as drawings on the plane by projection (which typically induces crossings outside of vertices). Faces in an oriented map are recovered by following the edges and the corners counter-clockwise. 
\begin{figure}
	\includegraphics[scale=.6]{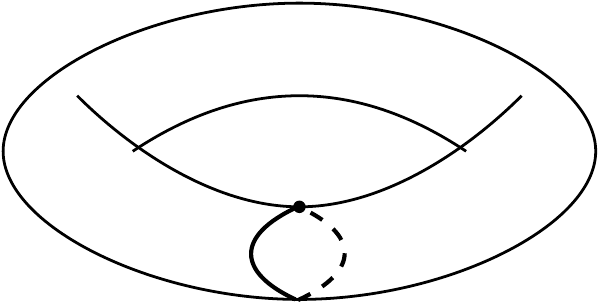}
	\caption{\label{fig:TorusCylinder} This is not a map, because the 1-skeleton is a vertex with an edge and its complement is a cylinder, not a disc. It cannot be constructed by gluings of polygons.}
\end{figure}

\subsection{Stuffed maps}\label{sec:StuffedMaps} Instead of gluing polygons, which have the topology of the disc, it is possible to glue higher topologies. Stuffed maps are defined as gluings of surfaces along their boundaries, where the surfaces are characterized by their genus $g$ and their $n$ boundaries of size $l_1, \dotsc, l_n$. They are called \emph{elementary cells of topology $(g, n)$ and perimeters $l_1, \dotsc, l_n$}. Usual maps correspond to only gluing discs, which are elementary cells of topology $(0,1)$ characterized by the perimeter of their boundary. Stuffed maps were formally introduced by Borot in \cite{Borot13} as a generalization of the gasket decomposition of maps decorated with loops. In the latter case, a map decorated with (self-avoiding, non-intersecting) loops can be transformed into a stuffed map by thickening the loop into a cylinder, respecting the inner and outer degrees of the loop. That gives stuffed maps where only discs and cylinders are allowed. 

\subsection{Constellations} A \emph{bipartite} map is such that the set of vertices can be written $A\sqcup B$ where vertices from $A$ (``white'' vertices) are only connected to vertices of $B$ (``black'' vertices) and the other way around. A bicoloring is a bipartition of the set of faces (a white face is adjacent to only black faces and the other way around) and a bicolored map is a map equipped with a bicoloring. A bicoloring induces a \emph{canonical orientation}, which orients edges so that black faces are to their left (and white faces to their right). 

Of special interest for us will be \emph{constellations} \cite{LandoZvonkin2004}. An (orientable\footnote{See \cite{ChapuyDolega2020} for a non-orientable generalization.}) $m$-constellation ($m\geq 1$) is a bicolored map equipped with its canonical orientation, such that (Figure \ref{fig:Faces})
\begin{itemize}
	\item each vertex has a color $c\in[1..m]$;
	\item if $v$ is a vertex of color~$c$, then any edge outgoing from $v$ points to a vertex of color $c-1\mod m$;
	\item black faces have degree $m$.
\end{itemize}
As a consequence, white faces have degrees multiples of $m$.
\begin{figure}
	\includegraphics[scale=.5,valign=c]{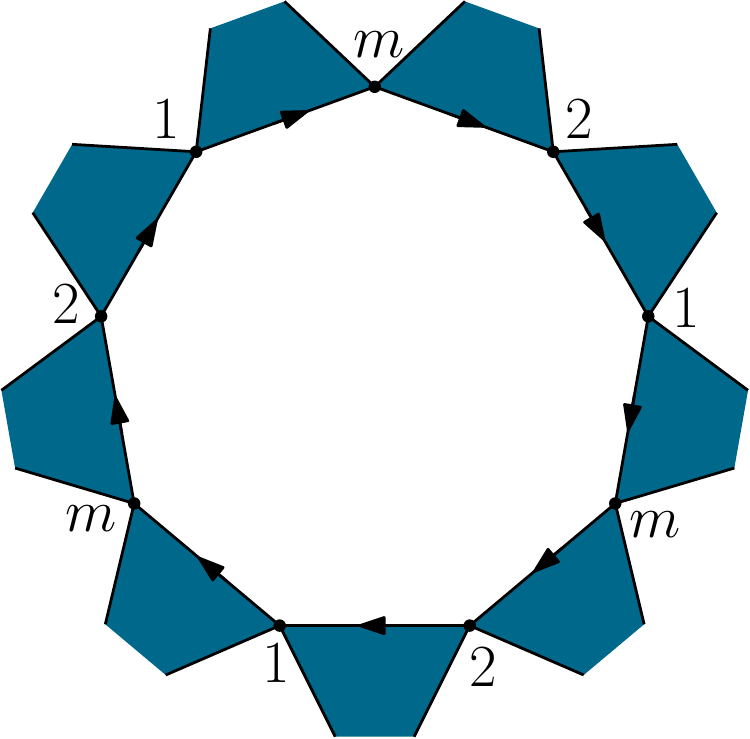} \hspace{2cm} 
	\includegraphics[scale=.5,valign=c]{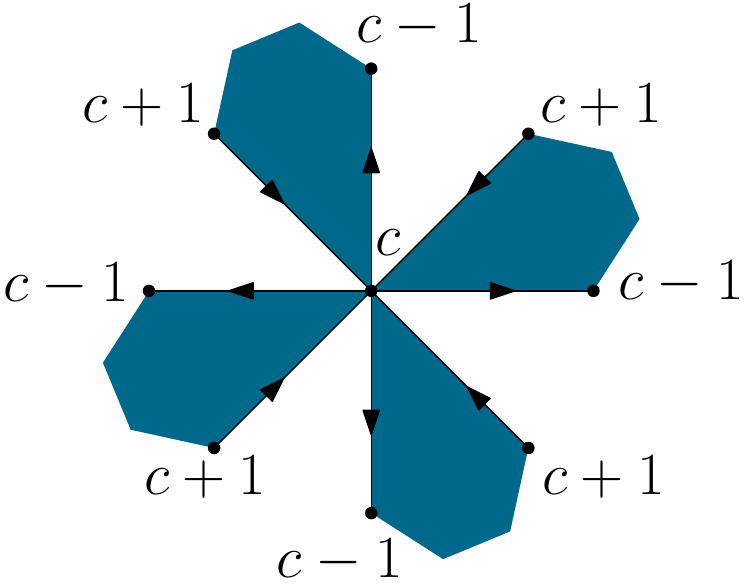}
	\caption{A white face of degree $3m$, and a vertex of color~$c$. Edges receive their canonical orientation so that the black faces are to their left.}
	\label{fig:Faces}
\end{figure}
The size of a constellation is the number of black faces. There are three connected 3-constellations of size 2, given in Figure \ref{fig:3ConstellationsSize2}.
\begin{figure}
	\includegraphics[scale=.55,valign=c]{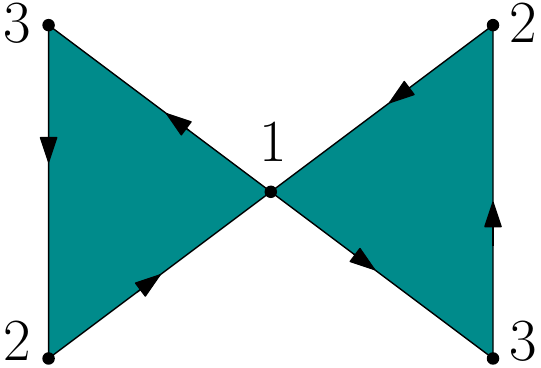} \hspace{2cm} \includegraphics[scale=.55,valign=c]{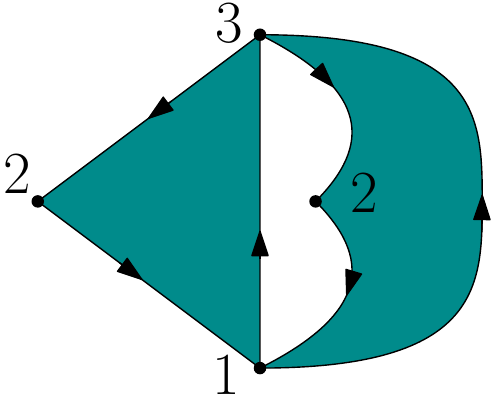} \hspace{2cm}
	\includegraphics[scale=.55,valign=c]{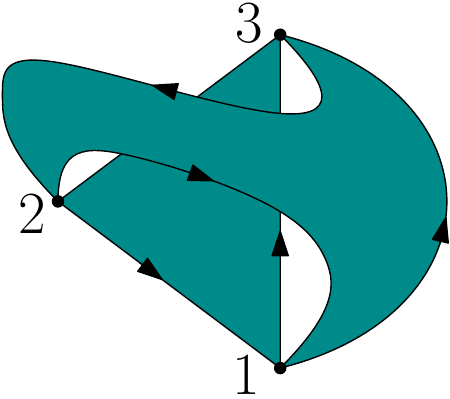}
	\caption{The three connected 3-constellations of size 2. Two are planar, the other has genus 1.}
	\label{fig:3ConstellationsSize2}
\end{figure}
2-constellations are simply bipartite maps, since black faces are then digons which can be replaced with edges (between vertices of colors 1 and 2). General maps can be seen as bipartite maps (thus 2-constellations) whose white vertices only have degree 2.

A \emph{labeled} constellation is a constellation with labeled black faces. A \emph{rooted} constellation has only a reference black face. There is thus a one-to-$(n-1)!$ correspondence due to the $(n-1)!$ possible labelings of the non-root black faces.

\subsection{Biconstellations}\label{sec:Biconstellations} An equivalent representation of $m$-constellations is as $(m-1)$-biconstellations\footnote{This terminology is not standard. We hope thereby to avoid confusion on the various representations of constellations and factorizations of permutations.}. An $(m-1)$-biconstellation is a bicolored map equipped with its canonical orientation, such that
\begin{itemize}
	\item each vertex has a color $c\in[1..m-1]$;
	\item if $v$ is a vertex of color~$c$, then any edge outgoing from $v$ points to a vertex of color $c-1\mod (m-1)$;
\end{itemize}
It comes that white and black faces have degrees multiples of $m-1$. There is a trivial bijection with $m$-constellations which exchanges vertices of color $m$ and degree $2k$ in the latter with black faces of degree $(m-1)k$ in the biconstellation, and white faces of degree $mk$ with white faces of degree $(m-1)k$,
\begin{equation} \label{VertexFaceBiconstellation}
	\includegraphics[scale=.5,valign=c]{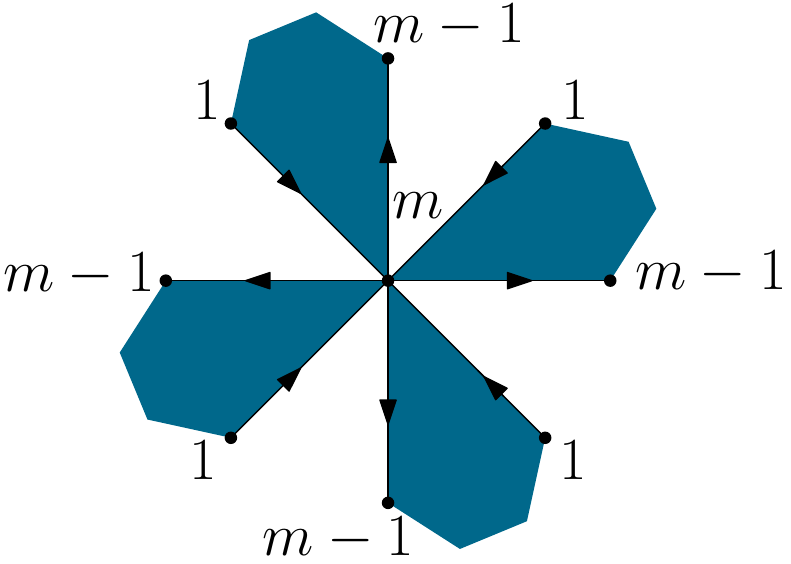} \quad \to\quad \includegraphics[scale=.5,valign=c]{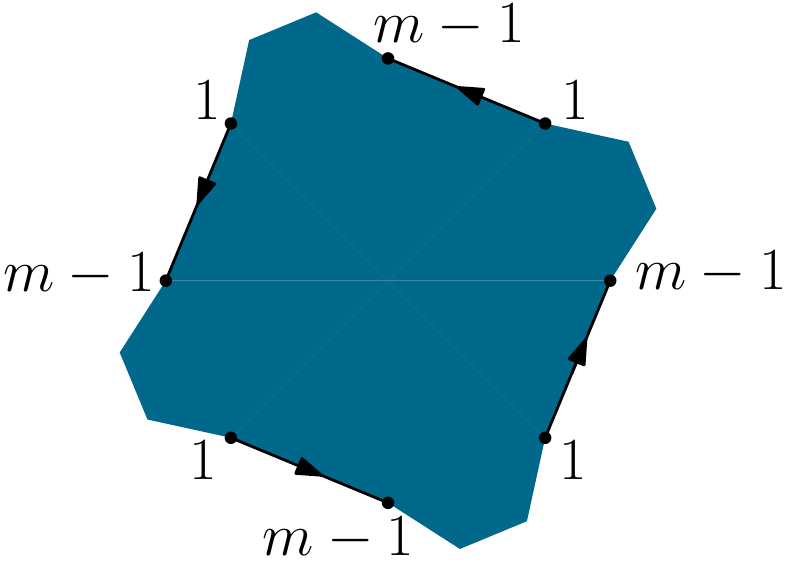}
\end{equation}

\emph{Rooted biconstellations} are defined by following the root of a constellation through that mapping. A black face of an $m$-constellation becomes a \emph{left path} in an $(m-1)$-biconstellation, i.e. an oriented sequence of $m-1$ edges from a vertex of color $m-1$ to the next vertex of color $m-1$ along a black face.

Biconstellations are a natural representation when attempting (and succeeding!) to perform some enumeration while controlling two sets of degrees. If one were to attempt that in terms of constellations, one would try to control the degrees of white faces and those of vertices of a given color like $m$, which is less symmetric. In particular, biconstellations generalize the famous bicolored maps/hypermaps generated by the 2-matrix model \cite{Eynard:book}, which simply correspond to 1-biconstellations. As shown originally in \cite{Eynard2002}, the enumeration with respect to the number of white and black faces with controlled degrees can take a very nice symmetric formulation (in planar case). It was moreover made completely clear in combinatorial terms in \cite{AlbenqueBouttier2022} using the notion of slices, for which the symmetry between white and black faces is obvious. We will come back to that in the next chapter.


\section{A graphical encoding of PL-manifolds} \label{sec:PLmanifolds}

\subsection{Bipartite maps and tricolored graphs} A tricolored graph is a bipartite, cubic graph\footnote{In this manuscript, graphs are multigraphs.}, whose edges carry a color $c\in\{0,1,2\}$ so that each vertex is incident to exactly one edge of every color. Of particular importance are the bicolored cycles, meaning cycles whose edges alternate two colors. There are 3 types of bicolored cycles: $\{0,1\}$, $\{0,2\}$, $\{1,2\}$. There is exactly one of each incident at each vertex.
\begin{proposition}{}{}
	There is a bijection between bipartite maps and tricolored graphs. It maps white vertices to bicolored cycles with colors $\{0,1\}$, black vertices to bicolored cycles with colors $\{0,2\}$ and faces to bicolored cycles with colors $\{1,2\}$.
\end{proposition}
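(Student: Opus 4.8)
The plan is to pass to the standard algebraic encodings on both sides and exhibit the bijection there, leaving only a short formal verification for the three families of bicolored cycles. On the map side, I encode a bipartite map with $n$ (labeled) edges by a pair of permutations $(\sigma_\circ,\sigma_\bullet)\in\mathfrak{S}_n\times\mathfrak{S}_n$ recording the counter-clockwise cyclic order of edges around white, respectively black, vertices; then the white vertices are the cycles of $\sigma_\circ$, the black vertices the cycles of $\sigma_\bullet$, and the faces the cycles of $\phi=\sigma_\circ\sigma_\bullet$. On the tricolored-graph side, a proper $3$-edge-coloring of a cubic graph is the same datum as three fixed-point-free involutions $\iota_0,\iota_1,\iota_2$ of the vertex set $V$, where $\iota_c$ is the color-$c$ perfect matching; bipartiteness means $V=V_\circ\sqcup V_\bullet$ with each $\iota_c$ exchanging the two parts, and a bicolored cycle of colors $\{a,b\}$ is exactly a cycle of the permutation $\iota_a\iota_b$.

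First I would define the forward map. Given $(\sigma_\circ,\sigma_\bullet)$ on $[n]$, I set $V=[n]\times\{\circ,\bullet\}$ (the darts, split into white and black darts, which gives the bipartition) and put
\begin{equation*}
\iota_0(e,\circ)=(e,\bullet),\qquad \iota_1(e,\circ)=(\sigma_\circ^{-1}(e),\bullet),\qquad \iota_2(e,\circ)=(\sigma_\bullet(e),\bullet),
\end{equation*}
with the action on black darts forced by the requirement that each $\iota_c$ be an involution (for instance $\iota_1(e,\bullet)=(\sigma_\circ(e),\circ)$). Each $\iota_c$ is then a fixed-point-free involution exchanging $V_\circ$ and $V_\bullet$, so the three matchings assemble into a bipartite cubic graph with a proper $3$-edge-coloring.

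The crux is to check that the three pairings of colors reproduce exactly white vertices, black vertices and faces, which is a one-line computation on white darts: $\iota_0\iota_1(e,\circ)=(\sigma_\circ^{-1}(e),\circ)$, $\iota_0\iota_2(e,\circ)=(\sigma_\bullet(e),\circ)$ and, crucially, $\iota_1\iota_2(e,\circ)=(\sigma_\circ\sigma_\bullet(e),\circ)=(\phi(e),\circ)$. Hence the $\{0,1\}$-cycles are the cycles of $\sigma_\circ$, the $\{0,2\}$-cycles those of $\sigma_\bullet$, and the $\{1,2\}$-cycles those of $\phi$, with a white vertex of degree $k$ becoming a $\{0,1\}$-cycle of length $2k$, and similarly for the others. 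I expect this convention-fixing step — choosing the inverses in $\iota_1,\iota_2$ so that $\iota_1\iota_2$ realizes $\phi=\sigma_\circ\sigma_\bullet$ rather than, say, $\sigma_\circ\sigma_\bullet^{-1}$ — to be the only place where genuine care is required; the rest is bookkeeping.

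Finally I would construct the inverse and conclude. Starting from a tricolored graph, the bipartition lets me identify $V\cong[n]\times\{\circ,\bullet\}$ with $\iota_0(e,\circ)=(e,\bullet)$ (each color-$0$ edge has one white and one black end), and I read off $\sigma_\circ,\sigma_\bullet$ from $\iota_0\iota_1$ and $\iota_0\iota_2$ restricted to white darts. Since $\iota_1,\iota_2$ are involutions exchanging the parts, their values on black darts are determined by those on white darts, so re-expanding the formulas recovers the original involutions and the two assignments are mutually inverse. To finish I note that the construction is equivariant under relabeling (simultaneous conjugation of $(\sigma_\circ,\sigma_\bullet)$ corresponds to graph isomorphism) and preserves connectivity (transitivity of $\langle\sigma_\circ,\sigma_\bullet\rangle$ matches connectivity of the graph), so it descends to the asserted bijection sending white vertices, black vertices and faces to $\{0,1\}$-, $\{0,2\}$- and $\{1,2\}$-cycles respectively.
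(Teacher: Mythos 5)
Your proof is correct, but it takes a genuinely different route from the one in the text. You work entirely at the level of algebraic encodings: a bipartite map as a pair of permutations $(\sigma_\circ,\sigma_\bullet)$ acting on labeled edges, and a tricolored graph as a triple of fixed-point-free involutions $\iota_0,\iota_1,\iota_2$ on darts, with the three cycle correspondences reduced to the one-line computations $\iota_0\iota_1\mapsto\sigma_\circ^{-1}$, $\iota_0\iota_2\mapsto\sigma_\bullet$, $\iota_1\iota_2\mapsto\sigma_\circ\sigma_\bullet=\phi$ on white darts; your sign conventions are consistent with the convention $\phi=\sigma_\circ\sigma_\bullet$ adopted in the introduction, and the equivariance/connectivity remarks at the end correctly handle the passage from labeled objects to isomorphism classes. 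The text instead argues geometrically: it first builds from the bipartite map an intermediate \emph{tricolored triangulation} by placing a vertex $v_f$ in each face, coloring the new edges $1$ or $2$ according to the color of the map vertex they reach, and then takes the colored $1$-skeleton of the dual cubic map. What the geometric route buys is precisely this intermediate object, which is reused immediately afterwards (the canonical embedding of \Cref{thm:CanonicalEmbedding} is read off from it) and which serves as the $d=2$ prototype for the general duality between colored triangulations and colored graphs in higher dimension. What your route buys is brevity and an explicit, convention-proof verification of the three cycle correspondences, which in the geometric construction are only checked implicitly; the only genuine care needed, as you note, is placing the inverses so that $\iota_1\iota_2$ realizes $\sigma_\circ\sigma_\bullet$ rather than some other product, and you have done that correctly.
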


We thus define the genus of a tricolored graph as that of the associated map $g(\graph) := g(\m(\graph))$. If $C_{ab}(\graph)$ is the number of bicolored cycles with colors $\{a,b\}$ in a tricolored graph $\graph$ with $V(\graph)$ vertices, then Euler's relation becomes
\begin{equation} \label{EulerColoredGraph}
	2-2g(\graph) = C_{01}(\graph) + C_{02}(\graph) + C_{12}(\graph) - \frac{V(\graph)}{2}.
\end{equation}

A key intermediate object between a bipartite map and its associated tricolored graph is a tricolored triangulation. It is a triangulation $\trisp$
\begin{itemize}
	\item whose edges carry a color $c\in\{0,1,2\}$, such that every triangle has an edge of each color,
	\item triangles can only be glued along edges which carry the same color (but this is not enough to specify the attaching map),
	\item In every triangle, assign the pair of colors $\{c_1,c_2\}$ to the vertex shared by the edges with colors $c_1$ and $c_2$. Then, edges of the same color are glued in the only way which respects the pair-coloring of the vertices. For instance, we glue two sides of color 0 in the only way which identifies their vertices of colors $\{0,1\}$ and of colors $\{0,2\}$,
	\begin{equation} \label{GluingTriangles}
		\includegraphics[scale=.35,valign=c]{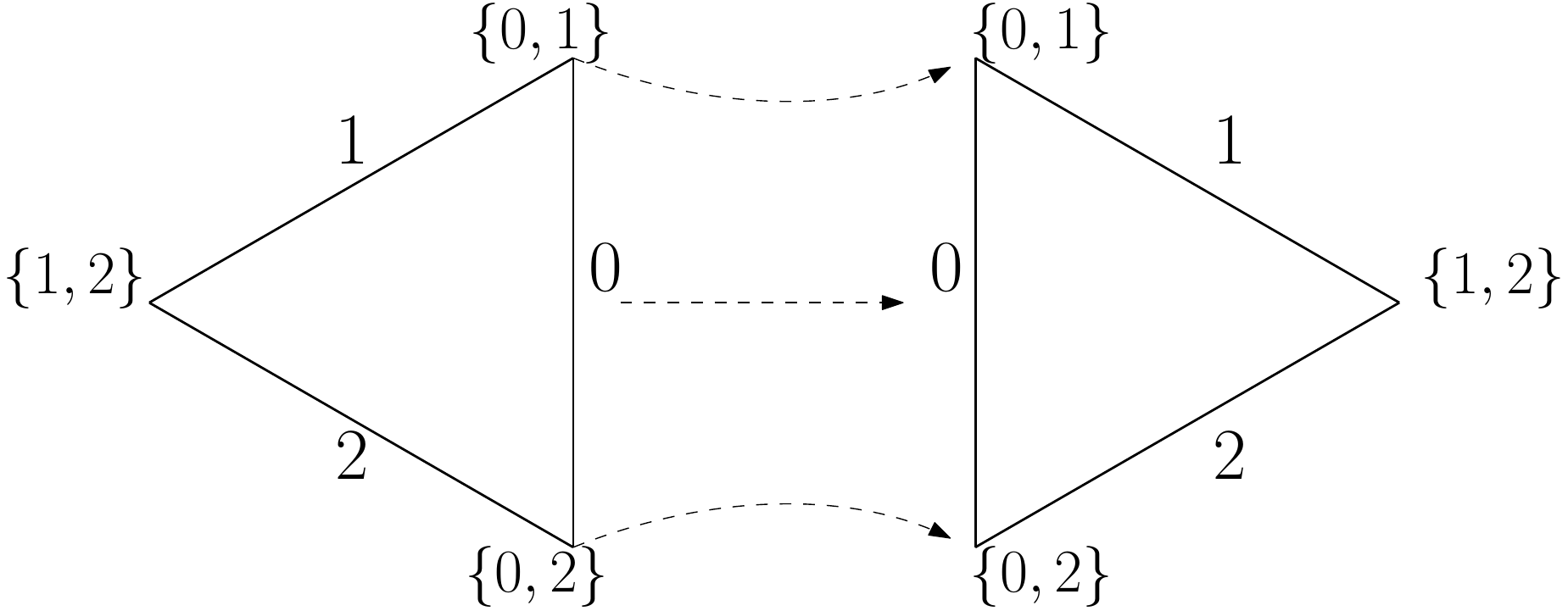}
	\end{equation}
\end{itemize}

\begin{figure}
	\includegraphics[scale=.43]{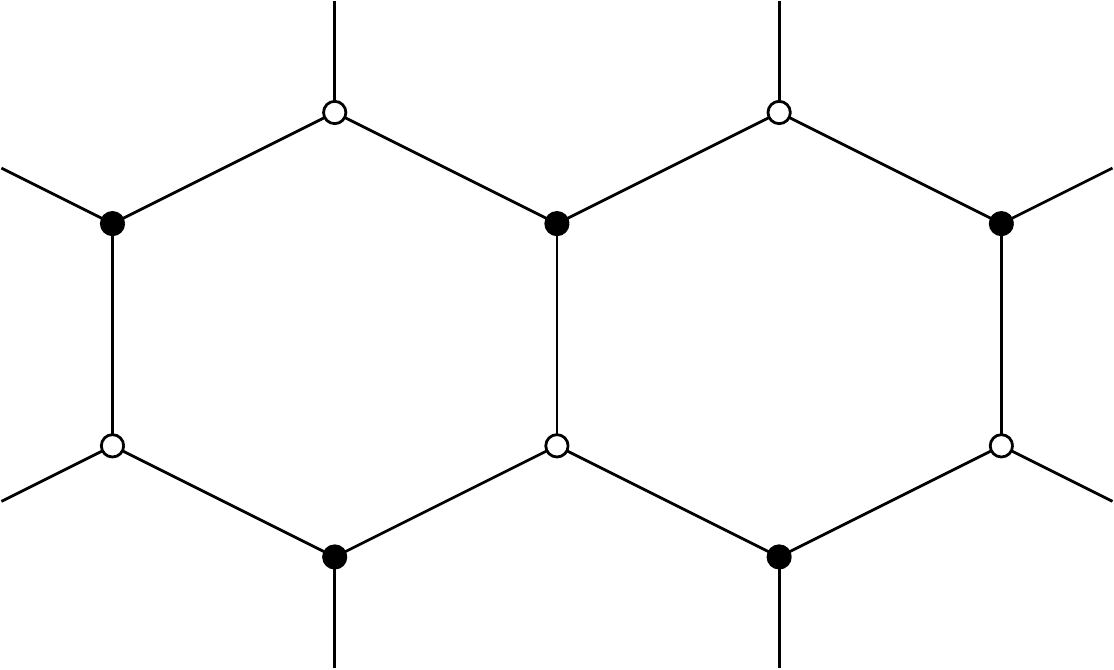} \hspace{.2cm}
	\includegraphics[scale=.43]{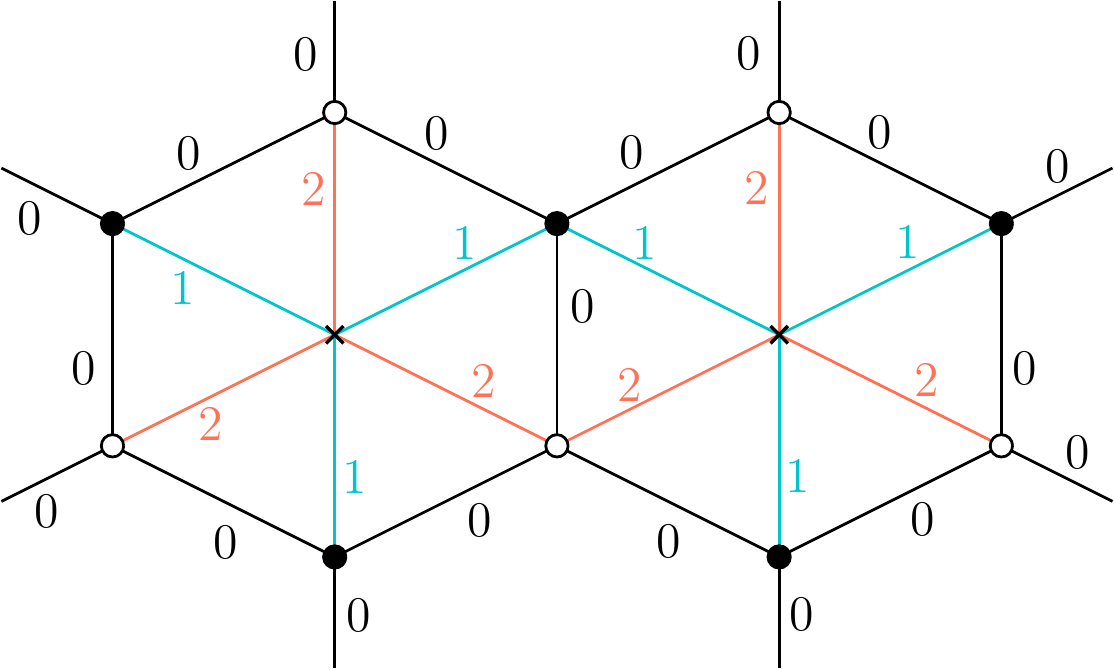} \hspace{.2cm}
	\includegraphics[scale=.43]{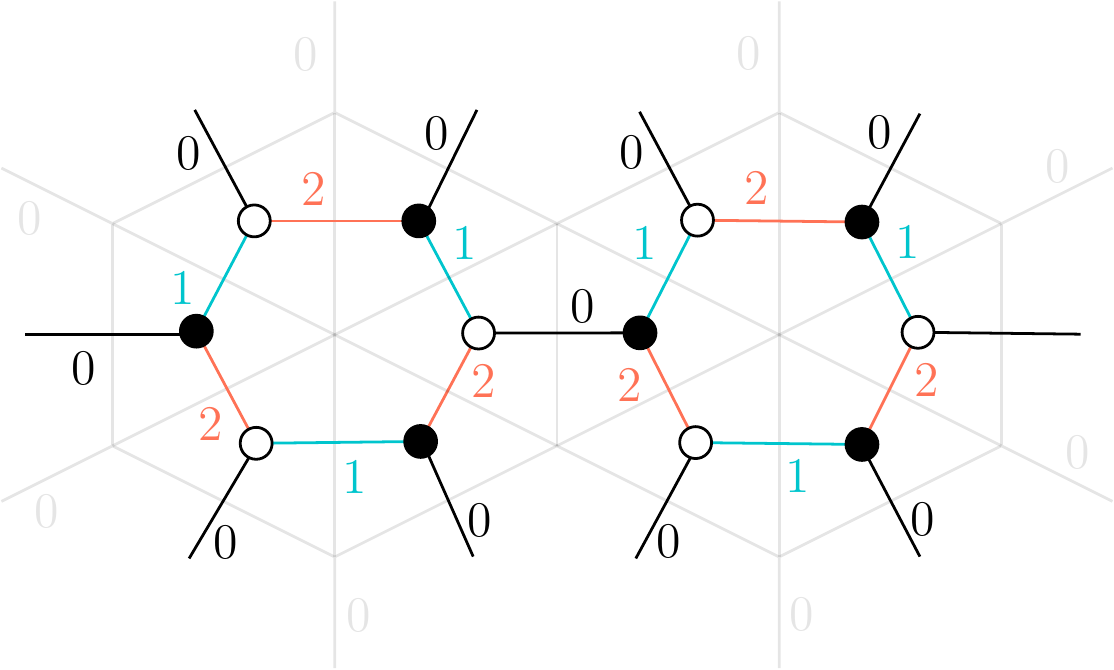}
	\caption{\label{fig:BipMapToColoredGraph} On the left is a piece of a bipartite map. In the middle, the corresponding tricolored triangulation. On the right, the corresponding colored graph.}
\end{figure}
\begin{proof}
	We first go between a bipartite map $\m$ and its associated tricolored triangulation $\trisp$. This is illustrated in Figure \ref{fig:BipMapToColoredGraph}. First assign the color 0 to all edges of $\m$. Then add a vertex $v_f$ inside every face $f$ of $\m$ and connect it with edges to the boundary vertices of $f$. To assign colors to those edges, we choose that edges which connect $v_f$ to a white vertex are given the color 1 and edges which connect $v_f$ to a black vertex are given the color 2. This is $\trisp$. The attaching rule of tricolored triangulations are satisfied because the white vertices of $\m$ are the vertices with colors $\{0,1\}$ in $\trisp$, the black vertices of $\m$ are the vertices with colors $\{0,2\}$ in $\trisp$, and the vertices with colors $\{1,2\}$ are the $v_f$s. 
	
	The tricolored graph $\graph$ associated to $\trisp$ is obtained by first taking the dual map to $\trisp$. It is a cubic map $\trisp^*$. Each edge being dual to an edge of $\trisp$, it inherits its color. Then the key observation which allows for reducing $\trisp^*$ to a graph $\graph$ is that the gluing between two adjacent triangle in $\trisp$ is in fact fully determined by the color of the edge they share. Therefore one can ignore the ordering between the edges around every vertex of $\trisp^*$, as long as one keeps the colors of the edges. Therefore $\graph$ is the colored 1-skeleton of $\trisp^*$.
\end{proof}

An obvious consequence of the construction in the above proof is the following.
\begin{proposition}{}{CanonicalEmbedding}
	There is a canonical embedding of the tricolored graph $\graph(\trisp)$ which turns it into the cubic map $\trisp^*$, by letting $(012)$ be the cyclic order around the white vertices of $\graph(\trisp)$ and $(021)$ be the cyclic order around its black vertices. It turns the bicolored cycles with colors $\{a,b\}$ for $a,b\in\{0,1,2\}$ of $\graph(\trisp)$ into the faces of $\trisp^*$.
\end{proposition}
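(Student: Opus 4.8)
The plan is to read the embedding of $\trisp^*$ straight off an orientation of the surface carrying $\trisp$, and then to observe that this data survives in $\graph(\trisp)$. Fix an orientation of the surface. Each triangle of $\trisp$ inherits an orientation, so that listing its three boundary edges in the positive sense yields a cyclic ordering of their colors, necessarily $(012)$ or $(021)$ since every triangle carries exactly one edge of each color. I would declare a vertex of $\graph(\trisp)$ (equivalently, a triangle of $\trisp$, equivalently a vertex of $\trisp^*$) to be \emph{white} when its color order is $(012)$ and \emph{black} when it is $(021)$, and first check that this is the bipartition of the tricolored graph: two triangles sharing an edge lie on opposite sides of it, so the fixed orientation induces opposite orientations on them, and the gluing rule \eqref{GluingTriangles}, which has the shared edge traversed in opposite directions, flips the color order between $(012)$ and $(021)$. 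Hence adjacent triangles always receive opposite colors and the two-coloring is proper.

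Next I would pin down the rotation system. By the construction in the previous proof, $\graph(\trisp)$ is the colored $1$-skeleton of the dual cubic map $\trisp^*$: one forgets the cyclic order of the three edges around each vertex but keeps their colors. That forgotten cyclic order, at the vertex dual to a triangle $T$, is exactly the order in which the three colored edges of $T$ are met along the positively oriented boundary of $T$, namely $(012)$ if $T$ is white and $(021)$ if $T$ is black. Therefore endowing $\graph(\trisp)$ with the rotation system ``$(012)$ at white vertices, $(021)$ at black vertices'' reconstructs $\trisp^*$ as an embedded graph, and the assignment is canonical because it depends only on the colors and the bipartition class, both recorded in $\graph(\trisp)$.

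For the statement about faces, the cleanest route is duality: the faces of $\trisp^*$ are in bijection with the vertices of $\trisp$. A vertex of $\trisp$ of type $\{a,b\}$ has a link along which the incident edges alternate the two colors $a$ and $b$, so the dual face encircling it is bounded precisely by the dual edges of colors $a$ and $b$, i.e. by an $\{a,b\}$-bicolored cycle of $\graph(\trisp)$. This matches the faces of $\trisp^*$ with the bicolored cycles of all three types, as claimed.

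I expect the main obstacle to be the global coherence underlying the first paragraph: one must be sure that a single surface orientation produces the white/black labeling consistently over the whole triangulation, and that this labeling agrees with the abstract bipartition of $\graph(\trisp)$, rather than holding only locally. This is exactly the point at which orientability is indispensable, and I would make it airtight by spelling out the orientation reversal across a shared edge via \eqref{GluingTriangles}, so that propriety of the coloring (and hence the identification of white/black with the two color orders) propagates across every gluing.
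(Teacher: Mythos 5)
Your proof is correct and is essentially the paper's argument made explicit: the paper treats the proposition as an immediate consequence of the observation, in the preceding proof, that $\graph(\trisp)$ is the colored $1$-skeleton of $\trisp^*$ and that the forgotten rotation system is recoverable from the edge colors, which is precisely what your orientation/color-order analysis and the duality between vertices of $\trisp$ and faces of $\trisp^*$ establish. Only a wording quibble: a fixed surface orientation induces the \emph{same} orientation on the two triangles sharing an edge; what is opposite is the direction in which their positively oriented boundaries traverse that shared edge, which is the fact you actually use in the very next clause.
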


\subsection{Colored graphs} Let $d\geq 2$. A properly $(d+1)$-edge-colored graph, or simply $(d+1)$-colored graph, or even colored graph when there is no ambiguity, is a (multi)graph such that each edge carries a color $c\in[0..d]$ and every vertex is incident to exactly one edge of every color. This is a generalization of the above tricolored graph, which will play the same dual role with respect to a set of higher-dimensional PL-manifolds, which we now introduce.

In this manuscript, all colored graphs are \emph{bipartite}. An example is given in Figure \ref{fig:ColoredGraph}.
\begin{figure}
	\includegraphics[scale=.48]{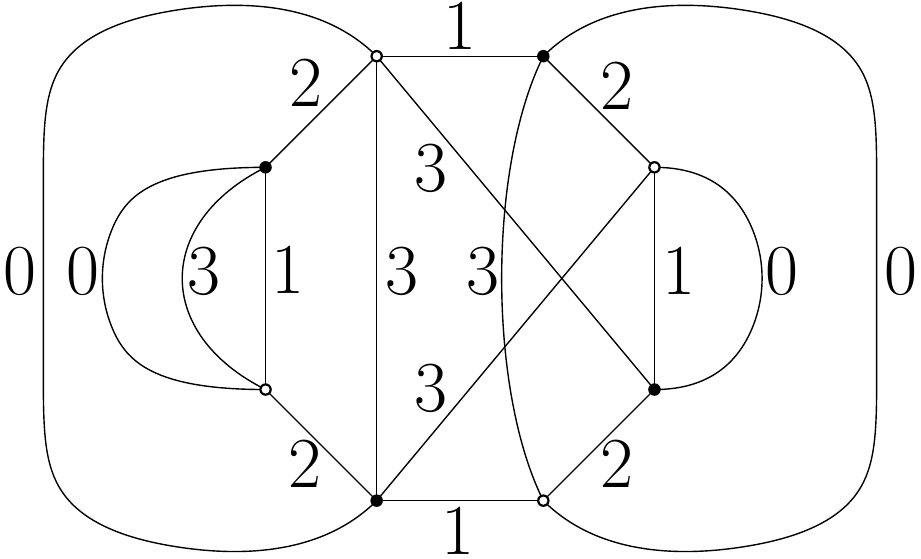}
	\caption{\label{fig:ColoredGraph} A (bipartite) colored graph at $d=3$.}
\end{figure}

\subsection{Colored triangulations} Let $d\geq 2$. An abstract $d$-simplex is the set of subsets of $[0..d]$. However, instead of thinking of the elements of $[0..d]$ as vertices, we will think of them as faces i.e. simplices of dimension $d-1$. Although they are often called faces or facets in the literature on simplicial complexes, we will avoid this terminology as it overlaps with the traditional use of faces for maps. Instead we will use ``subsimplices'' and specifiy the dimension as needed, like $k$-subsimplices.

A \emph{colored simplex} is a $d$-simplex whose $d+1$ subsimplices of dimension $d-1$ are each equipped with a color from $[0..d]$. It is key that every subsimplex of dimension $d-k$, $k=1,\dotsc, d$ is thus uniquely labeled with a $k$-tuple of colors. The tuple $\{i_0, \dotsc, i_k\}$ identifies the subsimplex which lies at the intersection of the $(d-1)$-subsimplices of colors $i_0, \dotsc, i_k$. For instance, a subsimplex $\simp^{(d-2)}$ of dimension $d-2$ is shared by exactly two $(d-1)$-subsimplices. If they have colors $c,c'$ then $\simp^{(d-2)}$ is uniquely identified by the pair of colors $\{c,c'\}$. This is depicted in Figure \ref{fig:ColoredSimplex}.

\begin{figure}
	\includegraphics[scale=.5,valign=c]{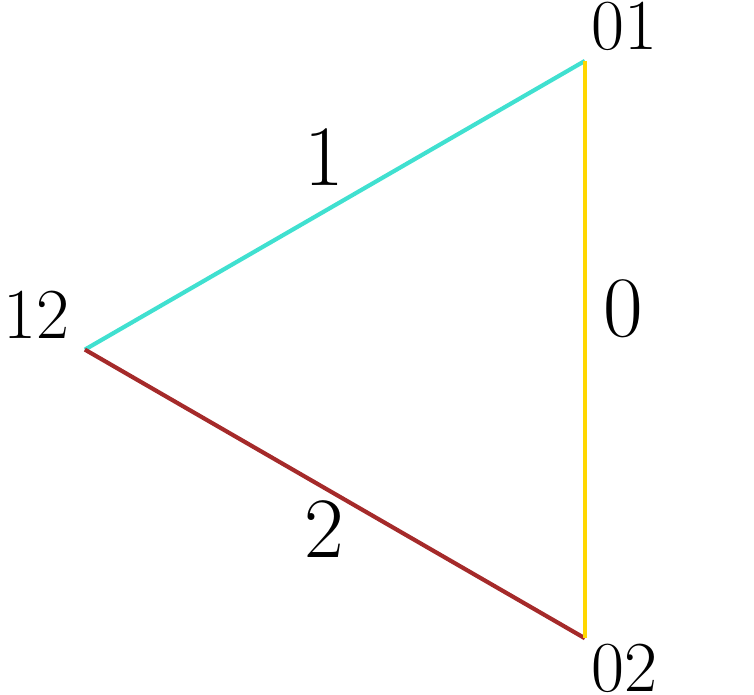} \hspace{2cm} \includegraphics[scale=.5,valign=c]{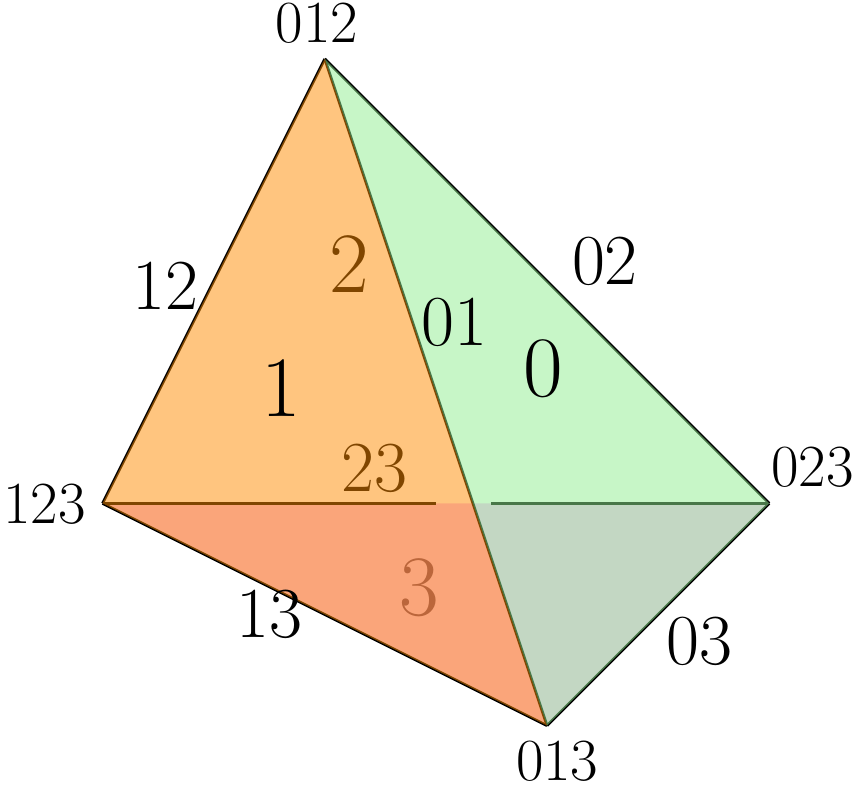}
	\caption{\label{fig:ColoredSimplex}On the left is a triangle equipped with a coloring of its edges which induces a coloring of its vertices by pairs. On the right, a tetrahedron with colored faces, which induces pairs of colors on edges and triples of colors on vertices.}
\end{figure}

Given two colored $d$-simplices $\simp_1, \simp_2$, we define a canonical attaching map which is fully specified by the color of the faces which are identified between them. We glue $\simp_1$ and $\simp_2$ along their $(d-1)$-subsimplex of color $c\in[0..d]$ by identifying all their subsimplices which contain $c$ in their labels, i.e. for all $k=0, \dotsc, d-1$, we identify the subsimplex with label $(i_0, \dotsc, i_k)$ of $\simp_1$ with the one of $\simp_2$ if $c=i_j$ for some $j\in\{0,\dotsc, k\}$. This was shown in \eqref{GluingTriangles} at $d=2$, and in Figure \ref{fig:GluingColoredSimplices} for $d=3$. For instance, if one glues two tetrahedra along their face of color 0, then this is done by identifying their edges of colors $\{0,1\}$, $\{0,2\}$, $\{0,3\}$ and their vertices of colors $\{0,1,2\}$, $\{0,1,3\}$ and $\{0,2,3\}$.

\begin{figure}
	\includegraphics[scale=.45,valign=c]{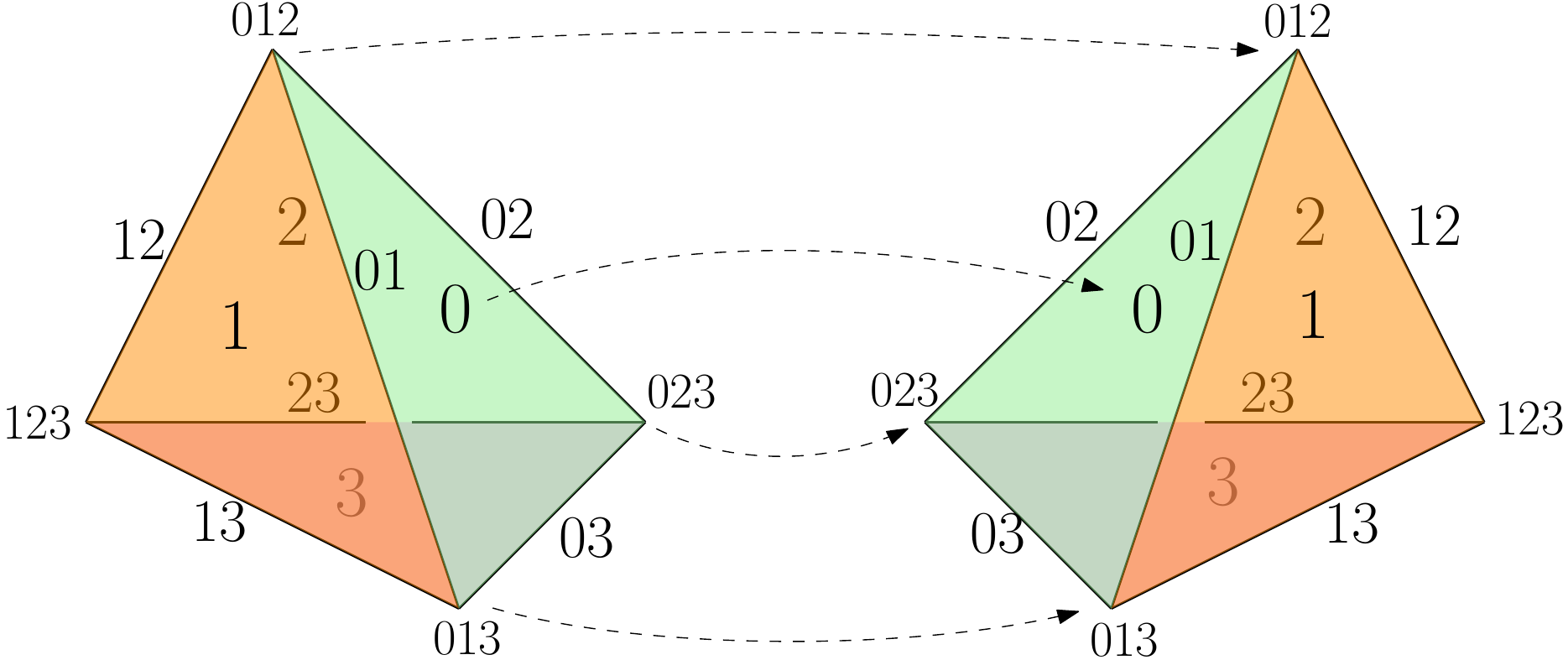}
	\caption{\label{fig:GluingColoredSimplices}The gluing of colored simplices at $d=2$ and $d=3$. One specifies the color, here 0, of the face being glued, then the attaching map is determined by requiring that all subcoloring containing 0 are respected.}
\end{figure}

A \emph{colored triangulation} $\trisp$ is a (connected or not) gluing of colored $d$-simplices where all $(d-1)$-subsimplices are shared by two $d$-simplices. A colored triangulation is a pseudomanifold (by definition). In two dimensions, a pseudomanifold is a manifold but in general, it can have singularities of codimension 2 (this will be clear later).

There is a single colored triangulation with two $d$-simplices, whose boundary $(d-1)$-simplices are identified two by two. It is homeomorphic to the $d$-sphere. The following theorem shows that colored triangulations are ``good'' topological objects.
\begin{theorem}{}{}\cite{Ferri1986,LinsMandel1985,KauffmanLins1994}
	Every PL-manifold admits a colored triangulation. For every $d$, there is a finite set of moves of the triangulations such that two homeomorphic triangulations can be transformed into each other through a finite sequence of those moves.
\end{theorem}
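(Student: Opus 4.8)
The plan is to treat the two assertions separately, since the existence part is elementary whereas the connectedness-under-moves part carries the real depth. The first thing I would do is record the dual dictionary that makes colored triangulations tractable. In a colored $d$-simplex each vertex lies on all but one of the $(d+1)$ colored $(d-1)$-faces, so the vertex opposite the face of color $c$ is canonically labelled by $c$; thus coloring the $(d-1)$-subsimplices is the same data as coloring the vertices with $[0..d]$, one vertex of each color per top simplex. Under this dictionary the canonical attaching map of \cref{GluingTriangles} is exactly ``glue two $d$-simplices along a shared $(d-1)$-face by the unique identification matching equally-colored vertices.'' Hence a colored triangulation $\trisp$ is the same object as a \emph{balanced} (pseudo-)simplicial pseudomanifold: one whose vertices are properly $(d+1)$-colored with all colors present in every top simplex. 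I would reduce both claims to statements about balanced triangulations.

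For existence I would use the barycentric subdivision. Start from any PL-triangulation $K$ of the PL-manifold $M$ (this exists by definition of the PL category) and pass to $\mathrm{sd}\,K$, coloring each vertex of $\mathrm{sd}\,K$ by the dimension of the simplex of $K$ it is the barycenter of. A simplex of $\mathrm{sd}\,K$ corresponds to a strictly increasing flag $\sigma_0\subsetneq\cdots\subsetneq\sigma_k$ of simplices of $K$, so its vertices carry strictly increasing, hence distinct, colors. In a \emph{top}-dimensional simplex the flag has length $d+1$, so its $d+1$ distinct colors, being strictly increasing integers in $[0,d]$, are exactly $0,1,\dots,d$. This exhibits $\mathrm{sd}\,K$ as a balanced triangulation, i.e. a colored triangulation of $M$ via the dictionary above; since $M$ is a closed manifold every $(d-1)$-face lies in exactly two $d$-simplices, so the pseudomanifold condition holds. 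Note this only produces \emph{simplicial} colored triangulations, a proper subclass of the multigraph-encoded ones allowed in general — a point that resurfaces below.

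For the second assertion the natural candidate move set is the dipole creations and cancellations of Ferri--Gagliardi, with one move type for each dipole degree $1,\dots,d$, hence finitely many for each fixed $d$, each manifestly preserving the PL-homeomorphism type; this supplies the easy (soundness) direction. The hard direction is completeness, and my strategy is to route it through Pachner's bistellar-flip theorem, which asserts that PL-homeomorphic closed PL-manifolds have triangulations joined by a finite sequence drawn from the $d+1$ bistellar flips. I would (i) use dipole cancellations to bring an arbitrary colored triangulation to a genuinely simplicial balanced one, removing the degeneracies where two simplices share more than one face; (ii) relate the balanced simplicial triangulation to an ordinary one and invoke Pachner; and (iii) simulate each bistellar flip by a bounded sequence of coloring-preserving dipole moves. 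I would then assemble these, together with the barycentric construction, to conclude that any two colored triangulations of PL-homeomorphic $M$ are connected by finitely many moves of the fixed, $d$-dependent set \cite{Ferri1986,LinsMandel1985,KauffmanLins1994}.

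The principal obstacle is step (iii): a single bistellar flip need not respect any balanced coloring, so it cannot be applied directly to a colored triangulation. One must instead pass to a subdivision of the relevant local patch on which a compatible coloring exists, perform the flip there, and then cancel the auxiliary simplices by dipole moves — all while proving those auxiliary dipoles can always be cancelled and checking that the number of moves used stays bounded by a function of $d$ alone (so that the global move set remains finite). Controlling this translation between bistellar and colored moves is the technical heart of the theorem and is exactly what the cited crystallization-theoretic works establish; by contrast, the existence half requires nothing beyond barycentric subdivision.
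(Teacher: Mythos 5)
The paper does not prove this statement; it is quoted as a black box from the crystallization literature, so there is no in-text proof to compare against. On its own merits, your existence argument is correct and complete: the facet-coloring/vertex-coloring dictionary is right, and coloring the barycentric subdivision of an arbitrary PL-triangulation by the dimension of the underlying simplex is exactly the standard construction of a balanced (hence colored) triangulation. That half needs no further work.

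The second assertion, however, is where the theorem lives, and your proposal does not prove it. Step (iii) --- simulating a bistellar flip by a sequence of dipole moves --- is precisely the content of the Ferri--Gagliardi completeness theorem, and you explicitly defer it to the cited works; as a self-contained argument this is a gap, not a technicality, since (as you correctly observe) a bistellar flip is incompatible with any balanced coloring (e.g.\ the barycenter introduced by a $0$-flip is adjacent to vertices of all $d+1$ colors, so no color is available for it), and it is not obvious that a colored surrogate exists. Two further points. First, step (i) is stated backwards: to pass from a general colored triangulation (say the minimal one with two $d$-simplices, whose dual graph has two vertices joined by $d+1$ parallel edges) to a simplicial balanced one, you must \emph{insert} dipoles, not cancel them; that such insertions always suffice to reach a simplicial representative itself requires an argument. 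Second, you impose an unnecessary condition: the theorem only asks that the \emph{set of move types} be finite (the dipole moves of degrees $1,\dotsc,d$ already are, for each fixed $d$), not that each bistellar flip be simulable in a number of moves bounded by a function of $d$ alone; dropping that requirement would simplify whatever argument you eventually supply for (iii). Note also that the historical proofs do not route through Pachner's theorem (which postdates them) but through stellar subdivision theory and the interaction of barycentric subdivision with dipole moves; your Pachner-based route is plausible in outline but is the part that would have to be written out in full.
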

The main interest in colored triangulations yet comes from the existence of a graphical encoding. The canonical gluing rule for colored simplices ensures that there is a unique gluing between two colored simplices as soon as the color of their common $(d-1)$-subsimplex is specified. Therefore, a colored triangulation is completely determined by the data of which simplex is connected to which other simplex using which color. This data is fully encoded into a graph $\graph(\trisp)$ with properly colored edges:
\begin{itemize}
	\item each vertex of $\graph(\trisp)$ corresponds to a $d$-simplex of $\trisp$
	\item there is an edge of color $c\in\{0, 1, \dotsc, d\}$ between two vertices of $\graph(\trisp)$ if the two corresponding $d$-simplices of $\trisp$ are glued along a $(d-1)$-subsimplex of color $c$.
\end{itemize} 
For instance, the colored graph corresponding to the unique colored triangulation with two $d$-simplices is the graph with two vertices, connected by edges of all colors from $0$ to $d$.
\begin{theorem}{}{ColoredGraphs}
	There is a one-to-one correspondence between $d$-dimensional, closed, connected, colored triangulations and connected colored graphs. Moreover,
	\begin{itemize}
		\item If $\trisp$ is a colored triangulation and $\graph(\trisp)$ the corresponding colored graph, then the latter is obtained as the \emph{colored 1-skeleton} of the complex dual to $\trisp$, i.e. the 1-skeleton of the dual where the edges of $\graph(\trisp)$ retain the colors of their dual $(d-1)$-simplices in $\trisp$. We can denote this as
		\begin{equation}
			\graph(\trisp) = \operatorname{col-1-sk}(\trisp^*)
		\end{equation}
		where the star represents the dual and $\operatorname{col-1-sk}$ the colored 1-skeleton.
		\item $\trisp$ can be reconstructed from $\graph(\trisp)$ by applying the colored gluing rule to each pair of simplices represented in $\graph(\trisp)$ by two vertices connected by an edge.
		\item For any $k\in\{0, \dotsc, d-1\}$, let $\{c_1, \dotsc, c_{d-k}\} \subset \{0, \dotsc, d\}$ be a subset of colors and $\graph(c_1, \dotsc, c_{d-k})\subset \graph(\trisp)$ be the subgraph obtained by keeping all the vertices of $\graph(\trisp)$ and the edges of colors $c_1, \dotsc, c_{d-k}$ while removing the other edges. There is a one-to-one correspondence between the $k$-simplices of $\trisp$ labelled with $\{c_1, \dotsc, c_{d-k}\}$ and the connected components of $G(c_1, \dotsc, c_{d-k})$.
	\end{itemize}
\end{theorem}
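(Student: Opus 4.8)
The plan is to mirror the two-step argument used for the tricolored case, first setting up the bijection together with the identity $\graph(\trisp)=\operatorname{col-1-sk}(\trisp^*)$, and then refining it to the statement about $k$-simplices, which is where the real work lies. First I would fix the forward map $\trisp\mapsto\graph(\trisp)$ as in the statement: a vertex for each $d$-simplex, and an edge of color $c$ between two vertices whenever the corresponding simplices are glued along their $(d-1)$-subsimplex of color $c$. Since $\trisp$ is closed, every $(d-1)$-subsimplex is shared by exactly two $d$-simplices, and since each colored $d$-simplex carries exactly one $(d-1)$-face of each color, every vertex of $\graph(\trisp)$ meets exactly one edge of each color in $[0..d]$; hence $\graph(\trisp)$ is a properly $(d+1)$-edge-colored graph, connected precisely when $\trisp$ is. The equality $\graph(\trisp)=\operatorname{col-1-sk}(\trisp^*)$ is then read off the construction, exactly as in the proof of \Cref{thm:CanonicalEmbedding}: the dual complex has one vertex per $d$-simplex and one edge per shared $(d-1)$-subsimplex, and because the canonical gluing rule is determined by the single color datum, the cyclic and ordering information at the dual vertices is irrelevant and only the edge-colors survive.

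For the inverse map I would start from a connected bipartite $(d+1)$-colored graph $\graph$, place one abstract colored $d$-simplex at each vertex, and for each edge of color $c$ glue the two incident simplices along their color-$c$ face by the canonical rule of \Cref{fig:GluingColoredSimplices}. This is well defined exactly because that rule is uniquely fixed by $c$ alone, and it yields a closed colored triangulation since each vertex meets one edge of every color, so every $(d-1)$-face acquires precisely one partner. The two constructions are visibly inverse to one another — each records only ``which simplex is glued to which along which color'' — which gives both the bijection and the reconstruction claim. The bipartiteness of $\graph$ corresponds to a black/white $2$-coloring of the $d$-simplices, equivalently to orientability: the canonical identification of a shared face reverses the orientation induced by the color order $0<1<\dots<d$, so a consistent global orientation exists exactly when the simplices admit a $2$-coloring with glued simplices of opposite colors, i.e. when $\graph$ is bipartite. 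This is the case singled out by our standing convention that colored graphs are bipartite.

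The substantive step, which I expect to be the main obstacle, is the last bullet. Fix $I=\{c_1,\dots,c_{d-k}\}$. Each abstract $d$-simplex contains a unique $k$-subsimplex labelled $I$, namely the intersection of its $(d-1)$-faces of colors in $I$. The decisive local observation, read directly from the canonical gluing rule, is that a gluing along color $c$ identifies the $I$-labelled $k$-subsimplices of the two simplices \emph{if and only if} $c\in I$, since the rule identifies precisely those subsimplices whose label contains $c$. A $k$-simplex of $\trisp$ labelled $I$ is then, by definition, an equivalence class of these per-simplex copies under the identifications induced by all the gluings, and the only gluings contributing such identifications are those along colors in $I$, i.e. the edges kept in $\graph(c_1,\dots,c_{d-k})$. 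Hence the partition of the $d$-simplices according to the $I$-labelled $k$-simplex they carry coincides with their partition into connected components of $\graph(c_1,\dots,c_{d-k})$, which is the desired correspondence.

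The point needing care is to check that each connected component collapses to a single honest $k$-simplex rather than an over-identified object. This I would settle by noting that the $I$-labelled subsimplex is itself a single simplex, hence a ball, and that $\graph(c_1,\dots,c_{d-k})$ is itself a connected $(d-k)$-colored graph encoding those identifications unambiguously, so the quotient is exactly one $k$-simplex per component. I would also keep careful track of the index bookkeeping matching dimension $k$ with the $d-k$ retained colors. Everything else reduces to the uniqueness of the canonical gluing, which is precisely what renders the whole correspondence purely combinatorial.
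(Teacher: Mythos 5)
Your proposal is correct and follows essentially the same route as the paper: the first two items are read off the canonical colored gluing rule (the paper simply calls them trivial), and the third item is proved by the same key observation that a gluing along color $c$ identifies the $I$-labelled $k$-subsimplices precisely when $c\in I$, so that the $k$-simplices of $\trisp$ with label $I$ are exactly the connected components of $\graph(c_1,\dotsc,c_{d-k})$. The extra material on the inverse construction and on bipartiteness versus orientability is a welcome elaboration of points the paper leaves implicit, but it does not change the argument.
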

Moreover, orientability of the colored triangulation translates into bipartiteness of its colored graph.
\begin{remark}
	In general, the 1-skeleton of the dual is not enough to reconstruct the whole triangulation. Here it works because one uses the \emph{colored} 1-skeleton, where the colors of the edges carry the necessary information to re-build $\trisp$ from $\graph(\trisp)$. The third item above shows that instead of considering directly the dual complex $\trisp^*$, it can be re-constructed from $\graph(\trisp)$ by looking at the connected components of $\graph(c_1, \dotsc, c_{d-k})$. So the duality is as follows.
	
	\begin{center}
		\begin{tabular}{|c|c|}
			\hline
			In $\graph(\trisp)$ & In $\trisp$\\
			\hline
			Vertex & $d$-simplex\\
			Edge & $(d-1)$-simplex\\
			Bicolored cycle & $(d-2)$-simplex\\
			Tricolored connected component & $(d-3)$-simplex\\
			\vdots & \vdots\\
			$(d-1)$-colored connected component & edge\\
			$d$-colored connected component & vertex\\
			\hline		\end{tabular}
	\end{center}
	
	We have already detailed the 2-dimensional case. At $d=3$, the tetrahedra of $\trisp$ are represented as the vertices of $\graph(\trisp)$, the triangles of $\trisp$ as the edges of $\graph(\trisp)$, the edges of $\trisp$ with colors $\{a, b\}$ as the bicolored cycles with colors $\{a, b\}$ of $\graph(\trisp)$ (this is illustrated in Figure \ref{fig:DualFace}) and the vertices of $\trisp$ with colors $\{a, b, c\}$ as the connected components of the subgraph of $\graph(\trisp)$ with colors $a, b, c$ only (this is illustrated in Figure \ref{fig:DualBubble}). The example of Figure \ref{fig:ColoredGraph} has a single bicolored cycle with colors $\{1,2\}$ and the graph $\graph(0,1,2)$ has a single connected component. This means that the corresponding colored triangulation has a single edge with colors $\{1,2\}$ and a single vertex with colors $\{0,1,2\}$.
	
	The connected components of $G(c_1, \dotsc, c_{d-k})$ are usually called $(d-k)$-bubbles. We will soon focus on a special case of $d$-bubbles obtained by removing the color 0. 
\end{remark}

\begin{figure}
	\includegraphics[scale=.4]{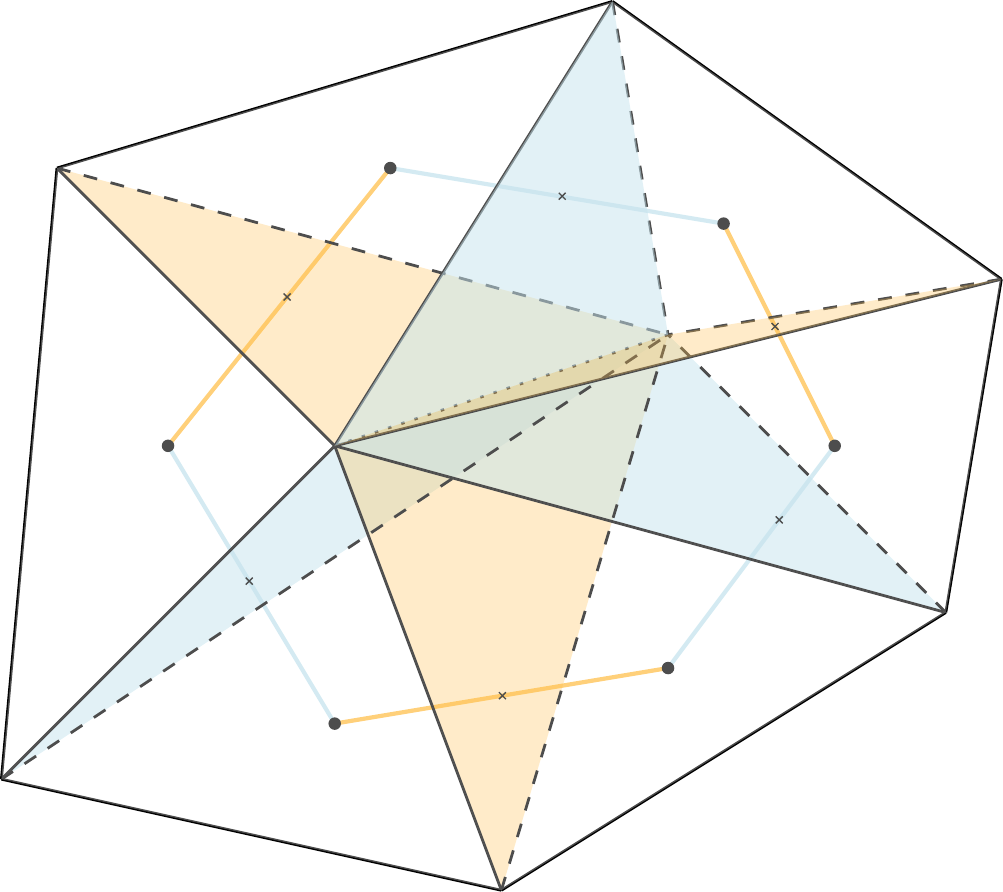}
	\caption{\label{fig:DualFace} The dual bicolored cycle of $\graph(\trisp)$ representing an edge of $\trisp$ in three dimensions.}
\end{figure}
\begin{figure}
	\includegraphics[scale=.45]{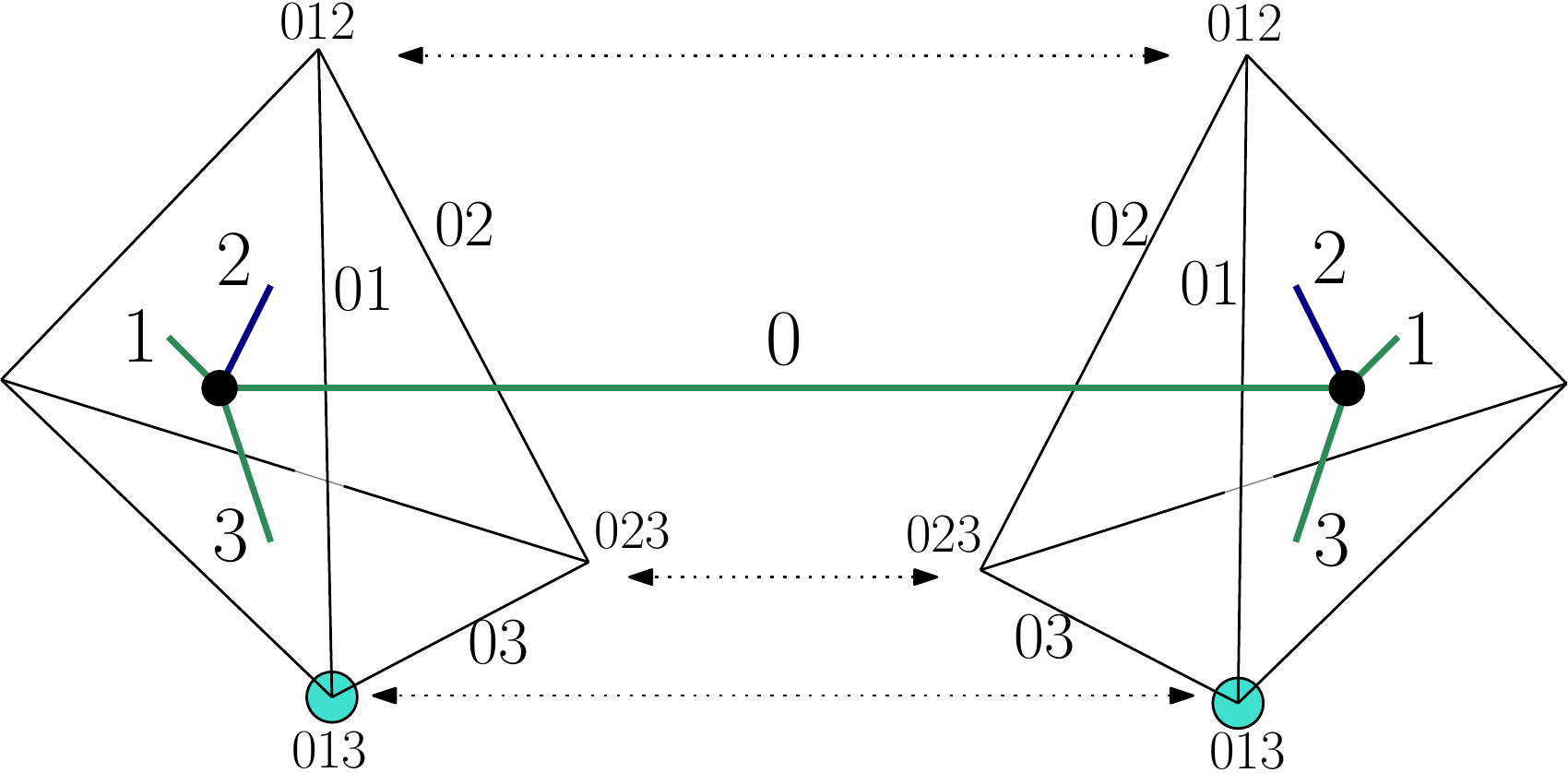}
	\caption{\label{fig:DualBubble} The vertex of colors $\{0,1,3\}$ is represented in $\graph(\trisp)$ as a connected component of the subgraph with the colors 0, 1, 3 (green edges).}
\end{figure}

\begin{proof}
	The first two items are trivial. 
	We prove the third point. Each $k$-dimensional subsimplex of a $d$-simplex is identified by a $(d-k)$-tuple of colors. Say if $\sigma$ is a $d$-simplex, denote $\sigma(c_1, \dotsc, c_{d-k})$ the $k$-simplex with colors $c_1, \dotsc, c_{d-k}$. In $\graph(\trisp)$, $\sigma$ is represented as a vertex $v_{\sigma}$. Its boundary $(d-1)$-simplices $\sigma(c)$ are represented as half-edges of color $c=0, \dotsc, d$ incident to $v_{\sigma}$. It comes that $\sigma(c_1, \dotsc, c_{d-k})$ in $\sigma$ is identified by the $(d-k)$-tuple of half-edges which carry the colors $c_1,\dotsc, c_{d-k}$.
	
	The gluing rule for colored simplices is that when $\sigma$ and $\sigma'$ are glued along a $(d-1)$-simplex $\sigma(c)$ of color $c$, they identify two by two their subsimplices whose color labels contain $c$. Therefore, when $\sigma(c_1, \dotsc, c_{d-k})\subset \sigma$ is identified with $\sigma'(c_1, \dotsc, c_{d-k})\subset \sigma'$, it translates in $\graph(\trisp)$ into the fact that when $v_\sigma$ and $v_{\sigma'}$ are connected by an edge of color $c \in\{c_1, \dotsc, c_{d-k}\}$, the half-edges of colors $c_1, \dotsc, c_{d-k}$ incident to $v_{\sigma}$ and $v_{\sigma'}$ represent the same $k$-simplex of $T$.
	
	Denote $\graph(c_1, \dotsc, c_{d-k})$ the subgraph of $\graph(\trisp)$ which only retains the edges of colors $c_1, \dotsc, c_{d-k}$. A connected component of $G(c_1, \dotsc, c_{d-k})$ thus represents a $k$-simplex $\sigma(c_1, \dotsc, c_{d-k})$ of $\trisp$. Moreover, two connected components represent different $k$-simplices of $T$. Indeed, the only way for a $k$-simplex with colors $\{c_1, \dotsc, c_{d-k}\}$ to be shared by two $d$-simplices $\sigma, \sigma'$ is that they are glued along a $(d-1)$-simplex of color $c\in\{c_1, \dotsc, c_{d-k}\}$. This is equivalent to $v_{\sigma}$ and $v_{\sigma'}$ being connected by an edge of color $c$ in $\graph(\trisp)$.
\end{proof}

Consider labeled colored graphs, i.e. every white vertex receives a label from 1 to $n$, and the same for black vertices. Then colored graphs with $d+1$ colors are described by a vector of $d+1$ permutations $(\sigma_0, \dotsc, \sigma_{d}) \in \mathfrak{S}_n^{d+1}$. For every color $c\in[0..d]$, there is an edge of color $c$ between the white vertex of label $v$ and the black vertex of label $w$ if and only if $\sigma_c(v)=w$. For historical reasons, including Regge's discretization of the Einstein-Hilbert action on equilateral triangulations, we will attempt to classify colored triangulations at fixed number of $d$-simplices with respect to the number of $(d-2)$-simplices. On colored graphs, the latter are the bicolored cycles, which are in fact easily encoded through the permutations. A bicolored cycle with colors $\{a,b\}$ and of length $2p$ is a $p$-cycle of $\sigma_a^{-1} \sigma_b$. This implies that in a colored triangulation $\trisp_n$, the number of $(d-2)$-dimensional simplices $\Delta_{d-2}(\trisp_n)$ is found through the dual colored graph as
\begin{equation}
	\Delta_{d-2}(\trisp_n) = \sum_{0\leq a<b\leq d} \text{$\#$ cycles of $(\sigma_a^{-1}\sigma_b)$}.
\end{equation}


\subsection{Bijection with constellations} We recall duality for maps. The dual of a map $\m$ is a map $\m^*$ such that the vertices of $\m$ are the faces of $\m^*$ and the other way around. If there is an edge $e$ incident to the faces $f_1, f_2$ (they may be the same) in $\m$, then one draws an edge $e^*$ between the corresponding vertices $f_1^*, f_2^*$ of $\m^*$. The cyclic order of the edges incident to $f^*$ in $\m^*$ is the cyclic order of the corresponding edges around the face $f$ in $\m$. Duality is an involution. Notice that the dual of a $p$-angulation is a map whose vertices all have degree $p$, and the other way around, see Figure \ref{fig:QuadrangulationDual}.
\begin{figure}
	\includegraphics[scale=.65]{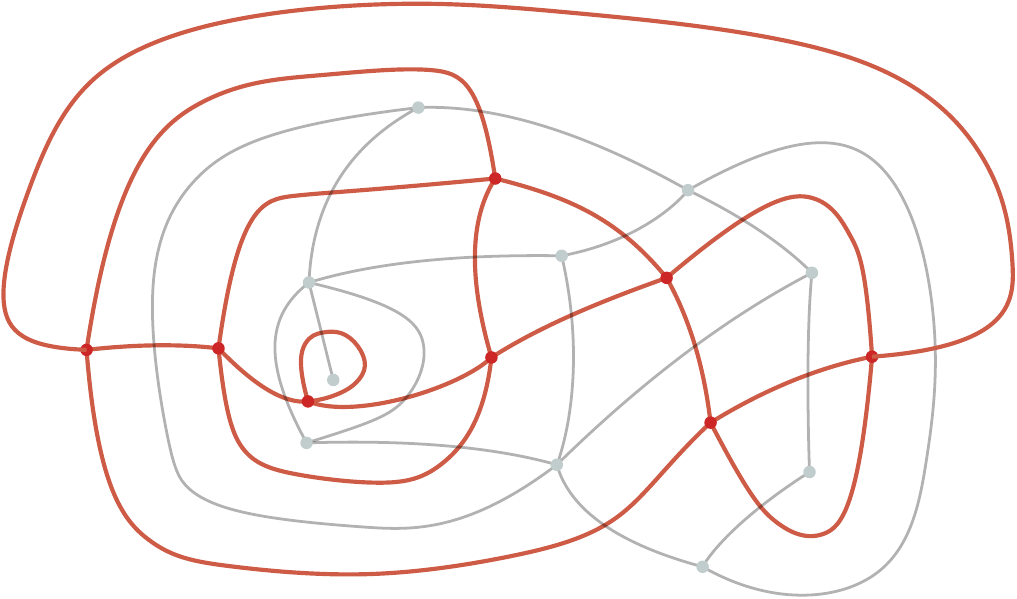}
	\caption{\label{fig:QuadrangulationDual}The dual map to a quadrangulation is a map whose vertices all have degree 4.}
\end{figure}

A color cycle is a cyclic order on the set of colors $[0..d]$. They correspond to permutations $\rho$ of $[0..d]$ made of a single cycle and defined up to inversion.

\begin{proposition}{}{}
	For each color cycle, there is a bijection between $(d+1)$-colored graphs and $d$-constellations.
\end{proposition}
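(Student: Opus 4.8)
The plan is to reduce both sides to permutation tuples and to exhibit the correspondence as a telescoping change of variables dictated by the color cycle, generalizing the $d=2$ case (bipartite maps and tricolored graphs) already treated. Recall from the permutation encoding of colored graphs that, after labelling the $n$ white and $n$ black vertices, a $(d+1)$-colored graph is the datum of $(\sigma_0, \dotsc, \sigma_d) \in \mathfrak{S}_n^{d+1}$, where $\sigma_c(v) = w$ records the color-$c$ edge from white vertex $v$ to black vertex $w$. Relabelling the colors, I may assume the given color cycle is $\rho = (0,1,2,\dotsc,d)$, read in one of its two directions. I would then define the candidate constellation by
\[
	\Psi(\sigma_0, \dotsc, \sigma_d) = (\alpha_1, \dotsc, \alpha_d), \qquad \alpha_c := \sigma_{c-1}^{-1}\sigma_c \quad (c = 1, \dotsc, d),
\]
viewed as the vertex-permutations of a $d$-constellation of size $n$: the black faces (degree $d$) are the ground set $[1..n]$, the color-$c$ vertices are the cycles of $\alpha_c$, and the white faces are the cycles of the ordered product $\alpha_1\alpha_2\dotsm\alpha_d$.

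First I would check that the image is a genuine $d$-constellation and that the construction is the correct geometric one. By the $\{a,b\}$-bicolored cycle / cycle-of-$\sigma_a^{-1}\sigma_b$ dictionary, the color-$c$ vertices are exactly the consecutive bicolored cycles $\{c-1,c\}$ of $\graph$, and the telescoping $\alpha_1\dotsm\alpha_d = \sigma_0^{-1}\sigma_d$ shows the white faces are the wrap-around bicolored cycles $\{d,0\}$; a cycle of length $\ell$ there becomes a white face of degree $d\ell$, as required. Thus the $d+1$ cyclically consecutive bicolored cycles of $\graph$ (those of the Hamiltonian cycle of colors selected by $\rho$) become precisely the $d$ vertex-colors together with the white faces, while each ground-set element lies in one cycle of each $\alpha_c$ and hence touches one vertex of every color, giving black faces of degree exactly $d$. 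The canonical embedding of \Cref{CanonicalEmbedding}, generalized by placing the rotation $\rho$ around white vertices and $\rho^{-1}$ around black ones, provides the geometric picture identifying these consecutive bicolored cycles as the faces of an embedded map and fixes the canonical orientation; specializing to $d=2$ recovers the first proposition of this section.

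Next I would verify bijectivity at the level of isomorphism classes by tracking the two relabelling groups. Relabelling the black vertices by $\tau' \in \mathfrak{S}_n$ sends $\sigma_c \mapsto \tau'\sigma_c$ and leaves every $\alpha_c = \sigma_{c-1}^{-1}\sigma_c$ unchanged, so $\Psi$ is constant on black-relabelling orbits; conversely $\Psi(\sigma) = \Psi(\sigma')$ forces, by the recursion $\sigma_c' = \sigma_{c-1}'\sigma_{c-1}^{-1}\sigma_c$, that $\sigma_c' = \tau'\sigma_c$ with $\tau' = \sigma_0'\sigma_0^{-1}$, so the fibers of $\Psi$ are exactly these orbits. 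Relabelling the white vertices by $\tau$ sends $\sigma_c \mapsto \sigma_c\tau^{-1}$ and hence $\alpha_c \mapsto \tau\alpha_c\tau^{-1}$, which is precisely the simultaneous conjugation identifying isomorphic constellations (relabellings of the black faces). Surjectivity is immediate: given $(\alpha_1, \dotsc, \alpha_d)$, set $\sigma_0 = \id$ and $\sigma_c = \alpha_1\dotsm\alpha_c$. Hence $\Psi$ descends to a bijection between $(d+1)$-colored graphs up to color-preserving isomorphism and $d$-constellations up to isomorphism; connectedness matches on both sides since, after fixing $\sigma_0 = \id$, the groups $\langle \sigma_0, \dotsc, \sigma_d\rangle$ and $\langle \alpha_1, \dotsc, \alpha_d\rangle$ coincide, so transitivity of one is transitivity of the other.

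The routine points are the degree bookkeeping and the face/vertex dictionary, which follow from the $\{a,b\}$-cycle correspondence exactly as in \Cref{ColoredGraphs}. The main obstacle is the well-definedness on unlabelled objects: one must check that $\Psi$ intertwines the full white-and-black relabelling action on colored graphs with the single simultaneous-conjugation action on constellations exactly, neither collapsing distinct constellations nor creating spurious ones, which is the content of the fiber computation above; and one must confirm that identifying a color cycle with its inverse corresponds to the natural orientation-reversing (equivalently, color-reversing) symmetry of constellations, so that the bijection is genuinely indexed by color cycles rather than by directed ones.
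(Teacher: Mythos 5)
Your proof is correct, but it takes a genuinely different route from the paper's. The paper argues geometrically: it embeds $\graph$ as a map $\m_\rho(\graph)$ using the color cycle as a rotation system (so that its faces are the bicolored cycles of cyclically consecutive colors), passes to the dual map $\m^*_\rho(\graph)$, whose black and white faces all have degree $d+1$ and whose vertices inherit a color from the consecutive pairs, and then deletes the vertices of one color and chops the black faces to land on a $d$-constellation. You instead work entirely in the permutation encodings: a labeled colored graph is a free tuple $(\sigma_0,\dotsc,\sigma_d)$, a labeled constellation is a free tuple $(\alpha_1,\dotsc,\alpha_d)$, and the telescoping assignment $\alpha_c=\sigma_{c-1}^{-1}\sigma_c$ does the job, with the fiber computation showing that exactly the black relabelings are collapsed, the white relabelings becoming simultaneous conjugation, and the bicolored-cycle dictionary supplying the vertex/face/degree matching. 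What your version buys is an essentially computation-free and fully rigorous treatment of bijectivity and of the passage to isomorphism classes, which the paper leaves implicit (``it is easily checked''). What it loses is the geometric content: the paper's construction exhibits the constellation directly as an embedded map (the jacket), which is what is actually used afterwards in the proof of Gurau's theorem, where the genus of each $\bar{\m}_\rho(\graph)$ matters; in your version that picture must be reinstated separately via the canonical embedding, as you note. Two minor points: your convention sends the white faces to the wrap-around pair $\{d,0\}$ where the paper's sends them to $\{0,\rho(0)\}$, an immaterial relabeling; and the issue you flag at the end --- that a color cycle and its inverse should yield the same bijection up to the orientation-reversing symmetry of constellations --- is a genuine convention point, but one the paper itself also glosses over, so it is not a gap relative to the intended statement.
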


\begin{proof}
	Consider the color cycle $(0, \rho(0), \dotsc, \rho^{d}(0))$ for a permutation $\rho$ of $[0..d]$ made of a single cycle. We define the map $\m_\rho(\graph)$ by the two-dimensional embedding of $\graph$ with this cyclic order around white vertices and its inverse around black vertices. It has as faces the bicolored cycles of colors $\{\rho^i(0), \rho^{i+1}(0)\}$ (the graph is $\graph$ itself).
	
	We then consider the map $\m^*_\rho(\graph)$ dual to $\m_\rho(\graph)$. Its vertices, dual to the faces of $\m_\rho(\graph)$, are thus labeled by pairs of colors $\{\rho^i(0), \rho^{i+1}(0)\}$. To simplify this vertex coloring, we use the color $i\in[0..d]$ instead. The map $\m^*_\rho(\graph)$ has white faces (dual to white vertices) and black faces (dual to black vertices), all of degree $d+1$. This is illustrated in Figure \ref{fig:Jacket}.
	
	To transform $\m^*_\rho(\graph)$ into a constellation, it is enough to remove its vertices of color $0$ (or any color for that matter) and chop the black faces by adding an edge between the vertices of colors $1$ and $d$, as follows
	\begin{equation} \label{PureConstellation}
		\includegraphics[scale=.4,valign=c]{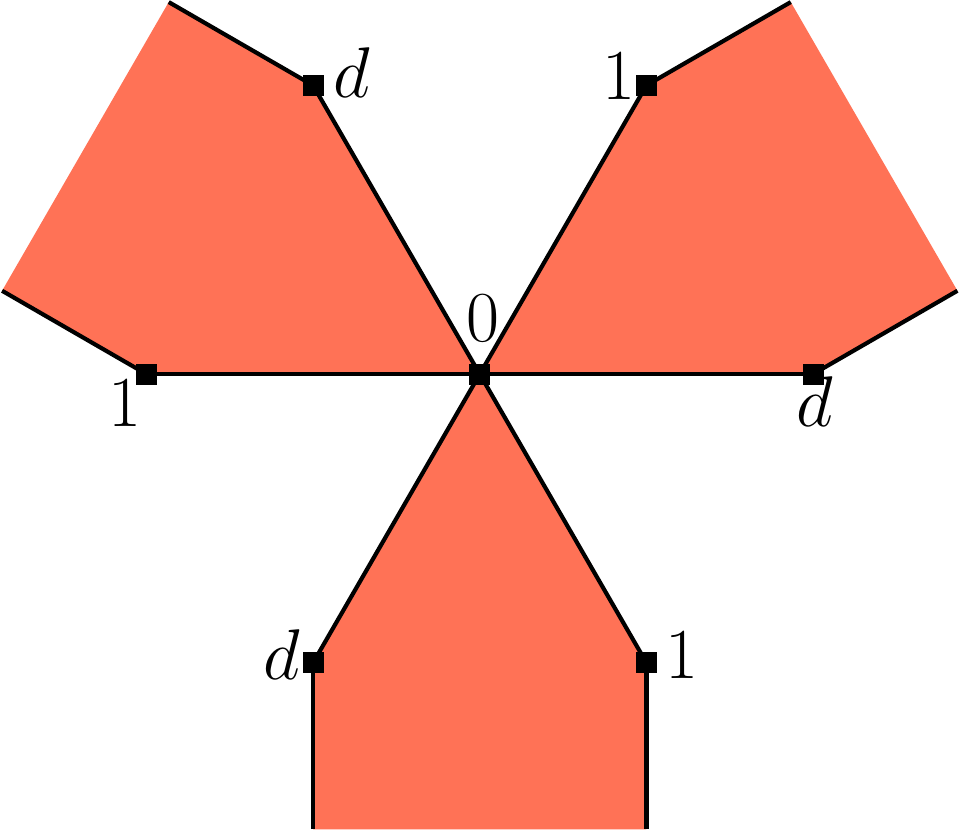} \qquad \to\qquad \includegraphics[scale=.4,valign=c]{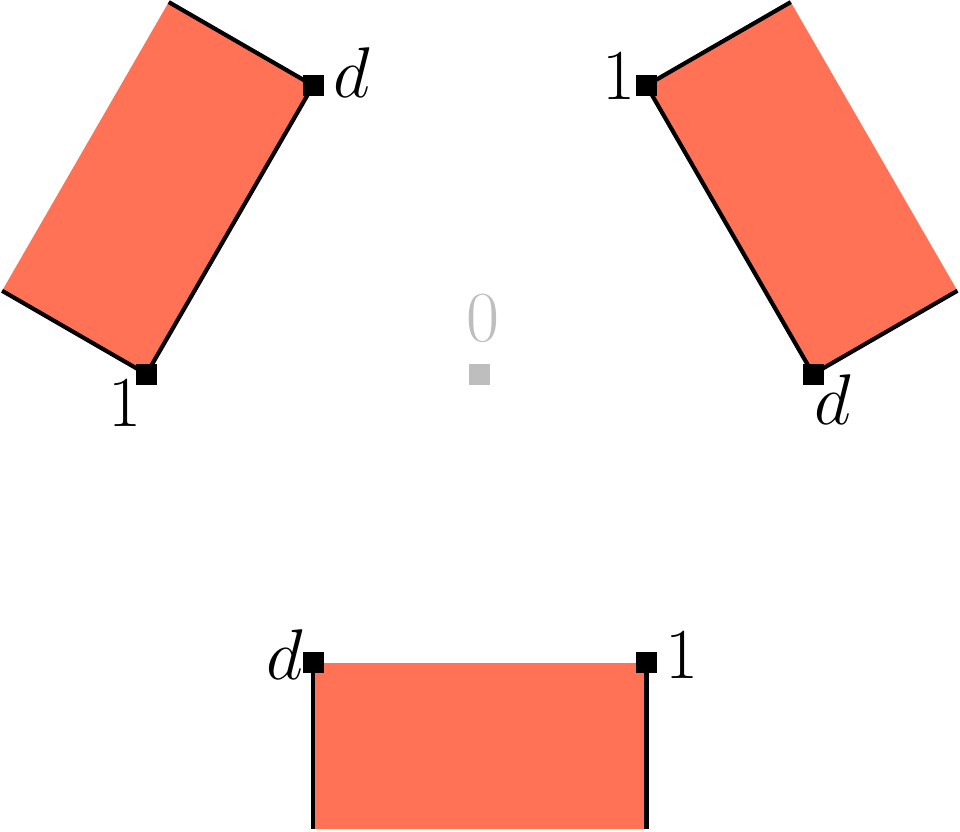}
	\end{equation}
	It is easily checked to be a $d$-constellation, which we denote $\bar{\m}_\rho(\graph)$.
\end{proof}
\begin{figure}
	\includegraphics[scale=.65]{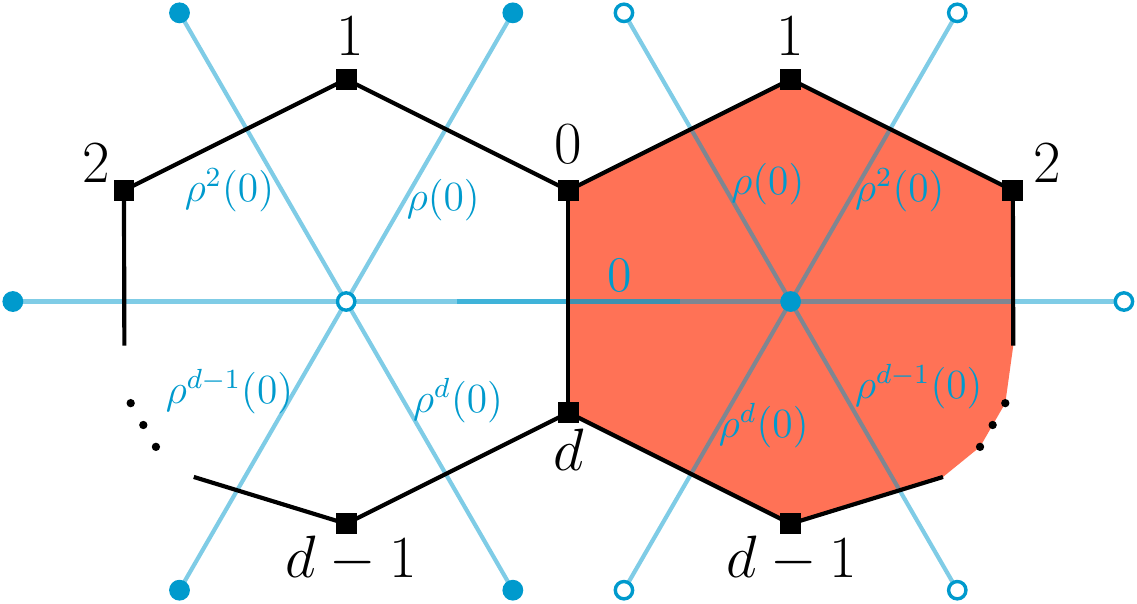}
	\caption{\label{fig:Jacket}A piece of a colored graph (in blue) embedded according to the color cycle $(0, \rho(0), \dotsc, \rho^d(0))$. The dual has black and white faces all of degree $d+1$.}
\end{figure}
As it is clear in this proof, the faces of $\bar{\m}_\rho(\graph)$ are the bicolored cycles with colors $\{\rho^k(0),\rho^{k+1}(0)\}$ of $\graph$, and thus a proper subset of all its bicolored cycles. This explains why maximizing the number of bicolored cycles does not correspond to singling out planar constellations.

\subsection{Combinatorics versus topology, Gurau's degree} While Euler's formula exist in higher dimensions, it is not as strong as in two dimensions. For instance, all orientable 3-manifolds have vanishing Euler's characteristic. So we look for another generalization of \eqref{Euler}. We have seen in Equation \eqref{EulerBound} that it can also be interpreted combinatorially as saying that there is a bound on the number of vertices which grows linearly with the number of faces. Gurau's theorem is precisely a combinatorial extension of this idea to any $d$-dimensional colored triangulations.

\begin{theorem}{}{Gurau}\cite{1/NExpansion}
	Gurau's degree defined as
	\begin{equation}
		\omega(\trisp) = d + \frac{d(d-1)}{4} \Delta_d(\trisp) - \Delta_{d-2}(\trisp)
	\end{equation}
	is a non-negative integer for any colored triangulation $\trisp$, where $\Delta_k(\trisp)$ is the number of $k$-dimensional simplices of $\trisp$.
	
	Moreover, the triangulations of vanishing Gurau's degree are $d$-spheres characterized as
	\begin{itemize}
		\item at $d=2$ the planar ones,
		\item at $d=3$ the melonic ones, defined below \cite{Melons}.
	\end{itemize}
\end{theorem}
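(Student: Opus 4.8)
The plan is to reduce everything to the non-negativity of the genera of a family of auxiliary surfaces built from the dual colored graph $\graph(\trisp)$, the so-called \emph{jackets}. Write $V=V(\graph)$ and $E=E(\graph)$ for the number of vertices and edges of $\graph=\graph(\trisp)$. Since every vertex is incident to exactly one edge of each of the $d+1$ colors, $\graph$ is $(d+1)$-regular and $E=\frac{(d+1)V}{2}$. By the dictionary of \Cref{thm:ColoredGraphs} we have $\Delta_d(\trisp)=V$ and $\Delta_{d-2}(\trisp)=\sum_{0\le a<b\le d}C_{ab}(\graph)$, where $C_{ab}(\graph)$ counts the bicolored $\{a,b\}$-cycles.

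For each color cycle $\rho$ (a cyclic order on $[0..d]$ up to inversion, of which there are $d!/2$), the construction used in the bijection with constellations produces a map $\m_\rho(\graph)$, obtained by embedding $\graph$ with cyclic order $\rho$ around white vertices and $\rho^{-1}$ around black ones. Because $\graph$ is bipartite this embedding is orientable, so $\m_\rho(\graph)$ is a closed orientable surface of some integer genus $g_\rho\ge 0$. Its faces are exactly the bicolored cycles whose two colors are consecutive in $\rho$; hence it has precisely $d+1$ faces, one per consecutive pair, while sharing the same $V$ and $E$ as $\graph$. Euler's relation \eqref{Euler} then reads $2-2g_\rho=F_\rho-E+V$ with $F_\rho=\sum_{\{a,b\}\text{ consecutive in }\rho}C_{ab}(\graph)$.

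Next I would sum this identity over all $d!/2$ jackets. The key combinatorial input is that a fixed pair $\{a,b\}$ is consecutive in exactly $(d-1)!$ of the color cycles (treat $\{a,b\}$ as a single block among $d$ objects arranged cyclically, up to inversion). Consequently $\sum_\rho F_\rho=(d-1)!\,\Delta_{d-2}(\trisp)$, while $\sum_\rho(E-V)=\frac{d!}{2}\cdot\frac{(d-1)V}{2}$ and $\sum_\rho 2=d!$. Substituting into the summed Euler relation and factoring out $(d-1)!$ (using $d!=d\,(d-1)!$) yields the master identity
\begin{equation*}
\sum_\rho 2g_\rho=(d-1)!\left(d+\frac{d(d-1)}{4}\Delta_d(\trisp)-\Delta_{d-2}(\trisp)\right)=(d-1)!\,\omega(\trisp).
\end{equation*}
Since every $g_\rho\ge 0$, this gives $\omega(\trisp)\ge 0$ at once. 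Integrality is a separate, elementary parity remark: $d(d-1)$ is always even and $V$ is even by bipartiteness, so $\frac{d(d-1)}{4}V\in\mathbb{Z}$ and hence $\omega(\trisp)\in\mathbb{Z}$.

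For the characterization of $\omega(\trisp)=0$, the master identity shows this occurs exactly when every jacket is planar. At $d=2$ there is a single jacket, equal to the map itself, so $\omega=0$ means planar. At $d=3$ there are three jackets, and one must show that their simultaneous planarity forces the melonic structure (and that the result is a $3$-sphere). I expect this last step to be the main obstacle: unlike the clean averaging argument above, it is a genuinely structural statement, to be proved by an inductive dipole-removal / melon-extraction argument showing that a graph whose jackets are all planar must contain a contractible elementary melon whose removal preserves both vanishing degree and topology, reducing to the two-vertex base case. The averaging identity itself is routine once the jacket face-count and the $(d-1)!$ incidence count are in hand; the delicate part is entirely the topological identification of the degree-zero family.
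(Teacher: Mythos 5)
Your proof is correct and follows essentially the same route as the paper: embed the colored graph according to each color cycle (the jackets), apply Euler's formula to each, sum over the $d!/2$ cycles using the $(d-1)!$ incidence count, and, like the paper, leave the topological characterization of the degree-zero graphs at $d=3$ to the cited reference. Two small remarks: a jacket has $d+1$ \emph{classes} of faces rather than $d+1$ faces (your subsequent formula $F_\rho=\sum_{\{a,b\}\ \text{consecutive}}C_{ab}(\graph)$ is the correct one), and your normalization $\sum_\rho g_\rho=\tfrac{(d-1)!}{2}\,\omega(\trisp)$ is the right one for the degree as defined in the statement --- the identity $\omega(\trisp)=\sum_\rho g(\bar{\m}_\rho)$ displayed in the paper's proof sketch holds verbatim only at $d=3$.
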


It is easy to check that $\omega(\trisp)$ is equivalent to the genus in two dimensions. In higher dimensions, it is however \emph{not} a topological invariant. It is instead a genuine extension of the genus in purely combinatorial terms: it provides a bound on the number of $(d-2)$-simplices which grows linearly with the number of $d$-simplices,
\begin{equation} \label{GurauBound}
	\Delta_{d-2}(\trisp) \leq d + \frac{d(d-1)}{4} \Delta_d(\trisp).
\end{equation}
Gurau's degree thus measures the deficit of $(d-2)$-simplices with respect to their maximal value. This way, Gurau's degree can be used to classify colored triangulations and this classification has been performed by Gurau and Schaeffer in \cite{GurauSchaeffer}. For each value of $\omega$, it states that there is a finite number of ``core-graphs'' (called schemes) from which all graphs of degree $\omega$ can be recovered by simple substitutions.

In any dimensions, triangulations which maximize the number of $(d-2)$-simplices at fixed number of $d$-simplices are those of vanishing Gurau's degree. In two dimensions, they are the planar ones, which as we know are also all those homeomorphic to the sphere. For any $d\geq 3$ they are called melonic \cite{Melons}. Using the correspondence with colored graphs, melonic triangulations can be described as certain series-parallel graphs called melonic graphs, also in bijection with $(d+1)$-ary trees. A \emph{melonic dipole} is a pair of vertices connected by exactly $d$ edges. A melonic insertion of color $c$ is the move inserting a melonic dipole on an edge of color $c$,
\begin{equation} \label{MelonicInsertion}
	\includegraphics[scale=.5,valign=c]{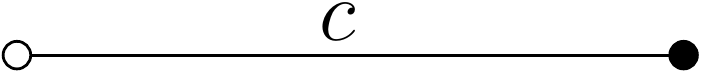} \quad \to \quad \includegraphics[scale=.5,valign=c]{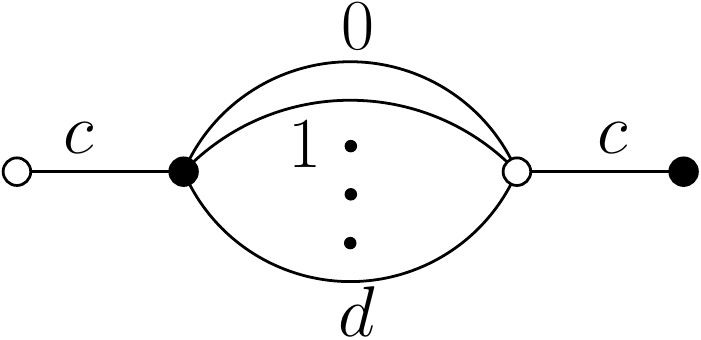}
\end{equation}
A melonic graph is then a colored graph obtained from the 2-vertex graph by arbitrary sequences of melonic insertions. As metric spaces, melonic triangulations have been shown to converge to Aldous' continuous random tree \cite{MelonsBP}, in contrast with planar maps which converge to the Brownian sphere \cite{Miermont2013, LeGall2013}.

\begin{proof}[Proof of \Cref{thm:Gurau}, \cite{1/NExpansion}]
	The idea is that for each color cycle $\rho$, the constellation $\bar{\m}_\rho(\graph)$ has all the vertices and edges of $\graph$ but only a subset of its bicolored cycles (those with colors $\{\rho^i(0), \rho^{i+1}(0)\}$), as faces. By summing the genera of those constellations over all color cycles, one recovers a quantity which is symmetric in the colors, and also non-negative. By using Euler's relation for each $\bar{\m}_\rho(\graph)$, one finds that this is Gurau's degree,
	\begin{equation}
		\omega(\trisp) = \sum_\rho g(\bar{\m}_\rho(\graph(\trisp))).
	\end{equation}
\end{proof}

\section{Colored building blocks and their bubbles} \label{sec:Bubbles}

\subsection{Colored building blocks at $d=2$} Combinatorial maps can be defined as gluings of polygons along their edges. We have defined above colored triangulations in arbitrary dimension, which generalize the tricolored triangulations in two dimensions. But we have not defined yet a generalization of the polygons in higher dimensions. A key point is that polygons can be triangulated, so knowing triangulations in higher dimensions should enable us to construct generalizations of polygons. In fact, as we have seen, colored triangulations in two dimensions are equivalent to bipartite maps. A face of a map of degree $2n$ corresponds to a $2n$-gon made of colored triangles such that
\begin{itemize}
	\item the triangles are glued along their sides of colors 1 and 2,
	\item the sides of color 0 constitute the boundary.
\end{itemize}
In particular, the color 0 does not play a role in the building of the polygon from triangles. Notice that the boundary can be seen as a 2-colored triangulation, by removing the color 0 from the labels of the edges and vertices. Clearly the polygon is determined by its boundary triangulation and can be reconstructed by taking the cone over the boundary. Through coning, the vertices of colors $\{0,1\}$ and $\{0,2\}$ respectively give rise to edges of color 1 and 2 respectively, and the boundary edges, with colors 0, give rise to the colored triangles.

In the dual, tricolored graph representation, a $2n$-gon is represented as a bicolored cycle with colors $\{1,2\}$. One can then think of a tricolored graph as a collection of such cycles connected by edges of color 0. This also shows that restricting bipartite maps to some prescribed face degrees corresponds to sampling tricolored graphs according to the same prescribed degrees for their bicolored cycles with colors $\{1,2\}$.

\subsection{Colored building blocks and bubbles} We thus define building blocks in higher dimensions using the same idea: glue simplices along all their colors except 0. The latter color will lie on the boundary and it will be the one involved in gluing building blocks together. 

\begin{definition} [Colored building blocks and bubbles]
	A \emph{colored building block} (CBB) is a connected colored triangulation with a boundary, whose $(d-1)$-simplices of color 0 constitute the boundary.
	
	A $d$-\emph{bubble}, or simply bubble, is a $d$-colored graph with color set $[1..d]$.
\end{definition}
We will show below the following properties.
\begin{proposition}{}{CBB}
	{}
	\begin{itemize}
		\item There is a bijection between CBBs and $d$-colored triangulations, given by taking the boundary $\partial\CBB$ and whose inverse is
		\begin{equation}
			\CBB = \left(\partial \CBB \times [0,1]\right)\,/\, \left(\partial \CBB\times \{1\}\right),
		\end{equation}
		i.e. CBBs are cones over their boundaries.
		\item CBBs are described by bubbles. If $\CBB$ is a CBB, its bubble $\bb(\CBB)$ is
		\begin{equation}
			\begin{aligned}
				\bb(\CBB) &= \operatorname{col-1-sk}(\CBB^*)\setminus \{\text{Edges of color 0}\}\\
				&= \operatorname{col-1-sk}(\partial \CBB^*)
			\end{aligned}
		\end{equation}
		i.e. it can be described equivalently as the colored 1-skeleton of the dual $\CBB^*$ with the edges of color 0 removed, and also as the colored 1-skeleton of the dual to the boundary triangulation.
		\item Any closed colored triangulation can be obtained by gluing CBBs along their boundary subsimplices.
	\end{itemize}
\end{proposition}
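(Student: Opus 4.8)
The plan is to prove the three items in order, letting the cone structure of item~1 carry most of the weight. The forward map is $\CBB\mapsto\partial\CBB$. First I would observe that each $d$-simplex of a CBB carries exactly one $(d-1)$-subsimplex of each color, hence exactly one of color $0$; since those are precisely the boundary faces, this gives a canonical bijection between the $d$-simplices of $\CBB$ and the boundary $(d-1)$-simplices, i.e.\ the top-dimensional simplices of $\partial\CBB$. Relabelling each boundary $(d-2)$-subsimplex of colors $\{0,c\}$ by $c\in[1..d]$ turns $\partial\CBB$ into a closed $(d-1)$-dimensional colored triangulation with color set $[1..d]$, which is the output of the forward map.

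The heart of item~1 is to show $\CBB$ is the cone over $\partial\CBB$. I would single out in each $d$-simplex the \emph{apex} vertex labelled $\{1,\dotsc,d\}$, i.e.\ the unique vertex not lying on the color-$0$ face. The key computation is that the gluing rule identifies apices across any interior face: if two $d$-simplices are glued along their common face of color $c\in[1..d]$, then all subsimplices whose label contains $c$ are identified, and in particular the two apices (whose label $\{1,\dotsc,d\}$ contains $c$) are. Since connectivity of $\CBB$ is equivalent to connectivity of the colors-$[1..d]$ part of its dual graph (the color-$0$ faces being boundary contribute only dangling half-edges), all apices are identified into a single vertex $v_*$. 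Each $d$-simplex is then the join of $v_*$ with its color-$0$ face, and the interior gluings are exactly the cone over the gluings of the boundary $(d-2)$-faces, exhibiting $\CBB=(\partial\CBB\times[0,1])/(\partial\CBB\times\{1\})$. Conversely, coning a closed $(d-1)$-dimensional colored triangulation $K$, assigning color $0$ to the base facets and the inherited color to each lateral face, produces a CBB with boundary $K$; the two constructions are manifestly inverse. I expect this cone argument to be the main obstacle: the identification of all apices is clean, but one must also check that no further unintended identifications occur and that the combinatorial join genuinely realizes the topological cone, which is where care is needed.

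For item~2 I would unwind \Cref{thm:ColoredGraphs}. The colored $1$-skeleton of $\CBB^*$ has the $d$-simplices as vertices and the $(d-1)$-faces as colored edges; as the color-$0$ faces are on the boundary, they are dual to dangling half-edges, so deleting them leaves a $d$-colored graph with color set $[1..d]$, which is by definition the bubble $\bb(\CBB)$. That this coincides with $\operatorname{col-1-sk}(\partial\CBB^*)$ is then immediate from the bijection of item~1 on vertices, together with the matching of an interior face of color $c$ with the boundary $(d-2)$-face of colors $\{0,c\}$, i.e.\ with a color-$c$ edge of the dual of $\partial\CBB$.

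Finally, for item~3 I would cut an arbitrary closed colored triangulation $\trisp$ along all its $(d-1)$-simplices of color $0$. In the dual graph $\graph(\trisp)$ this amounts to deleting the color-$0$ edges; each connected component of the remaining colors-$[1..d]$ graph is a bubble, hence by items~1--2 the dual of a CBB whose boundary color-$0$ faces correspond to the severed color-$0$ half-edges. Re-gluing these CBBs along their boundary faces according to the deleted color-$0$ edges of $\graph(\trisp)$ recovers $\trisp$, which establishes the claim.
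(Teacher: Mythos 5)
Your proof is correct and follows essentially the same route as the paper's: establish the boundary/cone correspondence by checking that the colored attaching rule propagates between bulk and boundary (your explicit identification of all apex vertices into a single interior vertex $\sigma(1,\dotsc,d)$ is exactly the mechanism the paper invokes, and which it spells out in the remark following the proof), then deduce the equality of the two descriptions of $\bb(\CBB)$ from that correspondence, and handle the last item by cutting along the color-$0$ edges of the dual graph. The only difference is one of emphasis — you are somewhat more explicit about the apex identification and the caveat that the combinatorial join realizes the topological cone — but the decomposition and key observations coincide.
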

The last point is trivial in terms of dual colored graphs, any colored graph with colors $[0..d]$ can be obtained from a collection of bubbles $\bb_1, \dotsc, \bb_N$ connected by edges of color 0. Here, ``connected by edges of color 0'' must follow the rules of colored graphs, i.e. respecting bipartiteness and so that every vertex is incident to exactly one edge of color 0.

\subsection{The case $d=3$} While two-dimensional CBBs are determined by a single integer (the size of their boundaries, $n$ for a $2n$-gon), this is not true in higher dimensions. However, CBBs are always determined by their boundary triangulations. In three dimensions, a CBB is a gluing of tetrahedra along all their triangles with colors 1, 2, 3, while triangles of color 0 constitute the boundary triangulation. From \Cref{thm:CBB}, there is a one-to-one correspondence 
\begin{itemize}
	\item between the boundary triangles and the tetrahedra, 
	\item between the boundary edges and the triangles which do not lie on the boundary, 
	\item between the boundary vertices and edges which do not lie on the boundary.
\end{itemize}
They are obtained by taking the cone over the boundary triangulation and inversely by projecting on the boundary. 
\begin{figure}
	\includegraphics[scale=.5,valign=c]{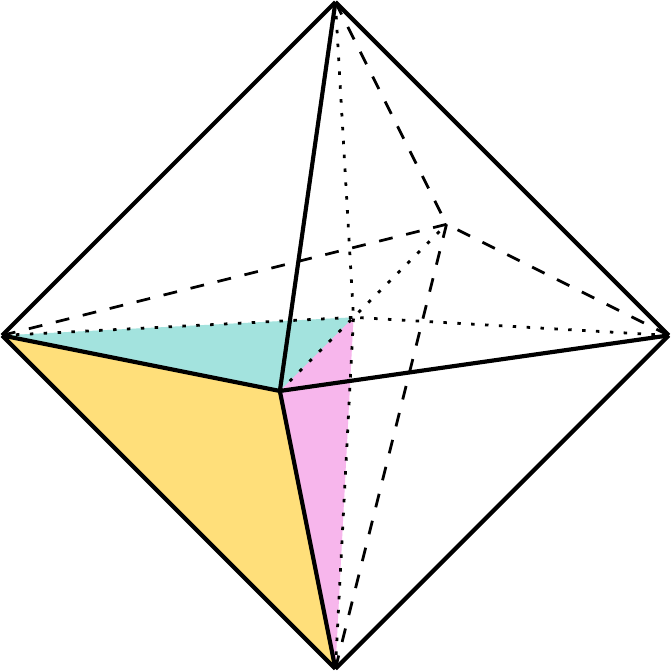}
	\hspace{2cm}
	\includegraphics[scale=.6,valign=c]{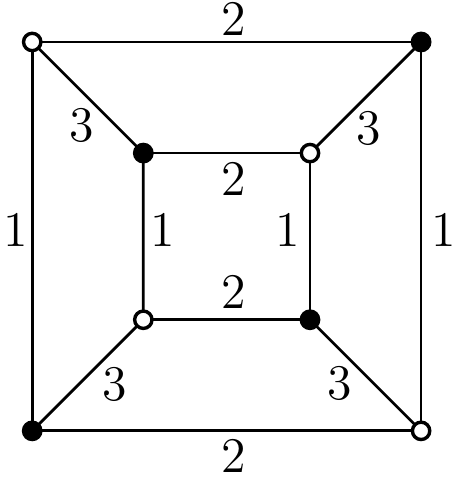}
	\caption{\label{fig:Octahedron} The octahedron is a CBB made of eight tetrahedra, and eight boundary triangles of color 0. Its bubble is the colored 1-skeleton of the dual with the color 0 removed, or equivalently the colored 1-skeleton of the dual to the boundary triangulation. The color 0 is only necessary for the gluing to other bubbles.}
\end{figure}

An example is given by the octehadron in Figure \ref{fig:Octahedron}, which is a colored gluing of eight tetrahedra. More generally, a CBB has a single interior vertex $\sigma(123)$, whose color label is $\{1, 2, 3\}$ (the only one not containing 0) and which is shared by all tetrahedra (it is the ``tip'' of the cone). Each tetrahedron contributes to a boundary triangle. Triangles of color $a=1, 2, 3$ are not contained in the boundary but intersect it at edges with colors $\{0,a\}$. Edges of color labels $\{a, b\}$ for $a, b =1, 2, 3$ do not lie on the boundary; they each have one vertex which is $\sigma(123)$ and another vertex with label $\{0, a, b\}$ on the boundary. Edges with color labels $\{0, a\}$ lie on the boundary and connect vertices with labels $\{0, a, b\}$ to $\{0, a, c\}$. We thus see that the boundary is a colored triangulation, whose set of colors is $\{1,2,3\}$ after removing the irrelevant label 0. 

In the dual picture, a CBB is represented by a bubble with colors $\{1,2,3\}$. This is a tricolored graph and by the above \Cref{thm:CBB}, it is the colored graph representation of the boundary triangulation. In the case of the octahedron, this gives the graph of the cube, as shown in Figure \ref{fig:Octahedron}.

\begin{figure}
	\includegraphics[scale=.55]{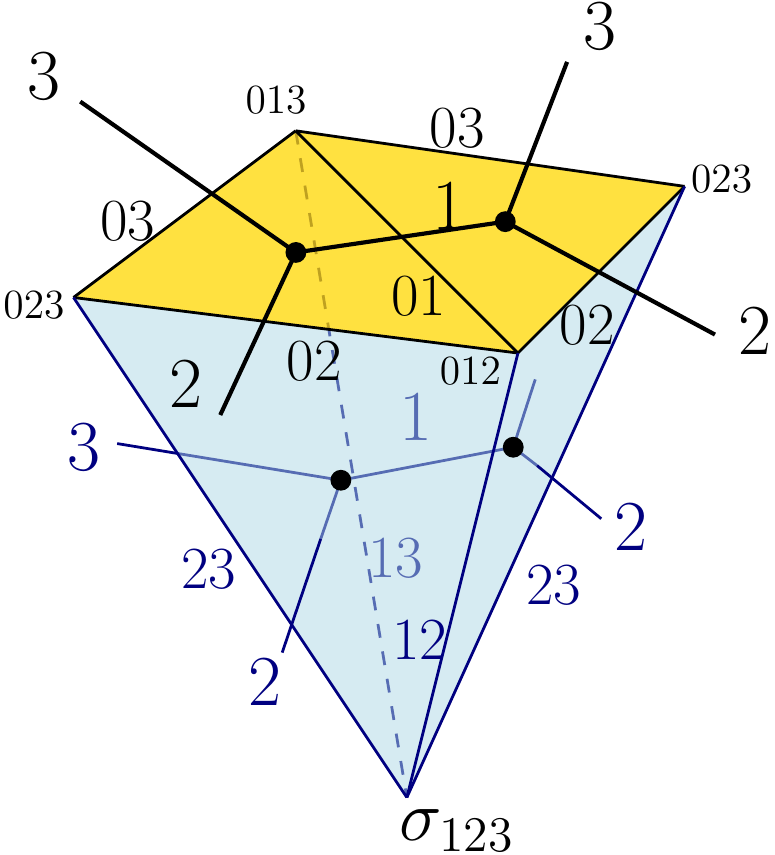}
	\caption{\label{fig:GluedTetDuals} A piece of a CBB $\CBB$ in dimension 3: two tetrahedra glued along their triangle of color 1. The boundary triangles (of color 0) are in yellow and the interior triangles are in blue. The CBB is the cone over its boundary, and both coning and projecting on the boundary are compatible with the colored gluing rule.	The 1-skeleton of the dual with the color 0 removed is depicted in the intoerior and is clearly the same as the 1-skeleton dual to the boundary.}
\end{figure}
\begin{proof}[Proof of \Cref{thm:CBB}]
	For the first point, one needs to show that the boundary triangulation of a CBB is a colored triangulation with one less color. This should be clear because by definition of the colored attaching map, the rules of colored gluings propagate to subsimplices of lower dimensions. It is also true the other way around: the cone over a $(d-1)$-dimensional colored triangulation is a $d$-dimensional colored triangulation with a boundary. In particular, the cone over a $(d-1)$-dimensional colored simplex is a $d$-dimensional colored simplex. Then one checks that the rules of the colored attaching map propagate correctly from the boundary to the bulk under coning, as shown in Figure \ref{fig:GluedTetDuals}. Therefore there is a one-to-one correspondence between the $k$-simplices of $\CBB$ and the $(k-1)$-simplices of its boundary, for $k=1, \dotsc, d$.
	
	To prove the second point, we consider 
	\begin{equation}
			\bb(\CBB) \coloneqq \operatorname{col-1-sk}(\CBB^*)\setminus \{\text{Edges of color 0}\},\quad \text{and} \quad \bb'(\CBB) = \operatorname{col-1-sk}(\partial \CBB^*)
	\end{equation}
	They are both $d$-colored graphs, i.e. bubbles and have the same set of vertices. It remains to prove that their edges have the same incidence on vertices, which follows from the coning above.
\end{proof}

Notice that this is consistent with \Cref{thm:ColoredGraphs}. There we proved that a connected component of the subgraph with all colors except 0 represents in a colored triangulation $\trisp$ a vertex. From the point of view of $\CBB$, this is the unique interior vertex $\sigma(1, \dotsc, d)$, and $\partial\CBB$ is its \emph{link}.

\subsection{CBBs with few simplices and their bubbles} It is simpler to consider the bubbles instead of the triangulations in order to find explicit CBBs (and more generally in order to prove properties of colored triangulations). Let us look at the smallest bubbles and their corresponding CBBs, see Figure \ref{fig:Bubbles}. The simplest one has two vertices connected by all edges with colors $c=1, \dotsc, d$. It encodes the gluing of two $d$-simplices along all their $(d-1)$-simplices except those of color 0, like in two dimensions two colored triangles glued along two edges. Topologically it is a $d$-ball with two boundary $(d-1)$-simplices.

Bubbles with four vertices, called \emph{quartic} bubbles, are characterized by their color sets $\colset\subset [1..d]$, whose colors are those of the parallel edges between a white and a black vertex. They are represented in Figure \ref{fig:Bubbles} with $\colset = \{c_1, \dotsc, c_q\}$, and denoted $\quart(\colset)$. By symmetry, we can use $\colset$ or its complement $[1..d]\setminus \colset$. By convention, we take $q \in\{1, \dotsc, (d-1)/2\}$ if $d$ is odd and $q\in\{1, \dotsc, d/2\}$ if $d$ is even. Notice that for $d=3$, only $q=1$ is possible, i.e. $\colset = \{c\}$ for $c=1, 2, 3$. Due to color relabeling, this leaves three distinct bubbles with four vertices at $d=3$. They are also homeomorphic to the $d$-ball.
\begin{figure}
	\includegraphics[scale=.35,valign=c]{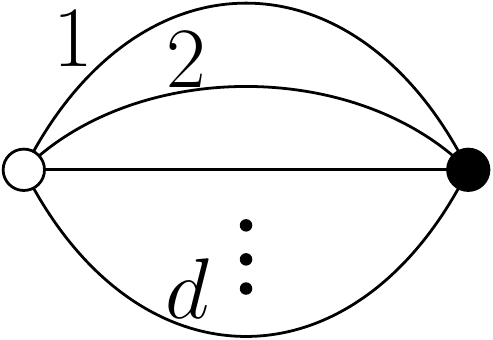}
	\hspace{1cm}
	$\quart(\colset) = $\includegraphics[scale=.35,valign=c]{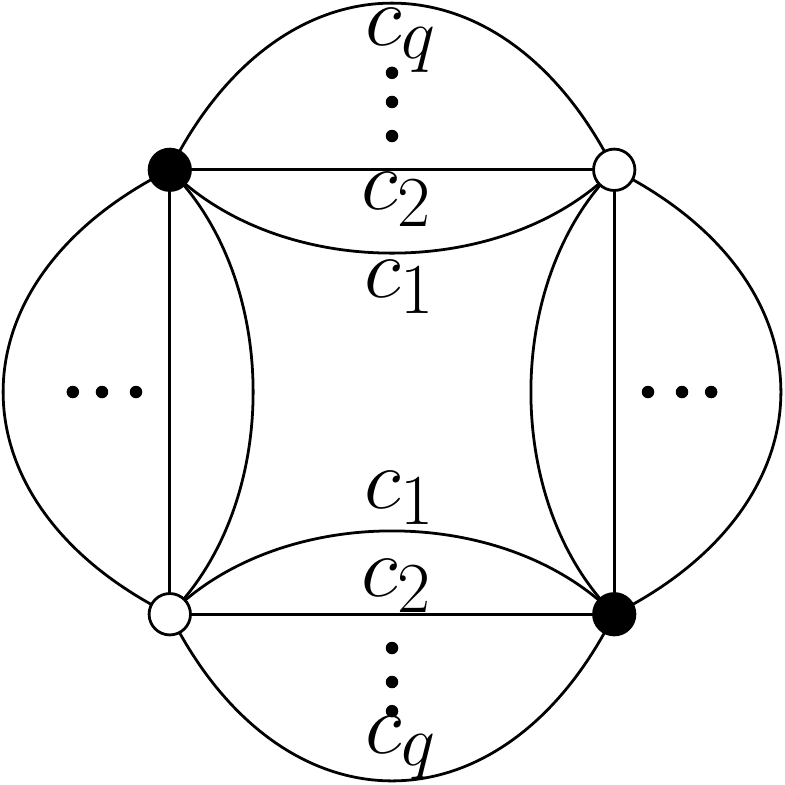}
	\hspace{1cm}
	$\quart(\{1\})_{|d=3} = $\includegraphics[scale=.35,valign=c]{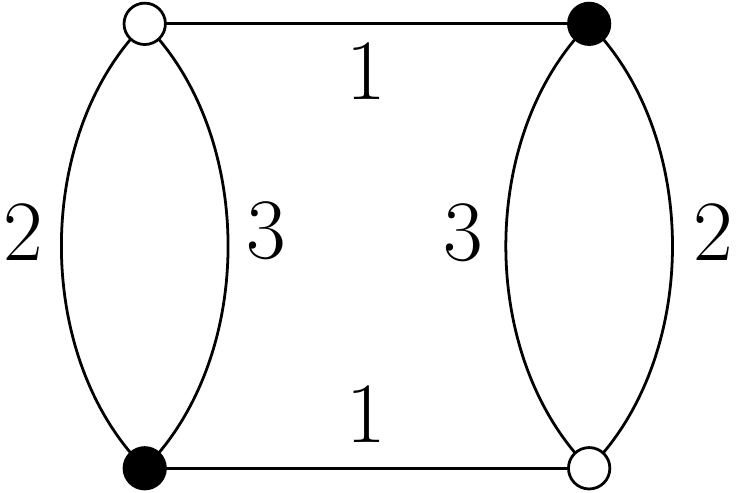}
	\caption{\label{fig:Bubbles}Examples of bubbles: the 2-vertex bubble; the 4-vertex bubbles characterized by $\colset=\{c_1, \dotsc, c_q\}$; only $q=1$ is possible at $d=3$ with 4 vertices, with three possible colorings.}
\end{figure}

\section{3-balls and planar bubbles} 
At $d=2$, bubbles are bicolored cycles alternating the colors 1 and 2 and are thus characterized by a single integer, the length $2p$ of the cycle. However, at $d=3$, bubbles are dual to tricolored triangulations which cannot be characterized by just an integer anymore. In particular, there is not just one bubble at fixed number of vertices like for $d=2$. Up to color relabeling, the three-dimensional bubbles with six vertices are the following.
\begin{equation} \label{6VertexBubbles3d}
	\begin{array}{c} \includegraphics[scale=.3]{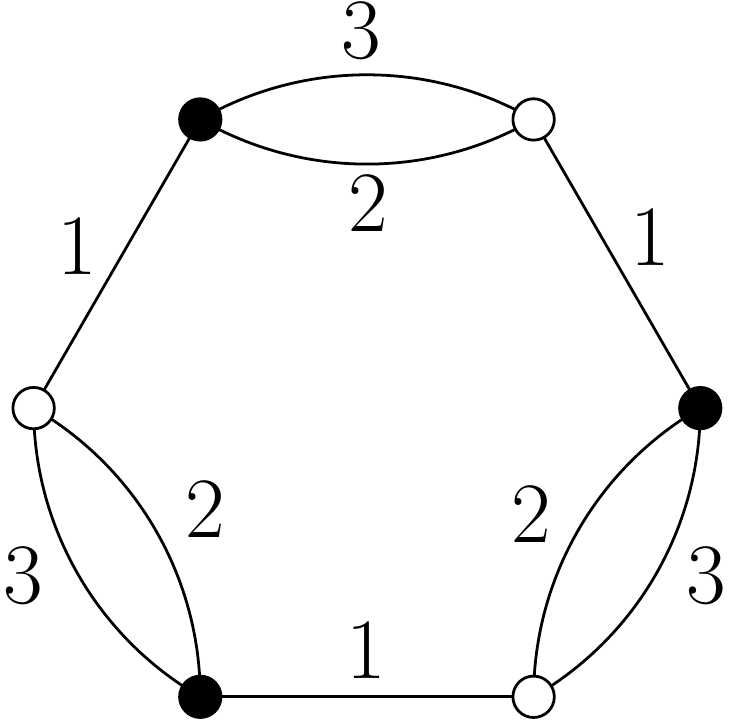} \end{array} \hspace{1.5cm}  \begin{array}{c} \includegraphics[scale=.3]{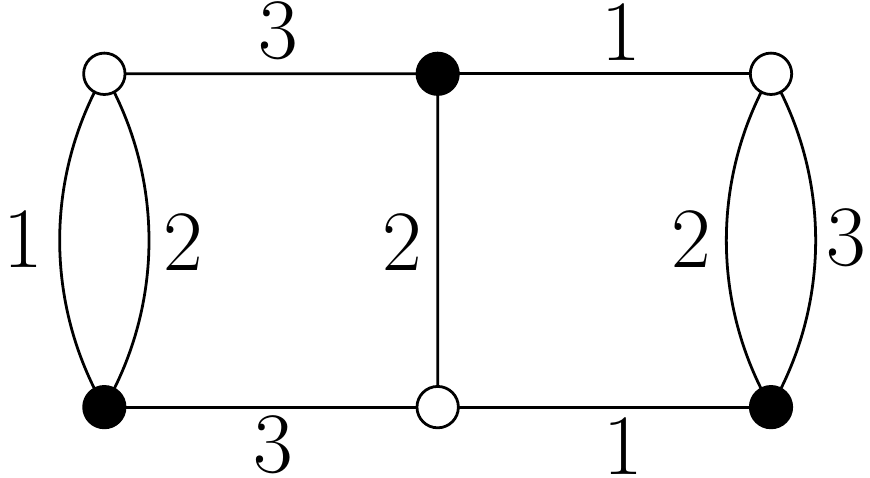} \end{array} \hspace{1.5cm} \begin{array}{c} \includegraphics[scale=.3]{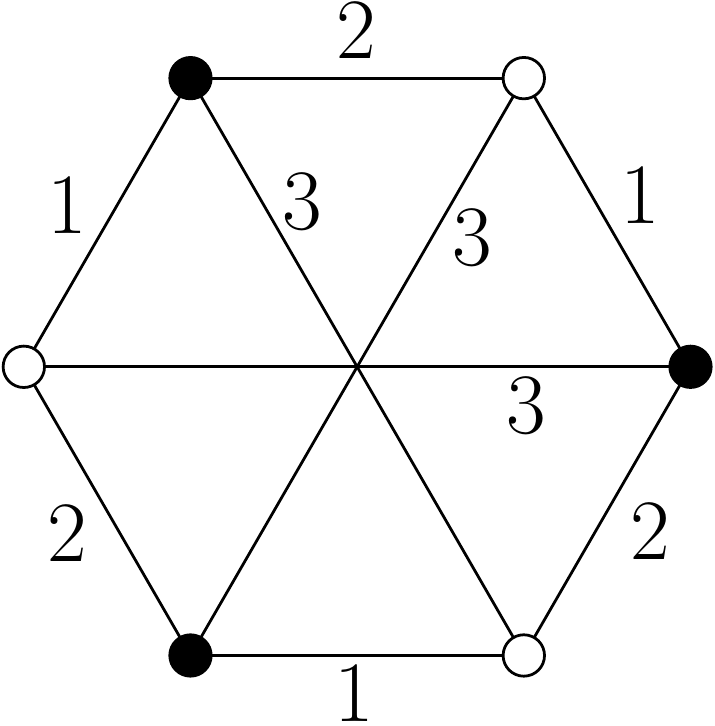} \end{array}
\end{equation}

Recall from \Cref{thm:CanonicalEmbedding} that tricolored graphs have a canonical embedding as cubic maps (here the set of colors is $\{1,2,3\}$ instead of $\{0,1,2\}$). In \eqref{6VertexBubbles3d} we have used that canonical embedding, with the cyclic order $(123)$ around white vertices and $(132)$ around black vertices, so that the bubbles can be seen as maps (dual to the colored triangulations). In particular, they are planar, except for the one with $K_{3,3}$ as underlying graph, which is a torus.

As a corollary of \Cref{thm:CBB}, one can characterize the CBBs which are homeomorphic to the 3-ball at $d=3$ in terms of their boundary triangulation. We say that a bubble has genus $g$ if that is its genus from Equation \eqref{EulerColoredGraph} or equivalently if its canonical embedding as a cubic map, \Cref{thm:CanonicalEmbedding}, has genus $g$.

\begin{corollary}{}{PlanarBubbles} 
	At $d=3$, a CBB is homeomorphic to the 3-ball if and only if its bubble is planar.
\end{corollary}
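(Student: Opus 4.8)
The plan is to use the cone structure from \Cref{thm:CBB} to reduce the three-dimensional topological question to a two-dimensional one about the boundary surface, and then to invoke the classical fact that the cone over a closed surface is a manifold precisely when that surface is a sphere.

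First I would pin down what the two sides of the equivalence mean. By \Cref{thm:CBB}, a $d=3$ CBB $\CBB$ is the cone over its boundary, $\CBB = C(\partial\CBB)$, where $\partial\CBB$ is a closed $2$-dimensional colored triangulation, hence (since all our colored graphs are bipartite, i.e.\ the triangulation is orientable) a closed orientable surface. Its bubble $\bb(\CBB) = \operatorname{col-1-sk}(\partial\CBB^*)$ is, by the same proposition, the colored-graph encoding of $\partial\CBB$, so by the definition of the genus of a bubble (via \eqref{EulerColoredGraph} and the canonical embedding of \Cref{thm:CanonicalEmbedding}) one has $g(\bb(\CBB)) = g(\partial\CBB)$. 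Since a closed orientable surface of genus $0$ is $S^2$, the statement ``$\bb(\CBB)$ is planar'' is equivalent to ``$\partial\CBB \cong S^2$''. The corollary is thereby reduced to the purely topological claim that $C(\partial\CBB) \cong B^3$ if and only if $\partial\CBB \cong S^2$.

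For the ``if'' direction I would exhibit the homeomorphism directly: the map $S^2 \times [0,1] \to B^3$, $(x,t) \mapsto (1-t)x$, collapses $S^2\times\{1\}$ to the origin and descends to a homeomorphism $C(S^2) \to B^3$; composing with $\CBB = C(\partial\CBB)$ gives $\CBB \cong B^3$. For the ``only if'' direction I would argue at the apex of the cone. By \Cref{thm:CBB} and the discussion of the $d=3$ case, the CBB has a single interior vertex $\sigma(123)$, the tip of the cone, a regular neighborhood of which is the cone over its link, and that link is exactly $\partial\CBB$. The heart of the matter is the local topology there: in a $3$-manifold every interior point has a neighborhood homeomorphic to $\mathbb{R}^3$, equivalently (in the PL category, with no loss at $d=3$ by Moise's theorem) the link of every interior vertex is a $2$-sphere. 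Thus if $\CBB\cong B^3$ is a manifold, then $\sigma(123)$, being the unique interior vertex, has spherical link, i.e.\ $\partial\CBB\cong S^2$. Concretely the obstruction is detected by local homology: for a cone $C(X)$ over a connected compact space $X$ with apex $v$ one computes $H_k(C(X),C(X)\setminus\{v\}) \cong \widetilde H_{k-1}(X)$, so for $X=\partial\CBB$ of genus $g\geq 1$ this is $\mathbb{Z}^{2g}\neq 0$ in degree $k=2$, whereas at any point of a $3$-manifold-with-boundary the local homology is concentrated in degree $3$ (interior) or vanishes (boundary). Hence $g(\partial\CBB)\geq 1$ forces $\sigma(123)$ to be a non-manifold point, so $\CBB\not\cong B^3$, which is the desired contrapositive.

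The main obstacle is precisely this last step: one must rule out that a cone over a higher-genus surface could nonetheless happen to be homeomorphic to a ball. The local-homology (equivalently, spherical-link) computation at the apex settles this cleanly, and it is the only place where genuinely topological input enters, as opposed to the combinatorial coning bookkeeping already packaged into \Cref{thm:CBB}.
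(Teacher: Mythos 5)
Your proof is correct and follows the route the paper intends: the corollary is left as an immediate consequence of \Cref{thm:CBB}, the point being that a CBB is the cone over its boundary surface (whose genus is that of the bubble), and the cone over a closed orientable surface is a ball exactly when the surface is $S^2$. The only part the paper leaves implicit is the ``only if'' direction, which you settle correctly via the local homology (equivalently, the non-spherical link) at the apex --- precisely the codimension-2 singularity of pseudomanifolds mentioned earlier in the chapter.
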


\chapter{Topological recursion for weighted Hurwitz numbers} \label{sec:TR}
A partition $\lambda=(\lambda_1, \lambda_2, \dotsc, \lambda_l)$ is a finite, non-increasing, sequence of positive integers, i.e. $\lambda_1\geq \lambda_2 \geq \dotsb\geq \lambda_l>0$. Its size is $|\lambda|\coloneqq \sum_{i=1}^l \lambda_i$ and its length is $\ell(\lambda)\coloneqq l$. We use the notation $\lambda \vdash n$ if $|\lambda|=n$ and say that it is a partition of $n$.

Partitions label the conjugacy classes of the symmetric group. If $\sigma\in\mathfrak{S}_n$ has a cycle decomposition with $m_i(\sigma)$ cycles of length $i>0$, its conjugacy class is the set $\{\tau \sigma\tau^{-1}\}_{\tau\in\mathfrak{S}_n}$ and it consists in permutations which have the same cycle type. Its conjugacy class is thus labeled by the partition with $m_i(\sigma)$ parts equal to $i$. As is well-known, partitions also label the irreducible representations of the symmetric group.

\section{Weighted Hurwitz numbers} \label{sec:WeightedHurwitzNumbers}

\subsection{$(m+1)$-Factorizations of permutations} Let $m\geq 0$. A classical problem in combinatorics is that of enumerating all sequences of permutations of fixed conjugacy classes which factorize the identity, i.e. set $\lambda^{(0)}, \lambda^{(1)}, \dotsc, \lambda^{(m)}$ some partitions of $n$ and count all solutions $(\sigma_0, \sigma_1, \dotsc, \sigma_m)\in \mathfrak{S}_n^{m+1}$ of
\begin{equation} \label{Factorization}
	\sigma_0 \sigma_1 \dotsm \sigma_m = \mathbb{1},
\end{equation}
such that $\sigma_i$ lies in the conjugacy class $\lambda^{(i)}$, for $i=0, \dotsc, m$.

This is a direct translation of a famous problem by Hurwitz in eumerative geometry, which is counting the number of $n$-sheeted branched coverings of the sphere with $m$ branch points and ramification profiles over the branch points given by the partitions $\lambda^{(0)}, \lambda^{(1)}, \dotsc, \lambda^{(m)}$. The genus $g$ of the covering is then given by the Riemann-Hurwitz formula,
\begin{equation}
	2-2g = 2n + \sum_{i=0}^{m} (\ell(\lambda^{(i)})-n).
\end{equation}

\subsection{Interpretation as constellations} \label{sec:EquivalenceFactorizationsConstellations}
\begin{proposition}{}{}
	There is a one-to-one correspondence between factorizations of the identity and labeled $m$-constellations of size $n$, where the parts of $\lambda^{(c)}$ are the degrees of the vertices of color $c=1, \dotsc, m$ and the parts of $\lambda^{(0)}$ are the degrees of the faces.
\end{proposition}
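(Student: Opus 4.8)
The plan is to exhibit the colored analogue of the ``encoding of a map by its rotation system'' recalled for bipartite maps in the introduction (the pair $(\sigma_\circ,\sigma_\bullet)$), and then to read the degree statistics off the cycle types. Fix a labeled $m$-constellation of size $n$ and take its $n$ labeled black faces as the ground set $[1..n]$. The key preliminary observation is that, since every edge is canonically oriented with its black face on the left, the boundary of a black face is a directed $m$-gon whose vertices, read counter-clockwise, run through the colors $m, m-1, \dotsc, 1$; hence each black face carries exactly one corner at a vertex of each color $c\in[1..m]$. For each $c$ I define $\sigma_c\in\mathfrak{S}_n$ by sending a black face $i$ to the next black face met when one rotates, around the color-$c$ vertex incident to the color-$c$ corner of $i$, consistently counter-clockwise. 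Because the faces around a vertex alternate black/white, this is a well-defined bijection whose cycles are precisely the color-$c$ vertices, the length of a cycle being the degree of that vertex; thus $\sigma_c$ has cycle type $\lambda^{(c)}$. Setting $\sigma_0:=(\sigma_1\cdots\sigma_m)^{-1}$ gives $\sigma_0\sigma_1\cdots\sigma_m=\mathbb{1}$ for free.

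The substance is then to identify the cycles of $\sigma_0$ with the (white) faces and their lengths with the parts of $\lambda^{(0)}$. I would do this by tracing the orbit of a black face $i$ under $\sigma_1\cdots\sigma_m$ and checking that each successive $\sigma_c$ advances one step along the boundary of a single white face: composing the $m$ rotations, in this order, realizes exactly one full turn through a corner sequence of colors $1,\dotsc,m$, so one orbit of the product walks once around a white face, visiting its incident black faces in cyclic order. This is the exact generalization of ``faces are the cycles of $\sigma_\circ\sigma_\bullet$'', and it is the only step requiring genuine care with the counter-clockwise and canonical-orientation conventions that single out this particular ordered product as the face permutation; it yields that the cycle type of $\sigma_0$ records the white-face degrees, i.e. $\lambda^{(0)}$.

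For the inverse map I would realize the data geometrically. A factorization $(\sigma_0,\dotsc,\sigma_m)$ of the identity is the same datum as a free choice of $(\sigma_1,\dotsc,\sigma_m)$, since $\sigma_0=(\sigma_1\cdots\sigma_m)^{-1}$ is then forced. From $(\sigma_1,\dotsc,\sigma_m)$ I build a map by taking $n$ labeled black $m$-gons with corners colored $m,\dotsc,1$ counter-clockwise and gluing them into colored stars: the color-$c$ corners of the black faces in a given cycle of $\sigma_c$ are made to share one color-$c$ vertex, in the cyclic order prescribed by that cycle. This is a rotation system, hence it produces a closed oriented bicolored map; by construction each black face has degree $m$ and the color-decrease rule holds, so the result is an $m$-constellation. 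By the same orientation analysis as above, its white faces are exactly the cycles of $\sigma_1\cdots\sigma_m=\sigma_0^{-1}$, so the forced permutation $\sigma_0$ is precisely the white-face permutation.

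Finally I would check that the two constructions are mutually inverse, which is immediate once the face/product identity is in place, and that connected constellations correspond to transitive tuples $(\sigma_1,\dotsc,\sigma_m)$. The degree correspondences then follow directly from the cycle-type computations: the parts of $\lambda^{(c)}$ are the cycle lengths of $\sigma_c$, i.e. the degrees of the color-$c$ vertices, and the parts of $\lambda^{(0)}$ are the cycle lengths of $\sigma_0$, i.e. the face degrees. The main obstacle, as indicated, is the face–product identity of the second paragraph together with the bookkeeping of orientation conventions; everything else is routine once that is settled.
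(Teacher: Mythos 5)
Your proof is correct, but it takes a different route from the paper's. The paper builds the bijection starting from the factorization side: each $i\in[1..n]$ becomes a star-vertex with $m+1$ colored half-edges, each cycle of $\sigma_c$ becomes a vertex of color $c$ collecting the corresponding half-edges in cyclic order, and the key observation is that the faces of the resulting map are the cycles of $\sigma_0\sigma_1\dotsm\sigma_m$ — so the identity condition is exactly the condition that every face surrounds $m+1$ star-vertices. One then passes from this $(m+1)$-constellation with all faces of degree $m+1$ to an $m$-constellation by the reduction \eqref{PureConstellation}. You instead start from the constellation side and extract $\sigma_1,\dotsc,\sigma_m$ directly as rotations of the labeled black faces around the colored vertices, with $\sigma_0$ forced, and prove the face--product identity in situ; this is essentially the content the paper only states informally after its proof (Figure \ref{fig:ConstellationPermutation} and the remark that $\sigma_1\dotsm\sigma_m=\sigma_0^{-1}$, with vertices on the left and faces on the right). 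What the paper's route buys is the manifest symmetry among all $m+1$ conjugacy classes (all colors, including $0$, are treated identically as star-vertex colors, which is what feeds into Frobenius' formula) and a construction that recycles the colored-graph-to-constellation reduction used elsewhere in the text; what your route buys is directness — no auxiliary color $0$, no chopping step — at the cost of concentrating all the orientation bookkeeping in the single verification that the ordered product $\sigma_1\dotsm\sigma_m$ turns once around a white face, which you correctly identify as the only non-routine step and which you sketch at about the same level of detail as the paper does for its corresponding claim. One small point of hygiene shared with the paper's own statement: the cycle lengths of $\sigma_c$ count the black corners at a color-$c$ vertex (half its edge-degree), and the cycle lengths of $\sigma_0$ count face degrees divided by $m$; your phrasing inherits this convention silently, as does the proposition itself.
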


Through that bijection, the Riemann-Hurwitz formula is equivalent to Euler's formula for the genus of constellations.

\begin{proof}
	We first give a representation of factorizations of permutations as special $(m+1)$-constellations which we then show to be equivalent to $m$-constellations as defined above. To each $i\in[1..n]$, we associate a star-vertex, i.e. a special vertex with $m+1$ incident edges carrying both the label $i$ and the color labels $c=1..m$ clockwise around the star-vertex. Then to every cycle of $\sigma_c$ of length $k$, we associate a vertex of color $c$ and degree $k$. If the cycle reads $(i_1 \dotsb i_k)$, the edges with labels $i_1, \dotsc, i_k$ and color label $c$ are attached to that vertex, respecting the cyclic order to the cycle,
	\begin{equation}
		(i_1 \dotsb i_k)_c \quad \leftrightarrow\quad \includegraphics[scale=.6,valign=c]{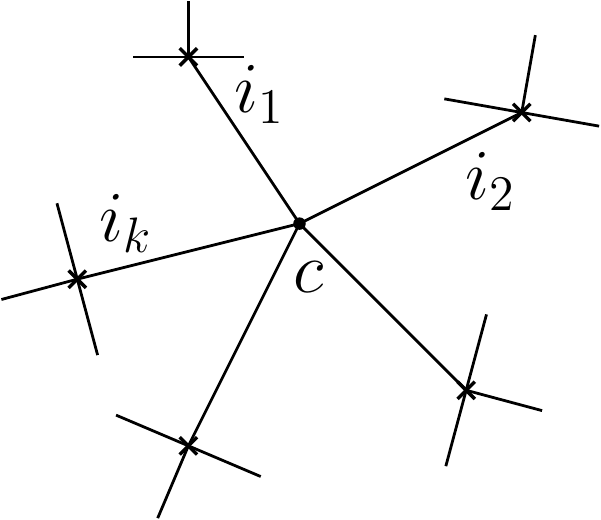}
	\end{equation}
	Here the star-vertices are represented as crosses and the subscript $c$ of the cycle indicates that it belongs to $\sigma_c$. One obtains a map. When one writes down the labels of the star-vertices encountered around each face, one finds that they are the cycles of the product $\sigma_0 \sigma_1 \dotsm \sigma_m$. Therefore a factorization of the identity is equivalent to such a map whose faces all go around exactly $m+1$ star-vertices.
	
	To turn those maps into constellations, one simply replaces star-vertices with black faces of degree $m+1$,
	\begin{equation} \label{StarVertexConstellations}
		\includegraphics[scale=.6,valign=c]{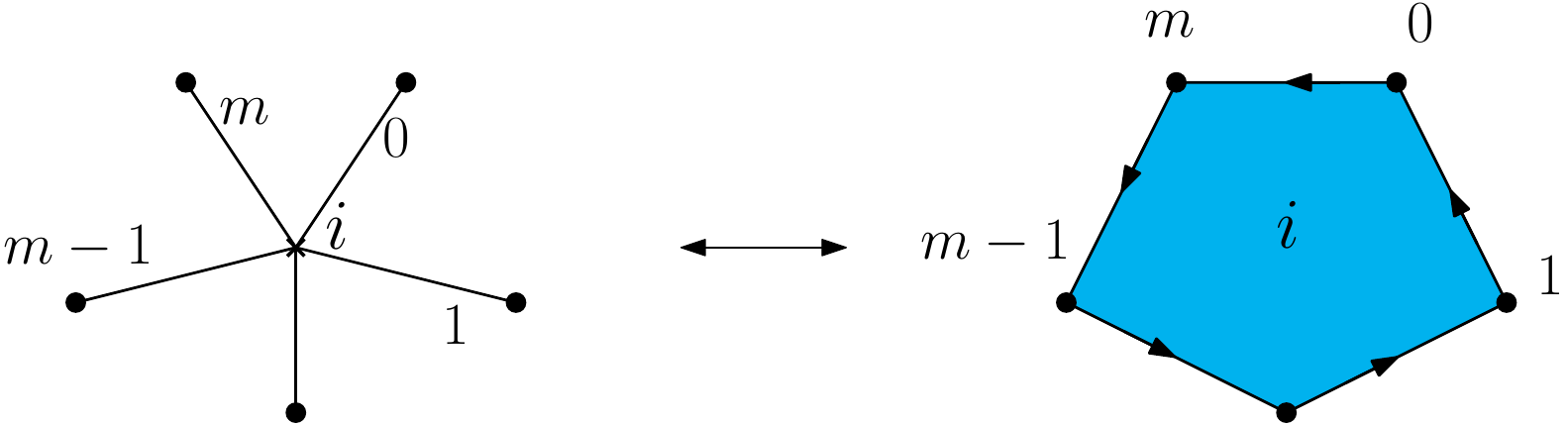}
	\end{equation}
	and the black face inherits the label $i\in[1..n]$ of the star-vertex. This turns the original faces into white faces of degree $m+1$, hence we find $(m+1)$-constellations whose faces all have degree $m+1$. The final step to recover $m$-constellations is to remove the vertices of color 0 and chop the black faces, which frees the white faces of the constraint on their degrees. This is exactly what we did in \eqref{PureConstellation} with $m=d$.
\end{proof}

After applying \eqref{PureConstellation} to turn an $(m+1)$-constellation with faces of degree $m+1$ only into a general $m$-constellation, the white faces turn out to correspond to the cycles $\sigma_0$, as illustrated in Figure \ref{fig:ConstellationPermutation}. In this sense, it is more natural to write the factorization as $\sigma_1\dotsm \sigma_m = \sigma^{-1}_0$, with permutations on the left corresponding to vertices and those on the right to faces.
\begin{figure}
	\includegraphics[scale=.5]{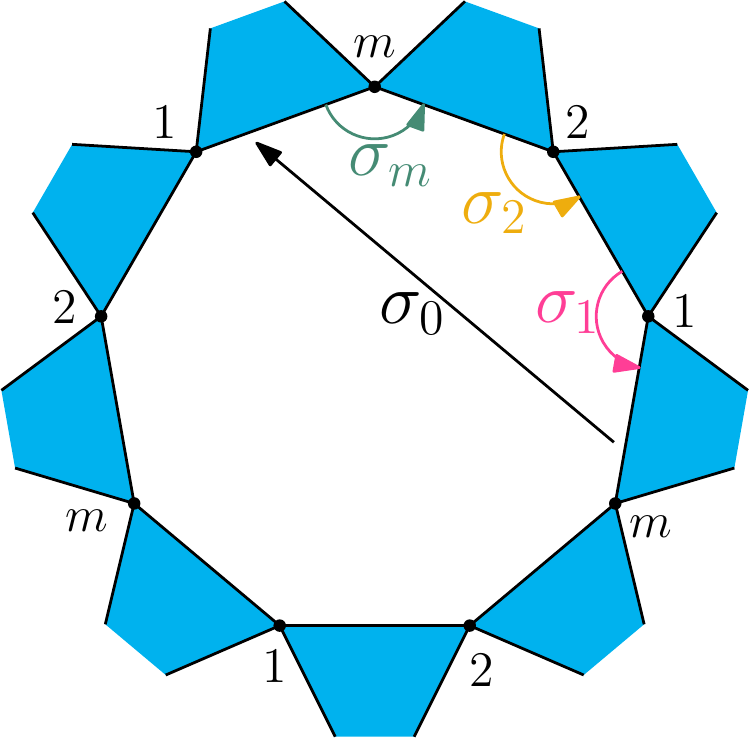}
	\caption{\label{fig:ConstellationPermutation} The permutations $\sigma_c$ act on the (labeled) black faces, so that the cycles of $\sigma_1\dotsm \sigma_m$ are those of $\sigma^{-1}_0$.}
\end{figure}

As for $(m-1)$-biconstellations (defined in \ref{sec:Biconstellations}), they also describe the same factorizations, since they are a simple redrawing of $m$-constellations. However, since black faces have been merged by the operation \eqref{VertexFaceBiconstellation}, the labels $1, \dotsc, n$ are not attached to them anymore. Instead, they are on \emph{left paths}, where a left path is an oriented sequence of $m$ edges from a vertex of color $m-1$ to the next vertex of color $m-1$ along a black face. The permutations $\sigma_c$ then acts on left paths and the factorization derives from the following picture
\begin{equation}
	\includegraphics[scale=1,valign=c]{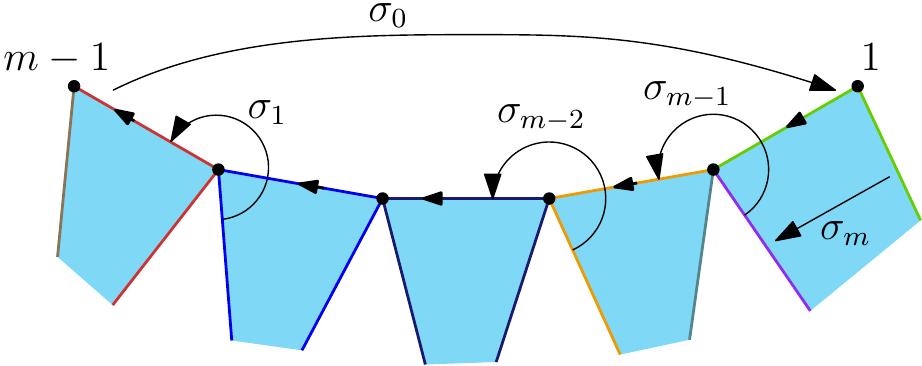}
\end{equation}
where the different colors of the edges represent different left paths. As for constellations, one can write the factorization for biconstellations as $\sigma_1 \dotsm \sigma_{m-1} = \sigma_0^{-1} \sigma_m^{-1}$ with the permutations giving rise to vertices on the left and permutations giving rise to faces on the right.

\subsection{Frobenius formula and Schur expansion} Denote $H^{(m+1)}(\lambda^{(0)}, \lambda^{(1)}, \dotsc, \lambda^{(m)})$ the number of $(m+1)$-factorizations \eqref{Factorization}. The enumeration problem is to extract information on those quantities and for that we will use generating series.

If $\ptimes = (p_1, p_2, \dotsc)$ is an infinite set of indeterminates and $\lambda = (\lambda_1 \geq \dotsb \geq \lambda_{\ell(\lambda)})$ a partition, let
\begin{equation} \label{Times}
	p_\lambda = p_{\lambda_1} \dotsm p_{\lambda_{\ell(\lambda)}},
\end{equation}
We will form a generating series for $(m+1)$-factorizations, by using a set of such variables to track the conjugacy class of every permutation. Let them be $\ptimes^{(0)}, \ptimes^{(1)}, \dotsc, \ptimes^{(m)}$, and let the series of factorizations be defined as
\begin{equation} \label{mFactorizationSeries}
	\tau^{\text{$(m+1)$-fact}}(\ptimes^{(0)}, \dotsc, \ptimes^{(m)};t) = \sum_{n\geq 0} \frac{t^n}{n!} \sum_{\mu^{(0)}, \dotsc, \mu^{(m)}\vdash n} H^{(m+1)}(\mu^{(0)}, \dotsc, \mu^{(m)}) \prod_{c=0}^{m} p^{(c)}_{\mu^{(c)}}.
\end{equation}

The set $\{p_\lambda\}_{\lambda}$ is a $\mathbb{Q}$-basis of the ring of of symmetric functions as defined in \cite{Macdonald1995}. It is thus a natural idea to ask whether changing the basis may be helpful in providing information on the numbers $H^{(m+1)}(\lambda^{(0)}, \lambda^{(1)}, \dotsc, \lambda^{(m)})$. We will see that it is indeed the case in Chapter \ref{sec:EnumerationIntegrability}. But for now we will focus on actually rewriting $\tau^{\text{$(m+1)$-fact}}(\ptimes^{(0)}, \dotsc, \ptimes^{(m)};t)$ in a different basis, namely the Schur basis.

As is standard in algebraic combinatorics, let $z_\lambda = \prod_{i=1}^{\ell(\lambda)} \lambda_i \prod_{r\geq 1} m_r(\lambda)!$, where $m_r(\lambda)$ is the number of parts of size $r$ in $\lambda$. The Schur function $s_\lambda(\ptimes)$ is 
\begin{equation} \label{Schur}
	s_\lambda(\ptimes) = \sum_\mu \frac{\chi_\lambda(\mu)}{z_\mu} p_\mu.
\end{equation}
where $\chi_\lambda$ is the character of the irreducible representation $\lambda$ of $\mathfrak{S}_n$, and $\chi_\lambda(\mu)$ its evaluation on the conjugacy class $\mu$. It corresponds to the usual Schur function $s_\lambda(x_1, x_2, \dotsc)$ when the variables $p_k$s are interpreted as the power-sums of auxiliary variables $x_1, x_2, \dotsc$ (an infinite number of them is needed for all $p_k$s to be independent)
\begin{equation}
	p_k(x_1, x_2, \dotsc) = \sum_{i\geq1} x_i^k.
\end{equation}
We keep the same notation $s_\lambda$ for both functions, whether it is seen as a function of the $p_k$s or of the auxiliary variables.

The factorization problem is clearly invariant under conjugation $\sigma^{(c)}\to \rho\sigma^{(c)}\rho^{-1}$ (it is called a \emph{central} problem), and it is thus amenable via character theory. Let $f^\lambda$ be the dimension of the irreducible representation indexed by $\lambda$. Frobenius' formula is
\begin{equation} \label{Frobenius}
	H^{(m+1)}(\mu^{(0)}, \dotsc, \mu^{(m)}) = \sum_{\alpha\vdash n} \frac{n!^m}{(f^\alpha)^{m-1}} \frac{\chi_\alpha(\mu^{(0)}) \dotsm \chi_\alpha(\mu^{(m)})}{z_{\mu^{(0)}} \dotsm z_{\mu^{(m)}}}
\end{equation}
and by the hook length formula, $\frac{n!^m}{(f^\alpha)^{m-1}} = n! \hook(\alpha)^{m-1}$ where $\hook(\alpha)$ is the product of the hook lengths over all boxes of $\alpha$. By plugging this formula into the definition \eqref{mFactorizationSeries}, we find
\begin{theorem}{}{SchurExpansionFactorizations}
	\begin{equation} \label{SchurExpansionFactorizations}
		\tau^{\text{$m$-fact}}(\ptimes^{(0)}, \dotsc, \ptimes^{(m)};t) = \sum_\lambda t^{|\lambda|} \operatorname{hook}(\lambda)^{m-1} s_\lambda(\ptimes^{(0)}) \dotsm s_\lambda(\ptimes^{(m)}),
	\end{equation}
\end{theorem}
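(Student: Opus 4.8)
The plan is to substitute Frobenius' formula \eqref{Frobenius} into the definition \eqref{mFactorizationSeries} and then recognize each resulting sum over conjugacy classes as a Schur function via \eqref{Schur}; the whole argument is a reorganization of sums with no analytic content, since everything is a formal power series in $t$ graded by $n=|\lambda|$.

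First I would insert \eqref{Frobenius} into \eqref{mFactorizationSeries}, producing a sum over $n\geq 0$, over the $m+1$ partitions $\mu^{(0)},\dotsc,\mu^{(m)}\vdash n$, and over $\alpha\vdash n$. The key observation is that the summand factorizes across the $m+1$ colors: the prefactor $1/(z_{\mu^{(0)}}\dotsm z_{\mu^{(m)}})$ together with $\prod_{c}p^{(c)}_{\mu^{(c)}}$ distributes so that each character $\chi_\alpha(\mu^{(c)})$ is paired with exactly one $z_{\mu^{(c)}}$ and one $p^{(c)}_{\mu^{(c)}}$. Moving $\sum_{\alpha\vdash n}$ to the outside, the sums over the distinct $\mu^{(c)}$ decouple, and the total becomes a product of $m+1$ independent factors
\begin{equation*}
	\sum_{\mu^{(c)}\vdash n} \frac{\chi_\alpha(\mu^{(c)})}{z_{\mu^{(c)}}}\, p^{(c)}_{\mu^{(c)}} = s_\alpha(\ptimes^{(c)}),
\end{equation*}
where the equality is exactly the definition \eqref{Schur} evaluated at $\lambda=\alpha$ (the sum over $\mu^{(c)}$ ranges over partitions of $n=|\alpha|$, as it must for $\chi_\alpha(\mu^{(c)})$ to be defined).

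Next I would simplify the combinatorial prefactor. By the hook length formula $f^\alpha = n!/\hook(\alpha)$, the factor $\tfrac{n!^m}{(f^\alpha)^{m-1}}$ equals $n!\,\hook(\alpha)^{m-1}$, exactly as recorded just before the statement, so the explicit $1/n!$ of \eqref{mFactorizationSeries} cancels the $n!$ and leaves $t^n\,\hook(\alpha)^{m-1}$. Merging the two outer sums $\sum_{n\geq 0}\sum_{\alpha\vdash n}$ into a single sum over all partitions $\lambda$, with $t^n=t^{|\lambda|}$, yields precisely \eqref{SchurExpansionFactorizations}. The only point requiring care — and hence the ``main obstacle'', though it is a mild one — is ensuring that the $z_\mu$ factors and characters pair up correctly color by color, so that the interchange of the finite inner sums over the $\mu^{(c)}$ with the outer sum over $\alpha$ produces a clean product of Schur functions rather than a tangled expression.
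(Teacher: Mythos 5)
Your proposal is correct and is exactly the paper's argument: the paper simply says ``by plugging [Frobenius' formula] into the definition \eqref{mFactorizationSeries}, we find'' the result, and your write-up fills in precisely those steps (decoupling the sums over the $\mu^{(c)}$ into Schur functions via \eqref{Schur} and using the hook length formula to simplify the prefactor). No issues.
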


\subsection{The tau function of (bi)constellations} Us being able to track $m+1$ conjugacy classes in \Cref{thm:SchurExpansionFactorizations} is due to the fact that we used representation theory. However, in the context of maps it is in general too difficult to keep more than 2 conjugacy classes, say $\lambda \equiv \lambda^{(0)}$ and $\mu\equiv \lambda^{(1)}$, while only the numbers of parts of the others are kept. This corresponds to $H(\lambda, \mu, n_2, \dotsc, n_m)$, counting biconstellations with respect to the face degrees of black and white faces and with respect to the number of vertices $n_c$ of each color $c=2..m$. This reduction seems a condition to be able to write evolution equations {\it à la} Tutte, or deal with $b$-deformed maps where no representation theory is relevant.

We thus set $p^{(c)}_i = u_c$ for $c=2, \dotsc, m$, and $\ptimes\equiv \ptimes^{(0)}$, $\qtimes\equiv \ptimes^{(1)}$. We need $s_\lambda(1^{u_c})\equiv s_\lambda(\ptimes^{(c)} = (u_c,u_c,\dotsb))$. Notice that $s_\lambda(\ptimes^{(c)})$ is a polynomial, so it is enough to know $s_\lambda(1^{u_c})$ for integer values $u_c=N\in\mathbb{N}$. It is easily found since $u_c=N\in\mathbb{N}$ corresponds to setting the variables $x_i$s of the symmetric functions to $x_1=\dotsb=x_N=1$ and $x_i=0$ for $i>N$. This is called the principal specialization \cite{Macdonald1995} and it leads to
\begin{equation} \label{PrincipalSpecialization}
	s_\lambda(1^{u_c}) = \frac{1}{\operatorname{hook}(\lambda)} \prod_{\Box\in\lambda} (u_c+c(\Box))
\end{equation}
where $c(\square) = x-y$ is the content of the box of coordinates $(x,y)$ of $\lambda$ ($(0,0)$ being the bottom left corner). It comes	
\begin{equation}
	\begin{aligned}
		\tau^{\text{$(m-1)$-biconst}}(\ptimes, \qtimes, u_2, \dotsc, u_m;t) &= \sum_{n\geq 0} \frac{t^n}{n!} \sum_{\lambda, \mu\vdash n} H(\lambda, \mu, n_1, \dotsc, n_m) p_\lambda q_\mu \prod_{c=2}^m u_c^{n_c}\\
		&= \sum_{\lambda} t^{|\lambda|} s_\lambda(\ptimes) s_\lambda(\qtimes) \prod_{\Box\in\lambda} \prod_{c=2}^m (u_c+c(\Box)).
	\end{aligned}
\end{equation}

\subsection{Simple and monotone Hurwitz numbers} Instead of keeping $m$ fixed and working with arbitrary conjugacy classes, one can consider the factorization question for some given types of conjugacy classes with $m$ arbitrary. We give the Schur expansions without proof.
\begin{itemize}
	\item \emph{Double simple Hurwitz numbers} count the number of factorizations when $\sigma_2, \dotsc, \sigma_{m}$ are transpositions. Their series is
	\begin{equation}
		\tau^{\text{Double simple}}(\ptimes, \qtimes, w;t) = \sum_{\lambda} t^{|\lambda|} s_\lambda(\ptimes) s_\lambda(\qtimes) \prod_{\Box\in\lambda} e^{wc(\Box)}
	\end{equation}
	where $w$ counts the number of transpositions.
	\item \emph{Double monotone Hurwitz numbers} count the number of factorizations where $(\sigma_2, \dotsc, \sigma_{m})$ is a monotone run of transpositions, i.e. if $\sigma_c = (a_c, b_c)$ for $a_c<b_c$, then the constraints $b_1\leq \dotsb\leq b_{m-1}$ have to hold. Their series is
	\begin{equation}
		\tau^{\text{Double mon}}(\ptimes, \qtimes, v;t) = \sum_{\lambda} t^{|\lambda|} s_\lambda(\ptimes) s_\lambda(\qtimes) \prod_{\Box\in\lambda} \frac{1}{1+vc(\Box)},
	\end{equation}
	where $v$ counts the number of transpositions.
\end{itemize}
Below we introduce weighted Hurwitz numbers, which generalize all the above examples with two fixed conjugacy classes $\lambda, \mu$.

\subsection{Weighted Hurwitz numbers} Call a \emph{run of transpositions} of length $\ell$,  an $\ell$-tuple of transpositions in $\mathfrak{S}_n$ of the form
\begin{equation}
	\rho = ((a_1,b_1), \dots, (a_\ell,b_\ell)),
\end{equation}
and a \emph{monotone run of transpositions} if for $a_k<b_k$ for all $k$, then $b_1\leq \dotsb \leq b_\ell$.

Let $m,s\geq 0$ be integers and denote $\vec{\ell} = (\ell_0, \dotsc, \ell_{m-1})\in\mathbb{N}^m$ and $\vec{k} = (k_0, \dotsc, k_{s-1})\in\mathbb{N}^s$.
\begin{definition}[Weighted Hurwitz numbers]
	Let $H(\lambda, \mu, \vec{\ell}, \vec{k}, j)$ be $(-1)^{\sum_{r=0}^{s-1} k_r}$ times the number of tuples 
	\begin{equation}
		(\sigma_{-2}, \sigma_{-1}, \sigma_0,\dotsc,\sigma_{m-1}, \rho_0, \dotsc, \rho_{s-1}, \tau),
	\end{equation}
	where
	\begin{itemize}
		\item $\sigma_c\in\mathfrak{S}_n$ for $c=-2, \dotsc, m-1$,
		\item $\sigma_{-2}$ is of cycle type $\lambda$, $\sigma_{-1}$ of cycle type $\mu$
		\item $\sigma_i$ has $n-\ell_i$ cycles for $i\in[0..m-1]$,
		\item $\rho_r$ for all $r\in [0..s-1]$ is a monotone run of transpositions in $\mathfrak{S}_n$ of length $k_r$,
		\item $\tau$ is a run of transpositions of length $j$.
		\item They form an ordered factorization
		\begin{equation}
			\sigma_{-2} \sigma_{-1} \underline{\tau} \sigma_0\dotsm \sigma_{m-1} \underline{\rho_{0}} \dotsm \underline{\rho_{s-1}} = \mathbb{1}_{\mathfrak{S}_n},
		\end{equation}
		where the underline notation $\underline{\rho}=(a_1,b_1) \dotsm (a_\ell,b_\ell)$ denotes the product of elements in the run.
	\end{itemize}
	The numbers $H(\lambda, \mu, \vec{\ell}, \vec{k}, j)$ are called weighted Hurwitz numbers.
\end{definition}

Biconstellations are recovered by taking $s=j=0$, simple Hurwitz numbers with $m=s=0$ and monotone Hurwitz numbers with $m=j=0$.

Let $\ptimes = (p_1, p_2, \dotsc)$ and $\qtimes = (q_1, q_2, \dotsc)$ be two infinite sets of indeterminates and denote $\vec{u} = (u_0, \dotsc, u_{m-1})$ and $\vec{v} = (v_0, \dotsc, v_{s-1})$ two other sets of indeterminates. We form the series of weighted Hurwitz numbers,
\begin{equation} \label{WeightedHurwitzSeries}
	\tau(\ptimes, \qtimes, \vec{u}, \vec{v}, w;t) = \sum_{n\geq 0} \frac{t^n}{n!} \sum_{\substack{\lambda,\mu\vdash n\\
			\vec{\ell}\in\mathbb{N}^m\\ \vec{k}\in\mathbb{N}^s, j\geq0}} H(\lambda,\mu,\vec{\ell},\vec{k},j) p_\lambda q_\mu \frac{w^{-j}}{j!} \prod_{i=0}^{m-1} u_i^{n-\ell_i} \prod_{r=0}^{s-1} v_r^{-n-k_r}.
\end{equation}

%

\section{Tau functions of weighted Hurwitz numbers: Schur expansions} \label{sec:TauFunctionsWeightedHurwitzNumbers}
\subsection{Hypergeometric tau functions} Let $G$ be a generic rational function with an extra exponential factor,
\begin{equation} \label{WeightFunction}
	G(z) = e^{\frac{z}{w}} \frac{\prod_{i=0}^{m-1} (u_i+z)}{\prod_{r=0}^{s-1}(v_r + z)}.
\end{equation}
with parameters $w,\vec{u},\vec{v}$. We form the so-called hypergeometric tau function of Orlov-Scherbin \cite{OrlovScherbin2000}
\begin{equation} \label{HypergeometricTauFunction}
	\tau_G(\ptimes, \qtimes, t) = \sum_{\lambda} t^{|\lambda|} s_\lambda(\ptimes) s_\lambda(\qtimes) \prod_{\square\in\lambda} G(c(\square))
\end{equation} 

\begin{theorem}{}{SchurExpansionWeightedHurwitz}\cite{Guay-PaquetHarnad2017}
	With $G(z)$ given in \eqref{WeightFunction},
	\begin{equation}
		\tau(\ptimes,\qtimes,t,\vec{u},\vec{v},w) = \tau_G(\ptimes, \qtimes, t)
	\end{equation}
\end{theorem}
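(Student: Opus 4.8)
The plan is to realise the entire generating series as the identity-coefficient of a single \emph{central} element of the group algebra $\mathbb{C}[\mathfrak{S}_n]$, and then evaluate it through its scalar action on each irreducible, exactly as in the proof of \Cref{thm:SchurExpansionFactorizations} but now incorporating the Jucys--Murphy elements $J_k=\sum_{l<k}(l\,k)$ to handle the ``number of cycles'', ``transposition'' and ``monotone'' factors. Concretely, for fixed $n$ I would assemble
\[
  \mathcal{A}_n=\Big(\sum_{\lambda\vdash n}p_\lambda\,C_\lambda\Big)\Big(\sum_{\mu\vdash n}q_\mu\,C_\mu\Big)\;e^{T/w}\;\prod_{i=0}^{m-1}\prod_{k=1}^{n}(u_i+J_k)\;\prod_{r=0}^{s-1}\prod_{k=1}^{n}(v_r+J_k)^{-1},
\]
where $C_\nu=\sum_{\sigma\in C_\nu}\sigma$ is the conjugacy-class sum and $T=\sum_{a<b}(a\,b)$. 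The first step is pure bookkeeping: expanding each factor reproduces the five types of building blocks in the definition of $H(\lambda,\mu,\vec{\ell},\vec{k},j)$, so that $\tau(\ptimes,\qtimes,\vec{u},\vec{v},w;t)=\sum_{n}\tfrac{t^n}{n!}\,[\mathbb{1}]\mathcal{A}_n$, with the sign $(-1)^{\sum_r k_r}$ and all the $u$, $v$, $w$ weights produced automatically.

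The two algebraic identities that make these factors both central and computable are the heart of the argument. First, Jucys's theorem $\prod_{k=1}^n(u+J_k)=\sum_{\sigma\in\mathfrak{S}_n}u^{c(\sigma)}\sigma$ turns the factor tracking $u_i^{n-\ell_i}$ (i.e. $u_i$ to the number of cycles) into a symmetric function of the $J_k$. Second, the sum of the products $\underline{\rho}$ over \emph{monotone} runs of transpositions of length $k$ equals the complete homogeneous symmetric polynomial $h_k(J_1,\dots,J_n)$; summing against $(-1)^k v_r^{-n-k}$ collapses the geometric series to $\prod_k(v_r+J_k)^{-1}$. Likewise $\sum_j \tfrac{w^{-j}}{j!}T^j=e^{T/w}$ accounts for the free run of transpositions. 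Since $C_\nu$ is central and every symmetric function of the $J_k$ is central, $\mathcal{A}_n$ is central.

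Next I would invoke the standard eigenvalue dictionary on the irreducible $V_\alpha$: the class sum $C_\nu$ acts by $\tfrac{n!}{z_\nu}\tfrac{\chi_\alpha(\nu)}{f^\alpha}$, so $\sum_\nu p_\nu C_\nu$ acts by $\tfrac{n!}{f^\alpha}s_\alpha(\ptimes)$ via \eqref{Schur}; and any symmetric function of the $J_k$ acts by that same symmetric function evaluated at the contents $\{c(\square):\square\in\alpha\}$, so that $T\mapsto\sum_\square c(\square)$, $\prod_k(u_i+J_k)\mapsto\prod_\square(u_i+c(\square))$ and $\prod_k(v_r+J_k)^{-1}\mapsto\prod_\square(v_r+c(\square))^{-1}$. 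Multiplying these scalars gives the eigenvalue
\[
  \omega_\alpha(\mathcal{A}_n)=\frac{(n!)^2}{(f^\alpha)^2}\,s_\alpha(\ptimes)\,s_\alpha(\qtimes)\prod_{\square\in\alpha}G(c(\square)).
\]
For a central element the coefficient of $\mathbb{1}$ is $[\mathbb{1}]\mathcal{A}_n=\tfrac{1}{n!}\sum_{\alpha\vdash n}(f^\alpha)^2\,\omega_\alpha(\mathcal{A}_n)$ (from the central idempotents, as in the derivation of \eqref{Frobenius}). The factors $(f^\alpha)^{\pm 2}$ cancel, leaving $[\mathbb{1}]\mathcal{A}_n=n!\sum_{\alpha\vdash n}s_\alpha(\ptimes)s_\alpha(\qtimes)\prod_\square G(c(\square))$, and summing $\tfrac{t^n}{n!}[\mathbb{1}]\mathcal{A}_n$ over $n$ yields precisely $\tau_G$ as in \eqref{HypergeometricTauFunction}.

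I expect the main obstacle to be the second identity, namely that the monotone runs of length $k$ sum to $h_k(J_1,\dots,J_n)$ (together with the sign/weight reorganisation into $\prod_k(v_r+J_k)^{-1}$); this is where the specific constraint $b_1\le\dots\le b_k$ is essential, since it is exactly the condition that lets one read a product $J_{i_1}\cdots J_{i_k}$ with $i_1\le\dots\le i_k$ off the maxima of the transpositions. Jucys's theorem and the content-eigenvalue statement are classical, and once they are in place the remaining steps are the same central-character bookkeeping already used for \Cref{thm:SchurExpansionFactorizations}.
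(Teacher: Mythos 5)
Your argument is correct and is precisely the route the paper (and \cite{Guay-PaquetHarnad2017}) has in mind: the text's entire proof indication is that one works in $\mathbb{C}[\mathfrak{S}_n]$ with the Jucys--Murphy elements, and your central element $\mathcal{A}_n$, the identities $\prod_k(u+J_k)=\sum_\sigma u^{\#\mathrm{cycles}(\sigma)}\sigma$ and $\sum_{\text{monotone runs of length }k}\underline{\rho}=h_k(J_1,\dots,J_n)$, and the content-eigenvalue evaluation on each $V_\alpha$ are exactly the standard ingredients. The bookkeeping (including the cancellation of $(f^\alpha)^{\pm2}$ and the geometric-series repackaging of the $(-1)^{k_r}v_r^{-n-k_r}$ weights into $\prod_k(v_r+J_k)^{-1}$) checks out, so nothing is missing.
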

In other words
\begin{equation}
	\left[\frac{t^n}{n!} p_\lambda q_\mu \frac{w^{-j}}{j!}\prod_{i=0}^{m-1} u_i^{n-\ell_i} \prod_{r=0}^{s-1} v_r^{-n-k_r}\right] \tau_G(\ptimes, \qtimes, t) = H(\lambda,\mu,\vec{\ell},\vec{k},j),
\end{equation}
where the brackets denote coefficient extraction. The proof can be performed in the group algebra $\mathbb{C}[\mathfrak{S}_n]$, using the Jucys-Murphy elements.


\subsection{Single Hurwitz numbers} \label{sec:SingleHurwitz} For reference, we specialize the above series to \emph{single} Hurwitz numbers, where only the conjugacy class of a single permutation is kept track of, say with the variables $p_i$s. For $m$-constellations counted with respect to the faces of every degree and with a weight $u_c$ on vertices of color $c\in[0..m-1]$, we use the restriction $q_i = \delta_{i,1}$, $s=0$, $w=\infty$, with the specialization 
\begin{equation}
	s_\lambda(\qtimes=(1,0,\dotsc)) = \frac{1}{\operatorname{hook}(\lambda)},
\end{equation}
yielding
\begin{equation} \label{ConstellationsTauFunction}
	\tau^{\text{$m$-const.}}(\ptimes, t, u_0, \dotsc, u_{m-1}) = \sum_\lambda \frac{t^{|\lambda|}}{\operatorname{hook}(\lambda)} s_\lambda(\ptimes) \prod_{\Box\in\lambda} \prod_{c=0}^{m-1} (u_c + c(\Box)),
\end{equation}
and bipartite maps are obtained by setting $m=2$. Note that it can also be recovered from $\tau^{\text{$m$-fact}}$ \eqref{SchurExpansionFactorizations} by specializing $\ptimes^{(c)} = (u_c, u_c, \dotsc)$ for $c=1, \dotsc, m$.

For general maps, one can consider 2-constellations, where the vertices of one color are forced to have degree 2 only. So we take $m=2$, $q_i=\delta_{i,2}$ as well as $s=0$ and $w=\infty$ again. Denote $\theta_\lambda = s_\lambda(\qtimes=(0,1,0,\dotsc))$ then
\begin{equation} \label{MapsTauFunction}
	\tau^{\text{Maps}}(\ptimes,t,u) = \sum_{\lambda} t^{|\lambda|} s_\lambda(\ptimes)\theta_\lambda\prod_{\square
		\in \lambda}(u+c(\square)).
\end{equation}

\section{Genus expansion}
Consider the \emph{connected} weighted Hurwitz numbers, counting the tuples 
\begin{equation*}
	(\sigma_{-2}, \sigma_{-1}, \sigma_0,\dotsc,\sigma_{m-1}, \rho_0, \dotsc, \rho_{s-1}, \tau)
\end{equation*}
whose elements generate a transitive group on $\{1, \dotsc, n\}$. The \emph{genus} of such a tuple is the non-negative integer $g$ determined by $\lambda, \mu, \vec{\ell}, \vec{k}, j$ by
\begin{equation} \label{HurwitzGenus}
	2-2g = \ell(\lambda)+\ell(\mu)-\sum_{i=0}^{m-1}\ell_i-\sum_{r=0}^{s-1}k_r-j+n.
\end{equation}

It can be interpreted in two ways, either as an equivalence class of branched coverings of the sphere, or in terms of maps. 
\begin{itemize}
	\item In the first case, if a branched covering has monodromies given by the permutations $\sigma_{-2}, \sigma_{-1}, \sigma_0,\dotsc,\sigma_{m-1}, [\rho_0], \dotsc, [\rho_{s-1}], [\tau]$, where $[\rho]$ denotes the sequence of transpositions in the run $\rho$, then \eqref{HurwitzGenus} is the Riemann-Hurwitz formula for the genus of the covering surface.
	\item In the second case, it is possible to adapt the construction of Section \ref{sec:EquivalenceFactorizationsConstellations} to obtain ``weighted Hurwitz maps''\footnote{For each new run of transpositions, there is a new set of colors and the vertices of each of those colors are bivalent. By erasing them, we see that a transposition becomes a colored edge. For a monotone run, an additional monotonicity is added on the edge labels.}, in bijection with the tuples $(\sigma_{-2}, \sigma_{-1}, \sigma_0,\dotsc,\sigma_{m-1}, \rho_0, \dotsc, \rho_{s-1}, \tau)$. Then the constraint that its elements generate a transitive group is equivalent to the maps being connected, and \eqref{HurwitzGenus} is Euler's formula for the genus of those maps.
\end{itemize}
We will however not define weighted Hurwitz maps here and instead restrict our combinatorial analysis to biconstellations. For instance to prove the topological recursion for weighted Hurwtiz numbers, we do it for biconstellations, then use algebraic arguments to generalize the result.

Extracting from the tau function the contribution of genus $g$ can be done by introducing a variable $N$ which will track the contributions to the genus via the Riemann-Hurwitz-Euler formula. We define $G_N(z) = G(z/N)$, and
\begin{equation} \label{FreeEnergyFixedGenus}
	\begin{aligned}
		&F^G_g(\ptimes, \qtimes, t) = [N^{2-2g}]\log \tau_{G_N}(N\ptimes, N\qtimes,Nt),\\
		&= \sum_{n\geq 1}\frac{t^n}{n!} \sum_{\lambda,\mu\vdash n} \sum_{\substack{\ell_0,\dots,\ell_{m-1}\geq 0\\ k_0, \dotsc, k_{s-1}, j\geq 0\\ \ell(\lambda)+\ell(\mu)-\sum_{i=0}^{m-1}\ell_i-\sum_{r=0}^{s-1}k_r-j+n=2-2g}}
		H^{\circ}(\lambda, \mu, \vec{\ell}, \vec{k}, j)	p_\lambda q_\mu \frac{w^{-j}}{j!}\prod_{i=0}^{m-1}u_i^{n-\ell_i} \prod_{r=0}^{s-1}v_r^{-n-k_r} 
	\end{aligned}
\end{equation}
We introduce the following \emph{rooting operator}, parametrized by a formal variable $x$, 
\begin{equation}
	\nabla_x \coloneqq \sum_{i\geq 1} i x^{-i-1} \frac{\partial}{\partial p_i}.
\end{equation}
On biconstellations, $\nabla_x$ is interpreted as marking a white face of degree $mi$, together with a ``root'' vertex on its boundary, and changing its weight from $p_i$ to $x^{-i-1}$. Let us explain why this is the same as the rooted biconstellations we defined in Chapter \ref{sec:Definitions}. A rooted $m$-biconstellation with colors $c=0..m-1$ is an $m$-biconstellation with a marked left path, i.e. an oriented sequence of $m$ edges from a vertex of color $m-1$ to the next vertex of color $m-1$ along a black face. For $c\in[0..m-1]$, this left path determines a unique white face, as the face to the right of the edge of the path between the colors $c$ and $c+1 \mod m$, and a unique root corner of color $c$ on it. The other way around, still for fixed $c$, the operator $\nabla_x$ marks a corner of color $c$ on a face of degree $mi$ (there are $i$ corners of color $c$) and it determines a unique left path.

The marked white face is then thought of as a \emph{boundary} component of the maps. In particular, let
\begin{equation}
	W_{0,1}(x) \coloneqq \nabla_x F^G_0(\ptimes, \qtimes, t)
\end{equation}
and
\begin{equation}
	W_{0,2}(x_1,x_2) \coloneqq \nabla_{x_1} \nabla_{x_2} F^G_0(\ptimes, \qtimes, t).
\end{equation}
The former is known as the \emph{disc} generating function because it counts maps of genus 0 with 1 boundary component, which is the topology of the disc. The latter is known as the \emph{cylinder} generating function because it counts maps of genus 0 with 2 boundary components.

More generally, the generating series with $n$ boundaries is
\begin{equation} \label{PlanarCorrelationFunctions}
	W_{0,n}(x_1, \dotsc, x_n) \coloneqq \nabla_{x_1} \dotsm \nabla_{x_n} F^G_0(\ptimes, \qtimes, t).
\end{equation}
In this framework, the non-root faces are called \emph{internal faces}.

\section{Enumeration of planar weighted Hurwitz numbers}
In this section we will give an explicit algebraic parametrization of $W_{0,1}(x)$ and show that $W_{0,2}(x_1,x_2)$ takes on a universal form in that parametrization. 

\subsection{State of the art} 
\subsubsection{The case of constellations} Here $q_i=\delta_{i,1}$, $s=0$, $w=\infty$ and $u_c = u$ for all $c=0..m-1$. M. Bousquet-Mélou and G. Schaeffer \cite{BousquetSchaeffer2000} gave the number of planar, labeled $m$-constellations of size $n$ with $d_k$ faces of degree $mk$,
\begin{equation} \label{BMS}
	m \frac{((m-1)n-1)!}{((m-1)n-f+2)!} \prod_{k\geq 1} \left(k\binom{mk-1}{k}\right)^{d_k}
\end{equation}
where $f=\sum_{k\geq 1}d_k$ is the total number of faces. This is equivalent to $W_{0,n} (x_1, \dotsc, x_n)_{|\ptimes=0}$, i.e. planar constellations with no internal faces. They used a bijective approach, more precisely a bijection between those constellations and a family of trees which are then easily enumerated. This bijection is of the same type as those previously introduced by the French school of combinatorics, originated in the work of Cori and Vauquelin and modernized by Schaeffer, and which have been so successful in explaining the counting formulas of various families of maps. A similar bijection, that of the mobiles of Bouttier-Di Francesco-Guitter \cite{BouttierDiFrancescoGuitter2004}, was used to derive a compact formula for the series $W_{0,n}(x_1, \dotsc, x_n)$ by G.~Collet and É.~Fusy \cite{ColletFusy2014}.

The traditional approach to finding those counting formulas for maps is that of Tutte's equation. It is based on a decomposition of maps obtained by ``removing the root edge''. Although this is a robust approach, in particular in the ability to write a functional equation, it becomes more than cumbersome to solve for constellations with $m\geq 3$, as shown by W. Fang \cite{Fang:PhD}. His result is still an explicit algebraic parametrization for $W_{0,1}(x)$ with the restrictions given above, in particular a single set of degrees tracked by $\ptimes$, and all colors $c=0..m-1$ counted with the same variable.

\subsubsection{Moving onto biconstellations} With our co-authors of \cite{BCCGF22}, we initially thought of using the same mobile bijection as in \cite{ColletFusy2014}. Mobiles however deal not with rooted maps, but \emph{pointed} rooted maps, meaning that there is an additional marked vertex. In our case where we want to control the vertex colors, direct depointing (by integration) however requires controlling the color of the pointed vertex, which we were not able to do. A different solution to this depointing problem was nevertheless given in \cite{BouttierGuitterContinuedFractions}. In this article, not only solving the depointing problem, the authors introduce a slightly different bijective technology, that of \emph{slices}, which we decided to use. Introduced by J. Bouttier and E. Guitter in \cite{BouttierGuitterContinuedFractions, BouttierGuitter2014}, slices were generalized to constellations by M. Albenque and J. Bouttier in \cite{AlbenqueBouttier:fpsac12}.

In all those works however, there were a single set of tracked degrees $\ptimes$, i.e. $q_i=\delta_{i,1}$, and in addition $u_c=u$ for all $c$. The only enumerative result with two sets of tracked degrees $\ptimes, \qtimes$ is that of B. Eynard \cite{Eynard2002} on the 2-matrix model. He found an algebraic parametrization of $W_{0,1}(x)$ in the case of 1-biconstellations, or equivalently bipartite maps with controlled degrees of both white and black vertices (and none on the face degrees), using clever (a.k.a. mystifying) manipulations on the loop equations.

Fortunately for us, M. Albenque and J. Bouttier (again) were working on a derivation of Eynard's result using slices \cite{AlbenqueBouttier2022}. Although no preprint was available, the authors had made public their results, and M. Albenque was kind enough to share some details with us. We realized it is possible to extend \cite{AlbenqueBouttier2022} to biconstellations by marrying \cite{AlbenqueBouttier2022} and \cite{AlbenqueBouttier:fpsac12}, while also adding color-dependent weights $u_c$ on vertices.

Although our enumeration formulas are indeed more general, they are direct applications of the formalism developed by J.~Bouttier and E.~Guitter \cite{BouttierGuitterContinuedFractions, BouttierGuitter2014}, and M.~Albenque and J.~Bouttier \cite{AlbenqueBouttier:fpsac12, AlbenqueBouttier2022}. We will present our results for the disc and cylinder generating functions below, and present the slices, the slice decompositions and their generating series. However, we will refer to the above references for the details regarding the relations between the disc and cylinder functions and the slice series since they are essentially unchanged in our case.

\subsection{The disc and cylinder generating functions} We set $w=\infty$ to simplify the expressions below ($w\neq \infty$ can be found in the appendix of \cite{BCCGF22}). For the purpose of the topological recursion to come, we fix two arbitrary positive integers $D_1,D_2$, which will play the role of maximum allowed degrees for internal faces of each color, i.e. $p_i=0$ for $i>D_1$ and $q_i=0$ for $i>D_2$. Our first result is an algebraic parametrization of the function $W_{0,1}(x)$. In the case $G(z)=(u+z)$, what follows is Eynard's solution to the 2-matrix model in the planar sector~\cite{Eynard2002}.

To state the result, we need to introduce some quantities. First the formulas take a more universal form if we define $u_{m-1+r}\coloneqq v_{r}$ for $r=0..s-1$, because the $u_i$ and $v_r$ play almost similar roles, and $M=m+s$. To distinguish them when needed, we set $I=\{0,\dotsc, m-1\}$ and $J=\{m, \dotsc, m+s-1\}$. We denote $U=\{u_0, \dotsc, u_{m-1}, u_m, \dotsc, u_{M-1}\}$ and $U^{-1}=\{u_0^{-1}, \dotsc, u_{m-1}^{-1}, u_m^{-1}, \dotsc, u_{M-1}^{-1}\}$.

Moreover, if $f(z)=\sum_{k\geq k_0} f_k z^k$ is a Laurent series in $z$, we write
\begin{equation*}
	[f(z)]^{<} \coloneqq \sum_{k=k_0}^{-1} f_k z^k
\end{equation*}
which is a polynomial in $z^{-1}$. If $g(z)=\sum_{k\geq k_0} g_k z^{-k}$ is a Laurent series in $z^{-1}$, we write 
\begin{equation*}
	\{g(z)\}^{\geq} \coloneqq \sum_{k=k_0}^{0} g_k z^{-k},
\end{equation*}
which is a polynomial in $z$. 

\begin{definition} \label{def:WkBk}
For $c\in I \cup J$ we define the Laurent polynomials
\begin{equation}
A^{(c)}(z) = \sum_{k=0}^{D_2} A_k^{(c)} z^k \, ,  \qquad 
B^{(c)}(z) = 1 + \sum_{k=1}^{D_1} B^{(c)}_k z^{-k},
\end{equation}
uniquely determined as elements of 
$\mathbb{Q}[\ptimes, \qtimes, U][[t]][z]$ 
and 
$\mathbb{Q}[\ptimes, \qtimes, U][[t]][z^{-1}]$ 
respectively, by the following system of equations:
\begin{align}
\label{eq:defWratconv2}
A^{(c)}(z) &= u_c + \sum_{s=1}^{D_2} q_{s} t^s   \biggl\{z^s \frac{\prod_{i\in I}  B^{(i)}(z)^s}{\prod_{j\in J}  B^{(j)}(z)^s}\frac{1}{B^{(c)}(z)}\biggr\}^{\geq},\\
\label{eq:defBratconv2}
B^{(c)}(z) &= 1 + \sum_{s=1}^{D_1} p_s \biggl[z^{-s} \frac{\prod_{i \in I} A^{(i)}(z)^s}{\prod_{j \in J} A^{(j)}(z)^s}\frac{1}{A^{(c)}(z)}\biggr]^{<},
\end{align}
\end{definition}
This is a rather compact form for a polynomial system of equations relating the series $A^{(c)}_k, B^{(c)}_k$s together, from which all coefficients of those series can be computed recursively, order by order in $t$. 

Define the Laurent polynomial $H^{(c)}(z) \in \mathbb{Q}[\ptimes,\qtimes,U][[t]][z,z^{-1}]$ given by 
\begin{equation*}
	H^{(c)}(z) \coloneqq \left(A^{(c)}(z) B^{(c)}(z) - u_c \right).
\end{equation*}
It is symmetric in the variables $u_0, \dotsc, u_{m-1}$, symmetric in $u_m, \dotsc, u_{M-1}$, and moreover it is independent of the chosen value of $c \in I\cup J$. These properties are not trivial and are proved in \cite{BCCGF22}. We have now defined all quantities needed to introduce our spectral curve.
\begin{definition}\label{def:SpectralCurve}
	We consider the system of polynomial equations\footnote{When $\ptimes,\qtimes,U$ are fixed complex numbers and $|t|$ is small enough, all generating functions $A_k^{(c)}, B_k^{(c)}$ converge and these polynomial equations are defined over complex numbers. One can also consider these polynomial equations formally over the field $\mathbb{Q}(\ptimes,\qtimes,U, U^{-1})[[t]]$.}
	defined by
	\begin{align}\label{eq:spectralX}
		z X(z)&=  \frac{\prod_{i  \in I} A^{(i)}(z)}{\prod_{j \in J} A^{(j)}(z)}
		, \\ \label{eq:spectralY}
		X(z) Y(z) &= H^{(c)}(z),
	\end{align}
	where $c$ is any value in $I\cup J$.
\end{definition}

Note that the equation $X(Z)=x$ defines a unique power series $Z\in \mathbb{Q}[\ptimes,\qtimes,U, U^{-1},x^{-1}][[t]]$, satisfying
\begin{equation} \label{eq:ZasExcursions}
	Z(x) = \frac{1}{x} \frac{\prod_{i\in I} A^{(i)}(Z(x))}{\prod_{j\in J} A^{(j)}(Z(x))}.
\end{equation}
By substitution, it then defines a unique formal series $Y(Z(x))$ in $\mathbb{Q}[\ptimes,\qtimes,U,U^{-1}, x, x^{-1}]((t))$.

\begin{theorem}{}{Disc}
	The disc generating function $W_{0,1}(x)$ is, up to an explicit shift, given by the parametrisation $(Y(z),X(z))$ given above,
	\begin{equation*}
		W_{0,1}(x) + \sum_{k=1}^{D_1}p_kx^{k-1} = Y(Z(x)),
	\end{equation*}
	in $\mathbb{Q}[\ptimes,\qtimes,U,x,x^{-1}][[t]]$.
\end{theorem}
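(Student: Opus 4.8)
The plan is to prove the identity combinatorially, through a slice decomposition of planar biconstellations, adapting the technology of Bouttier--Guitter \cite{BouttierGuitterContinuedFractions,BouttierGuitter2014} and Albenque--Bouttier \cite{AlbenqueBouttier:fpsac12,AlbenqueBouttier2022} to our two sets of tracked degrees $\ptimes,\qtimes$ and to the color-dependent vertex weights $u_c$. Recall that $W_{0,1}(x)=\nabla_x F^G_0$ enumerates genus-$0$ biconstellations carrying one distinguished white face, seen as a boundary with a root corner, the degree of that face being tracked by $x$ through the substitution $p_i\mapsto x^{-i-1}$. The strategy is to cut such a map along leftmost geodesics into elementary pieces, the \emph{slices}, to show that their generating series are precisely the $A^{(c)}(z)$ and $B^{(c)}(z)$ of \Cref{def:WkBk}, and then to reassemble the boundary so that the resummation, after the change of variables $X(Z)=x$, produces the parametrization $Y(Z(x))$.

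First I would introduce the slices. Pointing the map at a base vertex and labelling corners by their leftmost-geodesic distance to the base, one cuts along leftmost geodesics to obtain two dual families of slices according to whether the root face is white or black. In the colored setting each slice inherits a color $c\in I\cup J$ from the left path at its base, and a formal variable $z$ records the signed variation of the geodesic label across the slice. I would denote the generating series of the white-rooted slices by $B^{(c)}(z)$ (they carry the white-face weights $p_s$, whence a polynomial in $z^{-1}$) and of the black-rooted slices by $A^{(c)}(z)$ (they carry the black-face weights $q_s$, whence a polynomial in $z$); the degree truncations $D_1,D_2$ are inherited from the cutoffs $p_i=0$ for $i>D_1$ and $q_i=0$ for $i>D_2$.

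The core step, and the one I expect to be the main obstacle, is to show that these series satisfy the coupled system \eqref{eq:defWratconv2}--\eqref{eq:defBratconv2}. This follows from the recursive structure of a slice: peeling its root face, whose degree is indexed by $s$ and weighted by $q_s$ (resp.\ $p_s$), exposes along the boundary of that face a collection of sub-slices of the \emph{dual} family, one of each color, accounting for the alternating products $\prod_{i\in I}(\cdots)^s/\prod_{j\in J}(\cdots)^s$ (the colors of $J$, originating from runs of transpositions, entering with negative exponents), together with one distinguished sub-slice of the same color $c$ contributing the factor $1/B^{(c)}$ (resp.\ $1/A^{(c)}$). The operators $\{\,\cdot\,\}^{\ge}$ and $[\,\cdot\,]^{<}$ enforce the admissible variation of the geodesic label, keeping respectively its non-strict ($z^{\ge 0}$) and strict ($z^{<0}$) parts, while the additive $u_c$ and $1$ are the contributions of the empty slices. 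The delicate point is the label bookkeeping with two independent degree sets and the asymmetric roles of $I$ and $J$; one must then verify that the resulting system determines the coefficients $A^{(c)}_k,B^{(c)}_k$ uniquely, order by order in $t$, in the rings of \Cref{def:WkBk}.

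Finally I would assemble the disc. Gluing slices cyclically around the marked boundary face expresses $W_{0,1}(x)$, up to the degenerate one-face contributions, as a resummation of products of the slice series; carrying out this resummation in the label variable and imposing $X(Z)=x$ from \eqref{eq:ZasExcursions} turns the boundary variable $x$ into the slice variable $z=Z(x)$, and the resummed kernel is exactly $H^{(c)}(z)/X(z)=Y(z)$ by \eqref{eq:spectralY}. This is well defined precisely because $H^{(c)}$ is independent of $c$, as proved in \cite{BCCGF22}, so the right-hand side does not depend on the arbitrary choice of $c$. Matching powers of $x$, the polynomial part of $Y(Z(x))$ collects the degenerate boundaries and equals the announced shift $\sum_{k=1}^{D_1}p_k x^{k-1}$ (with no $x^{-1}$ term, consistently with $W_{0,1}$ starting at order $x^{-2}$), while its principal part is $W_{0,1}(x)$; this yields the claimed equality in $\mathbb{Q}[\ptimes,\qtimes,U,x,x^{-1}][[t]]$. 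As a final sanity check, specializing $G(z)=u+z$ must recover Eynard's planar solution of the two-matrix model \cite{Eynard2002}, which pins down the normalization of the shift.
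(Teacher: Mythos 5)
Your overall strategy is the one the paper follows --- slices, the system of \Cref{def:WkBk} established by a slice decomposition (this is \Cref{thm:CompositeSlices}), then reassembly of the boundary --- but your final step hides the actual difficulty and, as written, does not go through. Slices carry labels $\ell_o(v)$ measured from an \emph{origin}, i.e.\ they are canonically pointed objects; ``gluing slices cyclically around the marked boundary face'' therefore reconstructs \emph{pointed} rooted biconstellations, not rooted ones. The basic bijection available is: pointed rooted biconstellations correspond to white slices of increment $0$, with $l$ at the root corner and the origin at the pointed vertex. To reach $W_{0,1}$ one must depoint, and direct depointing by integration is blocked here precisely because it would require controlling the color of the pointed vertex, which the two-degree-set, color-weighted setting does not allow. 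The route the paper takes (following Bouttier--Guitter) is to write $\mathcal{M}=\mathcal{M}^*_{0}=\bigl(\cup_{d\geq 0}\mathcal{M}^*_d\bigr)\setminus\bigl(\cup_{d\geq 1}\mathcal{M}^*_d\bigr)$, where $\mathcal{M}^*_d$ is the set of pointed rooted maps with $\ell_{v^*}(\vec v)=d$ and all root-face vertices at distance at least $d$ from $v^*$, and to evaluate the two unions separately in terms of the slice series; it is this evaluation that produces $Y(Z(x))$ together with the explicit shift $\sum_k p_k x^{k-1}$. Without it, your statement that ``the resummed kernel is exactly $H^{(c)}(z)/X(z)=Y(z)$'' is an assertion of the conclusion rather than a derivation.

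A secondary point: the combinatorial slice analysis is carried out only for polynomial $G$ (biconstellations, $J=\emptyset$); the colors in $J$ are not given a slice interpretation ``with negative exponents'', and the rational case is reached afterwards by algebraic continuation from the polynomial one. Your plan to treat $I$ and $J$ on the same combinatorial footing inside the peeling of a slice would require a genuinely new combinatorial model that neither the manuscript nor the cited references provide.
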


Once formulated in the ``change of variables'' $x \leftrightarrow Z$, the cylinder generating function has the universal expression already encountered in many other models of enumerative geometry. Indeed we have:
\begin{theorem}{}{Cylinder}
	The cylinder generating function $W_{0,2}$ is
	\begin{equation*}
		W_{0,2}(x_1,x_2)= \frac{Z'(x_1)Z'(x_2)}{(Z(x_1)-Z(x_2))^2} - \frac{1}{(x_1-x_2)^2},
	\end{equation*}
	in $\mathbb{Q}[\ptimes,\qtimes,U](x_1,x_2)[[t]]$. 
\end{theorem}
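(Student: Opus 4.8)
The plan is to reduce the two-boundary function to a single application of the rooting operator and then substitute the disc parametrisation of \Cref{thm:Disc}. Since $W_{0,2}(x_1,x_2)=\nabla_{x_1}\nabla_{x_2}F^G_0$ and $\nabla_{x_1},\nabla_{x_2}$ are commuting derivations in the variables $p_i$, I would first write $W_{0,2}(x_1,x_2)=\nabla_{x_2}W_{0,1}(x_1)$ and insert $W_{0,1}(x_1)=Y(Z(x_1))-\sum_{k\ge 1}p_k x_1^{k-1}$. The subtracted polynomial is exactly the principal part of $Y(Z(x_1))$ at $x_1=\infty$, so its image $\nabla_{x_2}\sum_k p_k x_1^{k-1}=\sum_k k\,x_1^{k-1}x_2^{-k-1}$ together with the regular part of $\nabla_{x_2}Y(Z(x_1))$ must assemble the double-pole counterterm $-\tfrac{1}{(x_1-x_2)^2}$ of the statement. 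Thus the entire content is to identify the remainder of $\nabla_{x_2}Y(Z(x_1))$ with the Bergman term $\tfrac{Z'(x_1)Z'(x_2)}{(Z(x_1)-Z(x_2))^2}$.

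A direct computational route treats $\nabla_{x_2}$ as a derivation on the coefficient ring and applies the chain rule
\[
\nabla_{x_2}\,Y(Z(x_1))=(\nabla_{x_2}Y)\bigl(Z(x_1)\bigr)+Y'\bigl(Z(x_1)\bigr)\,\nabla_{x_2}Z(x_1),
\]
where $\nabla_{x_2}Y$ denotes the action on the coefficients of $Y(z)$ at frozen $z$. The inputs $\nabla_{x_2}A^{(c)}(z)$ and $\nabla_{x_2}B^{(c)}(z)$ are obtained by differentiating the defining system \eqref{eq:defWratconv2}--\eqref{eq:defBratconv2}, which yields a closed linear system for these variations; one then propagates them through \eqref{eq:spectralX}, the definition of $H^{(c)}$, and the implicit relation \eqref{eq:ZasExcursions} for $Z(x)$, reducing everything to $A^{(c)},B^{(c)},Z$ and the single response $\nabla_{x_2}Z$. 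The expected main obstacle is the final collapse of this rational expression to the compact Bergman form; here I would exploit the two structural facts recorded before \Cref{def:SpectralCurve}, namely that $H^{(c)}$ is independent of $c$ and symmetric in the $u$'s, to recognise the combination as a total $z_1$-derivative and integrate it.

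I would prefer, however, to conclude by a uniqueness argument on the spectral curve, which sidesteps the algebra. Passing to the uniformising variable $z_i=Z(x_i)$ and using $Z'(x)\,dx=dz$, the claimed identity is equivalent to the statement that the symmetric bidifferential
\[
\omega_{0,2}(z_1,z_2)\coloneqq\Bigl(W_{0,2}(x_1,x_2)+\tfrac{1}{(x_1-x_2)^2}\Bigr)\,dx_1\,dx_2
\]
equals the fundamental second-kind bidifferential $\tfrac{dz_1\,dz_2}{(z_1-z_2)^2}$ of the rational curve $\mathbb{P}^1_z$. On a genus-zero curve this bidifferential is characterised as the unique symmetric bidifferential with a double pole of biresidue $1$ on the diagonal $z_1=z_2$ and no other singularity. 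One checks the diagonal behaviour directly: $W_{0,2}$ is regular at $x_1=x_2$ because $Z(x_1)-Z(x_2)=Z'(x_2)(x_1-x_2)+O((x_1-x_2)^2)$ makes the double poles of the two Bergman-type terms cancel, while the change of variables turns $\tfrac{dx_1dx_2}{(x_1-x_2)^2}$ into the required diagonal double pole.

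The genuinely hard part is then to show that $\omega_{0,2}$ is holomorphic at the ramification points of $X$ (where $Z'$ blows up) and at $z=0$ (i.e. $x=\infty$), so that no spurious poles survive; this is where the regularity of the disc function $Y(Z(x))$ established in \Cref{thm:Disc} and the loop (Tutte) equation it satisfies must be invoked, exactly as in the universal proofs of the cylinder amplitude. Finally, I note that the slice decomposition of \cite{BouttierGuitterContinuedFractions,AlbenqueBouttier2022} furnishes an equivalent, purely combinatorial derivation, in which the cylinder is cut into a cyclic chain of slices whose generating series resums directly to the Bergman form; since, as stated in the text, the relation between the cylinder function and the slice series is unchanged in our setting, the details of that resummation can be imported verbatim from those references.
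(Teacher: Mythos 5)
Your preferred route---characterising $\bigl(W_{0,2}(x_1,x_2)+\tfrac{1}{(x_1-x_2)^2}\bigr)dx_1dx_2$ as \emph{the} symmetric bidifferential on $\mathbb{P}^1$ with a double pole of biresidue $1$ on the diagonal and no other singularity---has a genuine gap. To invoke that uniqueness you must first know that $W_{0,2}$, rewritten in the uniformising variable $z=Z(x)$, is a globally rational bidifferential, holomorphic away from the diagonal (in particular at the ramification points of $X$ and at $z=0$). You yourself flag this as ``the genuinely hard part'' and propose to obtain it from the regularity of $Y(Z(x))$ and ``the loop (Tutte) equation''; but no usable such equation is available here: for (bi)constellations with two sets of controlled degrees the loop equations are of higher degree and are precisely what the paper's strategy avoids (see the discussion of Fang's work), and the pole structure of the $W_{g,n}$ in the spectral variable is obtained in this paper as a \emph{corollary} of the topological recursion (\Cref{cor:structure}), not as an input. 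So the step you defer is essentially the content of the theorem, and the argument as written is circular in this setting. Your first, computational route suffers from the same problem in a different guise: the ``final collapse'' of $\nabla_{x_2}Y(Z(x_1))$ to the Bergman form is exactly the identity to be proved, and it is left open (note also that $\nabla_{x_2}\sum_{k=1}^{D_1}p_kx_1^{k-1}$ is a finite sum, not $\tfrac{1}{(x_1-x_2)^2}$, so the bookkeeping of the counterterm is not as clean as you suggest).

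The paper proves \Cref{thm:Cylinder} by the combinatorial route you mention only as an afterthought, and that route carries real content which cannot simply be ``imported verbatim'': one cuts a planar biconstellation with two marked white faces along the shortest cycle separating them, obtaining a $p$-strict annular map and a $p$-annular map; by \Cref{thm:Annular} these correspond to slices of increment $\pm mp$, whose generating series are the path coefficients $[\alpha^{\mp p}]\,\alpha^{-f}\prod_{c=0}^{m-1}A^{(c)}(\alpha)^{f}$; regluing the two annuli contributes a factor $p$, giving
\begin{equation*}
W_{0,2}(x_1,x_2)=x_1^{-1}x_2^{-1}\sum_{p\ge 1}p\,[z_1^{-p}]R(x_1,z_1)\,[z_2^{p}]R(x_2,z_2),\qquad R(x,z)=\Bigl(1-\tfrac{\prod_{c}A^{(c)}(z)}{xz}\Bigr)^{-1},
\end{equation*}
and the Bergman form follows by isolating the small root $Z(x)$ of the denominator. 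A complete proof requires carrying out this decomposition (or genuinely finishing one of your two analytic computations).
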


\subsection{Slices and elementary slices} We recall that a biconstellation is equipped with a canonical orientation, where the black faces are on the left of edges. A biconstellation with a virtual boundary is a map satisfying the same definition as biconstellations except for one face which is neither black nor white. In particular, its boundary is not necessarily an oriented cycle.

Here we consider ``oriented distances''. Given a pair of vertices $(v_1, v_2)$, there are oriented paths from $v_1$ to $v_2$ and the shortest paths are called \emph{geodesics}, and their number of edges is denoted $\ell_{v_2}(v_1)$.
\begin{definition}
A slice is a planar $m$-biconstellation with a virtual boundary $f^*$ which has three marked corners, $l, r, o$, which split the boundary of $f^*$ into three connected paths as follows:
\begin{itemize}
	\item The \emph{right boundary} is oriented from $r$ to $o$ (avoiding $l$) and is the unique geodesic from $r$ to $o$.
	\item The \emph{left boundary} is oriented from $l$ to $o$ (avoiding $r$) and is a geodesic from $r$ to $o$.
	\item The \emph{base} between $l$ and $r$.
	\item The left and right boundaries intersect only at the \emph{origin}, which is the vertex where $o$ sits.
\end{itemize}
If the base is oriented from $l$ to $r$ (respectively from $r$ to $l$) we say that it is a \emph{white} (respectively \emph{black}) slice, see Figure \ref{fig:Slice}. If the base consists in a single edge, we say that it is an \emph{elementary} slice.
\end{definition}
\begin{figure}
	\centering
	\includegraphics[scale=.9]{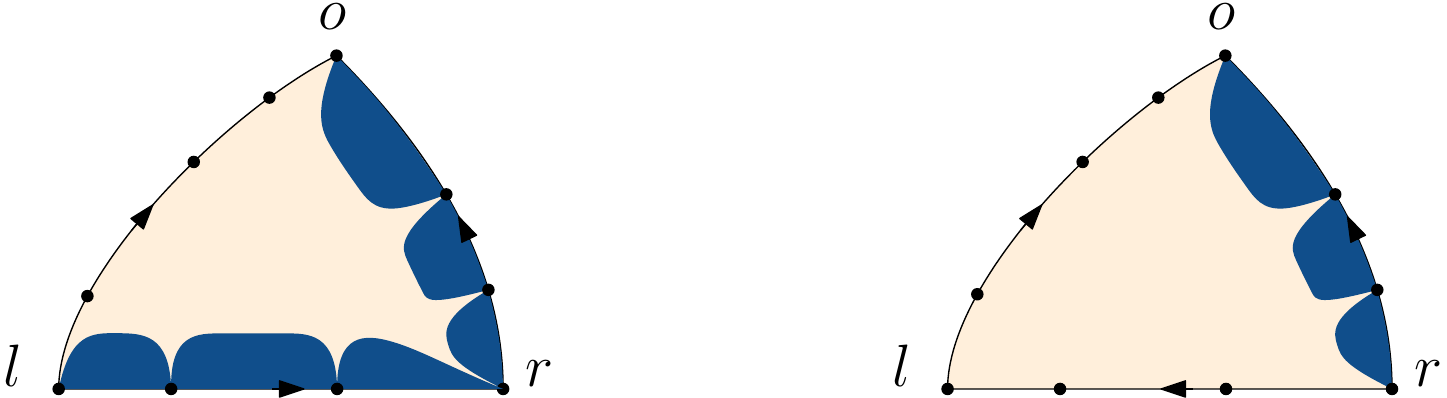}
	\caption{Some (schematic) white (left) and black (right) slices with their marked corners $o, l, r$. The virtual boundary is on the outside, the dark blue regions represent black faces incident to it, and the rest is in beige.}
	\label{fig:Slice}
\end{figure}

\subsubsection{The slice series} A slice is canonically pointed at its origin and vertices are labeled with $\ell_o(v)$ (abusing the notation by using the corner $o$ instead of the vertex as subscript). If we denote $\ell_o(l)$ and $\ell_o(r)$ the labels at $l$ and $r$, we call $\ell_o(r)-\ell_o(l)$ the \emph{increment} of white slices and $\ell_o(l)-\ell_o(r)$ the increment of black slices.

Let $A^{(c)}_{n,k}$ (respectively $B^{(c)}_{n,k}$) be the generating series of white (respectively black) slices of increment $k$, whose bases have length $n$ and where $l$ (respectively $r$) sits at a vertex of color $c$. As a special case, denote $A^{(c)}_k$ (respectively $B^{(c)}_k$) the generating series of \emph{elementary} white (respectively black) slices of increment $mk-1$ where $l$ (respectively $r$) sits at a vertex of color $c\in[0..m-1]$.
\begin{proposition}{}{CompositeSlices}
	Those series are given by
	\begin{equation} \label{CompositeSlices}
		A_{n,k}^{(c)} = [\alpha^k] \alpha^{-n} \prod_{r=1}^{n} A^{(c-r+1)}(\alpha^m),\qquad 
		B_{n,k}^{(c)} = [\alpha^{-k}] \alpha^{n} \prod_{r=1}^{n} B^{(c-r+1)}(\alpha^m),
	\end{equation}
	where the polynomials $A^{(c)}(z), B_k^{(c)}(z)$ (in $z$ and $1/z$ respectively) are given by Definition \ref{def:WkBk}, which in the polynomial case reduces to
	\begin{equation} \label{eq:PolynomialAB}
		\begin{aligned}
			A^{(c)}(z) &= u_c + \sum_{s=1}^{D_2} q_s t^s \left\{z^s \frac{\prod_{c'=0}^{m-1} B^{(c')}(z)^s}{B^{(c)}(z)}\right\}^{\geq},\\
			B^{(c)}(z) &= 1 + \sum_{s=1}^{D_1} p_s \left[z^{-s} \frac{\prod_{c'=0}^{m-1} A^{(c')}(z)^s}{A^{(c)}(z)}\right]^{<}.
		\end{aligned}
	\end{equation}
\end{proposition}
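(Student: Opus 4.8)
The plan is to prove both identities by the standard \emph{slice decomposition}: I cut a composite slice along geodesics to its origin into a chain of $n$ elementary slices, and then read off the generating-function identity as the associated convolution. I will treat the white case, the black case being entirely parallel with the orientation of the base reversed.

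First I would set up the cutting. Let $S$ be a white slice with base $l=v_0\to v_1\to\dots\to v_n=r$ of length $n$, oriented from $l$ to $r$, with $l$ at a vertex of color $c$. Because $S$ is a biconstellation, each base edge $v_{i-1}\to v_i$ runs from a vertex of color $c'$ to one of color $c'-1\bmod m$; since $v_0$ has color $c$, the vertex $v_i$ has color $c-i\bmod m$. For each interior base vertex $v_i$ ($1\le i\le n-1$) I take the leftmost geodesic $\gamma_i$ from $v_i$ to the origin. Cutting $S$ open along $\gamma_1,\dots,\gamma_{n-1}$ separates it into $n$ pieces $S_1,\dots,S_n$, where $S_r$ has base the single edge $v_{r-1}\to v_r$; hence $S_r$ is an \emph{elementary} white slice whose $l$-vertex $v_{r-1}$ has color $c-(r-1)=c-r+1$. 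Since cutting along a geodesic preserves the labels $\ell_o$, the increment is additive: writing $mk_r-1$ for the increment of $S_r$, the increment of $S$ equals $\sum_{r=1}^{n}(mk_r-1)=m\sum_r k_r-n$.

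The main step, and the one I expect to be the crux, is to verify that this cutting is a \emph{bijection} onto chains of elementary slices. Concretely one must check that (i) each $\gamma_i$ is a genuine geodesic whose removal leaves each side a valid slice, so that the left and right boundaries remain geodesics and the origin stays the unique common vertex, and (ii) the pieces reassemble uniquely, i.e.\ two consecutive elementary slices $S_r,S_{r+1}$ glue back along their shared geodesic boundary. This rests on the confluence of geodesics toward a fixed origin in planar biconstellations---geodesics from distinct base vertices do not cross and merge monotonically toward the origin---exactly as in the slice technology of Bouttier--Guitter and its constellation extension by Albenque--Bouttier, the only new feature being the bookkeeping of the vertex colors, which behaves coherently under cutting thanks to the biconstellation rule recalled above. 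I would cite these references for the geometric content and only carry out the color- and weight-tracking explicitly. The faces of $S$, together with the weights $t$, $\ptimes$, $\qtimes$ and the vertex weights $u_c$, distribute over the pieces in accordance with the elementary slice series (in particular the minimal elementary white slice of increment $-1$ carries the weight $u_c=A^{(c)}_0$, while its black counterpart carries $1=B^{(c)}_0$).

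Finally I would assemble the generating functions. Introduce a formal variable $\alpha$ weighting each slice by $\alpha^{\text{(increment)}}$; an elementary white slice of increment $mk-1$ with $l$-vertex of color $c'$ then contributes $A^{(c')}_k\,\alpha^{mk-1}$, so the full elementary series at color $c'$ is $\alpha^{-1}A^{(c')}(\alpha^m)$ with $z=\alpha^m$. Because the chain $S_1\cdots S_n$ carries colors $c,c-1,\dots,c-n+1$ and the faces and weights are partitioned among the pieces, the concatenation bijection gives $\sum_k A_{n,k}^{(c)}\alpha^k=\alpha^{-n}\prod_{r=1}^{n}A^{(c-r+1)}(\alpha^m)$, and extracting $[\alpha^k]$ yields the stated formula for $A_{n,k}^{(c)}$. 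The black case is identical after reversing the base orientation and recording increments through $\alpha^{-1}$, which produces $\alpha^{n}\prod_{r=1}^{n}B^{(c-r+1)}(\alpha^m)$ together with the extraction $[\alpha^{-k}]$. The reduction of Definition~\ref{def:WkBk} to \eqref{eq:PolynomialAB} is merely the specialization $s=0$, $w=\infty$ in \eqref{eq:defWratconv2}--\eqref{eq:defBratconv2}, so nothing further is required there.
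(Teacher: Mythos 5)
Your treatment of the first identity is essentially the paper's argument: the leftmost-geodesic cutting of a composite slice into $n$ elementary slices of colors $c, c-1, \dotsc, c-n+1$, with the increment tracked by a variable $\alpha$, is exactly the paper's encoding by weighted \L{}ukasiewicz paths, and the convolution $\sum_k A^{(c)}_{n,k}\alpha^k = \alpha^{-n}\prod_{r=1}^n A^{(c-r+1)}(\alpha^m)$ together with its black counterpart is correct as stated.

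There is, however, a genuine gap in the second half. The proposition does not only assert \eqref{CompositeSlices}; it also asserts that the \emph{combinatorially defined} elementary slice series $A^{(c)}_k$, $B^{(c)}_k$ coincide with the quantities of Definition \ref{def:WkBk}, i.e.\ that they satisfy the system \eqref{eq:PolynomialAB}. You dispose of this by saying the reduction of Definition \ref{def:WkBk} to \eqref{eq:PolynomialAB} is ``merely the specialization'' to the polynomial case. That algebraic reduction is indeed trivial, but it is not what needs proving: the substantive claim is that the elementary slice generating functions satisfy these equations at all, and this requires a second, separate decomposition --- of the elementary slices themselves. For an elementary white slice of nonzero increment one reveals the black face of degree $ms$ to the left of its base; removing the base turns it into a black slice of base length $ms-1$, giving $A^{(c)}_k = u_c\delta_{k,0} + \sum_{s\geq1} q_s t^s B^{(c-1)}_{ms-1,\,mk-1}$, and substituting \eqref{CompositeSlices} and reorganizing the coefficient extraction produces exactly the truncation $\{\cdot\}^{\geq}$ in \eqref{eq:PolynomialAB} (symmetrically $[\cdot]^{<}$ for $B^{(c)}_k$, with $B^{(c)}_0=1$ for the black slice reduced to its base). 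Without this step the polynomials $A^{(c)}(z)$, $B^{(c)}(z)$ appearing in your formula are never tied to Definition \ref{def:WkBk}, so the proposition as stated is not established.
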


\subsubsection{Weighted \L{}ukasiewicz paths}
We will see the generating series of slices as generating series of \L{}ukasiewicz paths appropriately weighted. Along an oriented edge of a slice
\begin{itemize}
	\item the color decreases by $1\mod m$,
	\item and the label $\ell_o(v)$ changes by $mk-1$ for $k\in\mathbb{N}$.
\end{itemize}
This explains why the following family of paths plays a role.

A white $m$-path starting at color $c\in[0..m-1]$ is a path on $\mathbb{Z}^2$ with steps of type $\omega_p^\circ = (1,mp-1)$ for $p\geq 0$, with associated weight $A^{(c+r-1)}_p$ if the $r$-th step is of type $\omega_p^{\circ}$. A black $m$-path is a path on $\mathbb{Z}^2$ with steps of type $\omega_p^\bullet = (1,1-mp)$ for $p\geq 0$, with associated weight $B^{(c-r+1)}_p$ if the $r$-th step is of type $\omega_p^{\bullet}$. We say that the $r$-th step has color $c-r+1\mod m$.

Let $\mathcal{L}^{(c)\circ}_{n,k}$ (resp. $\mathcal{L}^{(c)\bullet}_{n,k}$) be the set of $m$-paths which starts at $(0,0)$ with the color $c$ and ends at $(n,k)$ (respectively $(n,-k)$). Let $A^{(c)}(z) = \sum_{i\geq0} A_i^{(c)} z^i$ and $B(z) = 1 + \sum_{i\geq1} B_i^{(c)} z^{-i}$. Then the generating series of $\mathcal{L}^{(c)\circ}_{n,k}$ and $\mathcal{L}^{(c)\bullet}_{n,k}$ are, clearly,
\begin{equation} \label{eq:PathsSeries}
	\begin{aligned}
		L^{(c)\circ}_{n,k} &= [\alpha^k] \prod_{r=1}^{n} \Bigl(\sum_{i\geq 0} A^{(c-r+1)}_i \alpha^{mi-1}\Bigr) = [\alpha^k] \alpha^{-n} \prod_{r=1}^{n} A^{(c-r+1)}(\alpha^m),\\
		L^{(c)\bullet}_{n,k} &= [\alpha^{-k}] \prod_{r=1}^{n} \Bigl(\sum_{i\geq 0} B^{(c-r+1)}_i \alpha^{1-mi}\Bigr) = [\alpha^{-k}] \alpha^{n} \prod_{r=1}^{n} B^{(c-r+1)}(\alpha^m).
	\end{aligned}
\end{equation}

\begin{proof}[Proof of Proposition \ref{thm:CompositeSlices}]
	We follow the slice decompositions of~\cite{AlbenqueBouttier2022}, while keeping track of the colors of vertices.
	
	Let $M^{(c)}_{n,k}$ be a white slice of increment $k$, with $l$ sitting at a vertex of color $c$, and base of length $n$. Let us denote $v_1, \dotsc, v_n$ the sequence of vertices along the base from $l$ to $r$. Then the sequence of their labels can be encoded as a \L{}ukasiewicz path from $\mathcal{L}^{(c)\circ}_{n,k}$. If $M^{(c)}_{n,k}$ is a black slice, it works the same by writing the path from $r$ to $l$.
	
	Furthermore, one can apply the slice decomposition to $M^{(c)}_{n,k}$. It consists in drawing from each vertex of the base the leftmost geodesics to the origin. After cutting the geodesics every time two of them meet, one obtains a concatenation of $n$ elementary white slices, whose bases are the edges of the base between $v_i$ and $v_{i+1}$, $i=1, \dotsc, n-1$, and whose increments are $\ell_o(v_{i+1}) - \ell_o(v_i)$, see Figure \ref{fig:SliceDecomposition}. 
	
	In terms of generating series, one can thus decorate the steps of the \L{}ukasiewicz paths of $\mathcal{L}^{(c)\circ}_{n,k}$ (respectively $\mathcal{L}^{(c)\bullet}_{n,k}$) with the series $A^{(c)}_k$s (respectively $B^{(c)}_k$). Notice that the edge of the base between $v_i$ and $v_{i+1}$ has color $c-i+1\mod m$. Therefore, if the $i$-th step is of type $(1,mk_i-1)$, it receives the weight $A^{(c-i+1)}_{k_i}$. This gives for $k\in\mathbb{Z}$
	\begin{equation}
		A_{n,k}^{(c)} =  L^{(c)\circ}_{n,k}
		,\qquad
		B_{n,k}^{(c)} =  L^{(c)\bullet}_{n,k}
		.
	\end{equation}
	Here it is important to notice that elementary slices whose bases have compatible colors can be glued together along their full boundaries, since the color along each oriented edge decreases by $1$ modulo $m$ (thus if the colors of two boundaries to be glued are compatible at the beginning, they are compatible throughout). We have thus proved \eqref{CompositeSlices}. 
	
	To prove the relations in Definition \ref{def:WkBk} for $A_k^{(c)}$, consider an elementary white slice. For $k=0$, it may be reduced to a single edge, where the corners $o$ and $r$ are the same, with weight $u_c$. If not, consider the black face to the left of its base. This face can have arbitrary degree $ms$ for $s\geq 1$. After removing the base, the white elementary slice becomes a black slice with base length $ms-1$. It is therefore found that
	\begin{equation*}
		A_k^{(c)} = u_c\delta_{k,0} + \sum_{s\geq 1} q_{s} t^{s} B^{(c-1)}_{ms-1,mk-1}.
	\end{equation*}
	This is illustrated in Figure \ref{fig:SliceDecomposition}. Similarly $B_k^{(c)} = \sum_{s\geq 1} p_{s} A^{(c-1)}_{ms-1,mk-1}$ for $k\geq 1$, while a black slice of increment $-1$ is by definition reduced to its base, implying $B_0^{(c)}=1$.
\end{proof}
\begin{figure}
	\centering
	\includegraphics[scale=.75]{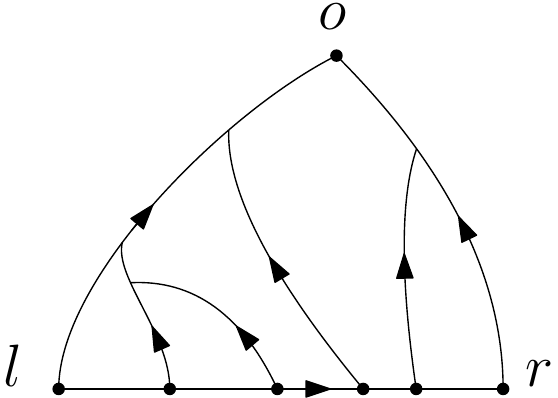}\hspace{2cm}
	\includegraphics[scale=.75]{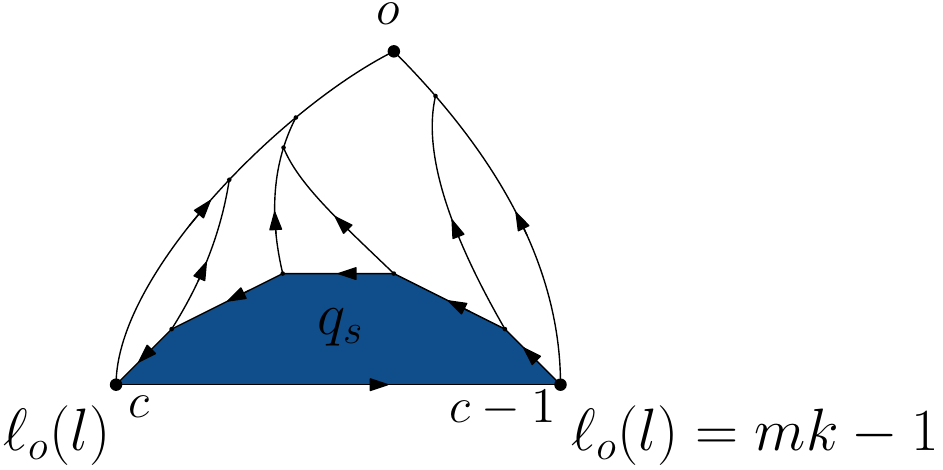}
	\caption{The slice decomposition of a white slice into elementary white slices (left). The slice decomposition applied to an elementary white slice after revealing the black face incident to the base (right).}
	\label{fig:SliceDecomposition}
\end{figure}

\subsection{Disc and cylinder functions from slices}
\subsubsection{The disc function} The proof of \Cref{thm:Disc} relies on a bijective argument originally due, in the case of maps, to \cite[Section 3.3]{BouttierGuitterContinuedFractions} which provides an expression for $W_{0,1}$. This argument was further used in \cite{AlbenqueBouttier:fpsac12} to derive $W_{0,1}$ in the case of constellations with $J=\emptyset$ and $q_i=\delta_{i,1}$ (in our notation). In our case, because there are two sets of controlled degrees, exploiting the bijection is more difficult and we are thankful to M.~Albenque for sharing some of the beautiful details of her derivation of $W_{0,1}(x)$ in \cite{AlbenqueBouttier2022}.

Consider an $m$-biconstellation, rooted on a white corner of color $c$, and also pointed, i.e. with a distinguished vertex (of any color). There is a leftmost, oriented path from the root vertex to the pointed vertex. We cut the map along that path and the root face, which shows the basic result relating biconstellations to slices.
\begin{theorem}{}{}[Adapted from \cite{BouttierGuitterContinuedFractions}]
Pointed, rooted biconstellations correspond to white slices of increment 0, where $l$ is the root corner and the origin is the pointed vertex.
\end{theorem}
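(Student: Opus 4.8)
The plan is to realize the correspondence through a cut-and-glue operation, exactly as in the slice constructions of \cite{BouttierGuitterContinuedFractions}, the only new ingredient being the bookkeeping of vertex colours. First I would describe the forward map, from a pointed rooted biconstellation to a slice. Given such an object, with root white corner at a vertex $v_0$ of colour $c$ and pointed vertex $o$, I turn the root white face into the virtual boundary and then slit the map open along the leftmost oriented geodesic $\gamma$ from $v_0$ to $o$, the root corner serving to initialise the leftmost choice. Since an oriented geodesic cannot revisit a vertex, $\gamma$ is a simple path; slitting along it duplicates each of its edges and each of its interior vertices into a left and a right copy, while the terminal vertex $o$ is not duplicated and the starting boundary vertex $v_0$ splits into two copies $l,r$. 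The contour of the resulting virtual face is then the opened-up boundary of the root face running from $l$ to $r$ (the base), followed by the two copies $\gamma_L,\gamma_R$ of $\gamma$ meeting at $o$. As the left and right boundaries are both equal to $\gamma$, they are geodesics of the same length meeting only at $o$, so $\ell_o(r)-\ell_o(l)=0$: this is a white slice of increment $0$ with $l$ at the root corner and origin $o$, as claimed.

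Next I would describe the inverse, gluing. Given a white slice of increment $0$, the left and right boundaries are geodesics $l=w_0\to\dots\to w_p=o$ and $r=z_0\to\dots\to z_p=o$ of equal length $p=\ell_o(l)=\ell_o(r)$, which I identify step by step, $w_i$ with $z_i$ and the corresponding edges. Two colour computations make this consistent. Along any oriented edge the colour drops by $1$ modulo $m$, so along a geodesic ending at $o$ one has $\operatorname{col}(v)\equiv \operatorname{col}(o)+\ell_o(v)\pmod m$; increment $0$ then forces $\operatorname{col}(l)=\operatorname{col}(r)$, and inductively $\operatorname{col}(w_i)=\operatorname{col}(z_i)$, so the identification respects the colours and the canonical orientation. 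Applying the same count to the base, oriented $l\to r$ with, say, $n$ edges, gives $\operatorname{col}(r)\equiv\operatorname{col}(l)-n$, whence $n\equiv 0\pmod m$; therefore, once the gluing identifies $l$ with $r$, the base closes into a cycle of length a multiple of $m$, which is filled back in as a legitimate white face. The outcome is a closed planar biconstellation, pointed at $o$ and rooted at the white corner carried by $l$.

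Finally I would verify that the two operations are mutually inverse. Cutting after gluing returns the original biconstellation provided the slit is made along the very geodesic that was glued, and gluing after cutting returns the original slice provided the forward map recognises the merged boundary $\gamma_L\!=\!\gamma_R$ as the leftmost geodesic from the root to $o$. The one substantial point — and the main obstacle — is thus the characterisation of the left boundary of a slice as the \emph{leftmost} geodesic: one must show that in the reconstructed map the canonical geodesic from the root vertex to $o$ is exactly the glued path and never detours into the interior. This is precisely the confluence property of leftmost geodesics proved in \cite{BouttierGuitterContinuedFractions}, which I would invoke; granting it, the remaining checks (planarity, properness of the colouring, and the face-degree condition, the latter being guaranteed automatically by the increment-$0$ constraint via the colour count above) are routine.
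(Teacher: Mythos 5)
Your proposal is correct and follows exactly the route the paper takes (and attributes to Bouttier--Guitter): cut the pointed rooted biconstellation along the leftmost oriented geodesic from the root vertex to the pointed vertex, open up the root face into the virtual boundary, and invert by gluing the two equal-length boundary geodesics, with the confluence property of leftmost geodesics guaranteeing bijectivity. The paper only sketches this in one sentence, so your added bookkeeping (colour compatibility along the glued boundaries and the base degree being a multiple of $m$) is a faithful elaboration rather than a different argument.
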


Finding out the series of rooted, non-pointed maps is more difficult, but the plan is given as well in \cite{BouttierGuitterContinuedFractions}. Let  $\mathcal{M}$ be the set of connected, rooted planar $m$-biconstellations. Let $d\geq 0$ and $c\in[0..m-1]$. We introduce $\mathcal{M}^*_{d}$ the set of rooted, pointed $m$-biconstellations with pointed vertex $v^*$ and a root vertex $\vec{v}$ of color $c$, such that
\begin{itemize}
	\item $\ell_{v^*}(\vec{v}) = d$,
	\item all vertices $v$ on the root face satisfy $\ell_{v^*}(v) \geq d$.
\end{itemize}
Then
\begin{equation}
	\mathcal{M} = \mathcal{M}^*_{d=0} = \left(\cup_{d\geq 0} \mathcal{M}^*_d\right) \setminus \left(\cup_{d\geq 1} \mathcal{M}^*_d\right).
\end{equation}
The key point of \cite{BouttierGuitterContinuedFractions} is that the series of both sets into brackets can be evaluated independently, which is done in \cite{BCCGF22, AlbenqueBouttier2022} and leads to \Cref{thm:Disc}.

\subsubsection{Combinatorial origin of $Z(x)$} The ``change of variable'' $x\mapsto Z(x)$, \eqref{eq:ZasExcursions}, is the key of our parametrization and it clearly begs for a combinatorial interpretation, at least in the case of biconstellations again. An \emph{$m$-excursion} is an $m$-path starting at $(0,0)$, ending at $(n,0)$ for some $n$ and staying above~$0$. Our $m$-excursions will always be weighted like $m$-paths, with an extra weight $x^{-1/m}$ per step. That is to say, the weights are $x^{-1/m}A^{(c')}_k$ for each step at color $c'$ of type $(1,mk-1)$, $k\geq 0$.

Notice that if an $m$-path crosses the horizontal line of height $h$ at the color $c$, then it can only cross it again with the same color. Indeed, if the total height variation is zero after $p$ steps, one has $\sum_{j=1}^p (mk_j-1)=0$ for some $k_j\geq0$ which implies that $p$ is a multiple of $m$. Since colors decrease by 1 modulo $m$ along oriented edges, we conclude that it is always the same color which is encountered at a given height.

Let $Z^{(c)}$ be the generating series of $m$-excursions with an additional down step below 0. We can write schematically
\begin{equation*}
	Z^{(c)} = \begin{array}{c} \includegraphics[scale=.5]{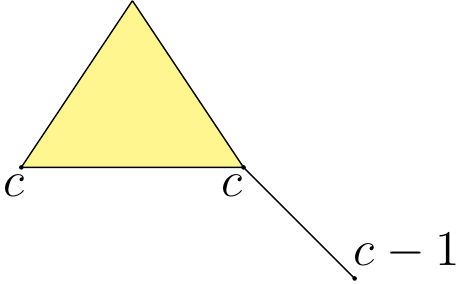} \end{array}
\end{equation*}
Let $Z$ then be the generating series of $m$-paths starting at $(0,0)$ with an arbitrary color $c$ and finishing at $(ms,-m)$ while staying above $-m$ until the last step, also counted with weight $x^{-1/m}A^{(c')}_k$ for each step at color $c'$ of type $(1,mk-1)$.

We perform a first passage decomposition. Such a path starts with an excursion at color, say $c$, until it goes below 0 for the first time with a down step. The associated weight is $Z^{(c)}$. The down step goes from color $c$ to $c-1$. The path then follows an excursion starting at color $c-1$ until it goes below $-1$ for the first time and so on. There are exactly $m$ excursions of this type to reach height $-m$. This gives
\begin{equation*}
	Z = \begin{array}{c} \includegraphics[scale=.5]{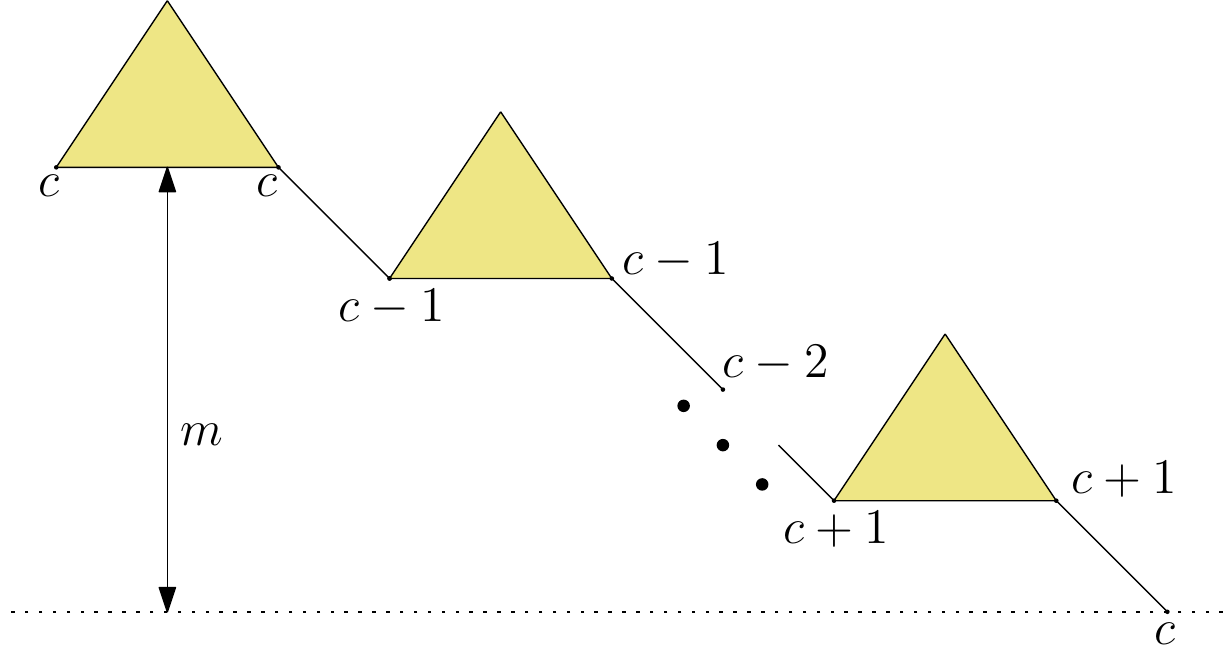}\end{array} =  \prod_{r=1}^{m} Z^{(c-r)}.
\end{equation*}
We then perform a first return decomposition to write an equation on $Z^{(c)}$. An excursion counted by $Z^{(c)}$ starts with an arbitrary step of type $(1,mk-1)$. If $k=0$, the $m$-excursion is over. If $k>0$, we perform again a first passage decomposition, decomposing the path into excursions above heights $c+mk-1, c+mk-2, \dots, c$. These $mk$ excursions can be arranged in consecutive groups of $m$ in which each group is formed by an excursion starting at color $i$ for all $i \in[0..m-1]$. The generating function for each group is precisely the object counted by $Z$ (up to circular permutation of excursions), hence
\begin{equation*}
	Z^{(c)}(x) = \includegraphics[scale=.5,valign=c]{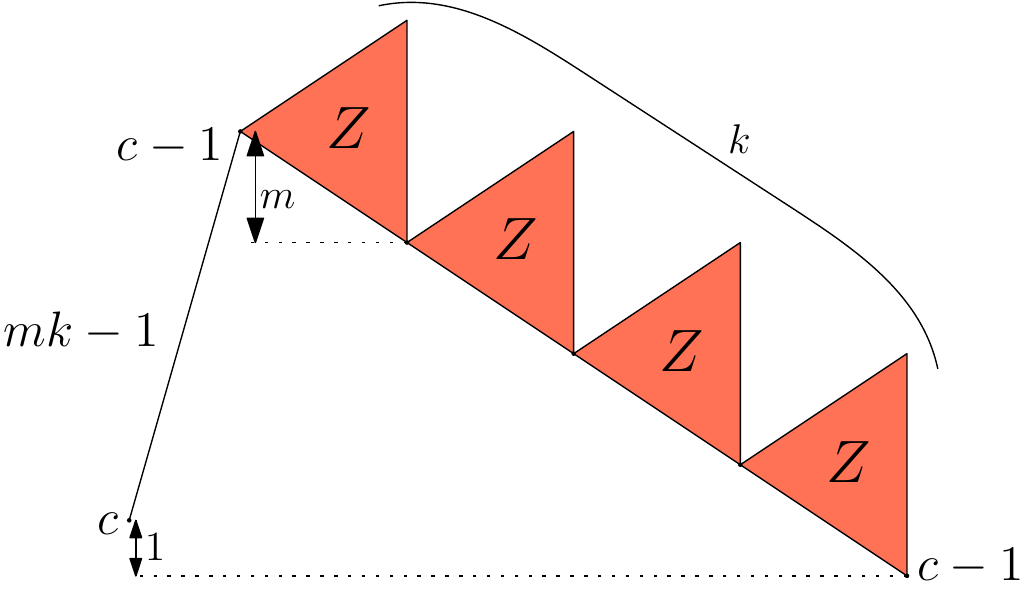} = x^{-1/m} \sum_{k=0}^{D_2} A_k^{(c)} 
	Z(x)^k = x^{-1/m} A^{(c)}(Z(x)).
\end{equation*}
We can in particular deduce that $A^{(c)}(Z(x))$ is the series of $m$-excursions with a final down step below 0 for which we ignore the weight $x^{-1/m}$ and that $Z(x)$ is determined by
\begin{equation} \label{eq:ZtildeSeries}
	Z(x) = \frac{1}{x} \prod_{c=0}^{m-1} A^{(c)}(Z(x)).
\end{equation}

\subsubsection{The cylinder function from slices} The magic of slices is even clearer for the decomposition of cylinders, because the pointing is canonical (although implicit). Let $p>0$. A $p$-\emph{annular} biconstellation is a rooted biconstellation with a marked black face of degree $mp$ such that any oriented cycle separating the root face from the marked face with the root face to its right has length larger than or equal to $mp$. A $p$-\emph{strict annular} biconstellation is a rooted biconstellation with a marked white face of degree $mp$ such that any oriented cycle separating the root face from the marked face with the root face to its left has length larger than $mp$.

The following proposition is originally due to Bouttier and Guitter \cite[Section 7]{BouttierGuitter2014} in the context of irreducible maps, and it was generalised to hypermaps in \cite{AlbenqueBouttier2022}, which we again follow.

\begin{proposition}{}{Annular}
	There is a bijection between slices of increment $-mp$ and $p$-annular maps. There is a bijection between slices of increment $mp$ and $p$-strict annular maps.
\end{proposition}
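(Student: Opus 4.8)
The plan is to follow the cut-and-glue construction of Bouttier and Guitter \cite{BouttierGuitter2014} and its hypermap extension \cite{AlbenqueBouttier2022}, adapting the bookkeeping so as to track the vertex colors of the biconstellation. The governing principle is the same one already used in the increment-$0$ case recorded above (where slices of increment $0$ encode pointed rooted biconstellations): a slice is a disc bounded by its base together with its left and right boundaries, both geodesics ending at the common origin $o$, and gluing these two geodesics turns the disc into an annulus whose two boundary components are the root face and a new marked face.

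First I would treat the direction from slices to annular maps. Given a slice, I identify its left and right boundaries vertex by vertex starting from the origin $o$ and proceeding outward, matching the labels $\ell_o(\cdot)$. Because both boundaries are geodesics, they agree up to the shorter of $\ell_o(l)$ and $\ell_o(r)$, so this partial identification is unambiguous; the $mp$ excess edges of the longer geodesic remain unglued and bound a single new face of degree $mp$, which becomes the marked face, while the base supplies the root. The orientation of the base (from $l$ to $r$ for white slices, from $r$ to $l$ for black slices) together with the sign of the increment fixes whether the marked face lies to the left or to the right of the separating cycles, hence whether the result is a $p$-strict annular map (marked white face, increment $mp$) or a $p$-annular map (marked black face, increment $-mp$). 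Throughout, one checks the colored-gluing compatibility exactly as in the proof of \Cref{thm:CompositeSlices}: since the color decreases by $1 \bmod m$ along each oriented edge, two geodesic segments that are color-compatible at the origin stay compatible all the way, so the glued object is again a genuine $m$-biconstellation.

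Conversely, from a ($p$-strict) annular map I would recover the slice by cutting along a canonical geodesic. Using the marked face and the root, one singles out the leftmost oriented geodesic separating them; cutting the annulus along this geodesic unfolds it into a disc, with the cut path splitting into the left and right boundaries and the boundary of the marked face accounting for the increment $\pm mp$. The separating-cycle condition in the definition of ($p$-strict) annular maps is precisely what guarantees that the cut path is a geodesic, hence that the resulting object satisfies the slice axioms; conversely the geodesic property of the slice boundaries forces every separating cycle to have length $\geq mp$ (respectively $> mp$). One then checks that cutting and gluing are mutually inverse.

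The main obstacle, and the point requiring the most care, will be matching the strict versus non-strict inequalities in the two cases. This asymmetry is dictated by the asymmetric roles of the boundaries in the slice definition—the right boundary is \emph{the} unique geodesic from $r$ to $o$, whereas the left boundary is only required to be a geodesic (in practice the leftmost one)—so uniqueness on one side and mere minimality on the other translate into the $\geq mp$ bound for $p$-annular maps and the $> mp$ bound for $p$-strict annular maps. Verifying that these conditions are exchanged correctly under the two maps, and that no slice of the wrong increment sign or wrong base orientation can arise, is the delicate combinatorial check; once the orientation conventions are pinned down, the two maps are manifestly inverse to one another.
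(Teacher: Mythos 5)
The paper does not actually prove this proposition: it states that the result is due to Bouttier--Guitter \cite[Section 7]{BouttierGuitter2014} and to \cite{AlbenqueBouttier2022} for hypermaps, and explicitly declines to describe the bijections (``Those bijections have some very technical aspects to them and we will not describe them''). Your cut-and-glue strategy is exactly the one used in those references, so there is no divergence of approach to report; the outline is sound, including the color-compatibility argument (colors decrease by $1 \bmod m$ along oriented edges, so two geodesics ending at the same origin are color-matched at every step) and the identification of the $mp$ unglued excess edges of the longer boundary as the marked face.

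That said, be aware that what you have written is a plan rather than a proof, and the items you defer are precisely the technical content that made the paper punt to the references. Three points in particular need to be carried out, not just named: (i) in the gluing direction, the origin becomes an interior vertex of the annular map and the distance labels $\ell_o(\cdot)$ are no longer intrinsic data, so you must show that the separating-cycle bound ($\geq mp$, resp.\ $> mp$) is exactly what survives of the geodesic property --- this is where a separating cycle is lifted to a path in the slice joining a boundary vertex to its identified partner, and the label difference forces the length bound; (ii) in the cutting direction, the existence and canonicity of the geodesic along which you cut must be established from the separating-cycle condition alone, since there is no marked origin to measure distances from; (iii) the strict/non-strict dichotomy is tied not only to the unique-versus-leftmost asymmetry of the two slice boundaries but also to the color of the marked face (black for $p$-annular, white for $p$-strict annular) and to whether the root face lies to the right or to the left of the separating cycles, and these conventions must be checked to match under both maps. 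None of these steps should fail, but until they are written out the argument is a faithful summary of \cite{BouttierGuitter2014,AlbenqueBouttier2022} rather than a self-contained proof.
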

Those bijections have some very technical aspects to them and we will not describe them.

Consider a planar $m$-biconstellation $M$ with two distinct marked white faces $F_1, F_2$ of degrees $mf_1(M), mf_2(M)$, and a root corner of the same color in each of them. The series $W_{0,2}(x_1,x_2)$ is the generating series of those objects, counted with $x_1^{-f_1(M)-1} x_2^{-f_2(M)-1}$. We consider the unique shortest cycle separating $F_1$ and $F_2$ which has $F_2$ on its left and is as close as possible to it. Say it has length $mp$ for $p\geq 1$. Let us cut $M$ along that cycle. By definition, we obtain to its left a $p$-strict annular map and to its right a $p$-annular map whose bases have respective degrees $mf_1(M), mf_2(M)$.

Using Proposition \ref{thm:Annular}, we now have to enumerate slices of increment $-p$ with base of length $mf_1(M)$, and slices of increment $p$ with base of length $mf_2(M)$. The former correspond to $m$-paths starting at $(0,0)$ and ending at $(mf_1(M), -mp)$, while the latter end at $(mf_2(M), mp)$. They are respectively
\begin{align*}
	[\alpha^{-mp}] \prod_{r=1}^{mf_1} \Bigl(\sum_{i\geq 0} A_i^{(c-r+1)} \alpha^{mi-1}\Bigr) &= [\alpha^{-p}] \alpha^{-f_1} \prod_{c=0}^{m-1} A^{(c)}(\alpha)^{f_1}\\
	\text{and}\quad 
	[\alpha^{mp}] \prod_{r=1}^{mf_2} \Bigl(\sum_{i\geq 0} A_i^{(c-r+1)} \alpha^{mi-1}\Bigr) &= [\alpha^{p}] \alpha^{-f_2} \prod_{c=0}^{m-1} A^{(c)}(\alpha)^{f_2}.
\end{align*}

Given a $p$-annular constellation and a $p$-strict annular constellation, we can glue them along their marked faces, since they both have degree $mp$, one is white and the other black. There are moreover $p$ ways to do so (and not $mp$ because the colors of the vertices have to match for the gluing to be allowed). Therefore
\begin{equation}
	\begin{aligned}
		W_{0,2}(x_1, x_2) &= \sum_{f_1, f_2} x_1^{-f_1-1} x_2^{-f_2-1} \sum_{p\geq 1} p \left([\alpha^{-p}] \alpha^{-f_1} \prod_{c=0}^{m-1} A^{(c)}(\alpha)^{f_1}\right) \left( [z^{p}] z^{-f_2} \prod_{c=0}^{m-1} A^{(c)}(z)^{f_2}\right).\\
		&= x_1^{-1} x_2^{-1} \sum_{p\geq 1} p [z_1^{-p}] R(x_1,z_1)\ [z_2^p] R(x_2,z_2)
	\end{aligned}
\end{equation}
where $R(x,z) = \frac{1}{1-\frac{\prod_{c=0}^{m-1} A^{(c)}(z)}{xz}} \in \mathbb{Q}[\ptimes, \qtimes, U][[t]][z,z^{-1}][[x^{-1}]]$. A nice calculation allows for extracting $[z_1^{-p}] R(x_1,z_1)$ by isolating the ``small'' root of $1-\frac{\prod_{c=0}^{m-1} A^{(c)}(z)}{xz}$, i.e. the unique one which is a power series in $x^{-1}$. This then leads to \Cref{thm:Cylinder}.

\section{Topological recursion}
\subsection{The topological recursion} The topological recursion (TR) is a universal structure which has initially appeared in the context of matrix integrals and combinatorial maps. The first instance of the TR was devised by B.~Eynard in \cite{Eynard04} as an intrinsic solution to the loop equations/Tutte equations of those problems\footnote{We are extremely grateful to Bertrand Eynard, Nicolas Orantin and Gaëtan Borot for the time they took to explain the topological recursion to newbie me, as well as the time they devote to the community.}, see also \cite{AMM05, CE06, ChekhovEynardOrantin2006}. The foundations were then formalized by Eynard and Orantin in \cite{EO}. It has become a universal procedure which enables one to compute certain invariants, usually denoted by $\omega_{g,n}$, attached to the surface of genus $g$ with $n$ boundaries, from a small initial data consisting of an algebraic object called the \emph{spectral curve}. It has been gradually observed along the years that many natural problems in combinatorics or enumerative geometry were actually instances of the topological recursion, in the sense that the correlation functions $W_{g,n}$ naturally associated to these counting problems could be recovered by this procedure, with an appropriate spectral curve~\cite{EO, EynardOrantinWeilPetersson, BorotEynard2011, EMS11, Borot13, AC14, KZ15, DOPS17, AC18, ABCDGLW19, BCG21}. The TR is thus said in general to be ``counting surfaces'' \cite{Eynard:book}. It has grown into a vast field of research with deep mathematical connections in geometry and mathematical physics, related to the quantization of classical spectral curves \cite{EGMO21}, and of	Airy structures (which it inspired) \cite{KontsevichSoibelman2017, ABCO, HigherAiry} and to Givental's formula~\cite{DOSS}.

For pratical purposes, it is important to keep in mind that, although many examples of models giving rise to the TR have now been proved to exist, the proofs are often model-dependent. The question of the unification of the methods and results, and of giving conceptual explanations for TR to hold in vast generality is a subject of active interrogation.

The typical setting (not the most general though) is as follows. Let a spectral curve be $\mathcal{S}=(\mathbb{P}^1,V,x(z),y(z),\omega_{02})$ 
\begin{itemize}
	\item a branched covering $x:\mathbb{P}^1\to V$, where $V$ is an open subset of $\mathbb{P}^1$. The zeroes of $dx$ are the \emph{ramification points} and they are assumed to be simple.
	\item a meromorphic function $y$, such that the zeroes of $dy$ are different from those of $dx$.
	\item a symmetric bidifferential $\omega_{02}$ on $\mathbb{P}^1$ which behaves locally like
	\begin{equation}
		\omega_{02}(z_1,z_2) = \frac{dz_1 dz_2}{(z_1-z_2)^2} + \mathcal{O}(1)
	\end{equation}
\end{itemize}
We need to say a bit more about the spectral curve in order to write the TR. Let $b_1, \dotsc, b_R$ the ramification points of $x$, and denote $\sigma_i$, for $i=1,\dotsc, R$ the local involutions which satisfy
\begin{equation}
	\sigma_i(b_i) = b_i, \quad \sigma_i(z)\neq z \ \text{for $z\neq b_i$,}\quad x(\sigma_i(z))=x(z).
\end{equation}
The local kernel of the TR is
\begin{equation}
	K_i(z_1,z) \coloneqq \frac{1}{2}\frac{\int_{w=\sigma_i(z)}^{z}\omega_{0,2}(z_1,w)}{\omega_{0,1}(z)-\omega_{0,1}(\sigma_i(z))}.
\end{equation}

Let $\{\omega_{g,n}(z_1, \dotsc, z_n)\}_{\substack{2g+n-2\geq1\\ g\geq 0, n>0}}$ be a set of symmetric $n$-differentials on $\mathbb{P}^1$. We say that they satify the TR on $\mathcal{S}$ if
\begin{equation} \label{TR}
	\omega_{g,n}(z_1,L) = \sum\limits_{i=1}^{R} \Res_{z=b_i}\, K_i(z_1,z)\Big(\omega_{g-1,n+1}(z,\sigma_i(z),L)+\sum\limits_{\substack{h+h'=g \\ C\sqcup C'=L}}^{'}\omega_{h,1+|C|}(z,C)\omega_{h',1+|C'|}(\sigma_i(z),C')\Big),
\end{equation}
where $L=\{z_2,\dots,z_n\}$ and the notation for $\sum^{'}$ means that the terms $(h,C)=(0,\emptyset),\, (g,L)$ are excluded, so that no $\omega_{01}$ appears in the sum. It is true, though not obvious, that this formula produces symmetric differentials.

\subsection{Topological recursion and weighted Hurwitz numbers} The TR has been proved for various classes of weighted Hurwitz numbers, starting with the original root of it all, i.e. maps, but also simple Hurwitz numbers \cite{SimpleHurwitzTR} and bipartite maps \cite{KazarianZograf2015}. Recall that weighted Hurwitz numbers are determined by the choice of the function $G$, which we take of the form
\begin{equation*}
	G(z) = e^{w z} \frac{\prod_{i=0}^{m-1} (1+u_iz)}{\prod_{r=0}^{s-1}(1+v_r z)},
\end{equation*}
and their generating series is
\begin{equation*}
	\tau_G(\ptimes, \qtimes, t) = \sum_{\lambda} t^{|\lambda|} s_\lambda(\ptimes) s_\lambda(\qtimes) \prod_{\square\in\lambda} G(c(\square))
\end{equation*}
Maps correspond to $q_i = \delta_{i,2}$ and $G(z) = u+z$, bipartite maps to $q_i=\delta_{i,1}$ and $G(z) = (u+z)(v+z)$ and simple Hurwitz numbers to $q_i=\delta_{i,1}$ and $G(z) = e^{uz}$. 

What does it mean for those models to satisfy the TR? For this to make sense, we have to define the $\omega_{g,n}$s. Since we know how to extract the contributions of genus $g\geq 0$, see $F^G_g(\ptimes, \qtimes, t)$, Equation \eqref{FreeEnergyFixedGenus}, and we know how to create boundary faces (with the rooting operator $\nabla_x$), we define naturally
\begin{align}\label{eq:Wgn}
	W_{g,n}(x_1,\dotsc,x_n) \coloneqq  \Big( \nabla_{x_n} \dotsb \nabla_{x_1} F_g \Big) \Big|_{\substack{p_{i}=0, i> D_1\\q_{j}=0, j> D_2}}.
\end{align}
Combinatorially, $W_{g,n}$ is the generating function of weighted Hurwitz numbers on a surface of genus $g$ with $n$ boundaries, arbitrarily many internal faces, whose degrees are bounded by $D_1, D_2$ (depending on their color). We view it as a formal power series in $t$ whose coefficients are polynomials in the other variables, i.e.~an element of $\mathbb{Q}[\ptimes,\qtimes,\vec{u},\vec{v},w,x_1^{-1}, \dotsc, x_n^{-1}][[t]]$. It depends implicitly on the parameters of the function $G$ and on $D_1, D_2$, although we will not indicate it in the notation.

The TR then requires the functions $x(z), y(z)$. Although there is no general recipe to find those, and the details are model-dependent, experience shows that $y(z)$ is ``determined'' in some sense by $W_{0,1}(x(z))$ (they are sometimes equal), and 
\begin{equation}
	\omega_{0,2}(z_1,z_2) = \biggl(W_{0,2}(x(z_1),x(z_2)) + \frac{1}{(x(z_1)-x(z_2))^2}\biggr) dx(z_1) dx(z_2).
\end{equation}
Then for $2g+n-2\geq1$, the $n$-differentials which may satisfy the TR are expected to be
\begin{equation} \label{Omegagn}
	\omega_{g,n}(z_1, \dotsc, z_n) = W_{g,n}(x(z_1),\dotsc,x(z_n)) dx(z_1) \dotsm dx(z_n).
\end{equation}

The TR being satisfied in several instances of weighted Hurwitz numbers, that begs the question whether it might be satisfied in full generality. This program was started in \cite{AlexandrovChapuyEynardHarnadJ2018, AlexandrovChapuyEynardHarnad2020} where it was proved for \emph{constellations with no (white) internal faces}, i.e. setting $p_i = 0$ for all $i>0$ in \eqref{eq:Wgn}. This was re-proved in \cite{BychkovDuninBarkowskiKazarianShadrin2020}. Although those two groups used different ways of approaching the calculations, they both rely on the fact that $\tau_G$ is a KP tau function (this is explained in Chapter \ref{sec:EnumerationIntegrability}).

Let us explain why it was a little surprising in fact. In the original instances like maps, the TR was proved from the loop equations, which are generalizations of Tutte's equation to all genera and numbers of boundary components. In the case of maps and bipartite maps, those equations are \emph{quadratic} in the $W_{g,n}$s and this appeared to be directly the reason why the TR formula \eqref{TR} is quadratic. That was a problem for models like constellations where the loop equations are \emph{not} quadratic but of higher degree, see \cite{Fang:PhD} and it was not known whether the TR formula could hold in such models because of this. 

However, it was realized that only some properties derived from the loop equations were used to find \eqref{TR}, called linear and quadratic loop equations. As it turns out those linear and quadratic loop equations can actually be derived from the KP hierarchy, as shown by \cite{AlexandrovChapuyEynardHarnad2020}. Therefore, the (quadratic) TR could be obtained for constellations although their loop equations are not quadratic. 

The next part of the program was to prove the TR for constellations \emph{with} internal faces, and generalize to general $G(z)$. The group of \cite{BychkovDuninBarkowskiKazarianShadrin2020} did so in \cite{BychkovDuninBarkowskiKazarianShadrin2022} using techniques on the half-infinite wedge. We also proved the same result in \cite{BCCGF22}, using a different approach which we comment on a bit.

The functions $W_{0,1}$ and $W_{0,2}$ have to be provided for the TR to be proved and we derived them when $G(z)$ is polynomial in the previous section, in a purely combinatorial way (the general case can be deduced from the polynomial one using algebraic properties of the series, see \cite{BCCGF22}). In most works on the subject, the TR is proved by first obtaining strong structure results on the generating functions (showing that they have poles only at the branchpoints of the spectral curve, once expressed in the spectral variables), and on a number of additional equations (typically the linear and quadratic loop equations). In particular, the structure of generating functions is not deduced from TR, but proved before, or simultaneously.

In our work, we proceed differently. We use the idea of \emph{deformation of spectral curves}, taken from Eynard and Orantin's original paper~\cite{EO}. It proves the TR for arbitrary $p_1, \dotsc, p_{D_1}$ from the fact that it was already proved when $p_i=0$, and from our solution in genus $0$. In particular, we obtain the structure of the generating functions as a byproduct of TR, and not the converse. This is quite interesting from the combinatorial perspective since it means that the TR and the structure theorem is proved for objects with internal faces from the case with no internal faces, which is a non-trivial step.

\subsection{TR statement and structure corollary} 
\begin{theorem}{}{}
	Recall the series $X(z), Y(z)$ from Definition \ref{def:SpectralCurve}. 
	The $n$-differentials $\{\omega_{g,n}(z_1, \dotsc, z_n)\}$ defined in \eqref{Omegagn} satisfy the topological recursion \eqref{TR} with the spectral curve $\mathcal{S}_{\text{rational $G$}} \coloneqq (\mathbb{P}^1, \mathbb{P}^1, X(z), Y(z), \omega_{0,2} = \frac{dz_1 dz_2}{(z_1-z_2)^2})$.
\end{theorem}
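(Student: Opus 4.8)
The goal is to prove that the $n$-differentials $\omega_{g,n}$ built from weighted Hurwitz numbers satisfy the topological recursion on the spectral curve $\mathcal{S}_{\text{rational }G}=(\mathbb{P}^1,\mathbb{P}^1,X(z),Y(z),\omega_{0,2})$. The strategy I would adopt is precisely the \emph{deformation of spectral curves} method advertised in the text: rather than proving TR directly for arbitrary internal-face weights, I would bootstrap from the already-established case with no white internal faces (i.e. $p_i=0$ for all $i$, the result of \cite{AlexandrovChapuyEynardHarnad2020, BychkovDuninBarkowskiKazarianShadrin2022}) and propagate it to general $p_1,\dots,p_{D_1}$ by a controlled deformation in the parameters $p_i$.

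**Step 1: Base case.** First I would set $p_i=0$ for all $i$ and invoke the known TR for constellations/weighted Hurwitz numbers without internal white faces. This fixes the recursion on a degenerate spectral curve $\mathcal{S}_0$. I would record carefully the genus-0 data on $\mathcal{S}_0$: the functions $X_0(z),Y_0(z)$ and the bidifferential $\omega_{0,2}$, which by \Cref{thm:Cylinder} already takes its universal form $\tfrac{Z'(x_1)Z'(x_2)}{(Z(x_1)-Z(x_2))^2}-\tfrac{1}{(x_1-x_2)^2}$ \emph{independently} of the $p_i$. This $p$-independence of $\omega_{0,2}$ is what makes the deformation clean: the kernel $K_i(z_1,z)$ and the involutions $\sigma_i$ vary with $p$ only through $X,Y$, not through the propagator.

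**Step 2: Deformation equation.** The heart of the argument is to show that turning on the $p_i$ deforms each $\omega_{g,n}$ in a way compatible with TR. Concretely, I would establish a variational formula: differentiating $\omega_{g,n}$ with respect to $p_k$ inserts an extra boundary, i.e. $\partial_{p_k}\omega_{g,n}$ is expressed through $\omega_{g,n+1}$ integrated against a suitable one-form (a ``loop insertion''/$\nabla_x$-type operator at the level of differentials). Eynard--Orantin's original result \cite{EO} guarantees that if a family of differentials satisfies TR at $p=0$, and if the deformation of the spectral curve is itself generated by the $\omega_{0,1}$ of the family through such a variational identity, then TR is preserved along the deformation. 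The input from our side is that $W_{0,1}=Y(Z(x))$ (up to the explicit shift, \Cref{thm:Disc}) and that the $p_k$-dependence of $X,Y$ enters exactly through \Cref{def:WkBk}; I would check that the derivative $\partial_{p_k}$ of the spectral curve matches the period/residue of $\omega_{0,1}$ prescribed by the Eynard--Orantin deformation theorem. This reduces the theorem to verifying one algebraic compatibility between \eqref{eq:defWratconv2}--\eqref{eq:defBratconv2} and the deformation formula.

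**Step 3: Transport and structure.** Assuming the deformation identity, TR holds for all $p$ by integrating from $p=0$, and as a byproduct one obtains the structure result that each $\omega_{g,n}$ is a rational differential with poles only at the ramification points of $X$ — this comes for free from TR rather than being assumed. I would finally note that the rational-$G$ (with $w\neq\infty$, exponential factor) case follows from the polynomial case by the algebraic manipulations of \cite{BCCGF22}, since the $\tau_G$ depends analytically on the parameters $u_i,v_r,w$ and TR is a closed condition. \textbf{The main obstacle} I anticipate is Step 2: precisely identifying the loop-insertion operator at the level of differentials and proving that $\partial_{p_k}$ of the \emph{genus-0} spectral data agrees with the variation dictated by the Eynard--Orantin theorem. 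The combinatorial $\nabla_x$ acts naturally on the $W_{g,n}$ as formal series in $x^{-1}$, but translating this into the correct \emph{geometric} deformation (a residue against $\omega_{0,2}$ evaluated at the appropriate point on the curve) requires control of the change of variables $x\leftrightarrow Z$ and of how marking an internal face of degree $mk$ corresponds to a contour integral on the spectral curve — this is where the nontrivial input about $H^{(c)}(z)$ being independent of $c$ and its symmetry properties will be needed.
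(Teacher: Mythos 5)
Your proposal follows essentially the same route as the paper: bootstrap from the known TR at $p_i=0$ and transport it to general $p_1,\dotsc,p_{D_1}$ by the Eynard--Orantin spectral-curve-deformation idea, using the genus-0 data of \Cref{thm:Disc} and \Cref{thm:Cylinder} as input. The only difference is one of implementation --- the paper realizes the deformation as a formal Taylor expansion in a parameter $\alpha$ (via $p_i\to\alpha p_i$ and the operator $\Gamma_{x\to\ptimes}$), proving coefficient-wise equality of the combinatorial and TR-defined differentials by induction through the recurrence \eqref{eq:chi:TR}, rather than invoking the infinitesimal variational identity you describe in Step 2 --- but this is the same mechanism in integrated form.
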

Since the recourse to a spectral curve deformation is the main originality of our approach, let us sketch the way it works. We introduce a parameter $\alpha$ by the change $p_i\to \alpha p_i$ which will track the number of internal faces. Then we consider on one side the expansion of the $\omega_{g,n}$s of constellations defined by \eqref{Omegagn} in powers of $\alpha$, and on the other side the expansion of a family of $n$-differentials built directly from the spectral curve $\mathcal{S}_{\text{rational $G$}}$ via the TR formula. More explicitly,
\begin{itemize}
	\item let $\Gamma_x$ be the operator which transforms a boundary component into an internal face,
	\begin{equation}
		\Gamma_{x\to\ptimes} f = \sum_{k=1}^{D_1} \frac{p_k}{k}\, [x^{-k-1}] f(x).
	\end{equation}
	Clearly,
	\begin{equation}
		W_{g,n}(x_1, \dotsc, x_n) = \sum_{l\geq 0} \frac{\alpha^l}{l!} \Gamma_{x_{n+1}\to\ptimes} \dotsm \Gamma_{x_{n+l}\to\ptimes} W^0_{g,n+l}(x_1, \dotsc, x_n, x_{n+1}, \dotsc, x_{n+l})
	\end{equation}
	where $W^0_{g,n}(x_1, \dotsc, x_n)$ are the same functions at $\ptimes=0$. This leads to a ``Taylor expansion'' of the form $\omega_{g,n}(z_1, \dotsc, z_n) = \sum_{l\geq 0} \frac{\alpha^l}{l!} \omega_{g,n}^l(z_1, \dotsc, z_n)$.
	\item We define $\tilde{\omega}_{g,n}(z_1, \dotsc, z_n)$ for $n+2g-2\geq 1$ via the TR formula for $\mathcal{S}_{\text{rational $G$}}$. They admit an expansion in powers of $\alpha$ at fixed values of $X(z_1), \dotsc, X(z_n)$, $\tilde{\omega}_{g,n}(z_1, \dotsc, z_n) = \sum_{l\geq 0} \frac{\alpha^l}{l!} \tilde{\omega}_{g,n}^l(z_1, \dotsc, z_n)$. The crucial lemma is the proof that
	\begin{multline}\label{eq:chi:TR}
		\tilde{\omega}_{g,n}^{l}(z_1,L) = \sum\limits_{i=1}^{M\, D_2} \underset{z= \tilde{a}_i}{\Res}\, K_i(z_1,z) \Bigg(\sum\limits_{k=1}^{l}{l \choose k}\big(\tilde{\omega}^{k}_{0,1}(z)\, \tilde{\omega}^{l-k}_{g,n}(\sigma_i(z),L)+\tilde{\omega}^{k}_{0,1}(\sigma_i(z)\, \tilde{\omega}^{l-k}_{g,n}(z,L)\big)\\ 
		+\tilde{\omega}^{l}_{g-1,n+1}(z,\sigma_i(z),L) + \sum\limits_{\substack{h+h'=g\\ C\sqcup C'=L\\ k+k'=l}}^{'}{l \choose k} \tilde{\omega}_{h,1+|C|}^{k}(z,C) \tilde{\omega}_{h',1+|C'|}^{k'}(\sigma_i(z),C')\Bigg),
	\end{multline}
	where $L=\{z_2,\dots,z_n\}$. Here we hide details, like the meaning of $\tilde{a}_i$, the key point being a recurrence formula for the $\alpha$-expansion from the TR.
\end{itemize}
Finally one proves (by induction) that $\tilde{\omega}_{g,n}^{l}(z_1, \dotsc, z_n) = \omega_{g,n}^l(z_1, \dotsc, z_n)$.

Let $\mathbb{K}$ be the algebraic closure of $\mathbb{Q}(\ptimes,\qtimes,\vec{u})$ and $\mathbb{K}((t^*))$ be the (algebraically closed) field of Puiseux series (formal Laurent series in fractional powers of $t$) over this field. We denote $b_1(t), \dotsc, b_{MD_2}(t)\in \mathbb{K}((t^*))$ the ramification points, i.e. the zeroes of $dx(z)$.

\begin{corollary}{}{}
	\label{cor:structure}
	For $2g-2+n\geq 1$, $W_{g,n}(X(z_1),\dots, X(z_n))X'(z_1) \dots X'(z_n)$ can be written as a rational function. In each variable, the poles are located at the ramification points $b_i(t)$ and are of order at most $6g-4+2n$. More precisely, it belongs to the polynomial ring
	\begin{equation*}
		\mathbb{Q}\Big[\frac{1}{z_j-b_i(t)},\,b_i(t),\,\frac{1}{b_i(t)-b_{k}(t)},\, \frac{1}{Y'(b_i(t))},\, \frac{Y^{(k)}(b_i(t))}{Y'(b_i(t))},\,\frac{1}{X''(b_i(t))},\, \frac{X^{(k)}(b_i(t))}{X''(b_i(t))}\Big],
	\end{equation*}
	where for each $(g,n)$ a finite number of derivatives of $X$ and $Y$ at the ramification points contribute.
\end{corollary}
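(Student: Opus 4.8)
The plan is to deduce the statement by \emph{strong induction on} $2g-2+n\ge 1$ from the topological recursion \eqref{TR}, which the preceding theorem guarantees holds for the forms $\omega_{g,n}$ of \eqref{Omegagn} on the rational spectral curve $\mathcal{S}_{\text{rational }G}$. Since $\omega_{g,n}(z_1,\dots,z_n)=W_{g,n}(X(z_1),\dots,X(z_n))\,X'(z_1)\cdots X'(z_n)\,dz_1\cdots dz_n$, the Laurent polynomial we must control is exactly the coefficient of $dz_1\cdots dz_n$ in $\omega_{g,n}$; hence it suffices to show that each $\omega_{g,n}$ with $2g-2+n\ge1$ is a rational $n$-form whose poles, in every variable, sit only at the ramification points $b_i(t)$, have order at most $6g-4+2n$, and whose Laurent coefficients lie in the asserted ring. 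Throughout, $X,Y$ are rational in $z$ by Definition \ref{def:SpectralCurve}, the $b_i(t)$ are the simple zeros of $X'$, and all the local data below live in the Puiseux field $\mathbb{K}((t^*))$, so that every residue, local involution and series expansion is a well-defined algebraic operation over this field.

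First I would record the local data at a simple ramification point $b_i$. Because $X'(b_i)=0$ and $X''(b_i)\neq0$, a local coordinate is furnished by $\zeta=z-b_i$, and the local involution $\sigma_i$ is the unique nontrivial formal solution of $X(\sigma_i(z))=X(z)$; its expansion $\sigma_i(z)=b_i-\zeta+O(\zeta^2)$ has coefficients that are polynomials in the $X^{(k)}(b_i)/X''(b_i)$. Expanding the kernel $K_i(z_1,z)$ of \eqref{TR} near $z=b_i$ then produces three features: the denominator $\omega_{0,1}(z)-\omega_{0,1}(\sigma_i(z))=(Y(z)-Y(\sigma_i(z)))X'(z)\,dz\propto Y'(b_i)X''(b_i)\,\zeta^2\,d\zeta$ contributes $1/Y'(b_i)$, $1/X''(b_i)$ and the ratios $Y^{(k)}(b_i)/Y'(b_i)$, $X^{(k)}(b_i)/X''(b_i)$; the numerator $\int_{\sigma_i(z)}^{z}\omega_{0,2}(z_1,w)=\big(\tfrac{1}{z_1-z}-\tfrac{1}{z_1-\sigma_i(z)}\big)dz_1$ produces, after expansion in $\zeta$, coefficients that are polynomials in $1/(z_1-b_i)$; and one checks that the coefficient of $\zeta^{m}$ carries a pole in $z_1$ at $b_i$ of order at most $m+3$. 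Thus every Laurent coefficient of $K_i$ lies in the target ring, and the order of the $z_1$-pole it can generate grows only linearly with the $\zeta$-power against which it is paired.

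With this in hand the inductive step runs as follows. The base cases $(g,n)=(0,3)$ and $(1,1)$ are obtained by a direct residue computation: the bracket of \eqref{TR} is built from $\omega_{0,2}$ only, being holomorphic (for $(0,3)$), respectively having a double pole (for $(1,1)$), in $z$ at $b_i$, and the previous expansion yields forms in the ring with pole orders $2$ and $4=6-4+2$. For the inductive step I substitute the lower forms, which by hypothesis lie in the ring with poles only at the $b_k$, into the bracket and take $\Res_{z=b_i}$. Poles in the spectator variables $z_2,\dots,z_n$ remain at ramification points: a factor $\omega_{0,2}(z,z_j)$ turns its diagonal pole into a pole at $z_j=b_i$, producing the generators $1/(z_j-b_i)$, while poles of the lower forms at $b_k$ with $k\neq i$ are regular at $z=b_i$ and, upon expansion there, contribute the values $1/(b_i-b_k)$. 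This gives closure of the ring in all variables.

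The hard part, and the main obstacle, is the pole-order bound $6g-4+2n$. A residue $\Res_{z=b_i}K_i(z_1,z)E(z)$ against a bracket $E$ with a pole of order $D$ in $z$ at $b_i$ produces, by the kernel analysis above, a pole in $z_1$ of order at most $D+2$. For the \emph{disconnected} product terms $\omega_{h,1+|C|}(z,C)\,\omega_{g-h,1+|C'|}(\sigma_i(z),C')$ the two factors blow up simultaneously as $z\to b_i$, so the induction gives $D\le (6h-4+2(1+|C|))+(6(g-h)-4+2(1+|C'|))=6g-6+2n$ and hence $z_1$-pole order $\le 6g-4+2n$, exactly as wanted. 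The danger is the \emph{connected} term $\omega_{g-1,n+1}(z,\sigma_i(z),L)$: a naive estimate would multiply the two slot orders and give a pole of order $\sim 2(6g-8+2n)$ in $z$, far too large. The resolution, which is the technical heart, is that the leading diagonal singularities of this term cancel against those of the product terms, so that the full bracket satisfies the \emph{quadratic loop equation} and has a pole in $z$ at $b_i$ of order only $6g-6+2n$. I would therefore \emph{strengthen the induction hypothesis} to carry, alongside the structure statement, the linear loop equation (regularity of $\omega_{g,n}(z,L)+\omega_{g,n}(\sigma_i(z),L)$ at $b_i$) and the quadratic loop equation, proving all three together from \eqref{TR}. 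Granting this control of the diagonal pole, the estimate $D\le 6g-6+2n$ holds for the whole bracket, the $D+2$ rule closes the induction at $6g-4+2n$, and reading off the coefficient of $dz_1\cdots dz_n$ yields the Corollary.
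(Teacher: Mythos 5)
Your overall strategy is the standard one and is essentially the route the corollary is meant to follow from the preceding theorem: induct on $2g-2+n$, expand the recursion kernel $K_i$ locally at each simple ramification point $b_i$ to see that its Laurent coefficients live in the asserted ring and that taking $\Res_{z=b_i}K_i(z_1,z)E(z)$ turns a pole of order $D$ of the bracket $E$ into a pole of order at most $D+2$ in $z_1$, and track the spectator variables through $\omega_{0,2}(z,z_j)$ and through the expansion of lower-order poles at $b_k$ around $z=b_i$ (whence the generators $1/(z_j-b_i)$ and $1/(b_i-b_k)$). All of that part of your write-up is correct, including the base cases.

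The gap is in your treatment of the connected term $\omega_{g-1,n+1}(z,\sigma_i(z),L)$, which you correctly identify as the crux but resolve incorrectly. First, invoking the quadratic loop equation at level $(g,n)$ is circular: in the Borot--Eynard--Orantin form that equation involves the cross terms $\omega_{0,1}(z)\,\omega_{g,n}(\sigma_i(z),L)+\omega_{0,1}(\sigma_i(z))\,\omega_{g,n}(z,L)$, i.e.\ the very form you are trying to bound, so it cannot be part of an induction hypothesis carried only at strictly lower levels (and even granting it, since $\omega_{0,1}$ vanishes only to order $1$ at $b_i$ it would yield $D\le 6g-5+2n$, which does not close the induction). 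Second, the claimed cancellation between the connected and disconnected parts of the bracket does not occur: for the Airy curve $(X,Y)=(z^2,z)$ and $(g,n)=(2,1)$ one computes $\omega_{1,2}(z,-z)=-\tfrac{39}{24}z^{-8}(dz)^2$ and $2\omega_{1,1}(z)\omega_{1,1}(-z)=-\tfrac{1}{32}z^{-8}(dz)^2$, which have leading poles of the \emph{same} order and the \emph{same} sign. What actually saves the bound is a refinement of the induction hypothesis that you have not carried: not only is the pole order of $\omega_{g',n'}$ in each single variable at most $6g'-4+2n'$, but the \emph{total} pole order at a fixed $b_i$ over any pair of variables is at most $6g'-6+2n'+4$ (more generally $6g'-6+2n'+2|S|$ over a subset $S$; for Airy this is just $\sum_j(2d_j+2)$ with $\sum_j d_j=3g'-3+n'$). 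Applied to $(g',n')=(g-1,n+1)$ and the first two slots, this gives pole order at most $6g-6+2n$ for $\omega_{g-1,n+1}(z,\sigma_i(z),L)$ \emph{without any cancellation}, matching the disconnected terms, and then your $D+2$ rule closes the induction at $6g-4+2n$. So you need to replace the loop-equation argument by this joint-degree bound and verify that it, too, propagates through the residue computation.
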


\chapter{Enumeration via integrability} \label{sec:EnumerationIntegrability}
\section{Some beautiful recursions for three families of (orientable) maps}
\subsection{Triangulations by number of edges and genus} Let $t^n_g$ be the number of rooted triangulations with $n$ edges and of genus $g$. Goulden and Jackson \cite{GouldenJackson2008} found a remarkable recurrence formula for those numbers
\begin{equation} \label{TriangulationRecurrence}
	(n+1) t^{n}_{g} = 4 n(3n-2) (3n-4) t^{n-1}_{g-1}+4\sum_{i+j=n-2\atop h+k=g}	(3i+2)(3j+2)t^{i}_{h} t^{j}_{k}.
\end{equation}
The coefficients are independent of $g$, meaning that the recurrence formula is equivalent to an \emph{ODE} on the series of the $t^n_g$s with respect to the parameter counting $n$. That ODE had also been written before in the physics literature \cite{KazakovKostovNekrasov1999}.

\subsection{Maps by number of edges and genus} Let $m^n_g\coloneqq m^n_g(u)$ be the generating series of rooted maps with $n$ edges and of genus $g$, with $u$ counting the number of vertices. Carrell and Chapuy first in \cite{CarrellChapuy2015}, then Kazarian and Zograf in \cite{KazarianZograf2015} found the following recurrence formula
\begin{multline} \label{MapRecurrence}
	(n+1) m^n_g = 2(1+u)(2n-1) m^{n-1}_g + \frac{1}{2}(2n-3)(2n-2)(2n-1) m^{n-2}_{g-1} \\+ 3 \sum_{i+j=n-2\atop h+k=g}	(2i+1)(2j+1) m^{i}_{h} m^{j}_{k}
\end{multline}
Again, the coefficients are independent of $g$ and the recursion is equivalent to an ODE.

\subsection{Bipartite maps by number of edges and genus} Let $b^n_g\coloneqq b^n_g(u,v)$ be the generating series of rooted bipartite maps with $n$ edges and of genus $g$, with $u$ counting the number of white vertices and $v$ the number of black vertices. Using the same technique as for \eqref{MapRecurrence}, Kazarian and Zograf \cite{KazarianZograf2015} found another recurrence formula of the same form
\begin{multline} \label{BipartiteRecurrence}
	(n+1) b^n_g = (2n-1)(1+u+v) b^n_{g-1} + (n-2)\bigl[4(u+v+uv)-(1+u+v)^2\bigr] b^n_{g-2} \\
	+ (n-1)^2(n-2) b^{n-1}_{g-2} + \sum_{i+j=n-2\atop h+k=g} 2(2+3i)j b^{i}_{h} b^{j}_{k}.
\end{multline}
Once again, it is equivalent to an ODE with respect to the parameter counting the number of edges.

Two key features of those recurrence formulas is that first they are simple: they are quadratic in the numbers of maps with polynomial coefficients in the other parameters, and second they are very efficient (it is even faster to work with the ODE for which methods exist which extract the coefficients in quasi-linear time \cite{aecf-2017-livre}).

\subsection{More recurrence formulas} More can be obtained, e.g. from bijections like maps by number of faces and vertices \cite{CarrellChapuy2015}. We list below other remarkable recurrence formulas.
\subsubsection{Constellations} Louf \cite{Louf2019} gave a similar formula for the number $C^{(m)}_{n,g}$ of rooted $m$-constellations with $n$ black faces and of genus $g$,
\begin{equation}
	\binom{n}{2} C^{(m)}_{n,g} = \sum_{{n_1+n_2=n\atop n_1, n_2\geq 1}\atop g = g_1+g_2+g^*} n_1 \binom{(m-1)n_2 + 2-2g_2}{2g^*+2} C^{(m)}_{n_1,g_1} C^{(m)}_{n_2,g_2}.
\end{equation}
\subsubsection{1-face maps} They are maps with a single face. Some beautiful recurrence formulas are known for them, which are linear with polynomial coefficients. They go as far back as the Harer-Zagier recursion for $\varepsilon_g^n$ the number of rooted 1-face maps with $n$ edges and of genus $g$ \cite{HarerZagier1986},
\begin{equation}
	(n+1) \varepsilon^n_g = 2(2n-1) \varepsilon_g^{n-1} + (n-1)(2n-1)(2n-3) \varepsilon^{n-2}_{g-1}.
\end{equation}
Adrianov \cite{Adrianov1997} gave an extension to the numbers $\beta^n_g$ of rooted bipartite 1-face maps with $n$ edges and of genus $g$,
\begin{equation}
	(n+1) \beta^n_g = 2(2n-1) \beta_g^{n-1} + (n-1)^2(n-2) \beta^{n-2}_{g-1}.
\end{equation}
Ledoux \cite{Ledoux2009} found a recurrence for \emph{non-oriented} 1-face maps,
\begin{multline} \label{Ledoux}
	(n+1)u^{n}_g = (8n-2) u^{n-1}_g-(4n-1) u^{n-1}_{g-1/2}
	+ n(2n-3)(10n-9) u^{n-2}_{g-1} - 8(2n-3) u^{n-2}_g 	\\
	-10(2n-3)(2n- 4)(2n-5) u^{n-3}_{g-1} 
	+ 5(2n-3)(2n-4)(2n-5) u^{n-3}_{g-3/2} \\
	+ 8(2n-3) u^{n-2}_{g-1/2} - 2(2n-3)(2n-4)(2n-5)(2n-6)(2n-7) u^{n-4}_{g-2},
\end{multline}
which has the same type of polynomial coefficients in $n$ (more on non-oriented maps in the next chapter).

\subsection{Robustness? Extensions?} In view of the above results, it is tempting to imagine some robustness of those recurrence formulas and the method(s) deriving it, if not some underlying universality. However, preliminary attempts to derive a recursion like \eqref{TriangulationRecurrence} for quadrangulations instead of triangulations have so far failed (not meaning that it is impossible, of course). Nevertheless, we were able to give in \cite{BCD-RecurrenceFormulas} some similar (albeit quite a bit longer) recurrence formulas for three families of \emph{non-oriented} maps, which are the same three families as above: \emph{triangulations}, \emph{general maps} and \emph{bipartite maps}.

\subsection{A mysterious origin} If multiple proofs of the Harer-Zagier recursion for 1-face maps exist (and the same proofs could probably be possible for Adrianov's and Ledoux' recursions), there is only one known way of deriving those for maps with multiple faces. It does not rely on Tutte's equations or bijective techniques, but on \emph{integrability} instead. The latter provides a set of partial differential equations(PDEs) on the generating series known as the \emph{KP hierarchy}.

The mystery relies on the fact that those PDEs are not understood from the more traditional approaches to maps. For instance, Tutte's equations determine all the relevant series, so in principle, one could derive the KP hierarchy from them, and yet it is not known how to do it. Moreover, the KP hierarchy has its own origin story, from the world of integrable systems and fluid dynamics, where there is no direct relation to maps. It can also be given a universal geometric origin in terms of infinite-dimensional Grassmannian, but this may be too general to understand the relation to maps. The KP hierarchy has an infinite set of solutions (see below) and series of maps are just a small subset. In fact, to point them out of all the solutions, an additional ingredient is needed, and (obviously) some Tutte's equations can be used. In this sense, integrability alone is not enough because it is not specific to the problem at hand, but it is nevertheless a key universal yet mystery ingredient in deriving all the above recurrence formulas.

On the bijective side, an explanation of the Harer-Zagier recursion for 1-face maps has been given by the Chapuy-F\'eray-Fusy's bijection with C-decorated trees \cite{ChapuyFerayFusy2013}. As for the recurrence formulas for maps with multiple faces, a bijection for planar maps has been found by Louf \cite{Louf2019}. Finally, we want to point out\footnote{And we thank M. Bousquet-M\'elou for actually pointing it out to us.} that Tutte's himself found a recursion of the same form, for \emph{properly $q$-colored planar maps} \cite{Bernardi2017}, using an equation {\it \`a la} Tutte. Whether it can be extended to all genera using integrability is an open problem.

\section{The KP hierarchy} \label{sec:KP}
\subsection{The partial differential equations of the hierarchy} The KP hierarchy is a set of partial differential equations on a formal series $\tau(\ptimes)\in\mathbb{C}[[p_1, p_2, \dotsc]]$, usually written as a generating sum known as Hirota bilinear equation,
\begin{equation} \label{KP}
	\sum_j h_j(-2\qtimes) h_{j+1}(\check{D}) e^{\sum_{r\geq 1} q_r D_r} \tau(\ptimes) \cdot \tau(\ptimes) = 0,
\end{equation}
where $\qtimes = (q_1, q_2, \dots)$ is a vector of free variables. Here we use the following notation:
\begin{itemize}
	\item $h_j$ is the complete homogeneous symmetric function of degree $j$ \cite{Macdonald1995},
	\item $\check{D} = (k D_k)_{k\geq 1}$ where $D_r$ is the \emph{Hirota derivative} with respect to $p_r$ defined as the following bilinear mapping
	\begin{equation}
		\bigl(f(p_r),g(p_r)\bigr) \ \mapsto\ D_r f(p_r)\cdot g (p_r)\coloneqq \frac{\partial}{\partial s_r} f(p_r + s_r) g(p_r - s_r)_{|s_r=0}.
	\end{equation}
\end{itemize}

The PDEs are obtained by extracting the coefficient of $q_\mu$ in \eqref{KP} for arbitrary partitions $\mu$. For example, the ``first'' non-trivial equation of the hierarchy, known as the \emph{KP equation} is obtained by extracting the coefficient of $q_3$. It is usually written for $F(\ptimes) = \log\tau(\ptimes)$, as
\begin{equation}\label{eq:explicitKP}
	-F_{3,1}(\ptimes) + F_{2,2}(\ptimes) + \frac{1}{2} F_{1,1}(\ptimes)^2 + \frac{1}{12} F_{1,1,1,1}(\ptimes) = 0,
\end{equation}
with the notation $f_{i}(\ptimes) = \frac{\partial f(\ptimes)}{\partial p_i}$.

Solutions $\tau(\ptimes)$ of the hierarchy \eqref{KP} are called \emph{KP tau functions}.

\subsection{Polynomial tau functions and the group $GL_\infty$} Denote 
\begin{equation}
	V = \bigoplus_{i\in\mathbb{Z}} \mathbb{C} e_i
\end{equation}
the infinite-dimensional complex vector space spanned by linearly independent vectors $e_i$s. In particular a vector has only a finite number of non-zero coordinates in that basis. Let $E_{ij}$ be the infinite-size matrix (up, down, left and right) with a 1 in position $(i,j)$ and 0s everywhere else. The endomorphisms of $V$ are infinite-size matrices with only a finite number of non-zero entries. They form the Lie algebra
\begin{equation}
	\mathfrak{gl}_\infty = \operatorname{span}\{ E_{ij}; i,j\in\mathbb{Z}\},
\end{equation}
whose Lie bracket is $[X,Y] = XY-YX$. It is the Lie algbra of the group 
\begin{equation}
	GL_\infty = \{A=(a_{ij})_{i,j\in\mathbb{Z}}; \text{$A$ is invertible and a finite number of $a_{ij}-\delta_{ij}$ are non-zero}\}
\end{equation}
We call non-trivial elements those $a_{ij}$ which are non-zero for $i\neq j$ or not equal to 1 for $i=j$.

Let $\ptimes = (p_1, p_2, \dotsc)$ an infinite sequence of indeterminates and $\mathbb{C}[p_1, p_2, \dotsc]$ their ring of polynomials. For each $n\in\mathbb{Z}$, it carries a representation of $GL_\infty$ as follows. For $A\in GL_\infty$ and partitions $\lambda, \mu$, denote
\begin{equation}
	A_{\lambda, \mu|n} = (a_{\lambda_i-i+n+1, \mu_j-j+n+1})_{i,j\geq 1}
\end{equation}
i.e. we pick up a subset of rows and columns of $A$ determined by $\lambda$ and $\mu$ and $n$. Then the representation $R_n(A)$ is defined by its action on Schur functions (the variables $p_1, p_2, \dotsc$ being seen as power-sums)
\begin{equation} \label{MinorExpansion}
	R_n(A) s_\lambda(\ptimes) = \sum_\mu \det (A_{\mu,\lambda|n})\ s_\mu(\ptimes),
\end{equation}
which is extended by linearity. Here the sum is finite. Indeed, all non-trivial elements fit in a square matrix $B=(a_{ij})_{-k\leq i,j\leq k}$ for some $k$. Among the columns of $A$ selected by $\lambda$, only a finite number of them go through this matrix, say $p\geq0$. For each column which does not go through $B$, $\mu$ must select the corresponding row. The freedom in $\mu$ is left to the choice of $p$ rows. If one of them does not go through $B$ however, the coefficients at the intersection with the columns which go through $B$ are all 0s, and so is the minor.

\begin{theorem}{}{PolynomialKP}\cite[Theorem 7.1]{Kac2013Bombay}
	$\tau\in\mathbb{C}[p_1, p_2, \dots]$ is a polynomial KP tau function if and only if there exists $A\in GL_\infty$ such that $\tau(\ptimes) = R_0(A) 1$. 
\end{theorem}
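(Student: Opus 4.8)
The plan is to lift the entire statement to the fermionic, semi-infinite wedge picture, where $R_0(A)$ becomes the transparent action of $A$ on wedges and the whole KP hierarchy collapses to a single bilinear identity. First I would introduce the charge-zero semi-infinite wedge space $\inftywedge^{(0)}$ with its basis of wedges $\lvert\lambda\rangle = \bigwedge_{i\geq 1} e_{\lambda_i - i + 1}$ indexed by partitions, together with the creation and annihilation operators $\psi_j$ (wedging by $e_j$) and $\psi_j^*$ (contraction against $e_j$). The boson--fermion correspondence provides an isomorphism $\Phi \colon \inftywedge^{(0)} \xrightarrow{\sim} \mathbb{C}[p_1,p_2,\dots]$ with $\Phi(\lvert\lambda\rangle) = s_\lambda(\ptimes)$ and $\Phi(\lvert 0\rangle)=1$. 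Any $A\in GL_\infty$ lifts to an operator $\widehat{A}$ acting entrywise, $\widehat{A}(v_1\wedge v_2 \wedge\cdots)=(Av_1)\wedge(Av_2)\wedge\cdots$, well defined because $A-\id$ has finite rank; a Cauchy--Binet computation in the exterior algebra gives $\langle \mu\rvert \widehat{A}\lvert\lambda\rangle = \det(A_{\mu,\lambda\mid 0})$. Comparing with \eqref{MinorExpansion}, this says exactly $\Phi\circ\widehat{A}\circ\Phi^{-1}=R_0(A)$, so $R_0(A)\,1 = \Phi(\widehat{A}\lvert 0\rangle)$. The theorem thus becomes: $\tau$ is a KP tau function if and only if $\Phi^{-1}(\tau)$ lies in the $GL_\infty$-orbit of the vacuum $\lvert 0\rangle$.

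Next I would record the two facts that power the orbit characterization. The operator $S\coloneqq \sum_{j\in\mathbb{Z}} \psi_j\otimes\psi_j^*$ on $\inftywedge\otimes\inftywedge$ is the image of the $GL(V)$-invariant identity tensor of $V\otimes V^*$, hence commutes with $\widehat{A}\otimes\widehat{A}$ for every $A\in GL_\infty$; and it kills the vacuum, $S(\lvert 0\rangle\otimes\lvert 0\rangle)=0$, since for each $j$ either $\psi_j\lvert 0\rangle=0$ (when $j\leq 0$) or $\psi_j^*\lvert 0\rangle=0$ (when $j\geq 1$). The crucial translation, discussed below, is that under $\Phi$ the Hirota form \eqref{KP} is equivalent to the single fermionic bilinear identity $S\bigl(\Phi^{-1}(\tau)\otimes\Phi^{-1}(\tau)\bigr)=0$. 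Granting this, the ``if'' direction is immediate: for $A\in GL_\infty$ one has $S(\widehat{A}\lvert 0\rangle\otimes\widehat{A}\lvert 0\rangle)=(\widehat A\otimes\widehat A)\,S(\lvert 0\rangle\otimes\lvert 0\rangle)=0$, so $R_0(A)\,1$ is a tau function; and since $A-\id$ has finite rank, only finitely many minors $\det(A_{\mu,\emptyset\mid 0})$ are nonzero (exactly as already argued after \eqref{MinorExpansion}), so $R_0(A)\,1$ is genuinely a polynomial.

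For the converse I would use that a solution of the bilinear identity is precisely a \emph{decomposable} element $v_1\wedge v_2\wedge\cdots$, this being the semi-infinite analogue of the fact that the quadratic Plücker relations cut out the image of the Grassmannian. A polynomial tau function is a finite combination $\tau=\sum_\lambda c_\lambda s_\lambda$, so $\Phi^{-1}(\tau)$ is a finite sum of wedges satisfying the bilinear identity; decomposability then produces vectors $v_i\in V$ with $v_i=e_{1-i}$ for all but finitely many $i$ and with $v_1\wedge v_2\wedge\cdots=\Phi^{-1}(\tau)$. Taking the $v_i$ as the columns indexed by $0,-1,-2,\dots$ of a matrix equal to the identity elsewhere yields a genuine $A\in GL_\infty$ with $\widehat{A}\lvert 0\rangle=\Phi^{-1}(\tau)$, whence $\tau=R_0(A)\,1$.

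The hard part is the translation between \eqref{KP} and the fermionic identity $S(\cdot\otimes\cdot)=0$. Here one realizes the generating fields $\psi(z)=\sum_j\psi_j z^j$ and $\psi^*(z)=\sum_j\psi_j^* z^{-j}$ through vertex operators of the form $X(z)=\exp\bigl(\sum_{k\geq 1}p_k z^{k}\bigr)\exp\bigl(-\sum_{k\geq 1}\tfrac{1}{k}z^{-k}\partial_{p_k}\bigr)$ under $\Phi$, so that $S$ becomes a residue-in-$z$ identity whose expansion reproduces exactly the generating function $\sum_j h_j(-2\qtimes)\,h_{j+1}(\check D)$; matching the two demands the full bosonization of $\psi_j,\psi_j^*$ together with careful bookkeeping of the charge shift that $\psi_j\otimes\psi_j^*$ induces between the two tensor factors. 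The second, more structural difficulty is the decomposability statement itself in the infinite setting: one must control the passage to the completion of $\inftywedge$ and ensure that in the polynomial case decomposability is realized by an honest element of $GL_\infty$ rather than of its completion, which is exactly where the hypothesis $\tau\in\mathbb{C}[p_1,p_2,\dots]$ is used.
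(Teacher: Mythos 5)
Your proposal is correct and follows essentially the same route the paper points to: the paper does not prove this theorem itself but cites Kac and describes the ``typical proof'' as passing to the wedge representations of $GL_\infty$ on the half-infinite wedge via the boson--fermion correspondence, which is exactly your strategy (bilinear identity $S(\tau\otimes\tau)=0$, invariance of $S$ under $\widehat{A}\otimes\widehat{A}$, and decomposability for the converse). Your sketch is a faithful expansion of that standard argument, with the genuinely hard steps (bosonization of the bilinear identity and the Pl\"ucker/decomposability characterization) correctly identified as such.
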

The typical proof, as in \cite{Kac2013Bombay}, makes use of the so-called wedge representations of $GL_\infty$ on the half-infinite wedge, which are then shown to be equivalent to $R_n$, thereby proving the boson-fermion correspondence. All calculations can then be performed (easily, that is the point) on the half-infinite wedge and then translated over to $\mathbb{C}[p_1, p_2, \dotsc]$.

The corpus of references on the boson-fermion correspondence is huge, ranging from physics to pure mathematics. We found that Kac's books in general, or the literature from the Japanese school of Jimbo and Miwa, incorporate in general the right amount of proofs and details necessary to satisfy our mathematical curiosity.

\subsection{Combinatorial tau functions and the group $\overline{GL}_\infty$} In combinatorics however, we are interested in formal power series rather than polynomial solutions\footnote{Also in physics, one looks for non-polynomial solutions. In particular in the field where the KP hierarchy historically appeared, which is wave equations, the interesting solutions are \emph{solitons} which are exponentials.}. In this case, there is no theorem as above in the form of ``if and only if''. Nevertheless it is still possible to give sufficient conditions to find non-polynomial tau functions.

We found that the mathematical details of the level of \cite{Kac2013Bombay} become scarse when it comes to applications to formal power series. Even the combinatorial literature is definitely evasive on the nature of the extension of $GL_\infty$ required to work with formal power series. The paragraphs below are our tentative way of clarifying the framework for combinatorialists. There are obviously many mathematical subtleties which would require more discussion than it is reasonable here, like e.g. why the exponential map is well defined from $\overline{\mathfrak{gl}}_\infty$ to $\overline{GL}_\infty$\footnote{Hint: a kew word is \emph{locally nilpotent} \cite{JimboMiwa1983}.}.

We consider a group bigger than $GL_\infty$, which we denote $\overline{GL}_\infty$ (following \cite{Kac2013Bombay}, although we consider the transposed matrices)
\begin{multline}
	\overline{GL}_\infty = \{ A=(A_{ij})_{i,j\in\mathbb{Z}}; \text{$A$ is invertible and} \\ \text{only a finite number of $A_{ij}-\delta_{ij}$ are non-zero for $i\leq j$}\}
\end{multline}
In other words, those matrices only have a finite number of non-zero elements above the main diagonal. The Lie algebra $\overline{\mathfrak{gl}}_\infty$ is
\begin{equation}
	\overline{\mathfrak{gl}}_\infty = \{ X=(X_{ij})_{i,j\in\mathbb{Z}}; \text{only a finite number of $X_{ij}$ are non-zero for $i\leq j$}\}
\end{equation}
They both act by left multiplication on the space
\begin{equation}
	\overline{V} = \left\{\sum_{j\in\mathbb{Z}} v_j e_j; \text{$v_j=0$ for $j\ll 0$}\right\}
\end{equation}
in a well-defined manner. Moreover, the representation $R_n$ given in \eqref{MinorExpansion} can be extended to a representation of $\overline{GL}_\infty$ over $\mathbb{C}[[p_1, p_2, \dotsc]]$ (because all determinant minors are finite).

\begin{theorem}{}{TauFunctionSchurExpansion}
	Let $\tau(\ptimes)\in\mathbb{C}[[p_1, p_2, \dotsb]]$ and assume that there exists $A\in\overline{GL}_\infty$ such that $\tau(\ptimes) = R_0(A)1$, i.e.
	\begin{equation} \label{TauFunctionSchurExpansion}
		\tau(\ptimes) = \sum_\lambda \det(A_{\lambda, \emptyset}) s_\lambda(\ptimes).
	\end{equation}
	Then it is a KP tau function.
\end{theorem}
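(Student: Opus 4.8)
The plan is to deduce the formal statement from the polynomial case \Cref{thm:PolynomialKP} by exploiting the fact that the KP hierarchy, although encoding infinitely many partial differential equations, is \emph{graded} and hence \emph{local} in the Schur coefficients. First I would assign weight $i$ to each $p_i$ and each $q_i$ and observe that the Hirota bilinear expression \eqref{KP} is weight-homogeneous: extracting the coefficient of a fixed monomial $q_\mu\, p_\nu$ produces a single scalar equation which is a finite bilinear relation among the Schur coefficients $c_\lambda := \det(A_{\lambda,\emptyset})$ appearing in \eqref{TauFunctionSchurExpansion}, involving only those $\lambda$ with $|\lambda|$ bounded by an explicit constant depending on $\mu$ and $\nu$. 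Since a formal power series vanishes precisely when all its coefficients vanish, it suffices to establish each of these finitely supported scalar relations separately.

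Each such relation is one of the Plücker relations satisfied by the minors $c_\lambda$. The key point is that, via \eqref{MinorExpansion}, every $c_\lambda$ is a polynomial in the entries $A_{ij}$, and for $|\lambda|$ bounded these polynomials involve $A$ only through a controlled set of entries. The polynomial case \Cref{thm:PolynomialKP} tells us that the very same scalar relation vanishes identically whenever $c_\lambda=\det(A_{\lambda,\emptyset})$ with $A\in GL_\infty$. Since $GL_\infty$ is Zariski-dense among matrices with a prescribed finite window of free entries, a polynomial in those entries that vanishes on all of $GL_\infty$ vanishes identically, hence also when the entries are specialised to those of an element of $\overline{GL}_\infty$. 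Equivalently, one may truncate $A\in\overline{GL}_\infty$ to a sequence $A^{(N)}\in GL_\infty$ agreeing with $A$ on the finite data that determines all $c_\lambda$ with $|\lambda|\le N$; then $R_0(A^{(N)})1$ is a genuine polynomial KP tau function whose low-degree Schur coefficients coincide with those of $\tau$, and letting $N\to\infty$ over all degrees yields every scalar relation for $\tau$.

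The main obstacle is entirely analytic and bookkeeping in nature: making rigorous the claim that, for $A\in\overline{GL}_\infty$, the determinant $\det(A_{\lambda,\emptyset})$ is well-defined, finite, and depends on $A$ in a way compatible with the truncation. Because elements of $\overline{GL}_\infty$ may have infinitely many nonzero entries below the diagonal, a single coefficient $c_\lambda$ genuinely sees infinitely many entries of $A$; one must check that the minor nonetheless stabilises---the tail of $A_{\lambda,\emptyset}$ is unitriangular, so the determinant reduces to a finite, convergent expression---and that the truncations $A^{(N)}$ can be taken inside $GL_\infty$, invertible, reproducing these minors exactly up to degree $N$. Conceptually the cleanest justification is fermionic: $\tau$ is the bosonisation of the decomposable state $g\lvert 0\rangle$, where $g$ is the image of $A$ in the completed Clifford group, and the bilinear identity holds because the operator $\sum_k \psi_k\otimes\psi_k^\ast$ commutes with $g\otimes g$ and annihilates the vacuum; this argument is insensitive to the passage from $GL_\infty$ to $\overline{GL}_\infty$ provided the wedge representation extends to the completion $\overline V$, which is precisely the content behind \eqref{MinorExpansion} being well-defined on $\mathbb{C}[[\ptimes]]$.
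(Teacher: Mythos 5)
Your argument is sound, but be aware that the paper does not actually prove this statement: it is presented as part of a ``tentative way of clarifying the framework'', with the subtleties explicitly deferred, the implicit justification being the fermionic one you sketch in your last paragraph (extension of the half-infinite-wedge representation to $\overline{V}$ and the bilinear identity for decomposable states, as in Kac or Jimbo--Miwa). So there is no in-paper proof to compare against, and what you propose is a genuine alternative. Your reduction --- grade the Hirota equation \eqref{KP}, observe that the coefficient of a fixed monomial $q_\mu p_\nu$ is a bilinear relation involving only the Schur coefficients $c_\lambda=\det(A_{\lambda,\emptyset})$ with $|\lambda|$ bounded in terms of $|\mu|+|\nu|$, invoke \Cref{thm:PolynomialKP} to get each such relation for $GL_\infty$-minors, and transfer it to $\overline{GL}_\infty$ by truncation plus Zariski density --- is correct, and it buys something the fermionic route does not: it uses the polynomial theorem purely as a black box and never needs the completed Clifford group or normal ordering, which is well adapted to the combinatorial setting the chapter is aimed at.

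Two steps deserve more care than you give them. First, your sentence ``a single coefficient $c_\lambda$ genuinely sees infinitely many entries of $A$'' is misleading: it sees only finitely many. For $A\in\overline{GL}_\infty$ and $i$ large, the row of $A_{\lambda,\emptyset|0}$ labelled $\lambda_i-i+1=1-i$ has entries $a_{1-i,1-j}$, which vanish for $j<i$ (these lie above the diagonal of $A$) and equal $1$ for $j=i$; consequently every permutation with nonzero contribution to the determinant fixes the tail pointwise, and the minor equals the determinant of a single finite top-left block. This is the ``unitriangular tail'' you mention, but it should be stated as the reason each $c_\lambda$ is a polynomial in a \emph{finite} window of entries of $A$ --- that finiteness is precisely what makes the truncation and the Zariski argument meaningful. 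Second, the truncation $A^{(N)}$ (entries outside the window replaced by $\delta_{ij}$) need not be invertible, so you cannot literally assert that $R_0(A^{(N)})1$ is a polynomial tau function; the Zariski-density formulation is the correct fix and should carry the argument, with the truncation serving only to identify which finite window of entries is relevant. With those two points made precise, the proof is complete.
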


\subsection{Ways of recognizing a KP tau function} \label{sec:RecognizingKP} All cases I know from combinatorics rely on identifying the element $A\in\overline{GL}_\infty$ in some way (however it is my personal observation that the infinite-size matrix $A$ belongs to $\overline{GL}_\infty$, since this space is usually overlooked in the literature). Here I list three approaches.

\subsubsection{Algebraic approach} It is used to get the explicit expansion on Schur functions, like Frobenius' formula, then it remains to prove that the coefficients are determinant minors of the form $\det(A_{\lambda, \emptyset})$. For instance, Guay-Paquet and Harnad proved the following,
\begin{corollary}{}{GuayPaquetHarnad}\cite{Guay-PaquetHarnad2017}
	The generating series of weighted Hurwitz numbers \eqref{WeightedHurwitzSeries}, written as the so-called hypergeometric type \eqref{HypergeometricTauFunction},
	\begin{equation*}
		\tau_G(\ptimes, \qtimes, t) = \sum_{\lambda} t^{|\lambda|} s_\lambda(\ptimes) s_\lambda(\qtimes) \prod_{\square\in\lambda} G(c(\square))
	\end{equation*}
	is a KP tau function\footnote{It is with respect to both sets of times $\ptimes$ and $\qtimes$, and it satisfies in addition another hierarchy of partial differential equations which mix derivatives with respect to the two sets of times, and called the Toda hierarchy.}.
\end{corollary}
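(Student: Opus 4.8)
The plan is to apply \Cref{thm:TauFunctionSchurExpansion}, which reduces the claim to exhibiting a single matrix $A\in\overline{GL}_\infty$ whose minors reproduce the coefficients of the Schur expansion of $\tau_G$. Viewing $\qtimes$ and $t$ as parameters living in the coefficient ring and $\tau_G$ as a series in $\ptimes$, the coefficient of $s_\lambda(\ptimes)$ is
\begin{equation*}
	c_\lambda = t^{|\lambda|}\, s_\lambda(\qtimes)\prod_{\square\in\lambda} G(c(\square)),
\end{equation*}
so it suffices to produce $A$ with $\det(A_{\lambda,\emptyset})=c_\lambda$ for every $\lambda$. I would first absorb the power of $t$ into the weight by setting $\hat G(x) := t\,G(x)$, so that $t^{|\lambda|}\prod_\square G(c(\square)) = \prod_\square \hat G(c(\square))$ and the $t$-degree of the content product equals $|\lambda|$.

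Next I would construct $A$ as a product $A = C\,\Gamma$ of two elements with complementary triangular structure. Take $\Gamma = (h_{i-j}(\qtimes))_{i,j\in\mathbb{Z}}$, the lower-unitriangular Toeplitz matrix built from the complete homogeneous symmetric functions of $\qtimes$; since $h_k=0$ for $k<0$ and $h_0=1$, one has $\Gamma_{ij}-\delta_{ij}=0$ whenever $i\le j$, so $\Gamma\in\overline{GL}_\infty$, and the Jacobi--Trudi identity gives exactly $\det(\Gamma_{\lambda,\emptyset})=s_\lambda(\qtimes)$. Take $C=\operatorname{diag}(T_i)_{i\in\mathbb{Z}}$ diagonal, with the entries fixed by the telescoping relation $T_i/T_{i-1}=\hat G(i)$, normalized so that the charge-$0$ action multiplies $s_\lambda$ by $\prod_{i\ge1} T_{\lambda_i-i+1}/T_{-i+1}=\prod_\square \hat G(c(\square))$ (the vacuum ratio collapses because $\lambda_i=0$ for $i$ large). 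Because $C$ is diagonal it scales each row of the minor $(C\Gamma)_{\lambda,\emptyset}$ by a single $T$, so minor multiplicativity yields $\det((C\Gamma)_{\lambda,\emptyset})=\big(\prod_\square \hat G(c(\square))\big)\,s_\lambda(\qtimes)=c_\lambda$, which is what \Cref{thm:TauFunctionSchurExpansion} requires.

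The main obstacle, and the only genuinely delicate point, is the membership $A\in\overline{GL}_\infty$. The triangular factor $\Gamma$ is unproblematic, but the diagonal factor $C$ has infinitely many diagonal entries different from $1$ (one per content value), so it is not literally an element of $\overline{GL}_\infty$ as defined over $\mathbb{C}$. This is precisely where the grading by $t$ is essential: after the substitution $\hat G=tG$ the deviations $T_i-1$ all lie in the ideal $(t)$, and the eigenvalue $\prod_\square\hat G(c(\square))$ on $s_\lambda$ is $O(t^{|\lambda|})$, so only finitely many $\lambda$ contribute to each order in $t$. The clean way to make this rigorous is to work over $\mathbb{C}[[t]]$ (or the field of Laurent/Puiseux series in $t$) and to replace $\overline{GL}_\infty$ by its $t$-adic completion, on which $R_0$ remains well defined because every coefficient of $\det(A_{\lambda,\emptyset})$ is then a finite sum. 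Granting this completion, $A=C\Gamma$ is a bona fide group element, all the determinant minors converge, and the theorem applies verbatim; the remaining verifications (Jacobi--Trudi and the telescoping of the diagonal) are routine. An equivalent and perhaps more transparent route is to recognize $\tau_G$ as the fermionic vacuum expectation $\langle 0|\,\hat\gamma_+(\ptimes)\,C\,\Gamma\,|0\rangle$ and to identify $C$ and $\Gamma$ as the standard dressing and vertex operators, the content-product identity for $C$ being the computation originally carried out by Guay-Paquet and Harnad.
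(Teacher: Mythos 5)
Your overall strategy is exactly the one the paper follows (reduce to \Cref{thm:TauFunctionSchurExpansion}, use Jacobi--Trudi for the $s_\lambda(\qtimes)$ factor, and a content-product dressing for $\prod_\square G(c(\square))$), but the way you handle the diagonal factor $C$ has a genuine gap. First, the minors of $C\Gamma$ are not what you claim: in the convention of \eqref{MinorExpansion}, the minor $(C\Gamma)_{\lambda,\emptyset|0}$ runs over all rows $\lambda_i-i+1$, $i\geq 1$, so
\begin{equation*}
\det\bigl((C\Gamma)_{\lambda,\emptyset}\bigr)=\Bigl(\prod_{i\geq 1} T_{\lambda_i-i+1}\Bigr)\, s_\lambda(\qtimes),
\end{equation*}
which contains the divergent infinite tail $\prod_{i>\ell(\lambda)}T_{-i+1}$; the ratio $\prod_{i\geq1}T_{\lambda_i-i+1}/T_{-i+1}$ you want only appears after dividing by the (equally divergent) vacuum minor, and that normalization is not part of the statement of \Cref{thm:TauFunctionSchurExpansion}. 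Second, your proposed rescue via a $t$-adic completion does not work: with $T_i=\prod_{k\leq i}\hat G(k)$ normalized at $T_0=1$ one has $T_1-1=tG(1)-1\notin(t)$, and the negatively indexed entries $T_{-i}$ carry $t$-valuation $-i\to-\infty$, so the tail product does not converge $t$-adically either. This is precisely the obstruction the paper flags in the footnote to the Orlov--Scherbin theorem: a diagonal matrix with infinitely many nontrivial entries is not in $\overline{GL}_\infty$ and its minors genuinely diverge without a normal-ordering prescription.

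The standard repair, and the one the paper uses, is not to multiply by $C$ but to \emph{conjugate} it through the unitriangular factor: since $C E_{i,i-k}C^{-1}=\frac{T_i}{T_{i-k}}E_{i,i-k}=G(i-1)\dotsm G(i-k)\,E_{i,i-k}$ and $C$ fixes the vacuum up to a scalar, one replaces $C\,e^{\sum_k \frac{q_k}{k}t^k\sum_i E_{i,i-k}}$ by the single element
$A=e^{\sum_{k\geq1}\frac{q_k}{k}t^k\sum_{i\in\mathbb{Z}}G(i-1)\dotsm G(i-k)E_{i,i-k}}$,
whose exponent is strictly lower triangular. This $A$ is an honest element of $\overline{GL}_\infty$, all its minors are finite determinants computed by a dressed Jacobi--Trudi identity, and they equal $t^{|\lambda|}s_\lambda(\qtimes)\prod_\square G(c(\square))$, so \Cref{thm:TauFunctionSchurExpansion} applies directly. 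Your closing remark about dressing operators in the fermionic picture points at this, but as written your argument stops one step short of the conjugation that actually makes $A$ a legitimate group element.
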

This is done by finding an explicit $A$ so that the coefficients $s_\lambda(\qtimes) \prod_{\square\in\lambda} G(c(\square))$ take the form $\det (A_{\lambda,\emptyset})$.
\subsubsection{Matrix integrals} It can be done for some models using matrix integrals \cite{MironovMorozovSemenoff1996, AlexandrovZabrodin2013} which provide $A$ directly through its wedge/fermionic representation. This works for instance for maps (the Hermitian 1-matrix model or GUE with a potential), bipartite maps (the matrix model for a complex rectangular matrix, or Wishart ensemble with a potential) and 1-biconstellations (generated by the Hermitian 2-matrix model).
\subsubsection{Via an evolution equation} In ``many'' cases I know, it is also possible to bypass the explicit Schur expansion and directly identify $A$ in the form $A = e^X$, by means of an \emph{evolution equation}. It is convenient to rescale $p_i\to t^i p_i$ for some formal parameter $t$ which counts the ``size'' of the object. In particular this gives $s_\lambda(\ptimes) \to t^{|\lambda|} s_\lambda(\ptimes)$ where $|\lambda| = \sum_i \lambda_i$ is the size of the partition $\lambda$. For a series $\tau\in\mathbb{Q}[p_1, p_2, \cdots][[t]]$, an evolution equation is an equation of the form
\begin{equation} \label{EvolutionEquation}
	\frac{\partial \tau(\ptimes, t)}{\partial t} = \hat{X} \tau(\ptimes, t),
\end{equation}
where $\hat{X}$ is a differential operator in the $p_i$s. Let us write $\tau$ as an exponential series,
\begin{equation}
	\tau(\ptimes, t) = \sum_{n\geq 0} \tau_n(\ptimes) \frac{t^n}{n!},
\end{equation}
then the evolution equation is equivalent to
\begin{equation} \label{EvolutionRecursion}
	\tau_{n+1}(\ptimes) = n![t^n]\hat{X} \tau(\ptimes, t) = n! \hat{X} \tau_n(\ptimes)
\end{equation}
where the last equality holds if $\hat{X}$ is independent of $t$, in which case we get a recursion on the coefficients $\tau_n(\ptimes)$. In the latter case, we can even write the solution to the evolution equation in a compact way which is useful to prove integrability. If $\tau(\ptimes,0) = 1$, then $\tau(\ptimes, t) = e^{t\hat{X}}1$. Depending on $\hat{X}$, one may identify $X\in\overline{\mathfrak{gl}}_\infty$ such that $e^{t\hat{X}} = R_0(e^{tX})$. If not, it may still be possible to prove that a Schur expansion of the form of \Cref{thm:TauFunctionSchurExpansion} is a solution of the evolution equation. Then one concludes with uniqueness of the formal power series solution of the evolution equation. This is what we did for non-oriented monotone Hurwitz numbers, see Chapter \ref{sec:Monotone}.

\section{The case of weighted Hurwitz numbers} \label{sec:ConstellationsKPTauFunctions}
\subsection{1-constellations} A good example to become familiar with those notions and techniques is the case of 1-constellations, i.e. counting permutations according to their cyclic type\footnote{Obviously for 1-constellations, the factorization question is $\sigma_1\sigma_2=\mathbb{1}$, which requires the cycle types of both to be the same. This means that $\tau$ is a function of the products $p_i q_i$ and we set $q_i=1$ for simplicity.}. Let 
\begin{equation}
	\tau^{\text{1-const}}(\ptimes,t,u) = \sum_{n\geq 0} \frac{t^n}{n!} \sum_{\lambda\vdash n} |C_\lambda| u^{\ell(\lambda)} p_\lambda,
\end{equation}
where $|C_\lambda|$ is the number of permutations in the conjugacy class of cycle type $\lambda$. From the orbit-stabilizer theorem
\begin{equation}
	|C_\lambda| = \frac{n!}{z_\lambda},
\end{equation}
it comes that
\begin{equation}
	\tau^{\text{1-const}}(\ptimes,t,u) = e^{u\sum_{i\geq 0} \frac{p_i}{i}t^i} = \sum_{n\geq 0} h_k(up_1, up_2, \dotsc) t^k,
\end{equation}
so we know the tau function explicitly in this simple example, and we can analyze the tools at our disposal. We will show
\begin{proposition}{}{1Constellations}
	The above series of 1-constellations has the Schur expansion \eqref{TauFunctionSchurExpansion} with $A(t,u)=A_1, A_2$ given by
	\begin{align} \label{SchurExpansion1Constellations}
	A_1 &\equiv \bigl(h_{i-j}(\forall k\geq 1, q_k=t^k u)\bigr)_{0\leq j\leq i}\\
	A_2 &\equiv e^{t \sum_{i\in\mathbb{Z}} (i-1 + u) E_{i,i-1}}. \label{GL1Constellations}
	\end{align}
Both expressions imply that it is a KP tau function.
\end{proposition}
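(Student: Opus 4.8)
The plan is to produce two independent certificates that $\tau^{\text{1-const}}$ is a KP tau function, one for each of the matrices $A_1$ and $A_2$, in both cases by exhibiting an explicit element of $\overline{GL}_\infty$ and invoking \Cref{thm:TauFunctionSchurExpansion}. Throughout I use the closed form $\tau^{\text{1-const}}(\ptimes,t,u)=\exp\bigl(u\sum_{k\geq1}\frac{p_k}{k}t^k\bigr)$ established just above the statement.

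For the matrix $A_1$ I would first read off the Schur expansion directly. Writing $\tilde q_k\coloneqq u t^k$ and treating the $\tilde q_k$ as the power sums of an auxiliary alphabet, the exponent is exactly $\sum_k p_k\tilde q_k/k$, so the Cauchy identity gives $\tau^{\text{1-const}}=\sum_\lambda s_\lambda(\ptimes)\,s_\lambda(\tilde\qtimes)$; equivalently, after factoring $t^{|\lambda|}$ and using \eqref{PrincipalSpecialization} (power sums all equal to $u$ is the specialization $N=u$), the coefficient of $t^{|\lambda|}s_\lambda(\ptimes)$ is $\prod_{\Box\in\lambda}(u+c(\Box))/\hook(\lambda)$, matching the $m=1$ case of \eqref{ConstellationsTauFunction}. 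Next I would apply the Jacobi--Trudi identity $s_\lambda=\det(h_{\lambda_i-i+j})$ to rewrite $s_\lambda(\tilde\qtimes)=\det\bigl(h_{\lambda_i-i+j}(\tilde\qtimes)\bigr)$. Finally I would identify this determinant with the minor $\det(A_{\lambda,\emptyset})$ of \eqref{MinorExpansion}: taking $A_1$ to be the matrix with entries $(A_1)_{ij}=h_{i-j}(\tilde\qtimes)$ (with $h_{<0}=0$, $h_0=1$), the prescription $n=0,\ \mu=\emptyset$ turns the rows $\lambda_i-i+1$ and columns $1-j$ into exactly the Jacobi--Trudi array. Since $A_1$ is lower-triangular with unit diagonal it satisfies $(A_1)_{ij}=\delta_{ij}$ for all $i\leq j$, hence lies in $\overline{GL}_\infty$, and \Cref{thm:TauFunctionSchurExpansion} concludes this route.

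For the matrix $A_2$ I would instead run the evolution-equation strategy of \Cref{sec:RecognizingKP}. Set $\hat X\coloneqq R_0'(X)$ for $X=\sum_{i\in\mathbb Z}(i-1+u)E_{i,i-1}$, the derived action of the Lie algebra element on $\mathbb{C}[[p_1,p_2,\dotsc]]$. The decisive computation is to show that $\hat X$ is the content-weighted box-adding operator,
\[
\hat X\,s_\lambda=\sum_{\Box}\bigl(u+c(\Box)\bigr)\,s_{\lambda+\Box},
\]
the sum running over addable boxes. I would obtain this by differentiating the minor formula \eqref{MinorExpansion}: reindexing $X=\sum_j(j+u)E_{j+1,j}$, each generator $E_{j+1,j}$ contributes to first order only to those $\mu=\lambda+\Box$ whose Maya sequence differs from that of $\lambda$ by moving a single particle from position $j$ to $j+1$, the associated weight $j$ being precisely the content of the added box, while the $u$ part reproduces Pieri multiplication by $p_1$. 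With $\hat X$ so identified, the recursion $(n+1)\tau_{n+1}=\hat X\tau_n$ for the graded pieces $\tau_n=\sum_{|\lambda|=n}\frac{\prod_{\Box\in\lambda}(u+c(\Box))}{\hook(\lambda)}s_\lambda$ reduces, after cancelling the common content product, to $\sum_{\lambda=\Lambda-\Box}1/\hook(\lambda)=|\Lambda|/\hook(\Lambda)$, i.e.\ to the branching identity $\sum_{\lambda=\Lambda-\Box}f^\lambda=f^\Lambda$. Thus $\tau^{\text{1-const}}$ solves $\partial_t\tau=\hat X\tau$ with $\tau|_{t=0}=1$, so $\tau^{\text{1-const}}=e^{t\hat X}1=R_0(e^{tX})1$; since $X$ is strictly lower-triangular it lies in $\overline{\mathfrak{gl}}_\infty$ and $A_2=e^{tX}\in\overline{GL}_\infty$, whence \Cref{thm:TauFunctionSchurExpansion} again applies.

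The routine parts (the Cauchy identity, Jacobi--Trudi, and the branching/hook identity) are standard; the one genuinely delicate step, and the place I would spend the most care, is the bosonization lemma identifying $R_0'\bigl(\sum_i(i-1+u)E_{i,i-1}\bigr)$ with $\sum_\Box(u+c(\Box))s_{\lambda+\Box}$ --- in particular pinning down the signs coming from the determinant in \eqref{MinorExpansion} and checking that the position weight of a subdiagonal generator equals the content of the box it creates. A convenient sanity check, which I would carry out at the $|\lambda|\leq 1$ level before the general argument, is that $\hat X\,1=u\,s_{(1)}$ and $\hat X\,s_{(1)}=(u+1)s_{(2)}+(u-1)s_{(1,1)}$, matching the $t^1$ and $t^2$ coefficients of the explicit exponential.
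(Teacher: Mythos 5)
Your proposal is correct. The $A_1$ half coincides with the paper's argument: Cauchy identity, principal specialization, Jacobi--Trudi, and the observation that the $n=0$, $\mu=\emptyset$ minor of $(h_{i-j}(\tilde\qtimes))$ is exactly the Jacobi--Trudi array; your bookkeeping of the index shift $\lambda_i-i+1$ versus $1-j$ is right, as is the remark that unitriangularity places $A_1$ in $\overline{GL}_\infty$. The $A_2$ half, however, takes a genuinely different route. The paper derives the evolution operator $\hat X(u)=\sum_{k\ge1}p_{k+1}p_k^*+up_1$ \emph{combinatorially} (inserting the vertex $n+1$ into a cycle, or creating a new fixed point with weight $up_1$) and then cites the boson--fermion correspondence as a black box to assert that $\hat X(u)$ is the representation of $X(u)=\sum_i(i-1+u)E_{i,i-1}$. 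You instead \emph{define} $\hat X\coloneqq R_0'(X)$, compute its action on the Schur basis as the content-weighted box-adding operator via the Maya-sequence reading of the minor expansion \eqref{MinorExpansion}, and verify the evolution equation directly on the Schur expansion, where it collapses to the branching identity $\sum_{\lambda=\Lambda-\Box}f^\lambda=f^\Lambda$. What the paper's route buys is a combinatorial meaning for $\hat X$ (a cut-and-join/vertex-insertion operator), which is the template reused later for monotone Hurwitz numbers; what your route buys is that the identification of $\hat X$ with $r_0(X)$ becomes the explicit content of the proof rather than an appeal to bosonization, everything else reducing to classical symmetric-function facts. Your flagged delicate point is the right one --- the off-by-one between the particle position $\lambda_i-i+n+1$ and the content of the created box is exactly where a sign or shift error would hide --- and your proposed sanity check at $|\lambda|\le1$ ($\hat X\,1=u\,s_{(1)}$ and $\hat X\,s_{(1)}=(u+1)s_{(2)}+(u-1)s_{(1,1)}$) does match the $t^1$ and $t^2$ coefficients of $\exp\bigl(u\sum_k p_kt^k/k\bigr)$, so it would catch such an error. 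Your level of detail on that lemma is comparable to the paper's own (which offers no proof of the corresponding claim), so there is no gap relative to the source.
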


Notice that both the matrices $A_1$ and $A_2$ belong to $\overline{GL}_\infty$. Although they are distinct\footnote{The elements of $H(u)$ only depend on $i-j$, while 
	\begin{equation}
		e^{t\sum_{i\in\mathbb{Z}} (i-1+u) E_{i,i-1}} = \sum_{k\geq 0} \frac{t^k}{k!} \sum_{i\in\mathbb{Z}} \prod_{l=1}^k (i-l+u) E_{i,i-k} = \sum_{i\geq j} \frac{t^{i-j}}{(i-j)!} \prod_{l=1}^{i-j} (i-l+u) E_{ij}
	\end{equation}
	does not only depend on $i-j$.
}, they have the same determinant minors.

The Schur expansion is readily obtained via the Cauchy identity,
\begin{equation}
	e^{\sum_{i\geq 0} \frac{p_i q_i}{i}} = \sum_{\lambda} s_\lambda(\ptimes) s_\lambda(\qtimes).
\end{equation}
The principal specialization \eqref{PrincipalSpecialization} for $q_i = u$ for all $i\geq 1$ gives
\begin{equation}
	\tau^{\text{1-const}}(\ptimes, t, u) = \sum_\lambda \frac{t^{|\lambda|}s_\lambda(\ptimes)}{\hook(\lambda)}\prod_{\Box\in\lambda}(u+c(\Box)).
\end{equation}
Proving the KP hierarchy is then equivalent to proving that\footnote{There is no need to specialize to $q_k=u$ in fact, but also no benefits to keeping arbitrary $q_k$s.} $t^{|\lambda|} \prod_{\Box\in\lambda}(u+c(\Box))/\hook(\lambda)$ is a determinant of the form $\det A_{\lambda,\emptyset}$. Here it comes for instance from the Jacobi-Trudi formula for Schur functions \cite{Macdonald1995},
\begin{equation}
	s_\lambda(\qtimes) = \det (h_{\lambda_i-i+j}(\qtimes))_{1\le i,j\leq \ell(\lambda)}
\end{equation}
whose specialization to $q_i=u$ gives
\begin{equation}
	\frac{t^{|\lambda|}}{\hook(\lambda)}\prod_{\Box\in\lambda}(u+c(\Box)) = \det (A(t,u)_{\lambda, \emptyset}) \qquad \text{for $A(t,u)_{i,j} = h_{i-j}(q_k =t^k u)$.}
\end{equation}
which proves \eqref{SchurExpansion1Constellations}.

Now we prove \eqref{GL1Constellations} through an evolution equation. To write it, we can think of a permutation of cyclic type $\lambda$ as a graph on $n$ labeled vertices consisting in a disjoint union of cycles of size $\lambda_1, \dotsc, \lambda_{\ell(\lambda)}$. As we have seen in \eqref{EvolutionRecursion}, the operator $\frac{\partial}{\partial t}$ on exponential series adds the vertex of label $n+1$, so we need to understand how to build an object of size $+1$ from one of size $n$. If $\sigma$ is a set of disjoint cycles on $n$ vertices, the vertex of label $n+1$ can be added by
\begin{itemize}
	\item first choosing a cycle of size $k \geq 1$, and choosing one of $k$ slots to insert the new vertex, giving rise to a cycle of size $k+1$. On the generating series, this gives the operator $kp_{k+1} \frac{\partial}{\partial p_k}$.
	\item or adding a new connected component of size 1 with weight $u p_1$.
\end{itemize}
One gets
\begin{equation}
	\frac{\partial \tau^{\text{1-const}}(\ptimes, t, u)}{\partial t} = \underbrace{\left(\sum_{k\geq1} p_{k+1} p_k^* + up_1\right)}_{\hat{X}(u)}\tau^{\text{1-const}}(\ptimes, t, u).
\end{equation}
where $p_k^* \equiv \frac{k\partial}{\partial p_k}$. The operator $\hat{X}(u)$ can be proved, via the boson-fermion correspondence, to be a representation of\footnote{There is a famous construction of the Virasoro algebra of charge 1 \cite{Kac2013Bombay}, known as Sugawara's construction, which gives
	\begin{equation}
		\hat{L}_n = \begin{cases} \sum_{k\geq1} p_{k+n} p_k^* + \frac{1}{2} \sum_{k=1}^{n-1} p_k^* p_{n-k}^* \quad \text{for $n>0$},\\
			\sum_{k\geq1} p_{k-n} p_k^* + \frac{1}{2} \sum_{k=1}^{-n-1} p_k p_{-n-k} \quad \text{for $n<0$} \end{cases}
	\end{equation}
	Then our operator is $X(u) = L_{-1} + u p_{1}$.}
\begin{equation}
	X(u) = \sum_{i\in\mathbb{Z}} (i-1 + u) E_{i,i-1}
\end{equation}
of $\overline{\mathfrak{gl}}_\infty$. It comes	$\tau^{\text{1-const}}(\ptimes, t, u) = e^{t\hat{X}(u)}1 = R_0(e^{tX(u)})1$.

\subsection{Weighted Hurwitz numbers} \Cref{thm:SchurExpansionWeightedHurwitz} of Guay-Paquet and Harnad provides an explicit Schur expansion of the generating series of weighted Hurwitz numbers $\tau(\ptimes, \qtimes, t, \vec{u}, \vec{v}, w)$. That type of Schur expansion, known as hypergeometric tau functions,
\begin{equation*}
	\tau_G(\ptimes, \qtimes, t) = \sum_\lambda s_\lambda(\ptimes) s_\lambda(\qtimes) \prod_{\Box\in\lambda} G(c(\Box))
\end{equation*}
has been studied by Orlov and Scherbin in \cite{OrlovScherbin2000} where two matrices $A$ are provided. 
\begin{theorem}{}{}\cite{OrlovScherbin2000}
	One has
	\begin{align*}
		\tau_G (\ptimes, \qtimes, t) &= D_G R_0\bigl(e^{\sum_{k\geq 1} \frac{q_k}{k} t^k \sum_{i\in\mathbb{Z}} E_{i,i-k}}\bigr) 1\\
		&= R_0\bigl(e^{\sum_{k\geq 1}\frac{q_k}{k} t^k \sum_{i\in\mathbb{Z}} G(i-1) \dotsm G(i-k) E_{i,i-k}}\bigr)1.
	\end{align*}
	where $D_G$ acts diagonally on the Schur basis of $\mathbb{Q}[[p_1, p_2, \dotsc]]$\footnote{In the first case, it is not an element of $\overline{GL}_\infty$ because of $D_G$. Diagonal matrices with an infinite number of elements different from 1 can lead to divergences. They can be corrected by introducing the normal ordering. This is a more general setup than using elements of $\overline{GL}_\infty$ which is often used in mathematical physics \cite{AlexandrovZabrodin2013}.}.
\end{theorem}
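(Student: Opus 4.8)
The plan is to invoke the Schur-expansion criterion \Cref{thm:TauFunctionSchurExpansion}: for any $A\in\overline{GL}_\infty$ one has $R_0(A)1=\sum_\lambda \det(A_{\lambda,\emptyset})\,s_\lambda(\ptimes)$, so each of the two claimed identities reduces to computing the minors $\det(A_{\lambda,\emptyset})$ of the relevant matrix and matching them against the coefficients $t^{|\lambda|}s_\lambda(\qtimes)\prod_{\Box\in\lambda}G(c(\Box))$ of $\tau_G$. I would first dispatch the ``$G=1$'' matrix, and then produce the general $G$ by a single diagonal conjugation — which is precisely the device that promotes the diagonal operator $D_G$ to a genuine element of $\overline{GL}_\infty$.

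For the first equality, set $M\coloneqq\sum_{k\geq1}\frac{q_k}{k}t^k\sum_{i\in\mathbb{Z}}E_{i,i-k}$. Every $E_{i,i-k}$ sits strictly below the diagonal and its coefficient depends only on $i-(i-k)=k$, so $M$ is a strictly lower-triangular Toeplitz matrix with symbol $\sum_{k\geq1}\frac{q_k}{k}(tz)^k$; hence $A\coloneqq e^{M}$ is lower-unitriangular Toeplitz (in particular $A\in\overline{GL}_\infty$, having no non-trivial entries on or above the diagonal) with symbol $\exp\!\big(\sum_{k\geq1}\frac{q_k}{k}(tz)^k\big)=\sum_{r\geq0}t^r h_r(\qtimes)z^r$, by the classical generating series of the complete homogeneous symmetric functions. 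Thus $A_{ab}=t^{a-b}h_{a-b}(\qtimes)$, and the $(i,j)$ entry of $A_{\lambda,\emptyset}$ is $t^{\lambda_i-i+j}h_{\lambda_i-i+j}(\qtimes)$. Pulling $t^{\lambda_i-i}$ out of each row and $t^{j}$ out of each column contributes a total power $t^{|\lambda|}$, and what remains is exactly the Jacobi--Trudi determinant $\det(h_{\lambda_i-i+j}(\qtimes))=s_\lambda(\qtimes)$. Therefore $R_0(A)1=\sum_\lambda t^{|\lambda|}s_\lambda(\ptimes)s_\lambda(\qtimes)$, and applying $D_G$ — which by definition scales $s_\lambda$ by $\prod_{\Box\in\lambda}G(c(\Box))$ — gives $\tau_G$.

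For the second equality I would conjugate by the diagonal matrix $\Gamma=\mathrm{diag}(\gamma_i)_{i\in\mathbb{Z}}$ fixed, up to an irrelevant global constant, by the recursion $\gamma_i/\gamma_{i-1}=G(i-1)$. Then $\Gamma E_{i,i-k}\Gamma^{-1}=\frac{\gamma_i}{\gamma_{i-k}}E_{i,i-k}$, and the telescoping $\frac{\gamma_i}{\gamma_{i-k}}=\prod_{j=1}^{k}G(i-j)=G(i-1)\cdots G(i-k)$ shows that $\Gamma M\Gamma^{-1}=N$, where $N\coloneqq\sum_{k\geq1}\frac{q_k}{k}t^k\sum_{i\in\mathbb{Z}}G(i-1)\cdots G(i-k)E_{i,i-k}$ is the exponent in the second formula. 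Hence $B\coloneqq e^{N}=\Gamma A\Gamma^{-1}$, and the minor transforms as $\det(B_{\lambda,\emptyset})=\big(\prod_{i\geq1}\gamma_{\lambda_i-i+1}/\gamma_{1-i}\big)\det(A_{\lambda,\emptyset})$. The tail factors with $i>\ell(\lambda)$ are all trivial, and telescoping the surviving ratios along the staircase gives $\prod_{i=1}^{\ell(\lambda)}\prod_{b=1-i}^{\lambda_i-i}G(b)$; recognising $\{1-i,\dots,\lambda_i-i\}$ as exactly the contents of the boxes in row $i$, this product is $\prod_{\Box\in\lambda}G(c(\Box))$. So $\det(B_{\lambda,\emptyset})=t^{|\lambda|}s_\lambda(\qtimes)\prod_{\Box\in\lambda}G(c(\Box))$ and $R_0(B)1=\tau_G$.

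The two genuinely delicate points, which I would treat with care rather than by brute computation, are: (i) the reduction of the formally semi-infinite determinant $\det(A_{\lambda,\emptyset})$ to the finite $\ell(\lambda)\times\ell(\lambda)$ Jacobi--Trudi determinant — using $a(r)=0$ for $r<0$ and $a(0)=1$, so the block indexed by $i,j>\ell(\lambda)$ is upper-unitriangular and factors out of the determinant; and (ii) the content bookkeeping in the conjugation step, namely checking that the telescoped diagonal ratios land on exactly the staircase indices $\{\lambda_i-i+1\}$ and $\{1-i\}$ and reassemble into $\prod_{\Box\in\lambda}G(c(\Box))$. I expect (ii) to be the main obstacle, as it is where the entire hypergeometric weight is generated; everything else is formal manipulation in the ring of power series in $t$, where the exponentials converge because $M$ is locally nilpotent (each entry is a finite sum and $M=O(t)$).
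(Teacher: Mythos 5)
Your argument is correct. Note that the paper itself does not prove this statement: it is quoted from Orlov--Scherbin \cite{OrlovScherbin2000} without proof, so there is no internal argument to compare against line by line. Your derivation is nevertheless the standard one and it is sound: the exponent $M=\sum_{k\geq 1}\frac{q_k}{k}t^k\sum_i E_{i,i-k}$ is strictly lower-triangular Toeplitz, so $e^M$ is lower-unitriangular Toeplitz with entries $t^{a-b}h_{a-b}(\qtimes)$, the semi-infinite minor collapses to the finite Jacobi--Trudi determinant (your point (i) is handled exactly as in the paper's remark after \eqref{MinorExpansion}, since rows with $i>\ell(\lambda)$ vanish to the left of the diagonal), and this first half is essentially the same computation the paper does carry out for $A_1$ in Proposition \ref{thm:1Constellations}. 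The conjugation by $\Gamma=\mathrm{diag}(\gamma_i)$ with $\gamma_i/\gamma_{i-1}=G(i-1)$ is the standard device for absorbing $D_G$, and your telescoping of $\gamma_{\lambda_i-i+1}/\gamma_{1-i}$ onto the row-$i$ contents $\{1-i,\dots,\lambda_i-i\}$ is consistent with the paper's content convention (as cross-checked by the principal specialization \eqref{PrincipalSpecialization}). The only points worth making explicit in a write-up are that $\gamma_i$ is well defined because $G(m)$ is a nonzero element of the coefficient field for formal parameters $\vec{u},\vec{v},w$, and that although $\Gamma\notin\overline{GL}_\infty$, the conjugate $B=\Gamma e^M\Gamma^{-1}$ is lower-unitriangular and hence does lie in $\overline{GL}_\infty$, so \Cref{thm:TauFunctionSchurExpansion} applies to the second expression but only the first factorized form requires the normal-ordering caveat of the footnote.
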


\section{Recurrence formulas from the KP equation} \label{sec:Elimination} In order to derive the recurrence formulas \eqref{TriangulationRecurrence}, \eqref{MapRecurrence}, \eqref{BipartiteRecurrence}, we ultimately use the KP equation
\begin{equation*}
	-F_{3,1}(\ptimes) + F_{2,2}(\ptimes) + \frac{1}{2} F_{1,1}(\ptimes)^2 + \frac{1}{12} F_{1,1,1,1}(\ptimes) = 0,
\end{equation*}
with $F(\ptimes) = \log \tau(\ptimes)$ and $f_i = \frac{\partial f(\ptimes)}{\partial p_i}$.

However, we face two issues:
\begin{itemize}
	\item The KP equation (or even the hierarchy) does not determine a unique solution.
	\item We must get rid of the partial derivatives.
\end{itemize}
Both issues will be solved thanks to a set of additional equations, which are model dependent. At first, we introduce them to solve the first issue (then they will also be used to solve the second issue). In the three cases of triangulations, general maps and bipartite maps, those equations are the so-called Virasoro constraints. Let as in Section \ref{sec:SingleHurwitz}
\begin{equation} \label{MapsTauFunctions}
	\begin{aligned}
		\tau^{\text{Maps}}(t;\ptimes,u) &= \sum_{n \geq 0} t^n\sum_{\lambda \vdash n} 
		s_\lambda(\ptimes)\theta_\lambda\prod_{\square
			\in \lambda}(u-c(\square)), \\
		\tau^{\text{Bip}}(t;\ptimes,u,v) &= \sum_{n \geq 0} t^n\sum_{\lambda \vdash n} 
		\frac{s_\lambda(\ptimes)}{\operatorname{hook}(\lambda)}\prod_{\square
			\in \lambda}(u-c(\square))(v-c(\square)),
	\end{aligned}
\end{equation}
They satisfy the partial differential equations
\begin{equation}
	\begin{aligned}
		L^{\text{Maps}}_i \tau^{\text{Maps}}(t;\ptimes,u) &= 0, \quad \text{for $i\geq -1$},\\
		L^{\text{Bip}}_i \tau^{\text{Bip}}(t;\ptimes,u,v) &= 0, \quad \text{for $i\geq 0$.}
	\end{aligned}
\end{equation}
with
\begin{equation} \label{VirasoroConstraints}
	\begin{aligned}
		L^{\text{Maps}}_i &= \frac{p_{i+2}^*}{t^2} - \bigg(\sum_{\substack{m,n\geq1\\m+n=i}}p_m^*p_n^*+ \sum_{n \geq 1} p_n p^*_{n+i} + (i+1+2u)p_i^* 
		+ \delta_{i,-1}up_1+u^2\delta_{i,0}\bigg),\\
		L^{\text{Bip}}_i &= \frac{p_{i+1}^*}{t} -\bigg(\sum_{\substack{m,n\geq1\\m+n=i}}p_m^*p_n^*+ \sum_{n \geq 1} p_n p^*_{n+i} + (i+u+v)p_i^*+uv\delta_{i,0}\bigg).
	\end{aligned}
\end{equation}
and $p_k^* \equiv \frac{k\partial}{\partial p_k}$, which determine the tau functions uniquely (however we do not know how to get the recurrence formulas directly from them). Those constraints\footnote{They are called Virasoro constraints because the satisfy $[L_n,L_m] = (n-m) L_{n+m}$ (up to shifts of the indices). The appearance of that algebra is well documented in the physics literature since the 80s in the context of matrix integrals. It reveals deep connections to conformal field theories (or vertex operator algebras) and to the Airy structures introduced by Kontsevich and Soibelman \cite{KontsevichSoibelman2017}.} can be typically found via Tutte's classical approach of removing the root edge ($i$ above is the root face degree in that context).

To solve the second issue, we use the elimination technique introduced by Goulden and Jackson \cite{GouldenJackson2008}. First we specialize our tau functions by evaluating the times $p_i$s on some parameter so that the genus can be controlled. We introduce the specialization operators
\begin{equation}
	\theta_T : p_i \mapsto z \delta_{i,3}, \quad \theta_{M} : p_i \mapsto z.
\end{equation}
In order to eliminate the partial derivatives in the KP equation, we act with those specialization operators on the Virasoro constraints. We have three cases to consider and denote
\begin{equation}
	F^{\text{Maps}}(t;\ptimes,u) = \log \tau^{\text{Maps}}(t;\ptimes,u),\qquad F^{\text{Bip}}(t;\ptimes,u,v) = \log \tau^{\text{Bip}}(t;\ptimes,u,v).
\end{equation}
Then
\begin{itemize}
	\item $\Phi^{\text{Tri}}(t,z,u)\coloneqq \theta_T F^{\text{Maps}}(t;\ptimes,u)$ the series of triangulations counted with $t$ on edges, $z$ on triangles and $u$ on vertices
	\item $\Phi^{\text{Maps}}(t,z,u)\coloneqq \theta_M F^{\text{Maps}}(t;\ptimes,u)$ the series of maps counted with $t$ on edges, $z$ on faces and $u$ on vertices
	\item $\Phi^{\text{Bip}}(t,z,u,v)\coloneqq \theta_M F^{\text{Bip}}(t;\ptimes,u,v)$ the series of bipartite maps counted with $t$ on edges, $z$ on faces, and $u$ on black vertices and $v$ on white vertices.
\end{itemize}

The specialized KP equation reads for triangulations
\begin{equation}
	-\theta_T \bigl(F^{\text{Maps}}_{3,1}\bigr) + \theta_T \bigl(F^{\text{Maps}}_{2,2}\bigr) + \frac{1}{2} \theta_T \bigl(F^{\text{Maps}}_{1,1}\bigr)^2 + \frac{1}{12} \theta_T \bigl(F^{\text{Maps}}_{1,1,1,1}\bigr) = 0,
\end{equation}
which is an equation in $\mathbb{Q}[u,z][[t]]$, and same for maps and bipartite maps.

The following lemma claims that all those terms are polynomials in $\Phi^{\text{Tri}}(t,z,u)$ and its $t$-derivatives.
\begin{lemma} \label{thm:Elimination} \cite{KazarianZograf2015, GouldenJackson2008}
	The functions $\theta_T \bigl(F^{\text{Maps}}_{3,1}\bigr)$, $\theta_T \bigl(F^{\text{Maps}}_{2,2}\bigr)$, $\theta_T \bigl(F^{\text{Maps}}_{1,1}\bigr)$, $\theta_T \bigl(F^{\text{Maps}}_{1,1,1,1}\bigr)$ are polynomials in $t,z,u$ and the derivatives $\frac{\partial^k\Phi^{\text{Tri}}}{\partial t^k}$ for some finite order $k$. Same for maps and bipartite maps.
\end{lemma}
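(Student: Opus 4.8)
The plan is to reduce every term appearing on the left-hand side of the specialized KP equation to the single \emph{accessible} derivative $\partial_z\Phi^{\text{Tri}}=\theta_T(F^{\text{Maps}}_3)$ together with $t$-derivatives of $\Phi^{\text{Tri}}$, using two ingredients: a homogeneity (dilation) identity and the Virasoro tower $L^{\text{Maps}}_i\tau^{\text{Maps}}=0$. First I would record the homogeneity relation. Since $s_\lambda(\ptimes)$ is homogeneous of degree $|\lambda|$ when $p_i$ carries weight $i$, and it is multiplied by $t^{|\lambda|}$, the coefficient of $t^n$ in $\tau^{\text{Maps}}$, hence (after taking $\log$) in $F^{\text{Maps}}$, is $p$-homogeneous of degree $n$. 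This yields the Euler identity
\begin{equation}
	t\,\partial_t F^{\text{Maps}} = \sum_{i\geq 1} i\,p_i\,F^{\text{Maps}}_i .
\end{equation}
Applying $\theta_T$ (which sets $p_3=z$ and all other $p_i=0$) gives $3z\,\partial_z\Phi^{\text{Tri}} = t\,\partial_t\Phi^{\text{Tri}}$, so any $z$-derivative of $\Phi^{\text{Tri}}$ can be traded for a $t$-derivative, and differentiating this relation shows the same for all higher $z$-derivatives. It therefore suffices to express every needed quantity through $\Phi^{\text{Tri}}$ and its $z$-derivatives.

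Second, I would convert each Virasoro constraint into a relation on $F^{\text{Maps}}=\log\tau^{\text{Maps}}$. Dividing $L^{\text{Maps}}_i\tau^{\text{Maps}}=0$ by $\tau^{\text{Maps}}$ and using $p^*_k\tau/\tau=kF_k$ and $p^*_mp^*_n\tau/\tau=mn(F_{m,n}+F_mF_n)$, one obtains for each $i\geq-1$ an identity of the schematic shape
\begin{equation}
	\frac{(i+2)F_{i+2}}{t^2} = \sum_{m+n=i} mn\,(F_{m,n}+F_mF_n) + \sum_{n\geq 1} (n+i)\,p_n\,F_{n+i} + (i+1+2u)\,i\,F_i + (\text{source}).
\end{equation}
The crucial point is that after applying $\theta_T$ the insertion sum $\sum_n(n+i)p_nF_{n+i}$ collapses to the \emph{single} term $3z\,\theta_T(F_{3+i})$, and $\theta_T(F_{3+i})$ is a $z$-derivative of a lower object. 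Starting from $\theta_T(F_3)=\partial_z\Phi^{\text{Tri}}$ and reading these relations for small $i$ in increasing order, I solve recursively for each specialized single derivative $\theta_T(F_k)$. For instance the $i=0$ relation specializes to $2\theta_T(F_2)/t^2 = 3z\,\partial_z\Phi^{\text{Tri}}+u^2 = t\,\partial_t\Phi^{\text{Tri}}+u^2$, so $\theta_T(F_2)=\tfrac{t^2}{2}(t\,\partial_t\Phi^{\text{Tri}}+u^2)$, a polynomial in $t,u$ and $\partial_t\Phi^{\text{Tri}}$.

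To reach the mixed derivatives $\theta_T(F_{3,1})$, $\theta_T(F_{2,2})$, $\theta_T(F_{1,1})$ and $\theta_T(F_{1,1,1,1})$ actually occurring in the KP equation, I would differentiate the $F$-form of the Virasoro constraints with respect to $p_1$ and $p_2$ \emph{before} specializing. Each differentiation raises the derivative order by one while preserving the structure ``one high derivative $=$ quadratic-in-lower $+$ insertion sum'', and specialization again turns the insertion sum into a single $z$-derivative term. Thus differentiating the $i=0$ relation by $p_2$ gives $2\theta_T(F_{2,2})/t^2 = 3z\,\partial_z\theta_T(F_2)+2\theta_T(F_2)$; differentiating the $i=-1$ relation by $p_1$ gives $\theta_T(F_{1,1})/t^2 = 2z\,\theta_T(F_{2,1})+u$, with $\theta_T(F_{2,1})$ supplied by the $p_1$-derivative of the $i=0$ relation, namely $2\theta_T(F_{2,1})/t^2 = 3z\,\partial_z\theta_T(F_1)+\theta_T(F_1)$; and one simply has $\theta_T(F_{3,1})=\partial_z\theta_T(F_1)$. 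Iterating this bootstrap handles $\theta_T(F_{1,1,1,1})$ as well and closes after finitely many steps, because the KP equation involves only derivatives of order $\le 4$ and index $\le 3$. The bipartite case is identical with $L^{\text{Bip}}$, and the maps case ($\theta_M:p_i\mapsto z$) differs only in that the insertion sum no longer collapses to one term; there it is \emph{resummed} by the same homogeneity identity, since $\theta_M\bigl(\sum_k kF_k\bigr)=\tfrac{t}{z}\partial_t\Phi^{\text{Maps}}$ reduces each infinite moment to a $t$-derivative of $\Phi^{\text{Maps}}$ minus finitely many individual terms already determined by the bootstrap.

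The main obstacle is organizing the recursion so that it provably terminates inside the claimed polynomial ring: one must fix a complexity ordering on specialized derivatives (e.g. by total order together with the number of indices equal to $1$) and check that each Virasoro-derived identity expresses a derivative strictly in terms of strictly simpler ones and $z$-derivatives of $\Phi^{\text{Tri}}$. A secondary technical point is to confirm that the factors of $z^{-1}$ introduced when inverting the dilation relation $3z\,\partial_z\Phi^{\text{Tri}}=t\,\partial_t\Phi^{\text{Tri}}$ ultimately cancel, so that the final expressions genuinely lie in the stated ring of polynomials in $t,z,u$ and the $\partial_t^k\Phi^{\text{Tri}}$.
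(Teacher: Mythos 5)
Your proposal is correct and follows essentially the same route as the paper: convert the Virasoro constraints into relations on $F=\log\tau$, differentiate with respect to the $p_i$ before specializing to reach the mixed derivatives, and use the Euler/homogeneity identity $t\,\partial_t F=\sum_i i p_i F_i$ to resum (or, under $\theta_T$, collapse) the insertion sums — exactly the Goulden--Jackson/Kazarian--Zograf elimination that the paper illustrates on $\theta_M F^{\text{Maps}}_1$. The only nit is the coefficient of the collapsed insertion term, which should be $(3+i)z\,\theta_T(F_{3+i})$ rather than $3z\,\theta_T(F_{3+i})$ for general $i$; this does not affect the argument.
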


\begin{proof}[Illustration of the lemma] Let us find a polynomial expression for $\theta_M F^{\text{Maps}}_1(t,z,u)$ (the lemma follows from the same mechanism). Use the constraint
	\begin{equation}
		L_{-1} = - \frac{\partial}{\partial p_1} + t^2 \sum_{n\geq 2} (n-1) p_n \frac{\partial}{\partial p_{n-1}} + t^2 up_1
	\end{equation}
	which gives
	\begin{equation} \label{SpecializationConstraint}
		\theta_M F^{\text{Maps}}_1(t,z,u) = t^2 z \sum_{n\geq 2} (n-1) \theta_M F^{\text{Maps}}_{n-1}(t,z,u) + t^2uz
	\end{equation}
	It might seem helpless because of the sum over $n$, but this sum can in fact be expressed via the homogeneity equation
	\begin{equation}
		t\frac{\partial \tau^{\text{Maps}}}{\partial t} = \sum_{n\geq 1} np_n \frac{\partial\tau^{\text{Maps}}}{\partial p_n}.
	\end{equation}
	Its specialization through $\theta_{M}$ gives
	\begin{equation}
		t\frac{\partial\Phi^{\text{Maps}}(t,z,u)}{\partial t} = z\sum_{n\geq 1} n \theta_M F^{\text{Maps}}_n(t,z,u),
	\end{equation}
	and plugging this into \eqref{SpecializationConstraint} gives
	\begin{equation}
		\theta_M F^{\text{Maps}}_1(t,z,u) = t^3 \frac{\partial\Phi^{\text{Maps}}(t,z,u)}{\partial t} + t^2uz
	\end{equation}
\end{proof}

\chapter{Non-oriented maps and the BKP hierarchy} \label{sec:NonOrientedMaps}

\section{Non-oriented maps}
We recall that orientable maps can be described as gluings of polygons (Section \ref{sec:PolygonGluings} of Chapter \ref{sec:Definitions}), as properly embedded graphs (Section \ref{sec:EmbeddedGraphs} of Chapter \ref{sec:Definitions}) and as factorizations of permutations (Chapter \ref{sec:TR}). Non-oriented maps are generalizations of orientabled maps, and the three definitions can indeed be generalized.

\subsection{Non-oriented maps as gluings of polygons} We must first supplement the definition of Chapter \ref{sec:Definitions} Section \ref{sec:PolygonGluings} with the place where orientability appears. Not discussed there, is the fact that there are two ways of identifying two sides of polygons together,
\begin{equation}
	\includegraphics[scale=.6,valign=c]{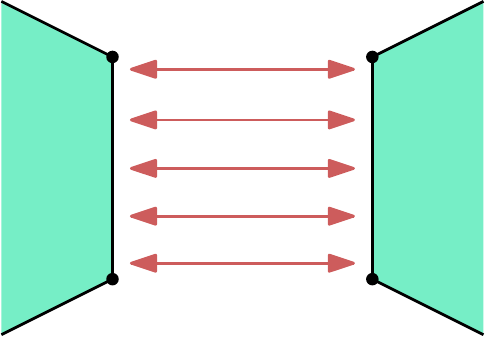} \hspace{2cm} \includegraphics[scale=.6,valign=c]{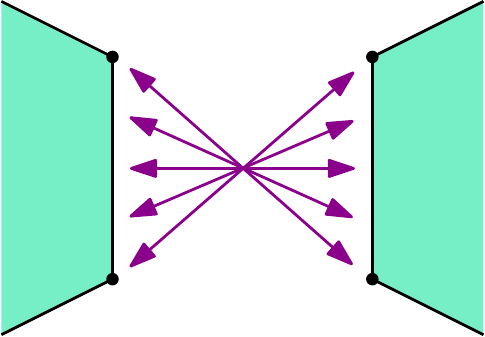}
\end{equation}
Let us orient every polygon, so that their sides inherit an orientation. Then two sides can be identified in two ways: by matching their orientations or with opposite orientations. If it is possible to adapt the orientations of the polygons so that the sides which are identified always have opposite orientations, then the map is orientable, and the other way around.

For instance, take a single oriented $n$-gon and equip its sides with their inherited orientation. There are $\operatorname{Cat}_n$ ways to identify the sides 2 by 2 to create a planar map, homeomorphic to the sphere, as illustrated in the left of Figure \ref{fig:OrientedPolygon} and they can all be realized by identifying the sides with opposite orientations. However, if the gluing of two sides is switched to matching orientations, as in the right of Figure \ref{fig:OrientedPolygon}, the topology becomes that of the (non-orientable) projective plane.
\begin{figure}
	\includegraphics[scale=.5,valign=c]{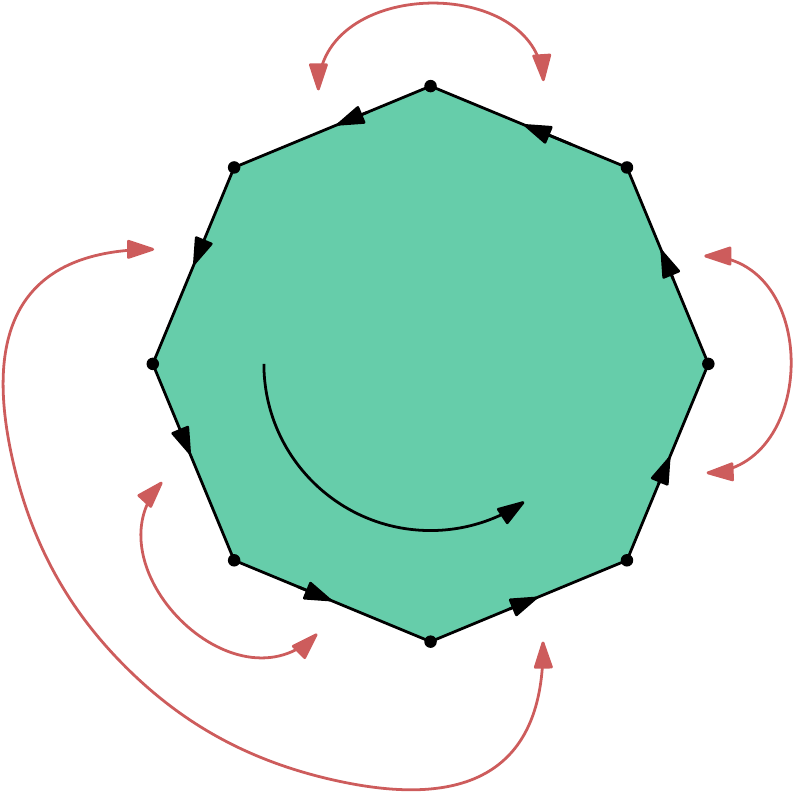} \hspace{2cm} \includegraphics[scale=.5,valign=c]{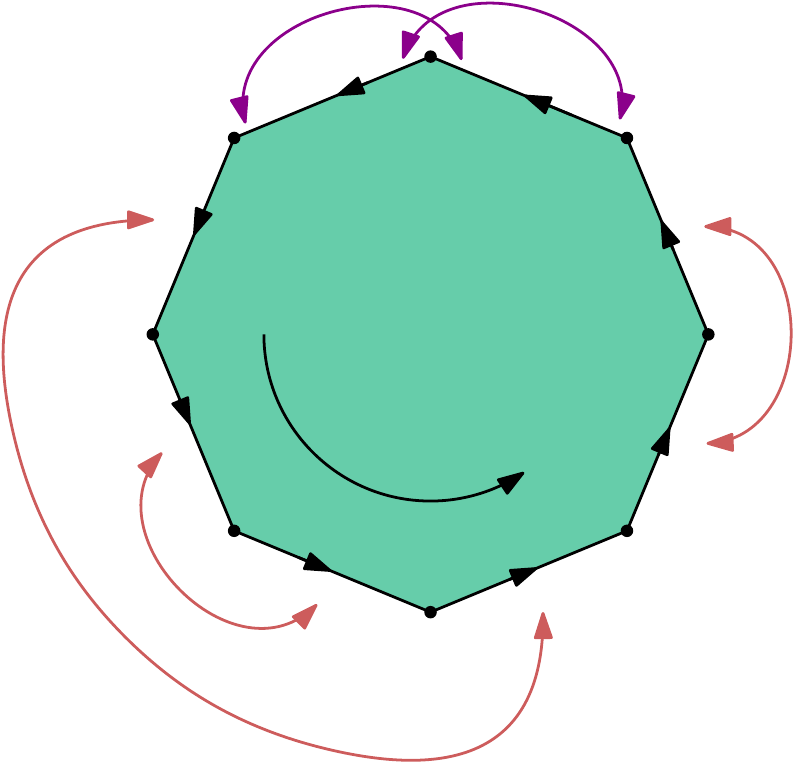}
	\caption{\label{fig:OrientedPolygon}}
\end{figure}

\subsection{Non-oriented maps as embedded graphs} The second definition is as graphs which are embedded in non-oriented surfaces such that the graph complement is a disjoint union of discs (corresponding to the above polygons). 

When drawing them in the plane, it is not enough to specify a rotation system as in the orientable case. A common way of incorporating non-orientability is by having \emph{twisted edges}, i.e. edges such that a face along it switches sides. The name ``twisted'' comes from the ribbon representation where the embedded graph is thickened to a ribbon. In the non-oriented case, ribbons are allowed to get twisted. Notice that the representation is dual to the first definition with gluings of polygons, in the sense that the dual to an edge coming from two sides with matching orientations is a twisted edge.

An illustration of a graph with three embeddings of different topologies is given in Figure \ref{fig:ThreeTopologies}. It has one vertex and two edges, but can give rise to a sphere, a torus and a projective plane.
\begin{figure}
	\includegraphics[scale=.6,valign=c]{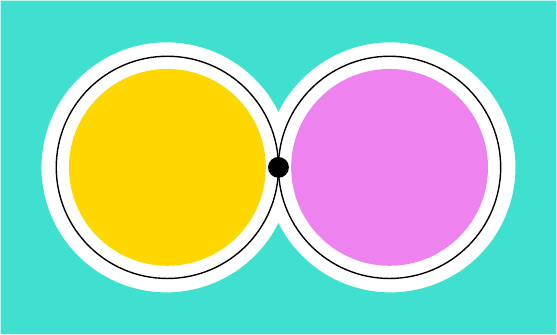} \hspace{1cm} \includegraphics[scale=.6,valign=c]{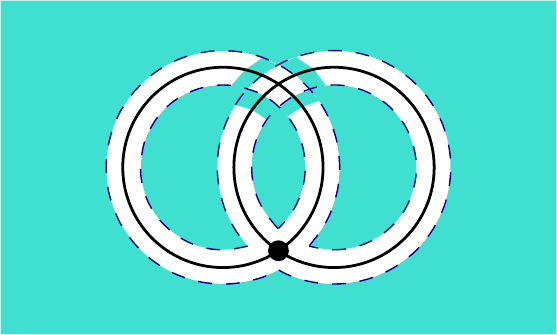} \hspace{1cm} \includegraphics[scale=.6,valign=c]{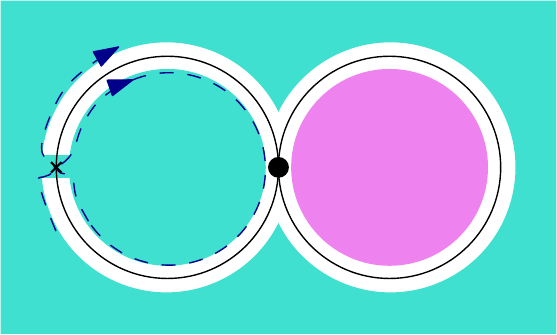}
	\caption{\label{fig:ThreeTopologies}}
\end{figure}

\section{Recurrence formulas for non-oriented maps} 
Here we present the recurrence formulas we found for non-oriented maps which extend those of Chapter \ref{sec:EnumerationIntegrability} for orientable maps. Contrary to the orientable case however, we found two types of formulas. To understand why, we need to explain a bit their origin. 

\subsection{From the BKP equation} In the orientable case, they were derived from the KP equation, which is satisfied by generating series of several families of maps because they are tau functions of the KP hierarchy. In the non-oriented case, the latter is replaced with a different hierarchy, the BKP hierarchy of Kac and van de Leur \cite{KacVandeLeur1998}. An important difference between the KP and BKP hierarchy is that the function $F(\ptimes)$ which is a solution of this hierarchy also involves a so-called \emph{charge parameter} $N$, which in our context will always be a variable marking faces or vertices of a certain kind. The first BKP equation reads
\begin{equation}
	\label{BKP1Log}
	F_{2^2}(N,\ptimes) - F_{3,1}(N,\ptimes) + \tfrac{1}{2} F_{1^2}(N,\ptimes)^2 + \tfrac{1}{12} F_{1^4}(N,\ptimes)=
	S_2(N) \frac{\tau(N\!-\!2,\ptimes) \tau(N\!+\!2,\ptimes)}{\tau(N,\ptimes)^2},
\end{equation}
where (as usual) $\tau(N,\ptimes)=e^{F(N,\ptimes)}$, $f_i(\ptimes) = \frac{\partial f(\ptimes)}{\partial p_i}$ and where $S_2(N)$ is a model-dependant normalizing factor that will always be an explicit rational function in our case. In~\cite{Carrell2014}, Carrell used the fact that the generating function of non-oriented maps satisfies this equation, together with the elimination techniques developed by Goulden and Jackson in the orientable case, to obtain a functional equation for the case of triangulations (this technique leads to an explicit recurrence).

In \cite{BCD-RecurrenceFormulas}, we used the same method again to derive recurrence formulas for maps and bipartite maps in addition to triangulations, thereby really extending the orientable case to the non-oriented case. All these recurrences are larger than those of the orientable case, but incredibly short compared to any alternative, and it is not unreasonable to believe that they could have a combinatorial interpretation. For example, we obtain in \cite{BCD-RecurrenceFormulas} the following recurrence formula. In this chapter, the symbol $\hsum$ denotes a sum over elements of $\tfrac{1}{2}\mathbb{N}$.
\begin{theorem}{}{NonOrientedMapsCC}
	The number $\mathfrak{h}_n^g$ of rooted maps of genus $g$ with $n$ edges, orientable or not, can be computed from the following recurrence formula:
	\begin{multline} \label{eq:recurrenceCC}
		\mathfrak{h}_n^g
		=
		\tfrac{2}{(n+1)(n-2)} \Bigg(
		n(2n-1)( 2 \mathfrak{h}_{n-1}^{g} + \mathfrak{h}_{n-1}^{g-1/2})
		+   \tfrac{(2 n-3) (2 n-2) (2 n-1) (2 n)}{2} \mathfrak{h}_{n-2}^{g-1}
		\\ 
		+12 \mkern-10mu\hsum_{g_1=0..g\atop g_1+g_2=g} \sum_{n_1=0..n \atop n_1+n_2=n} 
		\mkern-10mu\tfrac{(2 n_2-1)(2 n_1-1)n_1}{2}        \mathfrak{h}_{n_2-1}^{g_2}          
		\mathfrak{h}_{n_1-1}^{g_1}   
		-\mkern-10mu
		\hsum_{g_1=0..g\atop g_1+g_2=g} \sum_{n_1=0..n-1\atop n_1+n_2=n}
		\hsum_{g_0=0..g_1\atop g_1-g_0\in \mathbb{N}} \mkern-10mu {\textstyle {n_1+2-2 g_0 \choose n_1-2 g_1}} 
		2^{2 (1+g_1-g_0)}
		\mathfrak{h}_{n_1}^{g_0}
		\\
		\Big(
		\tfrac{(2 n_2-1) (2 n_2-2) (2 n_2-3)}{2} \mathfrak{h}_{n_2-2}^{g_2-1}
		-\delta_{(n_2,g_2)\neq(n,g)}\tfrac{n_2+1}{4} {\mathfrak{h}}_{n_2}^{g_2} 
		+\tfrac{2 n_2-1}{2} ( 2 \mathfrak{h}_{n_2-1}^{g_2} + \mathfrak{h}_{n_2-1}^{g_2-1/2} )\\
		+6 
		\hsum_{g_3=0..g_2\atop g_3+g_4=g_2}
		\sum_{n_3=0..n_2\atop n_3+n_4=n_2}
		\tfrac{(2 n_3-1)(2 n_4-1) }{4}\mathfrak{h}_{n_3-1}^{g_3} \mathfrak{h}_{n_4-1}^{g_4}
		\Big)\vspace{3cm} \Bigg)	
	\end{multline}
	for $n>2$, with the initial conditions 
	$\mathfrak{h}_0^0=1$, $\mathfrak{h}_1^0=2$, $\mathfrak{h}_2^0=9$, $\mathfrak{h}_{1}^{1/2}=1$, $\mathfrak{h}_2^{1/2}=10$, $\mathfrak{h}_2^1=5$, and $\mathfrak{h}_n^g=0$ if $n<2g$.
\end{theorem}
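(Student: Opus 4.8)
The plan is to exploit the fact, due to van de Leur~\cite{VandeLeur2001} and used in the map context by Carrell~\cite{Carrell2014} and in~\cite{BCD-RecurrenceFormulas}, that the generating series of non-oriented maps is a tau function of the BKP hierarchy, and then to run the Goulden--Jackson elimination scheme of \Cref{thm:Elimination} with the first BKP equation~\eqref{BKP1Log} playing the role that the KP equation~\eqref{eq:explicitKP} played in the orientable case. Concretely, I would first write the relevant tau function $\tau(N,\ptimes)$ as a sum over partitions of zonal polynomials (Jack at $\alpha=2$) times a content product, the $b=1$ counterpart of $\tau^{\text{Maps}}$ in~\eqref{MapsTauFunction}, with $t$ tracking edges and the BKP charge $N$ recording the number of vertices (the $b=1$ analogue of the variable $u$ there). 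The number $\mathfrak{h}_n^g$ is then recovered by coefficient extraction after the map specialization $\theta_M\colon p_i\mapsto z$, the genus being read off from the joint grading in $t$ and $N$ via Euler's relation~\eqref{Euler}.

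The treatment of the left-hand side of~\eqref{BKP1Log} is essentially the same as in the orientable case. I would establish Virasoro-type constraints for $\tau(N,\ptimes)$, the BKP counterparts of~\eqref{VirasoroConstraints}, obtained from a Tutte-style root-edge decomposition; together with the homogeneity (Euler) equation for $\tau$ these let me rewrite each of the four terms $F_{2^2}$, $F_{3,1}$, $F_{1^2}$, $F_{1^4}$, after applying $\theta_M$, as explicit polynomials in the specialized free energy $\Phi\coloneqq\theta_M F(N,\ptimes)$ and finitely many of its $t$-derivatives, exactly as in the illustration following \Cref{thm:Elimination}. This reproduces the quadratic convolution sum $\sum(2n_3-1)(2n_4-1)\,\mathfrak{h}\,\mathfrak{h}$ and the single-map terms visible in~\eqref{eq:recurrenceCC}.

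The genuinely new ingredient, and the step I expect to be the main obstacle, is the right-hand side $S_2(N)\,\tau(N\!-\!2,\ptimes)\tau(N\!+\!2,\ptimes)/\tau(N,\ptimes)^2$, which has no analogue in KP. Writing it as $S_2(N)\exp\bigl(F(N\!-\!2,\ptimes)+F(N\!+\!2,\ptimes)-2F(N,\ptimes)\bigr)$, it is the exponential of a discrete second difference in the charge. Since $N$ records vertices, the shifts $N\to N\pm 2$ change the vertex count by $\pm2$ at fixed edges, hence shift the Euler characteristic by $\pm2$ and the genus by $\mp1$ through~\eqref{Euler}. After specialization I would expand this exponential and re-express the shifts in terms of the genus grading; it is precisely this re-expansion that produces the nested genus sums together with the binomial weight $\binom{n_1+2-2g_0}{n_1-2g_1}$ and the factor $2^{2(1+g_1-g_0)}$ in~\eqref{eq:recurrenceCC}, the exponent of $2$ matching the gap $(n_1+2-2g_0)-(n_1-2g_1)$ between the two entries of the binomial. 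Controlling the combinatorics of this second-difference expansion, in particular tracking which genus contributions survive each shift and checking that $S_2(N)$ contributes only the rational prefactors, is where the bookkeeping becomes heavy and is the delicate part of the argument.

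Finally I would assemble the identity: the left-hand side, now a polynomial in $\Phi$ and its $t$-derivatives, equals the expanded right-hand side, giving a single functional relation in $\mathbb{Q}[u,z][[t]]$ refined by the $N$-grading. Extracting the coefficient of the monomial tracking $n$ edges and genus $g$ and isolating the top term $\mathfrak{h}_n^g$ (which accounts for the overall prefactor $\tfrac{2}{(n+1)(n-2)}$ and the restriction $n>2$) yields the stated recurrence. The low-order cases $n\leq 2$ are checked by hand to fix the initial conditions $\mathfrak{h}_0^0,\dots,\mathfrak{h}_2^1$, and the vanishing $\mathfrak{h}_n^g=0$ for $n<2g$ follows directly from Euler's relation~\eqref{Euler}.
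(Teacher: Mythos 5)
Your overall strategy --- the first BKP equation \eqref{BKP1Log}, Virasoro-type constraints, and the Goulden--Jackson elimination --- is indeed the engine behind this theorem, and you are right that the shifted term $S_2(N)\,\tau(N-2)\tau(N+2)/\tau(N)^2$ is the genuinely new difficulty compared with the orientable case. But there is a gap in your specific setup, and it is not a matter of bookkeeping: with the tau function you choose, the elimination does not produce \eqref{eq:recurrenceCC}.

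You work with the zonal tau function of non-oriented maps directly, with the BKP charge $N$ marking vertices (and, implicitly, a separate variable $z$ marking faces after $\theta_M$). Then the exponent of $N$ in a graded piece is the vertex count $V$, which is \emph{not} a function of $(n,g)$ alone. When you expand the second difference $F(N+2)+F(N-2)-2F(N)$, you therefore get binomials $\binom{V}{k}2^k$ indexed by the vertex count, and the resulting recurrence necessarily involves the refined numbers of maps by edges, vertices \emph{and} faces; it cannot be closed on $\mathfrak{h}_n^g$. This is precisely what happens in \Cref{thm:MapsRecurrence}, whose shifted term reads $\sum_{p+j=n_1+2-2g_0}\binom{p}{2(1+g_1-g_0)}u^{n_1-2g_1-j}z^jH_{n_1}^{p,j}$ and which is a genuinely different, bivariate recurrence. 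To obtain \eqref{eq:recurrenceCC}, which closes on $\mathfrak{h}_n^g$, one must first pass through Tutte's bijection between maps with $n$ edges and bipartite quadrangulations with $n$ faces, and use the bipartite tau function with \emph{both} vertex colours weighted by the charge $N$, so that $N$ tracks $V+F=n+2-2g$, a function of $(n,g)$ only. The second difference of $N^{n_1+2-2g_0}$ then yields exactly the weight $\binom{n_1+2-2g_0}{n_1-2g_1}\,2^{2(1+g_1-g_0)}$ of the statement. This is the Carrell--Chapuy elimination; as emphasized in the text, in the non-oriented case the Carrell--Chapuy and Kazarian--Zograf routes do \emph{not} give the same recurrence, and \Cref{thm:NonOrientedMapsCC} is specifically the former. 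Your sketch, carried out as written, proves a theorem --- but not this one.
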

We obtained similar theorems for other models, in particular one with control on vertices and faces, one for bipartite maps and one for triangulations which is implicit in Carrel's work. As already observed in the oriented case (in the literature, not in this manuscript), there are several ways of performing the elimination technique of Goulden and Jackson, one due to Carrell and Chapuy which goes through Tutte's bijection between general maps and bipartite quadrangulations before performing the elimination, and one due to Kazarian and Zograf which we explained in Chapter \ref{sec:EnumerationIntegrability}, Section \ref{sec:Elimination}. However in the non-oriented case we found that they do not lead to the same recurrence formulas. \Cref{thm:NonOrientedMapsCC} is obtained via the Carrell-Chapuy elimination.

The crucial fact that the BKP equation~\eqref{BKP1Log} involves not only the function $F(N, \ptimes)$ but also its shifts $F(N+2,\ptimes)$ and $F(N-2,\ptimes)$ has an important effect on the recurrence formulas we obtain. Indeed, the functional equations corresponding to those recurrences involve derivatives with respect to the $p_i$s but also shifts of $N$, and are therefore \emph{not} ODEs in their main variables. In return, the recurrences obtained do not have polynomial coefficients (for example \eqref{eq:recurrenceCC} contains binomial coefficients, which are not polynomials in the summation variables). This is a deep structural difference between the recurrence formulas of Chapter \ref{sec:EnumerationIntegrability} and recurrences such as~\eqref{eq:recurrenceCC}.

\subsection{ODEs (without shifts) from multiple BKP equations} It is natural to ask if one could instead obtain  formulas in which the shifts are not involved, i.e. true polynomial recurrence formulas, corresponding to nonlinear ODEs with polynomial coefficients for the associated generating functions. Although the shifts are intrinsic to the BKP hierarchy, we will see that the answer to this question is \emph{yes}. To do this, we will use combine three equations of the BKP hierarchy. Using additional derivations and manipulations, we were able to eliminate the shifts from equations, and obtain an ODE at fixed $N$. The counterpart is that it contains higher derivatives compared to those involving shifts. It is not obvious, but it will be true, that a finite number of Virasoro constraints will still be sufficient to perform the elimination of variables in this context.

Due to the use of higher BKP equations and additional manipulations involved, the equations thus obtained are bigger than the previous ones. They are however not gigantic, e.g. the main ODE for maps fits in slightly more than a page in $\backslash$tiny LaTex print. We will not reproduce it here, but state it in a non-explicit form. The reader eager to see those recurrence formulas at work may access them and use them to compute numbers of maps, in the accompanying Maple worksheet~\cite{us:Maple}. A typical statement we obtain from these methods is the following.
\begin{theorem}{}{rec-maps-nonshifted}
	The number $\mathfrak{h}_n^g$ of rooted maps of genus $g$ with $n$ edges, orientable or not, 
	is solution of an explicit recurrence relation of the form
	\begin{align}\label{eq:rec-maps-nonshifted}
		\mathfrak{h}_n^g = 
		\sum_{a=0}^{K_1}\hsum_{b=0}^{K_2} \sum_{k=1}^{K_3}
		\sum_{n_1,\dots,n_k \geq 1 \atop n_1+\cdots + n_k =n-a}
		\hsum_{g_1,\dots,g_k \geq 0 \atop  g_1+\dots + g_k =g-b}
		P_{a,b,k}(n_1,\dots,n_k) \mathfrak{h}_{n_1}^{g_1} \mathfrak{h}_{n_2}^{g_2}\dots \mathfrak{t}_{n_k}^{g_k},
	\end{align}
	where the $P_{a,b,k}$ are rational functions with $P_{0,0,1}=0$, and $K_1,K_2,K_3 <\infty$.
\end{theorem}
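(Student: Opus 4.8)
The plan is to start from the fact, established by van~de~Leur \cite{VandeLeur2001} and in the non-oriented setting of Chapuy--Do\l\k{e}ga \cite{ChapuyDolega2020}, that the generating function $\tau(N,\ptimes)$ of non-oriented maps is a BKP tau function with charge $N$, where $N$ marks faces (or vertices) of a prescribed kind. As such it satisfies not only the first equation \eqref{BKP1Log} but the whole BKP hierarchy. The crucial structural observation is that every equation of the hierarchy that I will use has the schematic shape
\[
	\mathcal{D}_i[F](N,\ptimes) = S_i(N)\,\mathcal{R}(N,\ptimes), \qquad
	\mathcal{R}(N,\ptimes) = \frac{\tau(N\sminus 2,\ptimes)\,\tau(N\splus 2,\ptimes)}{\tau(N,\ptimes)^2},
\]
where $\mathcal{D}_i$ is a differential operator in the $p_j$'s only and $\mathcal{R}$ is the \emph{shift ratio} that carries the entire $N$-shift dependence. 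In the KP case the corresponding right-hand side is absent, which is precisely why \eqref{eq:explicitKP} already yields an ODE; here the whole difficulty is concentrated in $\mathcal{R}$.

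First I would eliminate the shifts. The idea is to combine three equations of the hierarchy, supplemented by $t$- and $p_j$-derivations, so as to cancel both $\mathcal{R}$ and the shift-carrying logarithmic derivatives of $\mathcal{R}$ that reappear when one differentiates. Schematically, from two equations one forms a combination $S_j(N)\,\mathcal{D}_i F - S_i(N)\,\mathcal{D}_j F$ that removes $\mathcal{R}$ itself, but the residual terms produced by the higher hierarchy equations still involve $p_j$-derivatives of $\log\mathcal{R}$, hence shifts; a third equation together with one further derivation is what finally closes the system. The outcome is a single nonlinear relation $E[F](N,\ptimes)=0$ of finite order involving only finitely many $p_j$-derivatives of $F$ \emph{at fixed charge $N$}, at the cost of higher differential order than in the one-equation BKP approach leading to \eqref{eq:recurrenceCC}.

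Next I would specialize and run the Goulden--Jackson elimination. Applying the map specialization $\theta_M: p_i\mapsto z$ turns $E[F]=0$ into a functional equation for $\Phi^{\text{Maps}}$ at fixed $N$. As in \Cref{thm:Elimination}, the non-oriented analogue of the Virasoro constraints \eqref{VirasoroConstraints}, together with the homogeneity (Euler) relation $t\,\partial_t\tau=\sum_n np_n\,\partial_{p_n}\tau$, lets me rewrite every specialized $p_j$-derivative appearing in $E$ as a polynomial in $t,z,N$ and the $t$-derivatives $\partial_t^k\Phi^{\text{Maps}}$; only finitely many constraints are needed because $E$ has finite differential order. Extracting the coefficient tracking $(n,g)$ then produces the recurrence \eqref{eq:rec-maps-nonshifted}: the degree-$k$ terms of $E$ in $F$ become $k$-fold convolutions $\mathfrak{h}^{g_1}_{n_1}\cdots\mathfrak{h}^{g_k}_{n_k}$, the $p_j$-derivatives and the $N$-dependent prefactors $S_i(N)$ contribute the rational functions $P_{a,b,k}$, the $\hsum$ over $b$ records the half-integer genus shifts coming from the non-orientable (M\"obius) contributions, and the bounds $K_1,K_2,K_3<\infty$ follow because only three hierarchy equations, finitely many Virasoro constraints and finitely many derivations enter. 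The vanishing $P_{0,0,1}=0$ simply reflects that the term $\mathfrak{h}^g_n$ itself has been moved to the left-hand side.

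The hard part will be the shift elimination of the second step. Unlike the KP right-hand side, $\mathcal{R}$ is an intrinsically multiplicative object in the charge variable, so one must exhibit the precise linear combination of three BKP equations (with $\ptimes$-independent, $N$-dependent coefficients) whose dependence on $\mathcal{R}$ and on its logarithmic derivatives cancels identically, \emph{while} keeping the resulting equation closed under the Virasoro/homogeneity elimination. Equivalently, I must check that raising the differential order to kill the shifts does not spoil the polynomiality statement of \Cref{thm:Elimination}: every higher $p_j$-derivative that survives the combination must still specialize, under $\theta_M$, to a finite polynomial in $t,z,N$ and the $\partial_t^k\Phi^{\text{Maps}}$. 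Verifying this closure, and that the three chosen equations are genuinely independent enough to remove all shifts, is where the real work lies; the subsequent coefficient extraction is then routine bookkeeping of convolutions and half-integer genera.
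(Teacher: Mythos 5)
Your proposal follows essentially the same route as the paper: the paper's Theorem giving the fixed-charge identity \eqref{eq:FixedCharge} is exactly the combination of the first three BKP equations (and their $p_1$-, $p_2$-derivatives) that cancels the shift ratio $\tau(N\sminus 2)\tau(N\splus 2)/\tau(N)^2$ together with the shifted derivatives of $F$ appearing on the right-hand sides, after which the $\theta_M$ specialization, the Virasoro/homogeneity elimination of \Cref{thm:relGlambda}, and coefficient extraction proceed as you describe. The one piece you leave open --- exhibiting the explicit cancelling combination, which turns out to be cubic in the KP expressions rather than linear --- is precisely the content the paper imports from the appendix of \cite{BCD-RecurrenceFormulas}, so your outline matches the actual proof structure.
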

We obtained similar theorems for bipartite maps, and for triangulations \cite{BCD-RecurrenceFormulas}. Moreover, we will in fact prove a version of Theorem~\ref{thm:rec-maps-nonshifted} with control on the number of faces. From it, we obtain a closed recurrence formula enumerating non-oriented 1-face maps, which is precisely the one of Ledoux \cite{Ledoux2009} given in \eqref{Ledoux} which he obtained using matrix integrals.

It is remarkable to see that it is, in fact, the shadow of bigger nonlinear recurrence giving access to an arbitrary number of faces. That phenomenon was already observed in the oriented case as the famous Harer-Zagier recurrence for 1-face maps is itself a special case of the recurrence of~\cite{CarrellChapuy2015}. The Harer-Zagier recurrence has a nice analogue in the bipartite case due to Adrianov~\cite{Adrianov1997}, and it is natural to ask if our non-shifted recursion in the bipartite case implies an non-oriented version of Adrianov's result. The answer is yes.
\begin{theorem}{}{NonOrientedAdrianov}
	The number $\mathfrak{b}_{n}^{i,j}$ of
	rooted one-face maps with $n$ edges, $i$ white and $j$ black
	vertices, orientable or not, is given by the recursion:
	\begin{align}
		(n\splus{}1) \mathfrak{b}_{n}^{i,j} = &\hphantom{+} 
		(4n\sminus{}1)(\mathfrak{b}_{n\sminus{}1}^{i-1,j}\splus{}\mathfrak{b}_{n\sminus{}1}^{i,j-1}\sminus{}\mathfrak{b}_{n\sminus{}1}^{i,j})
		\splus{}(5n^3 \sminus{} 16n^2 \splus{} 13n\sminus{} 1) \mathfrak{b}_{n\sminus{}2}^{i,j}
		\nonumber\\&
		\splus{}(2n \sminus{} 3)(4\mathfrak{b}_{n\sminus{}2}^{i-1,j} \splus{} 4\mathfrak{b}_{n\sminus{}2}^{i,j-1} \sminus{}3\mathfrak{b}_{n\sminus{}2}^{i-2,j} \splus{} 3\mathfrak{b}_{n\sminus{}2}^{i,j-2}\sminus{} 2 \mathfrak{b}_{n\sminus{}2}^{i-1,j-1}) 
		\nonumber\\&
		\splus{}(10n^3 \sminus{} 68n^2 \splus{} 150n \sminus{}107)(\mathfrak{b}_{n\sminus{}3}^{i,j}\sminus{}\mathfrak{b}_{n\sminus{}3}^{i-1,j}\sminus{}\mathfrak{b}_{n\sminus{}3}^{i,j-1})
		\nonumber\\&
		\splus{}(4n	\sminus{}11)(\mathfrak{b}_{n\sminus{}3}^{i-3,j}
		\splus{}\mathfrak{b}_{n\sminus{}3}^{i,j-3}\sminus{}\mathfrak{b}_{n\sminus{}3}^{i-2,j-1}
		\sminus{}\mathfrak{b}_{n\sminus{}3}^{i-1,j-2} \sminus{}
		\mathfrak{b}_{n\sminus{}3}^{i-2,j}\sminus{}\mathfrak{b}_{n\sminus{}3}^{i,j-2} \splus{} 2
		\mathfrak{b}_{n\sminus{}3}^{i-1,j-1})\nonumber\\&
		\splus{}(4\sminus{}n)((2n \sminus{} 7)^2(n \sminus{} 2)^2 \mathfrak{b}_{n\sminus{}4}^{i,j}\splus{}(5n^2 \sminus{} 32n \splus{}
		53)(\mathfrak{b}_{n\sminus{}4}^{i-2,j}\splus{}\mathfrak{b}_{n\sminus{}4}^{i,j-2}\sminus{}2
		\mathfrak{b}_{n\sminus{}4}^{i-1,j-1})\nonumber\\&
		\splus{}\mathfrak{b}_{n\sminus{}4}^{i-4,j}\splus{}\mathfrak{b}_{n\sminus{}4}^{i,j-4}\sminus{}4 \mathfrak{b}_{n\sminus{}4}^{i-3,j-1}\splus{}4\mathfrak{b}_{n\sminus{}4}^{i-1,j-3}\splus{}6 \mathfrak{b}_{n\sminus{}4}^{i-2,j-2})) \label{eq:adrianovnori}
	\end{align}
	with the convention that $\mathfrak{b}_{n}^{i,j} =0$ for
	$i+j>n+1$, and $\mathfrak{b}_{n}^{i,0}=\mathfrak{b}_{n}^{0,j}=0$ and the initial conditions $\mathfrak{b}_{1}^{1,1} =
	\mathfrak{b}_{2}^{2,1} = \mathfrak{b}_{2}^{1,2} =
	\mathfrak{b}_{2}^{1,1}= 1$,
	$\mathfrak{b}_{3}^{3,1}=\mathfrak{b}_{3}^{1,3}=1$,
	$\mathfrak{b}_{3}^{2,2}=\mathfrak{b}_{3}^{2,1}=\mathfrak{b}_{3}^{1,2}=3$,
	$\mathfrak{b}_{3}^{1,1}=4$.
\end{theorem}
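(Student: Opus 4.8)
The plan is to obtain \Cref{thm:NonOrientedAdrianov} as the one-face specialization of the non-shifted, face-controlled recurrence for non-oriented bipartite maps, i.e. the bipartite analogue of \Cref{thm:rec-maps-nonshifted}. First I would set up the integrable input. The generating series $\tau^{\text{Bip}}$ of non-oriented bipartite maps, carrying the BKP charge $N$ together with the two vertex-colour weights $u,v$ and the face times $\ptimes$, is a tau function of the BKP hierarchy of Kac--van de Leur, and it satisfies a family of Virasoro constraints of the same shape as \eqref{VirasoroConstraints} but adapted to the bipartite, non-oriented, charged setting. These are precisely the two ingredients required by the Goulden--Jackson elimination of Section \ref{sec:Elimination}: the hierarchy supplies the nonlinear relations among the derivatives of $F^{\text{Bip}}=\log\tau^{\text{Bip}}$, while finitely many Virasoro constraints, read through a specialization operator $\theta_M\colon p_i\mapsto z$ that marks the number of faces by $z$, let one trade all partial derivatives $\partial_{p_i}$ for $t$-derivatives of the specialized free energy $\Phi^{\text{Bip}}(t,z,u,v)=\theta_M F^{\text{Bip}}$.

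Second, I would eliminate the charge shifts. The obstruction to a plain recurrence is that the single BKP equation \eqref{BKP1Log} couples $\tau(N,\ptimes)$ to $\tau(N\pm2,\ptimes)$. Following the route to \Cref{thm:rec-maps-nonshifted}, I would combine three equations of the BKP hierarchy and use further derivations to cancel the shift terms, producing a shift-free relation at fixed $N$, at the cost of higher $p$-derivatives which the Virasoro elimination then removes. This yields the bipartite, face-controlled analogue of \eqref{eq:rec-maps-nonshifted}: a closed recurrence expressing the number of non-oriented bipartite maps with $n$ edges, $i$ white and $j$ black vertices as a finite sum of linear and convolution (multi-factor) terms with explicit rational coefficients, where the factor count $k$ records how many connected maps are glued.

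Third comes the one-face restriction, which is where linearity appears. In the face-controlled recurrence every connected factor carries at least one face, so a $k$-factor term produces maps with at least $k$ faces; demanding a single face forces $k=1$ and kills all genuine convolution terms, leaving only the linear part. This is the structural reason Adrianov-type recurrences are linear, exactly as in the oriented case. Reading off the coefficient of $u^iv^j$ then selects $i$ white and $j$ black vertices and produces a linear recurrence relating $\mathfrak{b}_n^{i,j}$ to the $\mathfrak{b}_{n-1}^{\cdot,\cdot},\dots,\mathfrak{b}_{n-4}^{\cdot,\cdot}$, with the vertex indices shifted by the action of the $u,v$-weights. The genus does not appear because, for a one-face map of genus $g$ (a half-integer in the non-oriented case), Euler's relation gives $i+j=n+1-2g$; this same relation yields the vanishing ranges $\mathfrak{b}_n^{i,j}=0$ for $i+j>n+1$, while bipartiteness forces $\mathfrak{b}_n^{i,0}=\mathfrak{b}_n^{0,j}=0$, and the small cases are checked directly.

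The hard part will be the second step together with the final coefficient matching. One must carry both vertex-colour controls $u,v$ and the face marking through the three-fold BKP combination and the Virasoro elimination simultaneously, and then verify that the explicit coefficients delivered by the elimination collapse, after the one-face extraction, to exactly the integer polynomials in $n$ appearing in \eqref{eq:adrianovnori} (the factors $5n^3-16n^2+13n-1$, $10n^3-68n^2+150n-107$, and so on). The vanishing of the convolution terms is conceptually clean, but matching every coefficient is a sizeable symbolic computation, best carried out with the computer-algebra implementation accompanying \cite{us:Maple}.
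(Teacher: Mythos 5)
Your proposal follows essentially the same route as the paper: the non-oriented bipartite series is treated as a formal-$N$ BKP tau function with Virasoro constraints, three BKP equations are combined to cancel the charge shifts, the Goulden--Jackson/Virasoro elimination with the specialization $p_i\mapsto z$ converts everything into a $t$-ODE controlling faces and the two vertex colours, and the one-face extraction kills the convolution terms to leave the linear recurrence, with the explicit coefficients checked symbolically as in \cite{us:Maple}. This matches the paper's derivation, so no further comment is needed.
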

To conclude this introduction to the BKP recurrences, it is natural to ask whether our approach which eliminates the shifts are specific to the case of maps or apply to general solutions of the BKP hierarchy. The latter is in fact true: any function $F(N,\ptimes)$ which solves the BKP hierarchy is in fact solution of an explicit (but big) PDE involving only the function $F(N,\ptimes)$ and its $\ptimes$-derivatives, with no shifts (\cite[Appendix]{BCD-RecurrenceFormulas}). We are not aware of any in-depth study of such ``fixed charge'' BKP equations, which might be worth considering in the future.

\section{Zonal expansions of factorizations of matchings}
\subsection{Matchings} Although permutations can be used to describe non-oriented maps, it is argued (and showed!) by Goulden and Jackson in \cite{GouldenJackson1996a} that \emph{matchings} are more suited for enumeration. We recall that labeled orientable maps on $n$ edges are 2-constellations with vertices of one color constrained to be bivalent and thus encoded by triples of permutations satisfying $\sigma\alpha=\phi \in\mathfrak{S}_{2n}$ where the $2n$ labels are assigned to the half-edges and 
\begin{itemize}
	\item $\sigma$ describes the half-edges meeting at each vertex in a fixed cyclic order,
	\item $\phi$ describes the half-edges along the faces in a fixed cyclic order, and
	\item $\alpha$ only has cycles of length 2 and simply pairs the half-edges which belong to the same edges.
\end{itemize}

In order to account for the possibility of twisting the edges, one must now label not the half-edges but \emph{half-sides} (called side-ends in \cite{GouldenJackson1996a}), where a half-side is a side of a half-edge. In terms of ribbons, one cuts each ribbon in two half-ribbons, and each of them along their length, which gives four half-sides. The map is then determined by
\begin{itemize}
	\item a matching which indicates the pairs of half-sides forming half-edges,
	\item a matching which indicates the pairs of half-sides forming sides,
	\item a matching which indicates the pairs of half-sides which form corners at vertices.
\end{itemize}
This can be generalized to provide an extension of general factorizations to matchings \cite{BenDali2021}.

\subsection{Zonal expansions} One application of the encoding of maps as factorizations of permutations is the use of the representation theory of the symmetric group to expand the generating series of maps on Schur functions, see \Cref{thm:SchurExpansionFactorizations}. Now for non-oriented maps, there is a similar algebraic frame, where the symmetric group is replaced with a double coset $\mathfrak{B}_n\diagdown \mathfrak{S}_{2n}\diagup \mathfrak{B}_n$ \cite{HanlonStanleyStembridge1992}, where the hyperoctahedral group $\mathfrak{B}_n$ is the subgroup of $\mathfrak{S}_{2n}$ which fixes any arbitrarily chosen matching.

It is then still possible to make use of the representation theory of $\mathfrak{S}_{2n}$. By averaging over the hyperoctahedral group, one can find ``characters'' of the double coset, denoted $\phi_\lambda(\mu)$ \cite{HanlonStanleyStembridge1992}. To draw the parallel with Frobenius' formula \eqref{Frobenius}, we consider instead of maps\footnote{Both cases can be treated similarly \cite{GouldenJackson1996a}.}, the number of 3-factorizations $\tilde{C}^{(3)}_n(\lambda, \mu,\nu)$, i.e. labeled non-oriented bipartite maps with $n$ edges, where $\lambda, \mu, \nu$ are respectively the set of degrees of the faces, black vertices and white vertices. It satisfies an extension of Frobenius' formula \eqref{Frobenius} involving those characters of the double coset instead of the $\chi_\lambda(\mu)$s from the symmetric group,
\begin{equation}
	\tilde{C}^{(3)}_n(\lambda, \mu,\nu) = \frac{(2n)!}{(2^n n!)^3} \sum_{\alpha\vdash n} \frac{1}{\hook(2\alpha)} \phi_\alpha(\lambda) \phi_\alpha(\mu) \phi_\alpha(\nu).
\end{equation}
Going to the generating series, instead of Equation \eqref{Schur} which makes Schur functions appear, one finds \emph{zonal} polynomials
\begin{equation} \label{Zonal}
	Z_\lambda(\ptimes) = \frac{1}{2^n n!} \sum_{\mu\vdash n} \phi_\lambda(\mu) p_\mu.
\end{equation}

\begin{theorem}{}{NonOrientedBipartite}\cite{GouldenJackson1996a}
	A generating series of non-oriented 3-factorizations is
	\begin{equation} \label{NonOrientedBipartite}
		\begin{aligned}
			\tilde{\tau}^{\text{3-fact.}}(\ptimes, \qtimes, \vec{r};t) &\coloneqq \sum_{n\geq 0} \frac{t^n}{(2n)!} \sum_{\lambda, \mu, \nu\vdash n} \tilde{C}^{(3)}_n(\lambda, \mu,\nu) p_\lambda q_\mu r_\nu\\
			&= \sum_{\alpha} \frac{t^{|\alpha|}}{\hook(2\alpha)} Z_\alpha(\ptimes) Z_\alpha(\qtimes) Z_\alpha(\vec{r}).
		\end{aligned}
	\end{equation}
\end{theorem}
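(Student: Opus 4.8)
The plan is to prove the identity by direct substitution, since the excerpt already supplies the two structural ingredients: the Frobenius-type character formula for $\tilde{C}^{(3)}_n(\lambda,\mu,\nu)$ and the definition of the zonal polynomials $Z_\alpha$. Concretely, I would insert the character formula into the defining sum, interchange the (finite) order of summation so that the spectral index $\alpha$ becomes the outermost sum, and then observe that the triple sum over $\lambda,\mu,\nu$ decouples into a product of three independent sums, each of which is, up to an explicit scalar, a zonal polynomial in the corresponding set of variables.

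In more detail: starting from
\begin{equation*}
\tilde{\tau}^{\text{3-fact.}}(\ptimes, \qtimes, \vec{r};t) = \sum_{n\geq 0} \frac{t^n}{(2n)!} \sum_{\lambda, \mu, \nu\vdash n} \tilde{C}^{(3)}_n(\lambda, \mu,\nu)\, p_\lambda q_\mu r_\nu,
\end{equation*}
one substitutes $\tilde{C}^{(3)}_n(\lambda,\mu,\nu) = \frac{(2n)!}{(2^n n!)^3} \sum_{\alpha\vdash n} \hook(2\alpha)^{-1}\, \phi_\alpha(\lambda)\phi_\alpha(\mu)\phi_\alpha(\nu)$. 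The factor $(2n)!$ cancels against $1/(2n)!$, and pulling the sum over $\alpha$ outside yields, at each fixed $n$,
\begin{equation*}
\frac{1}{(2^n n!)^3}\sum_{\alpha\vdash n} \frac{1}{\hook(2\alpha)} \Bigl(\sum_{\lambda\vdash n}\phi_\alpha(\lambda)p_\lambda\Bigr)\Bigl(\sum_{\mu\vdash n}\phi_\alpha(\mu)q_\mu\Bigr)\Bigl(\sum_{\nu\vdash n}\phi_\alpha(\nu)r_\nu\Bigr).
\end{equation*}
By the definition of the zonal polynomial, each inner sum equals $2^n n!\, Z_\alpha$ in the appropriate variables, so the three factors contribute $(2^n n!)^3\, Z_\alpha(\ptimes)Z_\alpha(\qtimes)Z_\alpha(\vec{r})$, exactly cancelling the prefactor $(2^n n!)^{-3}$. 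Summing over $n$ and merging $\sum_{n\geq 0}\sum_{\alpha\vdash n}$ into a single sum over all partitions $\alpha$, with $t^n = t^{|\alpha|}$, then gives the claimed right-hand side.

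With the character formula granted, there is no real obstacle: every manipulation is a finite reindexing, and the only arithmetic is the clean cancellation of $(2n)!$ and of $(2^n n!)^3$. The genuine content, and what I expect to be the hard part if one insisted on a self-contained argument, lies \emph{upstream}, in establishing the Frobenius-type formula itself. That requires the facts that $(\mathfrak{S}_{2n},\mathfrak{B}_n)$ is a Gelfand pair, that the double-coset algebra $\mathfrak{B}_n\backslash\mathfrak{S}_{2n}/\mathfrak{B}_n$ is therefore commutative, and that its zonal spherical functions $\phi_\alpha$ form an orthogonal basis realizing the analogue of the orthogonality relations for $\mathfrak{S}_n$-characters; expanding the indicator of the factorization (matching) condition in this basis and averaging over $\mathfrak{B}_n$ then produces the product $\phi_\alpha(\lambda)\phi_\alpha(\mu)\phi_\alpha(\nu)$ together with the normalization $\hook(2\alpha)^{-1}$. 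I would treat that piece exactly as in Hanlon--Stanley--Stembridge and Goulden--Jackson, and take the resulting formula as the starting point here, so that the theorem reduces to the bookkeeping above.
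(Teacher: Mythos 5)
Your substitution is correct and is exactly the (implicit) derivation the paper intends: it states the Frobenius-type formula for $\tilde{C}^{(3)}_n$ and the definition \eqref{Zonal} of $Z_\lambda$ immediately before the theorem, and the identity follows by the cancellation of $(2n)!$ and $(2^n n!)^3$ just as you write. You also correctly locate the genuine content upstream in the Hanlon--Stanley--Stembridge character formula for the double coset, which the paper likewise takes from the literature rather than proving.
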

By generalizing the above construction with more matchings, it is possible to introduce a notion of non-oriented factorizations for which the above theorem holds with more than three partitions \cite{BenDali2021}.

\subsection{Non-oriented weighted Hurwitz numbers} A specialization of \cite{ChapuyDolega2020} gives a zonal expansion for the generating series of non-oriented constellations (which we do not define here) which is then extended to non-oriented weighted Hurwitz numbers, whose generating series is shown to be
\begin{equation} \label{NonOrientedWeightedHurwitz}
	\tilde{\tau}_G(\ptimes, \qtimes, t) = \sum_{\lambda} \frac{t^{|\lambda|}}{\hook(2\lambda)} Z_\lambda(\ptimes) Z_\lambda(\qtimes) \prod_{\Box\in\lambda} G(c_1(\Box))
\end{equation}
where $c_1(x,y) = 2x-y-1$ is a deformed content\footnote{This is consistent with \eqref{NonOrientedBipartite}. Let $Z_\lambda(1^u)\equiv Z_\lambda(\ptimes = (u, u, \dotsc))$. As in the Schur case, this is a polynomial in $u$, so it is enough to know it for integer values. If $u=N\in\mathbb{N}$, this is the principal specialization of symmetric functions, where the variables $x_1=\dotsb=x_N=1$ and $x_i=0$ for $i>N$. The principal specialization gives $Z_\lambda(1^N) = \prod_{\square \in \lambda}(N+c_1(\square))$.}.

\section{The BKP hierarchy}
It has been known since \cite{VandeLeur2001} that general and bipartite non-oriented maps ($b=1$) satisfy not the KP hierarchy but a generalization of it called the BKP hierarchy, see \cite{KacVandeLeur1998} for the full formalization. It is a $B$-type analogue of the KP hierarchy, where $GL_\infty$ is replaced with a larger group\footnote{There is another BKP hierarchy, this time based on the orthogonal subgroup of $GL_\infty$. Both those BKP hierarchies are sometimes distinguished as large/small or charged/neutral. Here we only consider the large (or charged) one.} of type $B$ (for polynomial tau functions).

The BKP hierarchy is a set of coupled partial derivative equations for a \emph{sequence of series} $(\tau_N(\ptimes))_{N\in\mathbb{Z}}$ with each $\tau_N(\ptimes)\in\mathbb{C}[[p_1, p_2, \dotsb]]$. It is generated by the following bilinear equation,
\begin{multline} \label{BKP}
	\sum_{j\geq k+1} h_j(2\qtimes) h_{j-k-1}(-\check{D}) U(\qtimes) \tau_{N-2}(\ptimes) \cdot \tau_{N+k}(\ptimes) \\
	+ \sum_{j\geq 0} h_j(-2\qtimes) h_{j+k-1}(\check{D}) U(\qtimes) \tau_N(\ptimes) \cdot \tau_{N+k-2}(\ptimes) = \frac{1-(-1)^k}{2} U(\qtimes) \tau_{N-1}(\ptimes)\cdot \tau_{N+k-1}(\ptimes)
\end{multline}
for $k\in\mathbb{N}, k\geq 1$ and $\qtimes = (q_1, q_2, \dots)$ a vector of formal indeterminates. We recall the notation from Section \ref{sec:KP}, Chapter \ref{sec:EnumerationIntegrability},
\begin{itemize}
	\item $h_j$ is the complete homogeneous symmetric function of degree $j$,
	\item $\check{D} = (k D_k)_{k\geq 1}$ where $D_r$ is the \emph{Hirota derivative} with respect to $p_r$ defined as the following bilinear mapping
	\begin{equation*}
		\bigl(f(p_r),g(p_r)\bigr) \ \mapsto\ D_r f(p_r)\cdot g (p_r)\coloneqq \frac{\partial}{\partial s_r} f(p_r + s_r) g(p_r - s_r)_{|s_r=0}.
	\end{equation*}
\end{itemize}
For example, the ``first'' equation of this hierarchy, often called \emph{the BKP-equation} is obtained by taking $k=2$ and extracting the coefficient of $q_3$. It is given by
\begin{align}\label{eq:explicitBKP}
	-F_{N|3,1}(\ptimes) + F_{N|2,2}(\ptimes) + \frac{1}{2} F_{N|1,1}(\ptimes)^2 + \frac{1}{12} F_{N|1,1,1,1}(\ptimes)=
	\frac{\tau_{N-2}(\ptimes) \tau_{N+2}(\ptimes)}{\tau_N(\ptimes)^2},
\end{align}
where $F_N(\ptimes) \coloneqq \log \tau_N(\ptimes)$ and $f_{|i} \equiv \frac{\partial f(\ptimes)}{\partial p_i}$. For the KP hierarchy, the RHS would be replaced with 0.

\subsection{Formal-$N$ BKP hierarchy} We will need to amend the BKP hierarchy in three directions. In combinatorics we need to extend it to formal power series obviously. We also need to introduce an $N$-dependent normalization of the tau functions, $\beta_N$. It will appear explicitly in our equations (while a normalization in the KP hierarchy would drop out).

The other, more subtle, extension we need consists in allowing tau functions where the parameter $N$ is \emph{formal rather than an integer}. Indeed, in our case $N$ will always be some weight marking vertices of maps and therefore it is more natural for it to be a formal variable.

Not only that, but in the case of non-oriented monotone Hurwitz numbers which we study in the next chapter, our main function will in fact \emph{not} be defined at integer values of $N$. Although we have not discussed the non-oriented extension of the hypergeometric tau functions, the problem is just like setting $v_r=N$ with $N$ non-negative integer in the hypergeometric tau function \eqref{HypergeometricTauFunction}. We will hit a divergence for partitions with at least $N$ parts because then there are boxes whose content is $-N$. Moreover, we think it is more natural for non-oriented weighted Hurwitz numbers to have the parameters of the weight function $G$ as formal rather requiring evaluations at integers. This in turn requires some care since, as we said, we need to renormalize our series for them to be tau functions and those renormalizations are $N$-dependent and defined a priori for integer, non-formal, $N$.
%
The formal definition we introduced in \cite{BonzomChapuyDolega2021} is the following. It relies on truncations and rational functions.
\begin{definition}[Formal-$N$ BKP tau function]\label{def:formalBKP}
	Let $\tau(N,\ptimes) \in \mathbb{Q}(N)[[p_1, p_2,\dotsc]]$ be a formal power series,
	\begin{equation}
		\tau(N,\ptimes) = \sum_\mu \tau_\mu(N) s_\mu(\ptimes)
	\end{equation}
	with coefficients which are rational functions of the formal parameter $N$. For each $L\in \mathbb{N}$, consider the following truncation of its Schur expansion, in which we only keep Schur functions for partitions with at most $L$ parts,
	\begin{equation}
		\tau^{\leq L}(N,\ptimes) := \sum_{\substack{\mu\\ \ell(\mu)\leq L}}  \tau_\mu(N) s_\mu(\ptimes).
	\end{equation}
	\begin{itemize}
		\item Assume that there exists a sequence $(\alpha_K)_{K\geq 1}$ tending to infinity, such that for each integer value $K\in \mathbb{N}$ the coefficients of $\tau^{\leq \alpha_K}(N,\ptimes)$ have no poles at $N=i$ for any integer $i\geq K$.
		\item Assume that there exists a sequence $(\beta_N)$ such that the sequence $(\beta_N \tau^{\leq \alpha_N}(N,\ptimes))$ is a tau function of the BKP hierarchy (which makes sense because $\tau^{\leq \alpha_N}(N,\ptimes)$ can be evaluated on $N\in\mathbb{N}$),
		\item Assume that $(\beta_N)$ is such that for all $N, k\in\mathbb{N}$,
		\begin{equation}\label{eq:2factorial}
			\frac{\beta_{N-1}\beta_{N+k-1}}{\beta_{N}\beta_{N+k-2}} =R_k(N), \quad \text{and}\quad \frac{\beta_{N-2}\beta_{N+k}}{\beta_{N}\beta_{N+k-2}} =S_k(N)
		\end{equation}
		for some rational functions $R_k(N), S_k(N)\in \mathbb{Q}(N)$.
	\end{itemize} 
	Then we say that $(\tau(N,\ptimes), \beta_N)$ is a \emph{formal-$N$ BKP tau function}.
\end{definition}
The first two conditions ensure that it makes sense to talk about ``$\beta_N \tau(N,\ptimes)$'' being a BKP tau function. The last condition\footnote{It can be relaxed a bit, like saying that $\frac{\beta_{N-1}\beta_{N+k-1}}{\beta_{N}\beta_{N+k-2}} =R_k(N)$ holds for odd $k$ instead of all $k$ is enough.} is crucial to extend the equations of the BKP hierarchy from integers $N$ to formal $N$.
\begin{proposition}{}{FormalBKP}[BKP hierarchy for formal $N$]
	Let $(\tau(N,\ptimes), \beta_N)$ be a formal-$N$ BKP tau function. Then for $k\in\mathbb{N}, k\geq 1$ the following bilinear identity holds in  $\mathbb{C}(N)[\ptimes,\qtimes][[t]]$,
	\begin{multline}
		\frac{(-1)^k-1}{2} R_k(N) U(\qtimes) \tau(N-1,\ptimes)\cdot \tau(N+k-1, \ptimes) 
		\\
		+ S_k(N) \sum_{j\geq k+1} h_j(2\qtimes) h_{j-k-1}(-\check{D}) U(\qtimes) \tau(N-2, \ptimes) \cdot \tau(N+k, \ptimes) \\
		+ \sum_{j\geq0} h_j(-2\qtimes) h_{j+k-1}(\check{D}) U(\qtimes) \tau(N, \ptimes) \cdot \tau(N+k-2, \ptimes) = 0
	\end{multline}
\end{proposition}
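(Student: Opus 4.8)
The plan is to deduce the formal-$N$ identity from the genuine (integer-$N$) BKP bilinear equation \eqref{BKP} satisfied, by hypothesis, by the sequence $(\hat\tau_M)_{M\in\mathbb{Z}}$ with $\hat\tau_M \coloneqq \beta_M\, \tau^{\leq \alpha_M}(M,\ptimes)$, and then to promote the statement ``valid for all large integers $N$'' to ``valid identically in $\mathbb{Q}(N)$'' by a rational-function argument. The whole proof is structural; no genuine computation with the $h_j$'s is needed.

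First I would write out \eqref{BKP} for the sequence $(\hat\tau_M)$ at base point $M=N$ and fixed $k\geq 1$, moving the right-hand side to the left so everything reads $=0$. The three bilinear products that occur are $\hat\tau_{N-2}\cdot\hat\tau_{N+k}$, $\hat\tau_N\cdot\hat\tau_{N+k-2}$ and $\hat\tau_{N-1}\cdot\hat\tau_{N+k-1}$. Substituting $\hat\tau_M=\beta_M\,\tau^{\leq\alpha_M}(M)$ factors out, respectively, $\beta_{N-2}\beta_{N+k}$, $\beta_N\beta_{N+k-2}$ and $\beta_{N-1}\beta_{N+k-1}$. Dividing the whole equation by $\beta_N\beta_{N+k-2}$ and invoking the two ratio identities \eqref{eq:2factorial}, these prefactors collapse to $S_k(N)$ and $R_k(N)$, producing exactly the claimed relation but with the truncated $\tau^{\leq\alpha_\bullet}(\bullet)$ in place of $\tau(\bullet)$. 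The operators $h_j(\pm2\qtimes)$, $h_{j\pm\cdot}(\check D)$ and the shift $U(\qtimes)$ are carried along unchanged, since they are fixed (independent of $N$) and act identically on truncated and untruncated series.

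The substance is the limit $N\to\infty$. I would fix a monomial in $\qtimes$ and a degree $n$ in the $t$-grading, i.e.\ the grading in which $p_i$ has degree $i$, so that $t$ tracks total degree and each $s_\mu$ is homogeneous of degree $|\mu|$. At a fixed such order, only partitions $\mu$ with $|\mu|\leq n$ can contribute to any of the finitely many factors $\tau^{\leq\alpha_M}(M)$ attached to the shifted arguments $M\in\{N-2,N-1,N,N+k-2,N+k-1,N+k\}$ (the sum over $j$ being finite as well, since $h_{j'}(\check D)$ annihilates objects of bounded degree once $j'$ is large). In particular $\ell(\mu)\leq n$ for every contributing $\mu$. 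Since $\alpha_K\to\infty$, for all sufficiently large integers $N$ every relevant truncation level $\alpha_M$ exceeds $n$, so $\tau^{\leq\alpha_M}(M)$ agrees with the full $\tau(M)$ at this order; simultaneously, by the first hypothesis of \Cref{def:formalBKP}, all these Schur coefficients are pole-free at the integer arguments $M$. Hence, coefficient by coefficient, the desired identity written with the full $\tau$'s holds for all large integer $N$.

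Finally, the coefficient in question (after collecting $q$-monomial and $t$-order) is a finite $\mathbb{Q}$-combination of products $\tau_\mu(N\pm\cdot)\,\tau_{\mu'}(N\pm\cdot)$ multiplied by the rational functions $R_k(N),S_k(N)$, hence an element of $\mathbb{Q}(N)$; vanishing at infinitely many integers $N$ forces it to vanish identically in $\mathbb{Q}(N)$. Reassembling over all $\qtimes$-monomials and all $t$-orders yields the identity in $\mathbb{C}(N)[\ptimes,\qtimes][[t]]$. The one genuinely delicate point I would state cleanly is precisely this stabilization: that the \emph{$N$-dependent} truncation level $\alpha_M$ does not obstruct coefficient extraction, because at each fixed grading degree only finitely many Schur coefficients intervene and, for $N$ large, these are already the untruncated ones. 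Everything else is bookkeeping.
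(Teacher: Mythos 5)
Your proposal is correct and follows essentially the same route as the paper's own proof: substitute the definition of a formal-$N$ BKP tau function into the integer-$N$ bilinear identity, use the ratio conditions \eqref{eq:2factorial} to produce $R_k(N)$ and $S_k(N)$, extract a fixed coefficient, observe that the resulting identity between rational functions of $N$ holds for infinitely many integers, and conclude it holds identically. Your write-up is somewhat more careful than the paper's on the one delicate point (that the $N$-dependent truncation levels $\alpha_M$ stabilize at each fixed grading degree), which the paper passes over quickly.
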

\begin{proof}
	By definition of a formal-$N$ BKP tau function we know that	$(\beta_N \tau^{\leq\alpha_N}(N,\ptimes))$ is a BKP tau function, i.e. satisfies \eqref{BKP}. Now, fix two partitions $\lambda$ and $\mu$, and consider the coefficient of $[p_\lambda q_\mu]$ in \eqref{BKP}. This coefficient is a certain polynomial identity relating a finite number of coefficients of the functions	$\tau^{\leq\alpha_m}(m,\ptimes)$ for $m\in \{N, N-1,N-2, N+k-2, N+k-1, N+k\}$.
	
	We recall that the coefficients of $\tau(N,\ptimes)$ are rational functions in $N$. In particular, by taking $N$ large enough (i.e. $N>N_0(\lambda, \mu,k)$) we can see that this polynomial identity is an	identity between rational functions in $N$, which holds true
	for infinitely many values of $N$. Therefore it in fact holds by replacing each coefficient involved by the corresponding rational function. This is precisely what~\Cref{thm:FormalBKP} is saying. 
\end{proof}

\subsection{Schur expansion with Pfaffian minors}
We now need some ways to be able to prove when a series is a BKP tau function. We will state a formal-$N$ BKP equivalent of \Cref{thm:TauFunctionSchurExpansion} without proof. Just like the matrix elements of the representation $R_0$ of $GL_\infty$ are \emph{determinant minors}, the matrix elements the bigger $B$-type group for the BKP hierarchy are \emph{Pfaffian minors}. We recall that the \emph{Pfaffian} of a skew-symmetric matrix $A$ of even size $n$ is the quantity $\Pf(A)$ defined by
\begin{equation}
	\label{eq:DefPf}
	\Pf(A) \coloneqq \frac{1}{2^{n/2} ( n/2)!}\sum_{\sigma \in \mathfrak{S}_n}\sgn(\sigma)\prod_{i=1}^{ n/2}a_{\sigma(2i-1),\sigma(2i)}.
\end{equation}
\begin{theorem}{}{PfaffianCoefficients}
	Let $\tau(N,\ptimes)=\sum_\lambda b_\lambda(N) s_\lambda(\ptimes)$ be a formal power series in $\mathbb{C}(N)[[\ptimes]]$ and assume that there exists an infinite skew-symmetric matrix $B=(B_{i,j})_{i,j\in \mathbb{Z}}$ such that for every integer $N\geq0$ and partition $\lambda$ such that $\ell(\lambda) \leq N$,
	\begin{equation}\label{eq:pfaffianCoeff}
		\beta_N b_\lambda (N)= \begin{cases} \Pf(B_{\lambda_i+N-i,\lambda_j+N-j})_{1 \leq i,j \leq N}\hfill\text{ for $N$ even,}\\
			\Pf(B_{\lambda_i+N-i,\lambda_j+N-j})_{1 \leq i,j \leq N+1}\ \text{ for $N$ odd,} 
		\end{cases}
	\end{equation}
	where $(\beta_N)_{N\geq 1}$ is a sequence satisfying~\eqref{eq:2factorial}.	Then $(\tau(N,\ptimes), \beta_N)$ is a formal-$N$ BKP tau function. 
\end{theorem}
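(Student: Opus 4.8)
The plan is to reduce the formal-$N$ statement to the classical characterization of integer-charge BKP tau functions, in exact parallel with the way \Cref{thm:TauFunctionSchurExpansion} reduces the KP case to the statement that the Schur coefficients of a KP tau function are determinant minors $\det(A_{\lambda,\emptyset})$. The guiding principle is that passing from KP to BKP replaces the charged free-fermion Fock space and its $\overline{GL}_\infty$-action by the fermionic realization of Kac--van de Leur \cite{KacVandeLeur1998, VandeLeur2001}, in which the analogue of a determinant (Plücker) minor is a \emph{Pfaffian} minor. So the first and main step is to record the classical fact underlying the theorem: a sequence of polynomials $(\hat\tau_N(\ptimes))_{N\in\mathbb{Z}}$ solves the BKP hierarchy \eqref{BKP} if and only if there is a fixed infinite skew-symmetric matrix $B$ such that the Schur coefficients of $\hat\tau_N$ are the Pfaffian minors $\Pf(B_{\lambda_i+N-i,\lambda_j+N-j})$, the Pfaffian being taken of even size $N$ or $N+1$ according to the parity of $N$ (the parity adjustment reflects the charge grading of the Fock space, Pfaffians existing only in even size, and is precisely the dichotomy in \eqref{eq:pfaffianCoeff}). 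This is the $B$-type boson--fermion correspondence, and it is the part I expect to be the genuine obstacle: it requires setting up the correct fermionic bilinear form, identifying the group orbit of the vacuum, and matching its ``Pfaffian Plücker relations'' with the Hirota form \eqref{BKP}. I would either invoke it directly from \cite{KacVandeLeur1998, VandeLeur2001} or reprove it along the wedge-representation route sketched after \Cref{thm:PolynomialKP}, now in the $B$-type setting.

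Granting that characterization, the rest is bookkeeping. I would set $\alpha_N=N$ and, for each integer $N\ge 0$, define
\[
\hat\tau_N(\ptimes)\coloneqq \beta_N\,\tau^{\le N}(N,\ptimes)=\sum_{\ell(\lambda)\le N}\beta_N\,b_\lambda(N)\,s_\lambda(\ptimes).
\]
Hypothesis \eqref{eq:pfaffianCoeff} says precisely that $\beta_N b_\lambda(N)=\Pf(B_{\lambda_i+N-i,\lambda_j+N-j})$ for every $\lambda$ with $\ell(\lambda)\le N$, so $(\hat\tau_N)_{N}$ is exactly the Pfaffian sequence attached to $B$. By Step~1 it is therefore a genuine BKP tau function, which is the second requirement in Definition~\ref{def:formalBKP} (the truncation $\tau^{\le N}$ is natural here, since the charge-$N$ component of a BKP tau function only involves partitions with at most $N$ parts).

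Finally I would verify the no-poles condition and conclude. Each $b_\lambda(N)\in\mathbb{Q}(N)$ is a rational function with finitely many poles, so the only delicate point is uniformity in $\lambda$, and this is supplied by \eqref{eq:pfaffianCoeff}: for every integer $N\ge\ell(\lambda)$ the right-hand side is a finite entrywise Pfaffian of the fixed matrix $B$, while $\beta_N\ne 0$ at integers (being a normalization, and forced nonzero by the quotient relations \eqref{eq:2factorial}). Hence $b_\lambda(N)$ has no pole at any integer $N\ge\ell(\lambda)$. With $\alpha_K=K$, any partition contributing to $\tau^{\le K}$ satisfies $\ell(\lambda)\le K\le i$ for all $i\ge K$, so its coefficient is regular at $N=i$; this is the first requirement of Definition~\ref{def:formalBKP}. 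The third requirement, \eqref{eq:2factorial}, is assumed. All three conditions hold, so $(\tau(N,\ptimes),\beta_N)$ is a formal-$N$ BKP tau function, and \Cref{thm:FormalBKP} then even promotes these integer-$N$ identities to genuine $\qtimes$-bilinear identities over $\mathbb{C}(N)$.
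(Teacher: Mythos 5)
Your proposal follows exactly the route the paper indicates for this result: \Cref{thm:PfaffianCoefficients} is presented as the $B$-type analogue of \Cref{thm:TauFunctionSchurExpansion}, with Pfaffian minors replacing determinant minors as the matrix elements of the relevant group orbit (Kac--van de Leur), and the rest is the bookkeeping you carry out against Definition~\ref{def:formalBKP} (taking $\alpha_K=K$, regularity at integers being forced by the hypothesis \eqref{eq:pfaffianCoeff}, and \eqref{eq:2factorial} being assumed). The one point to keep straight is that $\beta_N\,\tau^{\le N}(N,\ptimes)$ is a formal power series rather than a polynomial, so you need the one-directional (sufficient-condition) formal-series version of the Pfaffian characterization rather than the polynomial ``if and only if'', exactly as in the passage from \Cref{thm:PolynomialKP} to \Cref{thm:TauFunctionSchurExpansion} in the KP case; with that caveat your argument coincides with the paper's.
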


\section{Maps and bipartite maps} 
In the present chapter we have so far introduced:
\begin{itemize}
	\item generating series of non-oriented maps and weighted Hurwitz number \eqref{NonOrientedWeightedHurwitz}, which are naturally expanded on zonal polynomials via the representation theorey of matchings,
	\item the notion of BKP tau functions which can be found as series in the Schur basis with Pfaffian coefficients.
\end{itemize}
Both objects turn out to be related to objects of type $B$, which we have already claimed for the BKP hierarchy, while zonal polynomials are also related to orthogonal groups (more precisely to the Gelfand pair $(GL_N, O_N)$, Part 7 in Macdonald's \cite{Macdonald1995}).

However, there is clearly a \emph{tension} between those two types of objects, since series of non-oriented maps are naturally expanded in the zonal basis, while it is the Schur basis for BKP tau functions. By ``naturally expanded'', we mean that they have \underline{\emph{nice}} coefficients in those bases, as products of a function of the cell contents in \eqref{NonOrientedWeightedHurwitz} and as Pfaffians minors in \Cref{thm:PfaffianCoefficients}. 

It would thus be quite remarkable if some generating series of non-oriented maps were BKP tau functions as they would have nice expansions in both bases. From the literature, we gathered that this is indeed true for general and bipartite non-oriented maps, i.e. the zonal versions of \eqref{MapsTauFunctions}

\begin{theorem}{}{MapsBKP}[Based on \cite{VandeLeur2001}]
	The generating series of non-oriented general, and bipartite maps\footnote{General maps are obtained from \eqref{NonOrientedBipartite} by setting $r_i = N$ and $q_i=\delta_{i,2}$ for all $i$, with $\theta^{(1)}_\lambda \equiv Z_\lambda(\delta_{i,2})$, and bipartite maps by setting $r_i=N$ and $q_i = N+\delta$.}
	\begin{equation} \label{MapsBKPTauFunctions}
		\begin{aligned}
			\tilde{\tau}^{\text{maps}}(t;\ptimes,N) &= \sum_{n \geq 0} t^{2n}\sum_{\lambda \vdash n} 
			Z_\lambda(\ptimes)\theta^{(1)}_\lambda\prod_{\square \in \lambda}(N+c_1(\square)), \\
			\tilde{\tau}^{\text{bip}}(t;\ptimes,N,N+\delta) &= \sum_{n \geq 0} t^n\sum_{\lambda \vdash n} 
			\frac{Z_\lambda(\ptimes)}{\operatorname{hook}(2\lambda)}\prod_{\square \in \lambda}(N+c_1(\square))(N+\delta+c_1(\square)),
		\end{aligned}
	\end{equation}
	are formal-$N$ BKP tau functions after rescaling $\ptimes \to2\ptimes$ with the following Schur expansions
	\begin{multline} \label{GeneralMapsBKP}
		\tilde{\tau}^{\text{maps}}(t;2\ptimes,N) \\
		= \beta^{\text{Maps}}_N \sum_{\lambda} \Pf\biggl( \int_{\mathbb{R}^2} dx dy\ e^{-\frac{x^2+y^2}{4t^2}} x^{\lambda_i+N-i} \sgn(x-y) y^{\lambda_j+N-j}\biggr)_{1\leq i,j\leq N} s_\lambda(\ptimes)
	\end{multline}
	and 
	\begin{multline*}
		\tilde{\tau}^{\text{bip}}(t;2\ptimes,N,N+\delta) = \\
		\beta^{\text{Bip}}_N(\delta) \sum_{\lambda} \Pf\biggl(\int_{\mathbb{R}_+^2} dx dy\ e^{-\frac{x+y}{2t}} x^{\lambda_i+N+\frac{\delta-1}{2}-i} \sgn(x-y) y^{\lambda_j+N+\frac{\delta-1}{2}-j}\biggr)_{1\leq i,j\leq N} s_\lambda(\ptimes)
	\end{multline*}
	for $N$ even and similar expansions for $N$ odd.
\end{theorem}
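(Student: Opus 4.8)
The plan is to apply \Cref{thm:PfaffianCoefficients}: it suffices to exhibit the Schur coefficients of the rescaled series as Pfaffian minors of a single fixed skew-symmetric matrix $B$, together with a normalization $(\beta_N)$ satisfying the multiplicative relations \eqref{eq:2factorial}. The natural bridge between the zonal expansions \eqref{MapsBKPTauFunctions} and such a Pfaffian–Schur expansion is a matrix-integral representation over a $\beta=1$ ensemble, the Gaussian orthogonal ensemble (GOE) for general maps and the Laguerre orthogonal ensemble (LOE) for bipartite maps. Indeed, zonal polynomials are the $\alpha=2$ Jack polynomials attached to the Gelfand pair $(GL_N,O_N)$, and the principal specialization $Z_\lambda(1^N)=\prod_{\square\in\lambda}(N+c_1(\square))$ shows that the factor $\prod_{\square}(N+c_1(\square))$ appearing in \eqref{MapsBKPTauFunctions} is precisely what an integral over $N\times N$ real symmetric matrices produces.

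First I would write $\tilde\tau^{\text{maps}}(t;2\ptimes,N)$ as an integral over the $N\times N$ real symmetric matrices with Gaussian weight $e^{-\operatorname{Tr}M^2/4t^2}$ and source $\exp(\sum_{k\ge1}\tfrac{p_k}{k}\operatorname{Tr}M^k)$, and reduce it to the eigenvalues; that this integral reproduces the zonal expansion \eqref{MapsBKPTauFunctions} follows from the $\beta=1$ character theory underlying \Cref{thm:NonOrientedBipartite}, the Gaussian weight encoding the bivalent constraint $\theta^{(1)}_\lambda$. The $\beta=1$ Jacobian produces a \emph{single} Vandermonde $\prod_{i<j}|x_i-x_j|$, which is the origin of the Pfaffian structure, in contrast to the squared Vandermonde that yields determinants in the KP/Schur setting of \Cref{thm:SchurExpansionFactorizations}. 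Writing the source as $\prod_i H(x_i)$ with $H(x)=\sum_{m\ge0}h_m(\ptimes)x^m$, the Schur expansion in the variables $\ptimes$ emerges by antisymmetrizing the monomials $x_j^{\lambda_i+N-i}$ via the Weyl bialternant formula together with Jacobi–Trudi, so that each partition $\lambda$ contributes $s_\lambda(\ptimes)$ times a residual eigenvalue integral.

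Next I would apply de Bruijn's integration formula to that residual integral: it converts $\int\prod_{i<j}|x_i-x_j|\prod_i e^{-x_i^2/4t^2}x_i^{\lambda_i+N-i}\,dx_i$ into the Pfaffian of the antisymmetric moment matrix with entries $\int_{\mathbb{R}^2}e^{-(x^2+y^2)/4t^2}x^{\lambda_i+N-i}\sgn(x-y)\,y^{\lambda_j+N-j}\,dx\,dy$, which is exactly the matrix in \eqref{GeneralMapsBKP}. For even $N$ this is an even-size Pfaffian; for odd $N$ one augments the monomial family by one extra function, the standard device making the antisymmetric matrix even-dimensional, producing the size-$(N+1)$ Pfaffian in the statement. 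The bipartite case is identical with the GOE replaced by the LOE on $\mathbb{R}_+$ with weight $e^{-(x+y)/2t}$, the parameter $\delta$ entering as a shift of the Laguerre exponent. Reading off $\beta^{\text{Maps}}_N$ (resp.\ $\beta^{\text{Bip}}_N(\delta)$) as the product of the Gaussian (resp.\ Laguerre) normalization and the de Bruijn prefactor, one checks that $\beta_{N-1}\beta_{N+k-1}/(\beta_N\beta_{N+k-2})$ and $\beta_{N-2}\beta_{N+k}/(\beta_N\beta_{N+k-2})$ are rational in $N$, so \eqref{eq:2factorial} holds and \Cref{thm:PfaffianCoefficients} applies.

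The main obstacle is twofold. First, one must justify the matrix-integral representation at the level of \emph{formal} power series and \emph{formal} $N$: the integrals make literal sense only for integer $N$, and the passage to formal $N$ is exactly the rationality-in-$N$ and truncation bookkeeping built into Definition \ref{def:formalBKP}, together with the verification that the Pfaffian entries are, up to the common factor $\beta_N$, rational functions of $N$ of the shape demanded by \Cref{thm:PfaffianCoefficients}. Second, the even/odd parity handling and the precise choice of the augmenting function must be arranged so that the size-$N$ and size-$(N+1)$ Pfaffians glue into a single skew-symmetric matrix $B$ independent of $N$ and $\lambda$; it is this structural bookkeeping, rather than any isolated computation, where the real work lies.
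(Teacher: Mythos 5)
Your proposal matches the paper's route: the Pfaffian--Schur expansions are obtained exactly as you describe, from the GOE/LOE matrix-integral representations of the zonal series via eigenvalue reduction and de Bruijn's formula (this is what the paper calls van de Leur's technique), and the formal-$N$ BKP property then follows from \Cref{thm:PfaffianCoefficients} once the ratios in \eqref{eq:2factorial} are checked. The only detail the paper makes explicit that you leave implicit is that the normalizations $\beta^{\text{Maps}}_N$ and $\beta^{\text{Bip}}_N(\delta)$ are Selberg integrals, which is precisely why the required ratios are rational in $N$.
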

van de Leur's techniques make use of matrix integrals as generating series of non-oriented maps, which results in the above Pfaffian expansions. Proving that they are formal-$N$ tau functions then requires to check that $\beta^{\text{Maps}}_N$ and $\beta^{\text{Bip}}_N(\delta)$ satisfy the conditions \eqref{eq:2factorial}. This can be checked because they are in fact well-known Selberg integrals \cite{BonzomChapuyDolega2021}.

\section{Recurrence formulas from the BKP hierarchy}
Exaclty as we did in Chapter \ref{sec:EnumerationIntegrability}, Section \ref{sec:Elimination}, we introduce the Virasoro constraints satisfied non-oriented maps and bipartite maps. They allow for two things: determining a unique solution to the BKP equation, and eliminating the partial derivatives in the latter. However, this does not eliminate the shifts on the RHS of the BKP equation. Doing so requires (or at least that is how we did it in \cite{BCD-RecurrenceFormulas}) adding two more equations from the BKP hierarchy.

\subsection{The Virasoro constraints}

\begin{proposition}{}{}\cite{VandeLeur2001,BonzomChapuyDolega2021}
	The generating series $\tilde{\tau}^{\text{maps}}(t;\ptimes,u)$ and $\tilde{\tau}^{\text{bip}}(t;\ptimes,u,v)$ satisfy
	\begin{equation}
		\begin{aligned}
			\tilde{L}^{\text{maps}}_i \tilde{\tau}^{\text{maps}}(t;\ptimes,u) &= 0, \quad \text{for $i\geq -1$},\\
			\tilde{L}^{\text{bip}}_i \tilde{\tau}^{\text{bip}}(t;\ptimes,u,v) &= 0, \quad \text{for $i\geq 0$.}
		\end{aligned}
	\end{equation}
	with
	\begin{equation}
		\begin{aligned}
			\tilde{L}^{\text{maps}}_i &= \frac{p_{i+2}^*}{t^2} - \bigg(2\sum_{\substack{a,b\geq1\\a+b=i}}p_a^*p_b^*+ \sum_{a \geq 1} p_a p^*_{a+i} + ((i+1)+2u)p_i^* 
			+ \tfrac{\delta_{i,-1}up_1+u(u+1)\delta_{i,0}}{2}\bigg),\\
			\tilde{L}^{\text{bip}}_i &= \frac{p_{i+1}^*}{t}
			-\bigg(2\sum_{\substack{a,b\geq1\\a+b=i}}p_a^*p_b^*+ \sum_{a \geq 1} p_a p^*_{a+i} + (i+u+v)p_i^*+\frac{uv\delta_{i,0}}{2}\bigg).
		\end{aligned}
	\end{equation}
\end{proposition}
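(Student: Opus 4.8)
The plan is to deduce both families of constraints from the matrix-integral representations underlying \Cref{thm:MapsBKP}, which is van de Leur's method. Up to normalization, $\tilde{\tau}^{\text{maps}}$ is the partition function of a real symmetric ($\beta=1$, Gaussian orthogonal) ensemble of size $u=N$, namely $\int dM\, e^{-\frac{1}{2t^2}\operatorname{Tr}M^2+\sum_{k\ge1}\frac{p_k}{k}\operatorname{Tr}M^k}$, in which $p_k$ is realized as $\operatorname{Tr}M^k$ and the dual operator $p_k^*=k\,\partial/\partial p_k$ inserts a factor $\operatorname{Tr}M^k$; the Pfaffian-over-$\mathbb{R}^2$ form in \Cref{thm:MapsBKP} is exactly the eigenvalue integral of this ensemble. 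Analogously, $\tilde{\tau}^{\text{bip}}$ is the partition function of a rectangular real (Laguerre/Wishart-type) ensemble with the two sizes $u=N$ and $v=N+\delta$, matching the Pfaffian-over-$\mathbb{R}_+^2$ form. The crucial structural point is that the doubling of the symmetry class relative to the Hermitian case is precisely what turns the KP-Virasoro operators of \Cref{sec:Elimination} into their BKP counterparts, accounting for the extra factors in $\tilde{L}_i$.

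First I would write the Schwinger-Dyson (loop) equations: since the integral of a total derivative vanishes, $\int d\mu\,\sum_{a\le b}\partial_{M_{ab}}\big[(M^{i+1})_{ab}\,e^{-S}\big]=0$ for each $i\ge -1$ (respectively $i\ge 0$). Differentiating the potential produces the term $\sum_{a\ge 1}p_a\,p^*_{a+i}$ together with the Gaussian descendant $p^*_{i+2}/t^2$ (respectively $p^*_{i+1}/t$), while the shift $\delta M=\epsilon M^{i+1}$ splits a trace, $\operatorname{Tr}(M^{i+1}\partial_M)\mapsto\sum_{a+b=i}\operatorname{Tr}M^a\operatorname{Tr}M^b$, i.e. the quadratic piece $\sum_{a+b=i}p_a^*p_b^*$. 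For a \emph{symmetric} matrix the constraint $M_{ab}=M_{ba}$ doubles the off-diagonal contractions, which is the origin of the coefficient $2$ in front of $2\sum_{a+b=i}p_a^*p_b^*$ and of the shift $(i+1)+2u$; the boundary terms $\tfrac12\delta_{i,-1}up_1$ and $\tfrac12 u(u+1)\delta_{i,0}$ come from the $i=-1,0$ self-contractions and the $N$-dependence of the measure. Reading the result back through the dictionary $p_k\leftrightarrow\operatorname{Tr}M^k$ yields $\tilde{L}_i^{\text{maps}}\tilde{\tau}^{\text{maps}}=0$, and the rectangular computation gives $\tilde{L}_i^{\text{bip}}\tilde{\tau}^{\text{bip}}=0$.

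To minimize bookkeeping I would exploit that the $\tilde{L}_i$ close into (half) a Virasoro algebra, $[\tilde{L}_n,\tilde{L}_m]=(n-m)\tilde{L}_{n+m}$ up to the standard index shift, an operator identity checked once by a direct commutator computation. Since the annihilator of a fixed series is closed under brackets, it then suffices to establish the two base constraints $\tilde{L}_{-1}\tilde{\tau}^{\text{maps}}=0$ and $\tilde{L}_2\tilde{\tau}^{\text{maps}}=0$ (respectively $\tilde{L}_0,\tilde{L}_1$ for the bipartite series), the remaining $\tilde{L}_i$ following by repeated brackets. The base case $\tilde{L}_{-1}$ is the string equation, and it admits a purely algebraic proof avoiding matrix integrals: acting with $p_1^*$ removes a single cell, and the resulting identity is exactly the recursion for the ratio of the content-products $\prod_{\square}(N+c_1(\square))$ when $\lambda$ gains one box, combined with the $\alpha=2$ (zonal) Pieri coefficients for the polynomials $Z_\lambda$.

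The main obstacle is the exact combinatorial bookkeeping of the symmetric-ensemble contractions: unlike the Hermitian case, the $\beta=1$ Jacobian and the diagonal-versus-off-diagonal splitting generate several near-identical terms whose coefficients must be tracked precisely to produce the factor $2$, the $(i+1)+2u$ shift, and the $u(u+1)$ versus $uv$ boundary terms rather than their Hermitian analogues. In the algebraic variant the same difficulty reappears as the need to control the zonal (Jack at $\alpha=2$) Pieri structure constants, which are rational rather than $0/\pm1$; verifying that they conspire with the deformed content $c_1(x,y)=2x-y-1$ to reproduce the displayed operators is where the real work lies.
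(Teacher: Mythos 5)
The manuscript states this proposition without proof, citing \cite{VandeLeur2001,BonzomChapuyDolega2021}; your derivation via the real-symmetric (GOE-type) and real-rectangular (Wishart-type) matrix integrals whose eigenvalue measures are exactly the Pfaffian kernels of \Cref{thm:MapsBKP}, followed by Schwinger--Dyson/integration-by-parts under the shift $\delta M=\epsilon M^{i+1}$, is precisely the route of van de Leur, and your term-by-term accounting is accurate: the $\beta=1$ Jacobian $\sum_{a\le b}\partial_{M_{ab}}(M^{i+1})_{ab}=\tfrac12\sum_{a+b=i}\operatorname{Tr}M^a\operatorname{Tr}M^b+\tfrac{i+1}{2}\operatorname{Tr}M^i$ does produce, after separating the $\operatorname{Tr}M^0=N$ terms, the coefficient $2$ on the quadratic piece, the shift $(i+1)+2u$, and $\tfrac12 N(N+1)$ at $i=0$ versus $\tfrac12 uv$ in the rectangular case (the leftover factors of $2$ being absorbed by the rescaling $\ptimes\to2\ptimes$ of \Cref{thm:MapsBKP}). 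Two small points: in your Virasoro-closure shortcut, $\tilde L_0$ and $\tilde L_1$ do \emph{not} generate $\tilde L_i$ for $i\ge2$ since $[\tilde L_0,\tilde L_1]\propto\tilde L_1$, so for the bipartite family you would need $\tilde L_1,\tilde L_2$ (and $\tilde L_0$ separately) as generators --- though your direct derivation for all $i$ makes the shortcut unnecessary; and since the matrix integral only exists at integer $u=N$ (resp.\ $v=N+\delta$) while the proposition is asserted for formal $u,v$, one must finish with the polynomiality-in-$N$ argument the manuscript uses for \Cref{thm:FormalBKP}, namely that each coefficient of the constraint is a polynomial identity in $N$ valid for infinitely many integers.
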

They are the non-oriented versions of \eqref{VirasoroConstraints}. 

\subsection{Recurrence formulas from the BKP equation}
Next we re-introduce the specialization operators
\begin{equation*}
	\theta_T : p_i \mapsto z \delta_{i,3}, \quad \theta_{M} : p_i \mapsto z.
\end{equation*}
and introduce 
\begin{equation}
	\tilde{F}^{\text{Maps}}(t;\ptimes,u) = \log \tilde{\tau}^{\text{Maps}}(t;\ptimes,u),\qquad \tilde{F}^{\text{Bip}}(t;\ptimes,u,v) = \log \tilde{\tau}^{\text{Bip}}(t;\ptimes,u,v).
\end{equation}
Then
\begin{itemize}
	\item $\Theta^{\text{Tri}}(t,z,u)\coloneqq \theta_T \tilde{F}^{\text{Maps}}(t;\ptimes,u)$ the series of triangulations counted with $t$ on edges, $z$ on triangles and $u$ on vertices
	\item $\Theta^{\text{Maps}}(t,z,u)\coloneqq \theta_M \tilde{F}^{\text{Maps}}(t;\ptimes,u)$ the series of maps counted with $t$ on edges, $z$ on faces and $u$ on vertices
	\item $\Theta^{\text{Bip}}(t,z,u,v)\coloneqq \theta_M \tilde{F}^{\text{Bip}}(t;\ptimes,u,v)$ the series of bipartite maps counted with $t$ on edges, $z$ on faces, and $u$ on black vertices and $v$ on white vertices.
\end{itemize}

The specialized BKP equation reads for maps
\begin{multline}
	-\theta_M \bigl(\tilde{F}^{\text{Maps}}_{3,1}\bigr)(t,z,u) + \theta_M \bigl(\tilde{F}^{\text{Maps}}_{2,2}\bigr)(t,z,u) + \frac{1}{2} \theta_M \bigl(\tilde{F}^{\text{Maps}}_{1,1}\bigr)^2(t,z,u) + \frac{1}{12} \theta_M \bigl(\tilde{F}^{\text{Maps}}_{1,1,1,1}\bigr)(t,z,u) \\
	= S_2(N) e^{\theta_M \tilde{F}^{\text{Maps}}(t,z,u-2) + \theta_M \tilde{F}^{\text{Maps}}(t,z,u+2) - 2\theta_M \tilde{F}^{\text{Maps}}(t,z,u)}
\end{multline}
where $S_2(N) = t^4 u(u-1)$, which holds in $\mathbb{Q}[u,z][[t]]$, and same for maps and bipartite maps (in the latter case, the shifts are on both $u$ and $v$).

The equivalent of Lemma \ref{thm:Elimination} is
\begin{lemma} \cite{BCD-RecurrenceFormulas}
	The functions $\theta_M \bigl(\tilde{F}^{\text{Maps}}_{3,1}\bigr)$, $\theta_M \bigl(\tilde{F}^{\text{Maps}}_{2,2}\bigr)$, $\theta_M \bigl(\tilde{F}^{\text{Maps}}_{1,1}\bigr)$ and $\theta_M \bigl(\tilde{F}^{\text{Maps}}_{1,1,1,1}\bigr)$ are polynomials in $u,z,t$ and a finite number of $\frac{\partial^k \Theta^{\text{Maps}}(t,z,u)}{\partial t^k}$. The same holds for bipartite maps and triangulations.
\end{lemma}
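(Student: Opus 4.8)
The plan is to reproduce, in the non-oriented setting, the Goulden--Jackson elimination that underlies \Cref{thm:Elimination} in the oriented case, the only structural change being the modified Virasoro constraints $\tilde L_i^{\text{maps}}$ and $\tilde L_i^{\text{bip}}$. It is worth stressing first that this lemma concerns \emph{only} the partial-derivative terms $\theta_M\bigl(\tilde F^{\text{Maps}}_{3,1}\bigr),\dots$ appearing on the left-hand side of the specialized BKP equation; the $u$-shifts on the right-hand side play no role here and are eliminated by a separate step (using extra equations of the hierarchy). Consequently I expect the proof to run in exactly the same order as the oriented illustration already given for $\theta_M\bigl(F^{\text{Maps}}_1\bigr)$, with the same two ingredients.

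First I would record those two ingredients at the level of $\tilde F^{\text{Maps}}=\log\tilde\tau^{\text{Maps}}$, \emph{before} specialization. Dividing $\tilde L_i^{\text{maps}}\tilde\tau^{\text{Maps}}=0$ by $\tilde\tau^{\text{Maps}}$ and using $\tfrac{p_a^*p_b^*\tilde\tau}{\tilde\tau}=ab\bigl(\tilde F_{a,b}+\tilde F_a\tilde F_b\bigr)$ and $\tfrac{p_i^*\tilde\tau}{\tilde\tau}=i\tilde F_i$ turns each constraint into an identity in the full set $\ptimes$ relating $\tilde F^{\text{Maps}}_{i+2}$ to a quadratic expression $2\sum_{a+b=i}ab\bigl(\tilde F_{a,b}+\tilde F_a\tilde F_b\bigr)$, a one-index tail $\sum_{a\ge 1}(a+i)p_a\tilde F^{\text{Maps}}_{a+i}$, and explicit linear and constant terms. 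The second ingredient is the homogeneity relation $t\,\partial_t\tilde\tau^{\text{Maps}}=\sum_{n\ge1}np_n\partial_{p_n}\tilde\tau^{\text{Maps}}$, giving after $\theta_M$ that $t\,\partial_t\Theta^{\text{Maps}}=z\sum_{n\ge1}n\,\theta_M\bigl(\tilde F^{\text{Maps}}_n\bigr)$, together with its $p_j$-derivatives, which resum the one-index sums $z\sum_n n\,\theta_M\bigl(\tilde F^{\text{Maps}}_{n,j}\bigr)=t\,\partial_t\theta_M\bigl(\tilde F^{\text{Maps}}_j\bigr)-\theta_M\bigl(\tilde F^{\text{Maps}}_j\bigr)$ that will arise.

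Next I would run an induction, on the number of derivatives and then on their indices, to place every target quantity in the ring $\mathcal R:=\mathbb Q[t,z,u]\bigl[\partial_t^{\,k}\Theta^{\text{Maps}}:k\ge0\bigr]$, which is closed under addition, multiplication and $t\,\partial_t$. The base layer is the single-derivative reduction: the lowest-index constraints $\tilde L_{-1}^{\text{maps}}$ and $\tilde L_0^{\text{maps}}$ have empty quadratic sums $\sum_{a+b=i}$, so — exactly as in the given illustration — they express $\theta_M\bigl(\tilde F^{\text{Maps}}_1\bigr)$ and $\theta_M\bigl(\tilde F^{\text{Maps}}_2\bigr)$ through the homogeneity sum $\tfrac{t}{z}\partial_t\Theta^{\text{Maps}}$ plus the explicit $up_1$/$u(u+1)$ constants, hence in $\mathcal R$. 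To reach the second- and fourth-order targets I would differentiate the unspecialized Virasoro-derived relations with respect to $p_j$ the required number of times and only then apply $\theta_M$: each differentiation converts a one-index derivative into a two-index one, turns the tail $\sum_a(a+i)p_a\tilde F_{a+i}$ into a boundary term plus a one-index sum of higher derivatives (resummed by the differentiated homogeneity relation above), and multiplies already-controlled first derivatives. Since the target indices are at most $3$ and at most four derivatives are taken, only finitely many constraint relations and finitely many homogeneity derivatives are used, so the recursion terminates and yields honest polynomials in finitely many $\partial_t^{\,k}\Theta^{\text{Maps}}$.

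The bipartite and triangulation statements are obtained verbatim, replacing $\tilde L_i^{\text{maps}},\Theta^{\text{Maps}},\theta_M$ by $\tilde L_i^{\text{bip}},\Theta^{\text{Bip}},\theta_M$ and by $\theta_T$ (for which homogeneity reads $t\,\partial_t\Theta^{\text{Tri}}=3z\,\theta_T\bigl(\tilde F^{\text{Maps}}_3\bigr)$) respectively. The hard part is not any individual computation but checking that the resummation \emph{closes}: the quadratic term $2\sum_{a+b=i}p_a^*p_b^*$ is the single genuine difference from the oriented constraints, and one must verify that it only ever feeds back products of quantities already shown to lie in $\mathcal R$ together with strictly lower two-index derivatives, so that the well-founded induction never spawns an unbounded new family of specialized derivatives. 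Because that term supplies the same type of data as in the oriented case, up to the harmless factor $2$ and the shifted linear coefficients in $\tilde L_i^{\text{maps}},\tilde L_i^{\text{bip}}$, I expect closure to follow exactly as in \cite{GouldenJackson2008, KazarianZograf2015}.
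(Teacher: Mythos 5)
Your proposal is correct and follows essentially the same route as the paper: differentiate the logarithmic form of the non-oriented Virasoro constraints by $p_1,p_2,p_3$ before specializing, resum the one-index tails $\sum_a p_a \tilde F^{\text{Maps}}_{a+i,\dots}$ with the homogeneity operator, and induct on the total size of the index vector — this is exactly the content of \Cref{thm:relGlambda} and the discussion following it. The closure issue you flag at the end is settled in the paper by the explicit observation that the quadratic term $\theta_M \tilde F^{\text{Maps}}_{a,b,3^{n_3},2^{n_2},1^{n_1}}$ with $a+b=i$ can have two parts larger than $3$ only when $i+2>9$, which never occurs when iterating from the four target quantities.
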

Let us give the more precise version contained in \cite{BCD-RecurrenceFormulas}.
\begin{proposition}{}{relGlambda}
	For $i\geq -1$ and $n_1,n_2,n_3\geq 0$, one has the recurrence relation 
	\begin{multline} \label{eq:relGlambda}
		\frac{(i+2)\theta_M F^{\text{Maps}}_{i+2,3^{n_3},2^{n_2},1^{n_1}}}{t^2} =
		2\sum_{\substack{a+b=i\\a,b\geq 1}} \sum_{\substack{l_i=0\\
				i=1,2,3}}^{n_i} ab\binom{n_1}{l_1} \binom{n_2}{l_2}
		\binom{n_3}{l_3} \theta_M F^{\text{Maps}}_{a,3^{l_3},2^{l_2},1^{l_1}} \theta_M F^{\text{Maps}}_{b,3^{n_3-l_3},
			2^{n_2-l_2}, 1^{n_1-l_1}}\\
		+2\sum_{\substack{a+b=i\\a,b\geq 1}} ab\theta_M F^{\text{Maps}}_{a,b,3^{n_3},2^{n_2},1^{n_1}}+\sum_{j=1}^3n_j(i+j)\theta_M F^{\text{Maps}}_{i+j,3^{n_3-\delta_{3,j}}, 2^{n_2-\delta_{2,j}}, 1^{n_1-\delta_{1,j}}}+ t\frac{\partial}{\partial t} \theta_M F^{\text{Maps}}_{3^{n_3},2^{n_2},1^{n_1}} \\
		-(n_1+2n_2+3n_3) \theta_M F^{\text{Maps}}_{3^{n_3},2^{n_2},1^{n_1}} - z \sum_{a=1}^i
		a\theta_M F^{\text{Maps}}_{a,3^{n_3},2^{n_2},1^{n_1}} + \delta_{i \neq -1}(2u+i+1)i \theta_M F^{\text{Maps}}_{i,3^{n_3},2^{n_2},1^{n_1}}\\
		+ \Bigl(\delta_{i,-1}(\delta_{n_1,1}+z\delta_{n_1,0}) + (u+1) \delta_{i,0}\delta_{n_1,0}\Bigr)\frac{u}{2}\delta_{n_2,0}\delta_{n_3,0},
	\end{multline}
	with the initial condition that $\theta_M F^{\text{Maps}}_{3^{0}2^{0}1^{0}}=\theta_M F^{\text{Maps}}_{\emptyset}=\Theta(t,z,u)$.
	
	Consequently, for any integer vector 
	$\lambda$ of the form $\lambda=[\ell,3^{n_3},2^{n_2},1^{n_1}]$
	with $\ell \leq 9$ and of size $|\lambda| = \ell + n_1 +2n_2 +3n_3$, there exists a polynomial
	$P_{\lambda}$ 
	in $|\lambda|$ variables,
	with coefficients in $\mathbb{Q}[t,u,z]$,
	such that
	\begin{equation} \label{eq:Flambdadiff}
		\theta_M F^{\text{Maps}}_\lambda = P_\lambda\left(\frac{\partial}{\partial t}\Theta(t,z,u), \dotsc, \frac{\partial^{|\lambda|}}{\partial t^{|\lambda|}}\Theta(t,z,u)\right).
	\end{equation}
	This polynomial is linear for $\ell\leq 3$ and $|\lambda|\leq 5$, and
	quadratic for $4 \leq \ell\leq 6$ and $|\lambda|\leq 6$.
\end{proposition}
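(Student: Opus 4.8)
The plan is to obtain the recurrence \eqref{eq:relGlambda} by differentiating the Virasoro constraint and then to deduce \eqref{eq:Flambdadiff} by an induction whose correct complexity measure turns out to be $|\lambda|$, and \emph{not} the largest part of $\lambda$. First I would rewrite the constraint $\tilde{L}^{\text{maps}}_i \tilde{\tau}^{\text{maps}}=0$ as an equation on $F\coloneqq\log\tilde{\tau}^{\text{maps}}$, dividing by $\tilde{\tau}^{\text{maps}}$ and using $\tilde{\tau}^{-1}\partial_{p_a}\partial_{p_b}\tilde{\tau}=F_{a,b}+F_aF_b$ and $\tilde{\tau}^{-1}\partial_{p_a}\tilde{\tau}=F_a$. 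This turns the constraint into
\[
\tfrac{i+2}{t^2}F_{i+2} = 2\!\!\sum_{a+b=i}\!\! ab\,(F_{a,b}+F_aF_b) + \sum_{a\geq1}(a+i)\,p_a F_{a+i} + (i+1+2u)\,i\,F_i + \mathrm{source},
\]
where $\mathrm{source}$ collects the $\delta_{i,-1},\delta_{i,0}$ terms. I would then apply $D\coloneqq\partial_{p_1}^{n_1}\partial_{p_2}^{n_2}\partial_{p_3}^{n_3}$ and specialize with $\theta_M$. By the Leibniz rule, the quadratic piece $F_aF_b$ produces exactly the binomial double sum of products $\theta_M F_{a,3^{l_3}2^{l_2}1^{l_1}}\,\theta_M F_{b,\dots}$ on the first line of \eqref{eq:relGlambda}, while $F_{a,b}$ and $iF_i$ give the bilinear and linear terms directly.

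The only delicate term is $\sum_{a\geq1}(a+i)p_a F_{a+i}$, because of the explicit $p_a$ and the infinite sum. Under $D$ I would separate two cases: either all derivatives land on $F_{a+i}$, or exactly one $\partial_{p_j}$ with $j\in\{1,2,3\}$ hits $p_a$, forcing $a=j$. The second case yields the finite sum $\sum_{j=1}^3 n_j(i+j)\,\theta_M F_{i+j,\dots}$ with the corresponding $n_j$ lowered by one. The first case, after $\theta_M$ (which sets $p_a\mapsto z$) and the reindexing $b=a+i$, gives $z\sum_{b\geq i+1} b\,\theta_M F_{b,\dots}$; I would then invoke the homogeneity (Euler) relation, which expresses $\sum_{b\geq1}b\,p_b F_b$ as a multiple of $t\partial_t F$, applied under $D$ and $\theta_M$, to rewrite the tail $z\sum_{b\geq1}$ as $t\partial_t\theta_M F_{3^{n_3}2^{n_2}1^{n_1}}-(n_1+2n_2+3n_3)\theta_M F_{3^{n_3}2^{n_2}1^{n_1}}$, leaving the genuinely finite correction $-z\sum_{a=1}^i a\,\theta_M F_{a,\dots}$. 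Differentiating $\mathrm{source}$ by $D$ and specializing produces the last line of \eqref{eq:relGlambda}, and collecting everything closes the first part.

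For the consequence \eqref{eq:Flambdadiff} I would run a strong induction on $|\lambda|$. Reading \eqref{eq:relGlambda} with $i+2$ equal to the largest part $\ell$ of $\lambda$ and solving for $\theta_M F_\lambda$ (the prefactor $\tfrac{i+2}{t^2}$ being invertible), every term on the right is one of: a product $\theta_M F_{\lambda'}\,\theta_M F_{\lambda''}$, a single $\theta_M F_{\lambda'}$, or $t\partial_t$ applied to some $\theta_M F_{\lambda''}$, together with explicit elements of $\mathbb{Q}[t,u,z]$. The crucial bookkeeping point is that all of these have strictly smaller total size: a one-line count shows that the bilinear and quadratic terms, the $-z\sum$ term, and even the $j=3$ term $\theta_M F_{\ell+1,\,3^{n_3-1}\dots}$ — whose largest part \emph{increases} to $\ell+1$ — all satisfy $|\lambda'|=|\lambda|-2$, while the $t\partial_t$ terms act on a partition of size $|\lambda|-\ell$. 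Thus the naive induction on the largest part fails, but induction on $|\lambda|$ succeeds, with base case $\theta_M F_\emptyset=\Theta$. Since $t\partial_t$ and products preserve the class of polynomials in the $t$-derivatives of $\Theta$ and each step adds at most one derivative, one gets $\theta_M F_\lambda=P_\lambda(\partial_t\Theta,\dots,\partial_t^{|\lambda|}\Theta)$. The linear/quadratic refinements follow by tracking the multiplicative degree through this recursion: nonlinearity can only enter through the sum $\sum_{a+b=i}$, which is empty unless $i\geq2$, so for $\ell\leq3,\ |\lambda|\leq5$ no product is ever created and $P_\lambda$ stays linear, whereas $4\leq\ell\leq6,\ |\lambda|\leq6$ permits a single such product.

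The main obstacle, I expect, is twofold. The first is getting the term $\sum_{a\geq1}p_a F_{a+i}$ right, in particular the exchange of its infinite tail for $t\partial_t\Theta$ via homogeneity, where sign and range errors ($b\geq i+1$ versus $b=1,\dots,i$) are easy to make. The second, for the structural half, is recognizing that $|\lambda|$ rather than the largest part is the quantity that decreases, so that the term with an increased largest part does not break the induction. Everything else is routine Leibniz-rule bookkeeping.
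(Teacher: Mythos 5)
Your derivation of the recurrence is correct and is exactly the paper's route: write the Virasoro constraint $\tilde{L}^{\text{maps}}_i\tilde{\tau}^{\text{maps}}=0$ as an equation on $F=\log\tilde{\tau}^{\text{maps}}$, apply $\partial_{p_1}^{n_1}\partial_{p_2}^{n_2}\partial_{p_3}^{n_3}$, specialize by $\theta_M$, and trade the infinite tail of $\sum_{a\geq 1}(a+i)p_aF_{a+i}$ for $t\frac{\partial}{\partial t}$ via the homogeneity relation, precisely as in the illustration of Lemma~\ref{thm:Elimination}. Your observation that the induction must run on the total size $|\lambda|$ rather than on the largest part --- because the $j=3$ term raises the largest part to $\ell+1$ while its size still drops by $2$ --- is also the point the paper emphasizes.

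The one thing your write-up never uses is the hypothesis $\ell\leq 9$, and that hypothesis is where the paper's (very short) argument for \eqref{eq:Flambdadiff} actually lives. For the induction to close, every vector appearing on the right-hand side must again be of the canonical form $[\ell',3^{n_3'},2^{n_2'},1^{n_1'}]$, i.e.\ have at most one part larger than $3$; otherwise the recurrence cannot be re-applied to it. The only dangerous term is $\theta_M F^{\text{Maps}}_{a,b,3^{n_3},2^{n_2},1^{n_1}}$: since $a+b=i$, both $a$ and $b$ can exceed $3$ only when $i\geq 8$, i.e.\ $i+2>9$, and this is exactly what the bound $\ell\leq 9$ excludes. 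You list this term as ``a single $\theta_M F_{\lambda'}$'' without checking that $\lambda'$ lies in the class your induction covers, so as written the induction does not close; one sentence to the effect above repairs it. (A further caveat on which the paper is equally terse: the $j=3$ term can push the largest part to $\ell+1$, so starting from $\ell=9$ with $n_3\geq 1$ one can leave the class $\ell'\leq 9$ after a step and then meet $a=b=4$; in the intended application only $\ell\leq 6$ ever occurs, so this is harmless in practice, but the stated bound is not self-evidently closed under iteration and deserves a remark.)
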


Note that the size of all vectors indexing $\theta_M F^{\text{Maps}}$s appearing in the RHS of
\eqref{eq:relGlambda} are strictly smaller than the size $i+2 + n_1+2n_2+3n_3$ in the LHS.
In order to be able to iterate this equation on all these terms, we need all vectors appearing in the RHS to have at most one part larger than $3$. The only term on the RHS that could have two parts larger than $3$ is
of the form $\theta_M F^{\text{Maps}}_{a,b,3^{n_3},2^{n_2},1^{n_1}}$. Since $a+b=i$, this does not
happen unless $i+2> 9$. We thus obtain~\eqref{eq:Flambdadiff}.

It is now almost immediate to see that applying the operator $\theta_M$ to the BKP equation~\eqref{BKP1Log} produces a functional equation on $\Theta^{\text{Maps}}(t,z,u)$. To eliminate the exponential on the RHS, we take the derivative with respect to $t$ and combine it with the BKP equation.
\begin{theorem}{}{}
	The generating function $\Theta^{\text{Maps}}(t,z,u)$ satisfies a
	functional equation of the following form:
	\begin{multline*}
		\Big(\frac{\partial}{\partial t}\big(\Theta(t,z,u+2)+\Theta(t,z,u-2)-2\Theta(t,z,u)\big)\Big)
		P\left(\frac{\partial}{\partial t}\Theta(t,z,u), \dotsc, \frac{\partial^5}{\partial t^5}\Theta(t,z,u)\right)\\
		= Q\left(\frac{\partial}{\partial t}\Theta(t,z,u), \dotsc, \frac{\partial^5}{\partial t^5}\Theta(t,z,u)\right),
	\end{multline*}
	where $P$ and $Q$ are quadratic polynomials with coefficients in $\mathbb{Q}[t,u,z]$.
\end{theorem}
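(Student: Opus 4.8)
The plan is to run the Goulden--Jackson elimination exactly as in the oriented case of \Cref{thm:Elimination}, but starting from the BKP equation rather than the KP equation, and then to add one differentiation in $t$ in order to kill the shift term that is intrinsic to BKP. By \Cref{thm:MapsBKP} the series $\tilde{\tau}^{\text{Maps}}(t;\ptimes,u)$ is a formal-$N$ BKP tau function of charge $u$, hence it satisfies the first BKP equation \eqref{eq:explicitBKP} in its formal-$N$ form, with the model-dependent factor $S_2=t^4u(u-1)$. Applying the specialization operator $\theta_M$ and recalling $\Theta^{\text{Maps}}(t,z,u)=\theta_M\tilde{F}^{\text{Maps}}(t;\ptimes,u)$ gives the specialized BKP equation recorded above, which I write schematically as
\begin{equation*}
	\mathcal{L} = S_2\, e^{\Delta}, \qquad \Delta := \Theta^{\text{Maps}}(t,z,u+2)+\Theta^{\text{Maps}}(t,z,u-2)-2\,\Theta^{\text{Maps}}(t,z,u),
\end{equation*}
where $\mathcal{L}=-\theta_M\tilde{F}^{\text{Maps}}_{3,1}+\theta_M\tilde{F}^{\text{Maps}}_{2,2}+\tfrac12(\theta_M\tilde{F}^{\text{Maps}}_{1,1})^2+\tfrac1{12}\theta_M\tilde{F}^{\text{Maps}}_{1,1,1,1}$ is the specialized left-hand side.

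First I would remove all the partial $p_i$-derivatives from $\mathcal{L}$. Each of the four terms $\theta_M\tilde{F}^{\text{Maps}}_\lambda$ appearing there has index vector $\lambda\in\{(3,1),(2,2),(1,1,1,1),(1,1)\}$, all with largest part $\le 3$ and size $|\lambda|\le 4$; by the linear case of \Cref{thm:relGlambda}, namely \eqref{eq:Flambdadiff}, each is a \emph{linear} differential polynomial in $\partial_t\Theta^{\text{Maps}},\dots,\partial_t^{|\lambda|}\Theta^{\text{Maps}}$ with coefficients in $\mathbb{Q}[t,u,z]$. Here I would invoke the termination observation made after \eqref{eq:relGlambda}: iterating the recurrence never generates an index with two parts exceeding $3$ as long as the largest part stays $\le 9$, so the reduction really closes. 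It follows that $\mathcal{L}$ is a \emph{quadratic} differential polynomial in $\partial_t\Theta^{\text{Maps}},\dots,\partial_t^4\Theta^{\text{Maps}}$, the sole quadratic contribution being $\tfrac12(\theta_M\tilde{F}^{\text{Maps}}_{1,1})^2$.

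The only genuinely structural step is to eliminate the transcendental factor $e^{\Delta}$, which carries the BKP charge shifts $u\mapsto u\pm 2$ and which, unlike in the KP case, cannot be absorbed by the Virasoro constraints. The device is a logarithmic $t$-derivative: differentiating $\mathcal{L}=S_2 e^{\Delta}$ in $t$ yields $\partial_t\mathcal{L}=(\partial_tS_2+S_2\,\partial_t\Delta)e^{\Delta}$; substituting $e^{\Delta}=\mathcal{L}/S_2$ and clearing the denominator $S_2$ gives
\begin{equation*}
	(\partial_t\Delta)\,\bigl(S_2\,\mathcal{L}\bigr)=S_2\,\partial_t\mathcal{L}-(\partial_tS_2)\,\mathcal{L}.
\end{equation*}
Setting $P:=S_2\mathcal{L}$ and $Q:=S_2\,\partial_t\mathcal{L}-(\partial_tS_2)\,\mathcal{L}$, and noting $\partial_t\Delta=\partial_t\bigl(\Theta^{\text{Maps}}(t,z,u+2)+\Theta^{\text{Maps}}(t,z,u-2)-2\Theta^{\text{Maps}}(t,z,u)\bigr)$, this is precisely the asserted functional equation. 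Finally I would check the quantitative claims: since $S_2,\partial_tS_2\in\mathbb{Q}[t,u]$ and $\mathcal{L}$ is quadratic of top order $4$ in the $t$-derivatives of $\Theta^{\text{Maps}}$, both $P$ and $Q$ are quadratic with coefficients in $\mathbb{Q}[t,u,z]$, the single extra differentiation raising the top order from $\partial_t^4\Theta^{\text{Maps}}$ to $\partial_t^5\Theta^{\text{Maps}}$, in agreement with the dependence on $\partial_t\Theta^{\text{Maps}},\dots,\partial_t^5\Theta^{\text{Maps}}$. The main obstacle is thus conceptual rather than computational: recognizing that one differentiation in $t$ trades the BKP shift for a single higher derivative and keeps the equation polynomial.
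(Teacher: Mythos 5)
Your proposal is correct and follows essentially the same route as the paper: specialize the first BKP equation via $\theta_M$, eliminate the $p_i$-derivatives with \Cref{thm:relGlambda} (the linear case suffices for the indices $(3,1),(2,2),(1^2),(1^4)$, making $\mathcal{L}$ quadratic of order $4$), then take one $t$-derivative and substitute $e^{\Delta}=\mathcal{L}/S_2$ to trade the exponential shift term for the factor $\partial_t\Delta$ and a fifth-order derivative. The identification $P=S_2\mathcal{L}$, $Q=S_2\partial_t\mathcal{L}-(\partial_tS_2)\mathcal{L}$ matches the paper's "take the derivative with respect to $t$ and combine it with the BKP equation."
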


That functional equation can be transformed into a recurrence to compute the coefficients. It has the following relatively compact form.
\begin{theorem}{}{MapsRecurrence}
	The generating polynomial $H_n^g\equiv H_n^g(u,z) = \sum_{i+j=n+2-2g}H_n^{i,j}u^iz^j$ of rooted non-oriented maps of genus $g$ with $n$ edges, with weight $u$ per vertex and $z$ per face, can be computed from the following recurrence formula,
	\begin{multline*} 	
		H_n^g+\tfrac{3 u^2 \partial^2}{n(n+1) \partial u^2}H_n^g
		=
		\tfrac{1}{n(n+1)} \times \Bigg(n\bigg(
		2(2n-1)( (4u+z) H_{n-1}^{g} -2 H_{n-1}^{g-1/2})
		+   4(2 n-3)\big(3uz H_{n-2}^{g}\\
		+  (2 n-1)(n-1)H_{n-2}^{g-1}     \big)
		+6\hsum_{g_1=0..g\atop g_1+g_2=g} \sum_{n_1=0..n \atop n_1+n_2=n} 
		(2n_1 - 1)(2n_2 - 1)H_{n_2-1}^{g_2}H_{n_1-1}^{g_1}   \bigg)
		\\
		-
		\hsum_{g_1=0..g\atop g_1+g_2=g} \sum_{n_1=1..n\atop n_1+n_2=n}
		\hsum_{g_0=0..g_1\atop g_1-g_0\in \mathbb{N}}
		\sum_{p+j=n_1+2-2g_0}
		2^{2 (1+g_1-g_0)} {p \choose 2(1+g_1-g_0)}u^{n_1-2g_1-j}z^j H_{n_1}^{p,j}   
		\\
		\Big(
		-\frac{n_2+1}{2}H_{n_2}^{g_2}
		+   
		(2n_2-1) \big((4u+z)H_{n_2-1}^{g_2}-2H_{n_2-1}^{g_2-1/2}\big)+
		2(2n_2 - 3)\big((2n_2 - 1)(n_2 - 1)H_{n_2-2}^{g_2-1}\\
		+3uz H_{n_2-2}^{g_2}\big)
		+\frac{3}{2}\delta_{g_0\neq g}\delta_{n_1,n}u(\delta_{g_1,g}u- \delta_{g_1,g-1/2})
		+\delta_{n_1,n-1}uz(\delta_{g_1,g}(4u + z) - 2\delta_{g_1,g-1/2})\\
		+3uz\delta_{n_1,n-2}(\delta_{g_1,g}uz + 2\delta_{g_1,g-1}) 
		+ 3\hsum_{g_3=0..g_2\atop g_3+g_4=g_2} \sum_{n_3=0..n_2 \atop n_3+n_4=n_2} 
		(2n_3-1)(2n_4-1)H_{n_4-1}^{g_4}H_{n_3-1}^{g_3}   
		\Big)\vspace{3cm} \Bigg)
	\end{multline*}
	for $n>2$, with the initial conditions $H_0^g=0$, $H_1^0=uz(u+z)$, $H_2^0=uz(2u^2+5uz+2z^2)$, $H_{1}^{1/2}=uz$, $H_2^{1/2}=5uz(u+z)$, $H_2^1=5uz$, and $H_n^g=0$ if $n<2g$.
\end{theorem}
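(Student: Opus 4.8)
The plan is to treat the statement as the coefficient-extracted form of the functional equation quoted in the preceding theorem, so the work splits into two stages: re-deriving that functional equation from the integrable and Virasoro inputs, and then carrying out the coefficient bookkeeping that turns it into the displayed recurrence.

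First I would assemble the functional equation. By \Cref{thm:MapsBKP} the series $\tilde{\tau}^{\text{maps}}(t;\ptimes,u)$ is a formal-$N$ BKP tau function with the charge parameter identified with the vertex weight, $N=u$; hence the formal BKP equation of \Cref{thm:FormalBKP}, in the explicit form \eqref{eq:explicitBKP}, holds with normalization $S_2(N)=t^4u(u-1)$. Applying the specialization $\theta_M:p_i\mapsto z$ gives a scalar identity for $\Theta^{\text{Maps}}(t,z,u)$ whose right-hand side is $S_2\,e^{\Delta_u\Theta}$, where $\Delta_u\Theta=\Theta(t,z,u+2)+\Theta(t,z,u-2)-2\Theta(t,z,u)$. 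The partial derivatives $\theta_M\tilde{F}^{\text{Maps}}_{3,1}$, $\theta_M\tilde{F}^{\text{Maps}}_{2,2}$, $\theta_M\tilde{F}^{\text{Maps}}_{1,1}$, $\theta_M\tilde{F}^{\text{Maps}}_{1,1,1,1}$ on the left are rewritten as polynomials in the $t$-derivatives $\partial_t^k\Theta^{\text{Maps}}$ by iterating the Virasoro relation \eqref{eq:relGlambda} of \Cref{thm:relGlambda}; the degree bound there is exactly what prevents a second large part from appearing, so the iteration closes. To remove the exponential I would differentiate the specialized identity in $t$ and use $\partial_t\bigl(\log(\text{LHS}/S_2)\bigr)=\partial_t\Delta_u\Theta$, which after clearing $S_2$ yields the relation $(\partial_t\Delta_u\Theta)\,P=Q$ with $P,Q$ quadratic in the $\partial_t^k\Theta$, i.e. the functional equation preceding the statement.

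The decisive stage is the extraction. I write $\Theta^{\text{Maps}}(t,z,u)=\sum_{n\ge1}t^nH_n(u,z)$ with $H_n=\sum_gH_n^g=\sum_{i+j}H_n^{i,j}u^iz^j$, so that by Euler's relation $i+j=n+2-2g$ (with $g$ a half-integer for non-orientable surfaces) the genus is read off from the bidegree in $(u,z)$ rather than tracked separately. Under this substitution each $\partial_t$ becomes the usual derivation, producing the scalar prefactors $n$, $2n-1$, etc.; each product in $P,Q$ becomes a Cauchy convolution in the edge index, producing the inner sums $\sum_{n_1+n_2=n}$ and $\sum_{n_3+n_4=n_2}$ with their $\hsum$ genus splittings; and the second difference is expanded binomially, $(u\pm2)^i=\sum_k\binom{i}{k}(\pm2)^ku^{i-k}$, so that $\Delta_uu^i=2\sum_{k\ge2,\ k\text{ even}}\binom{i}{k}2^ku^{i-k}$. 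It is this even-$k$ expansion, with $k=2(1+g_1-g_0)$, that creates the factor $2^{2(1+g_1-g_0)}\binom{p}{2(1+g_1-g_0)}$ and the auxiliary genus index $g_0$, the vertex shift by $2(1+g_1-g_0)$ being precisely the genus change forced by Euler's relation. Collecting the coefficient of the appropriate power of $t$ and isolating the unique top term produces the operator $\bigl(1+\tfrac{3u^2}{n(n+1)}\partial_u^2\bigr)H_n^g$ on the left, with all remaining contributions involving only $H_m^h$ for $m<n$. Since $H_n^g$ has controlled bidegree, this linear-in-$u$ relation determines it, and the initial conditions $H_0^g,\dots,H_2^1$ are verified by direct enumeration of maps with at most two edges.

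I expect the coefficient extraction to be the main obstacle, and within it the honest bookkeeping of $\Delta_u$: keeping the genus-shift factors $\binom{p}{2(1+g_1-g_0)}2^{2(1+g_1-g_0)}$ consistent with the convolutions and with the half-integer sums $\hsum$ is where sign and index errors are easiest to make. The re-derivation of the functional equation, by contrast, is essentially mechanical once \Cref{thm:MapsBKP} and \Cref{thm:relGlambda} are granted, the only subtlety being to check that differentiating in $t$ and dividing stays within the ring of formal power series, which holds because the relevant series has invertible leading behaviour in $t$.
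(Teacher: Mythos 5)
Your proposal follows the paper's own route exactly: specialize the BKP equation \eqref{BKP1Log} with $\theta_M$ and $S_2(N)=t^4u(u-1)$, eliminate the $p_i$-derivatives via the Virasoro-based \Cref{thm:relGlambda}, differentiate in $t$ to kill the exponential and obtain the quoted functional equation $(\partial_t\Delta_u\Theta)P=Q$, then extract coefficients, with the even-order binomial expansion of the shift $u\mapsto u\pm2$ producing the $2^{2(1+g_1-g_0)}\binom{p}{2(1+g_1-g_0)}$ genus-shift factors and the invertible diagonal operator $1+\tfrac{3u^2}{n(n+1)}\partial_u^2$ on the left. This is the same argument the paper sketches (with details deferred to the cited reference), so no further comparison is needed.
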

Similar formulas are given in \cite{BCD-RecurrenceFormulas} for bipartite maps and triangulations.

\subsection{Removing the shifts using three BKP equations} Recall that a formal-$N$ BKP tau function has the charge parameter which we here denote $N$ ($u$ in the above recurrence formulas). To save space we write here $\tau(N) \equiv \tau(N,\ptimes)$ and $F_{i,j,\dotsc}(N) \equiv \frac{\partial}{\partial p_i \partial p_j \dotsm} \log \tau(N)$. The ``first'' three equations of the hierarchy are
\begin{equation}
	F_{2^2}(N) \sminus{}F_{3,1}(N) \splus{} \tfrac{1}{2} F_{1^2}(N)^2 \splus{} \tfrac{1}{12} F_{1^4}(N)=
	S_2(N) \tau(N\!\sminus{}\!2) \tau(N\!\splus{}\!2)\tau(N)^{-2},
\end{equation}
\begin{multline}
	\label{BKP2Log}
	\sminus{}2F_{4,1}(N) \splus{} 2F_{3,2}(N) \splus{} 2F_{2,1}(N)
	F_{1^2}(N) \splus{} \tfrac{1}{3} F_{2,1^3}(N) \\
	= S_2(N)\frac{\tau(N\!\sminus{}\!2) \tau(N\!\splus{}\!2)}{\tau(N)^2}(F_1(N\!\splus{}\!2)-F_1(N\!\sminus{}\!2)).
\end{multline}
\begin{multline}
	\label{BKP3Log}
	\sminus{}6F_{5,1}(N) \splus{} 4F_{4,2}(N) \splus{} 2F_{3^2}(N)\splus{}4F_{3,1}(N) F_{1^2}(N) \splus{} \tfrac{2}{3}F_{3,1^3}(N)
	\splus{}4F_{2,1}(N)^2 \\
	\splus{} 2F_{2^2}(N) F_{1^2}(N)\splus{} F_{2^2,1^2}(N) \splus \tfrac{1}{3}F_{1^2}(N)^3 \splus \tfrac{1}{6}F_{1^4}(N) F_{1^2}(N) \splus \tfrac{1}{180}F_{1^6}(N) \\
	= S_2(N)
	\frac{\tau(N\!\sminus{}\!2) \tau(N\!\splus{}\!2)}{\tau(N)^2}\bigl(F_{1^2}(N\!\splus{}\!2)+F_{1^2}(N\!\sminus{}\!2)
	+2F_{2}(N\!\splus{}\!2)-2F_{2}(N\!\sminus{}\!2)\\+(F_1(N\!\splus{}\!2)-F_1(N\!\sminus{}\!2))^2\bigr),
\end{multline}

As it turns out, we can combine them to obtain one PDE at fixed $N$
\begin{theorem}{}{}\cite{BCD-RecurrenceFormulas}
	Set $F\equiv F(N)  = \log \tau(N) \in\mathbb{Q}(N)[[p_1, p_2, \dotsc]]$ for a formal-$N$ BKP tau function. Then
	\begin{multline}
		\label{eq:FixedCharge}
		2 F_{1^3}\operatorname{KP1}^3 =  (\operatorname{KP3}_1-2 \operatorname{KP2}_2)\operatorname{KP1}^2 -
		(\operatorname{KP3}-3\operatorname{KP1}_{1^2}) \operatorname{KP1} \operatorname{KP1}_1  \\+2 (\operatorname{KP1}_2-\operatorname{KP2}_1)\operatorname{KP1}\operatorname{KP2}+ 2 \operatorname{KP2}^2 \operatorname{KP1}_1 
		-2 \operatorname{KP1}_1^3 - \operatorname{KP1}^2 \operatorname{KP1}_{1^3},
	\end{multline}
	where
	\begin{align*}
		\operatorname{KP1} &= -F_{3,1} + F_{2^2} + \frac{1}{2}F_{1^2}^2 +\frac{1}{12}F_{1^4},\\
		\operatorname{KP2} &= -2F_{4,1} + 2F_{3,2} + 2F_{2,1}F_{1^2}
		+\frac{1}{3} F_{2,1^3},\\
		\operatorname{KP3}&=-6F_{5,1}+ 4F_{4,2}+2F_{3^2}+
		4F_{3,1}F_{1^2}
		+\frac{2}{3}F_{3,1^3}+4F_{2,1}^2+2F_{2^2}F_{1^2} +
		F_{2^2,1^2}\\
		&+ \frac{1}{3}F_{1^2}^3 + \frac{1}{6} F_{1^4}F_{1^2} + \frac{1}{180} F_{1^6},
	\end{align*}
\end{theorem}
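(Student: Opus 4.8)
The plan is to regard the first three equations \eqref{BKP1Log}, \eqref{BKP2Log}, \eqref{BKP3Log} of the hierarchy as a system in the two ``shifted'' unknowns $\tau(N\splus2)$ and $\tau(N\sminus2)$, to solve for them, and then to recognise that the only surviving constraint is the commutation of two mixed partial derivatives of a single auxiliary series. Throughout, subscripts $1,2$ denote $\partial/\partial p_1,\ \partial/\partial p_2$, and I abbreviate $u^{\pm}\coloneqq F(N\pm2)$, $v\coloneqq u^{+}\splus u^{-}$ and $w\coloneqq u^{+}\sminus u^{-}$. The three right-hand sides share the common factor
\begin{equation*}
	R \coloneqq S_2(N)\,\frac{\tau(N\sminus2)\,\tau(N\splus2)}{\tau(N)^2} = S_2(N)\,e^{\,v-2F},
\end{equation*}
and equation \eqref{BKP1Log} reads exactly $\operatorname{KP1}=R$.

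First I would extract $v$ and its $\ptimes$-derivatives without any shift. Since $\operatorname{KP1}=R=S_2(N)e^{v-2F}$, we have $v=\log\!\big(\operatorname{KP1}/S_2(N)\big)+2F$, so every $\ptimes$-derivative of $v$ is a rational expression in $\operatorname{KP1}$, its $\ptimes$-derivatives, and the fixed-$N$ derivatives of $F$. In particular,
\begin{equation*}
	v_{1^2} = \frac{\operatorname{KP1}_{1^2}}{\operatorname{KP1}} - \frac{\operatorname{KP1}_1^2}{\operatorname{KP1}^2} + 2F_{1^2},
\end{equation*}
and differentiating once more produces $v_{1^3}$, whose only term free of $\operatorname{KP1}$ is $2F_{1^3}$; this is the term that will carry the left-hand side of \eqref{eq:FixedCharge}. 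Dividing \eqref{BKP2Log} and \eqref{BKP3Log} by $R=\operatorname{KP1}$ and using $u^{+}_{1^2}\splus u^{-}_{1^2}=v_{1^2}$ together with $u^{+}_2\sminus u^{-}_2=w_2$, the right-hand brackets collapse to
\begin{equation*}
	w_1 = \frac{\operatorname{KP2}}{\operatorname{KP1}}, \qquad 2w_2 = \frac{\operatorname{KP3}}{\operatorname{KP1}} - v_{1^2} - w_1^2 .
\end{equation*}
Thus both first derivatives of the single series $w$ are now written purely in terms of $\operatorname{KP1},\operatorname{KP2},\operatorname{KP3}$, their $p_1,p_2$-derivatives, and $F$ at fixed $N$.

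The heart of the argument is then the compatibility relation $\partial_2 w_1 = \partial_1 w_2$, which holds automatically because $w$ is a well-defined formal power series. Substituting the two displayed expressions for $w_1$ and $w_2$ and clearing denominators by multiplying through by $2\operatorname{KP1}^3$, the quotient-rule expansions of $\partial_2(\operatorname{KP2}/\operatorname{KP1})$, $\partial_1(\operatorname{KP3}/\operatorname{KP1})$, $\partial_1\!\big((\operatorname{KP2}/\operatorname{KP1})^2\big)$ and $v_{1^3}$ combine, after collecting, into precisely the right-hand side of \eqref{eq:FixedCharge}, while the $2F_{1^3}$ arising from $v_{1^3}$ supplies the left-hand term $2F_{1^3}\operatorname{KP1}^3$ once it is isolated. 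The division by $\operatorname{KP1}$ is legitimate because $\operatorname{KP1}=R$ is a nonzero element (a unit up to the explicit rational prefactor $S_2(N)$), so one may carry out the manipulation in the fraction field of $\mathbb{Q}(N)[[\ptimes]]$ and then observe that, both sides being polynomials in the $\operatorname{KP}$'s and the $\ptimes$-derivatives of $F$, the resulting identity holds in the ring itself.

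The only genuinely laborious step, which I would organise as a single bookkeeping computation, is verifying that the numerators match term by term after clearing denominators; I expect this to be the main obstacle, although it is routine since no cancellation beyond the quotient rule is needed. The conceptual content — that eliminating the $N$-shifts amounts to imposing the symmetry of mixed partials of $w=F(N\splus2)\sminus F(N\sminus2)$, with the symmetric combination $v=F(N\splus2)\splus F(N\sminus2)$ read directly off \eqref{BKP1Log} — is what makes the three BKP equations exactly the right number to close the system at fixed $N$.
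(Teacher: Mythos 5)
Your proposal is correct and follows the same route the paper indicates: read off $v=F(N\splus2)\splus F(N\sminus2)$ from $\operatorname{KP1}=S_2(N)e^{v-2F}$, extract $w_1=\operatorname{KP2}/\operatorname{KP1}$ and $2w_2=\operatorname{KP3}/\operatorname{KP1}-v_{1^2}-w_1^2$ from the second and third BKP equations, and impose $\partial_2 w_1=\partial_1 w_2$. I checked that clearing denominators by $2\operatorname{KP1}^3$ in this compatibility relation reproduces \eqref{eq:FixedCharge} term by term, so the argument is complete.
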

Using the same elimination technique as before, namely \Cref{thm:relGlambda}, an ODE is derived and extracting the coefficients lead to polynomial recurrence formulas, given explicitly in \cite{us:Maple}. As it is possible to do that while controlling the number of faces, a special recurrence can be obtained for 1-face maps, which turns out to be exactly Ledoux' recurrence \cite{Ledoux2009}. In the bipartite case, we obtain \Cref{thm:NonOrientedAdrianov} which is a non-oriented version of Adrianov's recurrence formula \cite{Adrianov1997}. 

As for triangulations, it does not make sense to restrict to a single face, but instead it does to restrict to a single vertex. By duality, they correspond to cubic 1-face maps. There already exist explicit and simple formulas in this case obtained from bijective methods~\cite{BernardiChapuy2011} so we did not try to work out this case from the BKP equations.

\chapter{A remarkable case: that of monotone Hurwitz numbers} \label{sec:Monotone}

\section{The Goulden-Jackson $b$-conjecture} \Cref{thm:NonOrientedBipartite} was a starting point for an important conjecture, known as the $b$-conjecture or the Matching-Jack conjecture, proposed in \cite{GouldenJackson1996}. It notices that the zonal expansion of \eqref{NonOrientedBipartite} and the Schur expansion of \eqref{SchurExpansionFactorizations} are two special cases of the more general series
\begin{equation} \label{bDeformedBipartite}
	\tau^{(b)}(\ptimes, \qtimes, \vec{r};t) = \sum_{\lambda} \frac{t^{|\lambda|}}{j_\lambda} J^{(1+b)}_\lambda(\ptimes) J^{(1+b)}_\lambda(\qtimes) J^{(1+b)}_\lambda(\vec{r}),
\end{equation}
where $J^{(1+b)}_\lambda$ is the \emph{Jack polynomial}. Jack polynomials form a 1-parameter basis of symmetric functions, which coincide with Schur functions (up to normalization) at $b=0$ and with zonal polynomials at $b=1$. We refer to \cite{Stanley1989} for definitions and properties.

The conjecture of Goulden and Jackson \cite{GouldenJackson1996} is that when $\tau^{(b)}(\ptimes, \qtimes, \vec{r};t)$ is expanded on the power sum basis, $p_\lambda q_\mu r_\nu$, the coefficients are polynomials in $b$ with non-negative integer coefficients. In other words, that series counts things, which are presumably some ``$b$-weighted'' matchings and possibly interpretable as non-oriented bipartite maps. Ben Dali showed in \cite{BenDali2021} that if true, this can be extended to an arbitrary number of cycle types.

A difficulty in proving the $b$-conjecture is that there is \emph{no representation theory} to provide a sort of Frobenius' formula for $b\neq 0,1$ (and $b=-1/2$ which is a dual value to $b=1$ \cite{Macdonald1995}). An important step forward was made by Chapuy and Do\l\k{e}ga in \cite{ChapuyDolega2020} who proved a version of the conjecture for $b$-weighted Hurwitz numbers. Let us explain what it means and the underlying reasoning. Because of the lack of representation theory for general $b$, other approaches are required. A natural one for combinatorists is to write equations {\it \`a la} Tutte, some kind of evolution equations like \eqref{EvolutionEquation}, and prove that such an equation is satisfied by both $\tau^{(b)}(\ptimes, \qtimes, \vec{r};t)$ and some series of maps or matchings. One hopefully concludes with uniqueness.

However, it has been so far impossible to write equations {\it \`a la} Tutte for series with more than two cycle types. This is why Chapuy and Do\l\k{e}ga focused on weighted Hurwitz numbers. They first consider the polynomial case, i.e. that of biconstellations and introduce a notion of $b$-weighted biconstellations for which they are able to write an evolution equation which determines the series, denoted $\tau^{(b)}_{\text{$m$-biconst}}(\ptimes, \qtimes; t, \vec{u})$. They are then also able to prove that
\begin{equation} \label{JackExpansionConstellations}
	\tau^{(b)}_{\text{$m$-biconst}}(\ptimes, \qtimes; t, \vec{u}) = \sum_{\lambda} \frac{t^{|\lambda|}}{j_\lambda} J^{(1+b)}_\lambda(\ptimes) J^{(1+b)}_\lambda(\qtimes) \prod_{\Box\in\lambda} \prod_{c=0}^{m-1} (u_c + c_b(\Box)),
\end{equation} 
because the RHS satisfies the same evolution equation. Here $c_b(\Box)$ is a $b$-deformed notion of content of a box, $c_b(x,y) = c(x,y) + b(x-1)$. Chapuy-Do\l\k{e}ga then extends their approach to non-polynomial $G$ like in \eqref{WeightFunction}
\begin{equation} \label{bDeformedTauG}
	\tau^{(b)}_G(\ptimes, \qtimes, t) = \sum_\lambda \frac{t^{|\lambda|}}{j_\lambda} J^{(1+b)}_\lambda(\ptimes) J^{(1+b)}_\lambda(\qtimes) \prod_{\Box\in\lambda} G(c_b(\Box)),
\end{equation}
which are shown to provide a $b$-deformed version of weighted Hurwitz numbers.

\section{Monotone Hurwitz numbers}
Since the zonal expansions of the series \eqref{MapsBKPTauFunctions} of non-oriented maps and bipartite maps can be recovered from \eqref{JackExpansionConstellations} at $b=1$ (when Jack polynomials reduce to zonal polynomials), the question naturally arises as to whether \Cref{thm:MapsBKP}, stating that they are formal-$N$ BKP tau functions, can be generalized to those non-oriented constellations at $b=1$. Testing the BKP equations has not led us to any change of variables with which the BKP equation could be satisfied. However we found one other case where we could prove the formal-$N$ BKP hierarchy, that of non-oriented monotone Hurwitz numbers. They are $b$-deformations of ordinary monotone Hurwitz numbers with the generating series
\begin{equation} \label{bMonotoneSeries}
	\tau^{\text{mon}}_b (t;\ptimes,u) = \sum_{n \geq 0} t^n\sum_{\lambda \vdash n} \frac{J^{(b)}_\lambda(\ptimes)}{j^{(b)}_\lambda}\prod_{\square
		\in \lambda}\frac{1}{u^{-1}+c_b(\square)}, 
\end{equation}
This series is understood as an element of $\mathbb{Q}(b,u)[\ptimes][[t]]$, and it also belongs to $\mathbb{Q}(b)[\ptimes][[u,t]]$. Of special interest in this section (because of the BKP hierarchy) is the case $b=1$,
\begin{equation} \label{NonOrientedMonotoneSeries}
	\tilde{\tau}^{\text{mon}} (t;\ptimes,u) = \sum_{n \geq 0} t^n\sum_{\lambda \vdash n} \frac{Z_\lambda(\ptimes)}{\hook(2\lambda)}\prod_{\square
		\in \lambda}\frac{1}{u^{-1}+c_1(\square)}, 
\end{equation}
A combinatorial model of ``$b$-deformed Hurwitz maps'' is given in \cite{BonzomChapuyDolega2021} but we will not use it and therefore we chose not to include it here. We will instead ``prove'' (as in ``sketch the proof'') of the following.
\begin{theorem}{}{MonotoneBKP}
	$(\beta_N, \tilde{\tau}^{\text{mon}}(t;2\ptimes,\frac{1}{2N}))$ is a formal-$N$ BKP tau function for $\beta_N=\tfrac{1}{\prod_{k=1}^{N-1}(2k)!}$.
\end{theorem}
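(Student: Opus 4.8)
The plan is to reduce the statement to the Pfaffian criterion of \Cref{thm:PfaffianCoefficients}: it suffices to (i) produce the Schur expansion $\tilde{\tau}^{\text{mon}}(t;2\ptimes,\tfrac{1}{2N}) = \sum_\lambda b_\lambda(N)\, s_\lambda(\ptimes)$ with $b_\lambda(N)\in\mathbb{Q}(N)$, (ii) exhibit a fixed infinite skew-symmetric matrix $B$ whose Pfaffian minors reproduce $\beta_N b_\lambda(N)$ in the sense of \eqref{eq:pfaffianCoeff}, and (iii) check that $\beta_N$ obeys the ratio conditions \eqref{eq:2factorial}. The first simplification disposes of the content product: since $u^{-1}=2N$, the principal specialization $Z_\lambda(1^{2N})=\prod_{\square\in\lambda}(2N+c_1(\square))$ gives $\prod_{\square\in\lambda}\tfrac{1}{u^{-1}+c_1(\square)} = 1/Z_\lambda(1^{2N})$, so that $\tilde{\tau}^{\text{mon}}(t;2\ptimes,\tfrac{1}{2N}) = \sum_{n\geq0} t^n \sum_{\lambda\vdash n} \frac{Z_\lambda(2\ptimes)}{\hook(2\lambda)\,Z_\lambda(1^{2N})}$.

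Next I would pass to the Schur basis. Expanding the doubled zonal polynomials $Z_\lambda(2\ptimes)$ in Schur functions (a classical change of basis, in which the factor $\hook(2\lambda)$ is precisely the one making the transition coefficients clean) and collecting terms yields the coefficients $b_\mu(N)$ as explicit rational functions of $N$. The only poles come from the vanishing of the factors $2N+c_1(\square)$, which for a given $\mu$ occur only once $\mu$ has sufficiently many rows; truncating the Schur expansion to $\ell(\mu)\leq L$ therefore removes all poles at integers $N$ above some bound, which furnishes the sequence $(\alpha_K)$ and verifies the pole-location hypothesis of \Cref{def:formalBKP}.

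The crux is step (ii): identifying $B$ and proving the Pfaffian identity. The natural candidate has moment entries $B_{a,b}=\int\!\!\int_{\mathbb{R}_+^2} w(x)\,w(y)\, x^{a}\,\sgn(x-y)\,y^{b}\,dx\,dy$ for a weight $w=w_{t,N}$ coming from the eigenvalue representation of the $O(N)$–BGW matrix integral that generates these numbers. With this $B$, de Bruijn's integration formula turns $\Pf\big(B_{\lambda_i+N-i,\lambda_j+N-j}\big)$ into an $N$-fold integral of $\prod_a w(x_a)$ against a Vandermonde $\det(x_b^{\lambda_a+N-a})$; for odd $N$ one augments $B$ by one row and column built from the constant function, matching the $(N+1)\times(N+1)$ case of \eqref{eq:pfaffianCoeff}. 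Evaluating this integral as a Selberg-type integral should reproduce exactly $\beta_N^{-1}\,\hook(2\lambda)^{-1}\,Z_\lambda(1^{2N})^{-1}$ up to the Schur normalisation, i.e. $\beta_N b_\lambda(N)=\Pf(\cdots)$. Equivalently, one may fix the candidate Pfaffian coefficients and prove the identity by showing that both sides satisfy the same evolution equation in $t$ and invoking uniqueness of formal power-series solutions. Either way, this is where the main difficulty lies, and this symmetric-function identity is precisely the content of the technical conjectures of Féray and of Oliveira–Novaes.

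Finally, step (iii) is routine: with $\beta_N=1/\prod_{k=1}^{N-1}(2k)!$ one computes $\beta_{N-1}/\beta_N=(2N-2)!$ and $\beta_{M}/\beta_{M-1}=1/(2M-2)!$, whence $\tfrac{\beta_{N-1}\beta_{N+k-1}}{\beta_N\beta_{N+k-2}}=\tfrac{(2N-2)!}{(2N+2k-4)!}$ and $\tfrac{\beta_{N-2}\beta_{N+k}}{\beta_N\beta_{N+k-2}}=\tfrac{(2N-4)!\,(2N-2)!}{(2N+2k-2)!\,(2N+2k-4)!}$ are manifestly rational functions $R_k(N),S_k(N)$ of $N$, as \eqref{eq:2factorial} demands. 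With (i)–(iii) in hand, \Cref{thm:PfaffianCoefficients} yields that $(\beta_N,\tilde{\tau}^{\text{mon}}(t;2\ptimes,\tfrac{1}{2N}))$ is a formal-$N$ BKP tau function.
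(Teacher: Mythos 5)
Your overall architecture is the right one — reduce to the Pfaffian criterion of \Cref{thm:PfaffianCoefficients}, and your step (iii) computing the factorial ratios of $\beta_N=1/\prod_{k=1}^{N-1}(2k)!$ is exactly the verification needed for \eqref{eq:2factorial}. But the two substantive steps are where the proposal breaks down. In step (i) you assert that the Schur expansion follows from ``a classical change of basis'' expanding $Z_\lambda(2\ptimes)$ in Schur functions. No such clean expansion exists: the transition matrix between zonal polynomials and Schur functions is not of product form, and expanding each $Z_\lambda(2\ptimes)$ term by term would produce the coefficients $b_\mu(N)$ as unwieldy sums, not in the closed form needed for a Pfaffian identification. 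The actual content here is \Cref{thm:MonotoneSchurExpansion}: the Schur coefficients are $t^{|\lambda|}/\bigl(\hook(\lambda)^2\, o_\lambda(1^{u^{-1}})\bigr)$, where $o_\lambda$ is the dimension of an irreducible representation of the orthogonal group — and this is proved not by a change of basis but by showing that this \emph{candidate} series satisfies the evolution equation \eqref{MonotoneEvolutionEquation} of \Cref{thm:MonotoneEvolutionEquation} (after converting it into Virasoro-type constraints, which act simply on Schur functions) and invoking uniqueness of the formal solution. You do gesture at this uniqueness argument at the end of your step (ii), but without first guessing the coefficient $1/(\hook(\lambda)^2 o_\lambda(1^{2N}))$ there is nothing to verify; recognizing that orthogonal-group dimensions appear is the key insight, and it is precisely the content of the Oliveira--Novaes conjecture you invoke as a black box.

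Step (ii) also takes a route that is not available. You propose to realize $B$ as a moment matrix of an eigenvalue measure coming from the $O(N)$-BGW integral and to evaluate the resulting de Bruijn/Selberg integral. But the Pfaffian expression for the $O(N)$-BGW integral is precisely the long-standing open problem that this theorem \emph{solves as a corollary} — unlike the Hermitian and Wishart ensembles behind \Cref{thm:MapsBKP}, there is no known reduction of the $O(N)$ group integral to a one-dimensional weight $w_{t,N}$ to feed into de Bruijn's formula, so your construction of $B$ is circular. The paper's \Cref{thm:MonotonePfaffianCoefficients} is instead purely algebraic: Weyl's formula \eqref{eq:dimWeyl} writes $so_\lambda(1^{2n})$ as a product over pairs $i<j$ of factors $(\rho_i-\rho_j)(\rho_i+\rho_j+2n)$, normalized by the same factors at $\lambda=\emptyset$, and Schur's Pfaffian identity (Lemma \ref{lem:SchurPfaffian}), $\prod_{i<j}\tfrac{x_i-x_j}{x_i+x_j}=\Pf\bigl(\tfrac{x_i-x_j}{x_i+x_j}\bigr)$, converts this product over pairs into a Pfaffian with the explicit entries $a_{i,j}=\tfrac{i-j}{4(i+j)\,i!^2 j!^2}$ of \eqref{eq:alambdaPfaffian}. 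Without either the correct Schur coefficients or this algebraic mechanism, the proposal cannot be completed as written.
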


\subsection{Evolution equation and Virasoro constraints} Recall that an evolution equation read $\frac{\partial\tau}{\partial t} = \hat{X}\tau$ where $\hat{X}$ is a differential operator with respect to the $p_i$s. For constellations \cite{Fang:PhD, ChapuyDolega2020}, it based on the \emph{cut-and-join} idea. When an edge is added at the rooted corner, it can either split the root face into a smaller root face and an internal face, or join the root face with an internal face thus creating a bigger root face\footnote{Notice that since the degree of the root face has to be tracked, a catalytic variable is used to derive $\hat{X}$ from that reasoning.}. For constellations, there is one action of the cut-and-join operator for every color, and therefore if one thinks of monotone Hurwitz numbers as counting maps with an arbitrary number of colors, one finds an evolution equation of infinite order, because it features an infinite number of actions of the cut-and-join operator.

Remarkably we found in \cite{BonzomChapuyDolega2021} that $\tau^{\text{mon}}_b (t;\ptimes,u)$ also satisfies another evolution equation, which is of finite order\footnote{It is perhaps similar to the way the functional equation for the generating series of rooted plane trees with vertices of unbounded degrees may seem to be a ``polynomial of infinite degree'' if it is written like $T = 1 + z\sum_{k\geq 1} T^k$ where each degree contributes to $T^k$, and this gives a series. However, it can be clearly rewritten as a quadratic equation.}. We recall the notation $p_i^* = \frac{i\partial}{\partial p_i}$.
\begin{theorem}{}{MonotoneEvolutionEquation}
	The function $\tau^{\text{mon}}_b (t;\ptimes,u)$ is	uniquely determined by
	\begin{equation}\label{MonotoneEvolutionEquation}
		\frac{\partial}{\partial t} \tau^{\text{mon}}_b (t;\ptimes,u) = H_b(t,u) \tau^{\text{mon}}_b (t;\ptimes,u),
	\end{equation}
	where
	\begin{equation}
		H_b(t,u) = \frac{u p_1}{1+b} - \frac{u}{t}\Bigl((1+b)\sum_{m,n\geq 1}p_{m+n}p_m^*p_n^*+ \sum_{n,m \geq 1}
		p_n p_m p^*_{n+m}+b\sum_{n\geq 1}(n-1) p_{n}p_n^*\Bigr).
	\end{equation}
\end{theorem}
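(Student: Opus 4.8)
The plan is to pass everything to the Jack basis and reduce the statement to the spectral properties of the operator occurring in $H_b$. Throughout set $\alpha=1+b$, write $J_\lambda=J^{(\alpha)}_\lambda$ for the integral-form Jack polynomials, $j_\lambda=\langle J_\lambda,J_\lambda\rangle_\alpha$ for their squared norms, and $c_\lambda=j_\lambda^{-1}\prod_{\square\in\lambda}(u^{-1}+c_b(\square))^{-1}$, so that $\tau^{\text{mon}}_b=\sum_\lambda c_\lambda\,t^{|\lambda|}J_\lambda(\ptimes)$ and $\tau_0=1$. The first observation is that, after writing $p_k^*=k\,\partial/\partial p_k$, the bracketed operator
\[
\Lambda:=(1+b)\sum_{m,n\geq1}p_{m+n}p_m^*p_n^*+\sum_{m,n\geq1}p_mp_np^*_{m+n}+b\sum_{n\geq1}(n-1)p_np_n^*
\]
is exactly twice the Jack (Laplace--Beltrami / cut-and-join) operator; it is therefore diagonal in the Jack basis, $\Lambda J_\lambda=e_b(\lambda)J_\lambda$ with $e_b(\lambda)=2\sum_{\square\in\lambda}c_b(\square)$. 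This is the classical diagonalization of the Sekiguchi--Debiard operator, which I would simply cite. Since $\Lambda$ preserves the degree, comparing the homogeneous components (the coefficient of $t^{n-1}$) shows that the evolution equation is equivalent to the recursion $(n+u\Lambda)\,\tau_n=\tfrac{u}{\alpha}\,p_1\,\tau_{n-1}$. On each degree-$n$ subspace with $n\geq1$, the operator $n+u\Lambda$ is diagonal with entries $n+u\,e_b(\lambda)\neq0$ in $\mathbb{Q}(b,u)$, hence invertible; together with $\tau_0=1$ this yields \emph{uniqueness} of the formal solution at once.

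It remains to check that the explicit $c_\lambda$ satisfy this recursion. Using the Pieri rule $p_1J_\lambda=\sum_{\mu=\lambda+\square}\psi_{\lambda\mu}J_\mu$ and extracting the coefficient of $J_\mu$ (with $\mu\vdash n$) turns the recursion into the scalar identity $(n+u\,e_b(\mu))\,c_\mu=\tfrac{u}{\alpha}\sum_{\lambda=\mu-\square}c_\lambda\psi_{\lambda\mu}$. Substituting $c_\lambda=c_\mu\,(j_\mu/j_\lambda)(u^{-1}+c_b(\square_0))$ for $\lambda=\mu-\square_0$ and dividing by $c_\mu$ gives an \emph{affine} relation in $u$; setting $\hat\psi_{\square_0}:=(j_\mu/j_{\mu-\square_0})\,\psi_{(\mu-\square_0)\mu}$ and matching the $u^0$ and $u^1$ coefficients reduces the whole statement to two box-adding identities,
\[
\textrm{(A)}\ \ \sum_{\square_0\in\mu}\hat\psi_{\square_0}=\alpha\,|\mu|,\qquad\qquad
\textrm{(B)}\ \ \sum_{\square_0\in\mu}c_b(\square_0)\,\hat\psi_{\square_0}=\alpha\,e_b(\mu).
\]
The structural input here is that, by self-adjointness of multiplication by $p_1$ for $\langle\cdot,\cdot\rangle_\alpha$ (whose adjoint is $\alpha\partial_{p_1}$), the numbers $\hat\psi_{\square_0}$ are precisely the coefficients in $\alpha\partial_{p_1}J_\mu=\sum_{\square_0}\hat\psi_{\square_0}J_{\mu-\square_0}$.

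To prove (A) I would introduce the linear functional $\phi(f)=[p_1^{\deg f}]f$ extracting the top power of $p_1$ in the power-sum expansion; the normalization of the integral form gives $\phi(J_\nu)=1$ for every $\nu$, so applying $\phi$ to $\alpha\partial_{p_1}J_\mu$ produces $\sum_{\square_0}\hat\psi_{\square_0}=\alpha\,[p_1^{n-1}]\partial_{p_1}J_\mu=\alpha n$. Identity (B) is the crux, being exactly where the content function must be shown compatible with the cut-and-join eigenvalue. For this I would compute the commutator $[\Lambda,\alpha\partial_{p_1}]=-2\alpha\sum_{k\geq1}(k+1)p_k\partial_{p_{k+1}}$ directly and evaluate it on $J_\mu$ two ways: from the eigenequation and $e_b(\mu)-e_b(\mu-\square_0)=2c_b(\square_0)$ one gets $[\Lambda,\alpha\partial_{p_1}]J_\mu=-2\sum_{\square_0}c_b(\square_0)\hat\psi_{\square_0}J_{\mu-\square_0}$, so $\phi$ of it equals $-2\sum c_b\hat\psi$; applying $\phi$ to the explicit operator leaves only the $k=1$ term, giving $-4\alpha\,[p_1^{n-2}p_2]J_\mu$. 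Finally, extracting $[p_1^n]$ from $\Lambda J_\mu=e_b(\mu)J_\mu$ identifies $[p_1^{n-2}p_2]J_\mu=\tfrac12 e_b(\mu)$, and combining the two evaluations yields (B). The main obstacle is thus precisely this compatibility (B) between the Jack Pieri coefficients, the norms $j_\lambda$, and the contents $c_b$; everything else is either formal (uniqueness) or classical (the diagonalization, the Pieri rule, and the $J$-normalization). Once (B) is secured the recursion holds coefficientwise, and with the uniqueness from the first paragraph the theorem follows.
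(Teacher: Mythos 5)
Your proof is correct, and it is a complete instantiation of the strategy the manuscript only alludes to (``based on the theory of Jack polynomials and we skip it''). The reduction is sound at every step: the identification of the bracketed operator with twice the $\alpha$-Laplace--Beltrami operator is an exact match of coefficients once $p_k^*=k\partial/\partial p_k$ is expanded (the $(1+b)$, $1$, $b$ weights correspond to $\alpha$, $1$, $\alpha-1$), and its eigenvalue $2\sum_{\square}\bigl((x-y)+b(x-1)\bigr)$ agrees with the paper's $c_b$ provided one reads the box coordinates as $1$-based --- worth a sentence, since the manuscript's stated convention ($(0,0)$ at the corner) is not consistent with the formula $c_b(x,y)=c(x,y)+b(x-1)$, and this is exactly the kind of place where an off-by-one would silently break identity (B). Extracting the coefficient of $J_\mu$ and dividing by $c_\mu$ correctly produces an affine relation in $u$, and your two identities (A) and (B) are precisely what is needed. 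The duality argument identifying $\hat\psi_{\square_0}$ with the coefficients of $\alpha\partial_{p_1}J_\mu$ is right, the evaluation functional $\phi=[p_1^{\deg}]$ does satisfy $\phi(J_\nu)=1$ by the normalization of the integral form, and I have checked the commutator $[\Lambda,\alpha\partial_{p_1}]=-2\alpha\sum_{k\ge1}(k+1)p_k\partial_{p_{k+1}}$ as well as the extraction $[p_2p_1^{n-2}]J_\mu=\tfrac12 e_b(\mu)$ from the eigenequation; both are correct, and together they close (B). This commutator device is the nicest feature of your write-up: it proves the weighted Pieri sum $\sum_{\square_0}c_b(\square_0)\hat\psi_{\square_0}=\alpha\,e_b(\mu)$ without ever invoking the explicit (Lassalle-type) formulas for Jack Pieri coefficients, which is what one would otherwise reach for. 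Two small points to make explicit: the theorem as stated determines $\tau$ only up to the normalization $\tau|_{t=0}=1$, which you correctly supply (and one should remark that the apparent $t^{-1}$ term $-u\Lambda\tau_0$ vanishes automatically since $\tau_0$ has degree $0$); and the nonvanishing of $n+u\,e_b(\mu)$ should be read in $\mathbb{Q}(b,u)$ or $\mathbb{Q}(b)[[u]]$, consistently with the paper's declaration that the series lives in $\mathbb{Q}(b,u)[\ptimes][[t]]$.
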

The special case $b=0$ of the above equation is known as the cut-and-join equation for the monotone Hurwitz numbers first proved in \cite{GouldenGuayPaquetNovak2013} and in \cite{Dunin-BarkowskiKramerPopolitovShadrin2019}. Our proof, when reduced to this case gives yet another proof. It is based on the theory of Jack polynomials and we skip it.

In Chapter \ref{sec:EnumerationIntegrability}, Section \ref{sec:RecognizingKP}, we explained that evolution equations can sometimes be used to prove the KP hierarchy, when $e^{t\hat{X}}$ is the representation $R_0$ of an element of $\overline{GL}_\infty$. Similar arguments can in principle be used for the BKP hierarchy, but it does not seem like integrating \eqref{MonotoneEvolutionEquation} produces an operator living in a representation of the bigger group of the BKP hierarchy\footnote{That is perhaps not too surprising because the evolution equation is (obviously) with respect to $t$, while the operators from the bigger group of the BKP hierarchy rather describe an \emph{evolution with respect to the charge}, so $u^{-1}$ here.}. Nevertheless, we are able to make use of \eqref{MonotoneEvolutionEquation} at $b=1$ by showing that a series whose Schur expansion has explicit Pfaffian minors as coefficients is a solution, thereby proving the BKP hierarchy at $b=1$ by uniqueness of the combinatorial solution.

\subsection{Schur expansion for $b=1$ and orthogonal group characters}\label{sec:schurExpansion} In the remaining of this chapter, we set $b=1$. The explicit Schur expansion we will give is directly related to the characters of the irreducible representations of the orthogonal group. 

\subsubsection{Irreducible representations of orthogonal and special orthogonal groups.} Here we give formulas for the dimensions of the irreducible representations of the orthogonal group $O(2n)$ without details. We use \cite{Weyl1939,FultonHarris1991} as a black box. Those representations are indexed by partitions $\lambda$ with $\ell(\lambda) \leq n$ and we denote $o_\lambda(1^{2n})$ the corresponding dimension. The dimension of the irreducible representation of $SO(2n)$ with highest weight $\lambda$ is given by the following formula
\begin{equation} \label{eq:dimWeyl}
	so_\lambda(1^{2n}) = \prod_{1\leq i < j	\leq n}\frac{(\rho_i-\rho_j)(\rho_i+\rho_j+2n)}{(-i+j)(-i-j+2n)},
\end{equation}
where $\rho_i := \lambda_i-i$ and 
\begin{equation} \label{eq:dimOrthogonal}
	o_\lambda(1^{2n}) = \begin{cases} so_\lambda(1^{2n}) &\text{
			for $\ell(\lambda) < n$,}\\ 2so_\lambda(1^{2n}) &\text{
			for $\ell(\lambda) = n$.}\end{cases}
\end{equation}
A different formula for $o_\lambda(1^{n})$, valid regardless of the parity of $n$, was given by El Samra and King \cite{ElSamraKing1979}
\begin{equation} \label{eq:dimWeyl2}
	o_\lambda(1^{n}) = \frac{1}{\hook(\lambda)} \prod_{\substack{(x,y)\in \lambda\\ x \leq y}}(n+\lambda_x+\lambda_y-x-y) \prod_{\substack{(x,y)	\in \lambda\\ x >y}}(n-\lambda^t_x-\lambda^t_y+x+y-2).
\end{equation}

\subsubsection{Explicit expansion in scaled Schur functions} From \eqref{eq:dimWeyl2}, the quantity $o_\lambda(1^{n}) \in \mathbb{Q}[n]$ can be considered as a polynomial in $n$ of degree $|\lambda|$, which allows us to extend the definition of this ``dimension'' to the case where $n$ is a formal variable. Below we will use this convention with $n=u^{-1}$, so that $o_\lambda(1^{u^{-1}})\in \mathbb{Q}[u^{-1}]$ and therefore $1/o_\lambda(1^{u^{-1}})$ has a valid power series expansion at $u=0$, i.e. $1/o_\lambda(1^{u^{-1}}) \in \mathbb{Q}[[u]]$.
\begin{theorem}{}{MonotoneSchurExpansion}
	The series \eqref{NonOrientedMonotoneSeries} has the following expansion in Schur functions over the power sums $\ptimes/2$,
	\begin{equation}\label{SchurExpansion}
		\tilde{\tau}^{\text{mon}}(t;\ptimes,u) = \sum_{\lambda} \frac{t^{|\lambda|} s_\lambda(\ptimes/2)}{\hook(\lambda)^2 o_\lambda(1^{u^{-1}})}.
	\end{equation}
	This identity holds in $\mathbb{Q}(u)[\ptimes][[t]]$ and in $\mathbb{Q}[\ptimes][[u,t]]$.
\end{theorem}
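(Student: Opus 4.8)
The plan is to prove the equality in $\mathbb{Q}(u)[\ptimes][[t]]$ by invoking uniqueness of the solution of the evolution equation of \Cref{thm:MonotoneEvolutionEquation} specialized to $b=1$. First I would rewrite the left-hand side using the principal specialization of zonal polynomials quoted in the text, $Z_\lambda(1^N)=\prod_{\square\in\lambda}(N+c_1(\square))$, so that the content product becomes $\prod_{\square\in\lambda}\tfrac{1}{u^{-1}+c_1(\square)}=\tfrac{1}{Z_\lambda(1^{u^{-1}})}$ and
\begin{equation*}
\tilde{\tau}^{\text{mon}}(t;\ptimes,u)=\sum_\lambda \frac{t^{|\lambda|}\,Z_\lambda(\ptimes)}{\hook(2\lambda)\,Z_\lambda(1^{u^{-1}})}.
\end{equation*}
In this form the left-hand side is manifestly the eigenfunction expansion attached to $H_1$: the second-order operator in $H_1(t,u)$ is, at $b=1$, diagonal in the zonal basis $\{Z_\lambda(\ptimes)\}$ (it is the power-sum avatar of the Laplace--Beltrami operator for Jack polynomials at $\alpha=2$), which is exactly why the zonal series solves the evolution equation. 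Both series equal $1$ at $t=0$, so it suffices to show that the candidate Schur series solves the same equation.

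Writing the series $\sigma=\sum_\mu \tfrac{t^{|\mu|}s_\mu(\ptimes/2)}{\hook(\mu)^2\,o_\mu(1^{u^{-1}})}$ and extracting the coefficient of $t^{n-1}$ in $\partial_t\sigma=H_1\sigma$, the equation collapses to a fixed-degree linear recursion of the schematic form $(n+uB)\sigma_n=\tfrac{u}{2}p_1\,\sigma_{n-1}$, where $B$ is the degree-preserving cut--join--diagonal operator read off from $H_1$. The box-adding term is handled by the Pieri rule $p_1 s_\mu=\sum_{\lambda=\mu+\square}s_\lambda$, while the action of $B$ on $s_\mu$ is computed from the standard cut-and-join action on Schur functions (its diagonal part has eigenvalue $\sum_{\square}c(\square)$) together with its explicit off-diagonal pieces. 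Matching the coefficient of each $s_\lambda$ then converts the recursion into a numerical identity relating $o_\lambda(1^{u^{-1}})$ to the dimensions $o_\mu(1^{u^{-1}})$ of its box-removals $\mu=\lambda-\square$, which I would verify directly from the El Samra--King product formula \eqref{eq:dimWeyl2}: the ratios $o_{\lambda-\square}(1^{u^{-1}})/o_\lambda(1^{u^{-1}})$ telescope into the hook--content factors demanded by $B$.

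The main obstacle is precisely the mismatch of bases: the operator governing the recursion is diagonal in the zonal basis but only block-triangular in the Schur basis, so the verification cannot proceed eigenvalue-by-eigenvalue and instead forces one to control all box-adding and box-removing contributions of $B$ simultaneously. An alternative, more conceptual route is to bypass the evolution equation and prove the zonal-to-Schur identity
\begin{equation*}
\sum_{\lambda\vdash n}\frac{Z_\lambda(\ptimes)}{\hook(2\lambda)\,Z_\lambda(1^{u^{-1}})}=\sum_{\mu\vdash n}\frac{s_\mu(\ptimes/2)}{\hook(\mu)^2\,o_\mu(1^{u^{-1}})}
\end{equation*}
directly, by pairing both sides against principal specializations $p_k\mapsto 2N$ (so that $s_\mu(\ptimes/2)\mapsto s_\mu(1^N)$ and $Z_\lambda(\ptimes)\mapsto Z_\lambda(1^{2N})$) for all integers $N$; each side becomes a rational function of $N$, and agreement for infinitely many $N$ forces the identity. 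This recasts the theorem as a content-summation identity naturally living on the Gelfand pair $(GL_N,O_N)$, whose spherical functions are the zonal polynomials and whose characters supply the dimensions $o_\mu$; the difficulty there is to make the zonal$\leftrightarrow$Schur transition explicit enough to resum, for which the orthogonal-group interpretation of $o_\mu(1^{u^{-1}})$ is the key input.
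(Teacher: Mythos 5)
Your overall strategy --- verify that the Schur-expanded series solves the evolution equation of \Cref{thm:MonotoneEvolutionEquation} at $b=1$ and conclude by uniqueness, having first rewritten the zonal side via the principal specialization $Z_\lambda(1^{u^{-1}})$ --- is exactly the paper's. But the substance of the proof lies in \emph{how} one verifies that the Schur series solves the equation, and there your plan has a genuine gap. Extracting $[t^{n-1}]$ from $\partial_t\sigma=H_1\sigma$ does give a relation of the form $(n+uB)\sigma_n=\tfrac{u}{2}p_1\sigma_{n-1}$, but the degree-preserving operator $B$ at $b=1$ is the $\alpha=2$ Laplace--Beltrami operator: it is diagonal on \emph{zonal} polynomials and has no simple action on Schur functions. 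Its ``diagonal part with eigenvalue $\sum_\square c(\square)$'' is the $b=0$ operator; the extra pieces $\sum_{m,n}p_{m+n}p_m^*p_n^*$ and $\sum_n(n-1)p_np_n^*$ couple \emph{all} partitions of $n$, not just $\lambda$ and its box-removals, so matching the coefficient of $s_\lambda$ does not produce a telescoping identity among the ratios $o_{\lambda-\square}(1^{u^{-1}})/o_\lambda(1^{u^{-1}})$. The paper's way around this is the step you are missing: it first converts the evolution equation into a family of constraints $L_r\tilde{\tau}^{\text{mon}}=0$, $r\geq 1$, which lie in the Lie-algebra representation $r_0$ of $\overline{\mathfrak{gl}}_\infty$ and therefore act on $s_\lambda$ by adding a single ribbon of size $r$ with the explicit coefficient $n+\lambda_j-j+r$. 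The resulting recursion on $a_\lambda(n)=1/(\hook(\lambda)^2 o_\lambda(1^{2n}))$ is then proved as an identity of rational functions in the shifted parts $\rho_i=\lambda_i-i$, by showing the left-hand side has no poles (including at infinity) and hence is a constant, which is checked to vanish.

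Your alternative route is also flawed as stated: pairing against the single principal specialization $p_k\mapsto 2N$ for all integers $N$ does not determine a symmetric function of fixed degree (already in degree $4$, $p_3p_1-p_2^2$ specializes to $0$ for every $N$), so agreement of the two sides for infinitely many $N$ does not force the identity in $\mathbb{Q}(u)[\ptimes][[t]]$. What \emph{does} work in that direction is projecting onto each Schur function via the Hall scalar product --- and indeed the paper notes that this projection of the proven identity is precisely the Oliveira--Novaes conjecture, i.e.\ it is a \emph{consequence} of the theorem rather than an independent route to it.
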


The proof of Theorem~\ref{thm:MonotoneSchurExpansion} consists in showing that the RHS of~\eqref{SchurExpansion} satisfies the evolution equation~\eqref{MonotoneEvolutionEquation}. This is however quite technical\ldots We still sketch the proof because some steps are conceptually interesting (at least the first one is, while the others are kind of classical).

The first step is to transform the evolution equation into a set of differential constraints $L_n \tilde{\tau}^{\text{mon}}(t;\ptimes,u)=0$, $n\geq 1$. The advantage of doing so is that those constraints live on the Lie algebra version of the representation $R_0$\footnote{It is a representation $r_0$ of $\overline{\mathfrak{gl}}_\infty$ which satisfies $e^{r_0(X)} = R_0(e^X)$.}. This implies that those operators are simple enough on Schur functions.

Let $a_\lambda(n) \coloneqq \frac{1}{\hook(\lambda)^2 o_\lambda(1^{2n})}$. Then the constraints give recursion relations on the $a_\lambda(n)$s which we need to prove,
\begin{equation}
	-\frac{t}{2}\delta_{r,1} a_\lambda(n) + t^r\sum_{j=1}^{k}\Bigl(n+\lambda_j-j+r\Bigr)a_{\lambda+r\epsilon_j}(n) = 0
\end{equation}
where $\lambda+r\epsilon_j$ is the partition with a ribbon of size $r$ added to $\lambda$ such that its lowest cell is on the row $j$ (if possible). This is proved as an identity between rational functions of $\rho_i\coloneqq \lambda_i-i$. By showing that the LHS has no poles (including at $\infty$), it is found to be a constant, which in turn is found to be zero.

\begin{remark}
	Oliveira and Novaes conjectured \cite[Conjecture 1]{OliveiraNovaes2021} an equality relating $o_\lambda(1^n)$ to the principal specialization $Z_\lambda(1^n)$. This is in fact just the projection of the equality between the zonal expansion \eqref{NonOrientedMonotoneSeries} and the Schur expansion of \Cref{thm:MonotoneSchurExpansion} on an arbitary Schur function via the Hall scalar product.
\end{remark}

\subsection{Formal-$N$ BKP tau function} The last step in our analysis of $\tilde{\tau}^{\text{mon}}$ is to show that it is a formal-$N$ BKP tau function. From \Cref{thm:PfaffianCoefficients}, we know that it is enough to prove that the coefficients $a_\lambda(n)$ in the Schur expansion are Pfaffian minors.
\begin{theorem}{}{MonotonePfaffianCoefficients}
	Let $n$ be an integer, and suppose that $\ell(\lambda) \leq n$.	Then
	\begin{align}\label{eq:alambdaPfaffian}
		a_\lambda(n) &= \frac{1}{\hook_\lambda^2\cdot o_\lambda(1^{2n})} = \begin{cases} \displaystyle\prod_{k=1}^{n-1}(2k)! \cdot \Pf(a_{\lambda_i+n-i,\lambda_j+n-j})_{1 \leq i,j \leq n}\ \text{ for $n$ even,}\\
			\displaystyle\prod_{k=1}^{n-1}(2k)! \cdot \Pf(a_{\lambda_i+n-i,\lambda_j+n-j})_{1 \leq i,j \leq n+1}\ \text{ for $n$ odd,} \end{cases}\\
		\mbox{where }\ \ \ a_{i,j} &= \begin{cases} \frac{i-j}{4(i+j)\ i!^2 j!^2} &\mbox{ for }\  i,j\geq 1,\\
			\frac{1}{2i!^2} &\mbox{ for }\  j\in \{-1,0\}, i > 0,\\
			1 &\mbox{ for }\ j=-1, i=0,\\
			\frac{-1}{2j!^2} &\mbox{ for }\ i\in \{-1,0\}, j > 0,\\
			-1 &\mbox{ for }\ i=-1, j=0,\\
			{\scriptstyle{0}} &\mbox{ for }\  i,j \in \{-1,0\},i=j,\end{cases} \nonumber
	\end{align}
	and $\lambda_{n+1}=0$ by convention.
\end{theorem}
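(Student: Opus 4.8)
The plan is to evaluate the Pfaffian on the right-hand side of \eqref{eq:alambdaPfaffian} in closed product form and match it against the explicit value $a_\lambda(n)=1/(\hook(\lambda)^2\,o_\lambda(1^{2n}))$ supplied by the dimension formulas. Throughout set $\mu_i\coloneqq\lambda_i+n-i$, so that $\mu_1>\dots>\mu_n\geq 0$ are the shifted parts, with $\mu_n=0$ exactly when $\ell(\lambda)<n$, and for odd $n$ the bordering index contributes $\mu_{n+1}=\lambda_{n+1}+n-(n+1)=-1$. Two classical inputs put the target in product form: the Frobenius/Vandermonde formula for the symmetric-group dimension, $\hook(\lambda)=\prod_i\mu_i!\,/\,\prod_{i<j}(\mu_i-\mu_j)$, and the Weyl formula \eqref{eq:dimWeyl}, which in the variables $\mu_i$ (using $\rho_i-\rho_j=\mu_i-\mu_j$ and $\rho_i+\rho_j+2n=\mu_i+\mu_j$) reads $so_\lambda(1^{2n})=\prod_{i<j}\frac{(\mu_i-\mu_j)(\mu_i+\mu_j)}{(j-i)(2n-i-j)}$. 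Combining these with the case distinction \eqref{eq:dimOrthogonal} gives the explicit target
\[
\frac{1}{\hook(\lambda)^2\,o_\lambda(1^{2n})}=\frac{1}{2^{[\ell(\lambda)=n]}}\,\frac{1}{\prod_i\mu_i!^2}\prod_{1\leq i<j\leq n}\frac{(\mu_i-\mu_j)(j-i)(2n-i-j)}{\mu_i+\mu_j}.
\]

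Next I would compute the Pfaffian. When all $\mu_i\geq 1$ (i.e.\ $\ell(\lambda)=n$) every entry factors as $a_{\mu_i,\mu_j}=\frac{1}{\mu_i!^2\mu_j!^2}\cdot\frac{\mu_i-\mu_j}{4(\mu_i+\mu_j)}$, so by multilinearity of the Pfaffian one pulls $1/\mu_i!^2$ out of each row and column and is left with the Cauchy-type Pfaffian of $\big(\tfrac{\mu_i-\mu_j}{4(\mu_i+\mu_j)}\big)$. The key external ingredient is Schur's classical evaluation $\Pf\big(\tfrac{x_i-x_j}{x_i+x_j}\big)_{1\leq i,j\leq 2m}=\prod_{i<j}\tfrac{x_i-x_j}{x_i+x_j}$, together with its bordered version in which $\big(\tfrac{x_i-x_j}{x_i+x_j}\big)$ is bordered by a column of ones and the Pfaffian equals the same product over an odd number of nodes (obtained from Schur's identity by letting one node tend to infinity). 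For even $n$ this yields directly, and for odd $n$ the border index $-1$ plays the role of the constant column (its reduced entries $a_{i,-1}\mu_i!^2=\tfrac12$ become constant after rescaling), the uniform formula $\Pf(a_{\mu_i,\mu_j})=\frac{1}{2^{\,n}\prod_i\mu_i!^2}\prod_{i<j}\frac{\mu_i-\mu_j}{\mu_i+\mu_j}$.

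The boundary index $0$ is then reconciled with the definition of $o_\lambda$. The point is that the special values $a_{i,0}=\tfrac{1}{2i!^2}$ and $a_{0,-1}=1$ are exactly \emph{twice} the naive limits $\lim_{y\to0}\frac{i-y}{4(i+y)i!^2}=\tfrac{1}{4i!^2}$ and $\tfrac12$ that the generic Cauchy formula assigns to a node at $0$, while $a_{0,0}=0$ is consistent with that limit. Hence whenever $\ell(\lambda)<n$ the single row and column carrying the index $0$ equals twice the generic one, and multilinearity multiplies the Pfaffian by a compensating factor $2$; this extra $2$ is precisely what turns $2so_\lambda$ into $so_\lambda$ as required by \eqref{eq:dimOrthogonal}. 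Assembling the four cases ($n$ even/odd, $\ell(\lambda)=n$ or $<n$), all the $2$-powers and the $4^{n/2}$ from the Cauchy kernel reduce the prefactor matching to the single elementary identity
\[
\prod_{k=1}^{n-1}(2k)!=2^{\,n-1}\prod_{1\leq i<j\leq n}(j-i)(2n-i-j),
\]
which I would prove from the superfactorial evaluation $\prod_{i<j}(j-i)=\prod_{k=1}^{n-1}k!$ and a direct evaluation of the companion product $\prod_{i<j}(2n-i-j)$. Multiplying the computed Pfaffian by $\prod_{k=1}^{n-1}(2k)!$ then reproduces the boxed target in every case, proving \eqref{eq:alambdaPfaffian}; together with \Cref{thm:MonotoneSchurExpansion} and \Cref{thm:PfaffianCoefficients} this also completes the proof that $\tilde\tau^{\text{mon}}$ is a formal-$N$ BKP tau function.

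I expect the main obstacle to be the bookkeeping around the degenerate indices $0$ and $-1$: one must check that the three exceptional entry-values in \eqref{eq:alambdaPfaffian} are simultaneously the right ones to (i) make the odd-$n$ Pfaffian well defined via bordering, (ii) produce exactly the factor $2$ distinguishing $o_\lambda$ from $so_\lambda$, and (iii) stay consistent with $a_{0,0}=a_{-1,-1}=0$. Verifying that a single choice of constants discharges all of these uniformly across the four parity/length cases, with every factor of $2$ cancelling cleanly against $\prod_k(2k)!$, is the delicate part; by contrast the Schur--Pfaffian identity and the dimension formulas \eqref{eq:dimWeyl}--\eqref{eq:dimWeyl2} are used off the shelf.
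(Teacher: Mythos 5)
Your proposal is correct and follows essentially the same route as the paper: it applies Schur's Pfaffian identity (the paper's Lemma \ref{lem:SchurPfaffian}, including its bordering convention for a node at $0$) to the Weyl product formula \eqref{eq:dimWeyl}--\eqref{eq:dimOrthogonal}, pulls the factorial weights out of the Pfaffian by row/column scaling, and matches constants via the identity $\prod_{k=1}^{n-1}(2k)!=2^{n-1}\prod_{i<j}(j-i)(2n-i-j)$. Your tracking of the degenerate indices $0$ and $-1$ (the factor $2$ reconciling $o_\lambda$ with $so_\lambda$, and the border column for odd $n$) is exactly the "algebraic manipulations" the paper alludes to, and it checks out.
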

It relies on the classical (very nice nonetheless) Schur's Pfaffian identity.
\begin{lemma}
	\label{lem:SchurPfaffian}
	Let $n$ be an integer and $x_1, \dotsc, x_n$ be $n$
	real variables. Set $x_{n+1}:=0$ by convention, and
	$x_{ij} := \frac{x_i-x_j}{x_i+x_j}$ for $i,j=1,
	\dotsc, n+1$, with $x_{ij}=\begin{cases} 1 \text{ if }
		x_i=x_j=0 \text{ and } i\neq j,\\ 0 \text{ if }
		x_i=x_j=0 \text{ and } i=j.\end{cases}$. Then
	\begin{equation}
		\label{eq:SchurPfaffian}
		\prod_{1 \leq i < j \leq n}\frac{x_i-x_j}{x_i+x_j} = \begin{cases} \Pf( x_{ij})_{1\leq i,j\leq n} \qquad & \text{for $n$ even,}\\
			\Pf( x_{ij})_{1\leq i,j\leq n+1} & \text{for $n$ odd.} \end{cases}
	\end{equation}
\end{lemma}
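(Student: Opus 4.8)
The plan is to prove the identity in two moves: first reduce the odd case to the even case, and then establish the even case by induction on the size, where the inductive step collapses the whole statement to a one‑variable partial‑fraction identity that I can settle by a residue comparison. For the reduction, when $n$ is odd I would use the prescribed convention $x_{n+1}=0$: since $\frac{x_i-0}{x_i+0}=1$ for every $i\le n$, the product $\prod_{1\le i<j\le n}\frac{x_i-x_j}{x_i+x_j}$ is unchanged if the pair index is allowed to run up to $n+1$. Hence the odd‑$n$ statement is precisely the even‑size statement for the $(n+1)$ variables $(x_1,\dotsc,x_n,0)$, and it suffices to treat $N=2m$ even.

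For the even case I would induct on $m$. The base $m=1$ is immediate, as $\Pf\begin{pmatrix}0 & x_{12}\\ -x_{12} & 0\end{pmatrix}=x_{12}$. For the step, expand the Pfaffian of $X=(x_{ij})_{1\le i,j\le 2m}$ along its first row,
\[
\Pf(X)=\sum_{j=2}^{2m}(-1)^j x_{1j}\,\Pf\bigl(X^{(1,j)}\bigr),
\]
where $X^{(1,j)}$ is obtained by deleting rows and columns $1$ and $j$. The induction hypothesis identifies each $\Pf(X^{(1,j)})$ with the Schur product over the surviving indices. After dividing by the common inner factor $P=\prod_{2\le a<b\le 2m}\frac{x_a-x_b}{x_a+x_b}$ and collecting the factors of $P$ that involve the deleted index $j$, the target identity reduces, with $y=x_1$, to the single‑variable statement
\[
\sum_{j=2}^{2m}\frac{y-x_j}{y+x_j}\prod_{\substack{l=2\\ l\neq j}}^{2m}\frac{x_j+x_l}{x_j-x_l}
=\prod_{k=2}^{2m}\frac{y-x_k}{y+x_k}.
\]

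To prove this reduced identity I would view both sides as rational functions of $y$ with only simple poles, located at $y=-x_k$. Writing $A_k=\prod_{l\neq k}\frac{x_k+x_l}{x_k-x_l}$, a direct computation gives residue $-2x_k A_k$ at $y=-x_k$ on \emph{both} sides, so their difference is a polynomial in $y$; since both sides are bounded as $y\to\infty$, this polynomial is a constant equal to $\bigl(\sum_j A_j\bigr)-1$. That constant vanishes by the auxiliary identity $\sum_{j}\prod_{l\neq j}\frac{x_j+x_l}{x_j-x_l}=1$ valid for an odd number $2m-1$ of variables, which I would obtain by expanding $h(y)=\prod_l\frac{y+x_l}{y-x_l}$ in partial fractions, reading off its residues as $2x_j A_j$, and evaluating at $y=0$ using $h(0)=(-1)^{2m-1}=-1$.

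The analytic content here is light; the delicate part is the sign bookkeeping in the Pfaffian row‑expansion. The main obstacle will be verifying that the sign $(-1)^j$ combines correctly with the index‑$j$ factors of $P$ so that, after the division by $P$, the coefficient of $\frac{y-x_j}{y+x_j}$ is exactly $A_j$ and not $A_j$ up to a spurious sign. Once that cancellation is pinned down, the residue matching and the partial‑fraction evaluation at $y=0$ are routine and close the induction.
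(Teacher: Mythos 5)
Your proof is correct. Note that the paper does not actually prove this lemma: it is quoted as ``the classical Schur Pfaffian identity'' and used as a black box, so there is no in-text argument to compare against. Your derivation is a clean, self-contained route: the reduction of the odd case to the even case via the dummy variable $x_{n+1}=0$ is immediate since all the extra factors equal $1$; the sign bookkeeping in the row expansion does work out as you suspected, because the $(-1)^{j}$ from the Pfaffian expansion cancels exactly against the $(-1)^{j-2}$ produced by reorienting the $j-2$ factors $\frac{x_l-x_j}{x_l+x_j}$ with $l<j$, leaving the coefficient $A_j=\prod_{l\neq j}\frac{x_j+x_l}{x_j-x_l}$ with no spurious sign; and the residue comparison together with the evaluation $h(0)=(-1)^{2m-1}=-1$ correctly forces $\sum_j A_j=1$ and closes the induction. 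The only point worth making explicit is that the whole argument is carried out for generic $x_i$ (pairwise distinct, nonzero, with $x_i+x_j\neq 0$), so that every denominator and every residue is defined; the statement for arbitrary real values with the stated conventions then follows since both sides are rational functions agreeing on a dense set. This is exactly the kind of elementary proof one finds in the literature for Schur's identity (alongside proofs via the Cauchy determinant or minor summation formulas), so it is a legitimate and complete substitute for the omitted classical reference.
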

It is applied to \eqref{eq:dimOrthogonal} (with some algebraic manipulations) to get \Cref{thm:MonotonePfaffianCoefficients}. As a consequence of \Cref{thm:MonotoneSchurExpansion}, \Cref{thm:PfaffianCoefficients} and \Cref{thm:MonotonePfaffianCoefficients}, we find \Cref{thm:MonotoneBKP}.

\chapter{Colored triangulations from prescribed building blocks} \label{sec:PrescribedBubbles}
In this chapter we introduce the main questions we will tackle in the coming chapters, and the key notion of \emph{boundary bubbles} which is used to formulate the Tutte-like equations for colored graphs.

\section{Colored graphs from prescribed bubbles} 
\subsection{Rooted graphs} For enumeration purposes, it is simpler to work with rooted graphs. \emph{A root in a colored graph is a marked white vertex}, or equivalently a marked edge of any color (the root being the white vertex it is connected to). Note that a root can be used to obtain a labeling of the vertices of the graph, by complementing it e.g. with any algorithm which uses the colors to move into the graph. This shows that rooting removes the symmetries. For colored triangulations, the root is a marked white $d$-simplex, or equivalently a marked $(d-1)$-simplex of any color.

Instead of considering the whole set of colored triangulations, we focus on those which are built by gluing some prescribed CBBs. Let $\{\CBB_1, \dotsc, \CBB_N\}$ be a finite set of CBBs and $n_1, \dotsc, n_N$ some positive integers. Let $\cT_{n_1, \dotsc, n_N}(\CBB_1, \dotsc, \CBB_N)$ be the set of connected, rooted colored triangulations built from $n_i$ copies of the CBB $\CBB_i$ for $i=1, \dotsc, N$. Similarly, let $\{\bb_1, \dotsc, \bb_N\}$ be the set of bubbles corresponding to $\{\CBB_1, \dotsc, \CBB_N\}$ and $\cG_{n_1, \dotsc, n_N}(\bb_1, \dotsc, \bb_N)$ be the set of connected, rooted colored graphs with $n_i$ copies of the bubble $\bb_i$, for $i=1, \dotsc, N$.

\subsection{Grading with respect to the number of $(d-2)$-simplices}  For $\trisp\in \cT_{n_1, \dotsc, n_N}(\CBB_1, \dotsc, \CBB_N)$ we further denote $\Delta_{ab}(\trisp)$ the number of $(d-2)$ simplices of $\trisp$ labeled with the pair of colors $\{a, b\}$ and given some integers $\{\delta_{ab}\}_{a<b}$,
\begin{equation}
	\cT_{n_1, \dotsc, n_N}(\CBB_1, \dotsc, \CBB_N | \{\delta_{ab}\}) = \left\{\trisp \in \cT_{n_1, \dotsc, n_N}(\CBB_1, \dotsc, \CBB_N), \forall\ a<b \quad \Delta_{ab}(\trisp) = \delta_{ab} \right\}.
\end{equation}
For $\graph\in \cG_{n_1, \dotsc, n_N}(\bb_1, \dotsc, \bb_N)$, we denote $C_{ab}(\graph)$ the number of bicolored cycles with colors $\{a, b\}$ and we introduce the subset of rooted colored graphs built from the bubbles $\{\bb_i\}$ with a prescribed number of bicolored cycles
\begin{equation}
	\cG_{n_1, \dotsc, n_N}(\bb_1, \dotsc, \bb_N | \{\delta_{ab}\}) = \left\{\graph \in \cG_{n_1, \dotsc, n_N}(\bb_1, \dotsc, \bb_N), \forall\ a<b \quad C_{ab}(\graph) = \delta_{ab} \right\}.
\end{equation}

From \Cref{thm:ColoredGraphs} (third point), we know that
\begin{equation}
	\Delta_{ab}(\trisp) = C_{ab}(\graph(\trisp)),
\end{equation}
and from \Cref{thm:CBB} we know that sampling triangulations by CBBs is the same as sampling colored graphs by bubbles. This shows that 
\begin{equation}
	\cT_{n_1, \dotsc, n_N}(\CBB_1, \dotsc, \CBB_N | \{\delta_{ab}\}) \cong \cG_{n_1, \dotsc, n_N}(\bb_1, \dotsc, \bb_N | \{\delta_{ab}\}).
\end{equation}
This can obviously be extended to $k$-simplices of colors $\{c_1, \dotsc, c_{d-k}\}$ and connected components of $\graph(c_1, \dotsc, c_{d-k})$, but as already explained, we will be exclusively interested in the number of $(d-2)$-simplices. 

The reason why \Cref{thm:Gurau} and the Gurau-Schaeffer classification \cite{GurauSchaeffer} of colored graphs with respect to Gurau's degree are not enough to classify the colored triangulations of $\cT_{n_1, \dotsc, n_N}(\CBB_1, \dotsc, \CBB_N)$ is that in general this set does not contain triangulations of vanishing Gurau's degree and consequently \cite{GurauSchaeffer} does not apply. Indeed, triangulations of vanishing Gurau's degree are melonic and it is easy to see by removing the color 0 that all their CBBs are melonic too (their colored graphs are melonic with $d$ colors).

Colored triangulations which are built from non-melonic CBBs cannot have vanishing Gurau's degree, because they cannot grow as many $(d-2)$-simplices. In fact, the Gurau-Schaeffer classification shows that for non-melonic CBBs there is only a finite number of colored triangulations at fixed value of Gurau's degree. This means that no notion of large scale, continuous limit can be reached, and universality cannot be studied using Gurau's degree in general.

If $\graph\in\cG_{n_1, \dotsc, n_N}(\bb_1, \dotsc, \bb_N)$, we denote $C(\graph)$ the total number of bicolored cycles of colors $\{a,b\}$ for $0\leq a<b\leq d$. Fixing the bubbles $\bb_i$ and their numbers $n_i$ actually fixes the number of bicolored cycles with colors $\{a, b\}$ for $1\leq a<b\leq d$, i.e. those which do not have the color 0. Let us denote $C(\bb_i)$ the total number of bicolored cycles of $\bb_i$ and 
\begin{equation}
	C_0(\graph) = \sum_{a=1} C_{0a}(\graph)
\end{equation}
the total number of bicolored cycles with colors $\{0, a\}$. Therefore
\begin{equation}
	C(\graph) = \sum_{0\leq a<b\leq d} C_{ab}(\graph) = C_0(\graph) + \sum_{i=1}^N n_i\, C(\bb_i).
\end{equation}
Since each $C(\bb_i)$ is fixed in $\cG_{n_1, \dotsc, n_N}(\bb_1, \dotsc, \bb_N)$, the classification with respect to $C(\graph)$ is equivalent to the classification with respect to $C_0(\graph)$. This establishes the main question below.

\section{Main questions} \label{sec:MainQuestion} 
\subsection{The linear growth hypothesis} Here we fix the bubbles $\bb_1, \dotsc, \bb_N$. Let $v_i$ be the number of vertices of $\bb_i$, so that $V\equiv \sum_{i=1}^N v_i n_i$ is the number of vertices of any $\graph\in\cG_{n_1, \dotsc, n_N}(\bb_1, \dotsc, \bb_N)$. Denote 
\begin{equation}
	C_{n_1 \dotsc n_N}(\bb_1, \dotsc, \bb_N) = \max_{\graph \in \cG_{n_1, \dotsc, n_N}(\bb_1, \dotsc, \bb_N)} C_0(\graph)
\end{equation}
the maximal number of bicolored cycles with colors $\{0, a\}$, $a=1, \dotsc, d$ at fixed numbers $n_1, \dotsc, n_N$ of bubbles, and 
\begin{equation} \label{Gmax}
	\cG^{\max}_{n_1, \dotsc, n_N}(\bb_1, \dotsc, \bb_N) = \Bigl\{\graph \in \cG_{n_1, \dotsc, n_N}(\bb_1, \dotsc, \bb_N),\ C_0(\graph) = C_{n_1, \dotsc, n_N}(\bb_1, \dotsc, \bb_N)\Bigr\}
\end{equation}
the subset of graphs which have this maximal number of bicolored cycles. The main question is two-fold.
\begin{itemize}
	\item Find $C_{n_1 \dotsc n_N}(\bb_1, \dotsc, \bb_N)$. This is equivalent to a sharp bound on $C_0(\graph)$. As it turns out, we have always found in practice that it \emph{grows linearly with the size of the graph}. Since the size of the graph is a function of every $n_i$, $i=1, \dotsc, N$, independently, there is a natural \emph{asymptotic} linear growth hypothesis,
	\begin{equation}
		C_{n_1 \dotsc n_N}(\bb_1, \dotsc, \bb_N) \thicksim_{n_i\to\infty} \alpha(\bb_i) n_i,
	\end{equation}
	for some constant $\alpha(\bb_i)$. We have always found in practice a much stronger version: for the same constants $\alpha(\bb_1), \dotsc, \alpha(\bb_N)$,
	\begin{equation} \label{LinearGrowth}
		C_{n_1 \dotsc n_N}(\bb_1, \dotsc, \bb_N) = d + \sum_{i=1}^N \alpha(\bb_i) n_i,
	\end{equation}
	(the shift by $d$ is found by setting all $n_i=0$). By comparison with Gurau's theorem in \Cref{thm:Gurau}, the constants satisfy
	\begin{equation} \label{GurauValue}
		\alpha(\bb_i) \leq \frac{d(d-1)}{4} v_i - \sum_{1\leq a<b\leq d} C_{ab}(\bb_i)
	\end{equation}
	where $C_{ab}(\bb_i)$ is the number of bicolored cycles with colors $\{a,b\}$ of $\bb_i$, and with equality if and only if either $\bb_i$ is melonic at $d\geq 3$, or $d=2$. This is the linear growth hypothesis, which we have been able to prove for some set of bubbles $(\bb_1, \dotsc, \bb_N)$.
	\item The second point is much simpler to state, but not to solve: Characterize the graphs which maximize the number of bicolored cycles, i.e. $\cG^{\max}_{n_1, \dotsc, n_N}(\bb_1, \dotsc, \bb_N)$. We were able to solve this question to some extent for a few families, as presented in the following chapters.
\end{itemize}

At $d=2$, let us apply Euler's relation \eqref{EulerColoredGraph} for 3-colored graphs to the present setting. Bicolored cycles have the pairs of colors $\{0,1\}$, $\{0,2\}$ and $\{1,2\}$. Each bubble $\bb_i$ contributes to one bicolored cycles with colors $\{1,2\}$, so $C(\graph) = C_0(\graph) + \sum_{i=1}^N n_i$. The total number of vertices is $\sum_{i=1}^N v_i n_i$, then \eqref{EulerColoredGraph} gives
\begin{equation} \label{EulerPrescribedBubbles}
	C_{0}(\graph) - \frac{1}{2}\sum_{i=1}^N (v_i-2) n_i = 2 - 2g(\graph),
\end{equation}
where $g(\graph)$ was defined as the genus of the associated bipartite map. This gives
\begin{equation} \label{MaxFacesPrescribedBubbles}
	C_{n_1 \dotsc n_N}(\bb_1, \dotsc, \bb_N) = 2 + \frac{1}{2}\sum_{i=1}^N (v_i-2) n_i
\end{equation}
i.e. \eqref{LinearGrowth} is satisfied with $\alpha(\bb_i)=(v_i-2)/2$, and in agreement with Gurau's value \eqref{GurauValue}, no surprises there. Notice the coefficient $1/2$ which is independent of the bubble. A major change in higher dimensions is that it is not the case anymore.
\subsection{Root bubble} In order to study the structure of a graph $\graph\in\cG^{\max}_{n_1, \dotsc, n_N}(\bb_1, \dotsc, \bb_N)$ around a given bubble $\bb\subset \graph$, it is convenient to mark this bubble, then called the \emph{root bubble}, and to label its vertices\footnote{It can also be achieved by rooting the bubble and using the colors to induce a labeling of the vertices with any prescribed algorithm.}. If $\bb$ is a bubble, we denote $\vec{\bb}$ a labeled version.

We also allow $\bb$ to differ $\bb_1, \dotsc, \bb_N$ and denote $\cG_{n_1, \dotsc, n_N}(\vec{\bb};\bb_1, \dotsc, \bb_N)$ the set of graphs made out of $n_i$ copies of $\bb_i$ for $i=1, \dotsc, N$ and containing one marked copy of $\vec{\bb}$. There is an obvious inclusion\footnote{If $\bb$ is the same as one of the $\bb_i$s, we still think of it as a different one.}
\begin{equation} \label{RootBubbleInclusion}
	\cG_{n_1, \dotsc, n_N}(\vec{\bb};\bb_1, \dotsc, \bb_N) \hookrightarrow \cG_{1,n_1, \dotsc, n_N}(\bb,\bb_1, \dotsc, \bb_N),
\end{equation}
where one uses for instance the vertex with label 1 to root the graph, then removing the other labels.
In the case of maps, marking a bubble $\vec{\bb}$ with labeled vertices means marking a face of a given degree with labeled vertices.

In order to discuss the main questions established above, we also pull apart those graphs which maximize the number of bicolored cycles at fixed $n_1, \dotsc, n_N$. Let $C_{n_1, \dotsc, n_N}(\vec{\bb};\bb_1, \dotsc, \bb_N)$ be the maximal value of the number of bicolored cycles among $\graph\in\cG_{n_1, \dotsc, n_N}(\vec{\bb};\bb_1, \dotsc, \bb_N)$, then denote (it is the same as \eqref{Gmax} with a root bubble)
\begin{equation} 
	\cG^{\max}_{n_1, \dotsc, n_N}(\vec{\bb};\bb_1, \dotsc, \bb_N) = \Bigl\{\graph \in \cG_{n_1, \dotsc, n_N}(\vec{\bb};\bb_1, \dotsc, \bb_N),\ C_0(\graph) = C_{n_1, \dotsc, n_N}(\vec{\bb};\bb_1, \dotsc, \bb_N)\Bigr\}.
\end{equation}

Also as in the case of rooted graphs, we expect the same linear growth of the number of bicolored cycles at large size. This is because the difference between $\cG^{\max}_{n_1, \dotsc, n_N}(\vec{\bb};\bb_1, \dotsc, \bb_N)$ and $\cG^{\max}_{n_1, \dotsc, n_N}(\bb_1, \dotsc, \bb_N)$ is ``local'', i.e. the number of bicolored cycles with colors $\{0,c\}$ which go through $\vec{\bb}$ remains finite (as long as $\vec{\bb}$ is) when the number of vertices becomes large. Thus we expect
\begin{equation}
	C_{n_1, \dotsc, n_N}(\vec{\bb};\bb_1, \dotsc, \bb_N) \thicksim_{n_i\to\infty} C_{n_1, \dotsc, n_N}(\bb_1, \dotsc, \bb_N).
\end{equation}
If the stronger linear growth hypothesis \eqref{LinearGrowth} holds for the set $\{\bb, \bb_1, \dotsc, \bb_N\}$, then from the inclusion \eqref{RootBubbleInclusion}
\begin{equation} \label{LinearGrowthRootBubble}
	C_{n_1, \dotsc, n_N}(\vec{\bb};\bb_1, \dotsc, \bb_N) = d + \alpha(\bb) + \sum_{i=1}^N \alpha(\bb_i) n_i.
\end{equation}
It also makes clear that $\alpha(\bb)$ can be found by maximizing the number of bicolored cycles at $n_i=0$ for all $i=1, \dotsc, N$. This corresponds to graphs with a single bubble, or dually triangulations with a single CBB. Studying those objects will be the specific topic of Chapter \ref{sec:1CBB} so we refrain to add further comments at this point.

\section{Generating series}
\subsection{Definitions} It is well-known since Tutte that it is possible to use a decomposition at the root edge in order to write equations on the generating series of maps (this is the cut-and-join idea mentioned in the previous chapters). As also well-known, this requires to track the degree of the root face. The same type of decomposition can be used with generating series of colored graphs. This requires to keep track of the root bubble. We use formal variables $p_1, \dotsc, p_N$ to track the bubbles and $x$ to track the bicolored cycles and $t$ for the number of edges of color 0. Introduce for any $\vec{\bb}$ the generating series 
\begin{equation} \label{GF}
	\langle \vec{\bb}\rangle \equiv \sum_{n_1, \dotsc, n_N\geq 0} t^{\frac{v(\bb) + \sum_{i=1}^N v_in_i}{2}}\Biggl(\sum_{\graph\in\cG_{n_1, \dotsc, n_N}(\vec{\bb};\bb_1, \dotsc, \bb_N)} x^{C_0(\graph)}\Biggr) \prod_{i=1}^N p_i^{n_i}.
\end{equation}
and
\begin{equation}
	\langle \vec{\bb}\rangle_{\max} = \sum_{n_1, \dotsc, n_N\geq 0} t^{\frac{v(\bb) + \sum_{i=1}^N v_in_i}{2}} |\cG^{\max}_{n_1, \dotsc, n_N}(\vec{\bb};\bb_1, \dotsc, \bb_N)| x^{C_{n_1, \dotsc, n_N}(\vec{\bb};\bb_1, \dotsc, \bb_N)} \prod_{i=1}^N p_i^{n_i}
\end{equation}
by restricting the sum of \eqref{GF} to $\cG^{\max}_{n_1, \dotsc, n_N}(\vec{\bb};\bb_1, \dotsc, \bb_N)$. Assuming the linear growth \eqref{LinearGrowthRootBubble} of $C_{n_1, \dotsc, n_N}(\vec{\bb};\bb_1, \dotsc, \bb_N)$, we can factor out the $x$-dependence by rescaling the variables $p_i\to p_i x^{-\alpha(\bb_i)}$,
\begin{equation}
	\langle \vec{\bb}\rangle_{\max|p_i\to p_i x^{-\alpha(\bb_i)}} = x^{d+\alpha(\bb)} \sum_{n_1, \dotsc, n_N\geq 0} t^{\frac{v(\bb) + \sum_{i=1}^N v_in_i}{2}} |\cG^{\max}_{n_1, \dotsc, n_N}(\vec{\bb};\bb_1, \dotsc, \bb_N)| \prod_{i=1}^N p_i^{n_i},
\end{equation}
where $v_i$ is the number of vertices of $\bb_i$.

At $d=2$, $\alpha(\bb) = \frac{v(\bb)-2}{2}$ and the above rescaling has the effect of giving the same power of $x$ to all maps of the same genus,
\begin{equation}
	x^{C_0(\graph)} \prod_{i=1}^N p_i^{n_i}{}_{|p_i\to p_i x^{-\alpha(\bb_i)}} = x^{1 + \frac{v(\vec{\bb})}{2}-2g(\graph)} \prod_{i=1}^N p_i^{n_i}.
\end{equation}
For general $d$ one can perform this rescaling already in $\langle \vec{\bb}\rangle$ and get a $1/x$-expansion. For instance at $d=2$ again
\begin{equation}
	\langle\vec{\bb}\rangle_{|p_i\to p_i x^{-\frac{v_i-2}{2}}} = \sum_{g\geq 0} x^{1 + \frac{v(\vec{\bb})}{2} - 2g} \sum_{n_1, \dotsc, n_N\geq 0} t^{\frac{v(\bb) + \sum_{i=1}^N v_in_i}{2}} |\cG_{n_1, \dotsc, n_N}^{(g)}(\vec{\bb};\bb_1, \dotsc, \bb_N)| \prod_{i=1}^N p_i^{n_i}
\end{equation}
where $\cG_{n_1, \dotsc, n_N}^{(g)}(\vec{\bb};\bb_1, \dotsc, \bb_N)$ is the restriction of $\cG_{n_1, \dotsc, n_N}(\vec{\bb};\bb_1, \dotsc, \bb_N)$ to genus $g$.

\subsection{Bubble subgraphs and bubble trees} We generalize the notion of root bubble to root subgraphs. Consider some bubbles $\vec{\bb}_1', \dotsc, \vec{\bb}_{N'}'$ (with labeled vertices and some can be the same bubbles) and a connected graph $\vec{H}$ whose bubbles are $\vec{\bb}_1', \dotsc, \vec{\bb}_{N'}'$ connected by some edges of color 0, but some vertices are allowed to have no incident edge of color 0. We call that type of graphs \emph{bubble subgraphs}. As a special case of bubble subgraphs, \emph{bubble trees} are those where the edges of color 0 are cut-edges.

We define $\cG(\vec{H};\bb_1, \dotsc, \bb_N)$ as the set of colored graphs built from $n_i$ copies of $\bb_i$ for $i=1..N$ and one copy of $\vec{H}$, by adding edges of color 0 until all vertices are incident to one. Then we define the same series as above,
\begin{equation} \label{SubgraphExpectation}
	\langle \vec{H}\rangle \equiv \sum_{\graph\in\cG(\vec{H};\bb_1, \dotsc, \bb_N)} t^{E_0(\graph)} x^{C_0(\graph)} \prod_{i=1}^N p_i^{n_i(\graph)}
\end{equation}
Here $E_0(\graph), C_0(\graph)$ are respectively the numbers of edges of color 0 and of bicolored cycles with colors $\{0,c\}$ in $\graph$ which are not contained in $\vec{H}$, and $n_i(\graph)$ the number of copies of $\bb_i$ in $\graph$ (which are not in $\vec{H}$ even though some $\bb'_i$s may be the same as some $\bb_j$s).

\section{Boundary bubbles and the Tutte/Schwinger-Dyson equations} 
There are two types of edges of color 0: those connecting two vertices of the same bubble, and those connecting two vertices of different bubbles. They give rise to the notions of \emph{self-contractions} and \emph{bridge-contractions} which generate a dynamics on bubbles, captured by the notion of \emph{boundary bubble}.
\subsection{Boundary bubbles} If a bubble subgraph $\vec{H}$ has no edges of color 0, then it is a bubble. Otherwise, we ask if there is a bubble $\bb_{\vec{H}}$ such that $\langle \vec{H}\rangle = \langle \bb_{\vec{H}}\rangle$. It exists, although it might not be connected and is called the boundary bubble of $\vec{H}$ and denoted $\partial \vec{H}$.
\begin{definition-lemma}
	The contraction of an edge $e$ of color 0 is the following move
	\begin{equation}
		\includegraphics[scale=.5,valign=c]{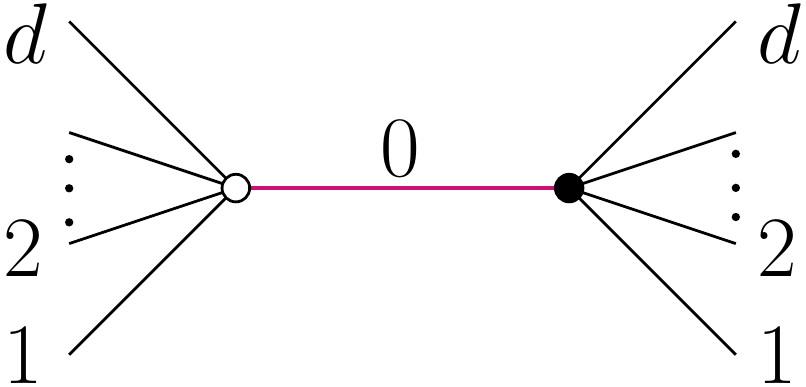} \quad\to\quad \includegraphics[scale=.5,valign=c]{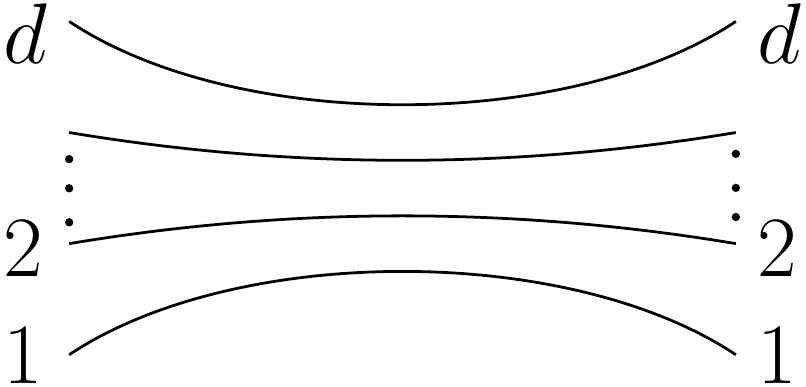}
	\end{equation}
	which we denote by the operator $\partial_e$. By applying $\partial_e$ to all edges of color 0, and eliminating possible closed loops, one obtains a disjoint union of bubbles called the boundary bubble. It is independent of the order of the contractions, and denoted
	\begin{equation}
	\partial \vec{H} \equiv \prod_{\text{$e$ of color 0}} \partial_e \vec{H}.
	\end{equation}
	We have
	\begin{equation} \label{BoundaryExpectation}
		\langle \vec{H}\rangle = \langle \partial \vec{H}\rangle.
	\end{equation}
\end{definition-lemma}
In other words, the series $\langle \vec{H}\rangle$ only sees the boundary. Notice that $\partial \vec{H}$ is obtained by keeping the vertices which have no incident edge of color 0 and drawing an edge of color $c\in[1..d]$ between two of them if there is a sequence of edges with colors $\{0,c\}$ between them in $\vec{H}$.
\begin{remark}
	If the bubbles incident to the white and black vertices on the left are distinct, and if one of them is dual to a $d$-ball, then the contraction does not change the topology of the dual triangulation.
\end{remark}

%

\subsection{Tutte-like decomposition} 
Let $\graph\in\cG(\vec{\bb};\bb_1, \dotsc, \bb_N)$ and let $v\in\vec{\bb}\subset\graph$ be the root vertex, e.g. the white vertex with label 1. There is an edge of color 0 incident to $v$ in $\graph$. The decomposition consists in looking at which black vertex $\bar{v}$ it is connected to. The vertex $\bar{v}$ can belong to
\begin{itemize}
	\item $\vec{\bb}$ itself, with weight $t$, and $x$ to the power the number $C(v, \bar{v})$ of edges with colors in $[1..d]$ between $v$ and $\bar{v}$,
	\item or to a copy of $\bb_i$ for $i\in[1..N]$, with weight $t p_i/|\operatorname{Aut}(\vec{\bb}_i)|$, where $|\operatorname{Aut}(\vec{\bb})|$ is the order of the symmetry group of $\vec{\bb}$.
\end{itemize}
We obtain
\begin{equation} \label{SDEwithEdges}
	\begin{aligned}
		\langle \vec{\bb}\rangle 
		= t\sum_{\bar{v}\in\vec{B}} x^{C(v,\bar{v})} \left\langle \includegraphics[scale=.5,valign=c]{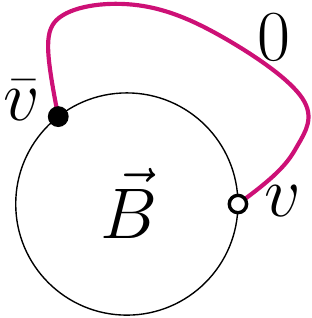} \right\rangle + t\sum_{i=1}^N \frac{p_i}{|\operatorname{Aut}(\vec{\bb}_i)|} \sum_{\bar{v}\in\bb_i} \left\langle \includegraphics[scale=.5,valign=c]{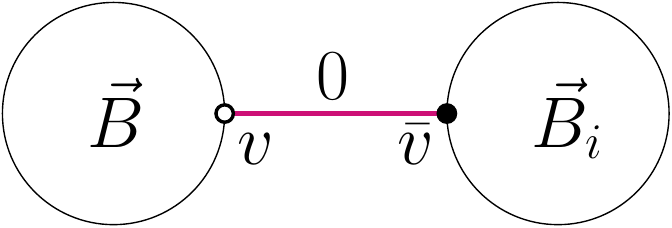} \right\rangle
	\end{aligned}
\end{equation}
where in the last term an arbitrary labeling of $\bb_i$ is used to perform the sum over all its black vertices. Equivalently, one can remove $1/|\operatorname{Aut}(\vec{\bb}_i)|$ and sum over non-equivalent vertices.

\subsection{Self-contractions and bridge-contractions} In the second line we have series of subgraphs $\vec{H}$ as defined in \eqref{SubgraphExpectation}, and we can use the boundary bubbles to recast them as depending on bubbles instead of subgraphs. Denote
\begin{equation}
	\vec{\bb}_{/(v, \bar{v})} = \partial \includegraphics[scale=.5,valign=c]{RootedBubbleContraction.pdf}, \qquad \vec{\bb}_v*\vec{\bb}_{i,\bar{v}} = \partial \includegraphics[scale=.5,valign=c]{RootedBubbleGluing.pdf}
\end{equation}
where the first operation is called a \emph{self-contraction} and the second a \emph{bridge-contraction} (because the edge of color 0 is a bridge, i.e. a cut-edge). Then the previous equation gives rise the following theorem, initially derived as the Schwinger-Dyson equations of random tensor models.

\begin{theorem}{}{} \cite{SDE-Gurau}
	\begin{equation} \label{SDE}
		\langle \vec{\bb}\rangle = t\sum_{\bar{v}\in\vec{B}} x^{C(v,\bar{v})} \langle \vec{\bb}_{/(v, \bar{v})}\rangle + t\sum_{i=1}^N \frac{p_i}{|\operatorname{Aut}(\vec{\bb}_i)|} \sum_{\bar{v}\in\bb_i} \langle \vec{\bb}_v*\vec{\bb}_{i,\bar{v}}\rangle
	\end{equation}
\end{theorem}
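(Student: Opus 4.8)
The plan is to promote the partial decomposition already recorded in \eqref{SDEwithEdges} into the closed equation \eqref{SDE} by invoking the boundary-bubble identity \eqref{BoundaryExpectation}. First I would justify \eqref{SDEwithEdges} itself. Fix $\graph\in\cG(\vec{\bb};\bb_1,\dotsc,\bb_N)$ and let $v$ be the root (white) vertex of $\vec{\bb}$. Since $\graph$ is a colored graph, $v$ is incident to exactly one edge of color $0$; let $\bar v$ be the black vertex at its other end. The sum defining $\langle\vec{\bb}\rangle$ then partitions according to the location of $\bar v$, and the whole point of the first step is to check that each cell of this partition carries the correct monomial weight in $t$, $x$ and the $p_i$.

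Next comes the case analysis. When $\bar v$ already belongs to $\vec{\bb}$, deleting the color-$0$ edge $(v,\bar v)$ leaves a bubble subgraph; the factor $t$ records that edge, and the power $x^{C(v,\bar v)}$ records the bicolored cycles with colors $\{0,c\}$ that close into length-two cycles, one for each color $c\in[1..d]$ joining $v$ and $\bar v$ directly (the $t$-grading of \eqref{GF} is exactly the count of color-$0$ edges, so removing one edge decreases the exponent by $1$, matching the pulled-out $t$). When instead $\bar v$ lies on a new bubble, gluing in a fresh copy of $\bb_i$ contributes the edge weight $t$ and the bubble weight $p_i$; summing over $\bar v$ through a fixed labeled representative $\vec{\bb}_i$ overcounts attachments by the order of its symmetry group, which is precisely cancelled by $1/|\operatorname{Aut}(\vec{\bb}_i)|$. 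I would make the automorphism bookkeeping explicit by observing that two black vertices of $\bb_i$ related by an automorphism yield identical glued graphs, so that the labeled sum divided by $|\operatorname{Aut}(\vec{\bb}_i)|$ counts each inequivalent attachment exactly once.

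Finally I would apply \eqref{BoundaryExpectation}. Each subgraph series appearing on the right of \eqref{SDEwithEdges} depends only on its boundary bubble, so by the very definitions of the self-contraction and the bridge-contraction one has $\langle\,\partial(\cdots)\,\rangle=\langle\vec{\bb}_{/(v,\bar v)}\rangle$ in the first family of terms and $\langle\,\partial(\cdots)\,\rangle=\langle\vec{\bb}_v*\vec{\bb}_{i,\bar v}\rangle$ in the second. Substituting these identities into \eqref{SDEwithEdges} produces \eqref{SDE} term by term.

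I expect the main obstacle to be the consistency of the $x$-grading under the boundary-bubble reduction: one must confirm that contracting the color-$0$ edge neither creates nor destroys $\{0,c\}$-cycles beyond the $C(v,\bar v)$ length-two cycles explicitly tracked, so that the exponent of $x$ agrees on both sides. This is exactly what makes the passage from the $(E_0,C_0)$ of a subgraph to those of its boundary bubble (the Definition-Lemma preceding \eqref{BoundaryExpectation}) applicable here; once the order-independence of contractions asserted there is granted, the remainder is routine monomial bookkeeping.
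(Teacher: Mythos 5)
Your proposal is correct and follows essentially the same route as the paper: the decomposition according to where the color-$0$ edge at the root vertex lands gives \eqref{SDEwithEdges}, and the boundary-bubble identity \eqref{BoundaryExpectation} applied to the resulting subgraph series (whose boundaries are by definition the self-contraction $\vec{\bb}_{/(v,\bar v)}$ and the bridge-contraction $\vec{\bb}_v*\vec{\bb}_{i,\bar v}$) yields \eqref{SDE}. Your closing concern about the $x$-grading is exactly what the Definition-Lemma on contractions is designed to settle, so nothing further is needed.
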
 

At $d=2$, one recovers the Tutte equations, because $\vec{\bb}$ is a cycle with colors $\{1,2\}$ and $2p$ vertices, labeled, say increasingly along the cycle. Then, if $\bar{v}$ is a the $k$-th black vertex, $\vec{\bb}_{/(v, \bar{v})}$ is a disjoint union of two cycles, one with $2(k-1)$ vertices, the other with $2p-2k$ vertices, $k=1, \dotsc, p$. Moreover, if $\bb_i$ is a cycle with $2i$ vertices, then $\vec{\bb}_v*\vec{\bb}_{i,\bar{v}}$ is a cycle with $2p+2i-2$ vertices, and there are $i$ possible black vertices to create it. At $d=3$ however, this dynamics is much more complicated because the bubbles are (dual to) maps instead of cycles.

\section{Boundary dynamics} 
The boundary dynamics of the model $\cG(\bb_1, \dotsc, \bb_N)$ is the set $\mathcal{B}(\bb_1, \dotsc, \bb_N)$ of bubbles which appear on the RHS of \eqref{SDE} by starting to apply it with each $\vec{\bb} = \vec{\bb}_i$ for $i=1..N$, and then recursively with $\vec{\bb}$ equal to every bubble which appears on the RHS. If $\vec{\bb}\in\mathcal{B}(\bb_1, \dotsc, \bb_N)$, by definition there exists a sequence
\begin{equation}
		\vec{\bb} = \vec{\bb}^{(0)}\ \underset{\partial_{e_1}}{\leftarrow}\ \vec{\bb}^{(1)}\ \leftarrow \dotsb \underset{\partial_{e_k}}{\leftarrow} \ \vec{\bb}^{(k)} = \vec{\bb}_i
\end{equation}
starting with some $\vec{\bb}_i$ and where $\vec{\bb}^{(j-1)}$ is obtained from $\vec{\bb}^{(j)}$ by applying either a self-contraction or a bridge-contraction with another bubble from $\{\bb_1, \dotsc, \bb_N\}$ on an edge $e_j$ of color 0.

Since the order of the contractions does not matter, we can wait to perform them ``at the end'', i.e. let $\vec{H}_{e_1, \dotsc, e_k}$ be the bubble subgraph obtained by adding the edges $e_1, \dotsc, e_k$ and the additional bubbles in case of bridge-contractions. Clearly $\vec{H}$ is a bubble subgraph from the model $\cG(\bb_1, \dotsc, \bb_N)$, and $\vec{\bb} = \partial \vec{H}_{e_1, \dotsc, e_k}$.
\begin{proposition}{}{BoundariesOfBubbleGraphs}
	$\mathcal{B}(\bb_1, \dotsc, \bb_N)$ is the set of boundaries of bubble subgraphs of $\cG(\bb_1, \dotsc, \bb_N)$.
\end{proposition}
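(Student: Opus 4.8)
The plan is to prove the two inclusions separately, the essential tool in both directions being the commutativity of color-$0$ contractions established in the Definition-Lemma, which guarantees that $\partial\vec{H}$ is well defined and equals the outcome of contracting the color-$0$ edges of $\vec{H}$ in any order. Note also that each starting bubble $\vec{\bb}_i$ lies in $\mathcal{B}(\bb_1, \dotsc, \bb_N)$ (it is $\vec{\bb}^{(0)}$ for the trivial chain $k=0$), and is simultaneously the boundary of the bubble subgraph consisting of a single copy of $\vec{\bb}_i$ with no color-$0$ edge, so the two sides already agree on the seeds.

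First I would treat the inclusion of $\mathcal{B}(\bb_1, \dotsc, \bb_N)$ into the set of boundaries of bubble subgraphs. Let $\vec{\bb} \in \mathcal{B}(\bb_1, \dotsc, \bb_N)$, so that there is a chain $\vec{\bb} = \vec{\bb}^{(0)} \leftarrow \dotsb \leftarrow \vec{\bb}^{(k)} = \vec{\bb}_i$ in which each step is a self-contraction or a bridge-contraction on a color-$0$ edge $e_j$. The point is to defer every contraction to the end: rather than contracting $e_j$ immediately at step $j$, I record the edge $e_j$ together with the extra bubble glued in case of a bridge-contraction. Performing all these insertions on $\vec{\bb}_i$ produces a connected graph $\vec{H}_{e_1, \dotsc, e_k}$ whose bubbles are $\vec{\bb}_i$ and the bubbles introduced by the bridge-contractions, all members of $\{\bb_1, \dotsc, \bb_N\}$, so it is a bubble subgraph of $\cG(\bb_1, \dotsc, \bb_N)$. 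By commutativity of contractions, contracting all of $e_1, \dotsc, e_k$ at once reproduces the sequential result, i.e. $\partial\vec{H}_{e_1, \dotsc, e_k} = \vec{\bb}^{(0)} = \vec{\bb}$, whence $\vec{\bb}$ is the boundary of a bubble subgraph.

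For the converse inclusion I would argue by induction on the number $k$ of color-$0$ edges of a bubble subgraph $\vec{H}$. When $k = 0$, connectedness forces $\vec{H}$ to be a single model bubble $\vec{\bb}_i$, and $\partial\vec{H} = \vec{\bb}_i \in \mathcal{B}(\bb_1, \dotsc, \bb_N)$. For $k \geq 1$, since $\vec{H}$ is connected I can order its color-$0$ edges as $e_1, \dotsc, e_k$ so that each partial subgraph $\vec{H}_j$ (carrying $e_1, \dotsc, e_j$ together with the bubbles they touch) stays connected, starting from the single bubble $\vec{H}_0$ containing the chosen root. Commutativity again gives that $\partial\vec{H}_{j+1}$ is obtained from $\partial\vec{H}_j$ by contracting the single remaining edge $e_{j+1}$, which, depending on whether $e_{j+1}$ joins two vertices already present or attaches a fresh model bubble, is precisely a self-contraction or a bridge-contraction, i.e. one step of the Schwinger-Dyson dynamics \eqref{SDE}. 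Thus $\partial\vec{H}_0, \partial\vec{H}_1, \dotsc, \partial\vec{H}_k = \partial\vec{H}$ is an admissible chain and $\partial\vec{H} \in \mathcal{B}(\bb_1, \dotsc, \bb_N)$.

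The main obstacle I anticipate is reconciling the SD dynamics, which always contracts the color-$0$ edge sitting at the distinguished root, with the freedom to contract an arbitrary edge $e_{j+1}$ of $\vec{H}$ needed in the induction. I would resolve this by noting that the root is only a symmetry-breaking label fixed by a prescribed algorithm, so one is free to relabel $\partial\vec{H}_j$ so that the relevant endpoint of $e_{j+1}$ carries label $1$; equivalently, $\mathcal{B}(\bb_1, \dotsc, \bb_N)$ is unchanged if one allows contraction at any vertex rather than only at the root. Once this is granted, the connected ordering of the $e_j$ makes every step a legitimate SD move, and the two inclusions together yield the claimed equality.
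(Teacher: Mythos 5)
Your proof is correct and follows essentially the same route as the paper: the key point in both is that contractions of color-0 edges commute, so a chain of self- and bridge-contractions can be deferred and packaged into a single bubble subgraph $\vec{H}_{e_1,\dotsc,e_k}$ with $\partial\vec{H}_{e_1,\dotsc,e_k}=\vec{\bb}$. The paper states only this forward direction explicitly and treats the converse as immediate, whereas you spell out the reverse inclusion via a connected ordering of the color-0 edges and correctly note the (harmless) relabeling needed to reconcile contracting an arbitrary edge with the root-vertex convention of the Schwinger--Dyson move.
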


\subsection{Boundary dynamics in a quartic model} If the allowed bubbles are $\bb_i = \quart(\{i\})$, i.e. the 4-vertex bubbles with $\colset=\{i\}$ for $i=1, \dotsc, d$, see Figure \ref{fig:Bubbles}, then it is called a quartic model and we have
\begin{proposition}{}{BoundariesQuarticModel}\cite{StuffedColoredMaps} 
	$\mathcal{B}(\quart(\{1\}), \dotsc, \quart(\{d\}))$ is the set of all bubbles.
\end{proposition}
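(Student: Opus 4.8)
The plan is to invoke \Cref{thm:BoundariesOfBubbleGraphs}, which identifies $\mathcal{B}(\quart(\{1\}), \dotsc, \quart(\{d\}))$ with the set of boundary bubbles $\partial\vec{H}$ of bubble subgraphs $\vec{H}$ built from $\quart(\{1\}), \dotsc, \quart(\{d\})$. It therefore suffices to show that \emph{every} bubble $\bb$ arises as such a boundary, and I would prove this by induction on the number $2k$ of vertices of $\bb$. For the base case $k=1$, the only bubble is the dipole $D$ (one white and one black vertex joined by edges of all colors $1, \dotsc, d$); it is obtained as the boundary of a single copy of $\quart(\{c\})$ after adding one color-$0$ edge between one white and one black vertex, a self-contraction. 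Indeed, contracting that edge joins the two remaining external vertices through every $\{0,c'\}$-alternating path, producing exactly $D$.

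The inductive step is where the real work lies. I would read $\quart(\{c\})$, with all four of its vertices left external, as a two-input/two-output ``gate'' acting on color-$0$ wires: inspecting its $\{0,c'\}$-paths shows it realizes the identity in color $c$ and the transposition of its two strands in every color $c'\neq c$. Wiring such gates in layers, by joining the black legs of one gate to the white legs of the next with color-$0$ edges, composes the corresponding matchings, so that the boundary color-$c$ matching of a layered bubble subgraph is the product of the gate permutations in color $c$. The goal is then to show that, by allowing arbitrary strand pairs and arbitrary colors, together with self-contractions (which cap strands) and bridge-contractions (which introduce new ones), the family of boundaries so generated contains, at each fixed size, every tuple of color matchings, i.e. every bubble on $2k$ vertices up to relabeling.

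Concretely, composing $\quart(\{c_1\})$ and $\quart(\{c_2\})$ on the same two strands yields a swap in colors $c_1,c_2$ only; chaining such elementary moves realizes the transposition of two strands in any prescribed set of colors, and since transpositions generate $\mathfrak{S}_k$, one can build up the prescribed matching in each color from the dipole obtained in the base case. The bookkeeping would be organized through the permutation encoding of a bubble as a tuple $(\pi_1, \dotsc, \pi_d)$ of matchings taken up to simultaneous relabeling (conjugation), reducing the claim to a statement that the quartic gates, self-contractions and bridge-contractions generate the full tuple of matchings at every size.

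The main obstacle, and where care is genuinely needed, is that a single quartic gate never implements a \emph{clean} single-color transposition: its off-color crossings force the other $d-1$ colors to be permuted simultaneously. Composing gates on fixed strands alone therefore leaves residual constraints on which color-supports are reachable (a parity phenomenon sensitive to the parity of $d$), so the construction cannot be purely ``layered''. I expect these residual constraints to be resolved precisely by the moves that change the number of strands, namely the self-contractions introducing the dipole and the bridge-contractions introducing fresh strands, which break the naive invariants. Making the induction rigorous thus amounts to (i) a clean treatment of the permutation encoding modulo relabeling, and (ii) verifying that no net obstruction survives once strand-number-changing moves are included; the bulk of the effort is this generation-and-parity argument, the remaining boundary computations being routine.
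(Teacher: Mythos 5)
Your reduction via \Cref{thm:BoundariesOfBubbleGraphs}, your base case, and your computation of what a single $\quart(\{c\})$ does on two strands (identity in color $c$, transposition in every other color) are all correct, as is the observation that $\quart(\{c_1\})$ followed by $\quart(\{c_2\})$ swaps exactly the colors $c_1,c_2$. But the argument stops exactly where the content of the proposition begins. The sentence ``chaining such elementary moves realizes the transposition of two strands in any prescribed set of colors'' is not true as stated: under the sign homomorphism $\mathfrak{S}_k^d\to(\mathbb{Z}/2)^d$, each gate maps to the weight-$(d-1)$ vector indexed by $[1..d]\setminus\{c\}$, so for $d$ odd a series composition on a fixed set of strands only reaches color-supports in the even-weight subspace, and a clean single-color transposition is unreachable this way. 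You acknowledge this yourself in the final paragraph, but you then only \emph{assert} that self-contractions and bridge-contractions ``break the naive invariants'' and that ``no net obstruction survives''. That assertion is the whole theorem, and it is genuinely delicate: capping an auxiliary strand flips $\sgn(\pi_c)$ precisely for the colors in which the capped strand is not a fixed point, and which such sign patterns are realizable is itself constrained by the available gates; meanwhile the relabeling freedom $(\pi_1,\dotsc,\pi_d)\mapsto(\rho\pi_1\sigma,\dotsc,\rho\pi_d\sigma)$ multiplies every $\sgn(\pi_c)$ by the \emph{same} factor, so all relative signs $\sgn(\pi_c\pi_{c'}^{-1})$ survive as invariants of the isomorphism class and must be shown to take arbitrary values (and showing the abelianized invariants vanish would still not show that the generated subset is everything). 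None of this is carried out, so the proposal is a programme rather than a proof.

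The argument in \cite{StuffedColoredMaps} does not go through generation-and-parity at all: it gives an explicit construction of a bubble subgraph $\vec{H}$ with $\partial\vec{H}=\vec{\bb}$ directly from the data of an arbitrary prescribed $\vec{\bb}$, so that no residual invariant ever has to be analyzed. To complete your route you would need either (i) an explicit gadget, using capped auxiliary strands, whose boundary effect on two given strands is a transposition in one prescribed color and the identity in all others, from which the generation claim follows immediately, or (ii) a full computation of the invariants of the set of reachable tuples in $\mathfrak{S}_k^d$ modulo relabeling together with a proof that it is the whole set. Until one of these is supplied, the inductive step is missing.
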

There is in addition an explicit construction of $\vec{H}$ such that $\partial \vec{H} = \vec{\bb}$ for any $\vec{\bb}$. It means that any bubble can be obtained as a boundary bubble from that quartic model. It is especially useful because that quartic model is fairly well-understood. In particular, there is a simple bijection between its colored graphs and maps with colored edges. By complementing that bijection with the explicit construction for the bubble subgraphs $\vec{H}$, we have obtained in \cite{StuffedColoredMaps} a general bijection for colored graphs, which is detailed in Section \ref{sec:Bijection} of Chapter \ref{sec:Universality2}.

Obviously the real difficulty here is to control the exponent of $x$ associated with a sequence of self-contractions and bridge-contractions, which we do not know how to do in general. Still, we were able in \cite{StuffedColoredMaps} to make use of this bijection to characterize $\cG^{\max}_{n_1, \dotsc, n_N}(\bb_1, \dotsc, \bb_N)$ for new sets of bubbles. Chapter \ref{sec:1CBB} will be devoted to that question in the case of self-contractions only, equivalently triangulations with a single CBB, for some families of CBBs. 
\chapter{One-CBB triangulations and pairings} \label{sec:1CBB}
Here we continue using $\vec{\bb}$ for a bubble $\bb$ with labeled vertices. 
	
\section{Some enumerative results}
\subsection{Pairings} A particular set of triangulations consists of those with a single CBB. They generalize 1-face maps \cite{ChapuyFerayFusy2013} to higher dimensions. In terms of simplices, they are formed by a perfect matching, or a \emph{pairing}, of its boundary $(d-1)$-simplices which identifies them two by two to form a closed space (recall that the identification is uniquely determined by the colors).

In terms of colored graphs, they have a single bubble $\bb$, and are obtained by adding edges of color 0 between its black and white vertices. If $v$ is a white vertex and $\bar{v}$ a black vertex and if they are connected by an edge of color 0, we denote as in Figure \ref{fig:Pairing}
\begin{equation}
	\bar{v} = \pi(v)
\end{equation}
so that $\pi$ is a map from the white to the black vertices. For a fixed labeling of the vertices of both colors from $1$ to $n$, $\pi$ is thus a permutation on $[1..n]$. Conversely $\pi$ determines a unique way of adding to $\bb$ the edges of color 0 so that one obtains a $(d+1)$-colored graph $\graph_\pi\in\cG(\vec{\bb})$. Here $\cG(\vec{\bb})$ is the set of connected $(d+1)$-colored graphs made out of a single bubble with labeled vertices $\vec{\bb}$. 
\begin{figure}
	\includegraphics[scale=.5]{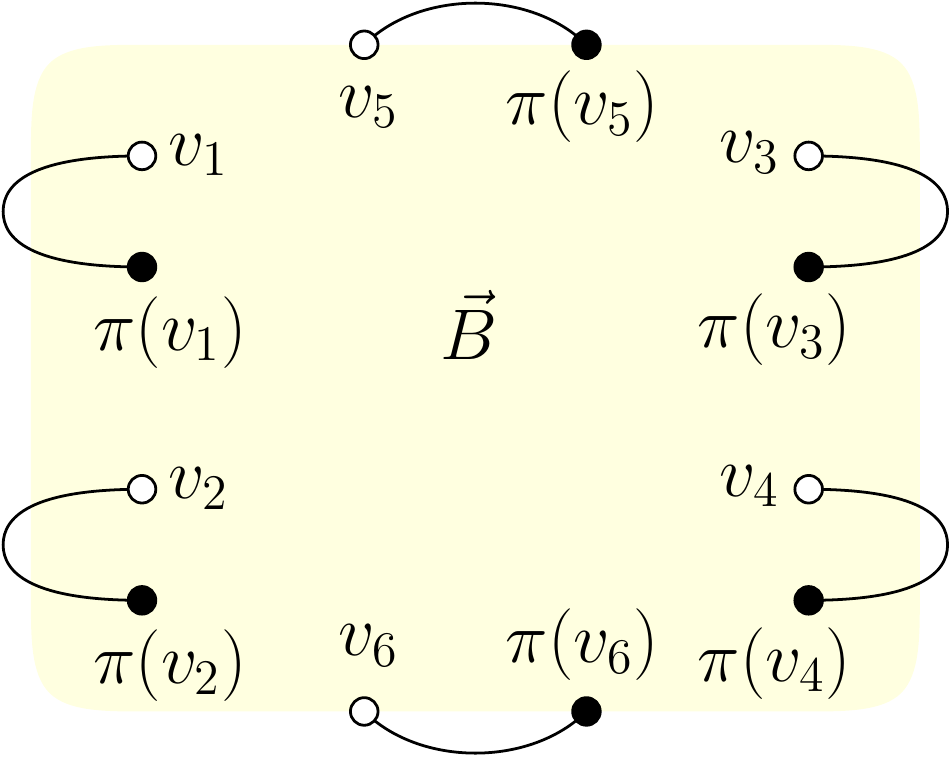}
	\caption{\label{fig:Pairing}A pairing is equivalent to a graph $\graph\in\cG(\vec{\bb})$.}
\end{figure}

Recall that the Tutte/Schwinger-Dyson equations revealed two types of operations which build the boundary bubbles: self-contractions and bridge-contractions. The 1-bubble colored graphs we now study correspond to studying self-contractions, since they have all their edges of color 0 on the same bubble.

\subsection{Enumerative results}
We denote $\cG^{\max}(\vec{\bb})$ the set of graphs $\graph\in\cG(\vec{\bb})$ which maximize $C_0(\graph)$ the total number of cycles of colors of $\{0,c\}$ for $c=1, \dotsc, d$, see Section \ref{sec:MainQuestion} of Chapter \ref{sec:PrescribedBubbles}. Not much is known about that set outside of simple families of bubbles. However, a broad set of behaviors has already been observed, i.e. families for which $|\cG^{\max}(\vec{\bb})|$ has exponential growth or polynomial growth \cite{Meanders}. In general however it seems hopeless to look for a formula for $|\cG^{\max}(\vec{\bb})|$. In particular, $\vec{\bb}$ is defined by $d$ permutations and the simple families mentioned above are characterized instead by a smaller set of data.

Following the classical strategy of decomposing an object into easier or smaller objects, an interesting approach concerns bubbles obtained from sequences of bridge-contractions, or equivalently which are trees of bubbles. A natural question for them is whether there is a formula relating the $|\cG^{\max}(\vec{\bb})|$ to the values $|\cG^{\max}(\vec{\bb}')|$ of the bubbles $\vec{\bb}'$ which appear in the tree of bubbles. For instance, for the bubbles which are boundary bubbles of trees of quartic bubbles, it is possible to recast them as trees of other bubbles called necklaces, so that $|\cG^{\max}(\vec{\bb})|$ is just a product $\prod_{\vec{\bb}'} |\cG^{\max}(\vec{\bb}')|$ of the values $|\cG^{\max}(\vec{\bb}')|$ for every necklace, which are Catalan numbers. So in that case, $|\cG^{\max}(\vec{\bb})|$ is a product of Catalan numbers \cite{Enhancing, GM}.

Below we study another example from \cite{Meanders}, for a family where $|\cG^{\max}(\vec{\bb})|$ counts meander systems, and there is a factorization over some irreducible components up to some choices of trees.

\section{2-cylic bubbles and meanders}
In \cite{Meanders} we found a family of bubbles $B_\sigma$ for $\sigma\in\mathfrak{S}_n$ where $2n$ is the number of vertices, for which $\cG^{\max}(\vec{\bb}_\sigma)$ is a set of meanders characterized by $\sigma$. We reproduce the main results here. Throughout this section $d=4$, but it can be generalized to ``higher-dimensional meanders'' for any $d=4+2d'$.

\subsection{2-cyclic bubbles} The family of bubbles we are interested in are those which have 
\begin{itemize}
	\item a single bicolored cycle with colors $\{1,2\}$,
	\item a single bicolored cycle with colors $\{3,4\}$.
\end{itemize}
\begin{figure}
	\includegraphics[scale=.45,valign=c]{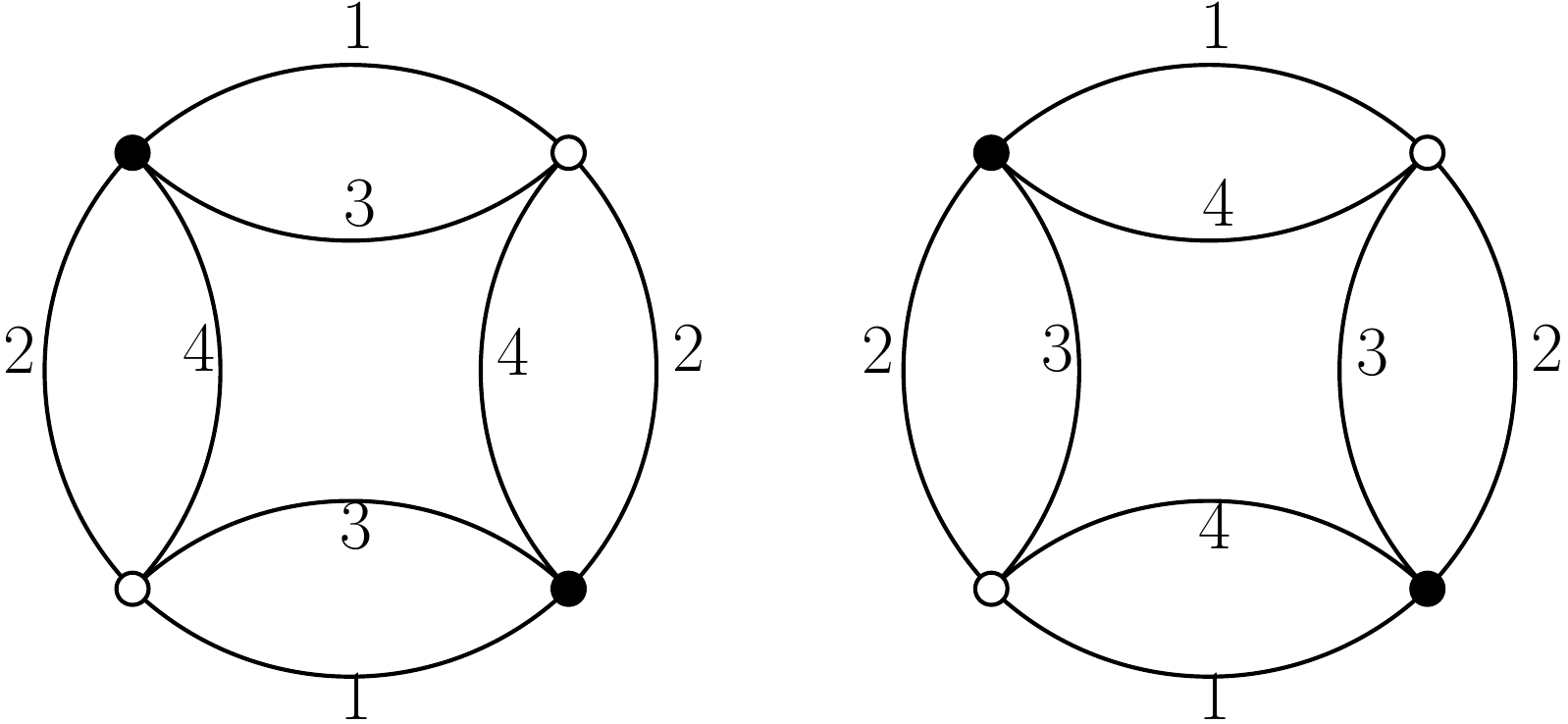} \hspace{2cm} \includegraphics[scale=.6,valign=c]{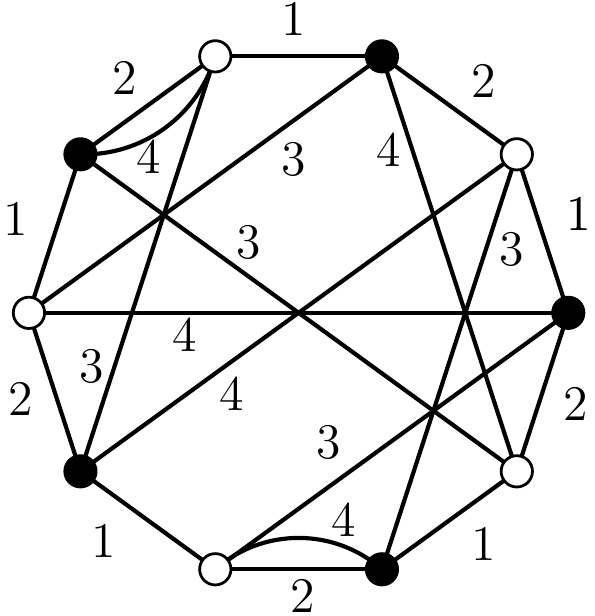}
	\caption{\label{fig:2CyclicBubbles}There are two 2-cyclic bubbles with 4 vertices. On the right is one with 10 vertices.}
\end{figure}
We call them 2-cyclic bubbles. 
They are conveniently described with permutations, provided vertices are labeled. We use special, cycle-induced labelings, shown in Figure \ref{fig:InducedLabeling}.
\begin{itemize}
	\item Given a marked white vertex labeled $1_\circ$, the labeling induced by the cycle of colors $(1,2)$ (note that there is an order between the two colors), denoted $(1_\bullet, 1_\circ, \dotsc, n_\bullet, n_\circ)$, has an edge of color $1$ connecting $j_\circ$ to $j_\bullet$ and an edge of color $2$ connects $j_\circ$ to $(j+1)_\bullet$ (modulo $n$),
	\item Given a possibly different marked white vertex $1'_\circ$, the labeling induced by the cycle of colors $(3,4)$, denoted $(1'_\bullet, 1'_\circ, \dotsc, n'_\bullet, n'_\circ)$, has an edge of color $3$ connecting $j'_\circ$ to $j'_\bullet$ and an edge of color $4$ connects $j'_\circ$ to $(j+1)'_\bullet$ (modulo $n$).
\end{itemize}
\begin{figure}
	\includegraphics[scale=.5]{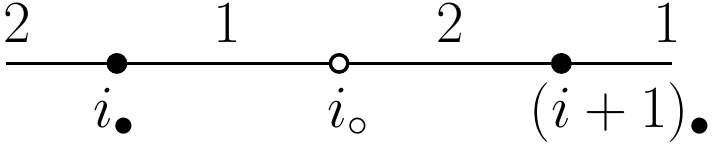}\hspace{2cm} \includegraphics[scale=.5]{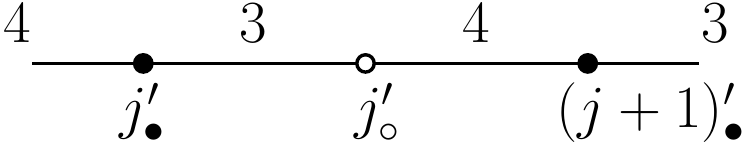}
	\caption{\label{fig:InducedLabeling}The labeling induced by the cycles with colors $(1,2)$ and $(3,4)$.}
\end{figure}
For simplicity, we set $1'_\circ =1_\circ$, so that $\mathcal{C}_n^\circ$ the set of 2-cyclic bubbles on $2n$ vertices with a marked white vertex is $\mathcal{C}_n^\circ \cong \mathfrak{S}_{n-1}\times\mathfrak{S}_{n}$. 

Indeed, after choosing the marked vertex $1_\circ$, every other white vertex receives two labels, say $i'_\circ$ from the cycle with colors $(3,4)$ and $j_{i\circ}$ from the cycle with colors $(1,2)$ (and $i'_\bullet, j_{i\bullet}$ for black vertices), $i=1,\dotsc,n$. A 2-cyclic bubble is thus described by a pair of permutations $(\sigma_\circ, \sigma_\bullet)$ defined as
\begin{equation}
	\bigl(\sigma_\circ(i)\bigr)_\circ = j_{i\circ},\qquad \text{and} \qquad \bigl(\sigma_\bullet(i)\bigr)_\bullet = j_{i\bullet}.
\end{equation}
i.e. they map the labels induced by the cycle with colors $(3,4)$ to those induced by the cycle with colors $(1,2)$. This looks locally like
\begin{equation}
	\includegraphics[scale=.65,valign=c]{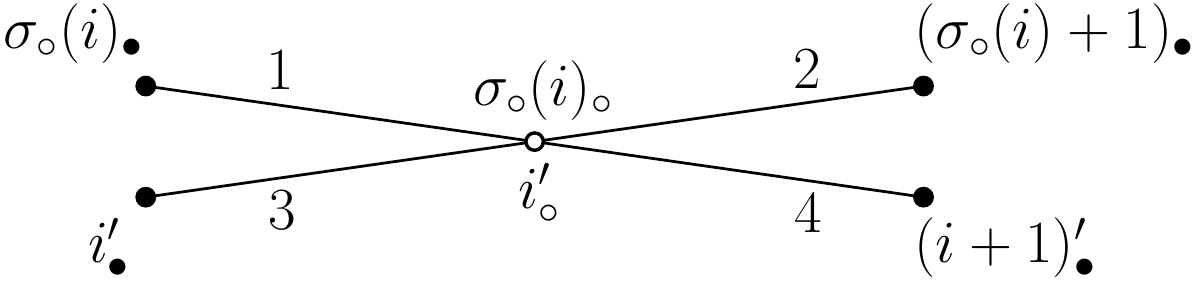}\qquad \includegraphics[scale=.65,valign=c]{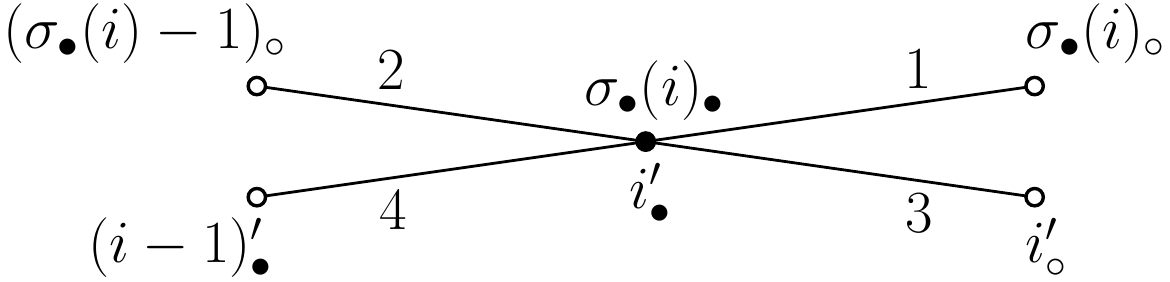}
\end{equation}
The other way around, every pair $(\sigma_\circ, \sigma_\bullet)\in\mathfrak{S}_n^2$ with $\sigma_\circ(1)=1$ can be used to construct a 2-cyclic bubble by reversing the process. It is denoted $\vec{B}_{\sigma_\circ, \sigma_\bullet}$.
	
Changing the reference vertex $1_\circ$ is an action of the cyclic group $\mathbb{Z}_n$ by conjugation. Let $\Delta_p:i\in[1..n]\mapsto (i+p)\mod n$ be the shift modulo $n$. The action of $\mathbb{Z}_n$ is
\begin{equation} \label{LeftRightActions}
	(\sigma_\circ, \sigma_\bullet) \mapsto (\Delta_p\circ \sigma_\circ \circ \Delta_{-p}, \Delta_p\circ \sigma_\bullet \circ \Delta_{-p}), \qquad p=0,\dotsc,n-1,
\end{equation}
and obviously $\cG(\vec{B}_{\sigma_\circ, \sigma_\bullet}) = \cG(\vec{B}_{\Delta_p\circ \sigma_\circ \circ \Delta_{-p}, \Delta_p\circ \sigma_\bullet \circ \Delta_{-p}})$.

With our parametrization, the (equivalence classes under $\mathbb{Z}_n$ of) $(\sigma_\circ, \sigma_\bullet) = (\id, \id)$ and $(\sigma_\circ, \sigma_\bullet) = (\id, \Delta_1)$ are given in Figure \ref{fig:Examples2Cyclic}.
\begin{figure}
	\includegraphics[scale=.65]{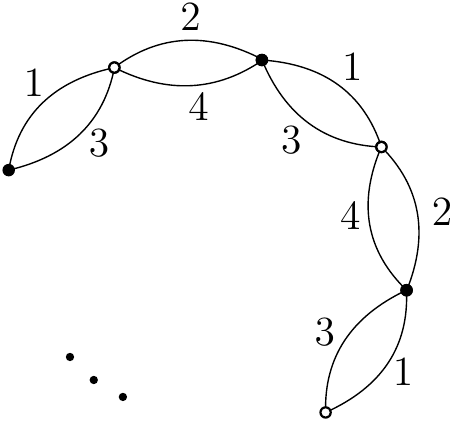} \hspace{2cm} \includegraphics[scale=.65]{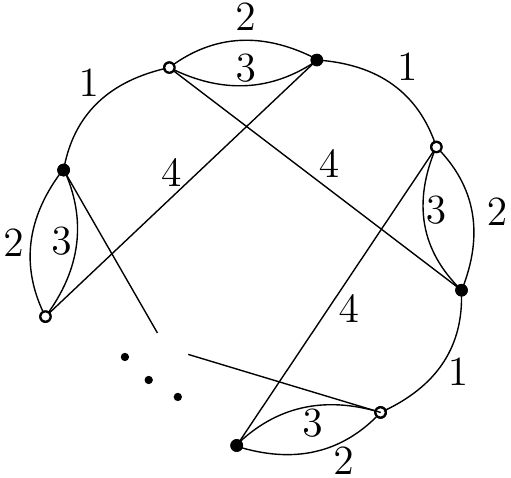}
	\caption{\label{fig:Examples2Cyclic} Two examples of 2-cyclic bubbles, corresponding to $(\sigma_\circ, \sigma_\bullet) = (\id, \id)$ and $(\sigma_\circ, \sigma_\bullet) = (\id, \Delta_1)$.}
\end{figure}

\subsection{The 2 genera} Let $\graph\in \cG^{\max}(\vec{\bb})$ with $\vec{\bb}\in \mathcal{C}_n^\circ$. We will here express the total number of bicolored cycles $C_0(\graph) = \sum_{c=1}^4 C_{0c}(\graph)$, in terms of topological quantities. Recall that $\graph$ is defined by adding edges of color 0 between black and white vertices, i.e. a pairing $\pi$. We first represent the tricolored graph with colors $\{0,1,2\}$ as a map $\m_{12}$, using the canonical embedding of \Cref{thm:CanonicalEmbedding}, Chapter \ref{sec:Definitions}. Since $\vec{\bb}$ has a single cycle with colors $\{1,2\}$, the map $\m_{12}$ is connected and its genus is given by Equation \eqref{EulerColoredGraph},
\begin{equation}
	2-2g(\m_{12}) = C_{01}+C_{02} + 1 - n.
\end{equation}
One can follow that reasoning for the tricolored graphs with colors $\{0,3,4\}$ also. One gets a map $\m_{34}$ with a face made of the cycle with colors $\{3,4\}$, and whose other faces are the bicolored cycles with colors $\{0,3\}$ and $\{3,4\}$. From Euler's relation \eqref{EulerColoredGraph}, one has $2-2g(\m_{34}) = C_{03}+C_{04} + 1 - n$, and therefore
\begin{equation}
	C_0(\graph) = 2 - 2g(\m_{12}) - 2g(\m_{34}) + 2n.
\end{equation}
Our question of maximizing $C_0(\graph)$ thus reduces to finding the pairings such that both $\m_{12}$ and $\m_{34}$ are planar, if any. We define
\begin{equation}
	\cG^{\max}(\vec{\bb}_{\sigma_\circ, \sigma_\bullet}) = \{\pi \in \cG(\vec{\bb}_{\sigma_\circ, \sigma_\bullet}) ; g(\m_{12}) = g(\m_{34}) = 0\}
\end{equation}
This slightly contradicts our general definition of $\cG^{\max}(\vec{\bb})$ which by definition can never be empty. Here $\cG^{\max}(\vec{\bb}_{\sigma_\circ, \sigma_\bullet})$ can be empty and \emph{we will be working with this convention for the rest of this chapter}. In that case, there are still pairings which maximize the number of bicolored cycles; they just have at least one non-zero genus. In fact, as it clearly appears below, almost all those sets are empty. Unfortunately, we have not been able to characterize them.

\subsection{Meander systems} 
\begin{definition}
	A \emph{meander} of order $n$ is a closed, planar, self-avoiding curve (the road) which crosses an infinite oriented horizontal line (the river) exactly $2n$ times (the bridges), up to isotopy. The number of meanders of order $n$ is denoted $M_n$.
	
	A \emph{meander system} of order $n$ with $k$ components is a set of $k$ non-crossing meanders all intersecting the same horizontal line, exactly $2n$ times in total. The number of meander systems with $k$ components and of order $n$ is denoted $M_n^{(k)}$.
\end{definition}
Another representation of the problem of counting the number of meanders is the problem of calculating the entropy associated to compact foldings of a polymer on the plane \cite{ArchStat, GeometricallyConstrainedSystems}.

We will use the classical representation, where the river is oriented from west to east, has $2n$ marked vertices labeled $1_\bullet, 1_\circ, \dotsc, n_\bullet, n_\circ$, and the segments of the roads, above and under the river, are represented as semi-circular arches (caps and cups) whose feet are the vertices. An example is provided in the Figure \ref{fig:MeandricSysExample}. 

An \emph{arch configuration} is a set of semi-circular arches between white and black vertices, all in the upper half-plane or all in the lower half-plane, and it is planar if the arches do not cross. An arch configuration is thus a pairing $\pi$ between the black and white vertices and a pairing is said to be planar if its arch configuration is. A meander system therefore consists of an upper pairing $\pi_N$ and a lower pairing $\pi_S$ both of which planar. We will denote $\Pl\mathfrak{S}_n$ the set of planar pairings.
\begin{figure}
	\includegraphics[scale=.7,valign=c]{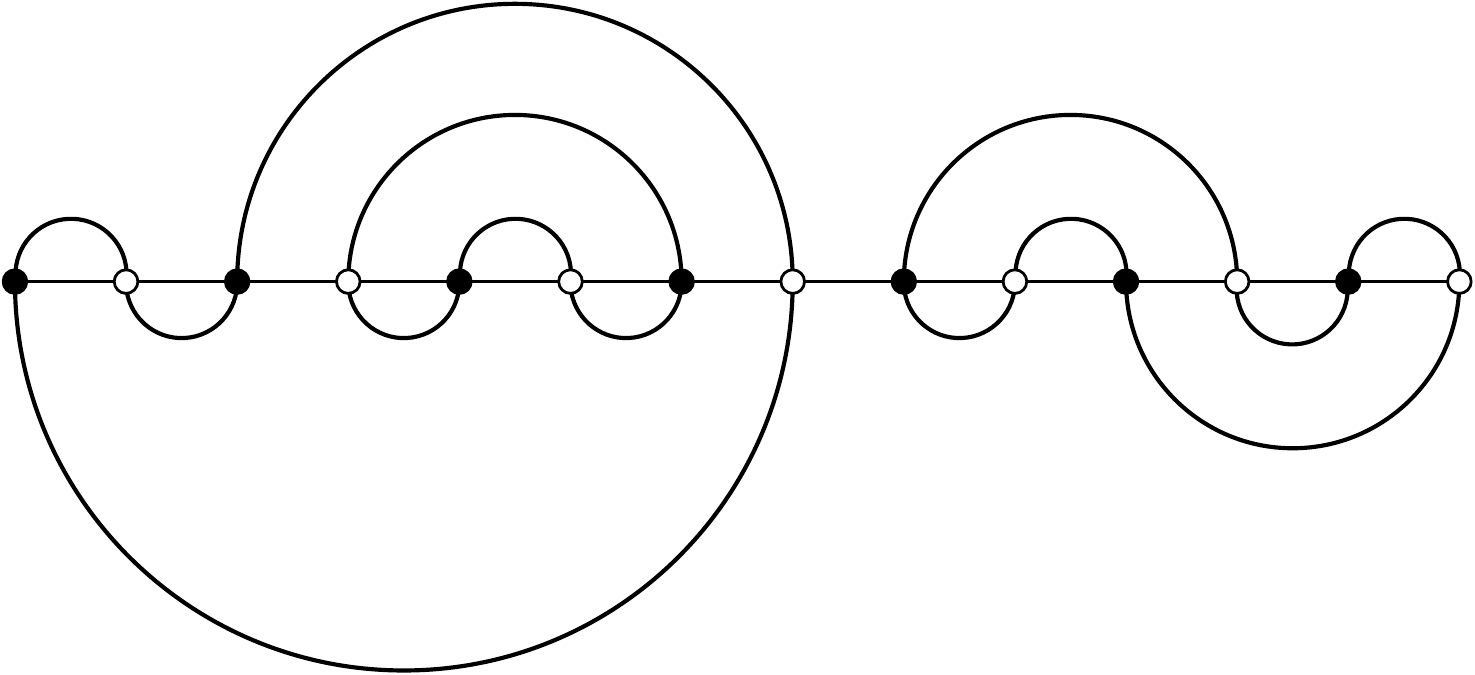}
	\caption{\label{fig:MeandricSysExample} A meander system with three components.}
\end{figure}

\subsection{Pairings of 2-cyclic bubbles and meander systems} Maximizing the number of bicolored cycles $C_0(\graph)$ is equivalent to having $\m_{12}$ and $\m_{34}$ planar at the same time, where $\graph$ is determined by a pairing $\pi$ mapping its white vertices to its black vertices. At fixed $\graph$, we can regoup $\m_{12}$ and $\m_{34}$ into a single meander system. 

We use the canonical embedding of $\m_{12}$ and represent the cyclic order of colors $(012)$ of \Cref{thm:CanonicalEmbedding} in the counter-clockwise direction. We draw the circle with colors $\{1,2\}$ in the plane, and the edges of color 0 on the outside region as follows in the picture. Equivalently we can cut the circle open between $n_\circ$ and $1_\bullet$ and have the pairing $\pi$ as an upper arch configuration. In addition, the colors are not needed since they can be recovered. This gives (representing a single edge of color 0)
\begin{equation}
	\m_{12} = \includegraphics[scale=.7,valign=c]{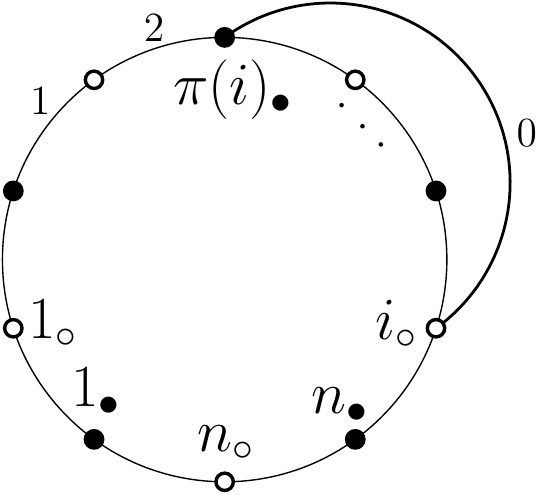} \simeq \includegraphics[scale=.7,valign=c]{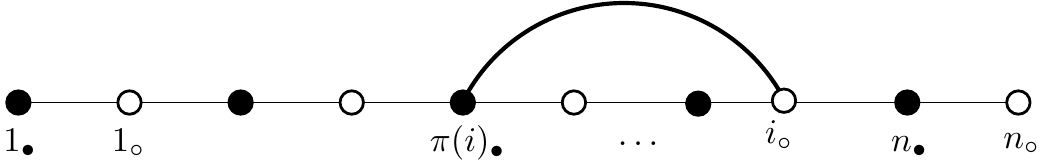}
\end{equation}

We do the same with the map $\m_{34}$. We draw the circle with colors $\{3,4\}$ in the plane, with vertices labeled $1'_\bullet, 1'_\circ, \dotsc, n'_\bullet, n'_\circ$. This time, we draw the edges of color 0 on the inside region of the circle. To do so, we must find the action of the pairing $\pi$ on primed labels, i.e. to which primed label of a black vertex is the primed label of a white vertex connected to by an edge of color 0? A moment of reflexion shows that it is $\sigma_\bullet^{-1}\circ \pi\circ \sigma_\circ$. Then we can open up the circle between $1'_\bullet$ and $n'_\circ$ and represent the edges of color 0 as a lower arch configuration, and forget about the colors,
\begin{equation}
	\m_{34} = \includegraphics[scale=.7,valign=c]{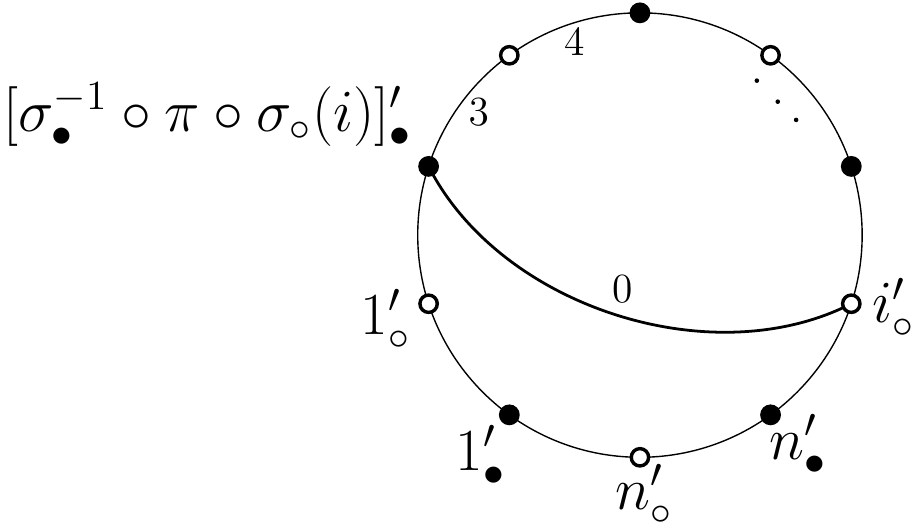} \simeq \includegraphics[scale=.7,valign=c]{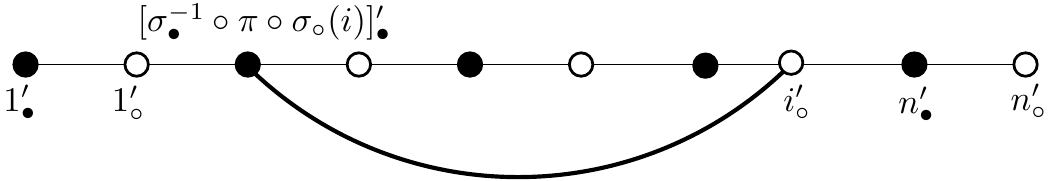}
\end{equation}

We have a representation of both $\m_{12}$ and $\m_{34}$ as an upper and a lower arch configuration, whose genera are $g(\m_{12})$ and $g(\m_{34})$. By forgetting the primes on $\m_{34}$, we identify the two horizontal lines and obtain a single object which has an upper and a lower arch configuration,
\begin{equation}
	\graph \simeq \includegraphics[scale=.7,valign=c]{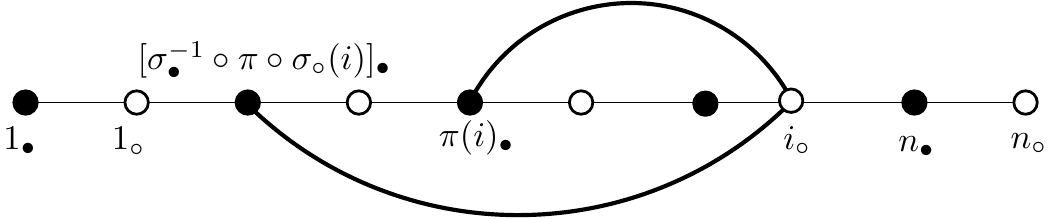}
\end{equation}
Therefore we have proved the following.
\begin{theorem}{}{MeanderSystems}
	Let $\mathcal{M}_{\sigma_\circ, \sigma_\bullet}$ be the set of meander systems such that if $\pi\in\Pl\mathfrak{S}_n$ is the upper arch configuration, then $\sigma_\bullet^{-1}\circ \pi\circ \sigma_\circ$ is planar too and represents the lower arch configuration. Then
	\begin{equation}
		|\cG^{\max}(\vec{\bb}_{\sigma_\circ, \sigma_\bullet})| = |\mathcal{M}_{\sigma_\circ, \sigma_\bullet}|
	\end{equation}	
\end{theorem}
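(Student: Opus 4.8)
The plan is to exhibit, for a fixed 2-cyclic bubble $\vec{\bb}_{\sigma_\circ,\sigma_\bullet}$, a bijection between $\cG^{\max}(\vec{\bb}_{\sigma_\circ,\sigma_\bullet})$ and $\mathcal{M}_{\sigma_\circ,\sigma_\bullet}$, and the whole construction is already laid out in the passage preceding the statement. So the proof is essentially to assemble the pieces carefully and check that each step is reversible. Recall that a graph $\graph \in \cG(\vec{\bb}_{\sigma_\circ,\sigma_\bullet})$ is the datum of a pairing $\pi$ sending white vertices to black vertices (the edges of color $0$). By the computation just above the theorem,

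\begin{equation*}
	C_0(\graph) = 2 - 2g(\m_{12}) - 2g(\m_{34}) + 2n,
\end{equation*}

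so maximizing $C_0(\graph)$ is exactly the simultaneous planarity condition $g(\m_{12}) = g(\m_{34}) = 0$, which is how $\cG^{\max}$ was (re)defined in this chapter. Thus I only need to match simultaneously-planar pairings with the meander systems in $\mathcal{M}_{\sigma_\circ,\sigma_\bullet}$.

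\textbf{The two arch configurations.} First I would use the canonical embedding of \Cref{thm:CanonicalEmbedding} to realize $\m_{12}$: drawing the bicolored $\{1,2\}$-cycle as a circle with the vertices in the order $(1_\bullet,1_\circ,\dots,n_\bullet,n_\circ)$ induced by the $(1,2)$-labeling, the edges of color $0$ become an \emph{outer} arch configuration, and after cutting the circle open this is precisely an upper arch configuration given by $\pi$. The key fact to invoke is that, since $\m_{12}$ has a single $\{1,2\}$-face (the bubble is 2-cyclic), $g(\m_{12})=0$ if and only if this arch configuration is planar, i.e.\ $\pi\in\Pl\mathfrak{S}_n$. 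Symmetrically, I would embed $\m_{34}$ using the $(3,4)$-induced labeling $(1'_\bullet,1'_\circ,\dots,n'_\bullet,n'_\circ)$, drawing the color-$0$ edges on the \emph{inside} as a lower arch configuration. The one computation that genuinely needs care is the relabeling: the pairing $\pi$, read in primed coordinates, becomes $\sigma_\bullet^{-1}\circ\pi\circ\sigma_\circ$, because $\sigma_\circ,\sigma_\bullet$ are by definition the permutations translating $(3,4)$-labels into $(1,2)$-labels on white and black vertices respectively. Again $g(\m_{34})=0$ iff this lower configuration $\sigma_\bullet^{-1}\circ\pi\circ\sigma_\circ$ is planar.

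\textbf{Gluing into a meander and reversibility.} Superimposing the upper configuration (from $\m_{12}$) and the lower configuration (from $\m_{34}$) on a common horizontal line with $2n$ marked points yields a meander system, and the two planarity conditions are exactly the defining conditions of $\mathcal{M}_{\sigma_\circ,\sigma_\bullet}$: the upper arches $\pi$ are planar and the lower arches $\sigma_\bullet^{-1}\circ\pi\circ\sigma_\circ$ are planar. This gives a well-defined map $\cG^{\max}(\vec{\bb}_{\sigma_\circ,\sigma_\bullet})\to\mathcal{M}_{\sigma_\circ,\sigma_\bullet}$. To finish I would check it is a bijection by constructing the inverse: given a meander system in $\mathcal{M}_{\sigma_\circ,\sigma_\bullet}$, its upper arch configuration determines a planar $\pi$, hence a pairing, hence a unique $\graph_\pi\in\cG(\vec{\bb}_{\sigma_\circ,\sigma_\bullet})$; the lower configuration is forced to be $\sigma_\bullet^{-1}\circ\pi\circ\sigma_\circ$ and its planarity guarantees $g(\m_{34})=0$, so $\graph_\pi$ indeed lands in $\cG^{\max}$. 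Since the embedding conventions (cutting points, colors recoverable from the labeling) are deterministic and mutually inverse, the two maps compose to the identity on both sides, yielding $|\cG^{\max}(\vec{\bb}_{\sigma_\circ,\sigma_\bullet})|=|\mathcal{M}_{\sigma_\circ,\sigma_\bullet}|$.

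\textbf{Main obstacle.} The conceptual content is light once the embeddings are fixed; the delicate point—and the step I would spend the most care on—is verifying that the pairing, when transported from the unprimed $(1,2)$-labeling to the primed $(3,4)$-labeling, conjugates precisely to $\sigma_\bullet^{-1}\circ\pi\circ\sigma_\circ$ rather than, say, $\sigma_\circ^{-1}\circ\pi\circ\sigma_\bullet$ or an inverse thereof. This is purely a bookkeeping matter of which permutation acts on white versus black vertices and in which direction color-$0$ edges are read, but getting the composition order and the placement of inverses right is where an error would silently creep in, so I would pin it down by tracking a single edge of color $0$ through both labelings explicitly (as sketched in the local-vertex pictures) before asserting the general identity.
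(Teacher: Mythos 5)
Your proposal is correct and follows essentially the same route as the paper: the identity $C_0(\graph) = 2 - 2g(\m_{12}) - 2g(\m_{34}) + 2n$, the canonical embeddings of $\m_{12}$ and $\m_{34}$ as outer and inner arch configurations cut open into upper and lower arches, and the relabeling $\pi \mapsto \sigma_\bullet^{-1}\circ\pi\circ\sigma_\circ$ (which you compute in the correct order) assembled into a meander system. The only addition is your explicit check of reversibility, which the paper leaves implicit.
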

Moreover, if $\sigma_\circ\in\mathfrak{S}_{n-1}$ is fixed and $\sigma_\bullet$ covers $\mathfrak{S}_n$, then there is a one-to-one correspondence between meander systems of order $n$ and elements of $\{\mathcal{M}_{\sigma_\circ, \sigma_\bullet}\}_{\sigma_\bullet\in\mathfrak{S}_n}$.

We notice in \cite{Meanders} that more can be said when $\sigma_\circ=\id$. In that case, the above correspondence is finer, since it becomes possible to track the components of the meander systems as the cycles of $\sigma_\bullet$.
\begin{proposition}{}{MeanderComponents}
	If $\sigma\in\mathfrak{S}_n$, denote $\ell(\sigma)$ its number of cycles. Then
	\begin{equation}
		\sum_{\substack{\sigma_\bullet\in\mathfrak{S}_n\\ \ell(\sigma_\bullet)=k}} |\cG^{\max}(\vec{\bb}_{\id, \sigma_\bullet})| = M_n^{(k)}.
	\end{equation}
\end{proposition}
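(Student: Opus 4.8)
The plan is to refine the bijection already established in \Cref{thm:MeanderSystems}. When $\sigma_\circ=\id$ and $\sigma_\bullet$ ranges over $\mathfrak{S}_n$, that theorem gives a one-to-one correspondence between the disjoint union $\bigsqcup_{\sigma_\bullet\in\mathfrak{S}_n}\mathcal{M}_{\id,\sigma_\bullet}$ and the set of all meander systems of order $n$, together with the equalities $|\cG^{\max}(\vec{\bb}_{\id,\sigma_\bullet})|=|\mathcal{M}_{\id,\sigma_\bullet}|$. Since $M_n^{(k)}$ is by definition the number of meander systems of order $n$ with exactly $k$ components, the stated identity follows at once provided one shows that, under this correspondence, $\mathcal{M}_{\id,\sigma_\bullet}$ consists precisely of the meander systems having $\ell(\sigma_\bullet)$ components. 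Summing $|\cG^{\max}(\vec{\bb}_{\id,\sigma_\bullet})|$ over all $\sigma_\bullet$ with $\ell(\sigma_\bullet)=k$ then reassembles exactly the meander systems with $k$ components, i.e. $M_n^{(k)}$.

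To carry this out, I would first recall that a meander system is encoded by a pair of planar pairings $(\pi_N,\pi_S)\in\Pl\mathfrak{S}_n\times\Pl\mathfrak{S}_n$, its upper and lower arch configurations. By the construction of $\mathcal{M}_{\id,\sigma_\bullet}$ in \Cref{thm:MeanderSystems}, the upper pairing is $\pi_N=\pi$ and the lower one is $\pi_S=\sigma_\bullet^{-1}\circ\pi$, whence the associated permutation is recovered as $\sigma_\bullet=\pi_N\circ\pi_S^{-1}$. The whole task then reduces to a purely combinatorial statement about pairings: the number of connected components of the road in the meander system built from $(\pi_N,\pi_S)$ should equal $\ell(\pi_N\circ\pi_S^{-1})$.

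The key step is a loop-tracing argument. Each bridge is a point of degree two on the road (one upper arc, one lower arc), so the road is a disjoint union of closed loops whose components are exactly the components of the meander system. Tracing a loop, one leaves a white vertex $v_\circ$ reached from below along its upper arch to the black vertex $\pi_N(v)_\bullet$, then leaves the latter from above along its lower arch, which terminates at the white vertex $\pi_S^{-1}(\pi_N(v))_\circ$. Hence the first-return map on white vertices is $v\mapsto\pi_S^{-1}\circ\pi_N(v)$, so the number of loops equals $\ell(\pi_S^{-1}\circ\pi_N)$. Since $\pi_N\circ\pi_S^{-1}=\pi_N\,(\pi_S^{-1}\circ\pi_N)\,\pi_N^{-1}$, the permutations $\pi_S^{-1}\circ\pi_N$ and $\sigma_\bullet=\pi_N\circ\pi_S^{-1}$ are conjugate and thus have equal numbers of cycles, giving that the number of components equals $\ell(\sigma_\bullet)$.

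The genuine content, and the only delicate point, is to make the conventions of the two canonical embeddings of $\m_{12}$ and $\m_{34}$ (as upper and lower arch configurations) match up so that the permutation produced by \Cref{thm:MeanderSystems} is exactly $\sigma_\bullet=\pi_N\circ\pi_S^{-1}$, rather than some variant, and to verify that the loop-tracing rule is consistent with the chosen orientations. This is also where the hypothesis $\sigma_\circ=\id$ is essential: for a general $\sigma_\circ$ one has $\pi_S=\sigma_\bullet^{-1}\pi_N\sigma_\circ$, so the first-return map becomes $\pi_S^{-1}\pi_N=\sigma_\circ^{-1}\pi_N^{-1}\sigma_\bullet\pi_N$, which is conjugate to $\sigma_\bullet\,\pi_N\sigma_\circ^{-1}\pi_N^{-1}$ instead of to $\sigma_\bullet$, and the clean bookkeeping of components by the cycles of $\sigma_\bullet$ breaks down. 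Once these conventions are pinned down, the remainder of the argument is elementary.
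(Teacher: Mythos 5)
Your proof is correct and takes essentially the same approach as the paper: the paper's justification for this proposition is precisely that, when $\sigma_\circ=\id$, the components of the meander system are tracked by the cycles of $\sigma_\bullet$, which is exactly what your first-return map $v\mapsto\pi_S^{-1}\circ\pi_N(v)$ together with the conjugacy $\pi_S^{-1}\circ\pi_N=\pi_N^{-1}\sigma_\bullet\pi_N$ establishes. The loop-tracing detail is left to the cited reference in the manuscript, but your reconstruction of it is sound.
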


\section{2-cyclic bubbles and factorization on stabilized-interval-free permutations}
In this section we set $\sigma_\circ=\id$ and denote $\vec{\bb}_{\sigma} \equiv \vec{\bb}_{\id, \sigma}$.

\subsection{Decomposition on stabilized-interval-free (SIF) permutations} 
\begin{definition}
	We say that $\sigma\in\mathfrak{S}_n$ is SIF if it does not stabilize any proper interval of $[1..n]$, i.e.
	\begin{equation}
		\forall\ a\leq b \in [1..n] \qquad \sigma([a..b]) \neq [a..b],
	\end{equation}
	except $[a..b]=[1..n]$. We denote $\SIF_n\subset \mathfrak{S}_n$ the set of SIF permutations.
\end{definition}

A closely related notion is that of connected (a.k.a. indecomposable) permutations. We say that $\sigma\in\mathfrak{S}_n$ is connected if 
\begin{equation}
	\forall i\in[2..n-1] \qquad \sigma([1..i])\neq [1..i].
\end{equation}
Clearly every permutation has a unique decomposition into connected blocks, i.e. there exists $1=i_1< i_2<\dotsb< i_p<i_{p+1}=n$ and connected permutations $\sigma_1, \dotsc, \sigma_p$, with $\sigma_j\in\mathfrak{S}_{i_{j+1}-i_j+1}$ defined by
\begin{equation}
	\forall k\in[1..i_{j+1}-i_j]\qquad \sigma_j(k) = \sigma(k+i_j-1)
\end{equation}
for $j=1,\dots, p$. They can be found by identifying the connected block which contains 1 and goes to $i_2-1$, and so on, as illustrated in Figure \ref{fig:ConnectedBlocks}.
\begin{figure}
	\includegraphics[scale=.5,valign=c]{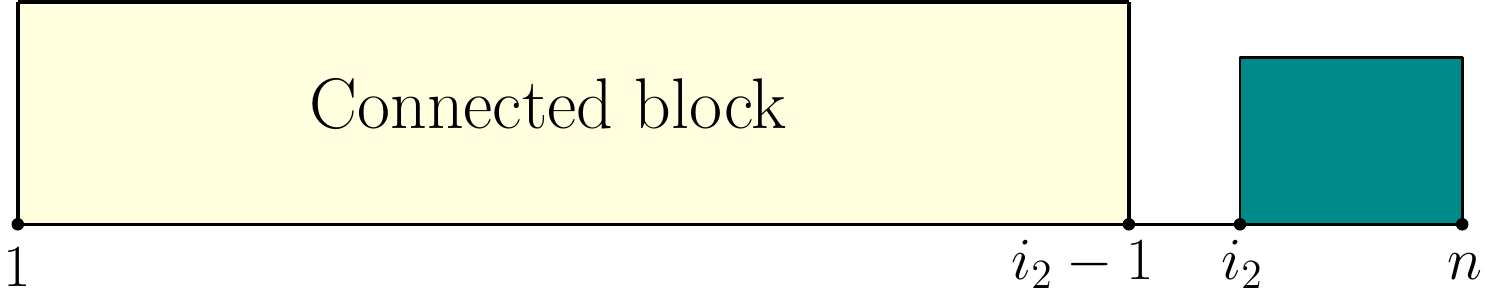} $\to$ \includegraphics[scale=.5,valign=c]{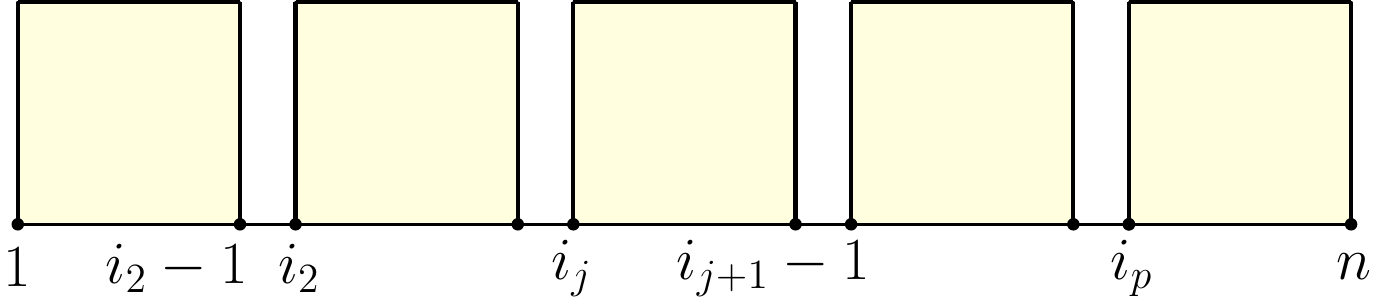}
	\caption{\label{fig:ConnectedBlocks} On the left we find the connected block on $[1..i_2-1]$, and the block on $[i_2..n]$ can be anything. On the right, we have proceeded recursively to get the full decomposition onto connected blocks.}
\end{figure}

We give two ways of decomposing a permutation into SIF blocks.
\begin{enumerate}
	\item For $\sigma\in\mathfrak{S}_n$ there is a unique SIF block which acts on 1. It can be found by considering the first connected block $\sigma_1$, and removing from $[i_1..i_2-1]$ the maximal, proper intervals (if any) stabilized by $\sigma_1$. The action of $\sigma_1$ on the remaining elements $\{j_1, \dotsc, j_m\}\subset [i_1..i_2-1]$ is SIF. Then $\sigma$ can be recovered by adding permutations $\rho_1, \dots, \rho_{m_1}$ between $j_l$ and $j_{l+1}$ for $l=1, \dotsc, m-1$, and a permutation $\rho_m$ on $[i_2..n]$, such that the sizes add up to $n$. Those permutations can be arbitrary. This decomposition is illustrated in Figure \ref{fig:SIFBlocks}. The vertical lines connected by a horizontal bar form a SIF permutation. This leads to the (unique) decomposition of a permutation on SIF blocks \cite{Callan04}. The latter are irreducible blocks in the sense of Beissinger \cite{Beissinger1985}. This translates directly into an equation on the formal generating series (see \cite{Beissinger1985})\footnote{ It can be used to extract the asymptotic series. We thank O. Bodini for pointing out theorems from Bender related to such asymptotics, and for discussions on that topic. Let us simply mention that the number of connected permutations is asymptotically $n!$, so permutations are almost surely connected, and the number of SIF permutations goes like $n!/e$.}
	\begin{equation} \label{SIFEquation}
		S(t)\coloneqq \sum_{n\geq 0} n! t^n = I(tS(t)),
	\end{equation}
	where $I(t) = \sum_{n\geq 0} I_n t^n$, $I_n$ being the number of SIF permutations of size $n$. Remarkably, we will re-encounter this equation several times in this manuscript, for irreducible meander systems and non-seperable planar maps playing the roles of the irreducible objects.
	\begin{figure}
		\includegraphics[scale=.4,valign=c]{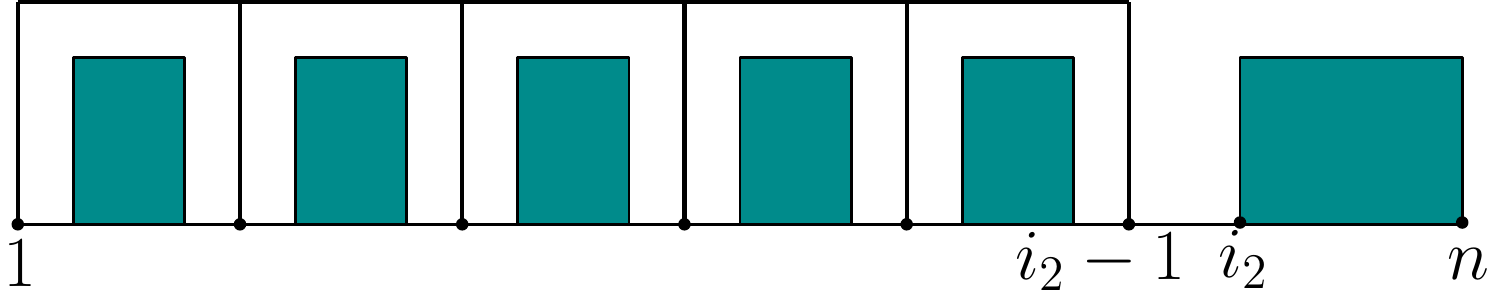}
		\caption{\label{fig:SIFBlocks} A decomposition of a permutation on $[1..n]$ as a SIF block containing 1, and arbitrary permutations between its elements, and a final one. One can proceed recursively to get the full decomposition onto SIF blocks.}
	\end{figure}
	\item We give another algorithm to find the SIF decomposition, which we will be used later. Denote $\Delta_k^{(n)}:i\in[1..n]\mapsto i+k\mod n$ the cyclic shift of $k$ on $[1..n]$. Another way to find SIF blocks is as connected blocks up to cyclic shifts. The algorithm starts with the connected block decomposition $(\sigma_1, \dotsc, \sigma_p)$ of $\sigma$. Then at every step of the algorithm:
	\begin{itemize}
		\item The input is a connected permutation $\sigma_{J}$ where $J=(j_1, \dotsc, j_m)$ (denotes a path from the root in a rooted plane tree, see below) acting on $[1..n_J]$ for some $n_J\leq n$.
		\item Test whether the cyclically-shifted permutation $\Delta_{-k}^{(n_J)} \circ \sigma_J\circ \Delta_k^{(n_J)}$ is also connected, by increasing values of $k$. 
		\item If $\Delta_{-k}^{(n_J)} \circ \sigma_J\circ \Delta_k^{(n_J)}$ is connected for all $k=1, \dotsc, n_J$, then $\sigma_J$ is SIF and it is stored as such.
		\item If a non-connected permutation is obtained, one performs its connected block decomposition, into say $p_J$ blocks, as shown in Figure \ref{fig:ConnectedSIF}. It gives rise to connected permutations $\sigma_{J_1}, \dotsc, \sigma_{J_{p_J}}$ where $J_l = (j_1, \dotsc, j_m, l)$ for $l=1, \dotsc, p_J$. They will then be considered as inputs for new steps of the algorithm.
	\end{itemize}
	Since the connected block decomposition reduces the size of the permutations, the algorithm stops after a finite number of steps and all final blocks are SIF. It can obviously be schematically pictured as a rooted plane tree as shown in Figure \ref{fig:SIFAlgorithm}. It also echoes the invariance of $\cG^{\max}(\vec{\bb}_\sigma)$ under the action of $\mathbb{Z}_n$.
	\begin{figure}
		\includegraphics[scale=.6,valign=c]{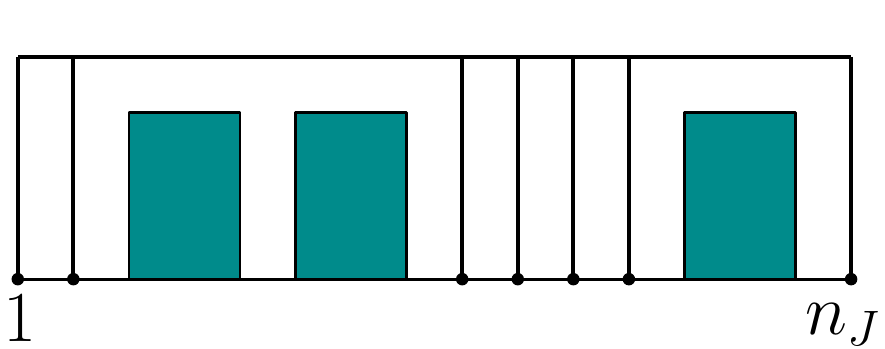} $\quad\overset{\Delta^{(n_J)}_{-2}\circ\sigma_1\circ \Delta^{(n_J)}_{2}}{\to}\qquad$ \includegraphics[scale=.6,valign=c]{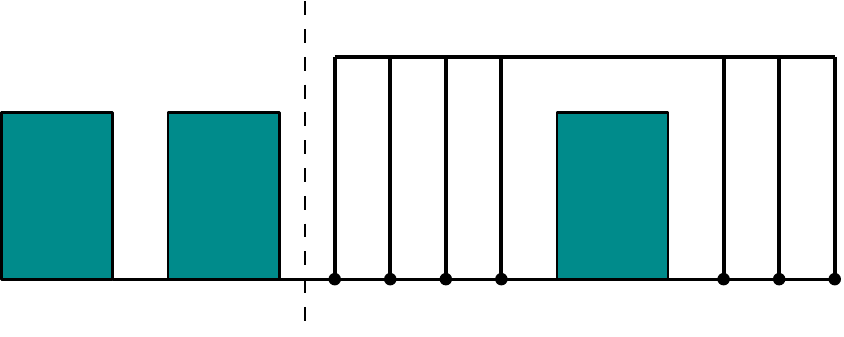}
		\caption{\label{fig:ConnectedSIF} On the left we have a connected block on $[1..n_J]$. After conjugation by $\Delta^{(n_J)}_2$, a non-connected permutation is found, and one writes its connected block decomposition, then applies this procedure recursively on all connected blocks until they are all SIF.}
	\end{figure}
	\begin{figure}
		\includegraphics[scale=.45]{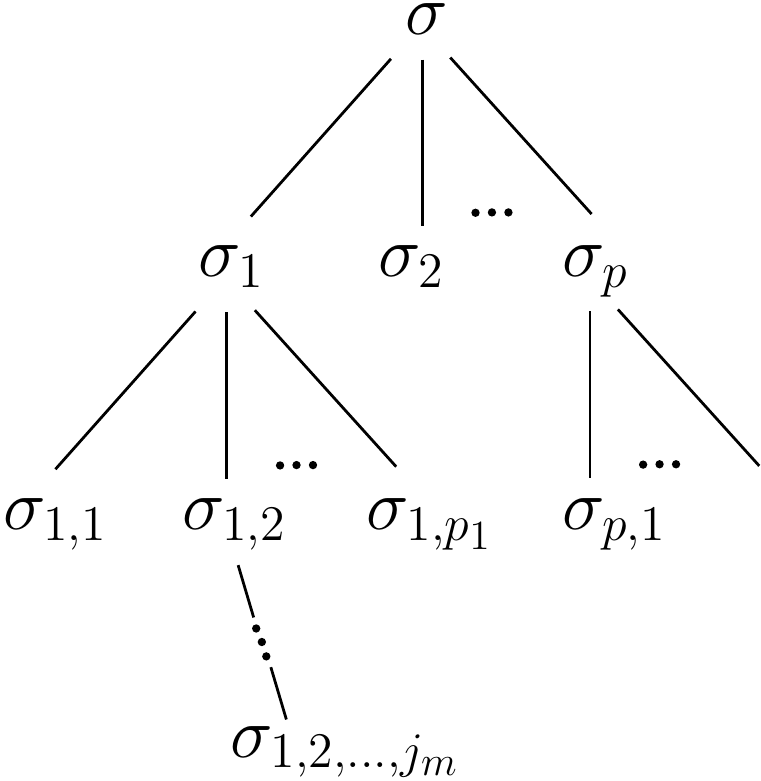}
		\caption{\label{fig:SIFAlgorithm} The algorithm we propose to find the SIF decomposition of $\sigma$ has the structure of a rooted plane tree. At every permutation, the conjugation by a shift of increasing value is used to find a non-connected permutation and its children are those connected blocks.}
	\end{figure}
\end{enumerate}

\subsection{2-cyclic SIF bubbles} If $\sigma\in\mathfrak{S}_n$, denote $\kappa_1, \dotsc, \kappa_p$ its SIF blocks (ordered e.g. by increasing order of their smallest elements), such that $\kappa_i$ acts on $\{j^{(i)}_1, \dotsc, j^{(i)}_{\ell_i}\}$ and in particular has size $\ell_i$, for $i=1, \dotsc, p$. Denote $\tilde{\kappa}_i$ the version of $\kappa_i$ ``rescaled'' so that its acts on $[1..\ell_i]$ (its $m$-th element is shifted by $m-j^{(i)}_m$) and is thus an element of $\mathfrak{S}_{\ell_i}$. Then
\begin{proposition}{}{}
	$\vec{B}_\sigma$ can be obtained as the boundary bubble of a tree on the bubbles $\vec{\bb}_{\tilde{\kappa}_i}$ for $i=1, \dotsc, p$.
\end{proposition}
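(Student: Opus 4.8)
The plan is to realize $\vec{\bb}_\sigma$ explicitly as the boundary of a concrete bubble tree whose nodes are the SIF bubbles $\vec{\bb}_{\tilde\kappa_i}$, the skeleton of this tree being exactly the recursive SIF-decomposition tree of Figure \ref{fig:SIFAlgorithm}. The central tool is a local \emph{merge lemma}: if one glues a white vertex $v$ of a $2$-cyclic bubble $\vec{\bb}_{\tau_1}$ to a black vertex of a $2$-cyclic bubble $\vec{\bb}_{\tau_2}$ by a single edge of color $0$ and contracts it (a bridge-contraction $\vec{\bb}_{\tau_1,v}\ast\vec{\bb}_{\tau_2}$ in the sense of the Tutte decomposition), then the result is again a $2$-cyclic bubble. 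Indeed, the contraction identifies $v$ with the black vertex, so the single $\{1,2\}$-cycle of $\vec{\bb}_{\tau_1}$ and that of $\vec{\bb}_{\tau_2}$ are spliced into one longer $\{1,2\}$-cycle, and likewise the two $\{3,4\}$-cycles merge; the new permutation is then read off as the \emph{insertion} of $\tau_2$ into $\tau_1$ at the position dictated by $v$. First I would prove this by direct inspection of the two cycle-induced labelings of Figure \ref{fig:InducedLabeling}, the key point being that splicing at $v$ inserts the whole block $\tau_2$ between two consecutive letters of $\tau_1$, which is precisely the block-insertion operation used in the first SIF decomposition \cite{Beissinger1985, Callan04}. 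The freedom in which black vertex of $\vec{\bb}_{\tau_2}$ one attaches to conjugates $\tau_2$ by a cyclic shift, i.e.\ it is the $\mathbb{Z}_n$ rerooting action \eqref{LeftRightActions}, the bubble-level shadow of the cyclic shifts of the second SIF algorithm.

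Second, I would match connected-block concatenation with bubble gluing. If $\sigma([1..k])=[1..k]$, then since $\sigma_\circ=\id$ the black vertices $1'_\bullet,\dotsc,k'_\bullet$ of the $(3,4)$-cycle coincide with $1_\bullet,\dotsc,k_\bullet$ of the $(1,2)$-cycle, so the sub-configuration on the white vertices $\{1_\circ,\dotsc,k_\circ\}$ and black vertices $\{1_\bullet,\dotsc,k_\bullet\}$ closes up into a self-contained $2$-cyclic sub-bubble. Thus every connected-block boundary of $\sigma$ is exactly the locus where a color-$0$ bridge can be cut, and the connected-block decomposition of $\sigma$ is a linear concatenation of $2$-cyclic bubbles via such bridges. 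Combining this with the merge lemma, ``SIF block with inserted permutations'' translates verbatim into ``root SIF bubble with sub-trees grafted by bridge-contractions''.

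Finally I would induct on $n$ (equivalently on the number $p$ of SIF blocks). The base case $p=1$ is $\sigma$ itself SIF: then $\tilde\kappa_1=\sigma$ up to the trivial rescaling, and the tree is the single node $\vec{\bb}_\sigma$ with no edges, whose boundary is itself. For the inductive step I extract the SIF block $\kappa_1$ acting on $1$ via the first SIF decomposition, take $\vec{\bb}_{\tilde\kappa_1}$ as the root bubble, and graft to its vertices — by bridge-contractions — the boundary bubbles of the sub-trees recursively built for the inserted permutations $\rho_1,\dotsc,\rho_m$. By the merge lemma each grafting performs one insertion, by the induction hypothesis each sub-tree has the correct boundary, and since the order of contractions is irrelevant (the Definition-Lemma on boundary bubbles), contracting the whole tree well-definedly reassembles $\sigma$ from its SIF blocks, giving $\partial\vec{H}=\vec{\bb}_\sigma$.

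The hard part will be the bookkeeping of labels: reconciling the rescaled blocks $\tilde\kappa_i\in\mathfrak{S}_{\ell_i}$ acting on $[1..\ell_i]$ with the global labels, and pinning down the attachment vertices so that the repeated insertions yield \emph{exactly} $\sigma$ rather than some cyclic conjugate. The only genuine subtlety is that a $2$-cyclic bubble determines $\sigma$ only up to the $\mathbb{Z}_n$-action of \eqref{LeftRightActions}; I would resolve this by fixing the marked white vertex $1_\circ$ throughout and running the second, shift-based SIF algorithm, whose cyclic shifts $\Delta_k^{(n_J)}$ exactly absorb this ambiguity, so that every bridge attachment point is determined unambiguously and the final boundary bubble is $\vec{\bb}_\sigma$ on the nose.
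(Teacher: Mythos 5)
Your proof is correct and follows essentially the same route as the paper: the paper locates, for each stabilized interval of $\sigma$, a $4$-edge-cut of $\vec{\bb}_\sigma$ containing one edge of each color and un-contracts it into a color-$0$ bridge between the block piece and the remainder, then recurses; your merge lemma is exactly the inverse of that splitting move, so your bottom-up assembly of the tree and the paper's top-down disassembly are the same argument. The only differences are presentational — you make the label bookkeeping and the cyclic-shift ambiguity explicit, which the paper leaves implicit.
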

Here a tree means that all edges of color 0 are cut-edges.
\begin{proof}
	Assume that $\sigma$ stabilizes the interval $[i..i+k]$ for $k>0$, then $\vec{\bb}_\sigma$ has a 4-edge-cut with an edge of every color as follows,
	\begin{equation}
		\vec{\bb}_\sigma = \includegraphics[scale=.4,valign=c]{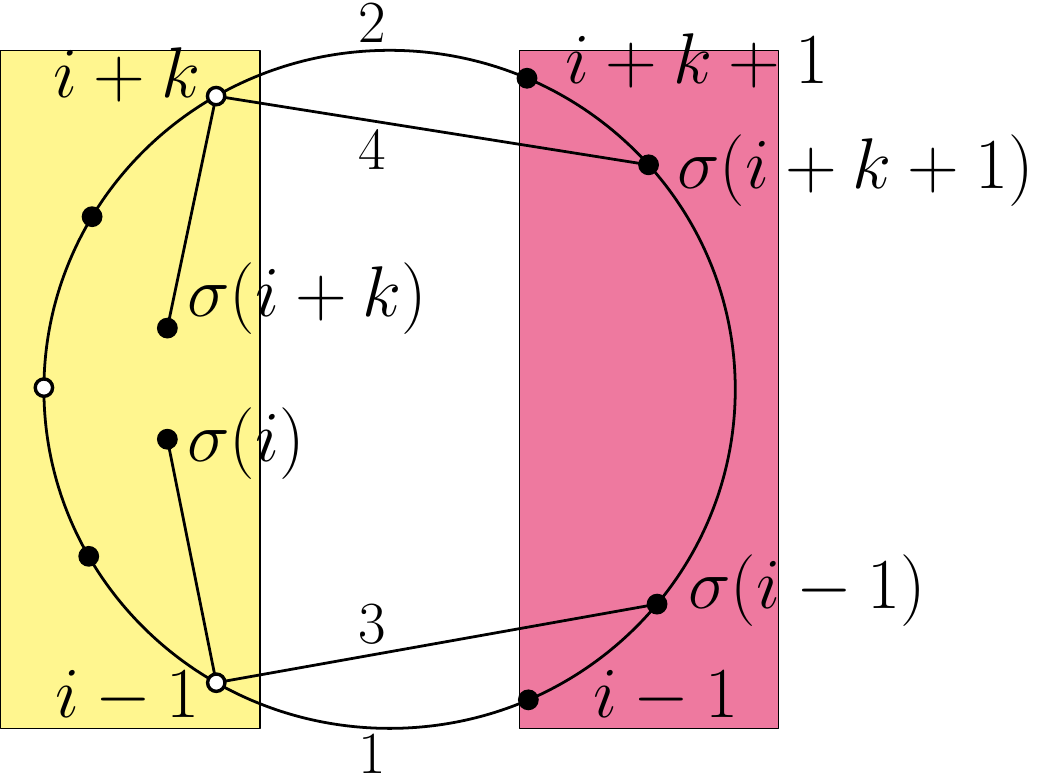}
	\end{equation}
	where the edges of color 1 and 3 (respectively 2 and 4) on the left hand side are incident to the white vertex of label $i-1$ (respectively $i+k$). It is a boundary bubble as follows
	\begin{equation}
		\vec{\bb}_\sigma = \partial \includegraphics[scale=.4,valign=c]{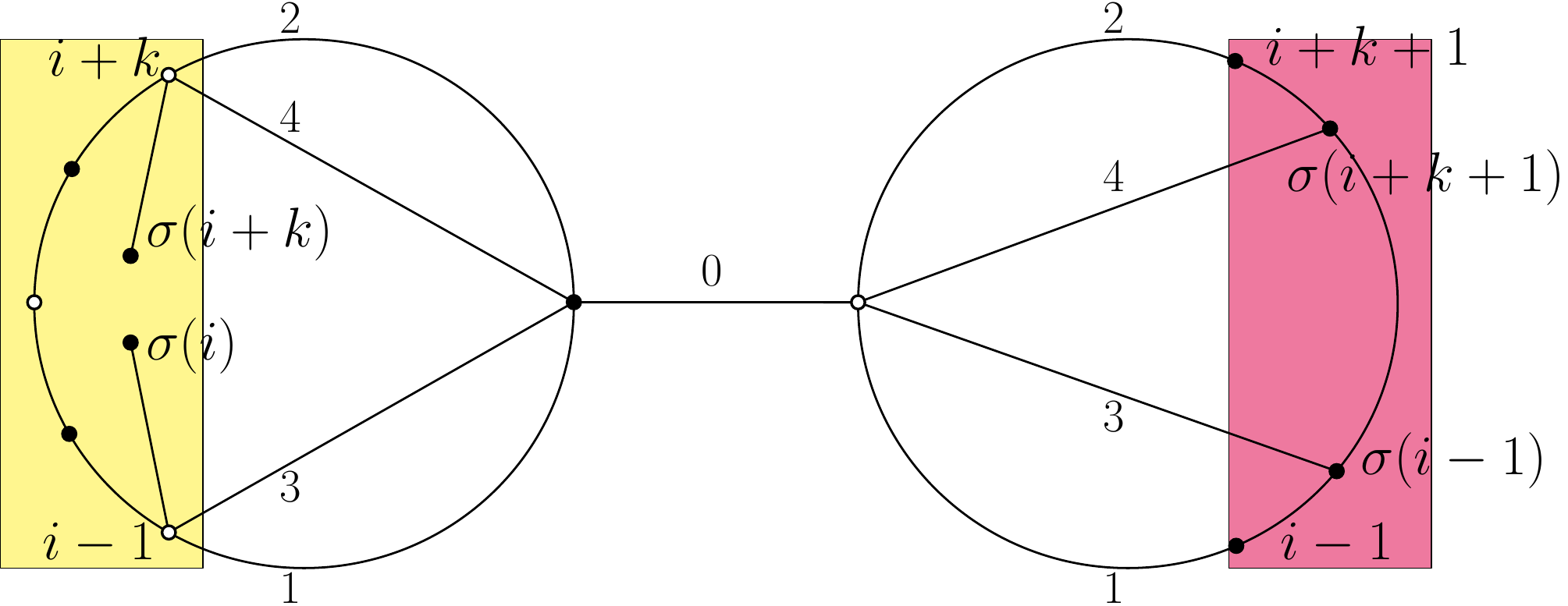}
	\end{equation}
	and this produces recursively a tree of the bubbles $\vec{\bb}_{\tilde{\kappa}_i}$ for $i=1, \dotsc, p$.
\end{proof}

\subsection{Factorization onto SIF permutations and their two-type trees} The SIF decomposition of $\sigma\in\mathfrak{S}_n$ gives rise to a non-crossing partition on $n$ elements. Due to the cyclic symmetry, it is convenient to represent non-crossing partitions on the circle with $n$ vertices. A \emph{non-crossing partition} is a set of convex polygons inside the circle on the $n$ vertices, which do not cross. Each polygon is colored ($1$-gons are vertices and $2$-gons are edges) and they are separated by connected non-colored regions. An example is given in Figure \ref{fig:NCP}.
\begin{figure}
	\includegraphics[scale=.7,valign=c]{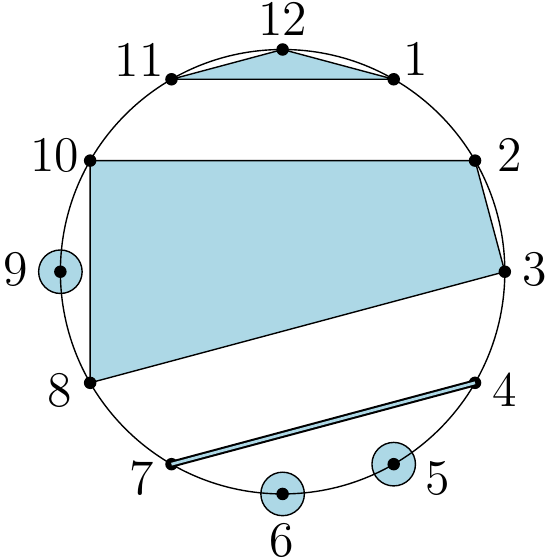} \hspace{1cm} \includegraphics[scale=.7,valign=c]{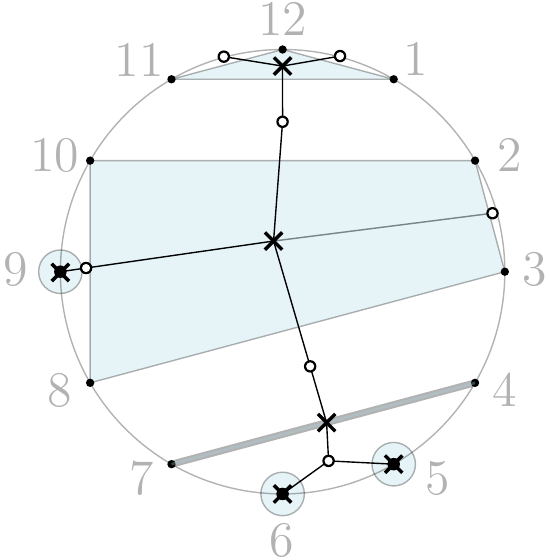}
	\caption{\label{fig:NCP}On the left is a non-crossing partition on 12 labeled elements, with blue-colored polygons. On the right is its dual two-type tree.}
\end{figure}

If $\kappa_1, \dotsc, \kappa_p$ are the SIF blocks of $\sigma\in\mathfrak{S}_n$, such that $\kappa_i$ has support $S_i \coloneqq\{j^{(i)}_1, \dotsc, j^{(i)}_{\ell_i}\}\subset [1..n]$, then one draws a convex polygon for each $S_i$. This determines a unique non-crossing partition.

We consider the \emph{two-type tree} $t(\sigma)$ dual to the non-crossing partition associated to $\sigma$. It has cross-vertices ``$\times$'' dual to the polygons and white vertices ``$\circ$'' dual to non-colored regions, and edges between them when a region is adjacent to a polygon, see Figure \ref{fig:NCP}.
\begin{theorem}{}{SIFFactorization}\cite{Meanders}
	\begin{equation}
		|\cG^{\max}(\vec{\bb}_\sigma)| = \prod_{\circ\in\tree(\sigma)} \operatorname{Cat}_{\operatorname{deg}(\circ)}\ \prod_{\times\in\tree(\sigma)} |\cG^{\max}(\vec{\bb}_{\sigma(\times)})|
	\end{equation}
\end{theorem}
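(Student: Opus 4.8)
The plan is to reduce the count to meander systems and then to strip off the SIF structure inductively. By \Cref{thm:MeanderSystems}, specialised to $\sigma_\circ=\id$ and $\sigma_\bullet=\sigma$, the set $\cG^{\max}(\vec{\bb}_\sigma)$ is in bijection with $\mathcal{M}_\sigma$, i.e.\ with the pairs $(\pi,\sigma^{-1}\circ\pi)$ in which $\pi\in\Pl\mathfrak{S}_n$ is the upper (planar) arch configuration and $\sigma^{-1}\circ\pi\in\Pl\mathfrak{S}_n$ the lower one. So the entire problem becomes combinatorial: count the planar pairings $\pi$ such that $\sigma^{-1}\circ\pi$ is again planar, and show that this count factorises along the two-type tree $\tree(\sigma)$.

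First I would set up an induction on the number $p$ of SIF blocks of $\sigma$. In the base case $p=1$ the permutation $\sigma$ is SIF, the associated non-crossing partition is the single $n$-gon, and its dual two-type tree is a star: one central cross-vertex carrying $\sigma(\times)=\sigma$, together with white leaves (the slivers between polygon edges and circle arcs) which all have degree $1$. Since $\operatorname{Cat}_1=1$, the claimed identity reduces to $|\cG^{\max}(\vec{\bb}_\sigma)|=|\cG^{\max}(\vec{\bb}_\sigma)|$ and holds trivially; this both fixes the normalisation and shows that the Catalan factors serve only to record the branching of $\tree(\sigma)$ at the gaps.

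For the inductive step, assume $\sigma$ is not SIF, so it stabilises a proper interval and the preceding proposition realises $\vec{\bb}_\sigma$ as the boundary bubble of a tree of the smaller bubbles $\vec{\bb}_{\tilde\kappa_i}$, glued along $4$-edge-cuts carrying one edge of each color. The geometric content of such a cut is that, once the edges of color $0$ are drawn on the two circles building $\m_{12}$ and $\m_{34}$, every arch either remains inside the support $S_i$ of some SIF block or remains inside a gap: an arch straddling a cut would force a crossing and ruin the planarity of $\m_{12}$ or of $\m_{34}$. I would make this precise by proving that in every element of $\mathcal{M}_\sigma$ \emph{both} arch configurations respect the non-crossing partition $\{S_1,\dots,S_p\}$. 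Cutting the meander along the arcs that separate the blocks then produces, for each cross-vertex, an independent element of $\cG^{\max}(\vec{\bb}_{\tilde\kappa_i})$, and, for each non-colored region (white vertex) where $\operatorname{deg}(\circ)$ block-corners meet, an independent non-crossing connection pattern. The Catalan numbers enter exactly here: the planar ways of nesting $\operatorname{deg}(\circ)$ already-formed sub-configurations around a common gap, compatibly with planarity of both the upper and the lower arches, should be in bijection with non-crossing matchings and hence number $\operatorname{Cat}_{\operatorname{deg}(\circ)}$. Multiplying over blocks and gaps, and applying the induction hypothesis to each $\tilde\kappa_i$ (whose own two-type subtree is the branch of $\tree(\sigma)$ hanging below the corresponding cross-vertex), yields the stated product.

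The main obstacle I anticipate is precisely this decoupling-plus-Catalan step: proving rigorously that the two simultaneous planarity constraints factor through the non-crossing partition, and that the residual freedom at a degree-$k$ gap equals $\operatorname{Cat}_k$ rather than merely being bounded by it. Extra care is needed because $\cG^{\max}(\vec{\bb}_\sigma)$ is only defined up to the $\mathbb{Z}_n$-action of \eqref{LeftRightActions}; I would use the second, cyclic-shift SIF algorithm (connected blocks up to cyclic shift) to make the cutting canonical and to align its recursive structure with the rooted-plane-tree shape of $\tree(\sigma)$, so that the bijection descends to $\mathbb{Z}_n$-classes and the factorisation is well defined.
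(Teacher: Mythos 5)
Your reduction to meander systems via \Cref{thm:MeanderSystems} is the right starting point and matches the paper, but the central claim of your inductive step --- that in every element of $\mathcal{M}_\sigma$ \emph{both} arch configurations respect the non-crossing partition $\{S_1,\dotsc,S_p\}$ into SIF supports, so that the system splits block by block --- is false, and it is exactly where the Catalan factors come from. Take $\sigma=\id$: every SIF block is a singleton, so if no arch could straddle two blocks the only admissible configuration would pair $i_\circ$ with $i_\bullet$ for every $i$ and the count would be $1$; but $|\cG^{\max}(\vec{\bb}_{\id})|=\operatorname{Cat}_n$ by \Cref{thm:SIFVEV}, realized precisely by arch systems joining different blocks (and $\tree(\id)$ has one white vertex of degree $n$ contributing that $\operatorname{Cat}_n$). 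Your write-up is also internally inconsistent on this point: if all arches stayed inside blocks or gaps, there would be no residual freedom left at a gap to be counted by $\operatorname{Cat}_{\operatorname{deg}(\circ)}$, yet you need that factor.

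The paper's proof isolates the correct mechanism, and it is not a partition-respecting decoupling. One first proves the factorization over \emph{connected} blocks, $|\cG^{\max}(\vec{\bb}_\sigma)|=\operatorname{Cat}_p\prod_{j=1}^p|\cG^{\max}(\vec{\bb}_{\sigma_j})|$. The key lemma, which uses the inversion property of connected permutations, is that an arch can leave a connected block only through the \emph{last white vertex} of that block; cutting the upper and lower arches at these $p$ distinguished vertices and re-attaching them according to an element of $\Pl\mathfrak{S}_p$ gives an injection $\left(\mathcal{M}_{\sigma_1}\times\dotsm\times\mathcal{M}_{\sigma_p}\right)\times\Pl\mathfrak{S}_p\hookrightarrow\mathcal{M}_\sigma$, and a converse lemma shows every meander system arises this way. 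This is where $\operatorname{Cat}_p=|\Pl\mathfrak{S}_p|$ enters, one factor per white vertex of $\tree(\sigma)$; the SIF decomposition is then reached by iterating the connected-block splitting together with the cyclic-shift algorithm and the $\mathbb{Z}_n$-invariance of $\cG^{\max}(\vec{\bb}_{\sigma})$. Your detour through the $4$-edge-cut / boundary-bubble tree is not needed and does not by itself control the arch count. To repair your argument you would need to replace the ``arches respect the partition'' claim by the ``exit only through the last white vertex'' lemma and identify the cross-block matching data as exactly a planar pairing, which is the step you correctly flag as the main obstacle but do not supply.
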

Before sketching the proof, let us translate our algorithm which finds the SIF block decomposition of $\sigma\in\mathfrak{S}_n$ in the tree $t(\sigma)$. Notice that every arc between $i$ and $i+1$ in the non-crossing partition corresponds to a corner at a white vertex of $t(\sigma)$. Consider the corner corresponding to the arc $(n,1)$ and the white vertex incident to that corner.
\begin{itemize}
	\item If it is not a leaf, then $n$ and $1$ are in different stabilized interval, meaning that $\sigma$ is not connected. Its connected blocks correspond to the subtrees attached to that white vertex. We separate them by replacing the white vertex with a leaf for each subtree, thereby obtaining trees $t_1, \dotsc, t_p$ which are the trees $(t(\sigma_j))_{j=1, \dotsc, p}$ of the connected blocks $\sigma_1, \dotsc, \sigma_p$ of $\sigma$.
	\item If the white vertex is a leaf, then $\sigma$ is connected. The action of the shift $\Delta_k^{(n)}$ corresponds to moving on to the next arcs $(k, k+1)$ by increasing values of $k$ until the corresponding corner sits at a white vertex which is not a leaf. Then one performs the previous step.
\end{itemize}
Those steps are then repeated on each subtree. The algorithm eventually splits $t(\sigma)$ at every white vertex which is not a leaf, until they all are. The SIF blocks are then identified as the cross-vertices.

\begin{proof}
	The key equation to prove is the following for the factorization on connected blocks,
	\begin{equation} \label{ConnectedFactorization}
		|\cG^{\max}(\vec{\bb}_\sigma)| = \operatorname{Cat}_p \prod_{j=1}^p |\cG^{\max}(\vec{\bb}_{\sigma_j})|.
	\end{equation}
	Assuming it holds, one can then apply the above algorithm to each $\sigma_j$, using the cyclic invariance of $\cG^{\max}(\vec{\bb}_{\sigma_j})$, until one has only SIF blocks. This produces a Catalan number every time one uses a decomposition into connected blocks, so at every white vertex of $t(\sigma)$.
	
	To prove \eqref{ConnectedFactorization}, we use the representation of $\cG^{\max}(\vec{\bb}_\sigma)$ as meander systems, as well as several lemmas. We recall that $\mathcal{M}_\sigma$ is the set of meander systems such that if $\pi\in\mathfrak{S}_n$ encodes the upper arch configuration (going from $i_\circ$ to $\pi(i)_\bullet$), then $\sigma^{-1}\circ\pi$ encodes the lower arch configuration. The first lemma is that if $\sigma$ has a decomposition $\sigma_1, \dotsc, \sigma_p$ into connected blocks, then there is an injective map
	\begin{equation}
		\left(\mathcal{M}_{\sigma_1} \times \dotsm \times \mathcal{M}_{\sigma_p}\right) \times \Pl\mathfrak{S}_p \hookrightarrow \mathcal{M}_\sigma
	\end{equation}
	which we explain ($\Pl\mathfrak{S}_n$ is the set of planar pairings). Obviously, given meander systems from $\mathcal{M}_{\sigma_1}, \dotsc, \mathcal{M}_{\sigma_p}$, we can concatenate them to obtain a meander system from $\mathcal{M}_\sigma$, see Figure \ref{fig:ConnBlocks}. Let us explain the additional action of $\Pl\mathfrak{S}_p$. Given $\rho\in\Pl\mathfrak{S}_p$, we can cut the upper and lower arches which touch the last white vertex of each region, and re-arrange them using $\rho$. This always produces a new meander system (i.e. no arch crossings). This is detailed in Figure \ref{fig:NewMeander}.
	
	The next lemma shows that all meander systems are of this type: given the decomposition in connected blocks, only the last white vertex of each region can be joined to other regions with upper and lower arches. We do not prove this here (see \cite{Meanders}). It relies on the inversion property of connected permutations (obviously): If $\sigma\in\mathfrak{S}_n$ is connected, then 
	\begin{equation}
		\forall k\in[1,n-1]\qquad \exists\ l>k, \quad \sigma(l)\leq k.
	\end{equation}
\end{proof}
\begin{figure}
	\subfloat[A collection of five meander systems ordered and concatenated. We only draw explicitly the arches with the last white vertex of each block as a foot.]{\includegraphics[scale=.6]{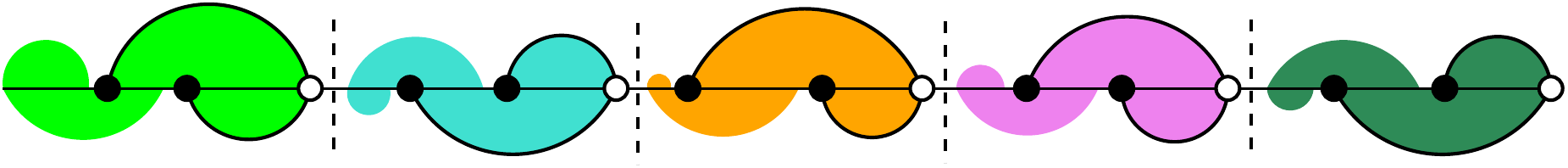}\label{fig:ConnBlocks}}\\
	\subfloat[A planar permutation $\rho$ on the set of blocks of the above meander system. We can consider the white vertices to correspond to the last white vertex of each block.]{\includegraphics[scale=.6]{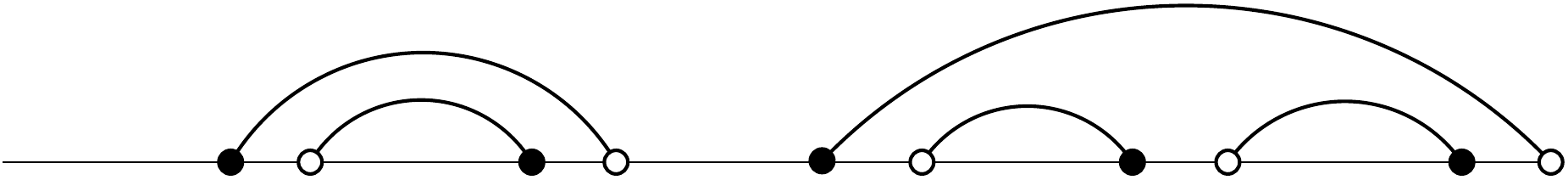}\label{fig:PlanarPerm}}\\
	\subfloat[We use $\rho$ to re-arrange the arches touching the last white vertex of each block. Clearly the use of a \emph{planar} permutation prevents the new arches to cross each other. Since they only have the last white vertex of each region as white foot, they do not intersect the arches contained in each region.]{\includegraphics[scale=.6]{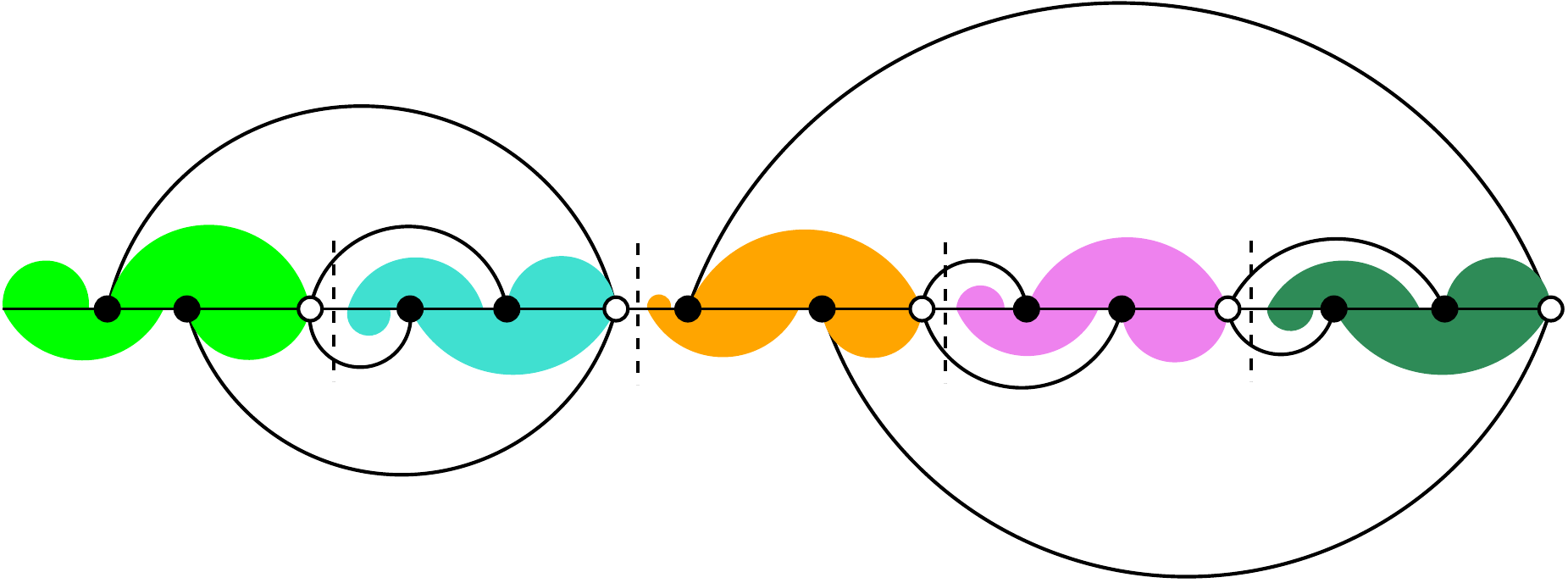}\label{fig:NewMeander}}
	\caption{\label{fig:Injectivity} From a set of meander systems and a planar permutation on them, we get a new meander system.}
\end{figure}

Since $|\cG^{\max}(\vec{\bb}_{\sigma})|$ counts meander systems as seen in \Cref{thm:MeanderSystems}, and we have been able to factorize it onto its SIF blocks, it is natural to ask whether taking $\sigma$ to be SIF restricts the meander systems which are counted to a remarkable family. This is indeed the case.
\begin{definition} \label{def:IrredMeanders}
	We say that a meander system is 1-reducible if a single cut on the horizontal line can produce two disconnected systems, and we say that it is 1-irreducible otherwise. A meander system is said to be 2-reducible if it becomes disconnected after exactly two cuts of the horizontal line, and 2-irreducible otherwise.
\end{definition}

Those notions were introduced in \cite{LandoZvonkin1993} (see also \cite{ArchStat}). A 2-reducible meander system thus has the following structure
\begin{equation}
	\includegraphics[scale=.25,valign=c]{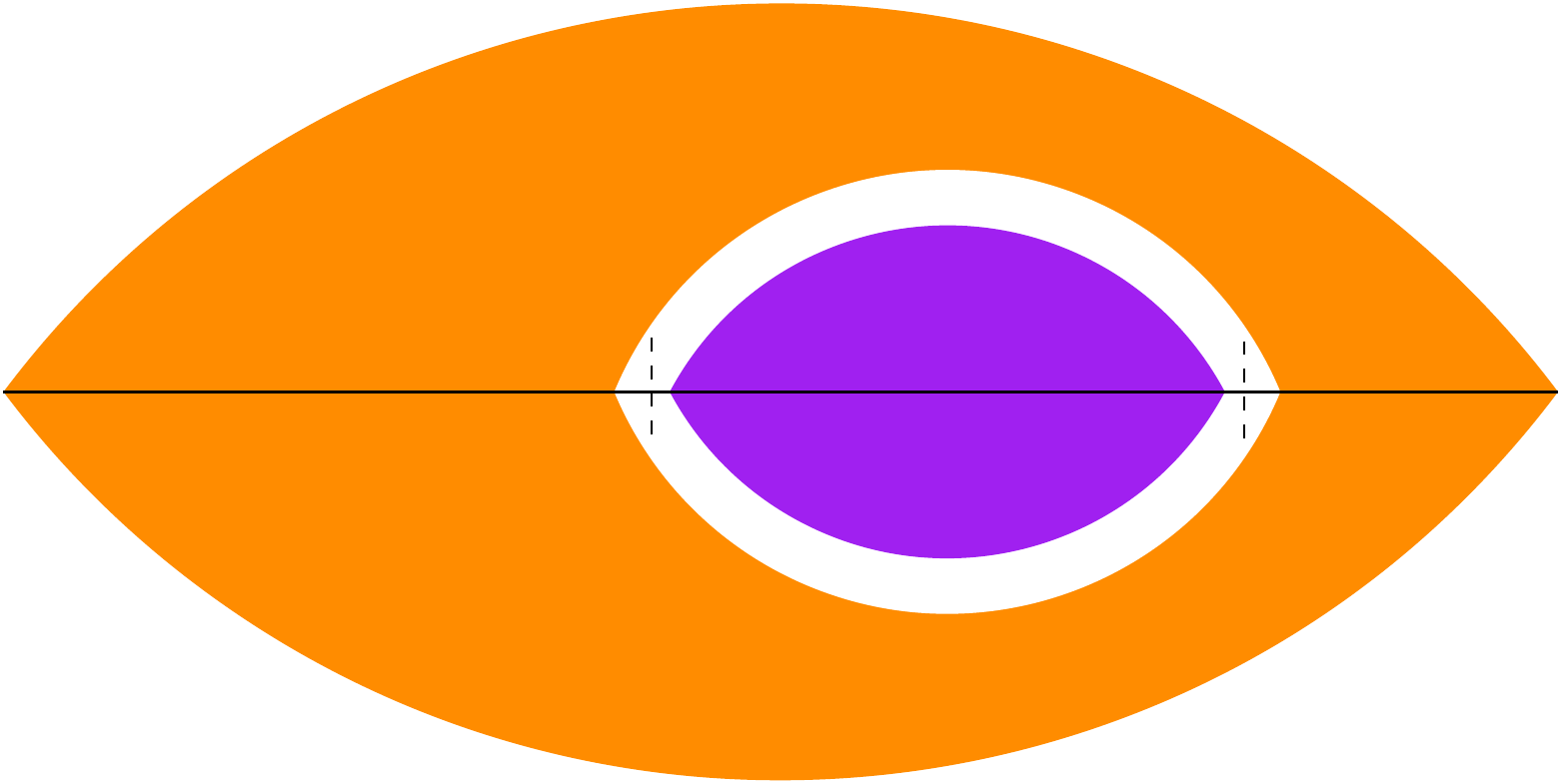}
\end{equation}
i.e. it has a sub-system totally restricted to a connected inner region and another sub-system which avoids entirely this region. The two vertical dashed lines indicate the two cuts which disconnect the two meander sub-systems. \Cref{thm:MeanderSystems} has the following refinement.
\begin{proposition}{}{}\cite{Meanders}
	Let $\SIF_n$ be the set of SIF permutations on $[1..n]$. The set $\{\mathcal{M}_\sigma\}_{\sigma\in\SIF_n}$ is the set of 2-irreducible meander systems.
\end{proposition}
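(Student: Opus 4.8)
The plan is to fix $\sigma_\circ=\id$ throughout and to exploit the partition of meander systems furnished by \Cref{thm:MeanderSystems}: as $\sigma$ ranges over $\mathfrak{S}_n$ the sets $\mathcal{M}_\sigma$ are disjoint and their union is the set of all meander systems of order $n$ (this is the one-to-one correspondence stated there for fixed $\sigma_\circ$). Consequently every meander system $M$ belongs to a unique $\mathcal{M}_\sigma$, and it suffices to prove the pointwise equivalence: $M\in\mathcal{M}_\sigma$ is $2$-irreducible if and only if $\sigma\in\SIF_n$. I would first record the combinatorial meaning of a disconnecting pair of cuts. If $\pi\in\Pl\mathfrak{S}_n$ encodes the upper arch configuration of $M$, then by definition of $\mathcal{M}_\sigma$ the lower arch configuration is $\sigma^{-1}\circ\pi\in\Pl\mathfrak{S}_n$. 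Two cuts disconnect $M$ exactly when they bound a nontrivial block $\beta$ of consecutive river vertices that is stable under both pairings; writing $I$ for the set of black labels and $J$ for the set of white labels occurring in $\beta$, stability reads $\pi(J)=I$ and $(\sigma^{-1}\pi)(J)=I$.

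For the forward implication ($2$-reducible $\Rightarrow$ not SIF) the algebra is immediate. From $J=\pi^{-1}(I)$ and $J=(\sigma^{-1}\pi)^{-1}(I)=\pi^{-1}(\sigma(I))$ I get $\pi^{-1}(\sigma(I))=\pi^{-1}(I)$, hence $\sigma(I)=I$. Since the block $\beta$ is nontrivial, $I$ is a proper subset; after applying a suitable element of the $\mathbb{Z}_n$-action \eqref{LeftRightActions} (under which both $\mathcal{M}_\sigma$ and the property of being a meander system are invariant) I may assume $I$ is a genuine interval $[a..b]\subsetneq[1..n]$. Thus $\sigma$ stabilizes a proper interval and is not SIF. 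Contrapositively, $\sigma\in\SIF_n$ forces $M$ to be $2$-irreducible.

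The reverse implication is the substantial half: I must show that whenever $\sigma$ stabilizes a proper interval, \emph{every} $M\in\mathcal{M}_\sigma$ is $2$-reducible, i.e. the stabilized interval can always be realized by an actual pair of river cuts. Here I would reuse the $4$-edge-cut picture established in the proof that $\vec{\bb}_\sigma$ is the boundary bubble of a tree of bubbles: if $\sigma$ stabilizes $[a..b]$ then $\vec{\bb}_\sigma$ carries a cut through one edge of each colour $1,2,3,4$. The pair $\{1,2\}$ sits in the jacket $\m_{12}$ and the pair $\{3,4\}$ in $\m_{34}$, so this algebraic cut becomes, in the meander picture, one prospective cut on each of the upper and lower arch configurations. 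The crux is to argue that the genus-$0$ condition defining $\cG^{\max}(\vec{\bb}_\sigma)$ — namely simultaneous planarity of both jackets — forces the colour-$0$ pairing $\pi$ to respect this cut, so that no upper or lower arch crosses it and the two prospective cuts genuinely disconnect $M$. I expect this to be the main obstacle: closure of the block under both pairings is automatic from $\sigma(I)=I$, but the \emph{geometric contiguity} of $\beta=\pi^{-1}(I)\cup I$ on the river is not, and must be extracted from planarity. I would handle it by choosing $I$ minimal (innermost) among stabilized intervals and invoking the inversion property of connected permutations already used in the proof of \Cref{thm:SIFFactorization}, together with the $\mathbb{Z}_n$-invariance, to place the two cuts so that $\pi^{-1}(I)$ forms an interval of river vertices flush with $I$.

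Finally I would assemble the two implications with the partition to conclude that $\bigcup_{\sigma\in\SIF_n}\mathcal{M}_\sigma$ is exactly the set of $2$-irreducible meander systems. As a consistency check I would note that the same dictionary sends a single cut to a stabilized initial segment, i.e. $1$-reducibility to decomposability of $\sigma$ into connected blocks; since SIF permutations are in particular connected, $2$-irreducible systems are automatically $1$-irreducible, matching Definition \ref{def:IrredMeanders}. An alternative, purely enumerative cross-check is that the factorization of \Cref{thm:SIFFactorization} collapses to a single factor $|\cG^{\max}(\vec{\bb}_\sigma)|$ precisely when $\sigma$ is SIF (the tree $t(\sigma)$ is then a star with one cross-vertex), which is the counting shadow of the structural statement proved here.
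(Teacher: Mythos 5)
Your reduction to a pointwise equivalence via the partition $\{\mathcal{M}_\sigma\}_{\sigma\in\mathfrak{S}_n}$ of all meander systems is the right frame, and your forward implication is correct (in fact slightly easier than you make it: the black labels occurring in a contiguous river block are automatically consecutive, whether the block begins on a black or a white vertex, so $I$ is an interval without any appeal to the $\mathbb{Z}_n$-action; the two stability conditions then give $\sigma(I)=I$ directly). The problem is the converse, which you rightly call the substantial half but leave as a plan — and the plan rests on an expectation that is false. Planarity of the two jackets does \emph{not} force $\pi$ to respect the cut read off from a stabilized interval. Concretely, take $n=3$ and $\sigma=(1\,2)$, which stabilizes $[1..2]$ and $\{3\}$ and is therefore not SIF. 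The pair $\pi=(2\,3)$, $\sigma^{-1}\pi=(1\,2\,3)$ is in $\mathcal{M}_\sigma$ (both are planar), yet the block $\{1_\bullet,1_\circ,2_\bullet,2_\circ\}$ associated with the stabilized interval $[1..2]$ is not closed, since $\pi(2)=3$. The system is still $2$-reducible, but through the block $\{2_\circ,3_\bullet\}$, which corresponds to the \emph{other} stabilized interval $\{3\}$ and moreover in the ``offset'' form $J=[k..m]$, $I=[k+1..m+1]$. So the realizing cut must be searched for among several candidates depending on $\pi$, and that search is exactly the content of the converse; ``choosing $I$ minimal'' and ``making $\pi^{-1}(I)$ flush with $I$'' names the target but does not produce it.

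What actually closes this direction is the structure lemma quoted (but not proved) inside the proof of \Cref{thm:SIFFactorization}: after a cyclic shift turning the stabilized interval into a prefix, every $M\in\mathcal{M}_\sigma$ with connected blocks $\sigma_1,\dotsc,\sigma_p$, $p\geq 2$, is a concatenation of elements of $\mathcal{M}_{\sigma_1},\dotsc,\mathcal{M}_{\sigma_p}$ whose last-white-vertex arches are rerouted by a single $\rho\in\Pl\mathfrak{S}_p$. A non-crossing $\rho$ on $p\geq2$ points has an innermost arch joining two adjacent points, so $\rho(j)=j$ or $\rho(j)=j+1$ for some $j$; in the first case the region $R_j$ is itself a closed contiguous block, and in the second case $\{w_j\}\cup\bigl(R_{j+1}\setminus\{w_{j+1}\}\bigr)$ (with $w_j$ the last white vertex of $R_j$) is closed, contiguous and proper, because all other arches of $R_{j+1}$ stay inside $R_{j+1}$ and $w_j$ absorbs the one black vertex of $R_{j+1}$ left uncovered. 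One also needs the (easy) observation that $2$-reducibility is preserved under cyclic rotation of the river, since the complement of a closed block is closed and one of the two is contiguous. Your write-up gestures at the inversion property that underlies the structure lemma, but without invoking that lemma explicitly and extracting an explicit closed block from it, the inclusion ``$2$-irreducible $\Rightarrow$ SIF'' remains unproved.
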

Meander systems can be decomposed into 2-irreducible blocks using the same two algorithms we gave for the SIF decomposition of permutations. For instance, there is a unique 2-irreducible block which goes through $1_\bullet$ and a meander system can be reconstructed from this unique 2-irreducible block by adding arbitrary meander systems between its vertices and a final one, see Figure \ref{fig:2IrreducibleMeanderDecomposition}. If $m(t) \coloneqq \sum_{n\geq 0} \operatorname{Cat}_n^2 t^n$ is the generating series of all meander systems, then
\begin{equation} \label{2IrreducibleDecomposition}
	m(t) = I(t m(t)^2),
\end{equation}
where $I(t) = \sum_{n\geq 0} I_n t^n$, $I_n$ being the number of 2-irreducible meander systems on $2n$ vertices.
\begin{figure}
	\includegraphics[scale=.4]{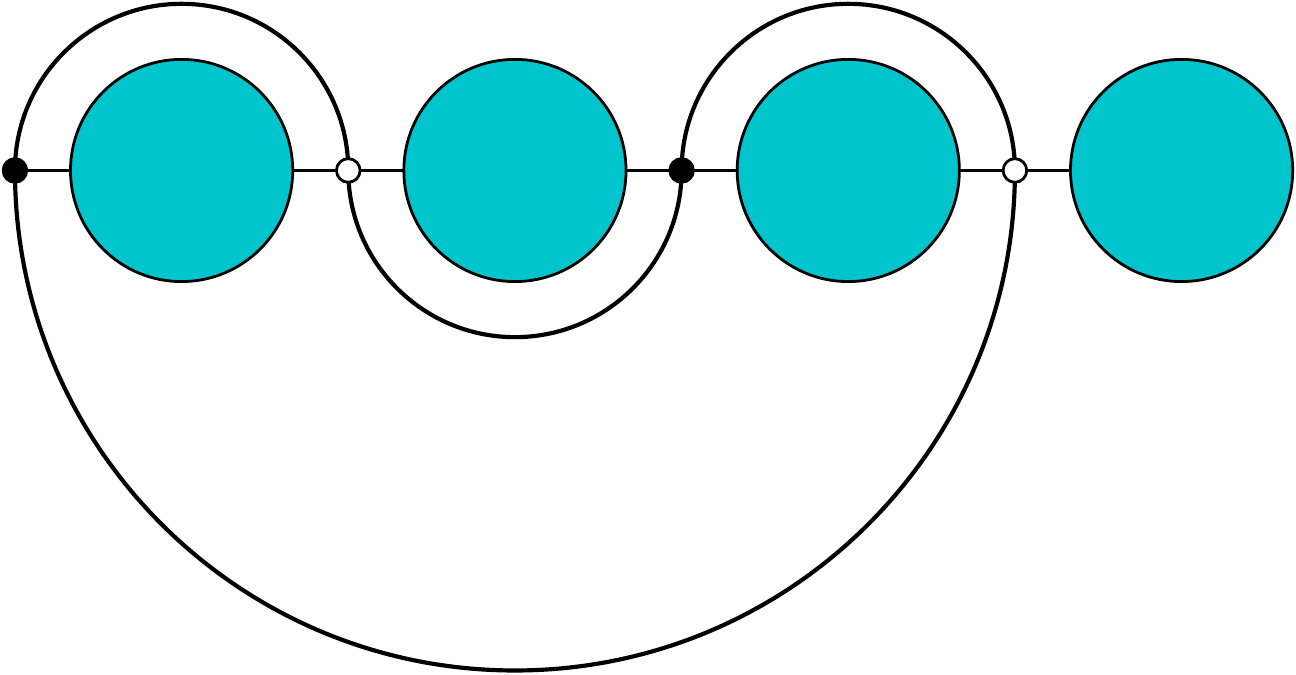}
	\caption{\label{fig:2IrreducibleMeanderDecomposition} An arbitrary meander system can be decomposed via its 2-irreducible meander going through $1_\bullet$ and insertions of other arbitrary systems inbetween the vertices. This is the same 2-irreducible meander going through $1_\bullet$ as in Figure \ref{fig:MeandricSysExample}.}
\end{figure}

\section{Examples of counting}
\subsection{Examples} We recall $\Delta_l^{(n)}: i\in[1..n]\mapsto i+l\mod n$ and consider $\vec{B}_{\Delta_l^{(n)}}$ for arbitrary $l$ and $n$.
\begin{proposition}{}{SIFVEV}
	\begin{subnumcases}
		{|\cG^{\max}(\vec{\bb}_{\Delta_l^{(n)}})| =} \operatorname{Cat}_n &\qquad \text{for $l=0\mod n$,}\label{SIF0}\\
		\operatorname{Motzkin}_n &\qquad  \text{for $l=\pm1\mod n$,}\label{SIF1}\\
		n &\qquad \text{else.} \label{SIFelse}
	\end{subnumcases}
	where $\operatorname{Cat}_n$ and $\operatorname{Motzkin}_n$ are the Catalan and Motzkin numbers.
\end{proposition}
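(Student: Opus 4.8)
The plan is to use \Cref{thm:MeanderSystems}. Since here $\sigma_\circ=\id$, the set $\mathcal{M}_{\id,\Delta_l^{(n)}}$ consists of the planar pairings $\pi\in\Pl\mathfrak{S}_n$ such that $\Delta_{-l}^{(n)}\circ\pi$ is again planar, so that $|\cG^{\max}(\vec{\bb}_{\Delta_l^{(n)}})| = |\Pl\mathfrak{S}_n\cap \Delta_l^{(n)}\cdot\Pl\mathfrak{S}_n|$. I would first record the geometric reformulation: placing the $2n$ vertices on a circle with blacks at odd and whites at even positions, a pairing is planar iff its chords are pairwise non-crossing, and composing with $\Delta_{-l}^{(n)}$ amounts to cyclically shifting all black endpoints by $l$ slots while keeping the white endpoints fixed. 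Thus I must count the matchings that are non-crossing both before and after this partial rotation.

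The backbone of the argument is a distinguished family of solutions valid for every $l$: the reflections $R_c:i\mapsto c-i\bmod n$, for $c\in\mathbb{Z}/n$. For such a matching the chord from $W_i$ to $B_{c-i}$ has endpoints whose positions sum to the constant $2c-1$, so all its chords are perpendicular to a common axis and hence pairwise non-crossing; moreover $\Delta_{-l}^{(n)}\circ R_c=R_{c-l}$ is again a reflection, hence again planar. The $n$ reflections are pairwise distinct, so there are always at least $n$ solutions, and the content of the statement is to decide when there are more.

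For $l\equiv 0$ the second constraint is vacuous, so one counts all planar pairings and obtains $|\Pl\mathfrak{S}_n|=\operatorname{Cat}_n$, which is \eqref{SIF0}. For $l\equiv\pm1$ I would prove \eqref{SIF1} by a bijection with Motzkin paths (equivalently, by setting up a recursion in $n$ matching the Motzkin recurrence): because the blacks are shifted by a single slot, the compatibility of two consecutive chords is governed by exactly three local possibilities, which I would read off as the up/down/level steps of a Motzkin path. The two values $l\equiv 1$ and $l\equiv-1$ give the same count by the orientation-reversing reflection of the circle, which preserves non-crossingness and exchanges $\Delta_l^{(n)}$ with $\Delta_{-l}^{(n)}$. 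Finally, for $2\le l\le n-2$ I would establish the rigidity statement \eqref{SIFelse} that the reflections are the only solutions: an innermost-chord/extremal argument should show that any chord not belonging to a globally symmetric (reflection) configuration is moved by a shift of size $\ge 2$ into a crossing with one of its neighbours.

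The main obstacle is precisely the interaction of the two non-crossing constraints, i.e. controlling how the black shift acts on an arbitrary non-crossing matching. The case $l\equiv 0$ and the universal reflection solutions are immediate, but both the Motzkin bijection ($l\equiv\pm1$) and the rigidity count ($2\le l\le n-2$) require a careful local analysis of how adjacent chords constrain one another once the blacks are rotated; making the ``innermost chord forces symmetry'' argument fully rigorous, uniformly in $n$ and $l$, is the delicate point. It is reassuring that the small cases $n\le 4$ (where for $l=2$ the solution set is exactly $\{R_c\}_{c\in\mathbb{Z}/n}$, while for $l\equiv\pm1$ one gets the full Motzkin families) already exhibit this dichotomy and can be used to seed and check the recursion.
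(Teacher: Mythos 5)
Your proposal is correct in outline and follows the same overall strategy as the paper: reformulate via \Cref{thm:MeanderSystems} as counting pairings $\pi$ that are planar together with $\Delta_{-l}^{(n)}\circ\pi$, get $\operatorname{Cat}_n$ for free when $l\equiv 0$, handle $l\equiv\pm1$ by the three-allowed-local-patterns observation (this is exactly the paper's Motzkin recursion $m_n=m_{n-1}+\sum_k m_{k-2}m_{n-k}$ and its subsequent Motzkin-path bijection, where the fourth local pattern is excluded), and prove rigidity for the remaining $l$. The one genuinely different ingredient is your explicit family of reflections $R_c:i\mapsto c-i$: since all chords of $R_c$ have constant endpoint-position sum they are parallel and non-crossing, and $\Delta_{-l}^{(n)}\circ R_c=R_{c-l}$ is again a reflection, so you get $n$ solutions for every $l$ in closed form. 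This is cleaner than the paper, which never names these configurations and instead builds the $n$ generic solutions by a propagation argument: fix $\pi(1)_\bullet=k_\bullet$, show that the two planarity constraints force the entire filling of the region between $2_\circ$ and $k_\bullet$, then shift cyclically and repeat until the whole pairing is determined. The trade-off is that the paper's propagation proves existence and uniqueness in a single pass, whereas your plan separates them and leaves the uniqueness step (``an innermost-chord argument should show\ldots'') as the unexecuted delicate point; to complete it you would essentially have to reconstruct the paper's forced-filling mechanism, and a similar forcing lemma (that $k_\circ$ must join $1_\bullet$ above and $2_\bullet$ below) is also what drives the Motzkin recursion in the $l\equiv\pm1$ case. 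So: same skeleton, a nicer description of the generic solutions, but the rigidity argument still needs the local analysis the paper sketches.
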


Notice that when $l$ and $n$ are coprime, then $\Delta_l^{(n)}$ has a single cycle, and then $|\cG^{\max}(\vec{\bb}_{\Delta_l^{(n)}})|$ counts a number of meanders (as opposed to generic meander systems).

\begin{proof}
	Equation \eqref{SIF0} corresponds to $t(\sigma)$ a tree with a one white vertex of degree $n$ connected to $n$ cross-vertices of degree 2 and $n$ leaves. Then one concludes with \Cref{thm:SIFFactorization}. Equivalently, $\sigma=\id$ so in terms of meander systems, the lower arch is the same as the upper arch, and they are $\operatorname{Cat}_n$ planar configurations.
	
	Let us prove Equation \eqref{SIF1} for $l=-1$. The relevant meanders are such that there is an upper arch connecting $i_\circ$ to $j_\bullet$ if and only if there is a lower arch connecting $i_\circ$ to $(j+1)_\bullet$. We aim at a recursion on $n$ and for the time of the proof we switch to the notation $|\cG^{\max}(\vec{\bb}_{\Delta_{-1}^{(n)}})| = m_n$.
	
	Let $k\in[2..n]$ and denote $m_{n,k}$ the number of contributing meanders with an upper arch between $1_\circ$ and $k_\bullet$. They also have a lower arch connecting $1_\circ$ to $(k+1)_\bullet$,
	\begin{equation}
		\includegraphics[scale=.65,valign=c]{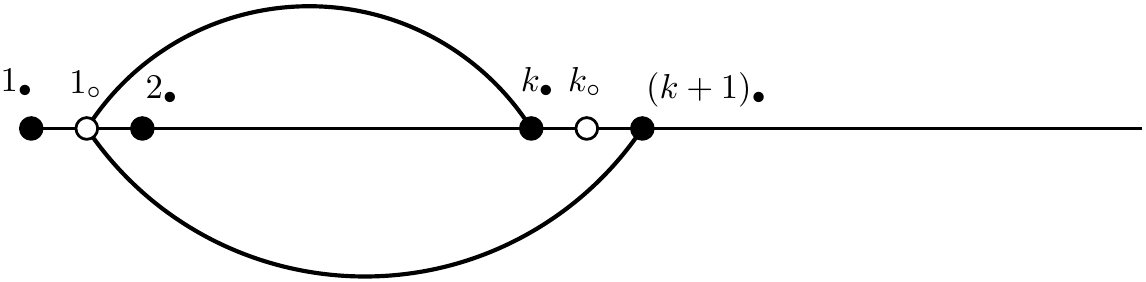}
	\end{equation}
	It can then be shown that $k_\circ$ is necessarily connected to $1_\bullet$ via an upper arch and to $2_\bullet$ via a lower arch,
	\begin{equation*}
		\includegraphics[scale=.65,valign=c]{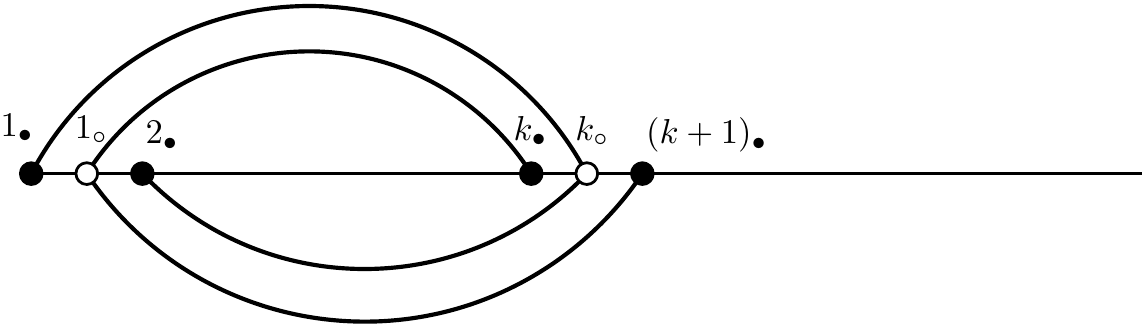}
	\end{equation*}
	This separates an inner region from an outer region and gives rise to a recursion. For $k=1$, one has $m_{n,1}=m_{n-1}$, and thus by summing over the position $k$,
	\begin{equation}
		m_n = \sum_{k=1}^n m_{n,k} = m_{n-1} + \sum_{k=2}^n m_{k-2}\,m_{n-k} 
	\end{equation}
	Together with the initial conditions $m_0=m_1=1$, this recursion defines the Motzkin numbers and $m_n =\operatorname{Motzkin}_n$. We give a bijective interpretation below.
	
	As for Equation \eqref{SIFelse}, the strategy is to prove that choosing $\pi(1)_\bullet\in [1,n]$ completely determines a meander system, and there are therefore $n$ possibilities. For instance, if $\pi(1)_\bullet = k_\bullet$ with $k\geq4$ and $k+p\leq n+3$, then this fills the entire upper region between $2_\circ$ and $k_\bullet$ as follows
	\begin{equation*}
		\includegraphics[scale=.7,valign=c]{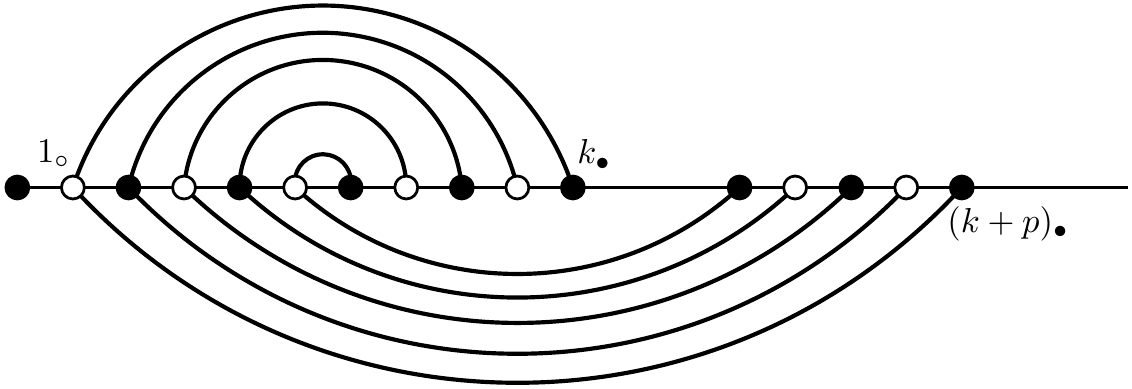}
	\end{equation*}
	It is then possible to shift cyclically that configuration so that we are once again in position of applying the same constraint (similar ones exist for all $k$), and so on until a unique meander system is fully built.
\end{proof}

\subsection{Bijection with Motzkin paths} It is well-known that planar arch configurations are one-to-one with Dyck paths. A \emph{Dyck path} of order $2n$ is a $2n$-step path in the upper half-plane which starts at $(0,0)$, ends at $(2n,0)$ and for which only two types of steps are allowed, the north-east step $(+1,+1)$ and the south-east step $(+1,-1)$. Given an arch configuration over $2n$ vertices on a horizontal line, oriented west to east, we list between each pair of consecutive vertices the number of arches which pass. This produces a list of $(2n-1)$ positive integers $(h_1,\dotsc,h_{2n-1})$ which are interpreted as the heights of a Dyck path after the step $1,\dotsc,2n-1$.

\begin{figure}
	\subfloat[]{\includegraphics[scale=.65]{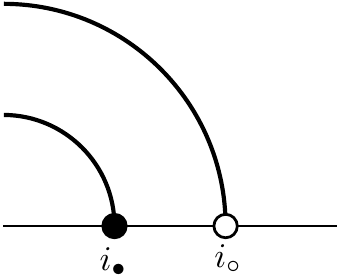}\label{fig:TwoDown}}\qquad
	\subfloat[]{\includegraphics[scale=.65]{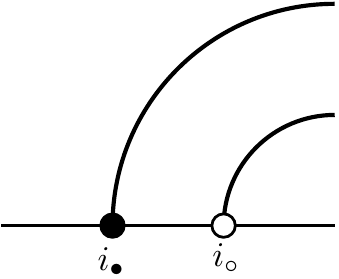}\label{fig:TwoUp}}\qquad
	\subfloat[]{\includegraphics[scale=.65]{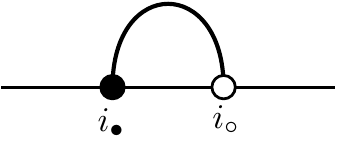}\label{fig:OneUpOneDown}}\qquad
	\subfloat[]{\includegraphics[scale=.65]{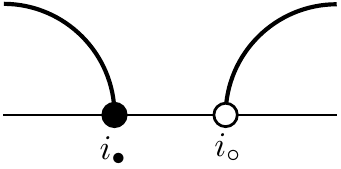}\label{fig:OneDownOneUp}}
	\caption{The four possible patterns at a pair of vertices in the upper half-plane of a meandric system.\label{fig:FourPatternsDyck}}
\end{figure}
Motzkin numbers are known to count \emph{Motzkin paths}. A Motzkin path of length $n$ is a path of $n$ steps in the upper half-plane which starts at $(0,0)$ and ends at $(n,0)$ with three types of steps, north-east $(+1, +1)$, south-east $(+1, -1)$ or east $(+1, 0)$, i.e. the horizontal step. Since $\cM_{\Delta_{-1}^{(n)}}$ is also enumerated by Motzkin numbers, it is natural to look for a bijection with the set of Motzkin paths. Consider a meander in $\mathcal{M}_{\Delta_{-1}^{(n)}}$. Looking at a pair $(i_\bullet, i_\circ)$ of vertices, there are four patterns which can arise in the upper half-plane, displayed in the Figure \ref{fig:FourPatternsDyck}. In terms of Dyck paths, they represent the four possible combinations of two consecutive steps: Figure \ref{fig:TwoDown} is two steps down, \ref{fig:TwoUp} two steps up, \ref{fig:OneUpOneDown} one up and one down, and finally \ref{fig:OneDownOneUp} one down and one up. 

However, meanders in $\mathcal{M}_{\Delta_{-1}^{(n)}}$ can not contain the pattern \ref{fig:OneDownOneUp}\footnote{Indeed, up to a cyclic permutation of the vertex labels, we can assume that we are looking at the leftmost pair of vertices $(1_\bullet, 1_\circ)$. Then, they are either connected together like in the Figure \ref{fig:OneUpOneDown}, or the white vertex $1_\circ$ is connected to some $j_\bullet$, $j>1$. In this case, we know from the proof of \Cref{thm:SIFVEV} that $1_\bullet$ is connected by an upper arch to $j_\circ$. Therefore when the labels are cyclically shifted, two arches always connect the pair $(i_\bullet,i_\circ)$ to the two vertices of a pair $((j+i-1)_\bullet, (j+i-1)_\circ) \mod n$. Therefore these two arches always point in the same direction, like in the Figures \ref{fig:TwoDown}, \ref{fig:TwoUp}.}. As a consequence, only the three other patterns in the upper-half plane are allowed. To find Motzkin paths, it is sufficient to just associate with the Figure \ref{fig:TwoDown} the south-east step, with \ref{fig:TwoUp} the north-east step and with \ref{fig:OneUpOneDown} the horizontal step. 

\chapter{Universality results in dimension \texorpdfstring{$d=3$}{d=3}} \label{sec:Universality3D}
\section{The maximal 2-cut family} \label{sec:Max2Cut} 


\subsection{Bonds and the maximal 2-cut property} We recall that an edge-cut in a connected graph is a set of edges whose removal disconnects the graph. In addition we will only consider \emph{bonds}, i.e. edge-cuts such that no proper subset is an edge-cut. In the remaining of the article, a $k$-bond means a bond with $k$ edges. Moreover, we say that a $k$-bond is incident on a bubble $\bb$ if it is formed by edges of color 0 which all have one end in $\bb$ and the other not in $\bb$.

Let $\graph$ be a colored graph, a bubble $\bb\subset\graph$ and let $\mathcal{E}(\graph;\bb)$ be the set of edges of color 0 of $\graph$ which have one end in $\bb$ and the other not in $\bb$. There is a unique partition of $\mathcal{E}(\graph; \bb)$ into bonds incident on $\bb$. Indeed, removing all edges of $\mathcal{E}(\graph; \bb)$ turns $\graph$ into $\bb$, decorated with edges of color 0 between some of its vertices, together with $L(\graph;\bb)$ connected components $\graph_1, \dotsc, \graph_{L(\graph;\bb)}$. The set of edges of color 0 which connects $\bb$ to $\graph_l$ in $\graph$ is $\mathcal{E}_l(\graph;\bb) \subset \mathcal{E}(\graph; \bb)$ and
\begin{equation}
	\mathcal{E}(\graph; \bb) = \dot\bigcup_{l=1}^{L(\graph; \bb)} \mathcal{E}_l(\graph; \bb),
\end{equation}
which is a disjoint union. If $k_l$ is the number of edges in $\mathcal{E}_l(\graph; \bb)$, then those edges form a $k_l$-bond incident on $\bb$. This is illustrated in Figure \ref{fig:Bond}.
\begin{figure}
	\includegraphics[scale=.5,valign=c]{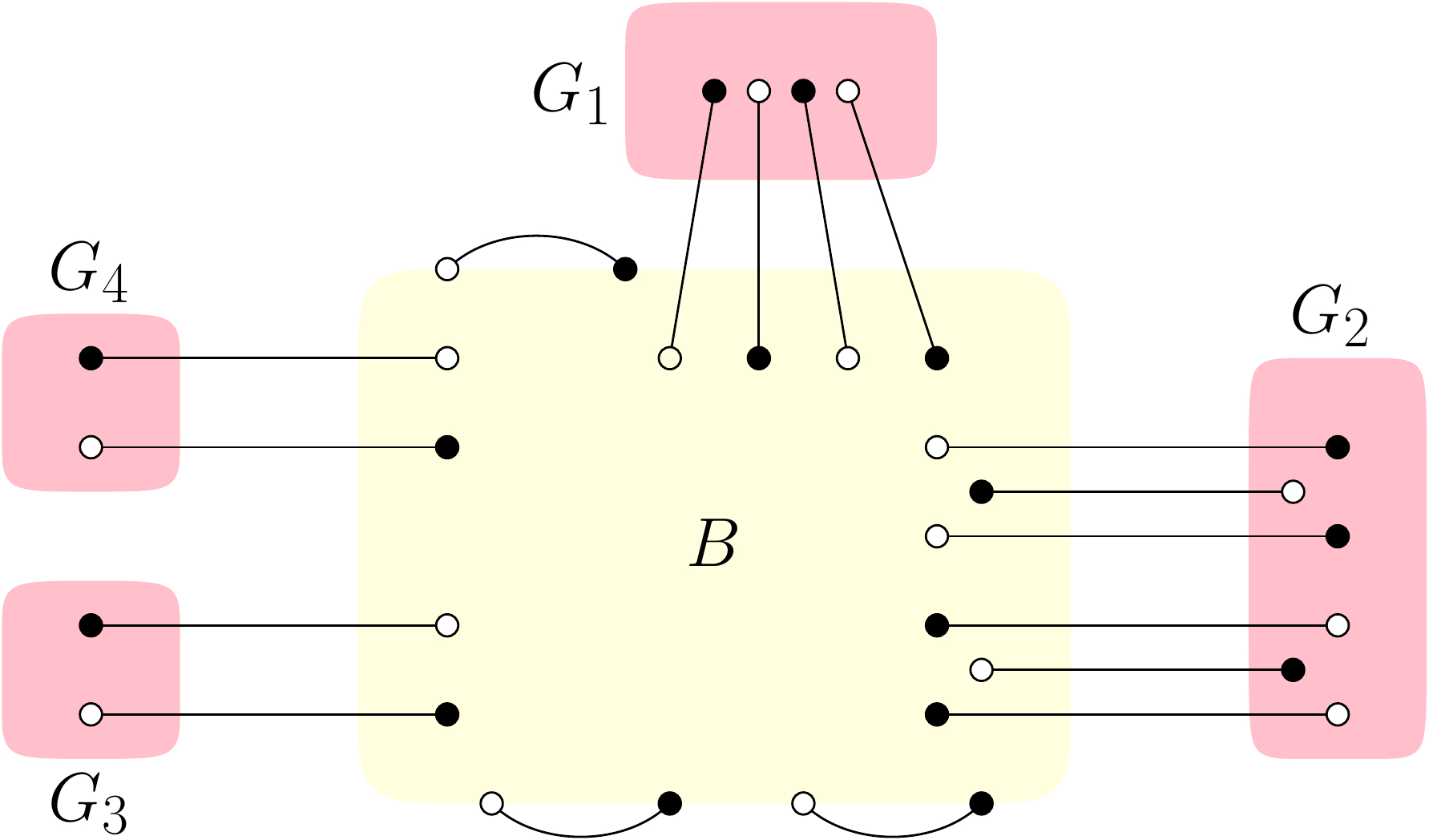}
	\caption{\label{fig:Bond}A graph $\graph$ partitioned as $\bb$ together with edges of color 0 between some of its vertices, and subgraphs $\graph_1, \dotsc, \graph_4$ connected via a 4-bond, a 6-bond, and two 2-bonds respectively.}
\end{figure}

\begin{definition} [Maximal 2-cut property]
	Let $\graph\in\cG_{n_1,\dotsc, n_N}(\vec{\bb};\bb_1, \dotsc, \bb_N)$. We say that $\vec{\bb}\subset \graph$ satisfies the maximal 2-cut property if there exists a pairing $\pi \in \cG^{\max}(\vec{\bb})$ such that for any pair of vertices $\{v, \pi(v)\}$, $v\in \bb$, there is 
	\begin{itemize}
		\item either an edge of color 0 between them,
		\item or two edges of color 0 forming a 2-bond incident on $\bb$, i.e. 
		\begin{equation}
			\forall l =1, \dotsc, L(\graph;\bb) \hspace{1cm}
			\lvert \mathcal{E}_l(\graph; \bb) \rvert = 2.
		\end{equation}
	\end{itemize}
	This is illustrated here,
	\begin{equation}
		\includegraphics[scale=.5,valign=c]{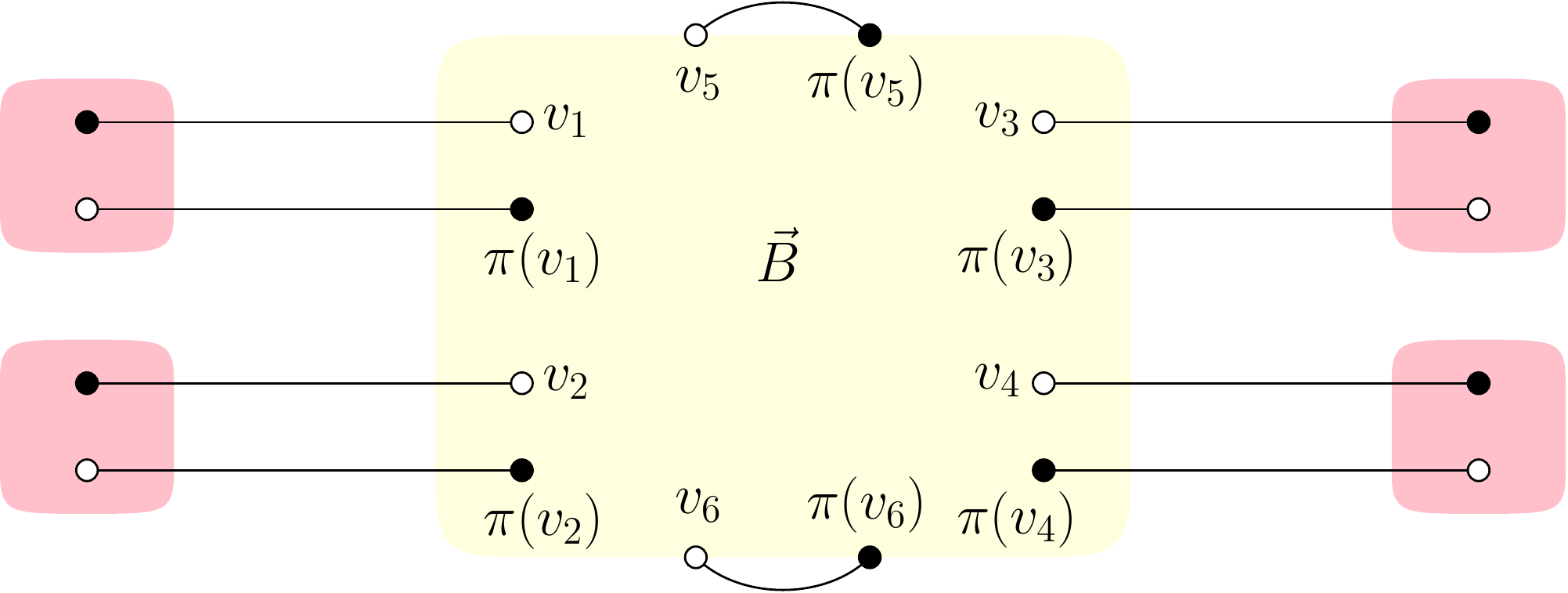}
	\end{equation}
\end{definition}

The number of faces of a graph in which $\vec{\bb}$ satisfies the maximal 2-cut property is independent of the choice of the pairing $\pi\in\cG^{\max}(\vec{\bb})$. More precisely, if one performs all the following moves, called edge flips of the 2-bonds incident on $\vec{\bb}$
\begin{equation}
	\includegraphics[scale=.5,valign=c]{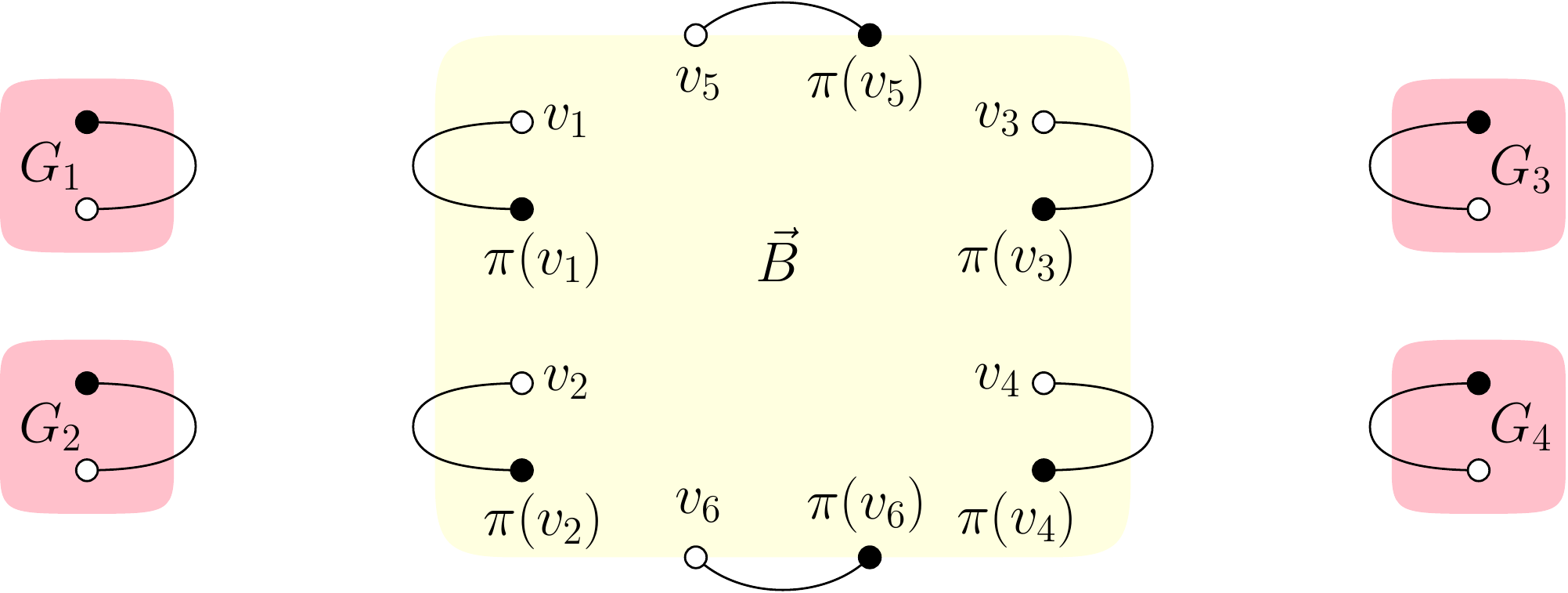}
\end{equation}
and denote $\graph_j$ the connected component which was attached to $\{v_j, \pi(v_j)\}$, then
\begin{equation} \label{BicoloredCyclesMax2Cut}
	C_0(\graph) = C(\vec{\bb}) + \sum_j (C_0(\graph_j) -d),
\end{equation}
where $C(\vec{\bb})$ is the number of bicolored cycles $C_0(\graph_\pi)$ for any $\pi\in\cG^{\max}(\vec{\bb})$. 

Indeed, if $\graph$ is a connected colored graph with colors $\{0, \dotsc, d\}$ which has a 2-bond formed by two edges $e, e'$ of color 0, then flipping $e$ with $e'$ gives two connected colored graphs $\graph_L, \graph_R$
\begin{equation}
	\graph = \includegraphics[scale=.4,valign=c]{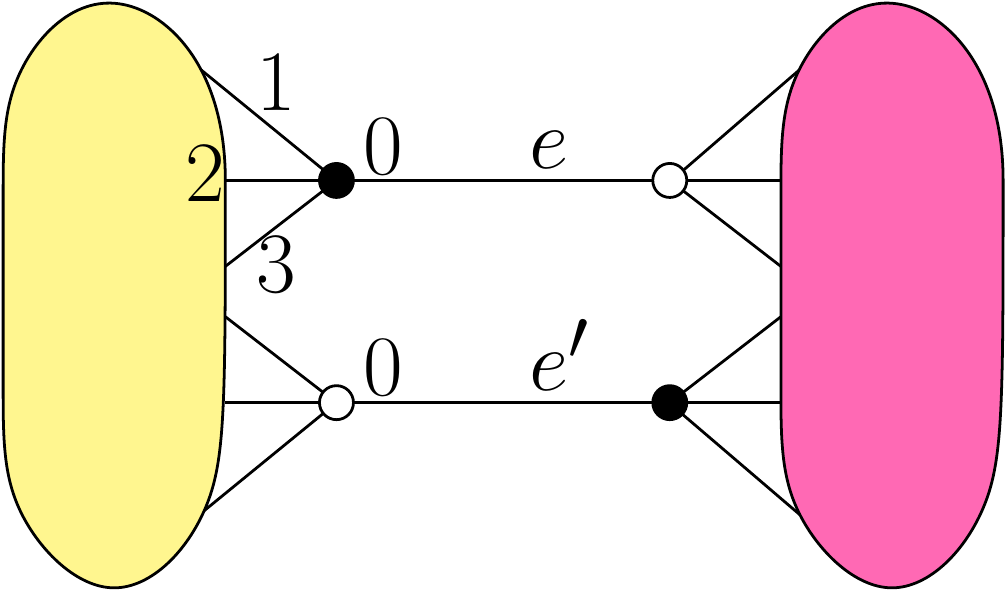} \quad \underset{\text{Flip}}{\to} \quad \graph_L = \includegraphics[scale=.4,valign=c]{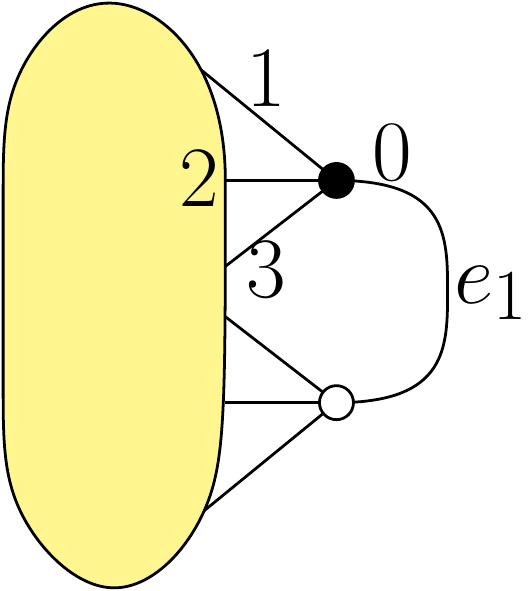}, \quad \graph_R = \includegraphics[scale=.4,valign=c]{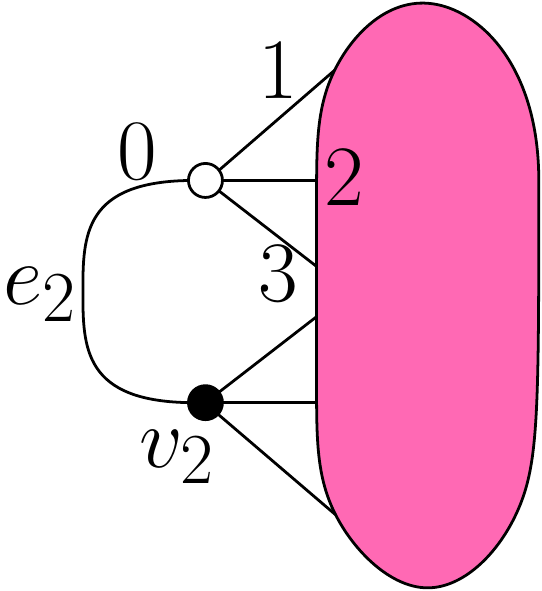}
\end{equation}
and
\begin{equation}
	C_0(\graph) = C_0(\graph_L) + C_0(\graph_R) -d,
\end{equation}
because there are $d$ bicolored cycles with colors $\{0,1\}, \dotsc, \{0,d\}$ going along both $e$ and $e'$ and each of them is split in two by the flip.

\subsection{An application to a lower bound on $\alpha(\bb)$} The maximal 2-cut family $\mathcal{P}_{n_1, \dotsc, n_N}(\bb_1, \dotsc, \bb_N)$ is the subset of $\cG_{n_1, \dotsc, n_N}(\bb_1, \dotsc, \bb_N)$ of graphs whose bubbles all satisfy the maximal 2-cut property. By induction on the formula \eqref{BicoloredCyclesMax2Cut}, one finds for $\graph\in \mathcal{P}_{n_1, \dotsc, n_N}(\bb_1, \dotsc, \bb_N)$
\begin{equation}
	C_0(\graph) = d + \sum_{i=1}^N (C(\bb_i)-d) n_i.
\end{equation}
This family thus satisfies the linear growth hypothesis with a growth coefficient $C(\bb_i)-d$. Therefore, if the model $\cG_{n_1, \dotsc, n_N}(\bb_1, \dotsc, \bb_N)$ satisfies the linear growth hypothesis, then the following lower bound holds,
\begin{equation} \label{LowerBound}
	\alpha(\bb_i) \geq C(\bb_i)-d.
\end{equation}
As it turns out, in all models where we have been able to find the growth coefficients $\alpha(\bb_i)$, they are always given by $C(\bb_i)-d$\footnote{Or by a generalization of it to pairings on bubble graphs, which we will briefly mention later.}.

Obviously $\mathcal{P}_{n_1, \dotsc, n_N}(\bb_1, \dotsc, \bb_N)$ is in bijection with a set of trees whose vertices corresponds to a bubble $\bb_i$ and the choice of a pairing $\pi\in\cG^{\max}(\vec{\bb}_i)$. An equation on the generating series is straightforwardly written. It will be given in Equation \eqref{2PtTreeEq}. 

At $d=2$, if $\bb_i$ is the bubble with $i$ edges, we have seen in \eqref{MaxFacesPrescribedBubbles} that the linear growth hypothesis holds with $\alpha(\bb_i) = \frac{i-2}{2}$. This coincides with the above bound, because $C(\bb_i)$ is then the maximal number of vertices in 1-face maps formed by the identification of the sides of the $i$-gon, and thus $C(\bb_i) = \frac{i+2}{2}$, by Euler's relation. However at $d=2$, not all colored graphs of genus 0 satisfy the maximal 2-cut property. It is in fact what makes them very interesting: bonds of arbitrary size can contribute to vanishing genus. It is in a sense the origin of their special universality class.

\section{Universality classes vademecum}
It is a basis of analytic combinatorics \cite{FlajoletSedgewick2009book} that asymptotic behaviors of combinatorial sequences can often be extracted from the singularities of their generating functions. If $f(z) = \sum_{n\geq 1} f_n z^n$ for a sequence $(f_n)_{n\geq 1}$, then we will keep in mind the classical correspondence
\begin{equation}
	f(z)\underset{z\to\rho}{\sim} (1-z/\rho)^{-\alpha} \quad \to \quad f_n\underset{n\to\infty}{\sim} \frac{1}{\Gamma(\alpha)} \rho^{-n} n^{\alpha-1}.
\end{equation}
If it satifies a polynomial equation $P(z,f(z))=0$, then typically $f(z)\sim \sqrt{1-z/\rho}$ and $f_n \sim a \rho^{-n} n^{-3/2}$, which is the universal behavior of trees, like planar and binary trees.

However some combinatorial classes fall out of this universality class. This is the case of planar maps with $n$ edges for which $f(z) \sim (1-z/\rho)^{3/2}$ which gives $f_n \sim a \rho^{-n} n^{-5/2}$, called the universality class of pure 2-dimensional quantum gravity. More exotic behaviors can be found by coupling conformal matter to planar maps \cite{DiFrancescoGinspargZinnJustin}. Here however we will only focus on trees, maps, and an intermediate behavior between trees and planar maps, for which $f(z) \sim (1-z/\rho)^{2/3}$ and $f_n \sim a\rho^{-n} n^{-5/3}$.

\section{Universality with planar bubbles at \texorpdfstring{$d=3$}{d=3}}
The point of our main theorem is that in order to maximize the number bicolored cycles when gluing planar bubbles at $d=3$, they have to be glued using 2-bonds of color 0 only. 
\subsection{Main theorem: planar bubbles satisfy the maximal 2-cut property}
\begin{theorem}{}{1Planar}\cite{Bonzom3D} 
	Let $d=3$ and $\bb_1, \dotsc, \bb_N$ arbitrary bubbles.	If $\vec{\bb}$ is planar and $\graph\in\cG^{\max}_{n_1, \dotsc, n_N}(\vec{\bb};B_1, \dotsc, B_N)$, then $\vec{\bb}\subset\graph$ satisfies the maximal 2-cut property. As a consequence, if the set $\{\bb_1, \dotsc, \bb_N\}$ satisfies the linear growth hypothesis, then
	\begin{equation}
		\langle\vec{\bb}\rangle_{\max}{}_{|p_i\to p_i x^{-\alpha(\bb_i)}} = x^{3+\alpha(\vec{\bb})} |\cG^{\max}(\vec{\bb})| K(p_1, \dotsc, p_N)^{v(\vec{\bb})/2}
	\end{equation}
	where 
	\begin{equation}
		K(p_1, \dotsc, p_N) \coloneqq \sum_{n_1, \dotsc, n_N\geq 0} |\cG^{\max}_{n_1, \dotsc, n_N}(\bb_1, \dotsc, \bb_N)| \prod_{i=1}^N p_i^{n_i}.
	\end{equation}
\end{theorem}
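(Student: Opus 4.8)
The statement has two parts: the structural claim that a planar root bubble in a cycle-maximizing graph is glued by $2$-bonds only, and the resulting factorization of $\langle\vec{\bb}\rangle_{\max}$. The plan is to prove the structural claim first and then treat the factorization as bookkeeping. At the outset I would record the jacket reformulation: for $d=3$ the three color cycles give three maps $\m_\rho(\graph)$, each bicolored cycle $\{a,b\}$ lies in exactly two of them, and summing \eqref{EulerColoredGraph} over $\rho$ gives $C(\graph)=3+\tfrac32 V(\graph)-\omega(\trisp)$ with $\omega=\sum_\rho g(\m_\rho)\ge 0$ the Gurau degree of \Cref{thm:Gurau}. Since fixing the bubbles $\bb_1,\dots,\bb_N$ and $\vec{\bb}$ fixes $V$ and the cycles $C_{12},C_{13},C_{23}$, maximizing $C_0(\graph)$ is \emph{equivalent to minimizing the total jacket genus} $\omega$. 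This recasts the problem as a genus-minimization over all color-$0$ pairings, which is the form in which the flip formula is most transparent.

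Next I would reduce the claim to a local exchange argument at $\vec{\bb}$. Starting from $\graph\in\cG^{\max}$, a rewiring that only touches the color-$0$ edges incident on $\vec{\bb}$ leaves every external component unchanged, so it suffices to show that whenever an external bond has size $\ge 4$, some such rewiring strictly increases $C_0$, contradicting maximality. First I would note the parity constraint: because colors $1,2,3$ are perfect matchings internal to each side of a color-$0$ bond, every bond incident on $\bb$ has even size, so the claim amounts to excluding $4$-, $6$-, $\dots$ bonds. The heart is then a single lemma: if $\vec{\bb}$ is planar and carries a $\ge4$-bond, its canonical genus-$0$ cubic embedding lets me locate two color-$0$ edges of that bond whose feet on $\bb$ are cofacial in one jacket; performing the $2$-bond flip on them (whose cycle count obeys $C_0(\graph)=C_0(\graph_L)+C_0(\graph_R)-d$) peels a $2$-bond off the component and strictly lowers $\omega$. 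Iterating reduces every bond to a $2$-bond. The hard part will be this existence-of-an-improving-flip step: it is exactly where the planarity of $\vec{\bb}$ is essential and where $d=3$ departs from $d=2$ (where large bonds can be ``free''), and making the cofaciality argument rigorous — rather than merely plausible — is the main obstacle.

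With the lemma in hand I would run the obvious induction: apply it to the root bubble and then to every bubble reached by cutting a $2$-bond, so that via \eqref{BicoloredCyclesMax2Cut} the maximal graphs are organized as trees of maximally self-paired bubbles joined by $2$-bonds. Summing \eqref{BicoloredCyclesMax2Cut} along the tree yields $C_0(\graph)=C(\vec{\bb})+\sum_j\bigl(C_0(\graph_j)-d\bigr)$, which forces $\alpha(\vec{\bb})=C(\vec{\bb})-d$, saturating the lower bound \eqref{LowerBound}, and confirms the assumed linear growth is realized by the $2$-cut family for the planar bubble.

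Finally, for the factorization I would read off the tree structure directly. Each of the $v(\vec{\bb})/2$ white–black pairs $\{v,\pi(v)\}$ of $\vec{\bb}$ is either a self-contraction (a direct color-$0$ edge) or hosts a $2$-bond leading to a maximal external component, which is canonically rooted by the bond (its attaching white vertex). Summing over both options per pair contributes one factor $K(p_1,\dots,p_N)$, the constant term $1$ of $K$ being precisely the direct-edge case, and the $v(\vec{\bb})/2$ slots being independent gives $K(p_1,\dots,p_N)^{v(\vec{\bb})/2}$; the self-pairing of the labeled bubble contributes the single-bubble count $|\cG^{\max}(\vec{\bb})|$; and the rescaling $p_i\to p_i x^{-\alpha(\bb_i)}$ together with the linear-growth value of $C_0$ collects all powers of $x$ into the prefactor $x^{3+\alpha(\vec{\bb})}$. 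Because $\vec{\bb}$ is taken with labeled vertices its automorphisms are killed, so no symmetry factor enters; the only point requiring care is to check that the $2$-bond roots each external component exactly once, which follows from the bipartite structure at $d=3$.
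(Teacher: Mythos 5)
Your reductions are sound up to the key step: the reformulation $C(\graph)=3+\tfrac32 V(\graph)-\omega$ via Gurau's degree is correct, the parity argument showing all bonds incident on $\bb$ have even size is correct, the flip identity $C_0(\graph)=C_0(\graph_L)+C_0(\graph_R)-d$ for a genuine $2$-bond is the one the paper uses, and your treatment of the factorization (each of the $v(\vec{\bb})/2$ pairs independently carrying a factor $K$, the direct edge being the constant term $1$, labels killing automorphisms) is essentially the paper's proof of \Cref{thm:2PtFunctionPlanarBubbles}. The gap is the lemma you yourself flag as the main obstacle, and it is not a matter of polishing: as stated it fails. First, the cycle-count formula you invoke applies only when the two flipped edges form a $2$-bond, i.e.\ when removing them disconnects the graph; for two edges inside a $\geq 4$-bond the change in $C_0$ is instead $2q-3$, where $q$ is the number of colors $c$ for which the flip splits the bicolored cycle with colors $\{0,c\}$, so a strict gain requires $q\geq 2$. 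Second, ``cofacial in one jacket of $\bb$'' certifies at best $q\geq 1$ (net change $-1$), and, worse, $q$ is determined by the bicolored paths through the \emph{external} component, whose constituent bubbles are arbitrary; the canonical planar embedding of $\vec{\bb}$ constrains only one side of the bond and cannot by itself guarantee an improving flip. This is precisely why the paper states that for $k$-bonds with $k\geq 6$ no statement as general as the $4$-bond exclusion is available.

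The paper's actual route is different at both stages. For $4$-bonds it needs no planarity at all: the component behind the bond is replaced by its $4$-vertex boundary bubble, which splits the $d$ colors into groups of sizes $q$ and $d-q$; since $d=3$ is odd one may choose the pair of bond edges with $q>d/2$, and the flip gains $2q-d>0$. For $\geq 6$-bonds there is no improving flip on the bond: instead one uses the planarity of $\vec{\bb}$ through Lemma \ref{thm:FacesPlanarBubble} (a planar cubic tricolored map with no degree-$2$ faces has at least six faces of degree $4$), performs a case analysis around such a degree-$4$ face using flips that are only guaranteed \emph{not to decrease} $C_0$, and then applies the reduction \eqref{FaceDegree4Removed}, which deletes four vertices of $\vec{\bb}$ while preserving its planarity, so that an induction on the total number of vertices closes the argument. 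So your proposed reduction (``it suffices to exhibit a strictly improving rewiring of the color-$0$ edges incident on $\vec{\bb}$'') is stronger than what is actually proved; the published argument settles for non-decreasing local moves plus induction, and the planarity of $\vec{\bb}$ enters through its small faces, not through cofaciality of bond feet.
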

$K(p_1, \dotsc, p_N)$ is simply the generating series of $\cG^{\max}_{n_1, \dotsc, n_N}(\bb_1, \dotsc, \bb_N)$. That theorem basically states that $\langle\vec{\bb}\rangle_{\max}$ factorizes over 2-bonds, up to the choice of a pairing from $\cG^{\max}(\vec{\bb})$.

\begin{corollary}{}{2PtFunctionPlanarBubbles}
	Consider $\bb_1, \dotsc, \bb_N$ to be planar bubbles. Then the model $\cG_{n_1, \dotsc, n_N}(\bb_1, \dotsc, \bb_N)$ satisfies the linear growth hypothesis \eqref{LinearGrowth} with $\alpha(\bb_i) = C(\bb_i)-3$, i.e.
	\begin{equation} \label{BicoloredCyclesPlanar}
		C_{n_1, \dotsc, n_N}(\bb_1, \dotsc, \bb_N) = 3 + \sum_{i=1}^N (C(\bb_i)-3)n_i.
	\end{equation}
	The graphs from $\cG^{\max}_{n_1, \dotsc, n_N}(\bb_1, \dotsc, \bb_N)$ are in bijection with trees with $n_i$ vertices of type $i=1, \dotsc, N$, where a vertex of type $i$ has degree $v_i/2$ and $|\cG^{\max}(\vec{\bb}_i)|$ possible colors. The above generating series satisfies
	\begin{equation} \label{2PtTreeEq}
		K(p_1, \dotsc, p_N) = 1 + \sum_{i=1}^N p_i |\cG^{\max}(\vec{\bb}_i)| K(p_1, \dotsc, p_N)^{v_i/2}
	\end{equation}
\end{corollary}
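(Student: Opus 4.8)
The plan is to deduce everything from \Cref{thm:1Planar} together with the combinatorics of the maximal 2-cut family $\mathcal{P}_{n_1, \dotsc, n_N}(\bb_1, \dotsc, \bb_N)$. First I would observe that since every $\bb_i$ is planar, \Cref{thm:1Planar} can be applied by marking \emph{each} bubble of a graph in turn as the root bubble: given $\graph\in\cG^{\max}_{n_1, \dotsc, n_N}(\bb_1, \dotsc, \bb_N)$ and any bubble $\bb\subset\graph$, viewing $\graph$ as an element of $\cG^{\max}_{n_1, \dotsc, n_N}(\vec{\bb};\dots)$ via the inclusion \eqref{RootBubbleInclusion} (using that marking a bubble does not change the maximal cycle count, \eqref{LinearGrowthRootBubble}) shows that $\bb$ satisfies the maximal 2-cut property. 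Hence every bubble of $\graph$ does, that is, $\cG^{\max}_{n_1, \dotsc, n_N}(\bb_1, \dotsc, \bb_N)\subseteq \mathcal{P}_{n_1, \dotsc, n_N}(\bb_1, \dotsc, \bb_N)$.

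Next I would prove \eqref{BicoloredCyclesPlanar} by a two-sided bound. For the lower bound, the maximal 2-cut family is nonempty, since one can always glue the bubbles into a tree using only $2$-bonds of color $0$, and by iterating \eqref{BicoloredCyclesMax2Cut} any such $\graph\in\mathcal{P}_{n_1, \dotsc, n_N}$ satisfies $C_0(\graph) = 3 + \sum_{i=1}^N (C(\bb_i)-3)n_i$ at $d=3$; this is the specialization of the general $\mathcal{P}$-formula and it gives $C_{n_1, \dotsc, n_N}(\bb_1, \dotsc, \bb_N) \geq 3 + \sum_i (C(\bb_i)-3)n_i$. For the upper bound, the previous paragraph shows that any maximizer $\graph\in\cG^{\max}_{n_1, \dotsc, n_N}$ already lies in $\mathcal{P}_{n_1, \dotsc, n_N}$, so it attains precisely the same value $3 + \sum_i (C(\bb_i)-3)n_i$. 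The two bounds coincide, which proves \eqref{BicoloredCyclesPlanar}, confirms the linear growth hypothesis \eqref{LinearGrowth} for this model, and pins down $\alpha(\bb_i) = C(\bb_i)-3$ (turning the bound \eqref{LowerBound} into an equality).

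For the tree bijection and the functional equation I would make the $2$-bond structure explicit. In a graph of $\cG^{\max}_{n_1, \dotsc, n_N}$, every pair $\{v,\pi(v)\}$ of a bubble is, by the maximal 2-cut property, either an internal edge of color $0$ or a $2$-bond leading to a connected component; performing all the edge flips of \eqref{BicoloredCyclesMax2Cut} detaches these components, each of which is again a graph in $\cG^{\max}$ of strictly smaller size (maximality is inherited because $C_0$ is additive up to the constant $-d$ under a $2$-bond flip). Encoding each bubble as a node carrying the datum of its pairing $\pi\in\cG^{\max}(\vec{\bb}_i)$ (whence the $|\cG^{\max}(\vec{\bb}_i)|$ ``colors'') together with its $v_i/2$ pairs (whence degree $v_i/2$) yields the announced bijection with decorated trees, whose inverse re-glues components by reinstating the $2$-bonds. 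The equation then follows from the symbolic method applied to the root bubble: selecting a root bubble of type $i$ (weight $p_i$) and one of its $|\cG^{\max}(\vec{\bb}_i)|$ pairings, each of the $v_i/2$ pairs is an independent slot filled by a subtree counted by $K$, the constant term $1$ of $K$ accounting for internal-edge (leaf) slots and for the empty graph; this gives \eqref{2PtTreeEq}.

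The genuinely hard content is packaged in \Cref{thm:1Planar}, so the remaining obstacle is bookkeeping rather than topology: one must check that the edge-flip decomposition is a true bijection, i.e.\ that flipping a $2$-bond always returns connected colored graphs that are themselves maximizers, that the $v_i/2$ slots are mutually independent with no hidden global constraint, and that the reconstruction is unambiguous. Equivalently, the subtle point to verify is that maximality of the total graph is equivalent to simultaneous maximality of each detached component, which is precisely what the additivity relation \eqref{BicoloredCyclesMax2Cut} guarantees once \Cref{thm:1Planar} forces the maximal 2-cut property on every bubble.
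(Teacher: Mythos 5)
Your proposal is correct and follows essentially the same route as the paper: the paper also deduces \eqref{BicoloredCyclesPlanar} by applying \Cref{thm:1Planar} together with the additivity relation \eqref{BicoloredCyclesMax2Cut} recursively, and obtains the tree bijection and \eqref{2PtTreeEq} by the same root-bubble decomposition in which each pair $\{v,\pi(v)\}$ is either a direct edge of color $0$ or a $2$-bond to a smaller maximizer. The only difference is presentational: you unpack the word ``recursively'' into an explicit two-sided bound (lower bound from the nonempty maximal 2-cut tree family, upper bound from $\cG^{\max}\subseteq\mathcal{P}$ via marking each bubble in turn), and you correctly flag that the equivalence between global maximality and componentwise maximality is exactly what \eqref{BicoloredCyclesMax2Cut} supplies.
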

It thus solves the questions of Section \ref{sec:MainQuestion} of Chapter \ref{sec:PrescribedBubbles} for models at $d=3$ with planar bubbles, except that the coefficients $C(\bb_i)$ remain unknown. \Cref{thm:2PtFunctionPlanarBubbles} in fact extends to $\cG_{n_1, \dotsc, n_N}(\vec{\bb};\bb_1, \dotsc, \bb_N)$ with a root bubble $\vec{\bb}$ which may not even be planar \cite{Bonzom3D}.

Notice that Gurau's bound on $\alpha(\bb_i)$ for a bubble $\bb_i$ of genus $g_i$ with $v_i$ vertices is $\alpha(\bb_i)\leq v_i-2+2g_i$. The inequality $C(\bb_i)-3\leq v_i-2$ for planar bubbles is obvious as the number of bicolored cycles cannot grow at the pace of one per vertex of $\bb_i$, except for melonic bubbles for which $C(\bb_i) = v_i+1$ exactly. Moreover, $C(\bb_i)$ is expected to decrease with the genus of $\bb_i$ and not grow with it.

\begin{proof}[Proof of \Cref{thm:2PtFunctionPlanarBubbles}]
	Equation \eqref{BicoloredCyclesPlanar} is proved from \Cref{thm:1Planar} with Equation \eqref{BicoloredCyclesMax2Cut} recursively.
	
	In our convention, $K(p_1, \dotsc, p_N)$ contains the ``trivial'' rooted graph, reduced to the root edge with no bubbles. If $\graph$ is not trivial, consider the bubble $\bb_i$ incident to the root vertex and any labeling of its vertices. Then one applies \Cref{thm:1Planar}. For any $\pi\in\cG^{\max}(\vec{\bb}_i)$, the partner vertices $\{v, \pi(v)\}$ can be connected either directly by an edge of color 0 (the trivial rooted graph) or via a rooted graph whose bubbles also all satisfy the maximal 2-cut property\footnote{Moreover, any $\pi\in\cG^{\max}(\vec{\bb}_i)$ can be equally used, since we saw in \eqref{BicoloredCyclesMax2Cut} that the number of bicolored cycles along the root bubble is independent of $\pi$. The other terms in \eqref{BicoloredCyclesMax2Cut} depend on the graphs $\graph_j$s, but their contributions are here ``uniformized'' by the rescaling of $p_i\to p_i x^{-\alpha(\bb_i)}$.}.
	
	This establishes the symbolic decomposition, where the edge with the arrow is the edge of color 0 incident to the root vertex,
	\begin{equation}
		\includegraphics[scale=.45,valign=c]{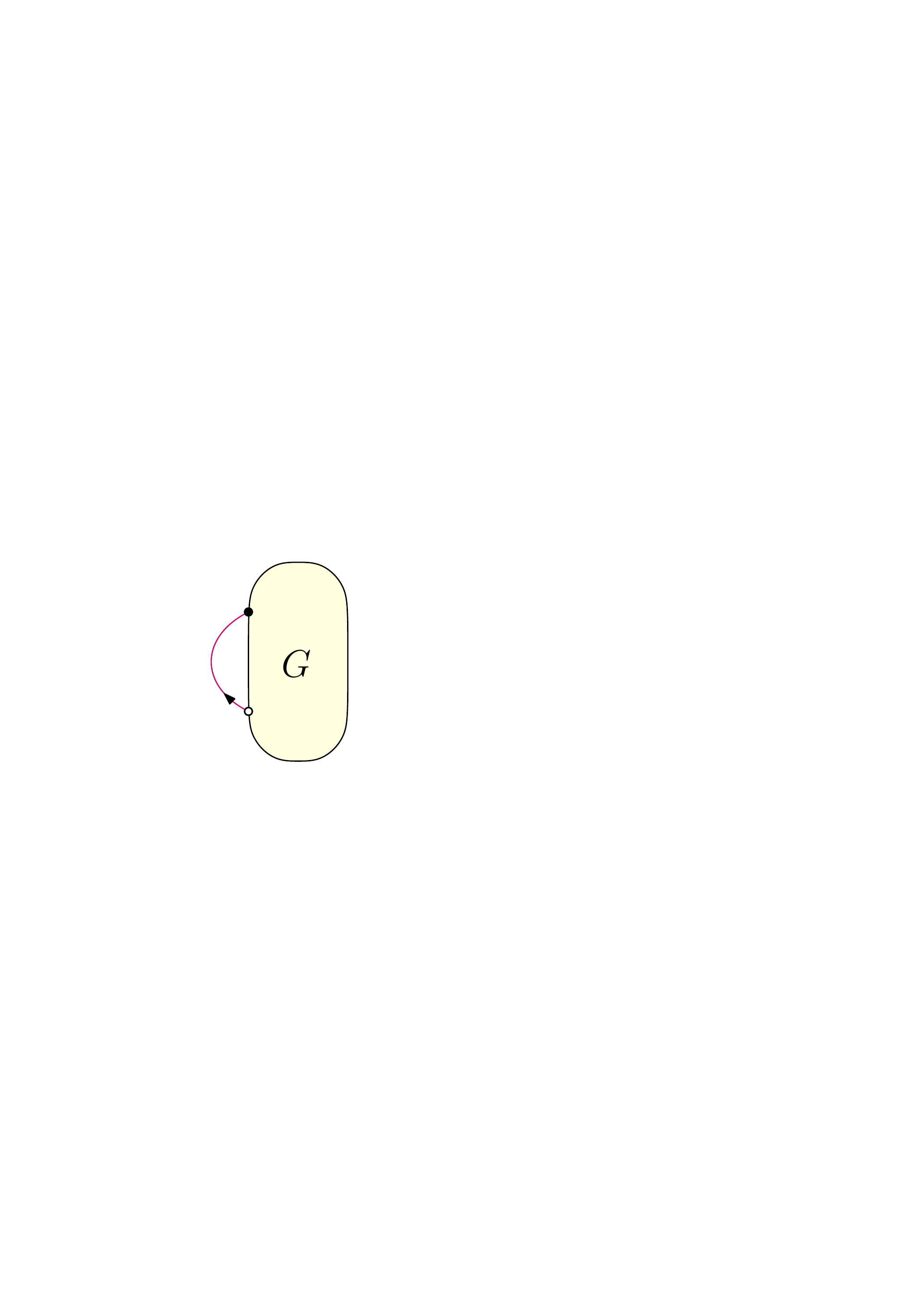} = \left\{\includegraphics[scale=.45,valign=c]{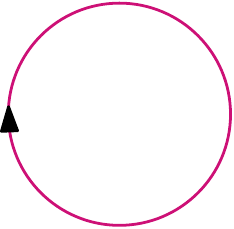}\right\}\cup \bigcup_{i=1}^N \bigcup_{\pi\in\cG^{\max}(\vec{\bb}_i)} \includegraphics[scale=.45,valign=c]{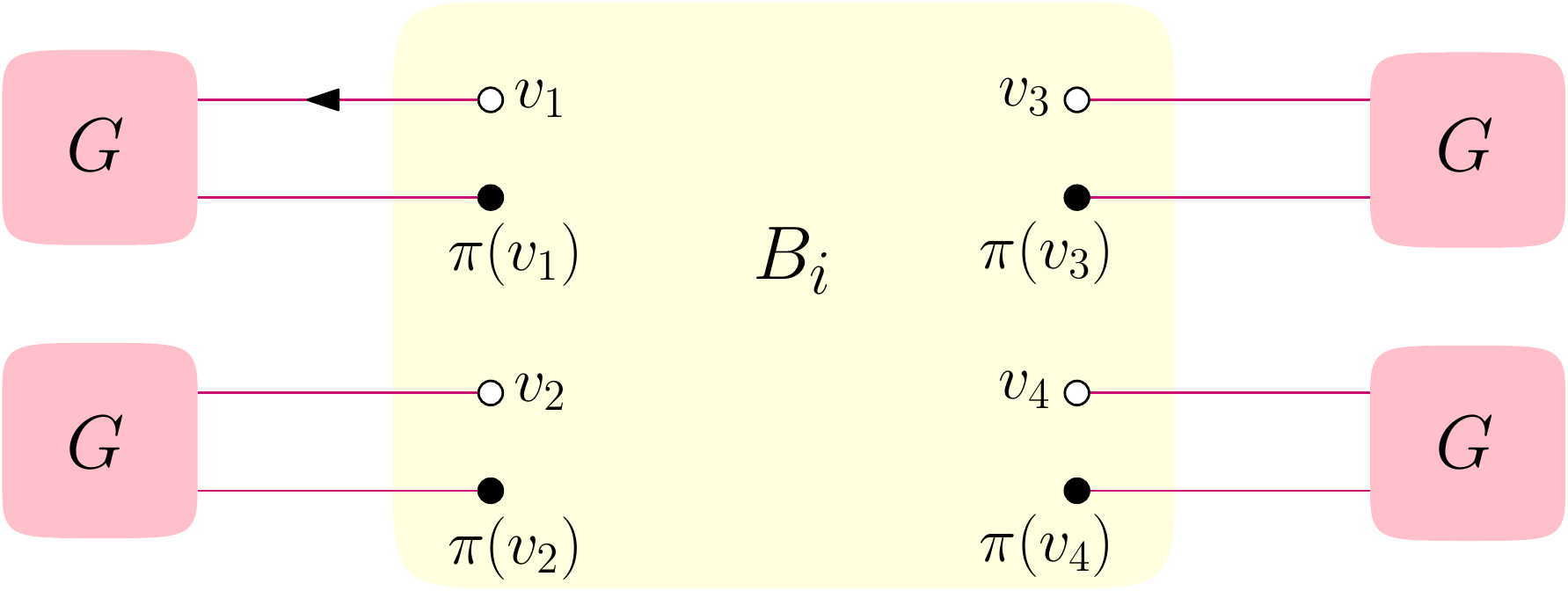}
	\end{equation}
	from which the bijection with trees follows, as well as the polynomial equation on $K(p_1, \dotsc, p_N)$.
\end{proof}

\Cref{thm:1Planar} was previously known to hold in limited situations: when all bubbles $B_1, \dotsc ,B_N$ are melonic \cite{Uncoloring}, and when there is a single type of bubble ($N=1$) which is the octahedron \cite{Octahedra}. In fact, a method detailed in \cite{SigmaReview} can be used to extend it to more bubbles of the form $\partial H$ when $H$ is a subgraph of some $G\in \cG^{\max}_{n_1, \dotsc, n_N}(B_1, \dotsc, B_N)$. This however does not bring genuinely new cases because the graphs maximizing the number of bicolored cycles with $\partial H$ form a subset of $\cG^{\max}_{n_1, \dotsc, n_N}(B_1, \dotsc, B_N)$.

Theorem \ref{thm:1Planar} is a thus a far reaching generalization of the existing results since it imposes the single constraint on one bubble to be homeomorphic to the 3-ball. Among all existing results, it only leaves out the case where all bubbles are $K_{3,3}$ which has the topology of the torus and $K_4$ (which is not bipartite, but can be used as a bubble nonetheless) which has the topology of the projective plane. The case of $K_{3,3}$ was studied in \cite{StuffedColoredMaps} with a conclusion slightly less restrictive than the maximal 2-cut property of Theorem \ref{thm:1Planar}, and similar to the result in the case of the bubble $K_4$ \cite{CarrozzaTanasa2016}. 

In fact, from the cases of $K_4$ and $K_{3,3}$, one can conjecture that there is a family of bubbles for which bubbles in $\graph\in\cG^{\max}_{n_1, \dotsc, n_N}(\bb_1, \dotsc, \bb_N)$ may not satisfy the maximal 2-cut property, but instead there exist some bubble \emph{subgraphs} $H_1, H_2, \dotsc$ which are finite and which satisfy the maximal 2-cut property. This is weaker than \Cref{thm:1Planar} because inside $H_1, H_2, \dotsc$ bubbles need not satisfy the maximal 2-cut property. Clearly, those models also have a tree-like behavior, the nodes not being bubbles anymore but bubble subgraphs instead. That is unless (possibly) the subgraphs $H_1, H_2, \dotsc$ form an infinite family where the size of the subgraphs become unbounded\ldots

\subsection{Applications to gluings of octahedra} The limiting factor in general applications of \Cref{thm:2PtFunctionPlanarBubbles} is finding the pairings $\pi\in\cG^{\max}(\vec{\bb}_i)$. We do not know much about their properties for $\bb_i$ planar in general and since $|\cG(\vec{\bb}_i)| = (v_i/2)!$, this becomes a serious issue quickly. We can nevertheless apply the theorem by hand to bubbles which do not have too many vertices.

Here, we apply it to gluings of octahedra which maximizes the number of edges of the triangulations. In the dual picture, the octahedron is represented as a bubble which is the edge-colored cube as shown in Figure \ref{fig:Octahedron} in Chapter \ref{sec:Definitions}. There are three pairings which maximize $C_0(\graph_\pi)$,
\begin{equation}
	\includegraphics[scale=.7,valign=c]{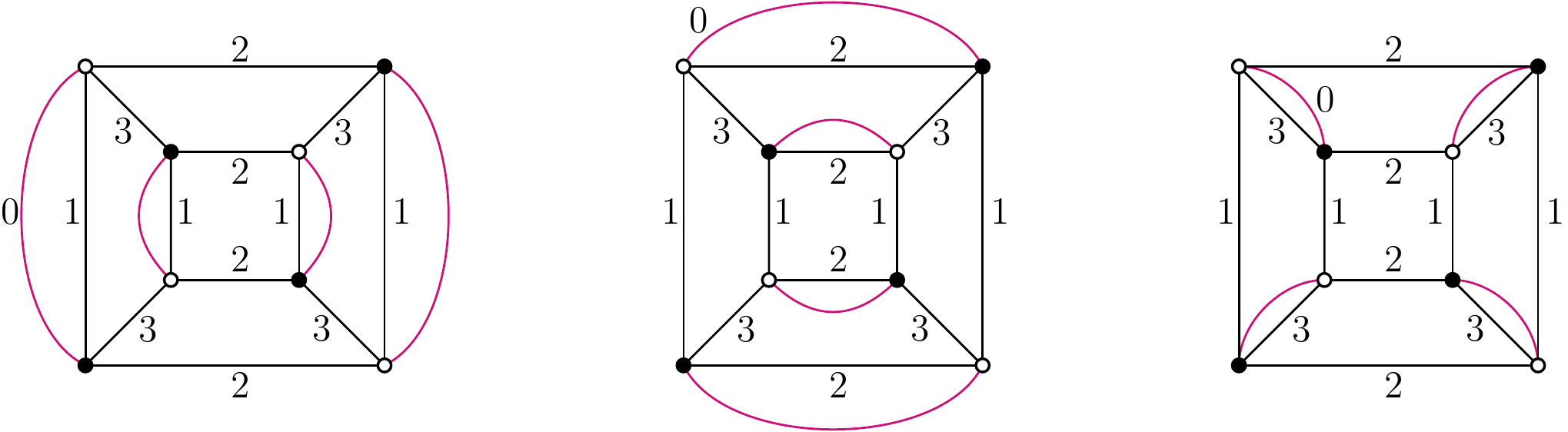}
\end{equation}
with $C(\bb) = 8\leq v(\bb)+1=9$ in agreement with Gurau's bound. For $\cG^{\max}_n(\vec{\bb})$, \Cref{thm:2PtFunctionPlanarBubbles} gives $C_n(\bb) =5n+3$ (Gurau's bound being $6n+3$). If one includes the edges inside the octahedra, we can write \eqref{BicoloredCyclesPlanar} as
\begin{equation}
	|\# \text{Edges}| \leq 3 + 11 |\# \text{Octahedra}|
\end{equation} 
while Gurau's bound has a coefficient 12 instead of 11.

The generating series $K(p) = \sum_{n\geq 0} |\cG^{\max}_n(\vec{\bb})| p^n$ satisfies $K(p) = 1 + 3p K(p)^4$, from which the critical point is found to be at $p_c = \frac{9}{256}$ and
\begin{equation}
	K(p) = \frac{4}{3} - \frac{32\sqrt{2}}{9\sqrt{3}}\sqrt{p_c-p} + \mathcal{O}(p_c-p).
\end{equation}

\subsection{Elements of proof of \Cref{thm:1Planar}} This is not the place for a complete proof of \Cref{thm:1Planar}, but since it is one of the most important results in this memoir, we want to point out a few elements from the proof. It relies on a \emph{local analysis}, in the sense that it is centered at a bubble and we study the bonds which can be attached to it. An induction on the total number of vertices of $\graph$ is used and the proof is done by a case-by-case analysis of situations where the maximal 2-cut property does not hold and how a graph with more bicolored cycles can then be built from the induction hypothesis.

If $\vec{\bb}\subset\graph$ is incident to a 2-bond, it is easily done, as well as if there is in $\vec{\bb}$ two parallel edges. Let us explain why $\vec{\bb}$ cannot be incident to a 4-bond, as this is the simplest case which has some flavor of the more intricate ones.
\begin{proposition}{}{4EdgeCuts}
	For $d$ odd and for \emph{any} (planar or not) bubbles $\bb_1, \dotsc, \bb_N$, $\graph\in\cG^{\max}_{n_1, \dotsc, n_N}(\vec{\bb};B_1, \dotsc, B_N)$ has no 4-bonds made of edges of color 0.
\end{proposition}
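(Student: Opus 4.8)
I would argue by contradiction, producing from a putative $4$-bond a competitor graph with the same bubble content but strictly more bicolored cycles of the form $\{0,c\}$. Suppose $\graph\in\cG^{\max}_{n_1,\dots,n_N}(\vec{\bb};B_1,\dots,B_N)$ contains a $4$-bond of color-$0$ edges $e_1,e_2,e_3,e_4$ incident on some bubble, separating $\graph$ into the side $A$ carrying $\vec{\bb}$ and a single connected component $B$ (the minimality of the bond is what makes $B$ connected). Since all the surgeries below move only edges of color $0$, the bubbles (the color-$[1..d]$ connected components) are untouched; the competitor graph $\graph''$ lies in the same $\cG_{n_1,\dots,n_N}(\vec{\bb};B_1,\dots,B_N)$, so exhibiting $C_0(\graph'')>C_0(\graph)$ contradicts $\graph\in\cG^{\max}$. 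Because every surgery is local to the four edges, only the $\{0,c\}$-cycles passing through the bond change, and I only ever need to compare these.

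\textbf{Local data.} The key bookkeeping device is that, for each color $c\in[1..d]$, the $\{0,c\}$-cycles entering $B$ must leave it, so tracing color-$(0,c)$ paths through $B$ pairs the four stubs into a perfect matching $\pi_c^B$ of $\{e_1,e_2,e_3,e_4\}$, and likewise $A$ gives $\pi_c^A$. The number of $\{0,c\}$-cycles traversing the bond is the number of connected components of the $2$-regular graph $\pi_c^A\cup\pi_c^B$ on four vertices: this is $2$ when $\pi_c^A=\pi_c^B$ and $1$ otherwise (two distinct matchings of four points union to a single $4$-cycle). Thus each color contributes exactly $1$ or $2$, and the through-bond contribution to $C_0(\graph)$ is $\sum_{c=1}^d N_c$ with $N_c\in\{1,2\}$. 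I would also record here the bipartite bookkeeping: each $e_i$ joins a white vertex to a black one, so on the $A$-side the four stubs attach to some whites and some blacks, and a flip that reconnects two $A$-stubs by an internal color-$0$ edge is only \emph{color-valid} when those two stubs have opposite colors.

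\textbf{The decisive surgery and the role of $d$ odd.} The admissible move turning the $4$-bond into a $2$-bond is: delete a color-valid pair $\{e_i,e_j\}$ and add the internal color-$0$ edges $(u_i,u_j)$ in $A$ and $(u_i',u_j')$ in $B$, leaving $\{e_k,e_l\}$ as a residual $2$-bond. A short computation on the $2$-regular $\{0,c\}$-subgraph (deleting two edges and reconnecting the four loose ends) shows the change in the $\{0,c\}$-count is $+1$, $0$, or $-1$ according to whether $e_i,e_j$ lay in distinct cycles, or in a common cycle resolved one way or the other; the outcome in each channel is governed by the matchings $\pi_c^A,\pi_c^B$ relative to the chosen resolution. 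There are three matching ``channels'' $P_1,P_2,P_3$ of the four stubs, and I would sum the induced gains over the three resolutions: the $d$ colors distribute among the three channels according to which $P_j$ each $\pi_c^{\,\cdot}$ realizes, and a counting of this distribution shows that the total gain over the channels cannot be $\le 0$ in all three simultaneously once $d$ is \emph{odd} — the oddness is exactly what forbids the balanced, everywhere-non-improving distribution that is possible for even $d$. Hence some color-valid channel strictly increases $C_0$, giving the desired $\graph''$ and the contradiction.

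\textbf{Main obstacle.} The genuine work is the last paragraph: carrying out the case analysis so that a \emph{single} flip is simultaneously (i) color-valid, given the possible white/black distributions of the four stubs — in particular ruling out or circumventing the degenerate ``all four $A$-stubs the same color'' configuration — and (ii) globally beneficial, i.e. the per-channel parity bookkeeping over $P_1,P_2,P_3$ really does force a strict net gain. Making the three-channel sum rule precise (and checking it interacts correctly with bicolored cycles that visit the bond region more than once) is where the oddness of $d$ must be used carefully, and it is the step I expect to require the most attention; everything else is local surgery on $2$-regular graphs.
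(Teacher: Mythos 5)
Your setup is exactly the skeleton of the paper's proof: the two matchings $\pi_c^A,\pi_c^B$ induced by the $\{0,c\}$-paths on either side of the bond (the paper packages $\pi_c^B$ as the $4$-vertex \emph{boundary bubble} $\partial H_R$ of the side $B$), the count ``$2$ through-cycles if they agree, $1$ if not'', and the flip of a color-valid pair among $\{e_1,\dots,e_4\}$ as the improving surgery. The gap sits precisely in the step you flag as the main obstacle, and the mechanism you propose there fails as stated. Writing $G_j$ for the total gain of the flip along channel $P_j$ and $E=\#\{c:\pi_c^A=\pi_c^B\}$, your own per-color table gives $\sum_{j=1}^{3}G_j=2d-3E$, which is negative whenever $E>2d/3$ --- in particular in the most relevant configuration where the two sides' pairings agree for every color (so that every color already contributes its maximum of two through-cycles, $\sum_j G_j=-d$). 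So ``the gains cannot all be $\le 0$'' cannot be extracted from an average over the three abstract channels.

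The missing idea is that bipartiteness kills the third channel outright. Each $\{0,c\}$-path inside $B$ alternates colors $0$ and $c$, has odd length, and hence joins a white stub to a black stub; therefore (a) the four stubs on each side are automatically two white and two black, so your degenerate ``all four $A$-stubs the same color'' case is vacuous rather than something to circumvent, and (b) each $\pi_c^B$ is one of only \emph{two} matchings $P_1,P_2$. Setting $q=\#\{c:\pi_c^B=P_1\}$ and relabelling the bond edges so that $q\ge d/2$, oddness of $d$ forces $q>d/2$, and the flip aligned with $P_1$ (color-valid by (a)) gains at least
\begin{equation*}
	\#\{c:\pi_c^B=P_1\}-\#\{c:\pi_c^B\neq P_1\}=2q-d>0,
\end{equation*}
\emph{independently of the $A$-side matchings}: your table gives $\Delta_c=+1$ whenever $\pi_c^B=P_1$ and $\Delta_c\ge -1$ otherwise. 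This majority argument on the two surviving channels is the paper's computation; with it substituted for your three-channel sum rule, the rest of your argument (locality of the surgery, preservation of the bubble content, connectedness via the residual $2$-bond) goes through.
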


\begin{proof}
	Suppose $\graph$ has a 4-bond made of edges of color 0. It looks as follows,
	\begin{equation}
		G = \begin{array}{c} \includegraphics[scale=.4]{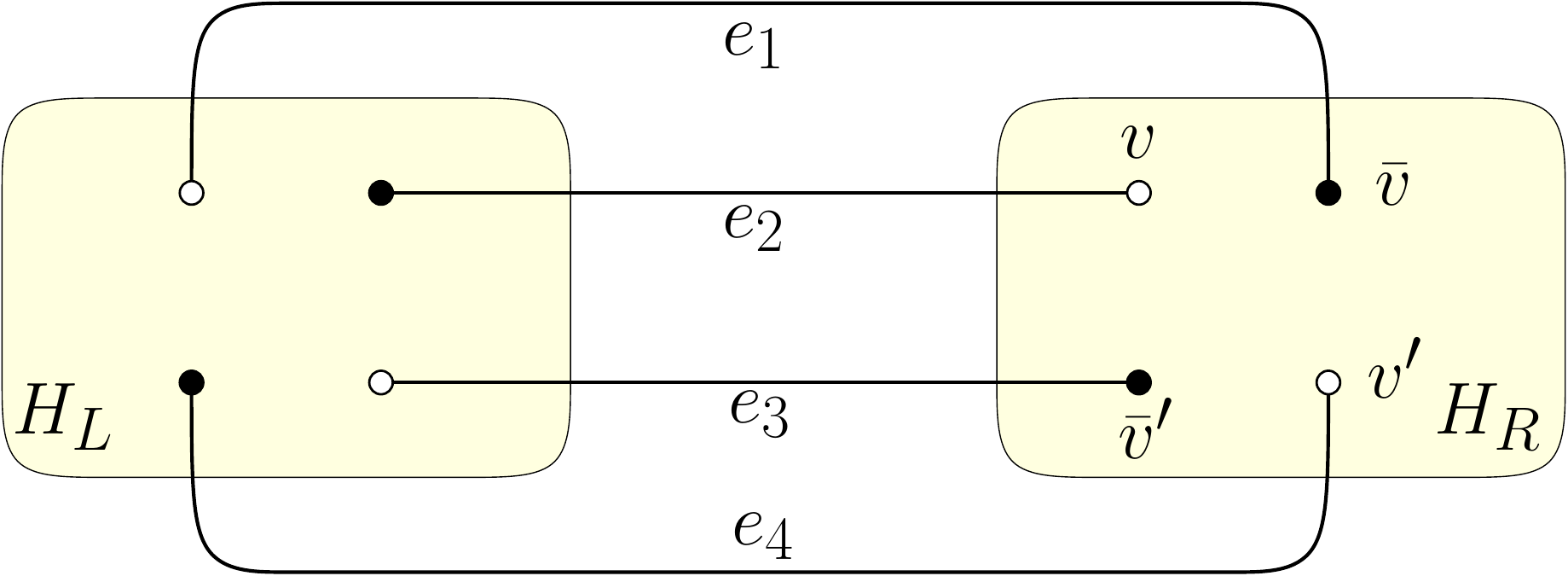} \end{array}
	\end{equation}
	Let $c\in[1..d]$ and let us look at the bicolored cycles which go along the edges of the 4-bond.
	\begin{itemize}
		\item There can be one along $e_1$ and $e_2$, and another along $e_3$ and $e_4$,
		\item or one along $e_1$ and $e_4$, and another along $e_3$ and $e_2$,
		\item or it is the same bicolored cycle along $e_1, e_2, e_3, e_4$.
	\end{itemize}
	This depends on the bicolored paths with colors $\{0,c\}$ inside $H_L$ and $H_R$. As we only need to know at which vertices those paths enter and leave, this information is captured by the boundary bubbles, which are bubbles with 4 vertices (possibly two 2-vertex bubbles).
	
	Considering the boundary bubble $\partial H_R$, it has the four vertices $v, v', \bar{v}, \bar{v}'$. Up to exchanging the roles of $e_2$ and $e_4$, we can assume that there are $d/2\leq q\leq d$ edges between $v$ and $\bar{v}$, and between $v'$ and $\bar{v}'$, and $d-q$ edges with the complementary colors between $v$ and $\bar{v}'$, and between $v'$ and $\bar{v}$. Thus the number of bicolored cycles going along $e_1, \dotsc, e_4$ in $\graph$ is the same as in 
	\begin{equation}
		\includegraphics[scale=.4,valign=c]{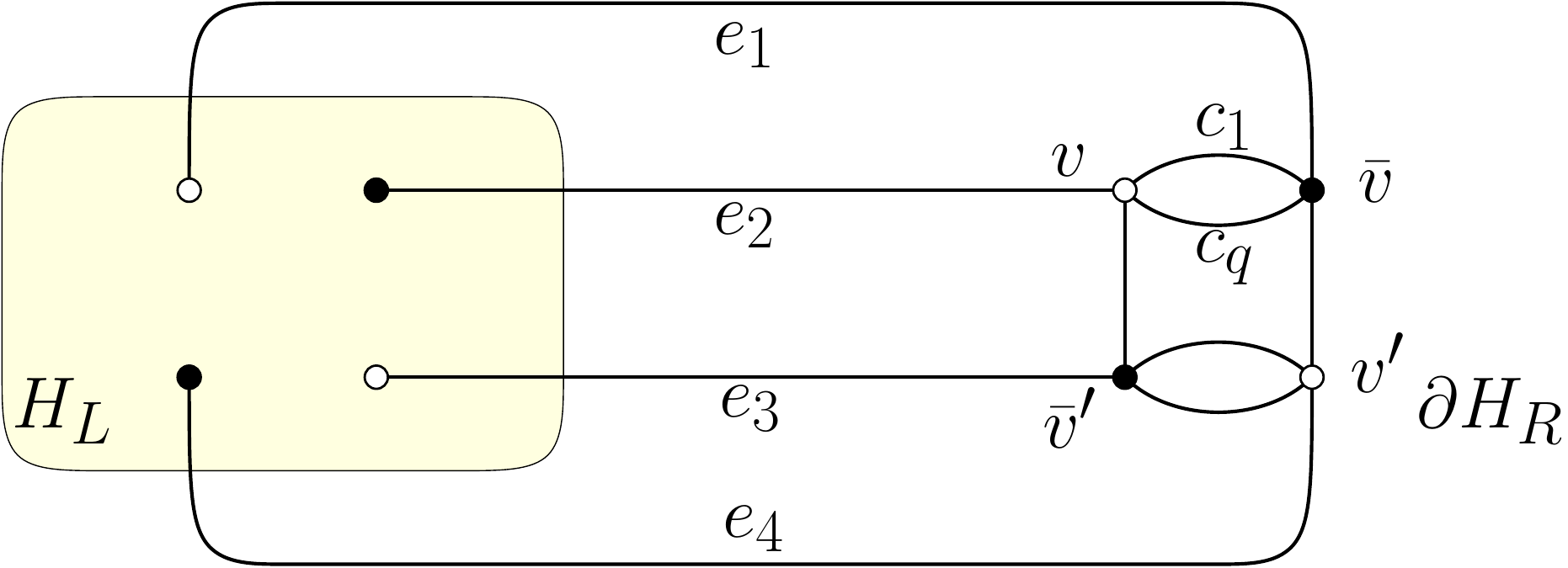}
	\end{equation}
	Crucially, since $d$ is odd, $q>d/2$, and therefore the following flip increases the number of bicolored cycles by $2q-d$
	\begin{equation}
		\includegraphics[scale=.4,valign=c]{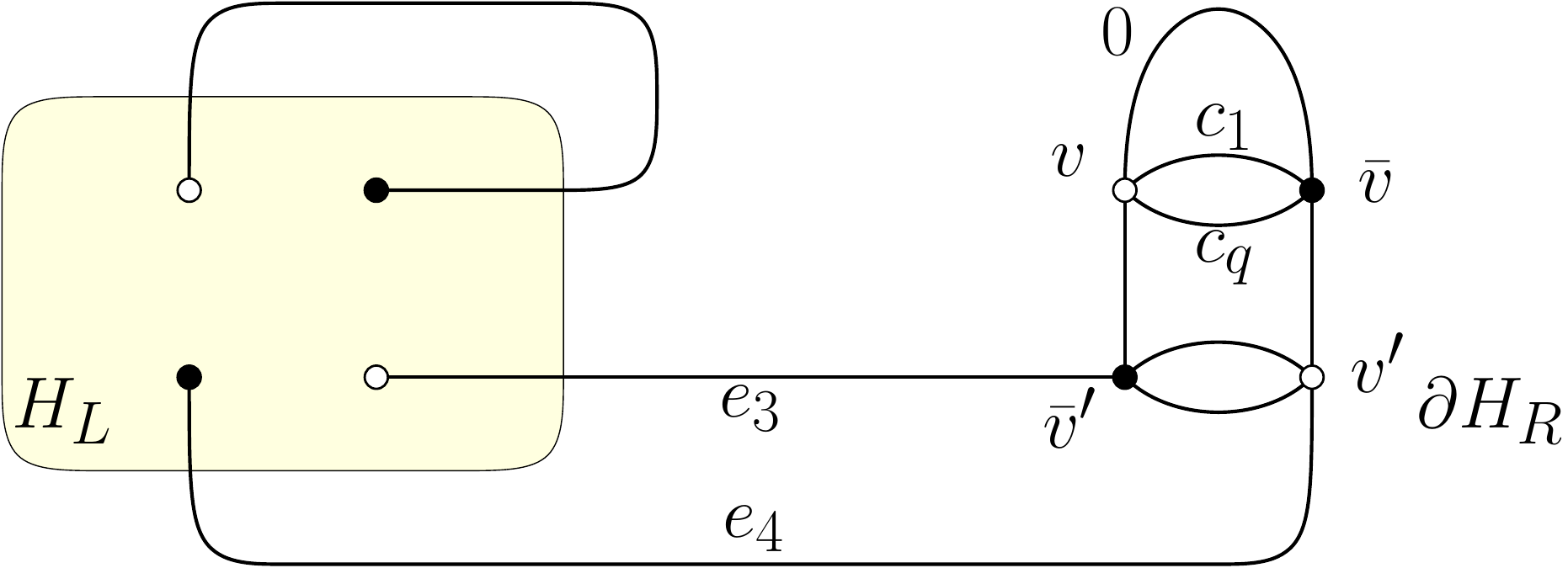}
	\end{equation}
	We thus find that performing the same flip on $e_1$, $e_2$ in $\graph$ increases the number of bicolored cycles
	\begin{equation}
		C_0\left(\includegraphics[scale=.4,valign=c]{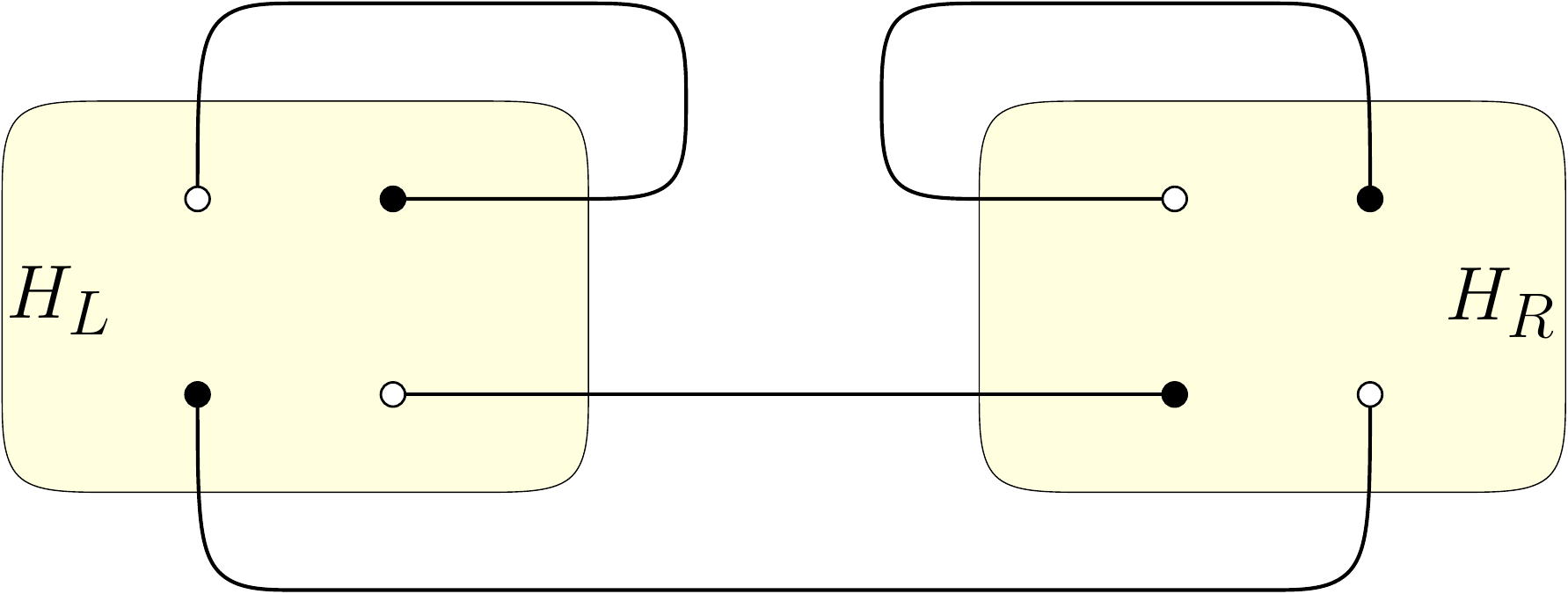}\right) = C_0(\graph) -d + 2q>C_0(\graph).
	\end{equation}
\end{proof}

For $k$-bonds with $k\geq 6$, we cannot have a statement as general as \Cref{thm:4EdgeCuts}, and we need to look at the specifity of planar bubbles at $d=3$, and combine it with the idea of boundary bubbles as in the proof above. The most important property of planar bubbles that we have used is an elementary property about the lengths of their bicolored cycles.

\begin{lemma} \label{thm:FacesPlanarBubble}
	A planar bubble without bicolored cycles of length 2 has at least six bicolored cycles of length 4.
\end{lemma}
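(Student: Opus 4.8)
I want to prove that a planar bubble $\bb$ (a $3$-colored graph with colors $\{1,2,3\}$, equivalently a tricolored triangulation of a surface of genus $0$ via its canonical embedding as a cubic map) which has no bicolored cycle of length $2$ must have at least six bicolored cycles of length $4$. The natural framework is Euler's relation for tricolored graphs, Equation \eqref{EulerColoredGraph}, specialized to $d=3$ and to the color set $\{1,2,3\}$ and genus $g=0$: writing $v$ for the number of vertices and $C_{ab}$ for the number of $\{a,b\}$-cycles, we have $C_{12}+C_{13}+C_{23}-v/2 = 2$. The plan is to turn this single global identity into a counting inequality by pairing it with the elementary fact that every vertex lies on exactly one cycle of each of the three bicolored types, and that each bicolored cycle of length $2p$ passes through $p$ white and $p$ black vertices.

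\textbf{The key steps.} First I would record the combinatorial constraints precisely. Each bicolored cycle with colors $\{a,b\}$ has even length $2\ell$ with $\ell\ge 1$; by hypothesis no cycle has $\ell=1$, so every cycle has $\ell\ge 2$, i.e. length $\ge 4$. Counting incidences of vertices on $\{1,2\}$-cycles gives $\sum_{\text{cycles}} (\text{length}) = v$ summed over the $\{1,2\}$-cycles alone, and similarly for the other two color pairs; hence $\sum_c (\text{length of }c) = 3v$ over all bicolored cycles. Let $F=C_{12}+C_{13}+C_{23}$ be the total number of bicolored cycles and let $F_4$ be the number of those of length exactly $4$; the remaining $F-F_4$ cycles have length $\ge 6$. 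Combining the total-length identity with Euler's relation is the crux: from $F - v/2 = 2$ I get $v = 2F-4$, and substituting into the length bound $3v = \sum_c(\text{length}) \ge 4F_4 + 6(F-F_4)$ yields $3(2F-4) \ge 6F - 2F_4$, which simplifies to $2F_4 \ge 12$, i.e. $F_4 \ge 6$. This is exactly the claim, so the argument is genuinely just Euler plus the parity/length accounting.

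\textbf{Where the care is needed.} The main thing to get right — and the step I expect to be the only real obstacle — is the passage to the genus-$0$ hypothesis and the verification that every summand in the length count is used consistently across the three color pairs without double-counting. I would carefully justify that the canonical embedding of Proposition \ref{thm:CanonicalEmbedding} makes $\bb$ (with its three bicolored cycle families becoming the faces of a cubic map) satisfy Euler's formula with $g=0$ precisely when $\bb$ is planar; this is what lets me set the right-hand side of \eqref{EulerColoredGraph} equal to $2$. I would also double-check the edge case where some color pair contributes few cycles, but since the inequality $\text{length}\ge 6$ for non-quadrilateral cycles is used uniformly and the total-length identity $3v=\sum_c \text{length}$ holds regardless of how the cycles distribute among the three pairs, the bound $F_4\ge 6$ follows without needing to control the individual $C_{ab}$. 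The only remaining subtlety is connectivity: Euler's relation as stated applies to a connected map, so I would note that a planar bubble is connected (or reduce to components and observe the inequality only improves), after which the computation above closes the proof.
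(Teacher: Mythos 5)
Your proof is correct and is essentially the paper's own argument: the paper phrases it via the canonical embedding as a planar cubic map and invokes ``double counting of twice the number of edges together with Euler's formula,'' which is exactly your identity $\sum(\text{lengths})=3v$ combined with $C_{12}+C_{13}+C_{23}=v/2+2$. The algebra $3(2F-4)\ge 6F-2F_4$ giving $F_4\ge 6$ is the intended computation.
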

In terms of maps, using the canonical embedding of \Cref{thm:CanonicalEmbedding} in Chapter \ref{sec:Definitions}, it is equivalent to saying that a planar, tricolored cubic map with no faces of degree 2 has at least six faces of degree 4. This can be obtained by a double counting argument of (twice) the number of edges, together with Euler's formula.

Assuming $\vec{\bb}$ has no incident 2-bonds and 4-bonds, the incident edges of color 0 are either between vertices of $\vec{\bb}$ or parts of $k$-bonds for $k\geq 6$. As it turns out, it is enough to study the ``neighborhood'' in $\graph$ of a face of degree 4 of $\vec{\bb}$, i.e. how it is connected via the edges of color 0 incident on that face. There are obviously a lot of cases. For definiteness, consider a face of degree 4 in $\vec{\bb}$ with colors $\{1,2\}$ and vertices $\bar{v}_1, v_2, \bar{v}_3, v_4$. The edges of color 0 incident to those vertices may be part of different bonds or connect to other vertices of $\vec{\bb}$ (or among themselves). For instance, there is a case where the edges $e_1, \dotsc, e_4$ incident on $\bar{v}_1, v_2, \bar{v}_3, v_4$ are all different, and $e_1$ and $e_3$ are part of the same bond, $e_2$ connects to another vertex of $\vec{\bb}$ and $e_4$ is part of another bond,
\begin{equation} \label{FaceDegree4ThreeEdgesTwoCuts}
	\graph = \includegraphics[scale=.4,valign=c]{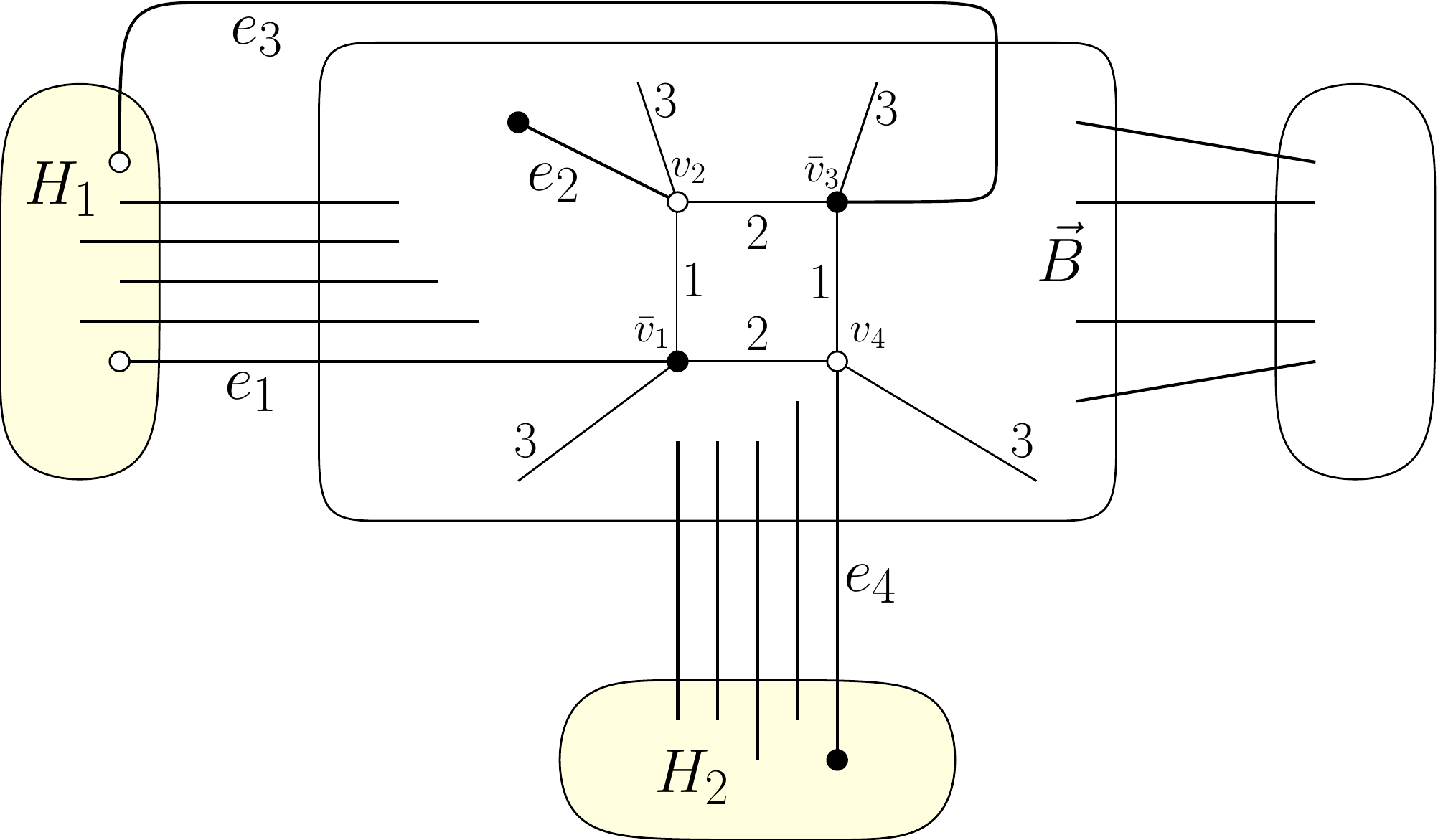}
\end{equation}
Here only the colors 1, 2, 3 are explicitly indicated.

The idea is then to perform some edge flips which do not decrease the number of bicolored cycles until the edges of color 0 incident to that face of degree 4 are parallel to the edges of color 1 or 2, like this
\begin{equation} \label{FaceDegree4TwoFlips}
	G = \includegraphics[scale=.4,valign=c]{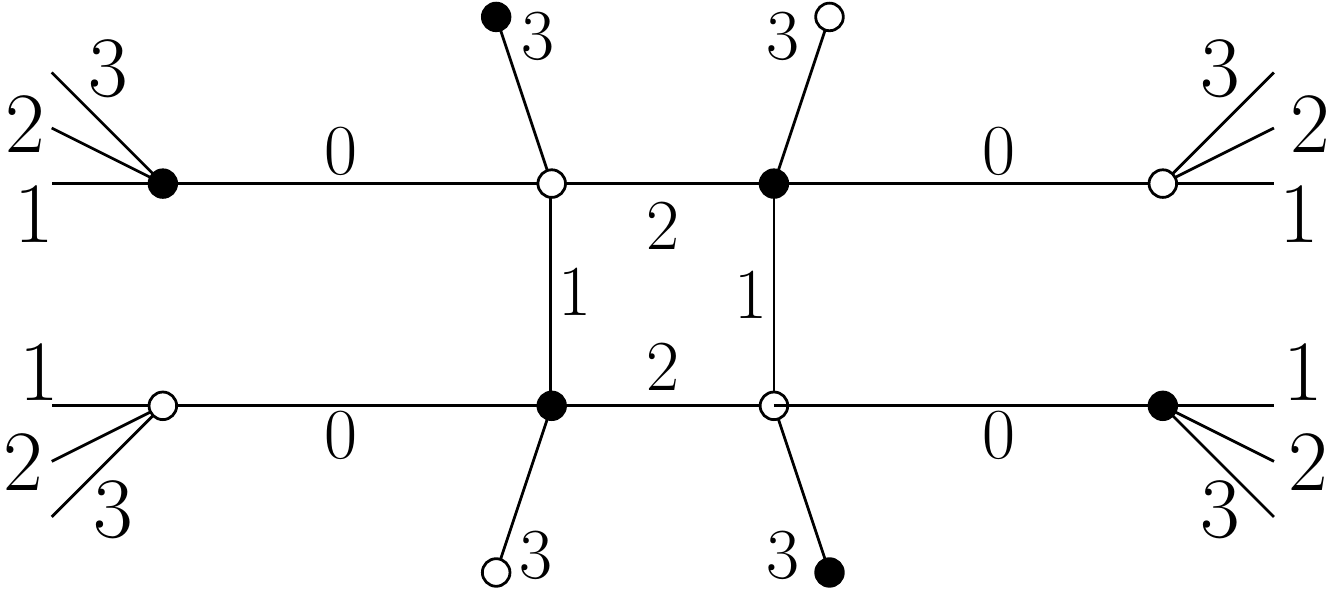} \quad\underset{\text{Flips}}{\to}\quad G_{\parallel} = \includegraphics[scale=.4,valign=c]{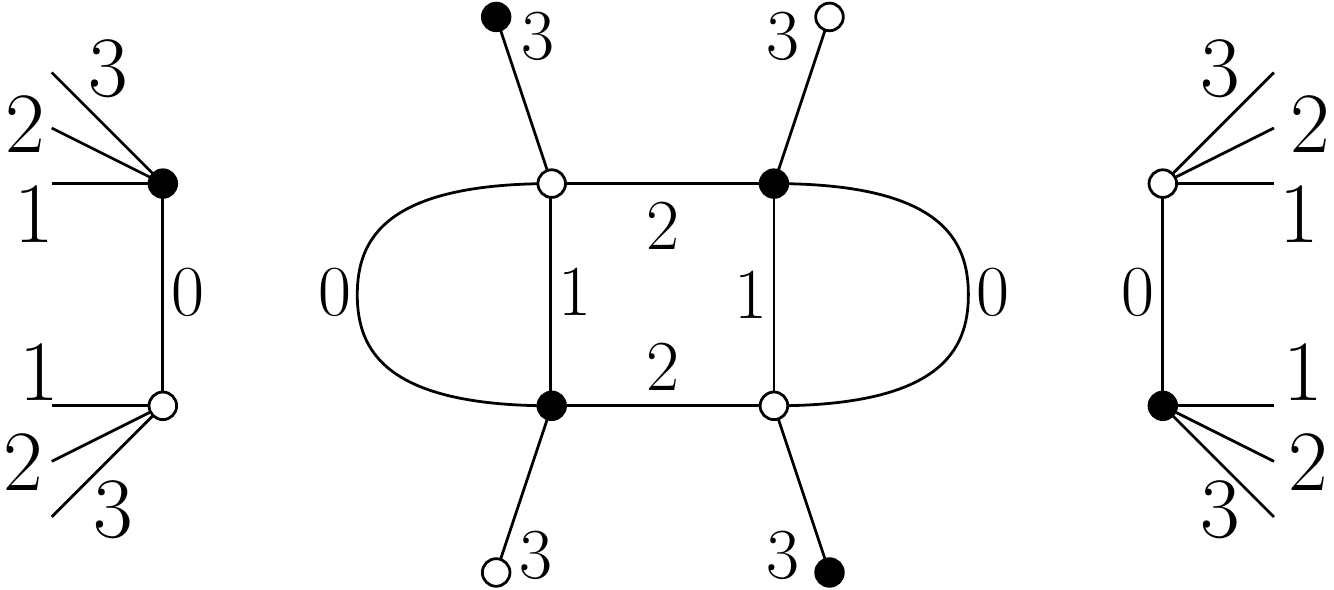}
\end{equation}
Through those flips it can be checked that the number of bicolored cycles cannot decrease, i.e. $C_0(\graph) \leq C_0(\graph_{\parallel})$. Then one performs the following transformation on the graph and $\vec{\bb}$ (e.g. if the edges of color 0 are parallel to those of color 1)
\begin{equation} \label{FaceDegree4Removed}
	\includegraphics[scale=.4,valign=c]{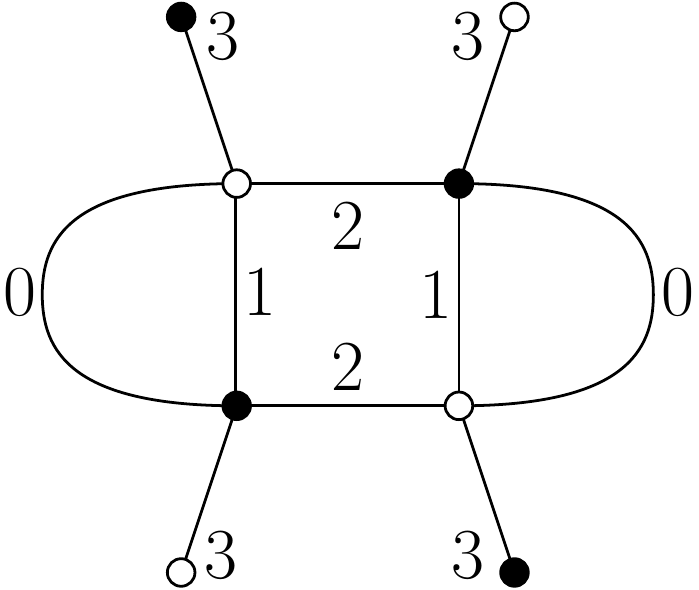} \quad\to\quad \includegraphics[scale=.4,valign=c]{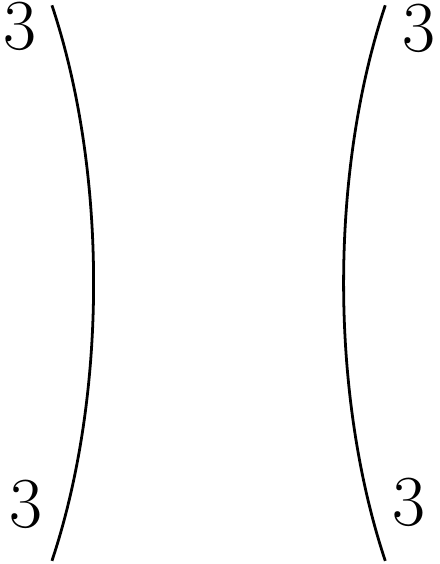}
\end{equation}
Crucially, this turns $\vec{\bb}$ into a new bubble with four vertices less and which is still planar, so that the induction hypothesis can be applied.

\subsection{Topology} 
\begin{theorem}{}{Topology}
	Let $\{\bb_1, \dotsc, \bb_N\}$ be a set of planar bubbles at $d=3$. Then $\graph\in \cG^{\max}_{n_1, \dotsc, n_N}(\bb_1, \dotsc, \bb_N)$ has the topology of the $3$-sphere.
\end{theorem}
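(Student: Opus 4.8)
The plan is to combine the combinatorial tree structure established earlier with the topology-preserving nature of color-$0$ edge contractions, and to run an induction on the total number of bubbles $\sum_i n_i$. Since every $\bb_i$ is planar, \Cref{thm:PlanarBubbles} shows that each dual CBB $\CBB_i$ is a $3$-ball, so $\graph$ is a closed colored triangulation assembled out of $3$-balls glued along their boundary color-$0$ triangles. By \Cref{thm:1Planar} every bubble of $\graph\in\cG^{\max}_{n_1,\dotsc,n_N}(\bb_1,\dotsc,\bb_N)$ satisfies the maximal $2$-cut property, and by \Cref{thm:2PtFunctionPlanarBubbles} the bubbles of $\graph$ are organized as a tree whose edges are $2$-bonds of color $0$, the remaining pairs $\{v,\pi(v)\}$ being closed by single color-$0$ edges. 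The whole point is that no bond of size $\geq 4$ occurs, so the only way the balls are joined is through these controlled $2$-bonds.

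The tool I would use for the inductive step is the contraction move $\partial_e$ of the definition-lemma on boundary bubbles, together with the remark immediately following it: contracting a color-$0$ edge $e$ whose endpoints lie in \emph{distinct} bubbles, one of which is dual to a $3$-ball, leaves the homeomorphism type of the dual triangulation unchanged. Concretely I would pick a leaf bubble $\vec{\bb}$ of the tree — a planar bubble, hence a $3$-ball — attached to the rest of $\graph$ through one $2$-bond $\{e_1,e_2\}$. Contracting $e_1$ satisfies the hypotheses of the remark (its endpoints sit in $\vec{\bb}$ and in the neighbouring bubble $\bb'$, which are distinct, and $\vec{\bb}$ is a ball), so the topology is preserved. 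This is a bridge-contraction merging $\vec{\bb}$ and $\bb'$ into a single bubble $\tilde{\bb}$; since a bridge-contraction of two planar bubbles corresponds dually to gluing two $3$-balls along a boundary triangle, $\tilde{\bb}$ is again planar, i.e. again a ball, and the surviving edge $e_2$ becomes an internal color-$0$ edge. Because $\tilde{\bb}$ is still a ball, the reduction can be iterated: choosing a current leaf at every stage keeps the ball hypothesis available, so one contracts all $2$-bonds of the tree without ever changing the topology, eventually reaching the minimal configuration, namely the two-vertex graph joined by all colors $0,\dots,d$, which is the $3$-sphere (as recalled before \Cref{thm:ColoredGraphs}). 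Hence $\graph\cong S^3$.

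Two points require genuine care, and together they form the main obstacle. The first is verifying that each $2$-bond contraction really is topology-preserving in the presence of its \emph{second} edge $e_2$: after contracting $e_1$ the edge $e_2$ turns into a self-contraction edge of $\tilde{\bb}$, and one must check that the corresponding $2$-dipole move glues and unglues only along balls (so that no handle or singular vertex of codimension $2$ is created) — equivalently, that the cycle-count bookkeeping of \eqref{BicoloredCyclesMax2Cut} is matched by an honest PL-homeomorphism. The second, which is really the base case of the same phenomenon, is the single-bubble statement: that a planar bubble closed by a maximal pairing $\pi\in\cG^{\max}(\vec{\bb})$ is itself a $3$-sphere. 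This is not immediate from \Cref{thm:Gurau}, because such a closure is generally \emph{not} melonic and so has strictly positive Gurau degree. I would instead establish it by reusing the local moves from the proof of \Cref{thm:1Planar}: by \Cref{thm:FacesPlanarBubble} a planar bubble with no length-$2$ cycle has at least six length-$4$ cycles, so there is always a degree-$4$ face on which to perform the face-removal reduction \eqref{FaceDegree4Removed}, which turns $\vec{\bb}$ into a strictly smaller planar bubble while preserving both planarity and the topology of the closed triangulation. Iterating this reduction collapses any maximal single-bubble closure to the minimal $S^3$, closing the argument.
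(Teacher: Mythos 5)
Your overall architecture matches the paper's: reduce the multi-bubble case to the single-bubble case using the $2$-bond tree structure of \Cref{thm:2PtFunctionPlanarBubbles}, then induct on a single planar bubble via the degree-$4$-face reduction of \eqref{FaceDegree4TwoFlips}--\eqref{FaceDegree4Removed}, using \Cref{thm:FacesPlanarBubble} to guarantee such a face exists. The one place you genuinely diverge is the multi-bubble step: you \emph{merge} bubbles by contracting one edge of each $2$-bond (invoking the remark that a contraction between distinct bubbles, one of them a ball, is topological), whereas the paper \emph{splits} the graph by flipping the $2$-bond and invokes Lemma \ref{lemma:2CutConnected} to recognize $\graph$ as a connected sum $\graph_1\#\graph_2$, so that spheres being neutral for $\#$ closes the induction. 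The paper's split has a concrete advantage you lose by merging: after the flip, each piece is a single-bubble graph $\graph_\pi$ with $\pi\in\cG^{\max}(\vec{\bb}_i)$, so the single-bubble induction hypothesis applies verbatim; after your merge, you obtain one large bubble $\tilde{\bb}$ closed by a pairing whose maximality for $\tilde{\bb}$ you never establish, and the single-bubble argument needs that maximality (it is what lets one conclude $G_\parallel\in\cG^{\max}(\vec{\bb})$ and $G'\in\cG^{\max}(\vec{\bb}')$ along the reduction). This is fixable, but it is an extra obligation your route creates and the paper's avoids.

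The genuine gap is in the single-bubble step. You assert that the face-removal reduction ``preserves both planarity and the topology of the closed triangulation,'' but the proof of \Cref{thm:1Planar} only controls the \emph{cycle count} under the edge flips $G\to G_\parallel$: the inequality $C_0(\graph)\leq C_0(\graph_\parallel)$ says nothing about homeomorphism type, and an edge flip of color $0$ is not in general one of the topology-preserving moves \eqref{TopologicalMoves} (it is not a dipole insertion or removal, and generic flips can change the topology of a colored triangulation). The paper isolates exactly this as the hard point: the passage $G_\parallel\to G'$ is a legitimate dipole-type move, but showing $G\to G_\parallel$ is a PL-homeomorphism requires a separate argument, carried out via Euler-characteristic control of the connected components of the subgraphs $G(0,2,3)$ and of their genera. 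Your proposal flags that ``care is required'' but supplies no mechanism for this verification, so as written the inductive step of the base case does not go through. Everything else --- the use of \Cref{thm:PlanarBubbles} to identify CBBs with $3$-balls, the topology-preserving contraction across a $2$-bond, and the terminal two-vertex graph being $S^3$ --- is sound.
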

There are two main cases to this theorem. One is when $\graph$ has at least two bubbles, and the other when $\graph$ has a single bubble. In the former, we will use \Cref{thm:2PtFunctionPlanarBubbles} and in the latter, the statement of the theorem was already given as \Cref{thm:Topology1PlanarCBB} in Chapter \ref{sec:1CBB} (with promise of details to come now).

\subsubsection{Sequences of topological moves} Let us release $d\geq 2$ for the following statement. The following moves and their inverses preserve the topology of the triangulation dual to a colored graph \cite{Ferri1982}.
\begin{equation} \label{TopologicalMoves}
	\includegraphics[scale=.4,valign=c]{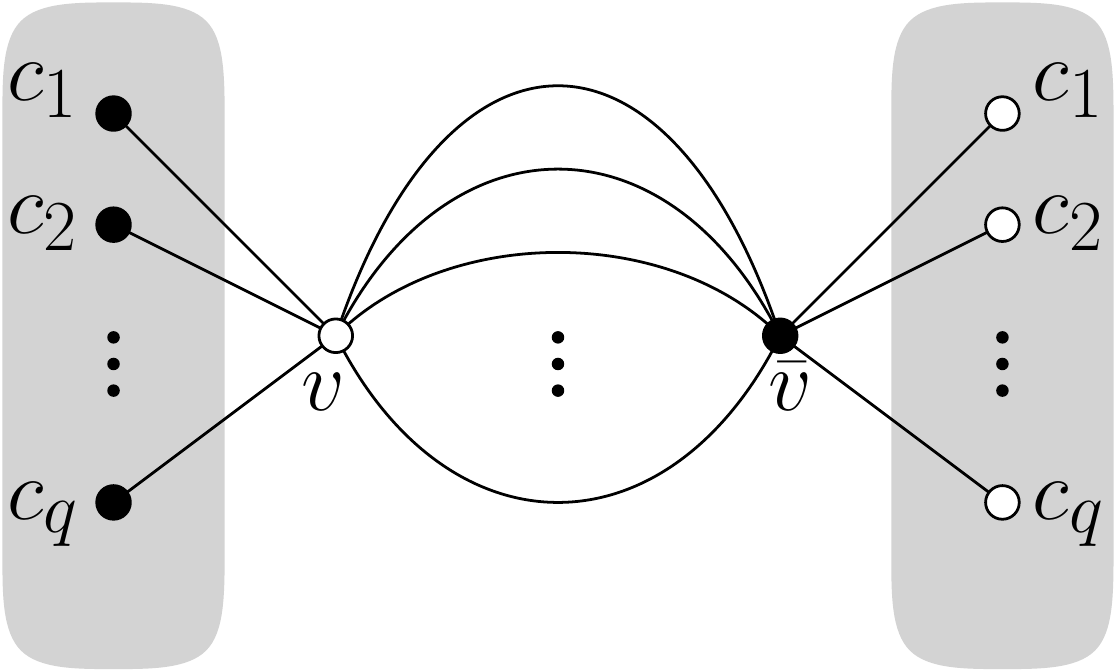} \quad \to \quad \includegraphics[scale=.4,valign=c]{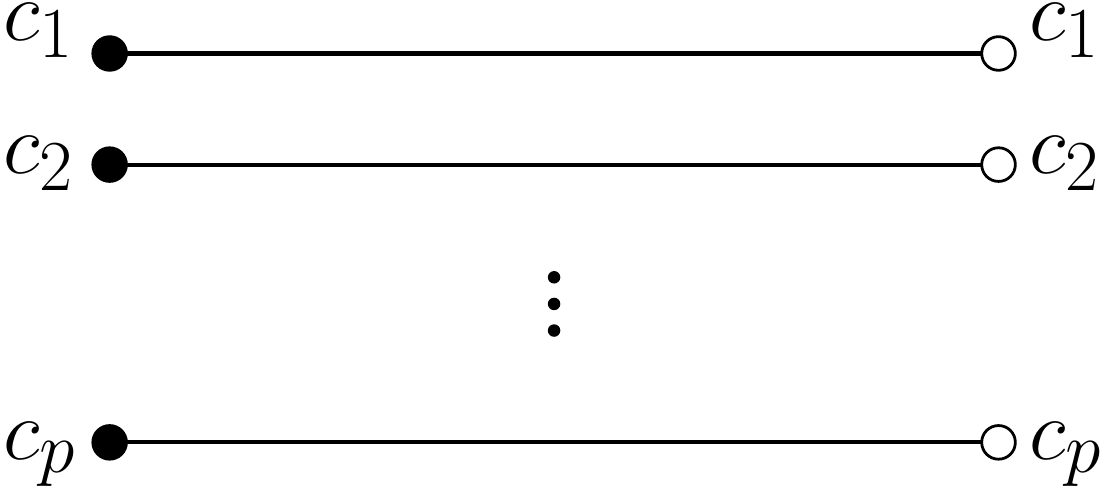}
\end{equation}
where 
\begin{itemize}
	\item $v$ and $\bar{v}$ belong to two different connected components of $\graph(c_1, \dotsc, c_q)$ (the subgraph of $\graph$ restricted to edges of colors $\{c_1, \dotsc, c_q\}$),
	\item and at least one of them is homeomorphic to a $q$-ball.
\end{itemize}
In fact, for any two homeomorphic colored triangulations $\trisp, \trisp'$, there exists a sequence of those moves transforming $\graph(\trisp)$ into $\graph(\trisp')$. However in general there is no algorithm to find such a sequence. This is a consequence of Dehn's classical isomorphism problem for groups and its undecidability. Let us explain this fascinating aspect in a few lines. A classical problem for manifolds is that of finding an algorithm to decide when two manifolds are homeomorphic. A subproblem is thus that of finding an algorithm to determine whether the fundamental groups of two manifolds are isomorphic. One of Dehn's classical problems is in fact just that, in the case of finitely presented group\footnote{i.e. finite numbers of generators and relations}: determining whether two finitely presented groups are isomorphic. It is known to be in general undecidable \cite{Stillwell1982}.

To be able to apply that result to manifolds, one needs to know the kind of groups which arise as fundamental groups. For 3-manifolds, it does not apply, because it turns out that the fundamental groups are much more restricted than ``only'' finitely presented groups. In fact, the isomorphism problem for (closed, oriented, triangulated) 3-manifolds is solvable \cite{Kuperberg2019}. We also refer to the survey \cite{Aschenbrenner2014} for more interesting decision problems on 3-manifolds and their fundamental groups. However in four dimensions, any finitely presented group is a fundamental group, and the isomorphism problem is thus non-solvable.

\subsubsection{Connected sums} We recall that the connected sum of two $d$-manifolds $M_1$ and $M_2$ is a $d$-manifold obtained by removing a ball in $M_1$ and in $M_2$ and then identifying the two resulting $(d-1)$-spheres in some canonical way. For two colored graphs $\graph_1$, $\graph_2$, a connected sum $\graph_1\#_{\{v_1, v_2\}} \graph_2$ is obtained by removing a black vertex $v_1$ from $\graph_1$, a white vertex $v_2$ from $\graph_2$ and identifying the resulting hanging edges which have the same color,
\begin{equation}
	\includegraphics[scale=.4,valign=c]{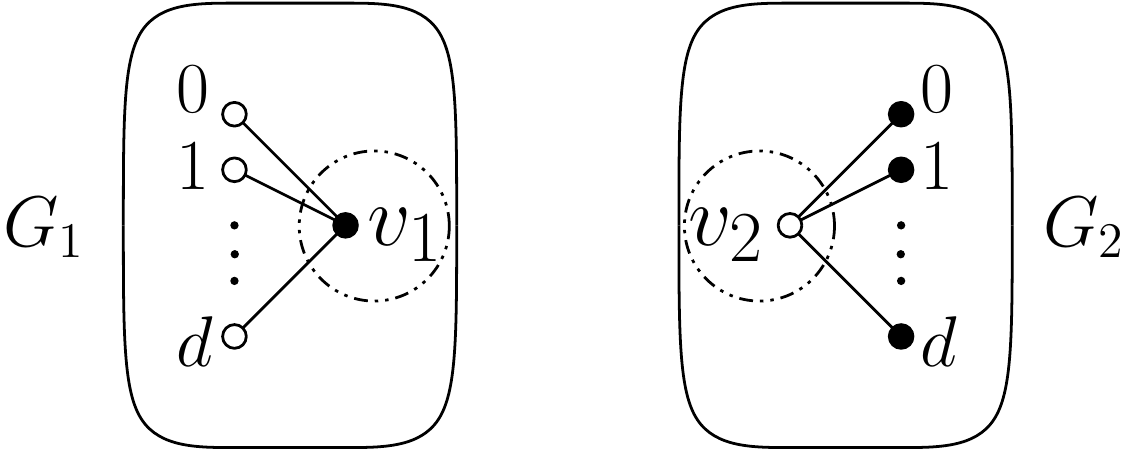} \quad \to \quad \graph_1\#_{\{v_1, v_2\}} \graph_2 = \includegraphics[scale=.4,valign=c]{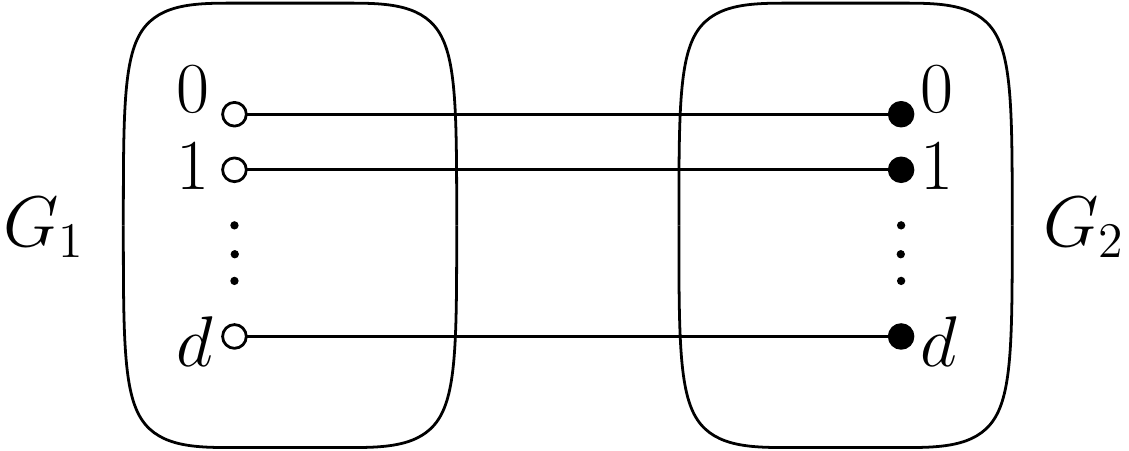}
\end{equation}
Then $\graph_1\#_{\{v_1, v_2\}} \graph_2$ is a colored graph for the connected sum of the manifolds represented by $\graph_1$ and $\graph_2$. We refer to \cite{Ferri1986} for multiple references to connected sums in the colored graph literature in topology. Notice that the sphere is a neutral element for the connected sum of manifolds.

\begin{lemma} \label{lemma:2CutConnected}
	If $\graph$ is a connected colored graph with colors $\{0, \dotsc, d\}$ which has a 2-bond formed by two edges $e, e'$ of color 0, then flipping $e$ with $e'$ gives two connected colored graphs $\graph_1, \graph_2$,
	\begin{equation}
		\graph = \includegraphics[scale=.4,valign=c]{2EdgeCutFinal.pdf} \quad \underset{\text{Flip}}{\to} \quad \graph_1 = \includegraphics[scale=.4,valign=c]{2EdgeCutLeftPart.pdf}, \quad \graph_2 = \includegraphics[scale=.4,valign=c]{2EdgeCutRightPart.pdf}
	\end{equation}
	and $\graph$ is a connected sum of $\graph_1$ and $\graph_2$.
\end{lemma}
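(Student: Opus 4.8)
The plan is to read the combinatorial $2$-bond as a geometric cut along an embedded $(d-1)$-sphere in the dual triangulation, and then to recognise the two capped pieces as $\graph_1$ and $\graph_2$; since the connected sum of colored graphs realises the connected sum of the dual manifolds (as recalled before the statement), this yields $\lvert\graph\rvert\cong\lvert\graph_1\rvert\#\lvert\graph_2\rvert$.

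\textbf{Connectedness of the pieces.} First I would dispatch the easy half. Since $\{e,e'\}$ is a bond of the connected graph $\graph$, removing both edges leaves exactly two connected components $H_L$ and $H_R$: a minimal edge-cut of a connected graph partitions the vertex set into two pieces, each inducing a connected subgraph. The flip replaces $e,e'$ by one color-$0$ edge joining the two $H_L$-endpoints and one joining the two $H_R$-endpoints; as these endpoints carry the two different vertex-colors, the new edges respect bipartiteness, and each of the four affected vertices still meets exactly one edge of every color. Hence $\graph_1:=H_L\cup\{\text{new color-}0\text{ edge}\}$ and $\graph_2$ are again connected colored graphs, which settles the first assertion and is consistent with $\lvert H_L\rvert+\lvert H_R\rvert=\lvert\graph\rvert$.

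\textbf{From the $2$-bond to a separating sphere.} The crux is to produce the sphere. I would first note that every bicolored cycle through $e$ also passes through $e'$: a cycle with colors $\{0,c\}$ alternates with color-$c$ edges, which never belong to the cut, so after crossing into $H_R$ along $e$ it can only return along a color-$0$ cut edge, and being simple it cannot reuse $e$, hence it uses $e'$. Since $e$ lies on exactly one $\{0,c\}$-cycle for each $c\in[1..d]$, there are exactly $d$ bicolored cycles through $e$, all passing through $e'$. Translating through \Cref{thm:ColoredGraphs}, the edges $e,e'$ are dual to two color-$0$ $(d-1)$-simplices $\simp_e,\simp_{e'}$, whose $(d-2)$-subsimplices labelled $\{0,c\}$ are precisely these bicolored cycles; the observation says exactly that $\simp_e$ and $\simp_{e'}$ share all their $(d-2)$-subsimplices, i.e.\ $\partial\simp_e=\partial\simp_{e'}$. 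Two $(d-1)$-simplices glued along their common boundary form an embedded $(d-1)$-sphere $S:=\simp_e\cup\simp_{e'}$, and removability of the bond shows $S$ separates the triangulation $\trisp$ dual to $\graph$ into the two sub-triangulations dual to $H_L$ and $H_R$, each a compact manifold with boundary $S$.

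\textbf{Cutting, capping, concluding.} I would then cut $\trisp$ along $S$ and cap each side with a $d$-ball; by the definition of the connected sum, the two capped closed manifolds are summands whose connected sum is $\trisp$. It remains to identify each capped piece with $\lvert\graph_1\rvert$, $\lvert\graph_2\rvert$. For the $H_L$-side I would realise the cap by a two-vertex bubble glued by its two free color-$0$ faces to the two copies of $S$; this bubble is a $d$-ball (two $d$-simplices glued along all colors but $0$), and contracting it via \eqref{TopologicalMoves} replaces it by a single color-$0$ edge joining the two former cut-endpoints in $H_L$, i.e.\ produces $\graph_1$. The same on the $H_R$-side produces $\graph_2$, and matching to the graph-level $\graph_1\#_{\{v_1,v_2\}}\graph_2$ with $v_1,v_2$ the dipole vertices finishes the argument.

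\textbf{Main obstacle.} The delicate step is this last identification: the formal connected-sum graph glues along all $d+1$ colors and deletes one vertex from each side, whereas $\graph$ is joined to its pieces only by the two color-$0$ cut edges. The real work is to check that the surplus $d-1$ crossing edges and two vertices assemble into a contractible $d$-dipole — a trivial $S^d$-summand — and that its removal is an instance of the topology-preserving moves \eqref{TopologicalMoves}; everything else is bookkeeping with the dictionary of \Cref{thm:ColoredGraphs}.
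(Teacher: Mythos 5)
Your proof is correct, and at its core it relies on the same device as the paper's: the two-vertex melonic dipole (two $d$-simplices glued along all colors except $0$) is the ball that mediates between the two-edge gluing of $\graph$ and the $(d+1)$-edge gluing of a formal connected sum. The paper's own proof is a two-line graph surgery entirely on the dual side: insert the dipole on the new color-$0$ edge of $\graph_1$ (topology-preserving), then check that deleting the black dipole vertex of $\tilde{\graph}_1$ and the white endpoint of the new edge of $\graph_2$, and matching the hanging edges color by color, reproduces $\graph$ on the nose — so $\graph=\tilde{\graph}_1\#_{\{v_1,v_2\}}\graph_2$ literally as colored graphs, and one concludes from the cited fact that graph connected sums realize topological connected sums. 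You instead work on the triangulation side: you verify that every $\{0,c\}$-cycle through $e$ also passes through $e'$, so that the two dual $(d-1)$-simplices form the standard two-simplex $(d-1)$-sphere separating $\trisp$, and then cut and cap. This buys an explicit picture of the separating sphere, which the paper leaves implicit inside the realization theorem, at the cost of extra verification (and of having to be slightly careful that colored triangulations are only pseudomanifolds in general, a point the paper is equally relaxed about). Note that your closing ``main obstacle'' is already resolved by your own capping step: the cap \emph{is} the dipole, its removal \emph{is} the move \eqref{TopologicalMoves} with $q=d$, and the apparent mismatch between the two crossing edges of $\graph$ and the $d+1$ crossing edges of the formal connected sum disappears precisely because $v_1$ is chosen to be a dipole vertex — which is the whole content of the paper's proof. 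One assertion you should justify rather than state: the flip respects bipartiteness because the two cut edges necessarily have one white and one black endpoint in $H_L$; this follows by counting the color-$c$ edges ($c\neq 0$) internal to $H_L$, which forces $H_L$ to have equally many white and black vertices.
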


\begin{proof}
	First add to $e_1$ two vertices connected by all colors except 0. This preserves the topology. Then choose $v_1$ and $v_2$ to perform the connected sum $\tilde{\graph}_1\#_{\{v_1, v_2\}} \graph_2$ as follows 
	\begin{equation}
		\tilde{\graph}_1 = \includegraphics[scale=.4,valign=c]{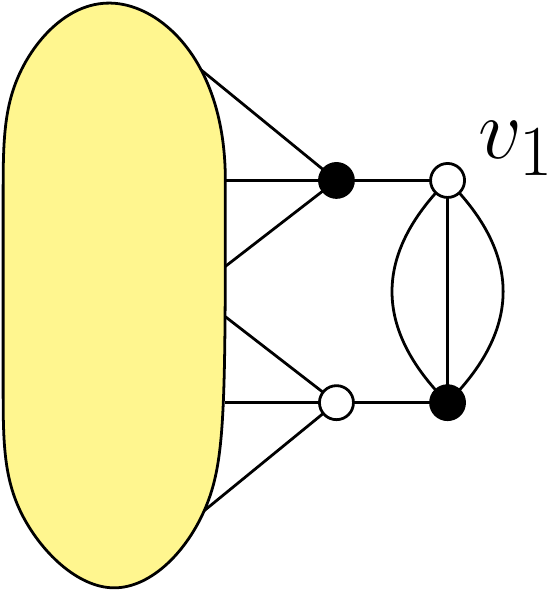} \quad \graph_2 = \includegraphics[scale=.4,valign=c]{2EdgeCutRightPart.pdf} \quad \to \quad \tilde{\graph}_1\#_{\{v_1, v_2\}} \graph_2 = \includegraphics[scale=.4,valign=c]{2EdgeCutFinal.pdf} = \graph
	\end{equation}
\end{proof}

In \Cref{thm:2PtFunctionPlanarBubbles} we saw that gluings of planar bubbles which maximize the number of bicolored cycles are performed using 2-bonds. From the above Lemma, we thus conclude that those gluings are topologically connected sums. Therefore, the remaining part of the proof of \Cref{thm:Topology} is to prove the statement for colored graphs with a single planar bubble. This shares similarities with the proof of \Cref{thm:1Planar}.

If $\vec{\bb}$ itself has a 2-edge-cut, it is easy to separate it into two (planar) bubbles and use \Cref{thm:2PtFunctionPlanarBubbles}. If $\graph\in\cG^{\max}(\vec{\bb})$ has an edge of color 0 joining two ends to an edge of $\vec{\bb}$, then the move \eqref{TopologicalMoves} can be used. Therefore, after treating some simpler cases, it is necessary to consider the case where $\vec{\bb}$ has no parallel edges, hence no faces of degree 2 in its canonical embedding, and from \Cref{thm:FacesPlanarBubble}, we can consider a face of degree 4, such that all edges of color 0 incident to its vertices are distinct. 

We then perform the same sequence of moves as in Equations \eqref{FaceDegree4TwoFlips}-\eqref{FaceDegree4Removed}
\begin{equation}
	\begin{aligned}
		G = \includegraphics[scale=.4,valign=c]{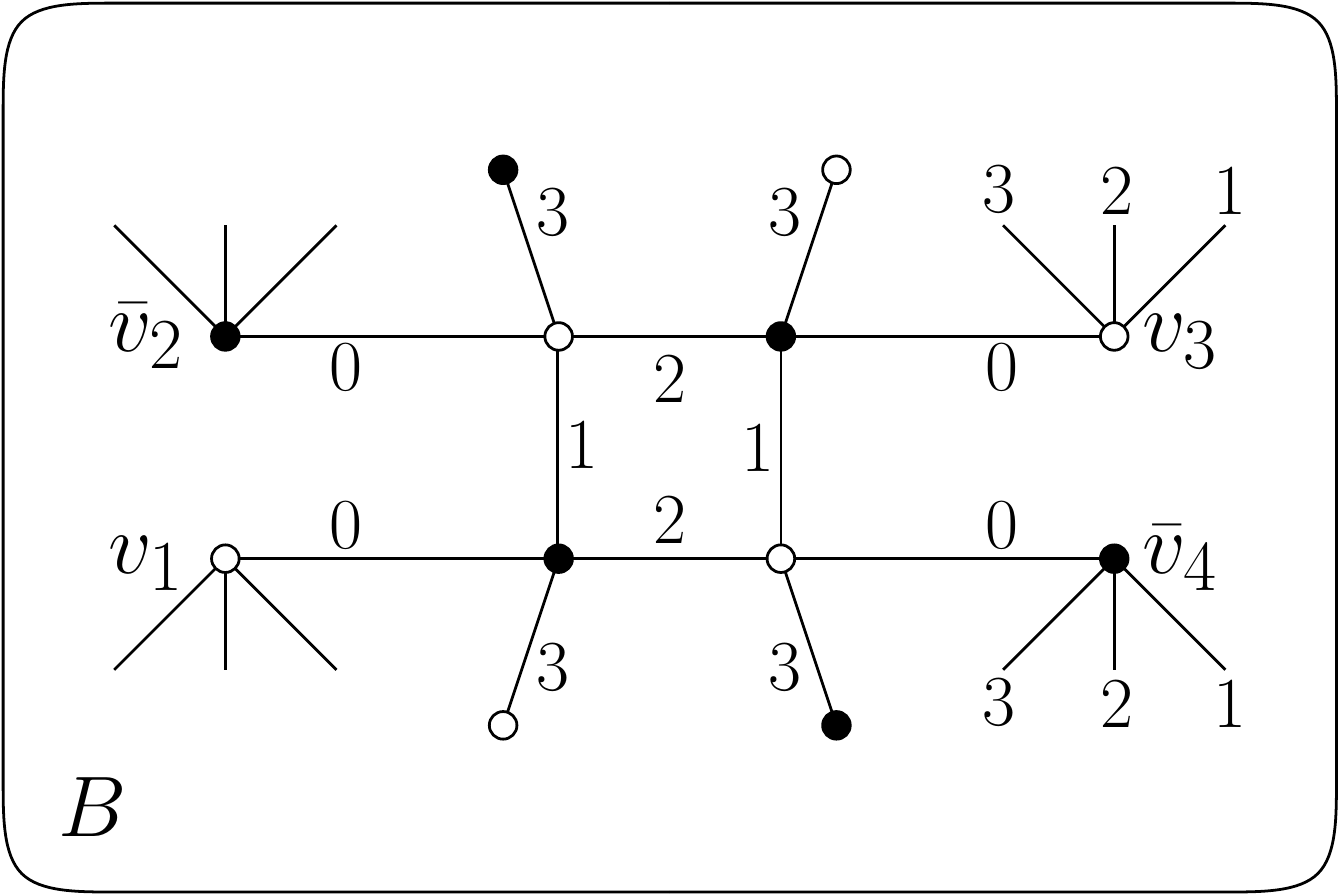}
		\to\quad &G_{\parallel} = \includegraphics[scale=.4,valign=c]{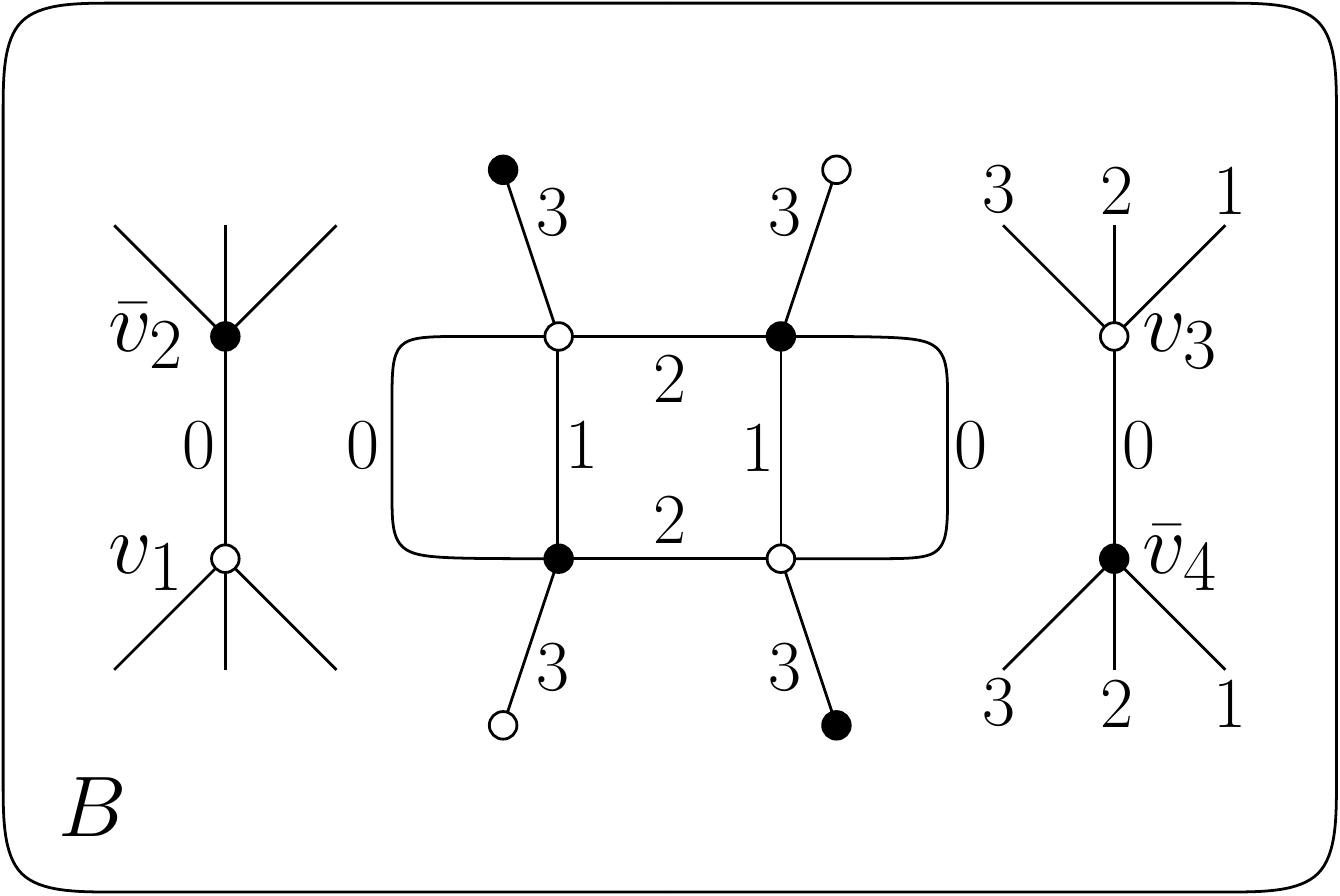}\\
		\to\quad &G' = \includegraphics[scale=.4,valign=c]{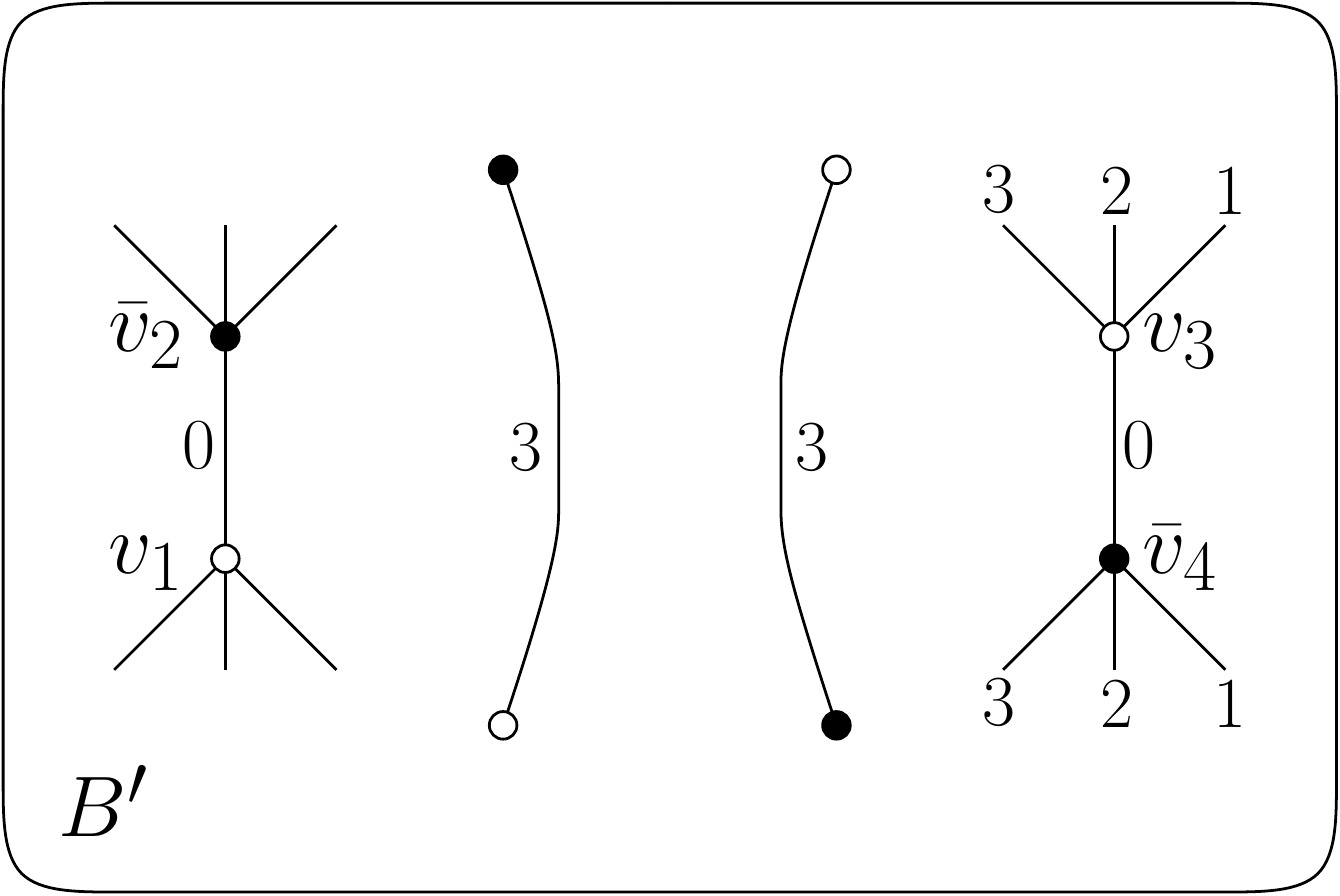}
	\end{aligned}
\end{equation}
Throughout this sequence the number of bicolored cycles is controlled, in particular
\begin{equation}
	G\in\cG^{\max}(\vec{\bb}) \ \Rightarrow G_\parallel\in\cG^{\max}(\vec{\bb}), \quad G'\in\cG^{\max}(\vec{\bb}')
\end{equation}
From the induction hypothesis, $G'$ is thus a 3-sphere. Moreover, the move from $G_\parallel$ to $G'$ is topological. In remains to show that $G\to G_\parallel$ is also topological, which is more difficult. It can be done via Euler characteristic arguments to control the number of connected components of the subgraphs $G(0, 2, 3)$ and their genera.

\begin{remark}
	We have thus found that $\graph\in\cG^{\max}(\vec{\bb})$ for a planar bubble is a 3-sphere. Although we do not know how to fully characterize those graphs, we are able to use combinatorial arguments to identify its topology.
\end{remark}

\chapter{Universality results in higher dimensions} \label{sec:Universality2}
\section{Bijection with colored stuffed maps} \label{sec:Bijection}
\subsection{Principle} We sketch here a bijection between colored graphs with prescribed bubbles and a colored generalization of stuffed maps. The formal details can be found in \cite{StuffedColoredMaps}. This bijection can be seen as an extension of Tutte's one between quadrangulations and bipartite maps. It is simply based on the following idea: if one has a cyclically ordered set of elements $(o_1, \dotsc, o_n)$, then one can represent the elements as $n$ vertices, and the cyclic order either as an oriented cycle on those vertices, or a as star-vertex connected to the vertices with the correct cyclic order,
\begin{equation} \label{CycleToMap}
	\includegraphics[scale=.5,valign=c]{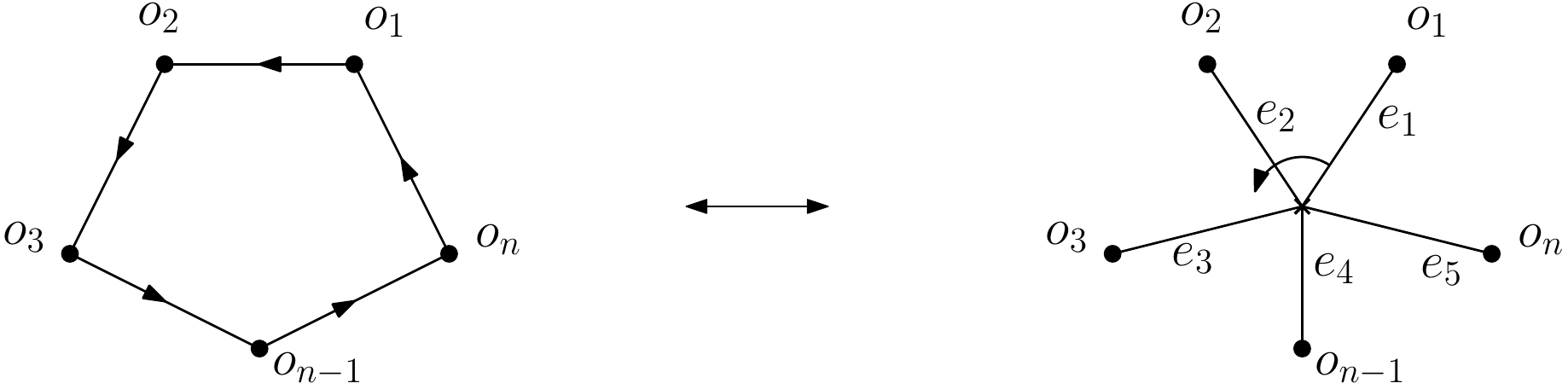}
\end{equation}
One can associate the topology of the disc to the oriented cycle on the left, by defining the interior as the region on the left of the oriented edges. Then \eqref{CycleToMap} is just the somewhat trivial equivalence between polygons and star-vertices.

\subsection{Pairing} Consider a bubble $\vec{\bb}$ with $2p$ labeled vertices. To be able to apply the above principle, we first need a pairing on $\bb$, i.e. a permutation $\pi\in \mathfrak{S}_p$. If $v$ is a white vertex, denote $\pi(v)$ its black partner. Say that $v$ and $\pi(v)$ are connected by $d-p$ edges for $p\in[0..d]$. There are $p$ edges with colors $c_1, \dotsc, c_p$ connecting $v$ to other vertices, and $p$ other edges of the same colors connecting $\pi(v)$ to other vertices. We merge $v$ and $\pi(v)$ together into a blue vertex, remove the $d-p$ edges which connected them, and orient the $2p$ edges ingoing if they were connected to $\pi(v)$ and outgoing if they were connected to $v$, while retaining their colors,
\begin{equation} \label{PairContraction}
	\includegraphics[scale=.6,valign=c]{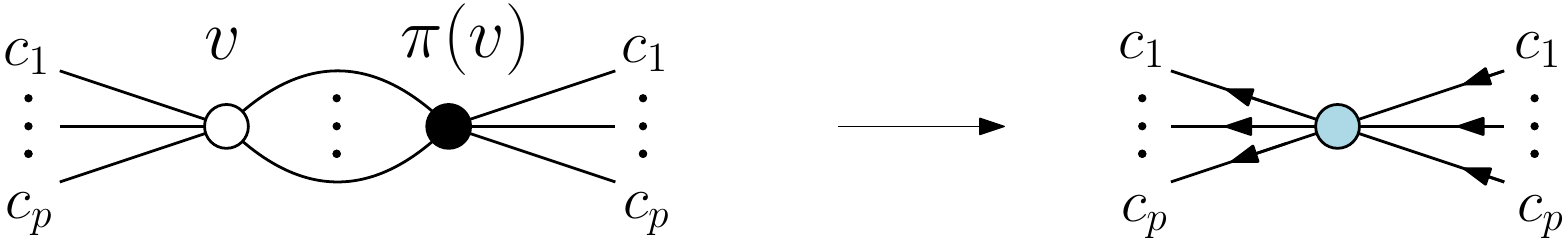}
\end{equation}

\subsection{Paired bubble} Applying \eqref{PairContraction} to every pair $(v, \pi(v))$, one obtains a \emph{paired bubble} $(\vec{\bb},\pi)$, which is an oriented graph with colored edges. Its subgraph of color $c\in[1..d]$ is the subgraph $(\vec{\bb}, \pi, c)$ obtained by removing all edges except those of color $c$. Clearly $(\vec{\bb}, \pi, c)$ is a disjoint union of $n$ oriented cycles of length $l_1,\dotsc, l_n$. They can be assembled into a surface of genus 0 with $n$ boundaries of length $l_1, \dotsc, l_n$. In this sense, $(\vec{\bb}, \pi, c)$ is a form of elementary cell of higher topology as those considered in stuffed maps. The paired bubble then has $d$ surfaces of higher topologies (one for each color; it can be empty for some) and the surfaces with different colors can share vertices. There is no order between edges of different colors at shared vertices, so although they are glued to the same vertices, they pivot with respect to each other.

It will be convenient to turn each cycle of color $c$ into a star-vertex using \eqref{CycleToMap}. An example of the process turning a bubble with a pairing into the paired bubble is shown in Figure \ref{fig:BubbleBijection}.
\begin{figure}
	$\bb=$\includegraphics[scale=.4,valign=c]{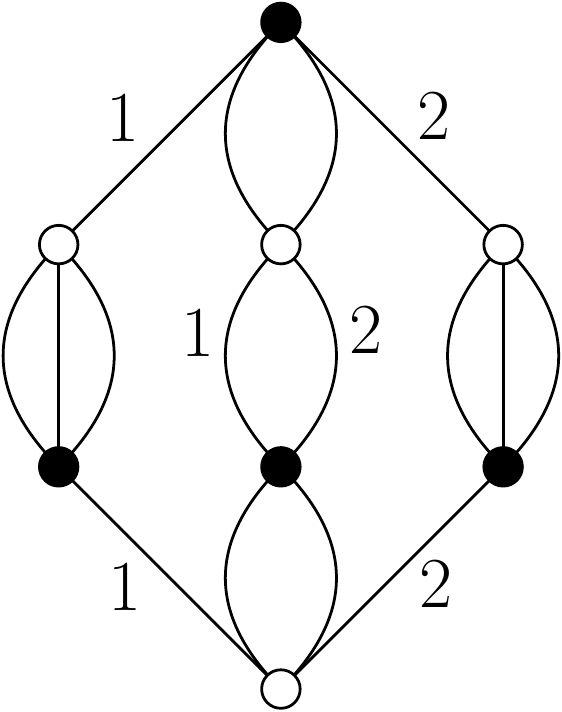} $\overset{\pi}{\to}$ \includegraphics[scale=.4,valign=c]{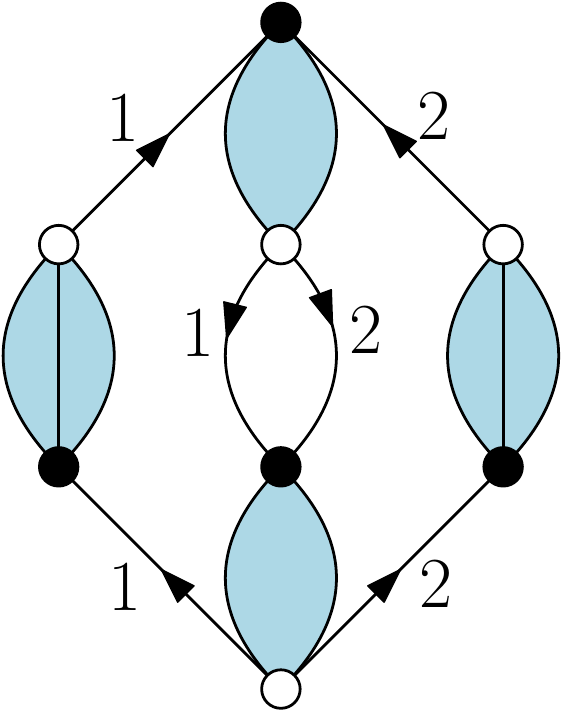} $\overset{\text{Merging}}{\to}$ \includegraphics[scale=.4,valign=c]{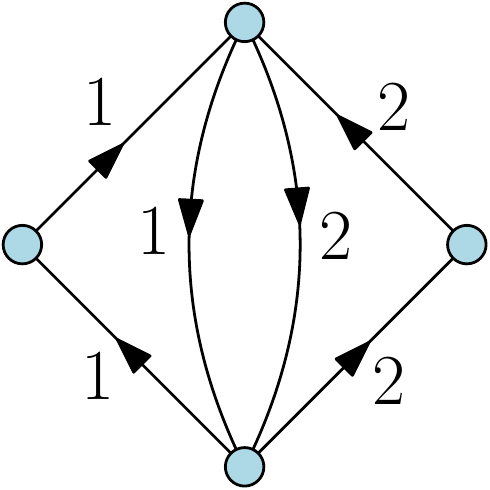} $\overset{\text{star-vertices}}{\to}$ \includegraphics[scale=.4,valign=c]{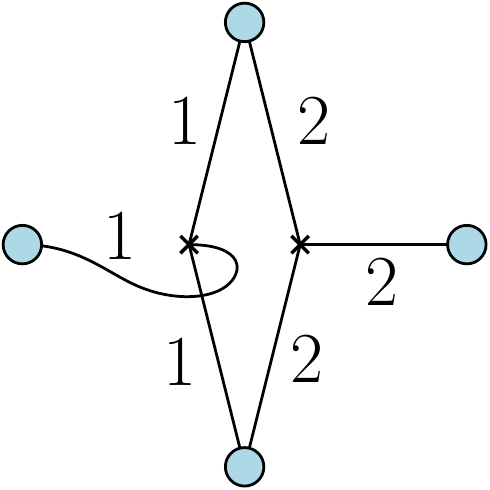}
	\caption{\label{fig:BubbleBijection} We start from a bubble with 4 colors, choose a pairing $\pi$ represented in blue, merge the vertices of each pair into blue vertices with oriented edges, then represent the oriented cycles with star-vertices, one for each cycle of each color.}
\end{figure}

\subsection{Gluing of paired bubbles} Let $\cG_{n_1, \dotsc, n_N}(\vec{\bb}_1, \dotsc, \vec{\bb}_N)$ the set of connected colored graphs built from $n_i$ copies of the bubbles $\vec{\bb}_i$ with labeled vertices. Consider $\graph\in\cG_{n_1, \dotsc, n_N}(\vec{\bb}_1, \dotsc, \vec{\bb}_N)$ and apply the above construction to all copies of $\vec{\bb}_1, \dotsc, \vec{\bb}_N$. Here the same pairing $\pi_i$ is used for all copies of $\vec{\bb}_i$. This construction takes care of the edges of all colors in $\graph$ except those of color 0. We thus apply the same technique as we did for every color of every bubble, except we use the opposite convention to orient the edges, i.e. from black to white.

After merging the vertices of every pair in $\graph$ into blue vertices, and orienting the remaining edges (from white to black for the colors $c\in[1..d]$ and from black to white for the color 0), the subgraph with color 0 only is also a disjoint union of oriented cycles. We turn them into star-vertices, as illustrated in Figure \ref{fig:CycleColor0}. We use crosses for the star-vertices of colors $c\in[1..d]$ and squares for those of color 0.
\begin{figure}
	\includegraphics[scale=.6,valign=c]{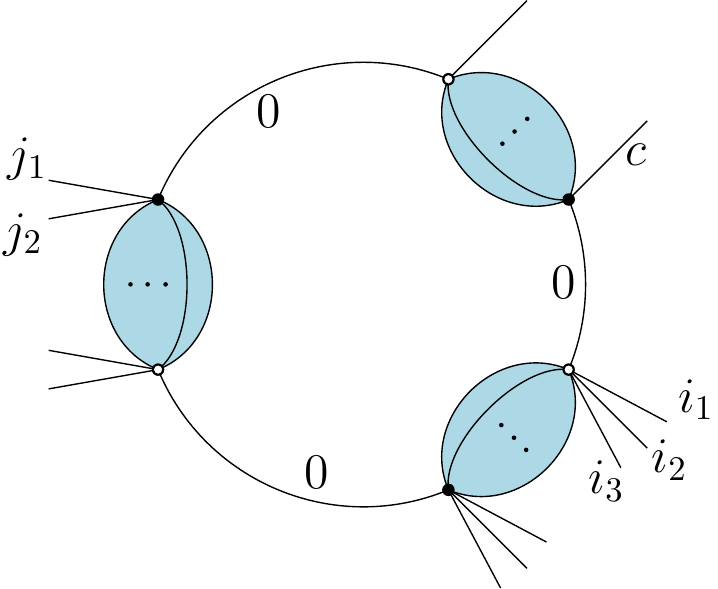} $\overset{\pi\text{ merging}}{\to}$ \includegraphics[scale=.6,valign=c]{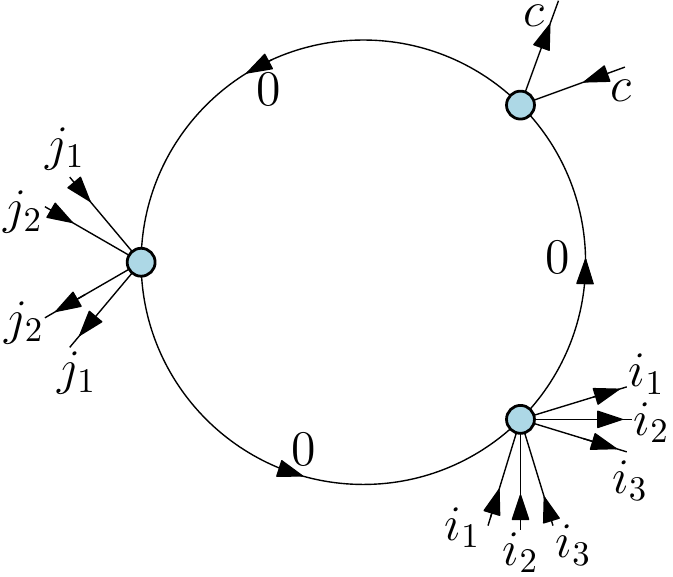} $\overset{\text{star-vertices}}{\to}$ \includegraphics[scale=.6,valign=c]{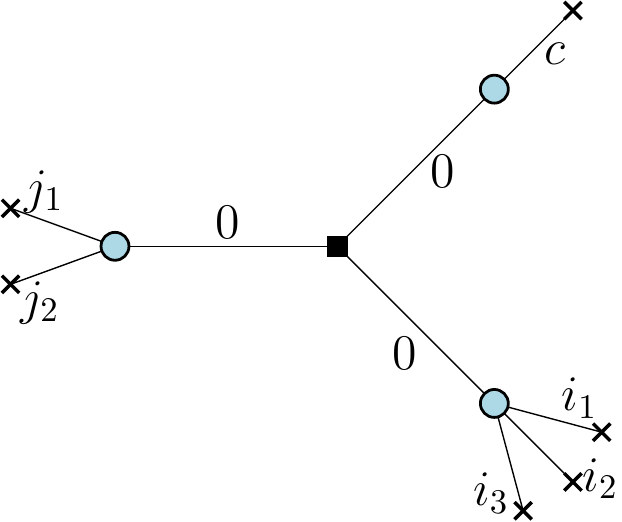}
	\caption{\label{fig:CycleColor0}A cycle made of color 0 and paired vertices (in light blue), then after merging the vertices of every pair into blue vertices and orienting edges from white to black, then after transforming all oriented cycles of each color into a star-vertex.}
\end{figure}

\subsection{The bijection} Let $\cM_{n_1, \dotsc, n_N}((\vec{\bb}_1, \pi_1), \dotsc, (\vec{\bb}_N, \pi_N))$ be the set of connected objects built from $n_i$ copies of the paired bubbles $(\vec{\bb}_i, \pi_i)$ connected by square-vertices of color 0 like in Figure \ref{fig:CycleColor0}. Although they are not maps, because the colors $[1..d]$ do not see each other, we will call them colored maps by simplicity when there is no possible confusion. In fact, for $\m\in\cM_{n_1, \dotsc, n_N}((\vec{\bb}_1, \pi_1), \dotsc, (\vec{\bb}_N, \pi_N))$, the restriction $\m_c$ to edges of colors 0 and $c\in[1..d]$ is a (possibly not connected) map. Moreover one has the following theorem.
\begin{theorem}{}{StuffedMapsBijection} \cite{StuffedColoredMaps}
	There is a bijection between $\cG_{n_1, \dotsc, n_N}(\vec{\bb}_1, \dotsc, \vec{\bb}_N)$ and $\cM_{n_1, \dotsc, n_N}((\vec{\bb}_1, \pi_1), \dotsc, (\vec{\bb}_N, \pi_N))$ which maps the bicolored cycles with colors $\{0,c\}$, for $c\in[1..d]$, to the faces of $\m_c$.
\end{theorem}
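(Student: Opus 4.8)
The plan is to exhibit the forward map explicitly, construct a candidate inverse, verify they undo each other move by move, and finally track the $\{0,c\}$-bicolored cycles through the construction.

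\textbf{The forward map.} Given $\graph \in \cG_{n_1,\dots,n_N}(\vec{\bb}_1,\dots,\vec{\bb}_N)$, I would apply to each copy of $\vec{\bb}_i$ its fixed pairing $\pi_i$. Because the vertices are labeled and $\pi_i$ is a fixed permutation, the collection of pairs $(v,\pi_i(v))$ and the colors of the $d-p$ edges joining them inside $\vec{\bb}_i$ are determined; one merges each pair into a blue vertex as in \eqref{PairContraction}, deletes those $d-p$ edges, and orients the surviving edges so that colors $c\in[1..d]$ run white-to-black and color $0$ runs black-to-white. As recalled in the discussion of paired bubbles, each monochromatic subgraph $(\vec{\bb}_i,\pi_i,c)$ is then a disjoint union of oriented cycles, and the same holds globally for the color-$0$ subgraph of $\graph$. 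Replacing each oriented cycle by a star-vertex via \eqref{CycleToMap} --- crosses for $c\in[1..d]$, squares for color $0$ --- yields an object built from $n_i$ copies of the paired bubbles $(\vec{\bb}_i,\pi_i)$ joined through color-$0$ squares, i.e.\ an element of $\cM_{n_1,\dots,n_N}((\vec{\bb}_1,\pi_1),\dots,(\vec{\bb}_N,\pi_N))$. Connectedness is preserved, since merging vertices and replacing cycles by incident star-vertices does not alter the connected components of the incidence structure.

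\textbf{The inverse map.} Each move above is reversible, and its inverse depends only on the fixed data $(\vec{\bb}_i,\pi_i)$. Starting from $\m$, the inverse of \eqref{CycleToMap} reads off from each cross-vertex of color $c$ (resp.\ each square-vertex) the oriented monochromatic cycle it encodes, recovering all edges of colors $c\in[1..d]$ and of color $0$ together with their orientations. Each blue vertex is then split back into a white vertex $v$ and a black vertex $\pi_i(v)$, the orientation convention telling us which incident half-edges belong to $v$ and which to $\pi_i(v)$; finally the $d-p$ deleted edges are reinserted between $v$ and $\pi_i(v)$ in the colors prescribed by $\vec{\bb}_i$. This produces a properly $(d+1)$-edge-colored bipartite graph, and since the two procedures undo each other step by step, they are mutually inverse bijections between $\cG_{n_1,\dots,n_N}$ and $\cM_{n_1,\dots,n_N}$.

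\textbf{The face correspondence.} Fix $c\in[1..d]$ and consider the restriction $\m_c$ to the star-vertices of colors $0$ and $c$ together with the blue vertices. A $\{0,c\}$-bicolored cycle of $\graph$ is a closed alternating sequence of color-$c$ and color-$0$ edges; under the construction each color-$c$ edge becomes a corner at a cross-vertex and each color-$0$ edge a corner at a square-vertex, while the two endpoints it visits are carried into consecutive blue vertices. Tracing the boundary of a face of $\m_c$ --- alternately leaving a blue vertex along a color-$c$ edge through a cross-vertex and returning along a color-$0$ edge through a square-vertex --- then reproduces exactly one $\{0,c\}$-bicolored cycle, and conversely. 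The \emph{main obstacle} is precisely to verify that the orientation conventions (white-to-black for color $c$, black-to-white for color $0$) make this face-tracing coherent: one must check at each of the three vertex types that the cyclic order installed by \eqref{CycleToMap} glues the corners so that the face walks of $\m_c$ close up exactly along bicolored cycles, with no over- or under-counting. Once this local compatibility is established, the bijection between faces of $\m_c$ and $\{0,c\}$-bicolored cycles of $\graph$ follows, completing the proof.
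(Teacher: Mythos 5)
Your proposal follows essentially the same route as the paper: the forward map is exactly the construction of Section \ref{sec:Bijection} (merge each pair via the fixed $\pi_i$, orient colors $1..d$ white-to-black and color $0$ black-to-white, replace each monochromatic oriented cycle by a star-vertex), the inverse undoes these steps, and the $\{0,c\}$-cycles become the faces of $\m_c$. The local corner-gluing check you flag as the remaining obstacle is precisely the part the paper itself leaves to the reference \cite{StuffedColoredMaps}, so your level of detail matches the paper's own treatment.
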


\subsection{Examples}
\subsubsection{Cyclic bubbles} Consider a family of bubbles with cyclic symmetry which generalize the quartic bubbles. They are given by parallel edges with colors $c_1, \dotsc, c_q$, and parallel edges of the complementary colors, as shown in Figure \ref{fig:NecklaceBijection}. We pair the vertices connected by the edges with colors $c_1, \dotsc, c_q$. For $i=1, \dotsc, q$, $(\bb, \pi)$ has a single oriented cycle of color $c_i$. Obviously, they are all the same cycle, and we represent them as a single cycle whose edges carry the color set $\colset = \{c_1, \dotsc, c_q\}$. We then turn it into one star-vertex, all edges connecting to the blue vertices having color set $\colset$. This is shown in Figure \ref{fig:NecklaceBijection}.

If we had applied the bijection strictly, we would have $q$ star-vertices and each blue vertex would be incident to $q$ edges of different colors. Because there is no order between edges of different colors, it makes sense to pack them into ``super-edges'' which carry the color set $\colset$ and which are incident to a single star-vertex.
\begin{figure}
	\includegraphics[scale=.7,valign=c]{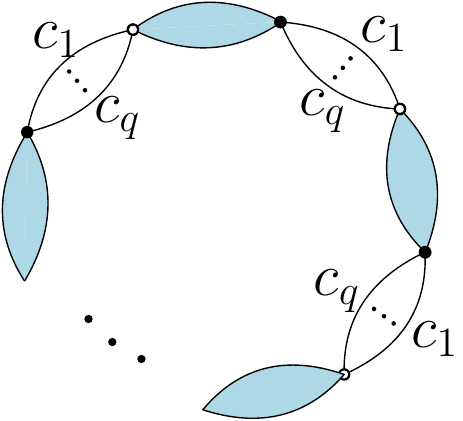} $\overset{\text{Merging}}{\to}$ \includegraphics[scale=.7,valign=c]{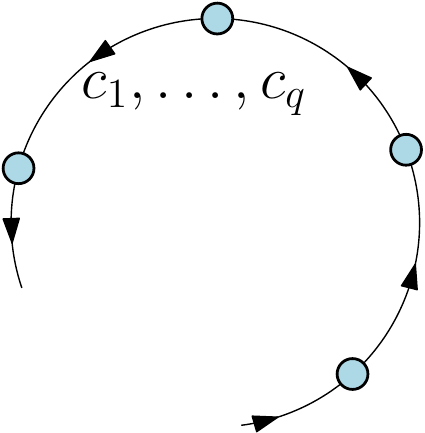} $\overset{\text{Star-vertex}}{\to}$ \includegraphics[scale=.7,valign=c]{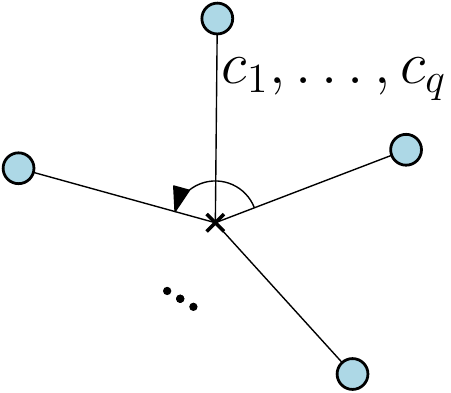}
	\caption{\label{fig:NecklaceBijection}On the left is a cyclic bubble with a pairing in blue. After merging, all cycles of colors $c_i$ are the same and can be represent as a single cycle with color set $\{c_1, \dotsc, c_q\}$. This single cycle can then be turned into a single star-vertex connected to blue vertices by edges carrying the color set $\{c_1, \dotsc, c_q\}$.} 
\end{figure}

We denote $\bb_p(\colset)$ the bubble with $p$ pairs and $\colset = \{c_1, \dotsc, c_q\}$. As a corollary of \Cref{thm:StuffedMapsBijection}
\begin{corollary}{}{CyclicBubblesBijection}
	There is a bijection between the set of connected rooted colored graphs with $n_p(\colset)$ copies of the bubbles $\bb_p(\colset)$ and the set of connected rooted maps $\m$ made of $n_p(\colset)$ star-vertices of degree $p$, connected to blue vertices only, with edges of colors $\colset$, and whose square-vertices have arbitrary degree and are connected to blue vertices only. The bijection maps bicolored cycles of colors $\{0,c\}$ to faces of $\m_c$.
\end{corollary}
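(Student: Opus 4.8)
The plan is to obtain the corollary as a direct specialization of the general bijection of \Cref{thm:StuffedMapsBijection}, the only genuine work being to identify the shape a paired bubble $(\bb_p(\colset),\pi)$ takes when $\pi$ is the pairing that matches the two endpoints of each $\colset$-colored parallel edge. First I would fix, once and for all, this pairing $\pi$ on every copy of $\bb_p(\colset)$: since $\bb_p(\colset)$ consists of $p$ pairs of vertices joined by the parallel edges of colors $c_1,\dots,c_q$ and by the parallel edges of the complementary colors, pairing the endpoints of the $\colset$-edges is canonical, and the $d-q$ complementary-colored edges are exactly the ones removed in the merging step \eqref{PairContraction}. With this choice the hypotheses of \Cref{thm:StuffedMapsBijection} are met, so the general construction already produces a bijection onto $\cM_{\dots}((\bb_p(\colset),\pi),\dots)$ sending bicolored cycles of colors $\{0,c\}$ to faces of $\m_c$.

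The heart of the matter is then to show that the paired bubble $(\bb_p(\colset),\pi)$ is precisely a single star-vertex of degree $p$ whose incident super-edges carry the full color set $\colset$. I would argue this colorwise. After merging each matched pair into a blue vertex and orienting the surviving edges from white to black, the subgraph $(\bb_p(\colset),\pi,c_i)$ retaining a single color $c_i\in\colset$ is, by the cyclic structure of $\bb_p(\colset)$, one oriented cycle visiting all $p$ blue vertices. The key observation is that this cyclic sequence of blue vertices is \emph{independent of $i$}: the $q$ colors $c_1,\dots,c_q$ run parallel through the bubble and join the same pairs of original vertices in the same cyclic order, so after merging they trace the identical cycle on the blue vertices. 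Consequently the $q$ star-vertices produced by a strict application of the bijection have the same neighbourhood and the same cyclic order, and may be packed into a single star-vertex with super-edges labelled by $\colset$ without any loss of information; conversely, such a packed star-vertex unpacks uniquely into the $q$ parallel monochromatic cycles. This packing is the only step beyond \Cref{thm:StuffedMapsBijection}, and it is exactly where the cyclic symmetry of $\bb_p(\colset)$ is used.

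Finally I would check the remaining bookkeeping, which is routine. The color-$0$ edges of $\graph$ are untouched by the packing, so the square-vertices retain arbitrary degree and connect only to blue vertices, as in the general construction. Rootedness transports across the bijection because the root (a marked white vertex, equivalently a marked color-$0$ edge) is preserved by merging and by the star-vertex replacement. The face correspondence $\{0,c\}\leftrightarrow \m_c$ is inherited verbatim from \Cref{thm:StuffedMapsBijection}, the only change being that the edges of $\m_c$ now emanate from the packed star-vertices. The main obstacle, as indicated above, is purely the verification that the $q$ monochromatic cycles coincide as cyclic sequences so that the packing is well-defined and invertible; once that is established, the corollary follows immediately.
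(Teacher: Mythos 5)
Your proposal matches the paper's argument: the corollary is obtained by specializing \Cref{thm:StuffedMapsBijection} to the pairing along the $\colset$-colored parallel edges, observing that the $q$ monochromatic oriented cycles all coincide on the blue vertices, and packing the resulting $q$ star-vertices into a single one with super-edges labelled by $\colset$. Your extra care in justifying why the cycles coincide and why the packing is invertible is a welcome elaboration of what the paper treats as ``obvious'', but the route is the same.
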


\subsubsection{The cube} The colored cube bubble is the dual to the octahedron, see Figure \ref{fig:Octahedron}. Let us pair the vertices which are connected by the edges of color 3, see Figure \ref{fig:OctahedronBijection}. After merging the paired vertices, one finds two oriented cycles of color 1 and two of color 2, all of length 2. Therefore the star-vertices are bivalent and can be removed, as in Figure \ref{fig:OctahedronBijection}. \Cref{thm:StuffedMapsBijection} gives the following corollary.
\begin{figure}
	\includegraphics[scale=.7,valign=c]{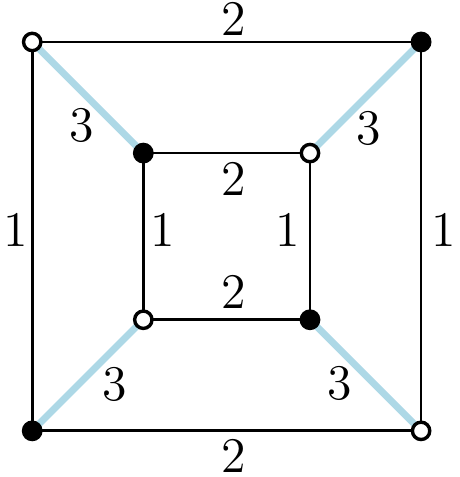} $\overset{\text{Merging}}{\to}$ \includegraphics[scale=.7,valign=c]{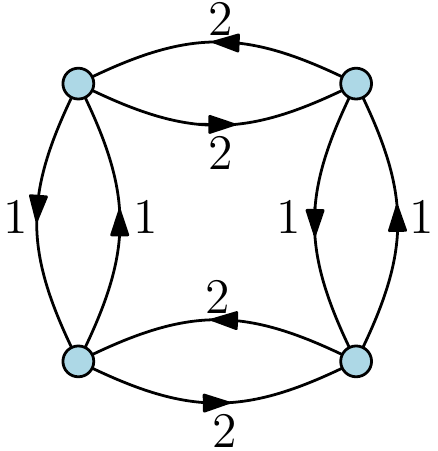} $\overset{\text{Star-vertex}}{\to}$ \includegraphics[scale=.7,valign=c]{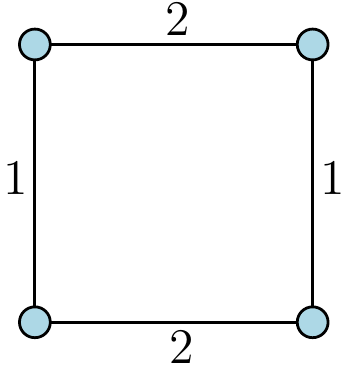}
	\caption{\label{fig:OctahedronBijection}On the left is the tricolored bubble dual to the octahedron, equipped with a pairing. After merging, all cycles of color 1 and 2 have length 2, and as star-vertices have degree 2, hence can be represented as edges.} 
\end{figure}

\begin{corollary}{}{}\cite{Octahedra}
	There is a bijection between connected rooted colored gluings of $n$ octahedra and maps made of $n$ squares with colors 1, 2 and blue vertices, and square-vertices of arbitrary degree connected to blue vertices only. The bicolored cycles with colors $\{0,1\}$, $\{0,2\}$ and $\{0,3\}$ are mapped to the faces of $\m_1$, $\m_2$ and to square-vertices respectively.
\end{corollary}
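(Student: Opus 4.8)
The plan is to derive this corollary directly from \Cref{thm:StuffedMapsBijection}, the general stuffed-map bijection, specialized to the single-bubble case $N=1$ with $d=3$, where $\vec{\bb}$ is the tricolored cube dual to the octahedron (Figure \ref{fig:Octahedron}) and $\pi$ is the pairing that matches each white vertex to the black vertex joined to it by its edge of color $3$. First I would make this pairing explicit and run the contraction \eqref{PairContraction}: each pair $(v,\pi(v))$ is joined by a single edge of color $3$, so merging them into a blue vertex erases exactly that color-$3$ edge and leaves the two edges of colors $1$ and $2$ at $v$ (outgoing) and at $\pi(v)$ (ingoing). This produces the four blue vertices of the paired bubble $(\vec{\bb},\pi)$.

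The key computation is to determine the oriented cycles of each color in $(\vec{\bb},\pi)$. Labelling the eight cube vertices by binary strings (color $c$ joining strings differing in bit $c$, even-weight strings white), the color-$3$ pairing groups the blue vertices as $\{000,001\},\{010,011\},\{100,101\},\{110,111\}$, and one checks by inspection that the color-$1$ subgraph of the merged graph splits into two oriented $2$-cycles, and likewise for color $2$. Since every such cycle has length $2$, the color-$1$ and color-$2$ star-vertices created by \eqref{CycleToMap} are bivalent; a bivalent star-vertex can be smoothed into an ordinary edge without altering the underlying combinatorial map or its set of faces. I would verify that this smoothing turns each paired cube into a $4$-cycle on its four blue vertices whose edges alternate colors $1$ and $2$, i.e.\ exactly the ``square with colors $1,2$'' of the statement, as in Figure \ref{fig:OctahedronBijection}.

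It then remains to track the faces. For $c=1,2$, \Cref{thm:StuffedMapsBijection} already sends the $\{0,c\}$-bicolored cycles of $\graph$ to the faces of $\m_c$, and this survives the bivalent smoothing since that operation is face-preserving. The subtle point, which I expect to be the main obstacle, is the role of color $3$: because $3$ is the pairing color, every color-$3$ edge is internal to a blue vertex and is erased by \eqref{PairContraction}, so there is \emph{no} color-$3$ cross-vertex and the naive restriction $\m_3$ degenerates. I would resolve this by observing that a $\{0,3\}$-bicolored cycle of $\graph$ alternates color-$0$ edges with color-$3$ edges, the latter being precisely the pairing edges; contracting those pairing edges into blue vertices collapses such a cycle into a pure color-$0$ cycle of the merged graph, which is exactly what \eqref{CycleToMap} turns into a square (color-$0$ star) vertex. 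Hence the $\{0,3\}$-cycles correspond to the square-vertices rather than to faces of any $\m_c$. The delicate part of this last step is checking that the contraction gives a genuine bijection between $\{0,3\}$-cycles and square-vertices (no collapse of distinct cycles, and every square-vertex so arising), though this ultimately follows from the injectivity and surjectivity already established for the general bijection in \Cref{thm:StuffedMapsBijection}.
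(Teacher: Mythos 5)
Your proposal is correct and follows essentially the same route as the paper: pair the cube's vertices along the color-$3$ edges, apply the general bijection of \Cref{thm:StuffedMapsBijection}, observe that the color-$1$ and color-$2$ oriented cycles of the paired bubble all have length $2$ so their star-vertices are bivalent and can be smoothed into the edges of a bicolored square, and note that the $\{0,3\}$-bicolored cycles, having lost their color-$3$ (pairing) edges, become the pure color-$0$ cycles encoded by the square-vertices. Your extra care about the color-$3$ case just makes explicit what the paper leaves implicit in its one-line appeal to the general theorem.
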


\subsubsection{Other examples} Quartic bubbles will be treated in the section below. We refer to \cite{StuffedColoredMaps, LionniThurigen, Lionni} for more examples at $d=3, 4$, such as bipyramids which generalize the octahedron.

\section{Multicritical regime in even dimensions} \label{sec:EvenDim}
The bijection we have described has proved useful in several cases in identifying $\cG^{\max}_{n_1, \dotsc, n_N}(\bb_1, \dotsc, \bb_N)$. In the case of quartic bubbles in particular, it leads to an interesting set of planar maps, which can interpolate between the universality class of large uniform random trees and large uniform random planar maps.

\subsection{Gluings of quartic bubbles as edge-colored maps}
\subsubsection{Admissible color sets} We recall that quartic bubbles, see Figure \ref{fig:Bubbles}, are characterized by a color set $\colset\subset [1..d]$, but its complement $[1..d]\setminus \colset$ gives the same bubble. Here we introduce a convention: we say that $\colset$ is admissible if
\begin{itemize}
	\item if $|\colset|< d/2$,
	\item or $|\colset|=d/2$ and $1\in \colset$.
\end{itemize}
We only consider admissible color sets in the following.

\subsubsection{The bijection} The quartic bubbles are special cases of the cyclic bubbles considered in the previous example. When applying \Cref{thm:CyclicBubblesBijection}, it turns out that the star-vertex has degree 2 because $p=2$, with the same color set label on both sides, see Figure \ref{fig:NecklaceBijection}, and can thus be removed. Moreover, the blue vertices also have degree 2 and the same color set label on both sides, and can also be removed. Every quartic bubble with color set $\colset$ is therefore mapped to an edge with label $\colset$. The only vertices are the squares, around which the cyclic order matters. The bijection is thus with general maps whose edges each have a color label $\colset\subset[1..d]$, as illustrated in Figure \ref{fig:QuarticBijection}.
\begin{figure}
	\includegraphics[scale=.4,valign=c]{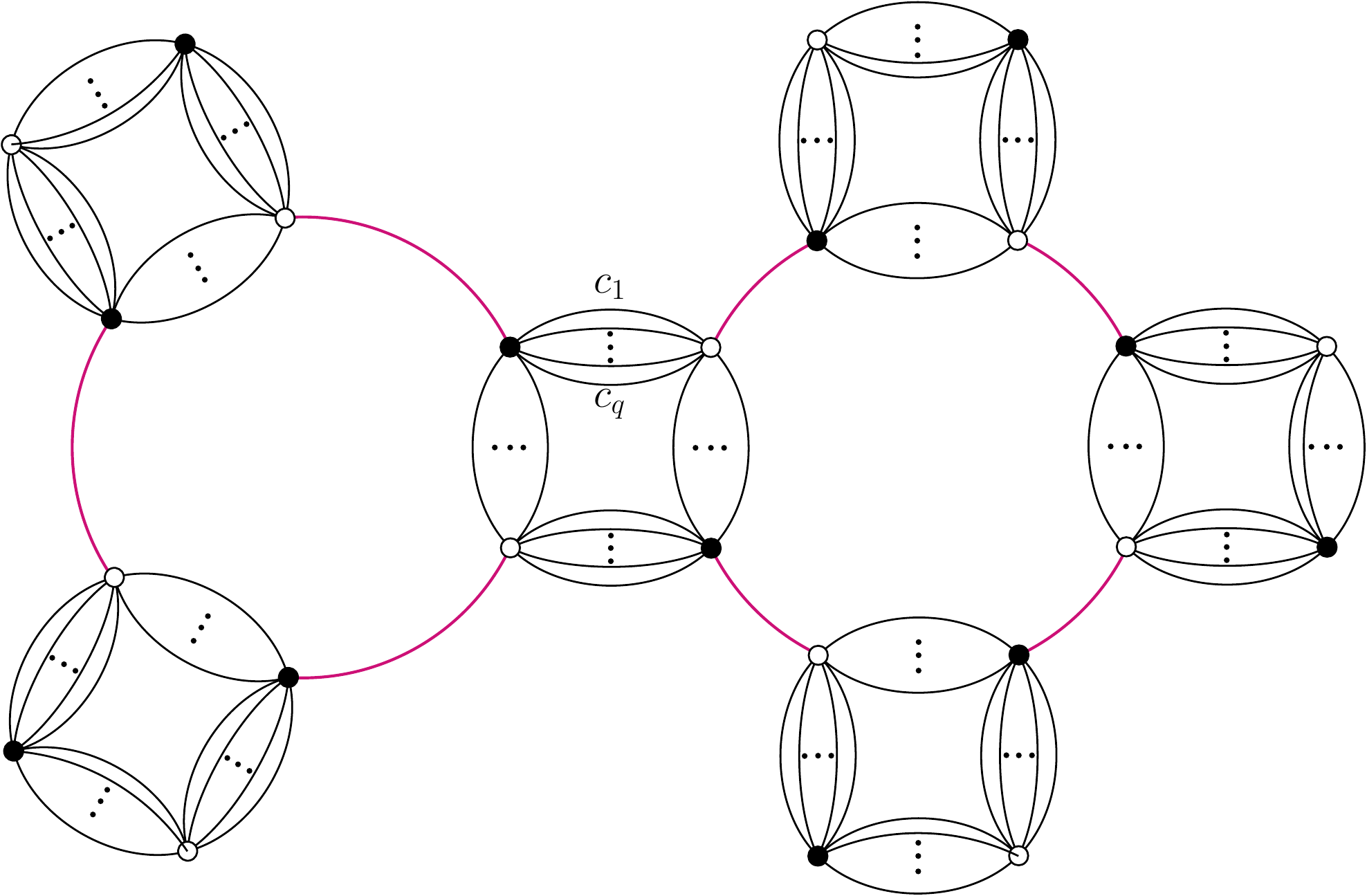}$\quad \leftrightarrow \quad$ \includegraphics[scale=.4,valign=c]{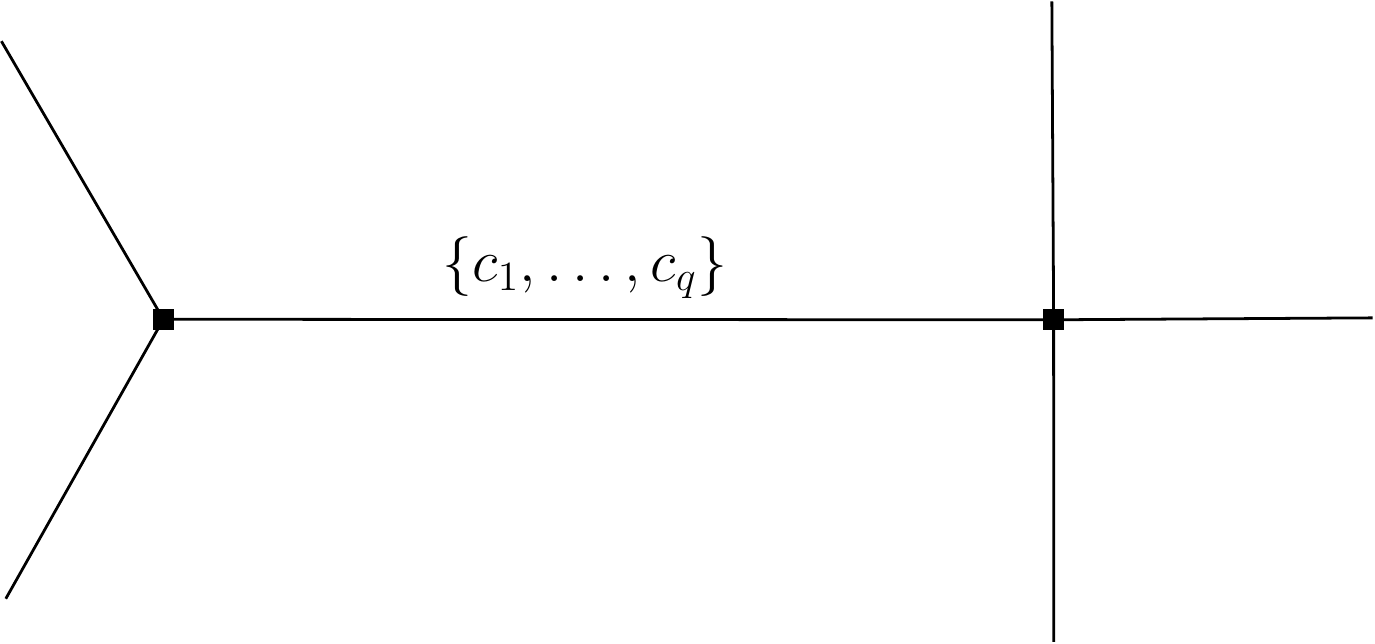}
	\caption{\label{fig:QuarticBijection} The bijection for quartic bubbles with general maps with colored edges.}
\end{figure}
\begin{corollary}{}{}
	Let $\colset_1, \dotsc, \colset_N$ be a collection of color sets. There is a bijection $J$ between $\cG_{n_1, \dotsc, n_N}(\quart(\colset_1), \dotsc, \quart(\colset_N))$ and the set $\cM_{n_1, \dotsc, n_N}(\colset_1, \dotsc, \colset_N)$ of general maps with $n_i$ edges of color labels $\colset_i$, $i=1, \dotsc, N$, and such that the bicolored cycles with colors $\{0,c\}$ are mapped to the faces of $\m_c$, $c\in[1..d]$. In particular,
	\begin{equation}
		C_0(\graph) = \sum_{c=1}^d F(\m_c).
	\end{equation}
\end{corollary}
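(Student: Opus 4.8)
The statement is the specialization of Theorem~\ref{thm:StuffedMapsBijection} (more precisely of Corollary~\ref{thm:CyclicBubblesBijection}) to the case where every bubble is quartic. The plan is to obtain the bijection $J$ as a direct corollary, while explicitly carrying out the two degeneracy simplifications that collapse the generic colored-stuffed-map picture to an ordinary edge-colored map. Quartic bubbles are the cyclic bubbles $\bb_p(\colset)$ with $p=2$, so I would first invoke Corollary~\ref{thm:CyclicBubblesBijection} with $n_p(\colset)=0$ for all $p\neq 2$, obtaining a bijection between $\cG_{n_1, \dotsc, n_N}(\quart(\colset_1), \dotsc, \quart(\colset_N))$ and connected rooted colored maps made of $n_i$ star-vertices of degree $2$, all edges carrying the appropriate color set $\colset_i$ and connected only to blue vertices, together with square-vertices of color $0$ of arbitrary degree.

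**The two degeneracy steps.** The core of the argument is that for $p=2$ both the star-vertices and the blue vertices become bivalent and can be suppressed, as is argued in the discussion preceding the corollary. First I would note that the pairing $\pi$ on a quartic bubble identifies the two vertices joined by the parallel edges with colors in $\colset$, so after merging each cycle of color $c\in\colset$ has length $2$ and, following the packing of parallel same-colored edges into a single super-edge of color set $\colset$, there is exactly one oriented $2$-cycle carrying the label $\colset$. The corresponding star-vertex thus has degree $2$, with the identical color-set label on both incident edges, so removing it and concatenating its two edges preserves all incidences and face structure. Second, each blue vertex (the merge of a paired pair $(v,\pi(v))$) is incident to exactly two surviving edges — one of color set $\colset$ and one of color $0$ — with matching labels across it, so it too is bivalent and is removed by concatenation. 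After both suppressions every quartic bubble has collapsed to a single edge labelled $\colset_i$, and the only remaining genuine vertices are the square-vertices of color $0$, around which a cyclic order persists; this is precisely a general map with colored edges, i.e. an element of $\cM_{n_1, \dotsc, n_N}(\colset_1, \dotsc, \colset_N)$.

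**Tracking the bicolored cycles.** To finish I would verify that the face-counting statement descends through the two degeneracy steps. Theorem~\ref{thm:StuffedMapsBijection} already maps the bicolored cycles of colors $\{0,c\}$ to the faces of $\m_c$; since the removal of a bivalent vertex by edge concatenation does not change the number of faces of any $\m_c$ (it merely merges two edges along a face boundary without creating or destroying a face), the correspondence $C_{0c}(\graph)=F(\m_c)$ survives the suppression of the star-vertices and blue vertices. Summing over $c=1,\dots,d$ and using the definition $C_0(\graph)=\sum_{a=1}^d C_{0a}(\graph)$ then yields
\begin{equation}
	C_0(\graph) = \sum_{c=1}^d F(\m_c),
\end{equation}
which is the claimed identity. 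The admissibility convention on the color sets $\colset_i$ ensures that each quartic bubble is counted once (since $\colset$ and its complement $[1..d]\setminus\colset$ give the same bubble), so no overcounting occurs; I would record this as the one bookkeeping point to check.

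**Main obstacle.** I expect no deep difficulty here, as the statement is a corollary of an already-established bijection; the only delicate point is making the two bivalent-vertex suppressions rigorous \emph{simultaneously} while respecting the fact that edges of distinct colors are unordered at a blue vertex. The subtlety is to confirm that packing parallel same-color edges into a single super-edge (needed so that the star-vertex becomes genuinely degree $2$ rather than degree $q=|\colset|$) is compatible with the absence of rotation between differently colored edges, and that this packing is exactly what makes the final object an ordinary map with a single well-defined cyclic order at each square-vertex. I would handle this by treating each color set $\colset_i$ as an atomic label throughout, so that the "no order between colors" feature is invisible once the bubble has collapsed to a single labelled edge.
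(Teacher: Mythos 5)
Your proposal is correct and follows essentially the same route as the paper: specialize the cyclic-bubble bijection (Corollary \ref{thm:CyclicBubblesBijection}) to $p=2$, observe that both the star-vertices and the blue vertices become bivalent with matching labels on both sides and can be suppressed, so each quartic bubble collapses to a single edge labelled $\colset_i$ between square-vertices, and note that bivalent-vertex suppression preserves the face count of each $\m_c$ so the identity $C_0(\graph)=\sum_{c=1}^d F(\m_c)$ descends from Theorem \ref{thm:StuffedMapsBijection}. The paper treats this as an immediate consequence of the preceding discussion; your write-up just makes the same two degeneracy steps and the admissibility bookkeeping explicit.
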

\begin{remark}
	This bijection is classical from the point of view of tensor and matrix models. It was in fact the motivation to write the more general one of \Cref{thm:StuffedMapsBijection}. 
\end{remark}

\subsection{Maps which maximize the number of faces} 
If $\m\in\cM_{n_1, \dotsc, n_N}(\colset_1, \dotsc, \colset_N)$ we denote $\m(\colset_i)$ the (possibly not connected) submap whose edges have exactly the color set $\colset_i$. 
\begin{theorem}{}{MelonoPlanar}\cite{Enhancing,GM} 
	The model $\cG_{n_1, \dotsc, n_N}(\quart(\colset_1), \dotsc, \quart(\colset_N))$ satisfies the linear growth hypothesis \eqref{LinearGrowth} with $\alpha(\quart(\colset_i)) = d-|\colset_i|$, i.e.
	\begin{equation}
		C_{n_1, \dotsc, n_N}(\quart(\colset_1), \dotsc, \quart(\colset_N)) = d + \sum_{i=1}^N (d-|\colset_i|)n_i.
	\end{equation}
	Define $\cM^{\max}_{n_1, \dotsc, n_N}(\colset_1, \dotsc, \colset_N) = J(\cG^{\max}_{n_1, \dotsc, n_N}(\quart(\colset_1), \dotsc, \quart(\colset_N)))$ the set of maps maximizing the number of faces $\sum_{c=1}^d F(\m_c)$. It corresponds to the set of maps satisfying
	\begin{itemize}
		\item $\m$ is planar,
		\item Edges with $|\colset| <d/2$ are bridges,
		\item $\m(\colset_i)$ and $\m(\colset_j)$ for $i\neq j$ can only meet at cut-vertices.
	\end{itemize}
\end{theorem}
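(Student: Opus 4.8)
The plan is to transport the entire problem through the bijection $J$ of the preceding corollary, which replaces each colored graph $\graph\in\cG_{n_1,\dotsc,n_N}(\quart(\colset_1),\dotsc,\quart(\colset_N))$ by a map $\m$ with colored edges and, crucially, identifies the bicolored cycles of colors $\{0,c\}$ with the faces of the monochromatic submap $\m_c$, so that $C_0(\graph)=\sum_{c=1}^d F(\m_c)$. Maximizing the number of bicolored cycles is therefore exactly maximizing $\sum_{c=1}^d F(\m_c)$ over $\cM_{n_1,\dotsc,n_N}(\colset_1,\dotsc,\colset_N)$, and it is this map-theoretic optimization that I would solve. Writing $q_i=|\colset_i|$, the first bookkeeping step records that each edge of type $\colset_i$ belongs to exactly $q_i$ of the submaps, so that $\sum_{c=1}^d E(\m_c)=\sum_i q_i n_i$; applying Euler's relation to each $\m_c$ componentwise then expresses $\sum_c F(\m_c)$ as $\sum_i q_i n_i$ corrected by the vertex counts, the numbers of monochromatic components, and a non-positive genus term $-2\sum_c g(\m_c)$.

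The genus term shows at once that maximizers must be \emph{planar}: any handle carried by some $\m_c$, or by $\m$ itself, strictly lowers the count, and one checks that a non-planar configuration can always be unfolded without destroying edges, so that planarity may be assumed. Granting planarity, the desired bound $\sum_c F(\m_c)\le d+\sum_i(d-q_i)n_i$ is governed entirely by the \emph{admissibility} convention $q_i\le d/2$: because every color set occupies at most half of the $d$ colors, each bubble can contribute to at most $d-q_i$ genuinely free cycles, the complementary $q_i$ colors being forced to organize into degenerate, vertex-like pieces rather than true faces. Turning this heuristic into a sharp inequality is the main obstacle. I expect to carry it out by the same kind of local surgery that underlies \Cref{thm:4EdgeCuts} and the $d=3$ analysis: whenever one of the three structural conditions fails, one exhibits a flip or a contraction of color-$0$ edges that decreases no $F(\m_c)$ while strictly increasing at least one, contradicting maximality. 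As in the planar-bubble theorem, the delicate part is the exhaustive case analysis guaranteeing that such a face-increasing move always exists.

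The structural description of the maximizers then falls out of the equality cases of this argument. Planarity of $\m$ is the equality case of the genus term. The condition that every edge with $|\colset|<d/2$ be a \emph{bridge} is the equality case of the admissibility estimate: such an edge lies in strictly fewer than half of the $\m_c$, so if it sat on a cycle, a flip would turn that cycle into extra faces in the majority of colors, a net gain; only bridges are immune. Finally, the requirement that $\m(\colset_i)$ and $\m(\colset_j)$ meet only at \emph{cut-vertices} for $i\neq j$ is the equality case of the component count: any $2$-connected overlap of two distinct color-set submaps can be separated to produce additional monochromatic components, hence additional faces, so at a maximum no such overlap can occur.

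For the reverse inequality I would exhibit the explicit ``tree of planar blobs'': glue copies of the optimal one-bubble self-pairings $\pi\in\cG^{\max}(\vec{\bb}_i)$ only at cut-vertices, inserting the $q_i<d/2$ bubbles as bridges, and verify by the componentwise Euler computation that this family attains $d+\sum_i(d-q_i)n_i$. This simultaneously fixes the growth constant $\alpha(\quart(\colset_i))=d-q_i$ through \eqref{LinearGrowth}, using that a single self-paired quartic bubble realizes $C_0=2d-q_i$ by pairing the vertices joined by the complementary colors. Transporting this family back through $J$ yields $\cG^{\max}_{n_1,\dotsc,n_N}(\quart(\colset_1),\dotsc,\quart(\colset_N))$ and, together with the upper bound, establishes both the linear growth hypothesis and the claimed characterization of the maximizing maps.
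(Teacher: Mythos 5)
A preliminary caveat: the manuscript does not prove this theorem itself --- it is imported from \cite{Enhancing,GM} --- so I am judging your proposal against the strategy those references and the surrounding text rely on, namely the bijection $J$ followed by a direct Euler-characteristic count on the edge-colored maps. Your reduction is the right one: transporting everything through $J$, using $C_0(\graph)=\sum_{c}F(\m_c)$ and $\sum_c E(\m_c)=\sum_i|\colset_i|\,n_i$, and exhibiting the tree of self-paired bubbles (edges with $|\colset|<d/2$ as bridges, the others glued at cut-vertices) to attain $d+\sum_i(d-|\colset_i|)n_i$ are all exactly what is needed; the last point correctly matches the general lower bound \eqref{LowerBound} with $C(\quart(\colset))=2d-|\colset|$.

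The gap is in the upper bound and the equality case. You replace the counting argument by an asserted ``local surgery'': whenever one of the three structural conditions fails, some flip strictly increases one $F(\m_c)$ without decreasing the others. That assertion is the entire content of the theorem and is not supplied, and one of the claims you lean on is false as stated: the quantity $\sum_c F(\m_c)$ involves only the genera of the submaps $\m_c$, not the genus of $\m$, so a handle of $\m$ itself does not ``strictly lower the count'' through any genus term. For instance, two interleaved loops at a single vertex carrying disjoint color sets give a toroidal $\m$ all of whose $\m_c$ are planar; the deficit relative to the bound sits entirely in the vertex and component counts, not in a genus. Planarity of $\m$ in the characterization is a consequence of the other conditions (each $\m(\colset_i)$ planar and the pieces meeting only at cut-vertices), not an independent equality case of a genus inequality. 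The robust route, which yields the bound and its equality case simultaneously, is to write $F(\m_c)=E(\m_c)-V(\m)+2k(\m_c)-2g(\m_c)$ for each $c$ (keeping all vertices of $\m$ in $\m_c$, isolated ones counting as components), sum over $c$, and bound $-dV(\m)+2\sum_c k(\m_c)-2\sum_c g(\m_c)$; the three listed conditions are then precisely the equality cases of $g(\m_c)=0$, of the component estimate for edges lying in fewer than $d/2$ of the $\m_c$, and of the component estimate at vertices shared by distinct color-set submaps. I would redo the upper bound along these lines rather than by case-by-case surgery.
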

This theorem completely solves the questions of Section \ref{sec:MainQuestion} from Chapter \ref{sec:Bubbles} for models with quartic bubbles. The coefficients of the linear growth hypothesis are here explicit, in contrast with those of the case $d=3$ with planar bubbles where we had the generically unknown parameters $C(\bb)$. Moreover, the coefficients coincide with the general lower bound \eqref{LowerBound}, $\alpha(\bb)\leq C(\bb)-d$. Indeed, $C(\quart(\colset)) = 2d-|\colset|$.

That lower bound was found by looking at the maximal 2-cut family, but here while the bound is reached, the graphs from $\cG^{\max}_{n_1, \dotsc, n_N}(\quart(\colset_1), \dotsc, \quart(\colset_N))$ do not all satisfy the maximal 2-cut property. Through the bijection $J$, a 2-bond of color 0 is mapped to a cut-vertex\footnote{i.e. the edges of color 0 of the 2-bond are mapped to a pair of corners at a cut-vertex} and thus colored graphs whose bubbles all satisfy the maximal 2-cut property are mapped to plane trees.

While some vertices in the Theorem have to be cut-vertices, not all of them have to. If a vertex is only incident to edges with a single color set which satisfies $|\colset_i|=d/2$, then it may not be a cut-vertex. For instance, the Theorem allows for all planar maps whose edges are made of the same color set $|\colset|=d/2$. Through the bijection $J$, the lower bound \eqref{LowerBound} holds for planar maps and not just plane trees.

We can once again compare the coefficients $\alpha(\bb_i)$ to the values from Gurau's bound \eqref{GurauValue}. The latter gives $\alpha(\quart(\colset))\leq |\colset|(d-|\colset|)$. It coincides with the exact value when $|\colset|=1$ (i.e. for melonic bubbles only).

\subsection{Decomposition on non-separable planar maps and enumeration} 
\subsubsection{Non-separable planar maps} A rooted non-separable planar map is a rooted planar map such that removing any vertex does not disconnect it. Equivalently it is a rooted planar map whose underlying graph is 2-connected. A rooted planar map $\m$ has a unique maximal non-separable submap\footnote{It is the unique maximal 2-connected subgraph, equipped with the rotation system induced by that of the original map.} $\m_{\text{n.s.}}$, and it can be reconstructed from $\m_{\text{n.s.}}$ by attaching rooted maps (possibly just a vertex) in every corner, as in Figure \ref{fig:NonSeparableDecomposition}.
\begin{figure}
	\includegraphics[scale=.55]{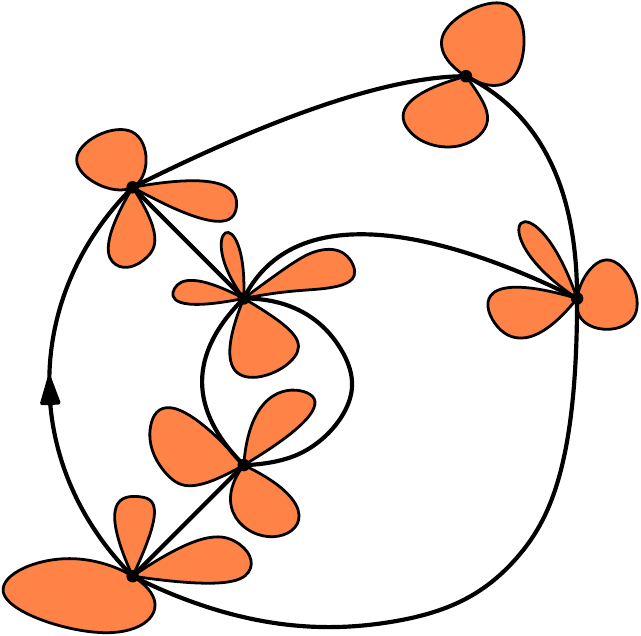}
	\caption{\label{fig:NonSeparableDecomposition} A rooted planar map can be decomposed with a unique maximal non-separable component containing the root edge, and by attaching in every corner a rooted planar map.}
\end{figure}

Let $M(t)$ (respectively $P(t)$) be the generating series of rooted planar maps (respectively non-separable rooted planar maps) counted with respect to the number of edges. Since the number of corners is twice the number of edges, one can consider that there are two maps potentially attached to every edge and therefore \cite{GouldenJacksonBook}
\begin{equation}
	M(t) = P(t(M(t)^2)
\end{equation}
which is exactly the same equation as \eqref{2IrreducibleDecomposition} and similar to \eqref{SIFEquation}. $P(t)$ can be described by the following algebraic system \cite{GouldenJacksonBook}
\begin{equation} \label{NonSeparableSystem}
	\left\{
	\begin{aligned} t &= \theta (1-\theta)^2\\
		P &= (1-\theta)(1+3\theta). \end{aligned}
	\right.
\end{equation}

\subsubsection{Enumeration} Let
\begin{equation}
	\cM^{\max}(\colset_1, \dotsc, \colset_N) = \bigcup_{n_1, \dotsc, n_N\geq 0} \cM^{\max}_{n_1, \dotsc, n_N}(\colset_1, \dotsc, \colset_N).
\end{equation}
In \Cref{thm:MelonoPlanar}, all edges whose color set has $|\colset_i|<d/2$ are bridges. They are very easy to take into account in the enumeration, and removing them entirely does not change the phase diagram, as argued below, so we do not consider them, which simplifies the equations. So we consider $|\colset_i|=d/2$ for $i=1..N$. For $\m\in \cM^{\max}_{n_1, \dotsc, n_N}(\colset_1, \dotsc, \colset_N)$, the number of edges is $E(\m) = \sum_{i=1}^N n_i$. Then consider the generating series
\begin{equation}
	f_{N}(t) = \sum_{\m\in\cM^{\max}(\colset_1, \dotsc, \colset_N)} t^{E(\m)}.
\end{equation}

$f_N(t)$ starts with $1$ (the 1-vertex map). All other terms have at least one edge. Let $\m\in \cM^{\max}_{n_1, \dotsc, n_N}(\colset_1, \dotsc, \colset_N)$ rooted on an edge $e$ with $\colset_j$. Following the same reasoning as in \cite{GouldenJacksonBook} for general maps, there is a unique non-separable connected submap $\m_{\text{n.s.}}$ which contains the root edge. In our case, the key point is that all edges of $\m_{\text{n.s.}}$ have the same color set $\colset_j$ and $\m_{\text{n.s.}}$ is thus an arbitrary non-separable, rooted planar map. Then $\m$ can be recovered uniquely from $\m_{\text{n.s.}}$ by inserting a rooted map on every corner of $\m_{\text{n.s.}}$. The corner insertions can be arbitrary rooted maps from $\cM^{\max}(\colset_1, \dotsc, \colset_N)$.

The contribution of the maps rooted on an edge of type $\colset_j$ is thus $P(t f_N(t)^2)-1$ ($P$ contains the 1-vertex map, which we take into account seperately). Adding the $N$ types of root edge gives
\begin{equation}
	f_N(t) = 1 - N + N\,P\bigl(t\, f_{N}(t)^2\bigr).
\end{equation}
From the algebraic system \eqref{NonSeparableSystem} satisfied by $P(t)$, it comes
\begin{equation} \label{AlgebraicSystem}
	\left\{ \begin{aligned}
		t f^2 &= \theta (1-\theta)^2 \\
		f &= N\,(1-\theta)(1+3\theta) - N + 1,
	\end{aligned} \right.
\end{equation}
and $f_N(t)$ is the solution of this system for $f$ after elimination of $\theta$\footnote{Doing so leads to an explicit polynomial equation of degree 6. As a sanity check, at $N=1$, $f_N(t)$ should be enumerating general planar maps. Then the polynomial equation does indeed reduce to the well-known quadratic equation on the generating function of rooted planar maps,
\begin{equation} \label{MapsGF}
	27 t^2 A(t)^2 + (1 - 18t) A(t) + 16t - 1 = 0,
\end{equation}
with $A(t) = f_{N=1}(t)$.}.
 
\subsubsection{Phase diagram} \label{sec:PhaseDiagram} {\it A priori}, $N$ is an integer. Notice however that the algebraic system \eqref{AlgebraicSystem} makes sense for $N$ a positive real number.
\begin{proposition}{}{BabyUniverses}
	\begin{itemize}
		\item For $N<9/5$, $f_N^{(1)} = 1 + \frac{N}{3}$, $t_N^{(1)} = \frac{4}{3 (N+3)^2}$. Then
		\begin{multline}
			f_N(t) = f_N^{(1)} - \frac{N(N+3)^3}{9 - 5N} (t_N^{(1)} - t) + \sqrt{3}\, N \frac{(N + 3)^{11/2}}{(9 - 5N)^{5/2}}\,(t_N^{(1)} - t)^{3/2} + o\bigl((t_N^{(1)} - t)^{3/2}\bigr),
		\end{multline}
		\item For $N>9/5$, $f_N^{(2)} = 4\bigl(1 - N + \sqrt{N(N-1)}\bigl)$, $t_N^{(2)} = \frac{N+\sqrt{N(N-1)}}{16\,N^2}$. Then
		\begin{multline}
			f_N(t) = f_N^{(2)} + 16 \sqrt{N \bigl(20 N^3 - 31 N^2 + 11 N - \sqrt{N (N-1)} (20 N^2 - 21 N + 3)\bigr)} (t_N^{(2)} - t)^{1/2} \\+ o\bigl((t_N^{(2)} - t)^{1/2}\bigr).
		\end{multline}
		\item At $N=9/5$, the two solutions $f_N^{(i)}, t_N^{(i)}$ coincide for $i=1,2$. Then
		\begin{equation}
			f_{9/5}(t) = \frac{8}{5} - \frac{432}{25 \times 5^{1/3}} \Bigl(\frac{25}{432} - t\Bigr)^{2/3} + o\Bigl(\Bigl(\frac{25}{432} - t\Bigr)^{2/3}\Bigr).
		\end{equation}
	\end{itemize}
\end{proposition}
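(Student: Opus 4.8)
The plan is to exploit the rational parametric representation already provided in \eqref{AlgebraicSystem}. I would first rewrite the second equation as $f = F(\theta) := 1 + 2N\theta - 3N\theta^2$ and the first as $t = T(\theta) := \theta(1-\theta)^2/F(\theta)^2$. Since $T(0)=0$, $F(0)=1$ and $T'(0)=1\neq 0$, the map $\theta\mapsto t$ is a local analytic isomorphism near the origin, so $f_N(t) = F(\theta(t))$ with $\theta(t)$ the unique power-series inverse satisfying $\theta(0)=0$. The dominant singularity of $f_N$ is then located at $t_c = T(\theta_c)$, where $\theta_c\in(0,1)$ is the smallest positive critical point of $T$, i.e. the first zero of $T'$ met as $\theta$ grows from $0$. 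A direct computation gives the clean factorisation
\begin{equation*}
T'(\theta) = \frac{(1-\theta)(1-3\theta)\bigl(1-2N\theta+N\theta^2\bigr)}{F(\theta)^3},
\end{equation*}
whose zeros in $(0,1)$ are $\theta=1/3$ (the factor inherited from the non-separable series $P$ of \eqref{NonSeparableSystem}) and $\theta = 1-\sqrt{(N-1)/N}$ (the branch point created by the composition $t\mapsto t f^2$).

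Next I would determine which critical point is reached first. All three factors of $T'$ are positive on $(0,\min(1/3,\,1-\sqrt{(N-1)/N}))$, so $T$ is increasing there and the dominant $\theta_c$ is exactly the smaller of the two roots. Since $1-\sqrt{(N-1)/N}$ decreases from $1$ to $1/3$ as $N$ runs from $1$ to $9/5$ (and is complex, hence irrelevant, for $N<1$), one gets $\theta_c=1/3$ for $N<9/5$ and $\theta_c=1-\sqrt{(N-1)/N}$ for $N>9/5$, the two coalescing at $\theta=1/3$ precisely when $N=9/5$. Evaluating $T$ and $F$ at these points reproduces the stated $t_N^{(i)},f_N^{(i)}$: in particular $T(1/3)=\tfrac{4}{3(N+3)^2}$ and $F(1/3)=1+N/3$, while at the second root the relation $N\theta^2-2N\theta+1=0$ gives $F=4(1-N\theta)=4\bigl(1-N+\sqrt{N(N-1)}\bigr)$.

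The heart of the argument is the local singularity analysis, and here the crucial structural fact is $F'(\theta)=2N(1-3\theta)$, so that $F'(1/3)=0$. In the regime $N<9/5$ the branch point sits at $\theta=1/3$: $T'$ has a simple zero there, so $t-t_c = A\,\epsilon^2 + B\,\epsilon^3+\cdots$ with $\epsilon=\theta-1/3$, while $f-f_c = \tfrac12 F''(1/3)\,\epsilon^2 = -3N\epsilon^2$ exactly. Inverting $t(\theta)$ and substituting, the $\epsilon^2$ term produces an analytic linear part in $(t_c-t)$ together with a genuine singular term of order $(t_c-t)^{3/2}$ — the pure-gravity (map) class. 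In the regime $N>9/5$ the branch point is the second root, where $F'\neq 0$; the simple zero of $T'$ then yields the generic square-root behaviour $f-f_c\sim(t_c-t)^{1/2}$ — the tree class. Finally, at $N=9/5$ the factors $(1-3\theta)$ and $(1-2N\theta+N\theta^2)$ vanish simultaneously at $\theta=1/3$, so $T'$ has a \emph{double} zero; then $t-t_c\sim c\,\epsilon^3$, i.e. $\epsilon\sim(t_c-t)^{1/3}$, and since $f-f_c=-3N\epsilon^2$ this gives the intermediate exponent $(t_c-t)^{2/3}$, the ``baby universes'' class. In each case the explicit constants (including $-\tfrac{432}{25\cdot 5^{1/3}}$ at $N=9/5$) follow by carrying the local inversion to the order indicated in the statement, and the corresponding $f_n\sim a\,\rho^{-n}n^{\gamma-1}$ asymptotics then follow from the singularity–coefficient correspondence recalled in the vademecum.

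The main obstacle I expect is the confluent case $N=9/5$: because two branch points collide, the ordinary square-root transfer no longer applies and one must justify the cubic local inversion $t-t_c\sim c\,\epsilon^3$ and the resulting $2/3$ exponent carefully, checking that no lower-order singular term survives. A secondary technical point is to confirm rigorously that $\theta_c$ is the \emph{dominant} (closest to $0$) singularity — that $\theta(t)$ stays real, analytic and increasing on $(0,\theta_c)$ and that the other branches of the algebraic curve defined by \eqref{AlgebraicSystem} do not contribute a nearer singularity — which amounts to the monotonicity of $T$ established above together with a check that $F$ does not vanish on the relevant range.
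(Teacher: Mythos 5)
Your proof is correct and follows essentially the route the paper intends: the proposition is read off from the rational parametrization \eqref{AlgebraicSystem} by locating the first positive critical point of $t=\theta(1-\theta)^2/F(\theta)^2$ and performing the local inversion there. I checked your factorization $T'(\theta)=(1-\theta)(1-3\theta)(1-2N\theta+N\theta^2)/F(\theta)^3$, the identification of the two branch points and their collision at $N=9/5$, the values $f_N^{(i)},t_N^{(i)}$, the linear coefficient $N(N+3)^3/(9-5N)$, and the constant $-432/(25\cdot 5^{1/3})$ at $N=9/5$; all agree with the statement.
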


That phase diagram can be understood heuristically as follows.
\begin{itemize}
	\item For $N$ small enough, a large typical map consists of planar components of color types $\colset_j$ connected by a finite number of cut-vertices. Therefore the criticality is expected to be that of planar maps, with a singularity $(t_c(q,u) - t)^{3/2}$.
	\item For $N$ large enough, the probability of adding a component of a different color type at an existing vertex becomes large enough that cut-vertices separating planar components of different color types dominate. The planar components of fixed color type remain non-critical. The maps are thus dominated by branching processes, so a singularity $(t_c(N) - t)^{1/2}$ is expected, i.e. the universality class of trees.
	\item Between those two phases, there is a regime where planar components of type $\colset_j, j=1..N$ are in infinite number (connected by infinitely many uncolored edges or cut--vertices) and each of them may be critical too. This phase is the so-called proliferation of baby universes and the expected singularity is $(t_c - t)^{2/3}$.
\end{itemize}

In \cite{LionniThurigen}, Lionni and Thürigen found more models fitting the following general equation
\begin{equation}
	f(t) = Q_0(f(t)) + \sum_{j=1}^m P(Q_j(f(t))
\end{equation}
where $Q_0, Q_1, \dotsc, Q_m$ are polynomials. They solved it in some cases and conjecture that the same phase portrait as above holds. Heuristically, the map and tree phases correspond to either $Q_0(f(t))$ being negligible compared to the other terms, so that one gets an equation similar to the one for planar maps, or $Q_0(f(t))$ dominating so that one gets the usual polynomial equation for trees.

\subsubsection{Scaling limit} The phase of proliferation of baby universes has appeared in the physics literature for some time (see below) but only enumerative results like those above have been known and the large scale limit has remained to be described. No suprises are expected for $N$ small, as it should be the Brownian sphere, extending the convergence known for $N=1$, and for $N$ large, as it should be the continuous random tree of Aldous since the geometry is dominated by tree-like structures. However, in the phase of the proliferation of baby universes, the large scale limit was not known and it did not seem straightforward, neither to guess nor to establish. Recently, Z. Salvy and W. Fleurat\footnote{To appear, and from a research proposal of M. Albenque and É. Fusy on one side, and of G. Miermont on the other.}, studied the exact model of \Cref{thm:BabyUniverses} and found that the large scale limit is the \emph{stable tree of index $3/2$}.

\subsection{Planar stuffed maps}
\subsubsection{Stuffed maps with cylinders} The proliferation of baby universes is a phenomenon which has been first studied in the physics literature, in the context of double-trace matrix models in \cite{Das, AlvarezBarbon, Korchemsky1992, KlebanovHashimoto, BarbonDemeterfi}. Those models generate stuffed maps, as defined in Chapter \ref{sec:Definitions}, which are made by gluings of both polygons and cylinders. Here a \emph{cylinder} is an elementary cell of topology $(0,2)$, characterized by definition by the perimeters of its boundaries $\ell_1, \ell_2$. In the generating series, each appearance of a cylinder is weighted with a formal variables $p_{\ell_1, \ell_2}$, just like a disc of perimeter $\ell$ receives a weight $p_\ell$. 

In the regime which maximizes the number of faces\footnote{That is the large $N$ limit of the matrix model.}, the maps must be planar. This implies in particular that removing a cylinder must disconnect the map. They must therefore be ``cut-cylinders''.

A cylinder with boundaries of perimeters $\ell_1, \ell_2$ can be represented in different ways, e.g. as two polygons of perimeters $\ell_1, \ell_2$ sitting on top of each other, like at a contact point between surfaces. Any way of indicating that the two polygons come from the same cylinder is good too, such as a special edge connecting them, which we draw here as a dashed line and call a \emph{virtual} edge, as in Figure \ref{fig:Cylinder}.

In the dual picture, a cylinder can be pictured as two vertices of respective degrees $\ell_1, \ell_2$ connected by a virtual edge. Moreover, the weights $p_{\ell_1, \ell_2}$ of cylinders in the generating series can now be associated with virtual edges.
\begin{figure}
	\includegraphics[scale=.35,valign=c]{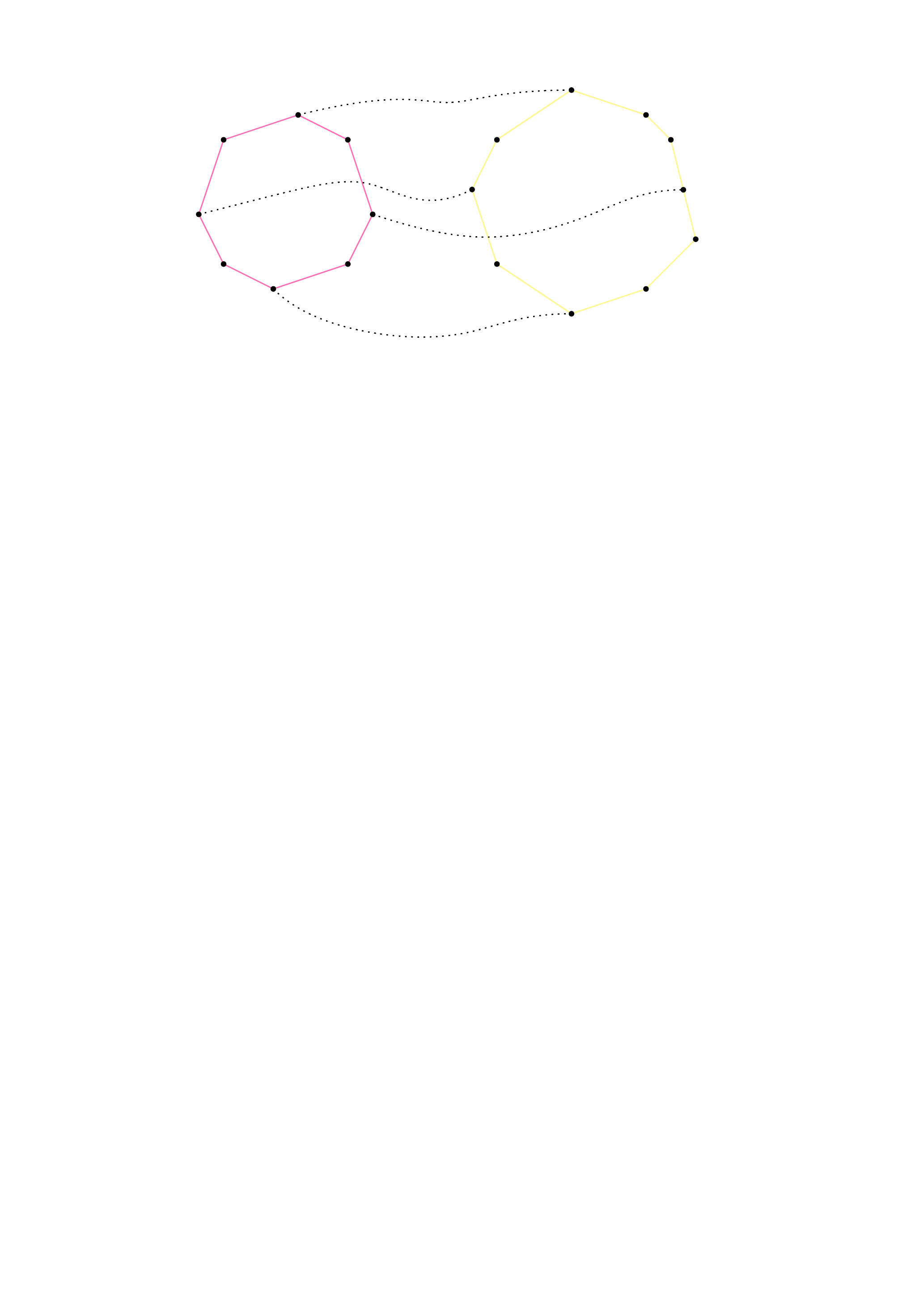} $\leftrightarrow$ \includegraphics[scale=.35,valign=c]{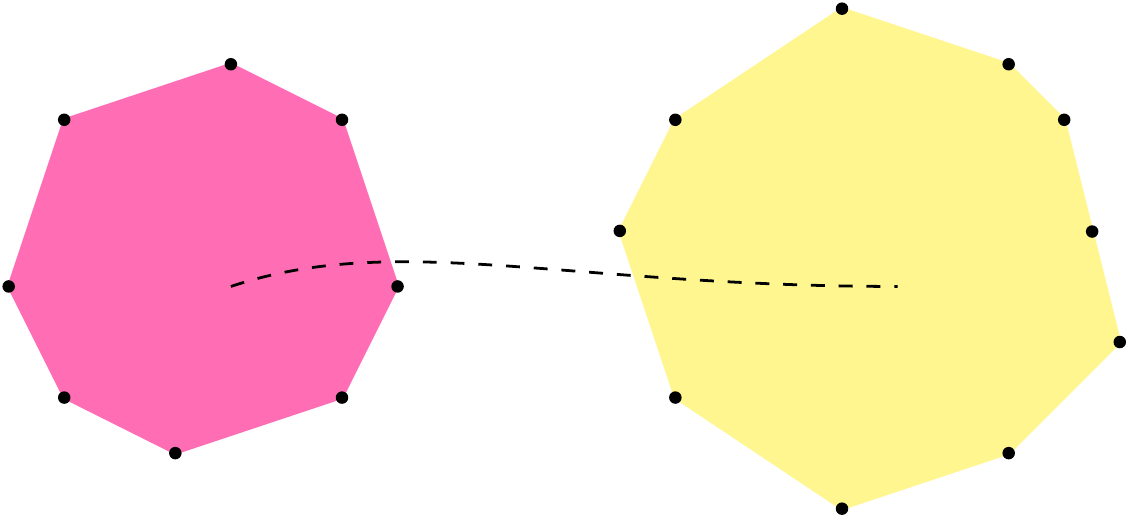} $\leftrightarrow$ \includegraphics[scale=.35,valign=c]{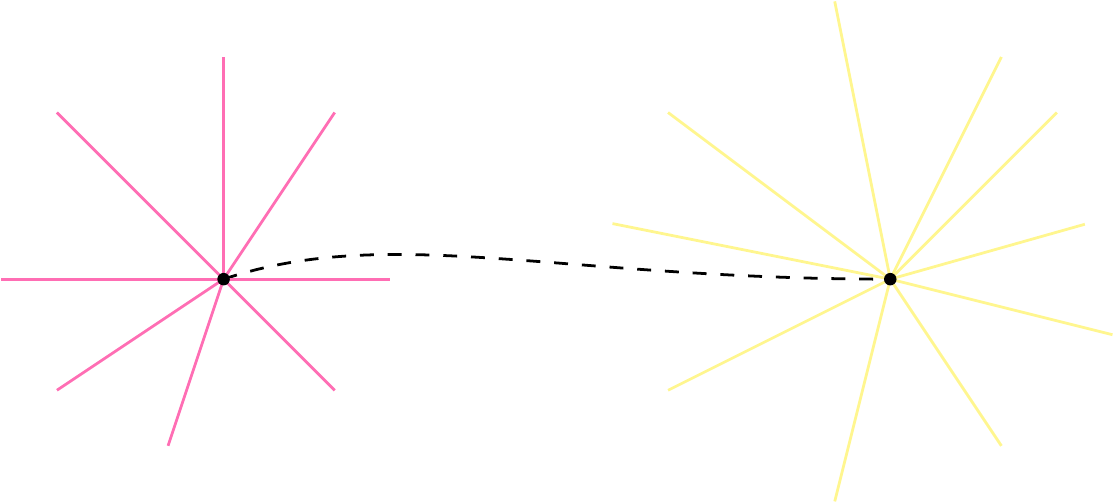} 
	\caption{\label{fig:Cylinder} On the left is a cylinder characterized as a surface of genus 0 with two boundary components of size $\ell_1, \ell_2$. In the middle, an equivalent representation as two polygons with a virtual edge indicating that they come from the same cylinder. On the right, a dual representation where the two polygons have become vertices.}
\end{figure}

Planarity of the stuffed maps translates in the ``maps with virtual edges'' world, as constraints which we are now familiar with: All virtual edges are bridges, and the components obtained after removing them are planar. If one contracts the virtual edges between two vertices (in the dual picture), this creates a cut-vertex.

Comparing stuffed maps with our model, some details are different\footnote{For instance, the two sides of a cylinder can rotate with respect to one another. This is different in our model, as the edges meeting at a cut-vertex cannot be exchanged.}. Nevertheless, one can certainly expect those stuffed maps to lie in the same universality classes at large scale. That is indeed the case at the enumerative level: this model (double-trace matrix model) has the same three universal phases as ours (planar maps, trees, and proliferation of baby universes). 

\subsubsection{Enumeration of planar stuffed maps} Tutte's equation for planar stuffed maps is given in \cite{Borot13} in a physicists' convention. Here we will write them in a more traditional manner for combinatorists, in particular {\it à la} Bousquet-Mélou--Jehanne \cite{BousquetJehanne2006} so that we can straightforwardly apply their algebraicity theorem.

Here we consider planar rooted stuffed maps (the root edge is an oriented edge, and the root face is the face to the right of the root edge). They are counted with a weight $p_{\ell_1, \dotsc, \ell_n}$ on elementary cells of topology $(0,n)$ with perimeters $\ell_1, \dotsc, \ell_n$ \cite{Borot13} (we do not consider higher genera here). We consider elementary cells with at most $r$ boundaries and whose boundary perimeters are bounded by $s$. Let $M_k\equiv M_k(t)$ be the generating series for those planar, rooted stuffed maps with root face of degree $k$, and with $t$ counting the number of edges. It is a formal series in $t$ with polynomial coefficients in $p_{\ell_1, \dotsc, \ell_n}$. Moreover, let $M(x) = \sum_{k\geq 0} M_k x^k$ with $M_0=1$ counting the atomic map, also a formal series in $t$, with polynomial coefficients in $x$ and $p_{\ell_1, \dotsc, \ell_n}$.

If $F(x) = \sum_{k\geq 0}F_k x^k$ is a formal series in $x$, we define for $i\geq 1$
\begin{equation}
	\Delta^{(i)}F(x) = \frac{F(x)-F_0-xF_1-\dotsb-x^{i-1}F_{i-1}}{x^i}.
\end{equation}
and $\Delta^{(0)} F(x) :=F(x)$.

\begin{proposition}{}{}
	For $\ell\geq 1$, let 
	\begin{equation}
		Q_\ell(x_1, \dotsc, x_s) = \sum_{n=1}^r \sum_{\ell_1, \dotsc, \ell_n=0}^{s} p_{\ell_1, \dotsc, \ell_n} \sum_{i=1}^n \delta_{\ell, \ell_i} \prod_{j\neq i} \frac{x_{\ell_j}}{\ell_j}.
	\end{equation}
	Then $M(x)$ is determined by the equation
	\begin{equation} \label{TutteStuffedMaps}
		M(x) = 1 + tx^2 M(x)^2 + tx\sum_{\ell=1}^s Q_\ell(M_1, \dotsc, M_s) \Delta^{(\ell-1)} M(x).
	\end{equation}
	and it is algebraic.
\end{proposition}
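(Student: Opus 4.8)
The plan is to establish the functional equation \eqref{TutteStuffedMaps} by a Tutte-style decomposition at the root edge, to argue that it determines $M(x)$ uniquely as a formal power series in $t$, and finally to recognise it as a polynomial equation with a single catalytic variable and conclude algebraicity via the theorem of Bousquet-M\'elou and Jehanne \cite{BousquetJehanne2006}.

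First I would peel the root edge $e$ of a rooted planar stuffed map whose root face $f_0$ has degree $k$. There are three cases. If the map is atomic there is no edge and we get the contribution $1$ (the term $M_0=1$). If $e$ is a bridge (equivalently, the root face lies on both sides of $e$), removing it disconnects the map into two rooted stuffed maps whose root-face degrees $a,b$ satisfy $k=a+b+2$, the two extra corners coming from the two sides of $e$; summing over $a,b$ and weighting $e$ by $t$ yields the term $tx^2M(x)^2$. Otherwise the face $f_1 \neq f_0$ across $e$ is one boundary component, of some perimeter $\ell$, of an elementary cell $\mathfrak{c}$ of topology $(0,n)$ and perimeters $\ell_1,\dots,\ell_n$ with $\ell_i=\ell$ for the boundary $i$ glued along $e$. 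Absorbing $\mathfrak{c}$ means deleting $e$, merging $f_0$ and $f_1$ into a new root face of degree $(k-1)+(\ell-1)=k+\ell-2$, recording the cell weight $p_{\ell_1,\dots,\ell_n}$, and attaching an arbitrary rooted stuffed map along each of the remaining boundaries $j\neq i$ (contributing $M_{\ell_j}/\ell_j$, the factor $1/\ell_j$ being the usual rooting correction for gluing two boundaries of equal perimeter). Collecting these data over all cells and distinguished boundaries is exactly the kernel $Q_\ell(M_1,\dots,M_s)$, while the passage from root-face degree $k+\ell-2$ back to $k$ is encoded by the truncation $\Delta^{(\ell-1)}$ (it discards precisely the coefficients that would correspond to $k\leq 0$), giving the last term $tx\sum_{\ell}Q_\ell\,\Delta^{(\ell-1)}M(x)$. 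As a sanity check, specialising to discs only, $p_\ell=1$ and $p_{\ell_1,\dots,\ell_n}=0$ for $n\geq 2$, gives $Q_\ell=1$ and the telescoping identity $\sum_{\ell\geq 1}\Delta^{(\ell-1)}M(x)=\frac{xM(x)-M(1)}{x-1}$, so that \eqref{TutteStuffedMaps} reduces to Tutte's classical equation for rooted planar maps.

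Next I would check that \eqref{TutteStuffedMaps} has a unique formal solution: since every term on the right-hand side other than the constant $1$ carries an explicit factor $t$, the coefficient of $t^n$ in $M(x)$ is determined by those of strictly smaller order, and at each order the root-face degree is bounded by the total perimeter, so the coefficients are polynomials in $x$ and in the finitely many weights $p_{\ell_1,\dots,\ell_n}$. In particular $M_1,\dots,M_s$ are well-defined series in $t$ and $Q_\ell(M_1,\dots,M_s)$ is a polynomial of degree at most $r-1$ in them. To obtain algebraicity I would clear the denominators $x^{\ell-1}$ in $\Delta^{(\ell-1)}M(x)=x^{-(\ell-1)}\bigl(M(x)-\sum_{i=0}^{\ell-2}M_i x^i\bigr)$ by multiplying through by $x^{s-1}$; using $M_0=1$ this turns \eqref{TutteStuffedMaps} into a genuine polynomial relation $\mathrm{Pol}\bigl(M(x),M_1,\dots,M_s,x,t\bigr)=0$ with coefficients in the polynomial ring $\mathbb{Q}[\mathbf{p}]$ in the cell weights. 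This is precisely a polynomial equation with one catalytic variable $x$ and finitely many auxiliary unknowns $M_1,\dots,M_s$, so the theorem of \cite{BousquetJehanne2006} applies verbatim and yields that $M(x)$ is algebraic over $\mathbb{Q}(\mathbf{p})(t,x)$ and that each $M_i$, hence $M(1)$, is algebraic over $\mathbb{Q}(\mathbf{p})(t)$.

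The main obstacle is the middle step, namely getting the cell-absorption case exactly right. The bookkeeping is more delicate than for ordinary maps because the face across the root edge is only one boundary of a possibly multiply-bounded cell: one must argue that the peeling is well-defined when $\mathfrak{c}$ has several boundaries (attaching independent rooted maps to all of them), handle the potential self-adjacencies where two boundaries of the same cell meet along the root region, and justify the rooting factors $1/\ell_j$ so that each stuffed map is counted exactly once. Once this decomposition is pinned down, the derivation of \eqref{TutteStuffedMaps} and the appeal to \cite{BousquetJehanne2006} are routine.
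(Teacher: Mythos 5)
Your proposal is correct and follows essentially the same route as the paper: a Tutte-style root-edge deletion distinguishing the bridge case (giving $tx^2M(x)^2$) from the case where the root edge borders a boundary of an elementary cell (giving the $Q_\ell\,\Delta^{(\ell-1)}M$ term, with the other boundaries of the cell filled by arbitrary stuffed maps), followed by the observation that the equation determines $M(x)$ order by order in $t$ and is of the single-catalytic-variable polynomial form to which the Bousquet-M\'elou--Jehanne theorem applies. The only additions beyond the paper's argument are your sanity check against Tutte's classical equation in the disc-only specialisation and your explicit flagging of the bookkeeping subtleties in the cell-absorption step, both of which are consistent with (and no weaker than) what the paper actually writes.
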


A comparison with the case of ordinary (non-stuffed) maps is in order. Tutte's equation is given in Lemma 5 in \cite{BousquetJehanne2006}. There, it is shown that ordinary maps counted with variables $q_\ell$ marking the faces of degree $\ell$ satisfy the same equation as \eqref{TutteStuffedMaps} up to the substitution $q_\ell \leftrightarrow Q_\ell(M_1, \dotsc, M_s)$. This reveals the ``stuffed'' aspect: stuffed maps are obtained from ordinary maps by substituting the face weights $q_\ell$ with a gluing of $n\leq r$ surfaces with perimeters bounded by $s$, with weights $p_{\ell_1, \dotsc, \ell_n}$.

We have already spent a lot of time on maps in this memoir while not having given a Tutte decomposition for them. This is time to fix this.
\begin{proof}
	As per usual, one considers a root face of degree $k\geq2$ and removes the root edge and two cases are distinguished. Either the root edge is a bridge, then removing it we obtain two rooted\footnote{For instance on the first edge after the root edge counter-clockwise.} stuffed maps with root faces of degrees $k_1, k_2$ such that $k_1+k_2=k-2$,
	\begin{equation}
		\includegraphics[scale=.65,valign=c]{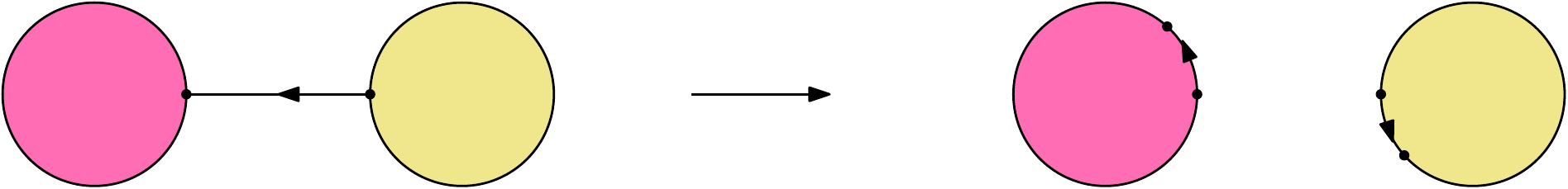}
	\end{equation}
	Or the root edge is not a bridge, then to its left can be any elementary cell of topology $(0,n)$.
	\begin{equation}
		\includegraphics[scale=.65,valign=c]{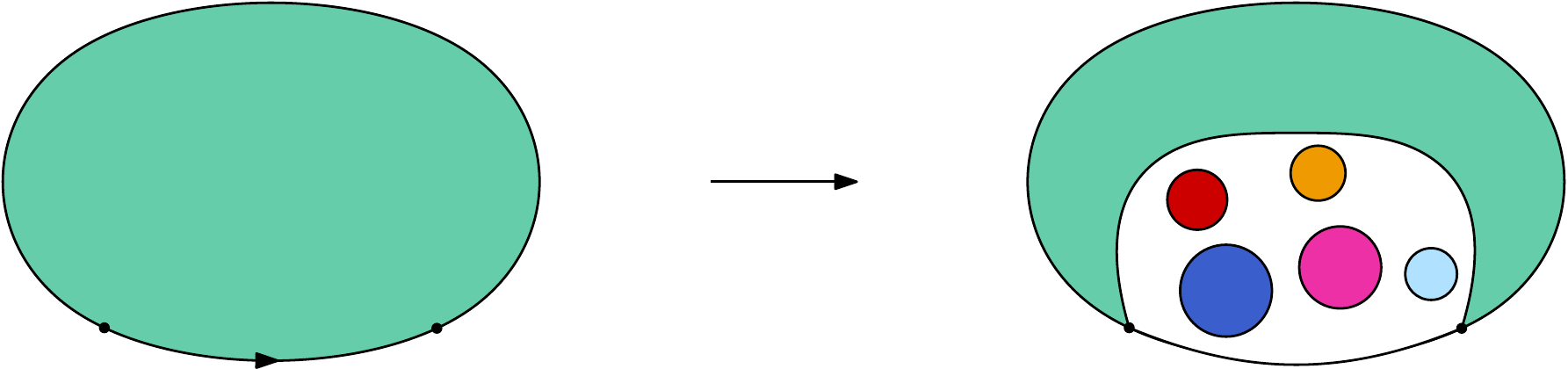}
	\end{equation}
	By planarity, its boundaries are filled with $M_{\ell_1}, \dotsc, M_{\ell_{n-1}}$ for some integers $\ell_1, \dotsc, \ell_{n-1}$, except the one incident to the root face which has size say $\ell$. After removing the root edge, the latter boundary is seen to be filled with $M_{k+\ell-2}$. That leads to
	\begin{equation} \label{SchwingerDysonStuffedMaps}
		M_k = t\sum_{l=0}^{k-2} M_l M_{k-2-l} + t\sum_{\ell=1}^s Q_\ell(M_1, \dotsc, M_r) M_{k+\ell-2}
	\end{equation}
	with the $Q_\ell$s as defined above. Here no assumptions have been made on the coefficients $p_{\ell_1, \dotsc, \ell_n}$ but symmetry assumptions can be used to simplify $Q_\ell$. We also refer to \cite{Borot13} for the higher genera equations.
	
	By expanding all $M_k$s as series in $t$, it is clear that \eqref{SchwingerDysonStuffedMaps} provides an induction which determines them order by order. Now introducing the catalytic variable $x$ and $M(x) = \sum_{k\geq 0} M_k x^k$, Equation \eqref{SchwingerDysonStuffedMaps} becomes \eqref{TutteStuffedMaps}.
	
	Algebraicity follows from a major theorem in the field due to Bousquet-Mélou and Jehanne \cite{BousquetJehanne2006}. Their theorem applies to an equation of the form 
	\begin{equation}
		P(F(x), F_1, \dotsc, F_s, t, x) = 0,
	\end{equation}
	where $P$ is a polynomial, $F(x)$ is a formal series in $t$ with polynomial coefficients in $x$, and $F_i = [x^i]F(x)$. Equation \eqref{TutteStuffedMaps} is exactly of that form (up to a global multiplication by $x^{s-2}$).
\end{proof}

As for the singularities of the $M_k$s with respect to $t$, only specific cases have been considered (in the physics literature, \cite{Das, AlvarezBarbon, Korchemsky1992, KlebanovHashimoto, BarbonDemeterfi}, with explicit calculations performed for $n=2$ and $\ell_1, \ell_2\in\{2,4\}$). They reveal the same phase portrait which we described: the universality class of maps on one side, that of trees on the other side when the weight of elementary cells with $(0,n)$ and $n>1$ dominates the weight of discs, and the proliferation of baby universes in between. 


\subsubsection{Difference with the $O(n)$-loop model} Another model which might look similar at first sight but is in fact not really is the $O(n)$-loop model. It consists in planar triangulations decorated with self-avoiding, non-intersecting loops, where at each triangle, a piece of loop can only cross two edges. By cutting along the edges of the triangulations on each side of a loop, one gets a triangulated cylinder. Moreover, because the maps are planar, every loop has an inside and an outside region so that removing a cylinder disconnects the map. This shows that the $O(n)$-loop model \emph{is} a model of stuffed maps with discs and cylinders!

However, we know that the $O(n)$-loop model has a very rich phase structure at large scale. It is only possible if loops of arbitrarily large sizes are allowed, a necessary condition for loops to be visible in the scaling limit, and if the weights $p_{\ell_1, \ell_2}$ of cylinders are well-chosen. In the $O(n)$-loop model, each loop receives a weight $n$. After going to the stuffed maps picture, the weights are
\begin{equation}
	p^{O(n)}_{\ell_1, \ell_2} = \binom{\ell_1+\ell_2}{\ell_1}\frac{n}{\ell_1+\ell_2}.
\end{equation}

Bernardi and Bousquet-Mélou have studied in a series of two papers \cite{Bernardi2011, Bernardi2017} some Tutte equation for the $q$-colored Potts model on planar maps, with two catalytic variables. They proved that the generating series is in general differentially algebraic, and for some values of $q$, namely $q = 2 + 2\cos \frac{j\pi}{m}$, it becomes algebraic. Given the similarities between the $O(n)$-loop model and the Potts model for $q=n^2$, the same results may be expected for the $O(n)$-loop model.

\section{Blobbed topological recursion for colored graphs}

Here we consider the problem of the topological recursion for colored graphs. Some similarities with stuffed maps will again be apparent. However, this can be quite technical, so we will only sketch some of the important steps.

\subsection{Effective maps with blobs as vertices} Assume that some of the $\bb_i$s are the quartic melonic bubbles $Q(\{1\}), \dotsc, Q(\{d\})$, and use the bijection of \Cref{thm:StuffedMapsBijection} between $\mathcal{G}_{n_1, \dotsc, n_N}(\bb_1, \dotsc, \bb_N)$ and $\mathcal{M}_{n_1, \dotsc, n_N}((\bb_1, \pi_1), \dotsc, (\bb_N, \pi_N))$. In the latter, quartic melonic bubbles become edges with colors $1, \dotsc, d$. Moreover, in $\mathcal{M}^{\max}_{n_1, \dotsc, n_N}((\bb_1, \pi_1), \dotsc, (\bb_N, \pi_N))$, they have to be \emph{bridges} (1-edge-cuts)\footnote{The quartic melonic bubbles satisfy the maximal 2-property, meaning that after the bijection, they become edges whose end-vertices are distinct and are cut-vertices.}. This might give some hope that ``from the point of view'' of those edges, the elements of $\mathcal{M}_{n_1, \dotsc, n_N}((\bb_1, \pi_1), \dotsc, (\bb_N, \pi_N))$ satisfy some interesting properties. Here ``from the point of view'' means that we package all other structures (from edges with more than one color) into ``blobs''. More precisely, we decompose $\m\in \mathcal{M}_{n_1, \dotsc, n_N}((\bb_1, \pi_1), \dotsc, (\bb_N, \pi_N))$ as follows,
\begin{equation}
	\includegraphics[scale=.45,valign=c]{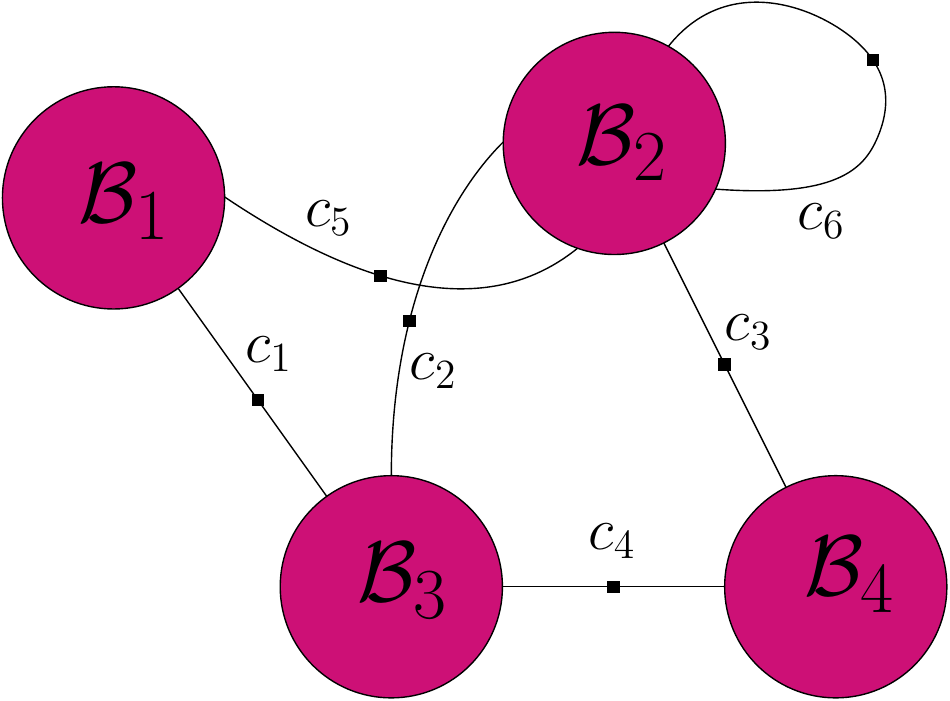}
\end{equation}
where each edge carries a color in $[1..d]$. Recall that the square-vertices represent a gluing of paired bubbles, see Section \ref{sec:Bijection}. Obviously, the blobs are obtained by removing all square-vertices of degree 2 which have edges of the same color on each side. This allows for defining some $\omega_{g,n}$s and we could prove that whatever the weights of those complicated structures, a certain form of the topological recursion is satisfied \cite{BonzomDub}.

Assume that $\mathcal{G}_{n_1, \dotsc, n_N}(\bb_1, \dotsc, \bb_N)$ satisfies the linear growth hypothesis and consider as the generating series (say rooted on an edge of color 1)
\begin{equation} \label{FullGeneratingSeries}
	F(u;p_1, \dotsc, p_N) = \sum_{n_1, \dotsc, n_N\geq 0}\sum_{\graph\in\mathcal{G}_{n_1, \dotsc, n_N}(\bb_1, \dotsc, \bb_N)} u^{C_0(\graph) - \sum_{i=1}^N \alpha(\bb_i) n_i} \prod_{i=1}^{n_i} p_i^{n_i},
\end{equation}
where $C_0(\graph)$ is the total number of bicolored cycles with colors $\{0,c\}$ for $c=1..d$, and $\alpha(\bb_i)$ is the coefficient from the linear growth hypothesis, such that $C_0(\graph) - \sum_{i=1}^N \alpha(\bb_i) n_i\leq d$.

Moving on to the description in terms of blobs, a blob $\mathcal{B}$ is characterized by a $d$-vector of partitions $\vec{\mu}=(\mu^{(1)}, \dotsc, \mu^{(d)})$ and a function $t_\mathcal{B}(u,\vec{\mu},p_1, \dotsc, p_N)$. First, the partition $\mu^{(c)}$ of color $c$ is obtained by following the walks across the blob along the edges which have color $c$. Although we do not explain how it is really done, we claim that they can be thought as describing an elementary cell of a colored stuffed map, i.e. $\mu^{(c)} = (\mu^{(c)}_1, \mu^{(c)}_2, \dotsc)$ means that there is a boundary component of color $c$ and degree $\mu^{(c)}_1$ and so on.


The function $t_\mathcal{B}(u, \vec{\mu}, p_1, \dotsc, p_N)$ is the result of summing over all maps which have $\vec{\mu}$ as boundary. Moreover, one can show that the leading order in $u$ is as follows
\begin{equation}
	t_\mathcal{B}(u, \vec{\mu}, p_1, \dotsc, p_N) = u^{d-\sum_{c=1}^d \ell(\mu^{(c)})} \bigl(\tilde{t}_\mathcal{B}(\vec{\mu}, p_1, \dotsc, p_N)+o(1)\bigr),
\end{equation}
where $\ell(\mu^{(c)})$ is the length of the partition $\mu^{(c)}$. Denote $\mathcal{M}(\{\mathcal{B}\};(\bb_1, \pi_1), \dotsc, (\bb_N, \pi_N))$ the set of maps made of those blobs and colored edges. In particular, while some faces are now internal to the blobs, others go along the edges of color $c$. Those can be followed at each boundary of degree $\mu^{(c)}_i$ of the blobs, like in usual stuffed maps except with colors, and we denote $F(\m)$ their numbers. Then by definition
\begin{equation}
	F(u;p_1, \dotsc, p_N) = \sum_{\m\in\mathcal{M}(\{\mathcal{B}\};(\bb_1, \pi_1), \dotsc, (\bb_N, \pi_N))} u^{F(\m)} \prod_{\mathcal{B}} t_{\mathcal{B}}(u,\vec{\mu}, p_1, \dotsc, p_N).
\end{equation}

However, the TR cannot hold at this stage, because the elements maximizing the number of faces are trees (considering the blobs as vertices), and for the TR to hold, planar maps are expected instead. Let $T_c$ be the series of $\mathcal{M}^{\max}_{n_1, \dotsc, n_N}((\bb_1, \pi_1), \dotsc, (\bb_N, \pi_N))$ with a root of color $c$, i.e. a series of rooted trees. Then define new blobs as follows. For $k\geq 2$
\begin{equation}
	\includegraphics[scale=.55,valign=c]{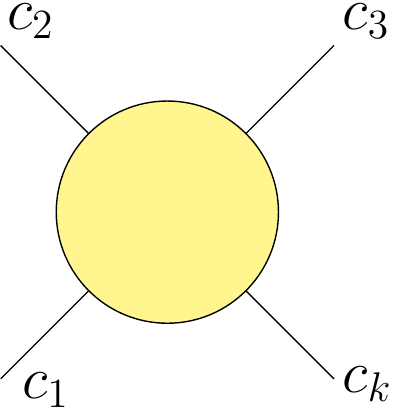} = \sum_{p\geq 0} \includegraphics[scale=.55,valign=c]{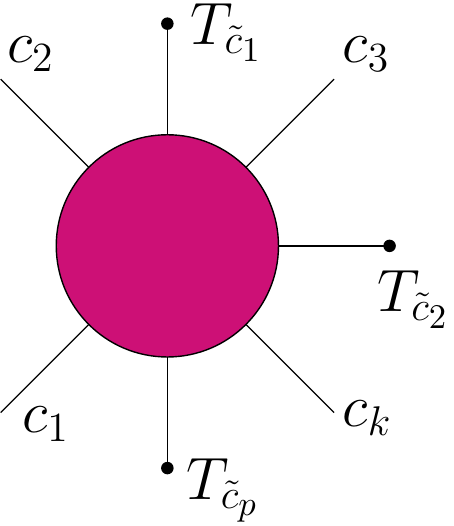}
\end{equation}
where the sum on the RHS is over all blobs with more than (or exactly) $k$ incident edges where the additional edges are turned into leaves weighted by $T$. For $k=1$, set
\begin{equation}
	\includegraphics[scale=.55,valign=c]{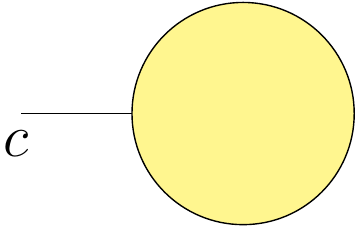} = -T + \sum_{p\geq 0} \includegraphics[scale=.55,valign=c]{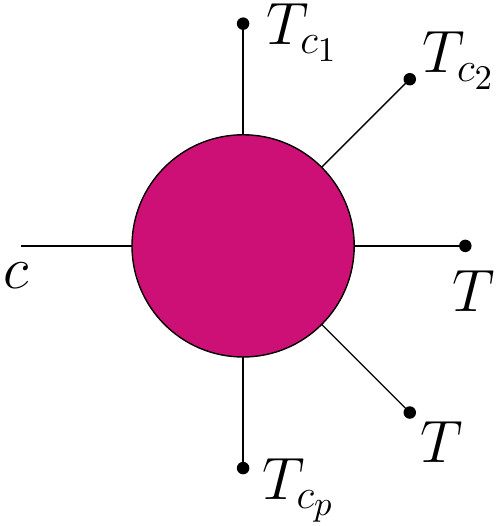} = 0,
\end{equation}
since one recognizes the equation on $T$. While it may seem artificial, this is in fact very natural from the point of view of the associated matrix integrals\footnote{Indeed, a saddle point in the large $N$ limit here selects a special value, which corresponds to the series of trees. It is then classical to substract this leading order to study the fluctuations. It is well-known that the fluctations are Gaussian to first order, as shown explicitly in \cite{NguyenDartoisEynard}. In fact this holds for any (non-pathological) saddle point approximation. Typically, one evaluates an integral like
\begin{equation}
	\int dx e^{-NV(x)}
\end{equation}
with $V(x) = \frac{x^2}{2} - \sum_{k\geq 1} \frac{p_k}{k!} x^k$ by a saddle point, i.e. around the point $\alpha$ where
\begin{equation}
	V'(\alpha) = \alpha - \sum_{k\geq 1} \frac{p_k}{(k-1)!} \alpha^{k-1} = 0.
\end{equation}
Notice that it is a tree equation, which can also be found via the Feynman expansion. To study the subleading orders, one sets $x = \alpha+y$, and
\begin{equation}
	\int dx e^{-NV(x)} = e^{-NV(\alpha)} \int dy e^{-N \sum_{k\geq 2} V^{(k)}(\alpha) \frac{y^k}{k!}}
\end{equation}
Notice that there is no linear term, so no leaf hence no trees anymore in the Feynman expansion. This is basically what we do here, except in the context of a much more complicated multi-matrix model.}. Moreover, the models with pink and yellow blobs are equivalent, since the pink blobs can be expressed in terms of the yellow blobs by using the same formulas as above with $T\mapsto -T$.

\subsection{Blobbed topological recursion} This change of blobs gives a model which can be shown to satisfy the blobbed TR of \cite{BorotShadrin2016}. It is a form of the TR, originally devised in \cite{Borot13} for stuffed maps, where the universal TR formula is supplemented with a holomorphic, model-dependent term. If the (simple) ramification points are $b_1, \dotsc, b_R$, then define the polar and holomorphic part (with respect to $z_1$) as
\begin{equation}
	\begin{aligned}
		P\omega_{g,n}(z_1, \dotsc, z_n) &\coloneqq \sum_{i=1}^R \Res_{z=b_i} \int^z \omega_{0,2}(\cdot, z_1) \omega_{n,g}(z, z_2,\dotsc, z_n),\\ H\omega_{g,n}(z_1, \dotsc, z_n) &\coloneqq \omega_{g,n}(z_1, \dotsc, z_n) - P\omega_{g,n}(z_1, \dotsc, z_n).
	\end{aligned}
\end{equation}
Then the blobbed TR reads
\begin{equation}
	P\omega_{g,n}(z_1,L) = \sum\limits_{i=1}^{R} \Res_{z=b_i}\, K_i(z_1,z)\Big(\omega_{g-1,n+1}(z,\sigma_i(z),L)+\sum\limits_{\substack{h+h'=g \\ C\sqcup C'=L}}^{'}\omega_{h,1+|C|}(z,C)\omega_{h',1+|C'|}(\sigma_i(z),C')\Big)
\end{equation}
with $L=(z_2, \dotsc, z_n)$, while
\begin{equation}
	H\omega_{g,n}(z_1, \dotsc, z_n) = \frac{1}{2i\pi}\oint_{U} \omega_{0,2}(z_1, z) \nu_{n,g}(z,z_2, \dotsc, z_n)
\end{equation}
where $U$ is the unit disc and for some functions $\nu_{n,g}$s which are model-dependent, and here related to the blobs from above. 

\subsection{Spectral curve of the model} In our model, one defines the correlation functions $W_{g,n}(x_1, c_1; \dotsc; x_n,c_n)$ as the maps made of yellow blobs and $n$ additional labeled vertices, such that the first one is incident to edges of color $c_1$ only, the second one is incident to edges of color 2 only, and so on, and with a marked corner on each of those $n$ vertices\footnote{This is the same notion of rooted boundary components as usual but in the dual and with colors.}. Then let $\mathbb{P}^1_{c_i}$ be the ``space of color $c_i$'', and we say that the catalytic variable $x_i$ lives on $\mathbb{P}^1_{c_i}$. Then,
\begin{equation}
	W_{g,n}(x_1, \dotsc, x_n) = \sum_{c_1, \dotsc, c_n\in[1..d]}W_{g,n}(x_1, c_1; \dotsc; x_n,c_n) \mathbb{1}(x_1\in \mathbb{P}^1_{c_1}) \dotsm \mathbb{1}(x_n\in \mathbb{P}^1_{c_n})
\end{equation}
where now the variables $x_i$s live on $\cup_{c=1}^d \mathbb{P}^1_c$.

Recall that a spectral curve is a tuple $\mathcal{S}=(\mathbb{P}^1,V,x(z),y(z),\omega_{02})$ where $V$ is an open subset of $\mathbb{P}^1$, $x(z)$ a covering with simple ramification points, $y(z)$ a meromorphic function and $\omega_{0,2}(z_1, z_2)$ a bidifferential which behaves locally as $\omega_{0,2}(z_1, z_2) \underset{z_1\sim z_2}{=} \frac{dz_1 dz_2}{(z_1-z_2)^2} + \text{holomorphic}$.

Here we need a slightly extended version because of the colors. Denote $a_1, \dotsc, a_d$ the leading order coefficients in $u$ of the blob with only one boundary of color $c$ and degree 2, and $b_{cc'}$ the leading order coefficients in $u$ of the blob with exactly one boundary component of color $c$ and degree 1 and one boundary component of color $c'$ and degree 1. Then denote $A=\operatorname{diag}(-1+a_1,\dotsc, -1+a_d)$ and $B=(b_{cc'})_{c,c'=1..d}$.
\begin{theorem}{}{}\cite{BonzomDub}
	The spectral curve is defined on $\cup_{c=1}^d (\mathbb{P}^1_c\setminus \{0\})$ by
	\begin{equation}
		\begin{aligned}
			x(z) &= \sum_{c=1}^d \mathbb{1}(x\in\mathbb{P}^1_c) \mathbb{1}(z\in\mathbb{P}^1_c) \frac{1}{\sqrt{1-a_c}} \bigl(z+z^{-1}\bigr)\\
			y(z) &= \sum_{c=1}^d \mathbb{1}(y\in\mathbb{P}^1_c) \mathbb{1}(z\in\mathbb{P}^1_c) \frac{\sqrt{1-a_c}}{z}
		\end{aligned}
	\end{equation}
and
\begin{multline}
	\omega_{0,2}(z_1,z_2) = \frac{dz_1 dz_2}{(z_1-z_2)^2}\sum_{c=1}^d \mathbb{1}(z_1\in\mathbb{P}^1_c) \mathbb{1}(z_2\in\mathbb{P}^1_c) \\- \frac{dz_1 dz_2}{z_1^2 z_2^2} \sum_{c_1,c_2=1}^d \frac{\mathbb{1}(z_1\in\mathbb{P}^1_{c_1}) \mathbb{1}(z_2\in\mathbb{P}^1_{c_2})}{\sqrt{(1-a_{c_1})(1-a_{c_2})}} \frac{1}{1-a_{c_1}} \Bigl(B\frac{1}{A+B}\Bigr)_{c_1 c_2}
\end{multline}
\end{theorem}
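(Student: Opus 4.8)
The plan is to determine the three pieces of spectral-curve data by computing the genus-zero one- and two-point correlation functions of the yellow-blob model and reading off $x(z),y(z)$ from $W_{0,1}$ and $\omega_{0,2}$ from $W_{0,2}$, exactly as in the standard correspondence between the resolvent/cylinder functions and the spectral curve recalled in Chapter \ref{sec:TR}. Since the blobbed topological recursion is already known to hold for this model, everything reduces to identifying $W_{0,1}$ and $W_{0,2}$ at leading order in $u$.

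First I would set up the Tutte/Schwinger-Dyson equation for the disc function $W_{0,1}(x,c)$ at fixed color $c$, obtained by the root-edge decomposition of maps built from yellow blobs and colored edges. Because a color-$c$ edge is a bridge in $\cM^{\max}$, the decomposition at the root edge of color $c$ produces a quadratic term $x^2 W_{0,1}(x,c)^2$ together with a linear term in which a yellow blob is inserted along the root edge. At leading order in $u$ the only blob contributing a degree-$2$ boundary of color $c$ is controlled by the coefficient $a_c$, which plays the role of a mass/self-energy term. Crucially, the defining property of the yellow blobs --- the leaf equation that evaluates to $0$ for $k=1$ --- removes exactly the tree contribution (the saddle value $T$), so that $W_{0,1}(x,c)$ obeys the loop equation of a planar model with effective quadratic weight $1-a_c$ rather than a tree-generating equation. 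Solving this one-cut equation yields a Zhukovsky-type parametrization: the one-cut ansatz forces $x$ and the resolvent to be rational in a variable $z$ with $x(z)=\frac{1}{\sqrt{1-a_c}}\bigl(z+z^{-1}\bigr)$ on $\mathbb{P}^1_c$, whose differential $dx$ vanishes only at $z=\pm1$, giving the two simple ramification points per color, and with local involution $\sigma_c(z)=z^{-1}$. The function $y(z)$ is then read off from $W_{0,1}(x(z),c)$, producing $y(z)=\frac{\sqrt{1-a_c}}{z}$ on $\mathbb{P}^1_c$, consistent with $x(z)y(z)=1+z^{-2}$ having the correct pole structure away from the ramification points.

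Next I would compute the cylinder $W_{0,2}(x_1,c_1;x_2,c_2)$. Its universal part reproduces $\frac{dz_1\,dz_2}{(z_1-z_2)^2}$ whenever $c_1=c_2$, accounting for the first line of the formula for $\omega_{0,2}$ through the indicator $\mathbb{1}(z_1\in\mathbb{P}^1_c)\mathbb{1}(z_2\in\mathbb{P}^1_c)$. The nontrivial, model-dependent part comes from the two-boundary blobs with one degree-$1$ boundary of color $c_1$ and one of color $c_2$, whose leading coefficients are the $b_{c_1c_2}$. The two marked boundaries are linked through a chain of such blobs separated by the propagating structure of single-color boundaries, and summing this chain is a geometric series in the matrices $A=\operatorname{diag}(-1+a_1,\dots,-1+a_d)$ and $B=(b_{cc'})$. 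Resumming over color space produces precisely the combination $\bigl(B\frac{1}{A+B}\bigr)_{c_1c_2}$, while the scalar prefactors $\frac{1}{\sqrt{(1-a_{c_1})(1-a_{c_2})}}$ and $\frac{1}{1-a_{c_1}}$ arise from the change of variables to the $z_i$ coordinates and the normalization of each resolvent; matching against $\omega_{0,2}=\bigl(W_{0,2}+\frac{1}{(x_1-x_2)^2}\bigr)dx_1\,dx_2$ then gives the stated second line.

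The main obstacle I expect is the cylinder computation: one must show that the chain of two-boundary blobs resums to exactly $B(A+B)^{-1}$ and not some other rational function of $A,B$. This requires carefully identifying which blob structures contribute at leading order in $u$ (only degree-$1$ and degree-$2$ boundaries survive), verifying that the single-color propagator along the chain contributes the factor turning $B+B(\cdots)B+\dots$ into $B(A+B)^{-1}$, and tracking the $z_i$-Jacobians. A clean way to organize this is the multi-matrix-integral viewpoint (cf. the footnote and \cite{NguyenDartoisEynard}): the saddle point selects the tree series $T$, the yellow-blob model is the theory of fluctuations around it, and $\omega_{0,2}$ is the Gaussian two-point function, i.e. the inverse Hessian of the effective action, whose diagonal self-interaction is $A$ and whose color-mixing is $B$. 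From this standpoint the appearance of $(A+B)^{-1}$ is automatic, and the remaining work is the bookkeeping of prefactors together with the verification that $dx$ has only simple zeros, so that the blobbed TR formula applies verbatim.
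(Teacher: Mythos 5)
Your proposal follows essentially the same route as the paper: the leaf equation for the yellow blobs implements the expansion around the tree/saddle value $T$, so that at leading order in $u$ the fixed-color disc function satisfies the one-cut GUE-type loop equation $y^2+(a_c-1)xy+(1-a_c)=0$ solved by the Zhukovsky map $x(z)=\tfrac{1}{\sqrt{1-a_c}}(z+z^{-1})$, and the cylinder is the Gaussian fluctuation two-point function whose color-mixing part resums the chain of two-boundary blobs into $B(A+B)^{-1}$ --- exactly the inverse-Hessian picture the paper invokes. The identification of the resummation of $B$-insertions against the diagonal part $A$ as the only delicate step, and the bookkeeping of the $dx_i/dz_i$ Jacobians for the prefactors, matches the intended argument.
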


Notice that at fixed color $c$, one has $y_c^2 + (a_c-1) x_c y_c + 1-a_c=0$, which experienced readers may recognize as the spectral curve for maps with no internal faces\footnote{So the spectral curve of the GUE.}.

\subsection{Discussion on the poles of $\omega_{0,2}$} The primary example of the blobbed TR is stuffed maps, where an elementary cell of genus $h$ with boundary perimeters $\ell_1, \dotsc, \ell_n$ is counted with a formal weight $p^{(h)}_{\ell_1, \dotsc, \ell_n}$, as explained in \cite{Borot13}. It can be shown explicitly that restricting to $h=0, n=2$, i.e. cylinders, give a $\omega_{0,2}(z_1,z_2) = \frac{dz_1 dz_2}{(z_1-z_2)^2} + \text{holomorphic}$, where the part which is holomorphic for $z_1\sim z_2$ has poles at $z_i=0$. As noticed in \cite{Borot13}, the $\omega_{g,n}$s are initially defined outside the unit circle and can be analytically continued in a neighborhood of it. Here it is clear that they cannot be continued all the way to 0.

On the other hand, the $O(n)$-loop model can also be seen as stuffed maps with cylinders, where the weights are now not formal but given by $p^{O(n)}_{\ell_1, \ell_2} = \binom{\ell_1+\ell_2}{\ell_1}\frac{n}{\ell_1+\ell_2}$. The reason for mentioning this model is that the blobbed TR thus applies to it and gives rise to an $\omega_{0,2}$ with poles at 0. However, the $O(n)$-loop model has been intensely studied and was shown by G. Borot and B.~Eynard to satisfy the TR for a spectral curve of genus 1 \cite{BorotEynard2011}. 

How to reconcile those two pictures?\footnote{We thank G.~Borot, E.~Garcia-Failde and V.~Nador for discussions on this topic, in particular G.~Borot for this explanation as local vs global.} The explanation may be that with formal weights, one cannot achieve a globally-defined spectral curve. It is instead defined locally around ramification points and while trying to continue it, one may run into poles at 0. However, in the $O(n)$-loop model, the weights are not formal and the spectral curve can be ``resummed'' in a sense to produce a curve of non-zero genus.

\bibliographystyle{alpha}
\bibliography{biblio}

\end{document}